\newcommand{\ie}{{\em i.e.\ }}
\newcommand{\eg}{{\em e.g.\ }}
\newcommand{\cf}{{\em cf.\ }}
\newtheorem{theorem}{Theorem}[section]
\newtheorem{lemma}[theorem]{Lemma}
\newtheorem{proposition}[theorem]{Proposition}
\newtheorem{corollary}[theorem]{Corollary}
\newtheorem*{thm}{Theorem}
\newtheorem*{cor}{Corollary}
\newtheorem*{prop}{Proposition}
\newtheorem*{defin}{Definition}
\newtheorem*{notat}{Notation}
\newtheorem*{ex}{Example}
\newtheorem*{teorema}{Teorema}
\newtheorem*{corolario}{Corolario}
\newtheorem*{proposicion}{Proposici\'{o}n}
\newtheorem*{lema}{Lema}
\newtheorem*{definicion}{Definici\'on}
\newtheorem*{notacion}{Notaci\'on}
\newtheorem*{ej}{Ejemplo}
\theoremstyle{definition}
\newtheorem{definition}[theorem]{Definition}
\newtheorem{notation}[theorem]{Notation}
\newtheorem{example}[theorem]{Example}
\newtheorem*{question}{\textbf{Question}}
\newtheorem{remark}[theorem]{Remark}
\numberwithin{equation}{section}
\newcommand{\internalcomment}[1]{}
\newcommand{\N}{\mathbf{N}}
\newcommand{\Z}{\mathbf{Z}}
\newcommand{\ko}{\: , \;}
\newcommand{\ol}{\overline}
\newcommand{\ul}{\underline}
\newcommand{\we}{\wedge}
\newcommand{\che}{\vee}
\renewcommand{\tilde}[1]{\widetilde{#1}}
\newcommand{\ra}{\rightarrow}
\newcommand{\arr}[1]{\stackrel{#1}{\rightarrow}}
\newcommand{\opname}[1]{\operatorname{\mathsf{#1}}}
\renewcommand{\mod}{\opname{mod}\nolimits}
\newcommand{\Mod}{\opname{Mod}\nolimits}
\newcommand{\Sub}{{\mbox{Sub}}}
\newcommand{\Gen}{{\mbox{Gen}}}
\newcommand{\Sum}{{\mbox{Sum}}}
\newcommand{\Tria}{{\mbox{Tria}}}
\newcommand{\Susp}{{\mbox{Susp}}}
\newcommand{\aisle}{{\mbox{aisle}}}
\newcommand{\add}{{\mbox{add}}}
\newcommand{\ann}{{\mbox{ann}}}
\newcommand{\lann}{{\mbox{lann}}}
\newcommand{\proj}{\opname{proj}\nolimits}
\newcommand{\Proj}{\opname{Proj}\nolimits}
\newcommand{\id}{\mathbf{1}}
\newcommand{\R}{\mathbf{R}}
\renewcommand{\L}{\mathbf{L}}
\newcommand{\ten}{\otimes}
\newcommand{\cok}{\opname{cok}\nolimits}
\newcommand{\im}{\opname{im}\nolimits}
\renewcommand{\ker}{\opname{ker}\nolimits}
\newcommand{\colim}{\varinjlim}
\newcommand{\lid}{\varinjlim}
\newcommand{\can}{\opname{can}}
\newcommand{\Mcolim}{\opname{Mcolim}}
\newcommand{\op}[1]{\opname{#1}\nolimits}
\newcommand{\Zy}[1]{\op{Z}^{#1}}
\newcommand{\Bo}[1]{\op{B}^{#1} \,}
\renewcommand{\H}[1]{{H}^{#1}}
\newcommand{\ca}{{\mathcal A}}
\newcommand{\cb}{{\mathcal B}}
\newcommand{\cc}{{\mathcal C}}
\newcommand{\cd}{{\mathcal D}}
\newcommand{\ce}{{\mathcal E}}
\newcommand{\cF}{{\mathcal F}}
\newcommand{\cg}{{\mathcal G}}
\newcommand{\ch}{{\mathcal H}}
\newcommand{\ci}{{\mathcal I}}
\newcommand{\ck}{{\mathcal K}}
\newcommand{\cm}{{\mathcal M}}
\newcommand{\cn}{{\mathcal N}}
\newcommand{\cp}{{\mathcal P}}
\newcommand{\cR}{{\mathcal R}}
\newcommand{\cq}{{\mathcal Q}}
\newcommand{\cs}{{\mathcal S}}
\newcommand{\ct}{{\mathcal T}}
\newcommand{\cu}{{\mathcal U}}
\newcommand{\cv}{{\mathcal V}}
\newcommand{\cx}{{\mathcal X}}
\newcommand{\cy}{{\mathcal Y}}
\newcommand{\cz}{{\mathcal Z}}
\newcommand{\eps}{\varepsilon}
\renewcommand{\phi}{\varphi}
\newcommand{\Hom}{\opname{Hom}}
\newcommand{\End}{\opname{End}}
\newcommand{\Ext}{\opname{Ext}}
\newcommand{\cone}{\opname{Cone}\nolimits}
\newcommand{\Fun}{\opname{Fun}}
\newcommand{\per}{\opname{per}}
\begin{document}
\frontmatter

%PORTADA EN INGLES

\begin{titlepage}

\begin{center}
\Huge{UNIVERSIDAD DE MURCIA}
\end{center}
\bigskip

\begin{center}
\huge{DEPARTAMENTO DE MATEM\'ATICAS}
\end{center}
\vspace*{1cm}

\begin{center}
\huge{TESIS DOCTORAL}
\end{center}

\begin{center}
\Huge{\textbf{On torsion torsionfree triples}}
\end{center}
\bigskip

\begin{center}
\huge{D. Pedro Nicol\'as Zaragoza}
\end{center}
\vspace*{1cm}

\begin{center}
\huge{2007}
\end{center}
\end{titlepage}

%FIN DE PORTADA EN INGLES

%AGRADECIMIENTOS

\begin{titlepage}
\textbf{Agradecimientos}
\bigskip

Quiero expresar mi agradecimiento a Manuel Saor\'{i}n Casta\~{n}o, cuya presencia y ayuda han sido determinantes en la realizaci\'{o}n de este trabajo.

Le estoy agradecido a K. Fuller por apuntarnos algunas referencias utilizadas en el Ejemplo \ref{balancedMorita}, a K. Goodearl por comunicarnos algunos resultados claves de \cite{Goodearl, Small1967} utilizados en el Cap\'itulo \ref{Split TTF triples on module categories}, y a J. Trlifaj por sugerirnos el Lema \ref{TrlifajLemma} y el Ejemplo \ref{TrlifajExample}. Tambi\'en le estoy agradecido a B.~Keller por su atenta lectura del manuscrito de \cite{NicolasSaorin2007c}, por diversas conversaciones e intercambio de correos electr\'onicos sobre categor\'ias dg, por su supervisi\'on de \cite{Nicolas2007b} y por su amabil\'isima hospitalidad durante mis estancias en Par\'is.
\vspace{3cm}

\textbf{Acknowledgements}
\bigskip

I would like to express my gratitude to my advisor, Manuel Saor\'{i}n Casta\~{n}o, whose presence and help have been decisive to carry out this work.

I am grateful to K. Fuller for pointing out some references used in Example \ref{balancedMorita}, to K. Goodearl for telling us about key results in \cite{Goodearl, Small1967} used in Chapter \ref{Split TTF triples on module categories}, and to J. Trlifaj for suggesting Lemma \ref{TrlifajLemma} and Example \ref{TrlifajExample}. Also, I am grateful to B.~Keller for carefully reading the manuscript of \cite{NicolasSaorin2007c}, for several conversations and e-mail exchange concerning dg categories, for his supervision of \cite{Nicolas2007b} and for his extremely kind hospitality during my stays in Paris.
\end{titlepage}

%FIN DE AGRADECIMIENTOS

\begin{titlepage}
\end{titlepage}

%DEDICATORIA

\begin{titlepage}
\rightline{\ }
\bigskip
\bigskip
\bigskip
\bigskip
\bigskip
\bigskip
\bigskip
\bigskip
\bigskip
\bigskip
\bigskip
\bigskip
\bigskip
\bigskip
\bigskip
\bigskip

\rightline{\Large{A mi familia y amigos.}}
\bigskip

\rightline{\Large{A Patri.}}
\end{titlepage}

%FIN DE DEDICATORIA

\begin{titlepage}
\end{titlepage}

%INDICE INGLES

\tableofcontents

%FIN DE INDICE EN INGLES

\begin{titlepage}
\end{titlepage}

%INTRODUCCION EN ESPA\~{N}OL

\chapter*{Introducci\'{o}n (Spanish)}
\addcontentsline{lot}{chapter}{Introducci\'{o}n}
\bigskip

\textbf{Motivaci\'on}
\bigskip

En su trabajo \cite{Dickson1966}, S. E. Dickson defini\'o la noci\'on de \emph{teor\'ia de torsi\'on} (ahora llamada \emph{par de torsi\'on}) en el marco general de las categor\'ias abelianas, que generaliza el concepto de `torsi\'on' que aparece en la teor\'ia de los grupos abelianos. Un \emph{par de torsi\'on} en una categor\'ia abeliana $\ca$ consiste en un par $(\cx,\cy)$ de subcategor\'ias plenas de $\ca$ tal que:
\begin{enumerate}[1)]
\item $\cx\cap\cy=\{0\}$,
\item $\cx$ es cerrada para cocientes e $\cy$ es cerrada para subobjetos,
\item cada objeto $M$ de $\ca$ aparece en una sucesi\'on exacta corta
\[0\ra M_{\cx}\ra M\ra M^{\cy}\ra 0
\]
en la cual $M_{\cx}$ pertenece a $\cx$ y $M^{\cy}$ pertenece a $\cy$.
\end{enumerate}
Cuando esta sucesi\'on exacta corta escinde para cada $M$, decimos que el par de torsi\'on \emph{escinde}. Desde el trabajo de S. E. Dickson, los pares de torsi\'on han desempe\~{n}ado una funci\'on importante en \'algebra: han sido una herramienta fundamental para una teor\'ia general de localizaci\'on no conmutativa \cite{Stenstrom}, han tenido una gran influencia en la teor\'ia de representaci\'on de \'algebras de Artin \cite{HappelRingel1982, HappelReitenSmalo1996, AssemSaorin2005},\dots Uno de los conceptos importantes relacionados con la teor\'ia de torsi\'on es el de \emph{teor\'ia de torsi\'on y libre de torsi\'on} (ahora llamada \emph{terna TTF}), introducida por J. P. Jans en \cite{Jans}. Consiste en una terna $(\cx,\cy,\cz)$ de subcategor\'ias plenas de una categor\'ia abeliana tal que ambos pares $(\cx,\cy)$ e $(\cy,\cz)$ son de torsi\'on. Decimos que una terna TTF $(\cx,\cy,\cz)$ es \emph{escindida por la izquierda (respectivamente, derecha)} cuando el par $(\cx,\cy)$ (respectivamente, $(\cy,\cz)$) es escindido. Una terna TTF es \emph{centralmente escindida} si es escindida por la izquierda y por la derecha. Cuando la categor\'ia abeliana ambiente es una categor\'ia de m\'odulos (unitarios) $\Mod A$ sobre un anillo $A$ (asociativo, con unidad) entonces tenemos el siguiente resultado de J. P. Jans \cite[Corollary 2.2]{Jans}:

\begin{teorema}
Las ternas TTF en $\Mod A$ est\'an en biyecci\'on con los ideales bi\-l\'a\-te\-ros idempotentes de $A$. M\'as a\'un, esta biyecci\'on induce una biyecci\'on entre las ternas TTF en $\Mod A$ centralmente escindidas e (ideales bil\'ateros generados por) idempotentes centrales de $A$.
\end{teorema}

Surge entonces una pregunta natural:
\bigskip

\noindent \emph{Pregunta 1:} ?`Cu\'ales son los ideales bil\'ateros de $A$ correspondientes a las ternas TTF en $\Mod A$ que escinden por la izquierda o por la derecha?
\bigskip

En su trabajo \cite{BeilinsonBernsteinDeligne} sobre haces perversos, A.~A.~Beilinson, J.~Bernstein y P.~Deligne definieron la noci\'on de \emph{t-estructura}
en una categor\'ia triangulada, que formaliza la noci\'on de `truncaci\'on (inteligente)' de un complejo de cocadena sobre una categor\'ia abeliana. Una \emph{t-estructura} en una categor\'ia triangulada $(\cd,?[1])$ consiste en un par $(\cx,\cy)$ de subcategor\'ias estrictamente(=cerradas para isomorfismos) plenas de $\cd$ que satisfacen:
\begin{enumerate}[1)]
\item $\cd(X,Y)=\{0\}$ para todo $X\in\cx$ e $Y\in\cy$,
\item $\cx$ es cerrada para traslaciones positivas e $\cy$ es cerrada para traslaciones negativas,
\item cada objeto $M$ de $\ca$ aparece en un tri\'angulo
\[ M_{\cx}\ra M\ra M^{\cy}\ra M_{\cx}[1].
\]
\end{enumerate}
Ellos probaron que la teor\'ia general de las t-estructuras es lo suficientemente sofisticada como para aprehender uno de los fen\'omenos m\'as importantes relacionados con la truncaci\'on: la t-estructura $(\cx,\cy)$ define una categor\'ia abeliana $\cx\cap(\cy[1])$ dentro de la categor\'ia triangulada $\cd$ y vinculada a ella mediante un funtor cohomol\'ogico $\cd\ra \cx\cap(\cy[1])$. Para enfatizar este hecho, uno normalmente escribe $(\cx,\cy[1])$ para referirse a la t-estructura $(\cx,\cy)$. Las t-estructuras han dado lugar a una cantidad considerable de resultados y son importantes en muchas ramas de las matem\'aticas: teor\'ia de haces perversos \cite{BeilinsonBernsteinDeligne, KashiwaraShapira}, relaci\'on entre la teor\'ia de la dualidad de Grothendieck-Roos y la teor\'ia inclinante \cite{KellerVossieck88a}, la teor\'ia inclinante misma \cite{KellerVossieck88a, HappelReitenSmalo1996}, cohomolog\'ia mot\'ivica y teor\'ia de la $\mathbb{A}^{1}$-homotop\'ia en el sentido de V. Voe\-vodsky \cite{Voevodsky2000, Morel2003}, \'algebra conmutativa \cite{Neeman1992a, AlonsoJeremiasSaorin2007}, \dots Como se puede ver, la noci\'on de t-estructura es el an\'alogo formal de la noci\'on de par de torsi\'on para ca\-te\-go\-r\'i\-as abelianas. Consecuentemente, el an\'alogo `triangulado' de una terna TTF, llamado \emph{terna TTF triangulada}, consiste en una terna $(\cx,\cy,\cz)$ tal que los pares $(\cx,\cy)$ e $(\cy,\cz)$ son t-estructuras. Aunque una terna TTF triangulada est\'a formada por t-estructuras, no nos provee de categor\'ias abelianas dentro de la categor\'ia triangulada ambiente. A cambio, la filosof\'ia general de las ternas TTF trianguladas es que nos permiten ver una categor\'ia triangulada como aglutinaci\'on de otras dos categor\'ias trianguladas. M\'as precisamente, las ternas TTF trianguladas en una categor\'ia triangulada $\cd$ est\'an en biyecci\'on con las maneras de expresar $\cd$ como \emph{recollement} o \emph{aglutinaci\'on} (una noci\'on definida por A.~A.~Beilinson, 
J.~Bernstein y P.~Deligne en \cite{BeilinsonBernsteinDeligne}) de dos categor\'ias trianguladas. En este sentido, las ternas TTF trianguladas se han independizado de las t-estructuras y establecido por su cuenta. Aparecen, con el nombre de ``recollement'' en: el trabajo de E. Cline, B. Parshall y L. L. Scott \cite{ClineParshallScott1988, ParshallScott1988}, la teor\'ia de las \'algebras de diagramas \cite{MartinGreenParker2006},...

Inspirado por el trabajo de E.~Cline, B.~Parshall y L.~L.~Scott y utilizando el trabajo de J.~Rickard \cite{Rickard1989}, S.~K\"{o}nig dio en \cite[Theorem 1]{Konig1991} condiciones necesarias y suficientes para la existencia de aglutinaciones. Precisamente, demostr\'o lo siguiente:

\begin{teorema}
La categor\'ia derivada acotada por la derecha $\cd^-A$ de un anillo $A$ es una aglutinaci\'on de las categor\'ias derivadas acotadas por la derecha de dos anillos $B$ y $C$ si y s\'olo si existen dos complejos $P$ y $Q$ en $\cd^-A$ cuasi-isomorfos a complejos acotados de $A$-m\'odulos proyectivos tales que:
\begin{enumerate}[1)]
\item $P$ es compacto en $\cd A$ (\ie cuasi-isomorfo a un complejo acotado de $A$-m\'odulos proyectivos finitamente generados) y $(\cd A)(Q,?)$ conmuta con coproductos peque\-\~{n}os de copias de $Q$.
\item $(\cd A)(P,P[n])=0$ y $(\cd A)(Q,Q[n])=0$ para todo $n\in\Z$.
\item Existen isomorfismos de anillos $(\cd A)(P,P)\cong C$ y $(\cd A)(Q,Q)\cong B$.
\item $(\cd A)(P[n],Q)=0$ para todo $n\in\Z$.
\item Si $M$ es un complejo de $\cd^-A$ tal que $(\cd A)(P[n],M)=0$ y $(\cd A)(Q[n],M)=0$ para todo $n\in\Z$, entonces $M=0$.
\end{enumerate}
\end{teorema}

S.~K\"{o}nig demostr\'o su teorema alrededor de 1990, y en aquel momento todav\'ia no se ten\'ia a disposici\'on una teor\'ia de Morita para categor\'ias derivadas no acotadas. Algunos a\~{n}os m\'as tarde, B.~Keller desarroll\'o en \cite{Keller1994a} una teor\'ia de Morita (y de Koszul) para categor\'ias derivadas no acotadas de categor\'ias dg. As\'i, surge una pregunta natural:
\bigskip

\noindent \emph{Pregunta 2:} ?`Podemos utilizar la teor\'ia de B. Keller para parametrizar todas las maneras de expresar la categor\'ia derivada no acotada $\cd A$ de un anillo $A$ como una aglutinaci\'on de categor\'ias derivadas no acotadas de otros dos anillos? 
\bigskip

Si la respuesta es afirmativa, 
\bigskip

\noindent \emph{Pregunta 3:} ?`podemos desarrollar una aproximaci\'on `no acotada' al trabajo de S.~K\"{o}nig? Es decir, ?`podemos valernos del estudio de las aglutinaciones a nivel no acotado para comprender las aglutinaciones a nivel acotado por la derecha?
\bigskip

Seg\'un \cite{BokstedtNeeman1993}, la noci\'on de \emph{subcategor\'ia aplastante} de una categor\'ia triangulada $\cd$ se origin\'o en el trabajo de D. Ravenel. Una subcategor\'ia plena $\cx$ de una categor\'ia triangulada $\cd$ con coproductos peque\~{n}os es una \emph{subcategor\'ia aplastante} si existe una t-estructura $(\cx,\cy)$ tal que $\cy$ es cerrada para coproductos peque\~{n}os. Si la ca\-te\-go\-r\'ia triangulada ambiente es, por ejemplo, compactamente generada, entonces las subcategor\'ias aplastantes est\'an en biyecci\'on con las ternas TTF trianguladas. La \emph{conjetura aplastante generalizada} es una generalizaci\'on a categor\'ias compactamente generadas arbitrarias de una conjetura debida a D.~Ravenel \cite[1.33]{Ravenel1984} y, originalmente, a A.~K.~Bousfield \cite[3.4]{Bousfield1979}. Predice lo siguiente:
\bigskip

\noindent Si $\cx$ es una subcategor\'ia aplastante de una categor\'ia triangulada compactamente generada $\cd$, entonces existe un conjunto $\cp$ de objetos compactos de $\cd$ tal que $\cx$ es la menor subcategor\'ia triangulada plena de $\cd$ cerrada para coproductos peque\~{n}os que contiene a $\cp$.
\bigskip

B. Keller dio en \cite{Keller1994} un contraejemplo a esta conjetura. No obstante, a veces resultados importantes demostrados para objetos compactos siguen siendo verdaderos (o admiten bonitas generalizaciones) despu\'es de sustituir  ``compacto'' por ``perfecto'', ``superperfecto'' o algo parecido. As\'i es que uno podr\'ia preguntarse lo siguiente:
\bigskip

\noindent \emph{Pregunta 4:} ?`Resulta la conjetura cierta despu\'es de sustituir ``compacto'' por algo parecido?
\bigskip

En su aproximaci\'on algebraica a la conjetura aplastante generalizada \cite{Krause2000, Krause2005}, H.~Krause demostr\'o lo siguiente:

\begin{teorema}
Sea $\cd$ una categor\'ia triangulada compactamente generada. Las subcategor\'ias aplastantes de $\cd$ est\'an en biyecci\'on con los ideales bil\'ateros idempotentes saturados de $\cd^c$(=la subcategor\'ia de $\cd$ formada por los objetos compactos) cerrados para traslaciones en ambos sentidos.
\end{teorema}

Aqu\'i \emph{saturado} significa que siempre que exista un tri\'angulo
\[P'\arr{u} P\arr{v} P''\ra P'[1]
\]
en $\cd^c$ y un morfismo $f\in\cd^c(P,Q)$ con $fu\ko v\in\ci$, entonces $f\in\ci$.

N\'otese la analog\'ia con el teorema de J. P. Jans de m\'as arriba. Esta analog\'ia no es extra\~{n}a ya que las categor\'ias de m\'odulos son categor\'ias abelianas `compactamente generadas'. Sin embargo, varios hechos sugieren que el verdadero an\'alogo `triangulado' de las categor\'ias de m\'odulos no son simplemente las categor\'ias trianguladas compactamente generadas sino m\'as bien las categor\'ias trianguladas compactamente generadas \emph{algebraicas}. \'Estas, gracias al teorema de B. Keller \cite[Theorem 4.3]{Keller1994a}, son precisamente las categor\'ias derivadas de categor\'ias dg.
\bigskip

\noindent \emph{Pregunta 5:} ?`Podemos parametrizar todas las ternas TTF trianguladas en (equivalentemente, subcategor\'ias aplastantes de) la categor\'ia derivada $\cd\ca$ de una categor\'ia dg $\ca$ en t\'erminos de la propia $\ca$ o, al menos, en `t\'erminos dg'? 
\bigskip

Si la respuesta es positiva, 
\bigskip

\noindent \emph{Pregunta 6:} ?`cu\'al es la relaci\'on entre esta parametrizaci\'on y la de H. Krause?
\bigskip

Las categor\'ias pretrianguladas (en el sentido de A. Beligiannis \cite{Beligiannis2001}) son una generalizaci\'on de las categor\'ias abelianas y las trianguladas. En las categor\'ias pretrianguladas uno puede definir la noci\'on de \emph{par de torsi\'on pretriangulado}, que es una generalizaci\'on de las nociones de par de torsi\'on y de t-estructura. Los pares de torsi\'on pretriangulados (y as\'i los pares de torsi\'on y las t-estructuras) tienen muchas propiedades b\'asicas que se deber\'ian deducir de una teor\'ia de torsi\'on en categor\'ias aditivas. M\'as a\'un, varios hechos sugieren que la categor\'ia aditiva subyacente es esencialmente suficiente para comprender las ternas TTF centralmente escindidas en una categor\'ia abeliana o ternas TTF trianguladas centralmente escindidas (la escisi\'on en este caso se define como lo hicimos para categor\'ias abelianas). Para formalizar esta observaci\'on uno deber\'ia intentar responder a la siguiente pregunta:
\bigskip

\noindent \emph{Pregunta 7:} ?`Podemos definir una teor\'ia de torsi\'on para categor\'ias aditivas de modo que las ternas TTF escindias en categor\'ias m\'as ricas puedan ser esencialmente comprendidas simplemente mirando a las ternas TTF aditivas subyacentes?
\bigskip

Al responder a esta pregunta uno se da cuenta de que la propiedad de ser una categor\'ia \emph{equilibrada} desempe\~{n}a una funci\'on importante en la caracterizaci\'on de las ternas TTF centralmente escindidas. N\'otese que tanto las categor\'ias abelianas como las trianguladas son equilibradas. Sin embargo, no es \'este el caso de las ca\-te\-go\-r\'i\-as pretrianguladas. Un ejemplo t\'ipico de categor\'ia pretriangulada (no necesariamente abeliana o triangulada) es la categor\'ia estable de una categor\'ia abeliana asociada a una subcategor\'ia funtorialmente finita (\cf los art\'iculos de A. Beligiannis \cite{Beligiannis2000, Beligiannis2001}). De este modo, la siguiente es una pregunta natural:
\bigskip

\noindent \emph{Pregunta 8:} ?`Podemos caracterizar cu\'ando categor\'ias estables `homol\'ogicamente buenas' de categor\'ias abelianas son equilibradas?
\bigskip 

Este trabajo est\'a estructurado en seis cap\'itulos a lo largo de los cuales es\-tu\-dia\-mos y respondemos las preguntas formuladas m\'as arriba.
\bigskip

\textbf{Cap\'itulo 1}
\bigskip

Este cap\'itulo versa sobre la pregunta 7, y desarrolla los rudimentos de una teor\'ia de torsi\'on para categor\'ias aditivas. La definici\'on principal es la de par de torsi\'on aditivo.

\begin{definicion}
Sea $\cd$ una categor\'ia aditiva. Un \emph{par de torsi\'on} en $\cd$ es un par $(\cx,\cy)$ de subcategor\'ias estrictamente plenas de $\cd$ tales que:
\begin{enumerate}[i)]
\item $\cd(X,Y)=0$ para cada $X\in\cx$ e $Y\in\cy$.
\item El funtor inclusi\'on $x:\cx\ra\cd$ tiene un adjunto por la derecha $\tau_{\cx}$\index{$\tau_{\cx}$}. Escribimos
\[\theta_{\cx}:\cd(xN,M)\arr{\sim}\cx(N,\tau_{\cx}M)
\] 
para referirnos al isomorfismo de adjunci\'on, $\eta_{\cx}$\index{$\eta_{\cx}$} para referirnos a la unidad y $\delta_{\cx}$\index{$\delta_{\cx}$} para referirnos a la counidad.
\item El funtor de inclusi\'on $y:\cy\ra\cd$ tiene un adjunto por la izquierda $\tau^{\cy}$\index{$\tau^{\cy}$}. Escribimos
\[\theta^{\cy}:\cy(\tau^{\cy}N,M)\arr{\sim}\cd(N,yM)
\] 
para referirnos al isomorfismo de adjunci\'on, $\eta^{\cy}$\index{$\eta^{\cy}$} para referirnos a la unidad y $\delta^{\cy}$\index{$\delta^{\cy}$} para referirnos a la counidad.
\item Para cada $M\in\cd$ la sucesi\'on
\[x\tau_{\cx}M\arr{\delta_{\cx,M}}M\arr{\eta^{\cy}_{M}}y\tau^{\cy}M
\]
es d\'ebilmente exacta, es decir $\delta_{\cx,M}$ es un n\'ucleo d\'ebil de $\eta^{\cy}_{M}$ y $\eta^{\cy}_{M}$ es un con\'ucleo d\'ebil de $\delta_{\cx,M}$.
\end{enumerate}
\end{definicion}

En la secci\'on \ref{(Co)suspended, triangulated and pretriangulated categories} revisamos la noci\'on de \emph{categor\'ia (co)suspendida} \cite{KellerVossieck1987, Keller1996} (que tambi\'en aparece en la literatura bajo el nombre de \emph{categor\'ia triangulada por la izquierda (o derecha)} \cite{BeligiannisMarmaridis1994}). En particular, recordamos las dos maneras t\'ipicas de obtener categor\'ias suspendidas: como categor\'ias estables de categor\'ias exactas (en el sentido de D. Quillen \cite{Quillen73}, v\'ease tambi\'en el art\'iculo de B. Keller \cite{Keller1990}) con suficientes inyectivos y como categor\'ias estables de categor\'ias abelianas asociadas a subcategor\'ias covariantemente finitas. Revisamos tambi\'en la noci\'on de categor\'ia triangulada (y su relaci\'on con las categor\'ias de Frobenius mediante el teorema de D. Happel \cite[Theorem 2.6]{Happel1988}), de categor\'ia pretriangulada (en el sentido de A.~Beligiannis \cite{Beligiannis2001}), de t-estructura y de par de torsi\'on pretriangulado.

En la secci\'on \ref{Compatible torsion theories}, demostramos que los pares de torsi\'on abelianos, triangulados, pretriangulados, \dots\  son precisamente los pares de torsi\'on aditivos que satisfacen ciertas condiciones extra. 

Decimos que un par de torsi\'on aditivo $(\cx,\cy)$ en una categor\'ia aditiva $\cd$ es \emph{escindido} si para cada objeto $M$ de $\cd$ tenemos que en la sucesi\'on d\'ebilmente exacta asociada
\[x\tau_{\cx}M\arr{\delta_{\cx,M}}M\arr{\eta^{\cy}_{M}}y\tau^{\cy}M
\]
el morfismo $\delta_{\cx,M}$ es una secci\'on y el morfismo $\eta^{\cy}_{M}$ es una retracci\'on. N\'otese que esto implica que $M\cong x\tau_{\cx}M\oplus y\tau^{\cy}M$. Una \emph{terna TTF aditiva} sobre una categor\'ia aditiva $\cd$ es una terna $(\cx,\cy,\cz)$ tal que tanto $(\cx,\cy)$ como $(\cy,\cz)$ son pares de torsi\'on aditivos. Decimos que la terna TTF aditiva $(\cx,\cy,\cz)$ es \emph{escindida por la izquierda (respectivamente, derecha)} si $(\cx,\cy)$ (respectivamente, $(\cy,\cz)$) es escindido, y decimos que la terna es \emph{centralmente escindida} si ambos pares de torsi\'on son escindidos.
En la secci\'on \ref{Characterization of centrally split TTF triples}, damos una caracterizaci\'on de las ternas TTF centralmente escindidas que ahora presentamos. Pero antes, observemos que en las categor\'ias pretrianguladas la siguiente versi\'on (estrictamente, v\'ease el Ejemplo \ref{weakly balanced not balanced}) d\'ebil de la noci\'on de ``equilibrio'' es posible:

\begin{definicion}
Una categor\'ia pretriangulada $(\cd,\Omega,\Sigma)$ es \emph{d\'ebilmente equilibrada} si un morfismo $f:M\ra N$ es un isomorfismo siempre que exista un tri\'angulo por la izquierda de la forma
\[\Omega N\ra 0\ra M\arr{f}N
\]
y un tri\'angulo por la derecha de la forma
\[M\arr{f}N\ra 0\ra\Sigma M.
\]
\end{definicion}

He aqu\'i la caracterizaci\'on mencionada (Proposici\'on \ref{centrally split TTF triples on additive}): 

\begin{proposicion}
Sea $(\cx,\cy,\cz)$ una terna TTF aditiva en una categor\'ia aditiva $\cd$. Las siguientes propiedades son equivalentes:
\begin{enumerate}[1)]
\item Es escindida por la izquierda y $\cx=\cz$.
\item Es escindida por la derecha y $\cx=\cz$.
\item Es centralmente escindida.
\end{enumerate}
Cuando  $\cd$ tiene \emph{factorizaciones can\'onicas} (es decir, un morfismo se factoriza `de manera \'unica' como un epimorfismo seguido de un monomorfismo), o es una categor\'ia pretriangulada d\'ebilmente equilibrada y ambos pares de torsi\'on $(\cx,\cy)$ e $(\cy,\cz)$ son pretriangulados, entonces las propiedades anteriores son equivalentes a:
\begin{enumerate}[4)]
\item $\cx=\cz$.
\end{enumerate}
\end{proposicion}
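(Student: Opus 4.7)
The plan is to establish $1) \Leftrightarrow 2) \Leftrightarrow 3)$ in full generality, and then $4) \Rightarrow 3)$ under either one of the extra hypotheses.

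For $3) \Rightarrow 1)$ and $3) \Rightarrow 2)$, the only non-tautological content is that central splitness forces $\cx = \cz$. Given $X \in \cx$, the split decomposition for $(\cy, \cz)$ writes $X \cong y\tau_\cy X \oplus z\tau^\cz X$; the summand projection $X \twoheadrightarrow y\tau_\cy X$ is a morphism from $\cx$ to $\cy$, hence vanishes by the orthogonality of the pair $(\cx, \cy)$, forcing $y\tau_\cy X = 0$ and $X \in \cz$. Dually, for $Z \in \cz$, the split decomposition for $(\cx, \cy)$ writes $Z \cong x\tau_\cx Z \oplus y\tau^\cy Z$; the summand inclusion $y\tau^\cy Z \hookrightarrow Z$ is a morphism from $\cy$ to $\cz$, hence vanishes by the orthogonality of $(\cy, \cz)$, so $Z \in \cx$.

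For $1) \Rightarrow 3)$ (and symmetrically $2) \Rightarrow 3)$), the key observation is that $\cx = \cz$ yields the double orthogonality $\cd(\cx, \cy) = 0 = \cd(\cy, \cx)$. Given the splitness of $(\cx, \cy)$, each $M$ decomposes as $M = A \oplus B$ with $A = x\tau_\cx M \in \cx$ and $B = y\tau^\cy M \in \cy$. For any $N \in \cy$, the computation
\[ \cd(N, M) \;=\; \cd(N, A) \oplus \cd(N, B) \;=\; \cy(N, B), \]
using $\cd(\cy, \cx) = 0$ on the first summand, identifies $B$ together with the split inclusion $B \hookrightarrow M$ as $y\tau_\cy M$ with counit $\delta_{\cy, M}$; dually $A = z\tau^\cz M$. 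Hence the weakly exact sequence for $(\cy, \cz)$ is the manifestly split decomposition $B \hookrightarrow M \twoheadrightarrow A$.

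The implication $4) \Rightarrow 3)$ under one of the two extra hypotheses is the heart of the statement. The strategy is to prove that for each $M$ the composite
\[ x\tau_\cx M \oplus y\tau_\cy M \xrightarrow{(\delta_{\cx,M},\, \delta_{\cy,M})} M \xrightarrow{(\eta^\cy_M,\, \eta^\cz_M)^{t}} y\tau^\cy M \oplus z\tau^\cz M \]
is an isomorphism. Its $2 \times 2$ matrix has zero diagonal, because $\eta^\cy \delta_\cx = 0$ and $\eta^\cz \delta_\cy = 0$ are the weak (co)kernel conditions of the two pairs; the anti-diagonal entries $\eta^\cy \delta_\cy : y\tau_\cy M \to y\tau^\cy M$ (a morphism in $\cy$) and $\eta^\cz \delta_\cx : x\tau_\cx M \to z\tau^\cz M$ (a morphism in $\cx$) are what remain to be shown isomorphisms. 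Once they are, the composite is an iso, forcing the first arrow to be a split monomorphism and the second a split epimorphism; reading off components then yields $\delta_\cx, \delta_\cy$ split mono and $\eta^\cy, \eta^\cz$ split epi, which is precisely central splitness.

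The main obstacle, and the only place where the extra hypothesis is used, is exhibiting the two anti-diagonal morphisms as isomorphisms. With canonical factorizations, the argument mimics the abelian one, using that $\cx = \cz$ implies that both $\tau_\cy$ and $\tau^\cy$ kill $\cx$; applying the right-adjoint-like $y\tau_\cy$ to $x\tau_\cx M \to M \to y\tau^\cy M$ and the left-adjoint-like $y\tau^\cy$ to $y\tau_\cy M \to M \to z\tau^\cz M$ yields respectively mono and epi halves of $y\tau_\cy M \to y\tau^\cy M$, which canonical factorizations weld into an isomorphism; the argument for $\eta^\cz \delta_\cx$ is dual. In the pretriangulated case each weakly exact sequence completes to both a left and a right triangle, and weak balance --- strictly weaker than balance by Example \ref{weakly balanced not balanced} --- is exactly the tool that upgrades the mono/epi information extracted from the rotated triangles to a genuine isomorphism, in the absence of a bijective-implies-iso principle. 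This pretriangulated case is the more delicate one and is the true obstacle in the proof.
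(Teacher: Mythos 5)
Your handling of the equivalence of 1), 2) and 3) is correct and is essentially the paper's argument (your inline identification of the split summands with the torsion and torsionfree parts of the other pair is exactly what Lemma \ref{split torsion pair lemma} provides), and reducing $4)\Rightarrow 3)$ to proving that the two anti-diagonal composites $\eta^{\cy}_{M}\delta_{\cy,M}:y\tau_{\cy}M\ra y\tau^{\cy}M$ and $\eta^{\cz}_{M}\delta_{\cx,M}:x\tau_{\cx}M\ra z\tau^{\cz}M$ are isomorphisms is a sound scheme, close in spirit to the paper's.

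The gap is that you never prove these composites are isomorphisms, and that is where the extra hypotheses must do all the work. In the weakly balanced pretriangulated case --- which you yourself call ``the true obstacle'' --- naming weak balance as ``the tool'' is not an argument: by Definition \ref{definition weakly balanced} it only applies after you have exhibited a left triangle $\Omega(z\tau^{\cz}M)\ra 0\ra x\tau_{\cx}M\ra z\tau^{\cz}M$ and a right triangle $x\tau_{\cx}M\ra z\tau^{\cz}M\ra 0\ra\Sigma(x\tau_{\cx}M)$ on the composite, and ``mono/epi information extracted from the rotated triangles'' does not produce them: in a pretriangulated category right triangles rotate only forwards and cannot be traded for left triangles, and a weakly exact sequence gives no triangle with vanishing third term by itself. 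The missing construction is to apply axiom RT4) of Definition \ref{definition suspended category} to the composable pair $\delta_{\cx,M}$, $\eta^{\cz}_{M}$ (and its dual for left triangles), producing a right triangle on $\eta^{\cz}_{M}\delta_{\cx,M}$ whose third term $U$ sits in triangles with entries in $\cx$ and in $\cy$; the closure properties $\Sigma\cx\subseteq\cx$, $\Sigma\cy\subseteq\cy$, $\Omega\cx\subseteq\cx$, $\Omega\cy\subseteq\cy$ together with the double orthogonality $\cx={}^{\bot}\cy=\cy^{\bot}$ forced by $\cx=\cz$ then give $U\in\cx\cap\cy=0$ (and dually $V=0$), and only then does weak balance yield the isomorphism. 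Your canonical-factorizations sketch has a smaller flaw of the same nature: applying the adjoints $\tau_{\cy}$ and $\tau^{\cy}$ to the two torsion sequences shows at best that $\eta^{\cy}_{M}\delta_{\cy,M}$ is a monomorphism and an epimorphism computed \emph{inside} $\cy$, while the balance you invoke (Remark \ref{canonical factorizations implies balanced}) is balance of $\cd$; to conclude you must check monicity and epicity against arbitrary objects of $\cd$, using uniqueness of canonical factorizations and the fact that the right orthogonal classes $\cy=\cx^{\bot}$ and $\cx=\cz=\cy^{\bot}$ are closed under subobjects --- which is exactly the paper's direct proof that $\left[\begin{array}{cc}\delta_{\cx,M}&\delta_{\cy,M}\end{array}\right]$ is monic and epic in $\cd$. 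This part is patchable along those lines; the pretriangulated case as written is simply not proved.
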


Es destacable que, en el caso de que la categor\'ia ambiente $\cd$ sea triangulada, entonces una terna TTF triangulada 
$(\cx,\cy,\cz)$ es centralmente escindida si y s\'olo si es escindida por la izquierda, si y s\'olo si es escindida por la derecha, si y s\'olo si $\cx=\cz$ (\cf Proposici\'on \ref{split TTF triple}). Por lo tanto, en el `mundo triangulado', o bien una terna TTF triangulada es centralmente escindida o bien no es escindida en modo alguno.

En la secci\'on \ref{Parametrization of centrally split TTF triples}, usamos idempotentes para parametrizar:
\begin{enumerate}[a)]
\item Proposici\'on \ref{centraly split TTF, 2-decomposition and idempotents}: Ternas TTF aditivas centralmente escindidas en categor\'ias aditivas en las que los idempotentes escinden.
\item Corolario \ref{centrally split TTF triples on abelian categories}: Ternas TTF centralmente escindidas en categor\'ias abelianas de una cierta clase que incluye a las categor\'ias de Grothendieck con un generador proyectivo. En particular, generalizamos y damos una demostraci\'on alternativa del hecho de que las ternas TTF en una categor\'ia de m\'odulos $\Mod A$ est\'a en biyecci\'on con los idempotentes centrales de $A$.
\item Corolario \ref{centrally split TTF in compactly generated triangulated categories}: Ternas TTF trianguladas centralmente escindidas en categor\'ias trianguladas compactamente generadas. En particular, demostramos que las ternas TTF trianguladas centralmente escindidas en la categor\'ia derivada 
$\cd\ca$ de una categor\'ia dg peque\~{n}a est\'an en biyecci\'on con los idempotentes (centrales) $(e_{A})_{A\in\ca}$ de $\prod_{A\in\ca}\H 0\ca(A,A)$ tales que para cada entero $n$ y cada $f\in\H n\ca(A,B)$ tenemos $e_{B}\cdot f=f\cdot e_{A}$ en $\H n\ca(A,B)$. As\'i, si $A$ es un \'algebra ordinaria, las ternas TTF trianguladas centralmente escindidas en $\cd A$ est\'an en biyecci\'on con los idempotentes centrales de $A$.
\item Corolario \ref{centrally split TTF in derived categories of abelian categories}: Ternas TTF trianguladas centralmente escindidas en la categor\'ia derivada de una categor\'ia abeliana completa y cocompleta.
\end{enumerate}
\bigskip

\textbf{Cap\'itulo 2}
\bigskip

Los resultados de este cap\'itulo, que tratan sobre la pregunta 8, aparecen en \cite{NicolasSaorin2007b}. Una de las conclusiones de este cap\'itulo es que la propiedad de ser equilibrada es bastante restrictiva para una categor\'ia estable de una categor\'ia abeliana, y que est\'a relacionada con fen\'omenos de escisi\'on. Esto queda patente en el siguiente resultado (Corolario \ref{balanced in Serre case}):

\begin{notacion}
Sea $\ca$ una categor\'ia aditiva y sea $\ct$ una clase de objetos de $\ca$. Denotamos por $\ct^{\bot}$ a la clase de objetos $M$ de $\ca$ tales que $\ca(T,M)=0$ para todo objeto $T$ de $\ct$. Dualmente, definimos $\ ^{\bot}\ct$.
\end{notacion}

\begin{corolario}
Sea $\ca$ una categor\'ia abeliana y sea $\ct$ una clase de Serre de $\ca$. Las siguientes afirmaciones son equivalentes:
\begin{enumerate}[1)]
\item $\ct$ es contravariantemente (respectivamente, covariantemente) finita en $\ca$ y la categor\'ia estable $\ul{\ca}$ de $\ca$ asociada a $\ct$ es equilibrada.
\item $\ct$ es funtorialmente finita en $\ca$ y $\ul{\ca}$ es d\'ebilmente equilibrada.
\item $\ct$ satisface las siguientes propiedades:
\begin{enumerate}[3.1)]
\item $\cd(M,T)=0$ para todo $T\in\ct$ y $M\in\ct^{\bot}$.
\item Todo objeto de $\ca$ es la suma directa de un objeto de $\ct$ y un objeto de $\ct^{\bot}$.
\end{enumerate}
\end{enumerate}
\end{corolario}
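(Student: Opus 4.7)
The strategy is to prove the implications (3)$\Rightarrow$(1), (3)$\Rightarrow$(2), and conversely (1)$\Rightarrow$(3) and (2)$\Rightarrow$(3), the last two by invoking the chapter's general characterization of when a stable category is (weakly) balanced.

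\textbf{Forward direction.} Assume (3). Given 3.2, decompose each $M\in\ca$ as $M = T_M\oplus M^\bot$ with $T_M\in\ct$ and $M^\bot\in\ct^\bot$. The inclusion $T_M\hookrightarrow M$ is a right $\ct$-approximation: for $T'\in\ct$ and $f: T'\ra M$, composing with the projection onto $M^\bot$ yields a morphism $T'\ra M^\bot$ which vanishes by 3.1, so $f$ factors through $T_M$. Dually, the projection $M\twoheadrightarrow T_M$ is a left $\ct$-approximation, using $\ca(M^\bot,T')=0$ for $T'\in\ct$ (again by 3.1) to kill the restriction of any $g:M\ra T'$ to $M^\bot$. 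Hence $\ct$ is functorially finite. Next, $\ct^\bot$ is a Serre subcategory of $\ca$: closure under subobjects and extensions is standard; for quotients, any $N$ quotient of $M^\bot\in\ct^\bot$ decomposes via 3.2 as $T_N\oplus N^\bot$, and the composite $M^\bot\twoheadrightarrow N\twoheadrightarrow T_N$ vanishes by 3.1, which together with the surjectivity of $M^\bot\twoheadrightarrow N$ forces $T_N=0$. Therefore $\ct^\bot$ is abelian, and the composite $\ct^\bot\hookrightarrow\ca\ra\ul{\ca}$ is essentially surjective (every $M$ is isomorphic to $M^\bot$ in $\ul{\ca}$) and fully faithful (a morphism $M^\bot\ra N^\bot$ factoring as $M^\bot\ra T\ra N^\bot$ with $T\in\ct$ vanishes thanks to the two-sided Hom-orthogonality between $\ct$ and $\ct^\bot$ provided by 3.1 and the definition of $\ct^\bot$). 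So $\ul{\ca}$ is equivalent to the abelian category $\ct^\bot$ and is therefore balanced, a fortiori weakly balanced.

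\textbf{Converse direction.} Assume (1) or (2). By the Serre property combined with contravariant finiteness, every $M\in\ca$ fits in a canonical torsion short exact sequence
\[0\ra t(M)\ra M\ra M/t(M)\ra 0\]
with $t(M)\in\ct$ the largest subobject of $M$ in $\ct$ and $M/t(M)\in\ct^\bot$. The plan is to apply the chapter's main theorem on (weak) balancedness of stable categories, which characterizes the property through splitting data for such approximation sequences. In the Serre setting this simultaneously yields the splitting of the torsion sequence for every $M$ (giving 3.2) and the stronger orthogonality $\ct^\bot\subseteq{}^\bot\ct$ (giving 3.1). For case (2) the symmetric cotorsion sequence supplied by covariant finiteness enters the argument as well, and the weaker balancedness hypothesis is compensated by having both approximations at hand.

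\textbf{Main obstacle.} The delicate point is extracting 3.1 from balancedness alone. Splitting 3.2 admits a reasonably direct pretriangulated interpretation via the torsion triangle, but 3.1 demands controlling morphisms from $\ct^\bot$ into $\ct$, and the principal difficulty is that $\ct^\bot$ is not a priori closed under quotients without 3.2. Concretely, for $f:M\ra T$ with $M\in\ct^\bot$ and $T\in\ct$, the Serre hypothesis forces $\im f\in\ct$, so $f$ tautologically factors through $\ct$ and is zero in $\ul{\ca}$. The content is that balancedness, applied to a morphism constructed from the kernel-image factorization $K\hookrightarrow M\twoheadrightarrow\im f$ (for instance via a pullback/pushout involving $M$ and $\im f$), forces $\im f=0$. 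This is precisely where the chapter's general balanced-stable-category theorem is indispensable, as it translates the abstract balancedness condition into the concrete splitting/orthogonality statements 3.1 and 3.2.
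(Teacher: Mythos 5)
Your forward direction (3)$\Rightarrow$(1),(2) is essentially the paper's argument: deduce functorial finiteness from the decompositions, observe that the composite $\ct^{\bot}\hookrightarrow\ca\ra\ul{\ca}$ is an equivalence onto an abelian category, hence $\ul{\ca}$ is balanced. (A small slip: the vanishing of $T'\ra M^{\bot}$ follows from the definition of $\ct^{\bot}$, not from 3.1; 3.1 is what you need for the other composite and for faithfulness. Harmless, since both orthogonalities hold under (3).)

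The converse direction, however, is a genuine gap: you never carry it out, and the tool you defer to is the wrong one. The ``chapter's main theorem on (weak) balancedness'' (Theorem \ref{characterization balanced}, Theorem \ref{characterization weakly-balanced}) is proved only for $\ct$ consisting of \emph{projective} objects, which a Serre class need not be, so it is not available here and cannot ``translate balancedness into 3.1 and 3.2'' for you. What the paper uses instead is the elementary Serre-class calculus of Proposition \ref{triangles for Serre classes}: $\ol{f}$ is a monomorphism (resp.\ epimorphism) in $\ul{\ca}$ iff $\ker(f)\in\ct$ (resp.\ $\cok(f)\in\ct$), and if the source lies in $\ct^{\bot}$ then $\ol{f}$ is an isomorphism iff $f$ is a section with cokernel in $\ct$. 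With this, the argument you gesture at in ``Main obstacle'' is short and needs no big theorem: given $f:F\ra T$ with $F\in\ct^{\bot}$, $T\in\ct$, replace $T$ by $\im(f)$ (still in $\ct$, by the Serre property) so $f$ is epi; then $f^{k}:\ker(f)\ra F$ has $\ker=0\in\ct$ and $\cok\cong T\in\ct$, so $\ol{f^{k}}$ is both mono and epi, hence iso by balance; since $\ker(f)\in\ct^{\bot}$ (subobject of $F$), part (2) of the proposition makes $f^{k}$ a section, so $f$ is a retraction whose sections $T\ra F$ vanish because $\ca(\ct,\ct^{\bot})=0$, forcing $T=0$ — this is 3.1. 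For 3.2 one then uses that a contravariantly finite Serre class gives a torsion pair $(\ct,\ct^{\bot})$ (Lemma \ref{(co- contra-)variantly finite Serre classes}), applies balance to $p_{M}:M\ra M/t(M)$ (kernel and cokernel in $\ct$), and invokes the \emph{dual} of part (2) together with 3.1 to conclude $p_{M}$ is a retraction, i.e.\ $M\cong t(M)\oplus M/t(M)$. Finally, your remark that in case (2) ``the weaker hypothesis is compensated by both approximations'' is the right instinct, but the actual mechanism is again Proposition \ref{triangles for Serre classes}: under contravariant (resp.\ covariant) finiteness stable monos (resp.\ epis) coincide with strong ones, so functorial finiteness plus weak balance already gives balance, i.e.\ 2)$\Rightarrow$1); none of this appears in your proposal.
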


El siguiente teorema (Teorema \ref{characterization balanced}), que es el resultado principal de la secci\'on \ref{Balance when T consists of projective objects}, caracteriza completamente cu\'ando una categor\'ia estable de una categor\'ia abeliana asociada a una subcategor\'ia de proyectivos es equilibrada:

\begin{teorema}
Sea $\ca$ una categor\'ia abeliana y $\ct$ una subcategor\'ia de objetos proyectivos que asumimos cerrada para sumas directas finitas y sumandos directos. Las siguientes afirmaciones son equivalentes:
\begin{enumerate}[1)]
\item La categor\'ia estable $\ul{\ca}$ de $\ca$ asociada a $\ct$ es equilibrada.
\item Si $\mu:T\ra M$ es un monomorfismo no escindido con $T\in\ct$, entonces existe un morfismo $h:M\ra T'$, con $T'\in\ct$, tal que nig\'un morfismo $\tilde{h}:M\ra (h\mu)(T)$ coincide con $h$ en $T$. Es decir, no tenemos un diagrama conmutativo como el que sigue:
\[\xymatrix{T\ar@{->>}[d]\ar[r]^{\mu} & M\ar[d]^{h}\ar@{.>}[dl]_{\tilde{h}} \\
(h\mu)(T)\ar@{^(->}[r] & T'
}
\]
\item Si $f:M\ra N$ es un epimorfismo satisfaciendo las condiciones $i)$ y $ii)$ de abajo, entonces es una retracci\'on:
\begin{enumerate}
\item[i)] Su n\'ucleo $f^k:\ker(f)\ra M$ factoriza a trav\'es de un objeto de $\ct$.
\item[ii)] Para cada $h:M\ra T$, con $T\in\ct$, el epimorfismo can\'onico $\ker(f)\twoheadrightarrow h(\ker(f))$ factoriza a trav\'es de $f^k:\ker(f)\ra M$.
\end{enumerate}
\end{enumerate}
\end{teorema}

Como consecuencia obtenemos que si $\ct$ consta de objetos inyectivos, entonces la correspondiente categor\'ia estable es siempre equilibrada ya que la condici\'on 2) siempre se satisface. Esto nos permite dar un ejemplo (Ejemplo \ref{hereditary non-abelian non-triangulated}) de una categor\'ia pretriangulada equilibrada que no es ni abeliana ni triangulada.

El resultado principal (Teorema \ref{characterization weakly-balanced}) de la secci\'on \ref{Weak balance when T consists of projective objects} trata el caso de las categor\'ias estables pretrianguladas d\'ebilmente equilibradas:

\begin{teorema} 
Sea $\ca$ una categor\'ia abeliana y $\ct$ una subcategor\'ia plena funtorialmente finita que consta de objetos proyectivos y que asumimos cerrada para sumandos directos. Consideremos las siguientes afirmaciones:
\begin{enumerate}[1)]
\item Para todo $T\in\ct\setminus\{0\}$, existe un morfismo no nulo $\varphi:T\ra T'$, con $T'\in\ct$, que factoriza a trav\'es de un objeto inyectivo de $\ca$. 
\item La categor\'ia estable $\ul{\ca}$ de $\ca$ asociada $\ct$ (que es pretriangulada) es d\'ebilmente equilibrada. 
\item Si $j: T\ra M$ es un monomorfismo no nulo en $\ca$, con $T\in\ct$, entonces existe un morfismo $h:M\ra T'$ tal que $h j\neq 0$, para alg\'un $T'\in\ct$. 
\end{enumerate}
Entonces $1)\Rightarrow 2)\Leftrightarrow 3)$ y, en el caso de que $\ca$ tenga suficientes inyectivos, todas las afirmaciones son equivalentes.
\end{teorema}

La relaci\'on entre equilibrio y escisi\'on en categor\'ias estables vuelve a quedar clara gracias a la siguiente consecuencia (Proposici\'on \ref{hereditary weakly balanced}) del teorema anterior, que caracteriza el equilibrio (d\'ebil) de categor\'ias estables de categor\'ias abelianas cuya clase de proyectivos es cerrada para subobjetos.

\begin{proposicion}
Sea $\ch$ una categor\'ia abeliana cuya clase de objetos proyectivos es cerrada para subobjetos. Sea $\ct$ una subcategor\'ia plena covariantemente finita en $\ch$ cuyos objetos son proyectivos y que asumimos cerrada para sumandos directos. Las siguientes afirmaciones son equivalentes: 
\begin{enumerate}[1)]
\item La categor\'ia estable $\ul{\ch}$ de $\ch$ asociada a $\ct$ es equilibrada. 
\item  $^\perp\ct$ es cerrada para subobjetos. 
\item El par $(^\perp\ct,\Sub(\ct))$ es un par de torsi\'on hereditario (escindido) en $\ch$, donde $\Sub(\ct)$ es la subcategor\'ia plena de $\ch$ formada por los subobjetos de los objetos de $\ct$.
\end{enumerate}
Cuando $\ct$ es contravariantemente finita en $\ch$, las afirmaciones anteriores son equivalentes a:
\begin{enumerate}[4)]
\item  $\ul{\ch}$ es d\'ebilmente equilibrada.
\end{enumerate}
Cuando $\ch$ tiene suficientes inyectivos, las afirmaciones $1)-3)$ son tambi\'en e\-qui\-va\-len\-tes a:
\begin{enumerate}[5)]
\item Para todo objeto $T\in\ct$, existe un monomorfismo $T\ra E$, cuando $E$ es un objeto inyectivo(-proyectivo) de $\ch$ que pertenece a $\ct$.
\end{enumerate}
\end{proposicion}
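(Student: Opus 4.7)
The plan is to extract from the hypotheses a canonical split decomposition and then read all five conditions off of it. For every $M\in\ch$ I would let $l_M:M\to T_M$ be a left $\ct$-approximation and examine the sequence $0\to K\to M\to I\to 0$ with $K=\ker l_M$ and $I=\im l_M$. Since $I\subseteq T_M$ is a subobject of a projective it is projective by hypothesis, so the sequence splits; moreover every morphism $M\to T'$ with $T'\in\ct$ factors through $l_M$ and hence vanishes on $K$, which forces $K\in{}^{\perp}\ct$. Together with the obvious vanishing $\Hom({}^{\perp}\ct,\Sub(\ct))=0$, this yields without further work that $({}^{\perp}\ct,\Sub(\ct))$ is a split torsion pair in $\ch$; the equivalence $2)\Leftrightarrow 3)$ then reduces to the tautology that a torsion pair is hereditary precisely when its torsion class is closed under subobjects.

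For $2)\Rightarrow 1)$ I apply Theorem \ref{characterization balanced}: given a non-split monomorphism $\mu:T\to M$ with $T\in\ct$, I decompose $M=K\oplus I$ as above and write $\mu=(\mu_K,\mu_I)$. I rule out the case ``$\mu_I$ surjective but not injective'' by noting that $\mu$ being a mono forces $\ker\mu_I\hookrightarrow K\in{}^{\perp}\ct$, whereupon 2) gives $\ker\mu_I\in{}^{\perp}\ct$; but the inclusion $\ker\mu_I\hookrightarrow T\in\ct$ is then a nonzero element of $\Hom(\ker\mu_I,T)=0$, a contradiction. Hence $\mu_I$ is injective, and if it were also surjective the mono $\mu$ would split, so $\mu_I(T)\subsetneq I$, and taking $h=(0,i):K\oplus I\to T_M$ with $i:I\hookrightarrow T_M$ the inclusion yields $h(M)=I\supsetneq\mu_I(T)=h\mu(T)$. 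For the converse $1)\Rightarrow 2)$ I argue by contraposition: if $Y\subseteq X\in{}^{\perp}\ct$ has $Y\notin{}^{\perp}\ct$ and $g:Y\to T$ is nonzero into some $T\in\ct$, the pushout $\mu:T\to P$ of $Y\hookrightarrow X$ along $g$ is a non-split mono (any retraction would extend $g$ to $X$, and that extension is zero since $\Hom(X,T)=0$), while every $h:P\to T'$ in $\ct$ kills the $X$-leg $\nu$ and hence factors through $P/\nu(X)\cong T/g(Y)$; both $h(P)$ and $h\mu(T)$ then equal $\bar h(T/g(Y))$, contradicting balance via Theorem \ref{characterization balanced}.

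The implication $1)\Rightarrow 4)$ is the standard fact that balanced pretriangulated categories are weakly balanced. For $4)\Rightarrow 2)$ I would first reformulate weak balance, via Theorem \ref{characterization weakly-balanced} combined with the decomposition $M=K_M\oplus I_M$, as the assertion that no nonzero monomorphism $j:T\to M$ with $T\in\ct$ has $j(T)\subseteq K_M$ --- equivalently, no nonzero mono from $\ct$ lands in an object of ${}^{\perp}\ct$. To derive 2) I take a hypothetical $Y\subseteq X\in{}^{\perp}\ct$ with $Y\notin{}^{\perp}\ct$, apply the splitting to $Y$ to produce a nonzero projective summand $I_Y\in\Sub(\ct)$ sitting inside $X$, and use the contravariant finiteness of $\ct$ to upgrade this embedding to one starting from a genuine object of $\ct$ via a suitable pushout, producing the forbidden mono. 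This upgrade step is the main obstacle of the whole proof, since $I_Y$ a priori need only lie in $\Sub(\ct)$, not in $\ct$ itself, and passing from the former to the latter is precisely what the contravariant finiteness is meant to supply.

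Finally, under enough injectives, $5)\Rightarrow 2)$ is direct: for $Y\subseteq X\in{}^{\perp}\ct$ and $T\in\ct$, choosing $T\hookrightarrow E\in\ct$ injective-projective and applying $\Hom(-,E)$ to $0\to Y\to X\to X/Y\to 0$, the vanishings $\Hom(X,E)=0$ and $\Ext(X/Y,E)=0$ (the latter from injectivity of $E$) give $\Hom(Y,E)=0$, whence $\Hom(Y,T)\hookrightarrow\Hom(Y,E)=0$. For $2)\Rightarrow 5)$ I embed $T\in\ct$ in its injective envelope $I(T)$, split $I(T)=K\oplus J$ via the decomposition, and repeat the argument of $2)\Rightarrow 1)$ to see that the $K$-component of $T\hookrightarrow I(T)$ vanishes, so $T\hookrightarrow J$; since $J$ is a direct summand of an injective it is itself injective, hence its embedding into an ambient $T'\in\ct$ (guaranteed by $J\in\Sub(\ct)$) splits, giving $J\in\ct$ by closure under summands, and $J$ is the desired injective-projective object of $\ct$ containing $T$.
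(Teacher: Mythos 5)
Your overall skeleton (the split decomposition $M=K\oplus I$ with $K\in{}^{\perp}\ct$ and $I\in\Sub(\ct)$, the reduction of 1) and 4) to Theorems \ref{characterization balanced} and \ref{characterization weakly-balanced}, the equivalence $2)\Leftrightarrow 3)$, and the direction $5)\Rightarrow 2)$) agrees with the paper, and your contrapositive pushout argument for $1)\Rightarrow 2)$ is a correct alternative to the paper's. But there are two genuine gaps. In $2)\Rightarrow 1)$ the final step is a non sequitur: condition 3) of Theorem \ref{characterization balanced} does not ask that $h(M)$ strictly contain $(h\mu)(T)$; it asks that no morphism $\tilde{h}:M\ra (h\mu)(T)$ satisfy $j\tilde{h}\mu=h\mu$, where $j$ is the inclusion of $(h\mu)(T)$ into $T_M$, and such a $\tilde{h}$ could exist even though $h$ itself has strictly larger image. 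Your $h=(0,i)$ is the right choice, but you still must argue: since $h\mu=i\mu_I$ is a monomorphism, $(h\mu)(T)\cong T\in\ct$, so any candidate $\tilde{h}$ kills the summand $K\in{}^{\perp}\ct$ and has the form $(0,\beta)$; then $j\beta\mu_I=h\mu$ forces $\beta\mu_I$ to be the canonical isomorphism $T\ra\mu_I(T)$, so $\mu_I$ is a section and hence $\mu$ splits, a contradiction. (A cousin of this slip appears in your $2)\Rightarrow 5)$: the ${}^{\perp}\ct$-component of $T\ra E$ need not vanish, since $\Hom(T,N)$ with $N\in{}^{\perp}\ct$ is not zero in general; what is true, and all you need, is that the $\Sub(\ct)$-component is a monomorphism, provable by the same kernel argument you used for $\mu_I$. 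Also only enough injectives is assumed, so use any embedding $T\ra E$ rather than an injective envelope.)

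The second, larger gap is $4)\Rightarrow 2)$, which you explicitly leave open. Your reformulation of weak balance as ``no nonzero monomorphism from an object of $\ct$ into an object of ${}^{\perp}\ct$'' is fine (it is condition 5) of Theorem \ref{characterization weakly-balanced}), but the ``upgrade'' of $I_Y\in\Sub(\ct)$ to an object of $\ct$ cannot be done the way you suggest: pushing $I_Y\hookrightarrow X$ out along a monomorphism $I_Y\ra T'$ does produce a monomorphism $T'\ra Q$, but $Q$ is an extension of $X/I_Y$ by $T'$ and is no longer in ${}^{\perp}\ct$, so nothing contradicts weak balance, and contravariant finiteness does not repair this. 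The paper avoids the upgrade altogether: it first proves that for any monomorphism $T\ra N\oplus P$ with $T\in\ct$, $N\in{}^{\perp}\ct$ and $P$ projective, the component $T\ra P$ is a monomorphism (its kernel is a direct summand of $T$, hence lies in $\ct$, and embeds into $N$, which 4) forbids); then, for $Y\subseteq X\in{}^{\perp}\ct$ and an \emph{arbitrary} morphism $h:Y\ra T$, it forms the pushout of $Y\hookrightarrow X$ along $h$, decomposes the pushout as $V'\oplus P$ with $V'\in{}^{\perp}\ct$ and $P\in\Sub(\ct)$, and uses $\Hom(X,P)=0$ together with the statement above to force $h=0$. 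Some version of this mechanism (which is also how the paper handles $1)\Rightarrow 2)$) is indispensable for $4)\Rightarrow 2)$, and your proposal does not supply it.
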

\bigskip

\textbf{Cap\'itulo 3}
\bigskip

Los resultados de este cap\'itulo, que dan una respuesta completa a la pregunta 1, han aparecido publicados en \cite{NicolasSaorin2007a} (v\'ease tambi\'en \cite{NicolasSaorin2007e}).

Sea $A$ un anillo arbitrario. Recordemos que un $A$-m\'odulo $M$ es \emph{$\Sigma$-inyectivo hereditario} si todo cociente de un coproducto (posiblemente infinito) de copias de $M$ es $A$-m\'odulo inyectivo. 

Resulta que las ternas TTF escindidas por la izquierda en $\Mod A$ admiten una caracterizaci\'on relativamente f\'acil en t\'erminos de m\'odulos $\Sigma$-inyectivos hereditarios (Corolario \ref{clasificacion left split}):

\begin{corolario}
La biyecci\'on de Jans entre ternas TTF en $\Mod A$ e ideales bi\-l\'a\-te\-ros idempotentes de $A$ se restringe a una biyecci\'on entre:
\begin{enumerate}[1)]
\item Ternas TTF escindidas a izquierda en $\Mod A$.
\item Ideales de $A$ de la forma $I=eA$ donde $e$ es un idempotente de $A$ tal que $eA(1-e)$ es $\Sigma$-inyectivo hereditario como $(1-e)A(1-e)$-m\'odulo por la derecha.
\end{enumerate}
\end{corolario}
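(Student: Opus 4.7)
The plan is to start from Jans's bijection: given an idempotent two-sided ideal $I$ of $A$, the associated TTF triple $(\cx,\cy,\cz)$ has torsion class $\cx=\{M\in\Mod A:MI=M\}$, torsion-free class $\cy=\{M:MI=0\}$, and the torsion short exact sequence attached to a module $M$ is $0\ra MI\ra M\ra M/MI\ra 0$. Thus $(\cx,\cy,\cz)$ is left split if and only if this sequence splits for every right $A$-module $M$, and the task is to translate this into a statement about $I$.

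First I would test the condition on $M=A$: the splitting of $0\ra I\ra A\ra A/I\ra 0$ forces $I$ to be a direct summand of $A_A$, hence $I=eA$ for an idempotent $e\in A$. Two-sidedness of $eA$ forces $AeA\subseteq eA$, equivalently $(1-e)Ae=0$. Writing $f=1-e$, $R=eAe$, $S=fAf=(1-e)A(1-e)$ and $B=eAf=eA(1-e)$, the ring $A$ decomposes as $A=R\oplus B\oplus S$ and is a triangular matrix ring over $R$ and $S$ with connecting bimodule $B$. Every right $A$-module $M$ is thereby described as a triple $(X,Y,\mu)$, where $X=Me\in\Mod R$, $Y=Mf\in\Mod S$, and $\mu:X\ten_R B\ra Y$, $\mu(xe\ten eaf)=xeaf$, is the $S$-linear map encoding the action of $B$.

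Next I would carry out the core computation. For $M=(X,Y,\mu)$ one checks that $MI=MeA$ equals $X\oplus\im\mu$ inside $M=X\oplus Y$ (using $fAe=0$ to see that $Me\cdot eAf$ lands in $Mf$), and consequently $M/MI$ is the $A$-module $(0,Y/\im\mu,0)$, concentrated in the $S$-component. An $A$-linear splitting of $0\ra MI\ra M\ra M/MI\ra 0$ then reduces to an $S$-linear section of the projection $Y\ra Y/\im\mu$; the compatibility with $\mu$ is vacuous because the $R$-component of the source triple is zero. Hence the splitting condition becomes: for every $X\in\Mod R$, every $Y\in\Mod S$ and every $S$-linear $\mu:X\ten_R B\ra Y$, the submodule $\im\mu\subseteq Y$ is a direct summand of $Y$.

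Finally I would translate this into the $\Sigma$-injective hereditary condition on $B_S$. Since $X\ten_R B$ is a quotient of $B^{(|X|)}$, every $\im\mu$ that appears is an $S$-submodule of $Y$ that is itself a quotient of some $B^{(\kappa)}$; conversely, any quotient $N$ of $B^{(\kappa)}$ sitting inside an arbitrary $S$-module $Y$ is realized by choosing $X=R^{(\kappa)}$ and taking $\mu$ to be the composition of the surjection $B^{(\kappa)}\twoheadrightarrow N$ with the inclusion $N\hookrightarrow Y$. So the splitting condition is equivalent to requiring that every $S$-submodule of any $S$-module that is a quotient of some $B^{(\kappa)}$ be a direct summand. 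Since a module is injective precisely when every embedding of it into a larger module splits, this is equivalent to every quotient of $B^{(\kappa)}$ being injective as an $S$-module, that is, to $B=eA(1-e)$ being $\Sigma$-injective hereditary as a right $(1-e)A(1-e)$-module. The main obstacle I anticipate is bookkeeping rather than conceptual: pinning down $MI$ inside the triangular description, checking that an $S$-linear splitting automatically lifts to an $A$-linear one (so that no extra compatibility with $\mu$ is imposed), and in the converse direction selecting $X$ so that every quotient of $B^{(\kappa)}$ actually arises as some $\im\mu$ in the correct ambient $Y$.
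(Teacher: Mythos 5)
Your argument is correct, and on the crucial implication it takes a genuinely different, more elementary route than the paper. Both proofs share the same skeleton: splitting the torsion sequence at $M=A$ gives $I=eA$, two-sidedness gives $(1-e)Ae=0$, and one passes to the triangular matrix description of $\Mod A$ with corners $R=eAe$, $S=(1-e)A(1-e)$ and connecting bimodule $B=eA(1-e)$. But the paper proves ``left split $\Rightarrow$ $B_{S}$ hereditary $\Sigma$-injective'' homologically (Theorem \ref{izquierda}): it takes an arbitrary epimorphic image $E$ of $T=(eA(1-e)A)^{(S)}\in\cy$, pushes out along $T\hookrightarrow eA^{(S)}$, and uses $\Hom_A(\cy,\cz)=0$ together with $\Ext^1_A(\cy,\cx)=0$ (a consequence of splitness) to see that $E$ is injective over $S$. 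You instead compute, for an arbitrary module written as a triple, that $\tau_{\cx}(M)=MeA=X\oplus\im\mu$ and that an $A$-linear splitting of the torsion sequence is exactly an $S$-linear splitting of $\im\mu\subseteq Y$ (the compatibility square being vacuous), and then realize every embedded quotient of $B^{(\kappa)}$ as $\im\mu$ of the test module $(Y,R^{(\kappa)};\mu)$; this reads off hereditary $\Sigma$-injectivity with no Ext groups or pushouts and yields the corollary directly, bypassing the intermediate conditions of Theorem \ref{izquierda}. The converse direction is essentially the same in both proofs, since the paper's $NeA\cap N(1-e)A$ is precisely your $\im\mu$, injective because it is a quotient of a coproduct of copies of $B$. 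What your route buys is a self-contained, Ext-free proof of the hard direction and a clean reformulation of left splitness inside $\Mod S$; what the paper's buys is the auxiliary characterizations of left split triples used elsewhere. Only a phrasing caveat: ``every $S$-submodule of any $S$-module that is a quotient of some $B^{(\kappa)}$'' must be parsed as ``every submodule which is a quotient of some $B^{(\kappa)}$, inside an arbitrary $Y$, is a direct summand of $Y$''; with that reading your final translation via ``injective $=$ every embedding splits'' is exactly right.
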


Un $A$-m\'odulo $M$ es \emph{$\Pi$-proyectivo hereditario} si todo subm\'odulo de un producto (posiblemente infinito) de copias de $M$ es proyectivo.
N\'otese que uno puede considerar varios `duales' de un $A$-m\'odulo por la izquierda $M$. Por ejemplo, uno puede considerar el $A$-m\'odulo por la derecha $\Hom_{\Z}(M,\mathbf{Q}/\Z)$. Tambi\'en, uno podr\'ia poner $S:=\End_{A}(M)^{op}$ y considerar el $A$-m\'odulo por la derecha $\Hom_{S}(M,Q)$ donde $Q$ es un cogenerador inyectivo minimal de $\Mod S$. La Proposici\'on \ref{proposicion de hereditary pi-projective dual} dice que uno de esos duales es $\Pi$-proyectivo hereditario si y s\'olo si tambi\'en lo es el otro. En ese caso, decimos que $M$ \emph{tiene dual $\Pi$-proyectivo hereditario}.
 
Una caracterizaci\'on completa de las ternas TTF que escinden a derecha es m\'as dif\'icil que su an\'aloga a izquierda y requiere algo m\'as que m\'odulos $\Pi$-proyectivos hereditarios. Sin embargo, a veces estos m\'odulos bastan, como demostramos en la siguiente caracterizaci\'on parcial (\cf Corolario \ref{right split para buenos2}):

\begin{corolario}
La biyecci\'on de Jans entre ternas TTF en $\Mod A$ e ideales bi\-l\'a\-te\-ros idempotentes de $A$ se restringe a una biyecci\'on entre:
\begin{enumerate}[1)]
\item Ternas TTF que escinde a derecha en $\Mod A$ cuyo ideal idempotente asociado $I$ es finitamente generado por la izquierda.
\item Ideales de la forma $I=Ae$, donde $e$ es un idempotente de $A$ tal que $(1-e)Ae$ tiene dual $\Pi$-proyectivo hereditario visto como ${(1-e)A(1-e)}$-m\'odulo por la izquierda .
\end{enumerate}
En particular, cuando $A$ satisface cualesquiera de las siguientes condiciones, la clase $1)$ de m\'as arriba cubre todas las ternas TTF que escinden por la derecha en $\Mod A$:
\begin{enumerate}[i)]
\item $A$ es semiperfecto.
\item Todo ideal idempotente de $A$ que es puro por la izquierda es tambi\'en finitamente generado por la izquierda (\eg si $A$ es N\oe theriano por la izquierda).
\end{enumerate}
\end{corolario}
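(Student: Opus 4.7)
The plan is to restrict the Jans bijection under the given finiteness hypothesis, then verify the two sufficient conditions. The starting point is that, by Jans's theorem, any TTF triple $(\cx,\cy,\cz)$ in $\Mod A$ corresponds to a two-sided idempotent ideal $I$ with $\cy = \{M : MI = 0\}$.

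First I would pass from right-splitness to the form $I = Ae$. Applying the splitting of $(\cy,\cz)$ to the right module $A$ itself yields $A = \lann_A(I) \oplus N$ as right $A$-modules, so $\lann_A(I)$ is a direct summand of $A$. The hypothesis that $I$ is finitely generated as a left ideal is what allows the upgrade to $I = Ae$: writing $I = Ax_1 + \cdots + Ax_n$ and using $I = I^2$, a standard lifting argument produces an element $e \in I$ with $ex = x$ for every $x \in I$. This $e$ is idempotent, and $I = Ae$.

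Next comes the heart of the matter: encoding right-splitness as a homological property of the bimodule $(1-e)Ae$. With $I = Ae$, the class $\cy$ identifies with the right $A$-modules $M$ satisfying $Me = 0$, that is, with right modules over $A/Ae$. Using the Peirce decomposition $A = (1-e)A(1-e) \oplus (1-e)Ae \oplus eA(1-e) \oplus eAe$ together with the functor $M \mapsto M(1-e)$, the requirement that the sequence $0 \to M_\cy \to M \to M^\cz \to 0$ split for every $M \in \Mod A$ translates, after passage through the adjunctions and $\Hom$-duals, into the statement that every submodule of every product of copies of the appropriate dual of $(1-e)Ae$ is projective over $(1-e)A(1-e)$. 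By the proposition cited earlier in the excerpt, the choice of dual is immaterial, so this is exactly the $\Pi$-projective hereditary dual condition. This is the main obstacle, as it requires careful bookkeeping of the Peirce functors, of how arbitrary products behave under them, and of the equivalence between the two candidate duals of $(1-e)Ae$.

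Finally, for the ``in particular'' part, the finite-generation hypothesis must be shown automatic under (i) or (ii). When $A$ is semiperfect, the direct summand $A^\cz$ of the right module $A$ is of the form $e_0 A$ with $e_0$ a sum of finitely many orthogonal primitive idempotents, and the corresponding two-sided idempotent ideal is then finitely generated as a left ideal. Under hypothesis (ii), the decomposition $A = \lann_A(I) \oplus A^\cz$ with $A^\cz$ projective implies that $\lann_A(I)$ is a pure left ideal, from which a short argument using $I = I^2$ shows that $I$ itself is pure as a left ideal; the assumption on $A$ then forces $I$ to be finitely generated on the left.
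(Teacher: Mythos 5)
Your reduction to $I=Ae$ contains a genuine gap. You claim that finite generation of $I$ as a left ideal together with $I=I^2$ yields, by a ``standard lifting argument'', an element $e\in I$ with $ex=x$ for all $x\in I$, hence $I=Ae$. First, a left identity $e$ for $I$ gives $I=eA$, not $I=Ae$; for $I=Ae$ you would need $xe=x$ for all $x\in I$. More seriously, no such element-wise statement follows from idempotency and left finite generation alone over a noncommutative ring: in the ring $A$ of upper triangular $2\times 2$ matrices over a field, the ideal $I$ of matrices with zero second row is idempotent and finitely generated as a left ideal, yet every principal left ideal contained in $I$ is at most one-dimensional while $I$ is two-dimensional, so $I\neq Ae$ for any idempotent $e$. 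The missing ingredient is precisely right-splitness: by Proposition \ref{primeras propiedades de los right split} the class $\cy$ is closed under injective envelopes, so $(\cx,\cy)$ is hereditary, and by Azumaya's criterion (Lemma \ref{hereditary igual a pure igual a condicion aritmetica}) $I$ is pure as a left ideal; hence $A/I$ is a flat left $A$-module which, being finitely presented when $I$ is finitely generated on the left, is projective, so $0\ra I\ra A\ra A/I\ra 0$ splits as left modules and $I=Ae$ (implication $4)\Rightarrow 2)$ of Proposition \ref{anhadida}). The splitting $A=\lann_A(I)\oplus N$ of the right module $A$ that you invoke gives information about $\lann_A(I)$, not about $I$, and cannot replace this step.

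The ``in particular'' part has analogous problems. Under (i), the correct argument is that over a semiperfect ring the cyclic module $A/I$ has a projective cover, and for a right split triple this is equivalent, again via Proposition \ref{anhadida} and Azumaya's results, to $I$ being finitely generated on the left; your claim that $A^{\cz}=e_0A$ with $e_0$ a finite sum of orthogonal primitive idempotents makes ``the corresponding idempotent ideal'' finitely generated on the left is not justified, since $A^{\cz}$ is governed by $\lann_A(I)$ rather than by $I$. Under (ii), purity of $I$ as a left ideal should come directly from heredity of $(\cx,\cy)$ plus Azumaya's criterion; I do not see how a purported purity of $\lann_A(I)$ (which you obtain only as a direct summand of the right module $A$) would transfer to $I$, and no such transfer is needed. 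Finally, your central translation of right-splitness into the hereditary $\Pi$-projective dual condition on $(1-e)Ae$ follows the same triangular-matrix route as Theorem \ref{clasificacion right split para buenos} (via Proposition \ref{proposicion de hereditary pi-projective dual}), but it is only sketched; as written, the proof does not go through without repairing the two points above.
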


Ahora presentamos la parametrizaci\'on general de las ternas TTF que escinden a derecha. Pero antes necesitamos algunas definiciones.

\begin{definicion}
Sea $I$ un ideal bil\'atero idempotente de $A$ y sea $M$ un $A$-m\'odulo por la derecha. Un subm\'odulo $N$ de $M$ es \emph{$I$-saturado} si un elemento $m$ de $M$ pertenece a $N$ siempre y cuando $mI$ est\'e contenido en $N$.
\end{definicion}

\begin{definicion}
Un ideal bil\'atero idempotente $I$ de $A$ \emph{escinde a derecha} si:
\begin{enumerate}[1)]
\item para todo entero $n\geq 1$ y todo subm\'odulo $I$-saturado $K$ de $A^n$, el cociente $A^n/(K+I^n)$ es un $A/I$-m\'odulo proyectivo,
\item $I$ es un $A$-m\'odulo puro por la izquierda,
\item el anulador por la izquierda de $I$ en $A$ se anula, \ie $\lann_{A}(I)=0$.
\end{enumerate}
\end{definicion}

\begin{corolario}
La biyecci\'on de Jans entre ternas TTF en $\Mod A$ e ideales bi\-l\'a\-te\-ros idempotentes de $A$ se restringe a una biyecci\'on entre:
\begin{enumerate}[1)]
\item Ternas TTF en $\Mod A$ que escinden a derecha.
\item Ideales bil\'ateros idempotentes $I$ tales que $\lann_A(I)=(1-e)A$ para alg\'un idempotente $e\in A$, y adem\'as $I$ es un ideal de $eAe$ que escinde a derecha tal que $eAe/I$ es un anillo perfecto hereditario.
\end{enumerate}
\end{corolario}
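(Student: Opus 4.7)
The plan is to establish the bijection through Jans's theorem: for an idempotent two-sided ideal $I$ of $A$ with associated TTF triple $(\cx,\cy,\cz)$, I need to show that $(\cy,\cz)$ splits if and only if $I$ satisfies the listed conditions. Since $\cy = \Mod(A/I)$, the torsion submodule of any $M \in \Mod A$ with respect to $(\cy,\cz)$ is $\ann_M(I) = \{m \in M : mI = 0\}$, so right-splitting means $\ann_M(I)$ is a direct summand of $M$ for every $M$.

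For necessity, I apply this splitting to $M = A_A$, which yields $\lann_A(I) = (1-e)A$ for some idempotent $e \in A$, establishing the first condition. A short computation then shows $eA \in \cz$ and $(1-e)A \in \cy$, giving the Peirce decomposition $A = (1-e)A \oplus eA$ compatible with the TTF triple. From this I identify the relevant ideal of $eAe$ (as $eIe$, which coincides with $I$ under the further constraints derived from right-splitting applied to suitable cyclic test modules), and the TTF triple restricts along the Peirce functors $(?)e$ and $? \ten_{eAe} eA$ to an analogous triple in $\Mod(eAe)$ whose associated idempotent ideal has zero left annihilator. This reduces the analysis to the case $e = 1$, in which $\lann_A(I) = 0$.

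In that reduced setting, I decode the three bullets of the right-split-ideal definition, together with the perfect-hereditary property of $A/I$, as follows. Left purity of $I$ ensures that the inclusions $I^n \hookrightarrow A^n$ remain exact after tensoring with modules in $\cz$; the projectivity over $A/I$ of $A^n/(K + I^n)$ for every $I$-saturated $K \subseteq A^n$ captures the lifting of finitely generated $\cy$-pieces along the projection $A \to A/I$; perfection of $A/I$ supplies projective covers for $A/I$-modules, the standard tool used to upgrade local splittings into a single global direct-sum decomposition valid for arbitrary modules; and hereditarity of $A/I$ controls $\Ext^1$ between modules in $\cy$, removing extension obstructions that would otherwise interfere with the assembled decomposition.

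The hardest step will be sufficiency: producing, from the four ring-theoretic conditions alone, a decomposition $M = \ann_M(I) \oplus M'$ with $M' \in \cz$ for an arbitrary $M \in \Mod A$. My approach is to first settle the finitely generated case using the saturated-quotient condition together with hereditarity of $A/I$, and then extend to arbitrary $M$ by taking a projective cover of $\ann_M(I)$ as an $A/I$-module (available by perfection) and lifting it to a genuine complement in $M$ using purity of $I$. The delicacy lies in reconciling the three ideal-theoretic conditions with the ring-theoretic properties of $A/I$ and in guaranteeing compatibility of the locally produced splittings when assembled globally; purity of $I$ is the crucial ingredient that rules out the obstructions to this assembly, and this is the most technical moment of the proof.
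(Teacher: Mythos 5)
Your proposal correctly identifies the starting point (right splitness means $\ann_M(I)=\tau_{\cy}(M)$ is a direct summand of every $M$, and applying this to $A_A$ gives $\lann_A(I)=(1-e)A$, after which $I\subseteq eAe$ and the problem reduces to the case $\lann_A(I)=0$), but the two substantive halves of the corollary are not actually carried out. First, the necessity direction is essentially absent: you treat the hereditary and perfect conditions on $eAe/I$ as hypotheses to be ``decoded'' rather than as properties that must be \emph{deduced} from right splitness. In the paper this requires three separate arguments: right splitness forces $\cy$ to be closed under injective envelopes, hence $(\cx,\cy)$ is hereditary and (by Azumaya's criterion) $I$ is pure as a left ideal; applying the fact that $F/FI$ is projective over $A/I$ for every $F\in\cz$ to right ideals $\mathfrak{b}\supseteq I$ (using $\mathfrak{b}I=\mathfrak{b}\cap I=I$, which needs hereditariness) yields that $A/I$ is right hereditary; and applying it to the products $A^{\cs}$ yields, via Goodearl's theorem, that $A/I$ is right perfect. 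None of these steps, nor any substitute for them, appears in your plan, and the last two are genuinely nontrivial.

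Second, your mechanism for sufficiency would not work as described. The complement of $\ann_M(I)$ in $M$ is a module of $\cz$, isomorphic to $M/\ann_M(I)$; it has nothing to do with a projective cover of $\ann_M(I)$ over $A/I$, and there is no epimorphism along which such a cover could be ``lifted to a genuine complement,'' nor does purity of $I$ supply one. The correct route (the paper's) is: purity gives a hereditary torsion pair, hence $\ann_M(I)\cap MI=0$, so the composite $\ann_M(I)\hookrightarrow M\twoheadrightarrow M/MI$ is a monomorphism whose cokernel is $F/FI$ with $F=M/\ann_M(I)\in\cz$; one then shows $F/FI$ is projective over $A/I$ — for finitely generated $F$ by the $I$-saturated condition on submodules of $A^n$, and for arbitrary $F$ by writing $F/FI$ as a direct limit of such projectives and invoking perfectness of $A/I$ (flat $=$ projective, i.e.\ Bass's theorem), not projective covers — so the monomorphism splits over $A/I$ and hence the inclusion $\ann_M(I)\hookrightarrow M$ splits over $A$. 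Note also that hereditarity of $A/I$ plays no role in this direction (it enters only as a necessary condition), so your appeal to it ``to remove extension obstructions'' in the assembly is a sign that the intended argument has not been pinned down. As it stands the proposal is a plausible-looking outline whose central step fails.
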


Este resultado (Corolario \ref{clasification right split2}) nos permite dar un ejemplo (Ejemplo \ref{l=c<r}) de un anillo conmutativo tal que: toda terna TTF que escinde por la izquierda es de hecho centralmente escindida, pero admite ternas TTF que escinden por la derecha que no son centralmente escindidas.
\bigskip

\textbf{Cap\'itulo 4}
\bigskip

Este cap\'itulo est\'a esencialmente dedicado a recordar resultados bien conocidos sobre categor\'ias trianguladas y a establecer la terminolog\'ia para los cap\'itulos si\-guien\-tes. No obstante, contiene algunos resultados aparentemente nuevos que quiz\'as merece la pena resaltar aqu\'i. Por ejemplo, en la siguiente proposici\'on `me\-di\-mos' la distancia entre la propiedad de ser compacto y la de ser: `autocompacto', perfecto, superperfecto, \dots 

\begin{notacion}
Dada una clase $\cq$ de objetos de una categor\'ia triangulada $\cd$ denotamos por $\Tria(\cq)$ a la menor subcategor\'ia triangulada de $\cd$ que contiene a $\cq$ y es cerrada para coproductos peque\~{n}os.
\end{notacion}

\begin{proposicion}
Sea $\cd$ una categor\'ia triangulada con coproductos peque\~{n}os y sea $P$ un objeto de $\cd$. Las siguientes condiciones son equivalentes:
\begin{enumerate}[1)]
\item $P$ es compacto en $\cd$.
\item $P$ satisface:
\begin{enumerate}[2.1)]
\item $P$ es perfecto en $\cd$.
\item $P$ es compacto en la subcategor\'ia plena $\mbox{Sum}(\{P[n]\}_{n\in\Z})$ de $\cd$ formada por los coproductos peque\~{n}os de las traslaciones en ambos sentidos de $P$.
\item $\Tria(P)^{\bot}$ es cerrada para coproductos peque\~{n}os.
\end{enumerate}
\item $P$ satisfies:
\begin{enumerate}[3.1)]
\item $P$ es compacto en $\Tria(P)$.
\item $\Tria(P)^{\bot}$ es cerrada para coproductos peque\~{n}os.
\end{enumerate}
\item $P$ satisface:
\begin{enumerate}[4.1)]
\item $P$ es superperfecto en $\cd$.
\item $P$ es compacto en $\Sum(\{P[n]\}_{n\in\Z})$.
\end{enumerate}
\end{enumerate}
\end{proposicion}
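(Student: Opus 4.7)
The plan is to establish equivalence of all four conditions via $(1) \Rightarrow (2) \Rightarrow (3) \Rightarrow (1)$ and $(1) \Rightarrow (4) \Rightarrow (3)$. The implications starting from (1) are essentially formal. If $P$ is compact in $\cd$, then $\cd(P,-)$ commutes with small coproducts, so it does so in any full triangulated subcategory of $\cd$ containing $P$; in particular $P$ is compact in $\Tria(P)$ and in $\Sum(\{P[n]\}_{n\in\Z})$. Closure of $\Tria(P)^\bot$ under coproducts then follows because $\Tria(P)^\bot$ is cut out by vanishing of the coproduct-preserving functors $\cd(P[n],-)$. Perfectness and superperfectness are by definition weaker continuity properties than compactness and so hold automatically.

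The core implication is $(3) \Rightarrow (1)$. Since $P$ is compact in $\Tria(P)$ and generates it, $\Tria(P)$ is compactly generated. Brown representability in $\Tria(P)$ then provides a right adjoint $\tau:\cd \to \Tria(P)$ to the inclusion: for each $M\in\cd$, the functor $\Tria(P)(-,M)$ sends coproducts to products and is therefore representable, yielding a Bousfield triangle $\tau M \to M \to M' \to \tau M[1]$ with $\tau M \in \Tria(P)$ and $M' \in \Tria(P)^\bot$. Closure of $\Tria(P)^\bot$ under small coproducts lets us take coproducts of such triangles for a family $\{M_i\}$; the resulting triangle still has left term in $\Tria(P)$ and cofiber in $\Tria(P)^\bot$, so by uniqueness of Bousfield triangles $\tau(\coprod M_i) \cong \coprod \tau M_i$. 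Compactness of $P$ in $\cd$ follows from
\[\cd(P,\coprod M_i) \cong \cd(P, \tau\coprod M_i) \cong \cd(P, \coprod \tau M_i) \cong \coprod \cd(P,\tau M_i) \cong \coprod \cd(P,M_i),\]
using adjointness, the commutation of $\tau$ with coproducts just proved, compactness of $P$ in $\Tria(P)$, and adjointness again.

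For $(2) \Leftrightarrow (3)$ and $(4) \Leftrightarrow (3)$ one unpacks the definitions of perfect and superperfect from the earlier sections: every object of $\Tria(P)$ is built from objects of $\Sum(\{P[n]\}_{n\in\Z})$ by iterated cones and Milnor-type countable homotopy colimits, and perfectness is precisely the continuity needed to push $\cd(P,-)$ through such colimits, while compactness in $\Sum(\{P[n]\})$ handles coproducts of shifts. Together they yield compactness in $\Tria(P)$. Superperfectness is a strengthening strong enough to also force closure of $\Tria(P)^\bot$ under coproducts on its own, explaining why the latter assumption is absent from condition (4). The main obstacle is the application of Brown representability in step $(3)\Rightarrow(1)$ and the verification that $\tau$ commutes with coproducts from the sole hypothesis that $\Tria(P)^\bot$ does; one must also take some care with the transfinite build-up of $\Tria(P)$ from $\Sum(\{P[n]\}_{n\in\Z})$ needed for the implications involving (2) and (4).
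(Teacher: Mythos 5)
Your proposal is correct and follows essentially the same route as the paper: the key direction $3)\Rightarrow 1)$ is the adjoint-functor/Brown-representability argument making $\Tria(P)$ a smashing aisle and then transferring compactness along the coproduct-preserving truncation, exactly as in the paper's lemma on smashing t-structures, while for 2) you invoke, as the paper does, the Milnor-colimit description of objects of $\Tria(P)$ coming from Krause's Theorem A together with the fact that perfectness lets $\cd(P,?)$ pass through those colimits and compactness in $\Sum(\{P[n]\}_{n\in\Z})$ handles the building blocks. Your observation that superperfectness alone forces $\Tria(P)^{\bot}$ to be closed under small coproducts is precisely why the paper can say that $4)$ implies $2)$; the paper merely adds Keller's shorter direct proof of $4)\Rightarrow 1)$, which your argument does not need.
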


Tambi\'en definimos la \emph{categor\'ia derivada acotada por la derecha de una categor\'ia dg} y estudiamos algunas de sus propiedades b\'asicas. Esbocemos brevemente aqu\'i su definici\'on, que necesitaremos para el resumen del cap\'itulo 6. Sea $\ca$ una categor\'ia dg peque\~{n}a. Sea $\Susp(\ca)$ la menor subcategor\'ia suspendida plena de $\cd\ca$ que contiene a los $\ca$-m\'odulos dg por la derecha $A^{\we}$ representados por los objetos $A$ de $\ca$ y cerrada para coproductos peque\~{n}os. Decimos que la \emph{categor\'ia derivada acotada por la derecha de $\ca$} es la subcategor\'ia triangulada plena de $\cd\ca$ obtenida como la uni\'on de todos los trasladados en ambos sentidos de $\Susp(\ca)$:
\[\cd^-\ca:=\bigcup_{n\in\Z}\Susp(\ca)[n].
\]
Esta definici\'on coincide con la cl\'asica cuando $\ca$ es la categor\'ia dg asociada a un \'algebra ordinaria $A$, y con la natural cuando $\ca$ tiene cohomolog\'ia concentrada en grados no positivos. En efecto, el Lema \ref{looking for coherence} nos dice lo siguiente:

\begin{lema}
Sea $\ca$ una categor\'ia dg peque\~{n}a con cohomolog\'ia concentrada en grados $(-\infty,m]$ para alg\'un entero $m\in\Z$. Para un $\ca$-module dg $M$ consideramos las siguientes afirmaciones:
\begin{enumerate}[1)]
\item $M\in\Susp(\ca)[s]$.
\item $\H iM(A)=0$ para cada entero $i>m+s$ y cada objeto $A$ de $\ca$.
\end{enumerate}
Entonces $1)\Rightarrow 2)$ y, en caso de que $m=0$, tambi\'en tenemos $2)\Rightarrow 1)$.
\end{lema}
\bigskip

\textbf{Cap\'itulo 5}
\bigskip

Los resultados de este cap\'itulo aparecen en el art\'iculo \cite{NicolasSaorin2007c}. Fijemos algo de terminolog\'ia antes de responder a la pregunta 2 de m\'as arriba.

\begin{definicion}
Sea $\cd$ una categor\'ia triangulada y sea $\cq$ un conjunto de objetos de $\cd$. Decimos que $\cq$ \emph{genera} $\cd$ si un objeto $M$ de $\cd$ se anula siempre y cuando
\[\cd(Q[n],M)=0
\]
para todo objeto $Q$ de $\cq$ y todo entero $n\in\Z$.
\end{definicion}

\begin{definicion}
Una categor\'ia triangulada $\cd$ es \emph{alada} si tiene un conjunto de generadores, tiene coproductos peque\~{n}os y para todo conjunto $\cq$ de objetos de $\cd$ tenemos que $\Tria(\cq)$ es la primera clase de objetos de una t-estructura de $\cd$.
\end{definicion}

\begin{ej}
La categor\'ia derivada de una categor\'ia dg peque\~{n}a es alada (\cf Corolario \ref{derived categories are aisled}).
\end{ej}

\begin{definicion}
Un objeto $M$ de una categor\'ia triangulada $\cd$ es \emph{excepcional} si no tiene `autoextensiones', \ie $\cd(M,M[n])=0$ para todo $n\in\Z\setminus\{0\}$.
\end{definicion}

Ahora podemos presentar nuestro Corolario \ref{recollements unbounded derived}, que es la versi\'on no acotada del teorema de S.~K\"{o}nig:

\begin{corolario}
Sea $\cd$ una categor\'ia triangulada que es o bien perfectamente ge\-ne\-ra\-da o bien alada. Las siguientes afirmaciones son equivalentes:
\begin{enumerate}[1)]
\item $\cd$ es una aglutinaci\'on de categor\'ias trianguladas generadas por un solo objeto compacto (y excepcional).
\item Existen objetos (excepcionales) $P$ y $Q$ de $\cd$ tales que:
\begin{enumerate}[2.1)]
\item $P$ es compacto.
\item $Q$ es compacto en $\Tria(Q)$.
\item $\cd(P[n],Q)=0$ para cada $n\in\Z$.
\item $\{P, Q\}$ genera $\cd$.
\end{enumerate}
\item Existe un objeto compacto (y excepcional) $P$ tal que $\Tria(P)^{\bot}$ es generado por un objeto compacto (y excepcional) en $\Tria(P)^{\bot}$.
\end{enumerate}
En el caso de que $\cd$ sea compactamente generada por un objeto compacto podemos a\~{n}adir:
\begin{enumerate}[4)]
\item Existe un objeto compacto (y excepcional) $P$ (tal que $\Tria(P)^{\bot}$ est\'a generada por un objeto compacto excepcional).
\end{enumerate}
En caso de que $\cd$ sea algebraica podemos a\~{n}adir:
\begin{enumerate}[5)]
\item $\cd$ es una aglutinaci\'on de categor\'ias derivadas de \'algebras dg (concentradas en grado $0$, \ie \'algebras ordinarias).
\end{enumerate}
\end{corolario}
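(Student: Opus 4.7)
The plan is to reduce every equivalence to the bijection between triangulated TTF triples $(\cx,\cy,\cz)$ on $\cd$ and recollements of $\cd$ recalled in the introduction. Under this correspondence, the middle class $\cy$ is the ``closed'' piece of the recollement and the outer class $\cx$ (equivalently $\cz$) realises the ``open'' piece. Thus condition $1)$ becomes the existence of a TTF triple with $\cx$ generated (as a triangulated subcategory closed under small coproducts) by a single object compact in $\cx$, and $\cy$ generated by a single object compact in $\cy$; the roles of $P$ and $Q$ will be precisely these two generators.

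For $1)\Rightarrow 2)\Rightarrow 3)$, I would take $P$ to be the generator of $\cx$ and $Q$ the generator of $\cy$. Since the inclusion $\cx\hookrightarrow\cd$ is a left adjoint (namely $j_!$ in the recollement notation), it preserves compactness, so $P$ is compact in $\cd$; this gives $2.1)$. Compactness of $Q$ in $\Tria(Q)=\cy$ is exactly $2.2)$, and the orthogonality $\cd(P[n],Q)=0$ is built into the recollement, so we get $2.3)$. Finally $\{P,Q\}$ generates $\cd$ because every object sits in a triangle with one vertex in $\cx$ and one in $\cy$. To obtain $3)$ it then suffices to observe that $Q$ generates $\Tria(P)^\perp=\cy$ and is compact there.

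For $3)\Rightarrow 1)$, the TTF triple is built directly from $P$ and $Q$. Under the assumption that $\cd$ is aisled, or that $P$ is compact in $\cd$ (in which case $P$ is perfect by the proposition on compactness already stated), $\Tria(P)$ is the aisle of a t-structure, so $(\cx,\cy):=(\Tria(P),\Tria(P)^\perp)$ is a t-structure in $\cd$. Similarly, $Q$ being compact in $\Tria(Q)$ together with the aisled or perfectly-generated structure of $\cd$ ensures that $\Tria(Q)$ is the aisle of a t-structure in $\cd$; since by hypothesis $\Tria(Q)=\Tria(P)^\perp=\cy$, setting $\cz:=\cy^\perp$ produces the second t-structure $(\cy,\cz)$. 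The resulting triple $(\cx,\cy,\cz)$ is TTF and corresponds to a recollement of $\cd$ whose two glued pieces are $\Tria(P)$ and $\Tria(Q)$.

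Clause $4)$ is the specialisation of $3)$ under the additional hypothesis that a compact generator of $\cd$ is already available; the parenthetical conditions are precisely what is needed to recognise the full force of $3)$ in that setting. Clause $5)$ follows from B.~Keller's Morita theorem \cite{Keller1994a}: an algebraic compactly generated triangulated category with a single compact generator is equivalent to the derived category of the dg endomorphism algebra of that generator, and this equivalence transports through the recollement constructed above. The main obstacle I anticipate lies in the perfectly-generated (as opposed to aisled) case of $3)\Rightarrow 1)$: when $Q$ is compact only inside $\Tria(Q)$ and not in the whole of $\cd$, verifying that $\Tria(Q)$ is the aisle of a t-structure in $\cd$ requires a careful combination of the smashing character of $\Tria(P)\subseteq\cd$ with a Brown-representability argument applied inside the localisation $\Tria(P)^\perp$.
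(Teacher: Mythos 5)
Your overall route---TTF triples as recollements, Krause's Brown representability to produce aisles, Keller's Morita theorem for 5)---is essentially the paper's route, but two of your steps are not justified as written. In $1)\Rightarrow 2)$ you deduce that $P$ is compact in $\cd$ from the principle that a left adjoint (here $j_!$) preserves compactness. That principle is false in general, and in this very situation it proves too much: the inclusion $i_{*}=i_{!}$ of the middle class $\cy$ is also a left adjoint (of $i^{!}$), so the same reasoning would make $Q$ compact in $\cd$, rendering condition 2.2) superfluous and contradicting Keller's counterexample to the generalized smashing conjecture. The correct reason is that the right adjoint of $j_!$, namely $j^{*}$, preserves small coproducts (it has the further right adjoint $j_{*}$); equivalently, one applies part 2) of Lemma \ref{truncating special objects and from local to global via smashing} to the t-structure $(\cx,\cy)$, whose coaisle $\cy$ is closed under small coproducts because it is itself an aisle---which is exactly how the paper argues.

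In $3)\Rightarrow 1)$ you write ``since by hypothesis $\Tria(Q)=\Tria(P)^{\bot}$'', but assertion 3) only says that $Q$ \emph{generates} $\Tria(P)^{\bot}$ in the sense of Definition \ref{generate triangulated category}; generation does not by itself give infinite d\'evissage, and this identification is precisely what has to be proved. The missing argument is: $P$ compact implies $\Tria(P)^{\bot}$ is closed under small coproducts, so $\Tria(Q)\subseteq\Tria(P)^{\bot}$; since $Q$ is compact (hence perfect) in $\Tria(Q)$ and $\cd$ has small coproducts, Theorem \ref{Krause on perfects} makes $\Tria(Q)$ an aisle in $\cd$; then Lemma \ref{generators and t-structures} yields $\Tria(Q)=\Tria(P)^{\bot}$, after which $(\Tria(P),\Tria(P)^{\bot},\Tria(P)^{\bot\bot})$ is a triangulated TTF triple (the paper closes this step with Lemma \ref{recollement defining in perfectly generated} in the perfectly generated case and Lemma \ref{rds in aisled} in the aisled case). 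Your diagnosis of where the difficulty lies is also misplaced: making $\Tria(Q)$ an aisle needs no Brown representability ``inside the localisation $\Tria(P)^{\bot}$''---Krause's theorem applies to $\Tria(Q)$ in $\cd$ directly; the genuine work is the generation-versus-d\'evissage step just described (the same issue silently affects your identification $\cy=\Tria(P)^{\bot}$ in $1)\Rightarrow 2)$, which needs that $P$ generates $\cx$). Finally, clause 4) is not a mere ``specialisation'': the substantive direction $4)\Rightarrow 3)$ requires producing a compact generator of $\Tria(P)^{\bot}$, which the paper obtains by truncating the compact generator of $\cd$, using Lemma \ref{generators and t-structures} and part 1) of Lemma \ref{truncating special objects and from local to global via smashing} together with the fact that $\Tria(P)$ is smashing; your one-sentence treatment omits this argument.
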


Este resultado es consecuencia de una parametrizaci\'on general de todas las formas en que una categor\'ia triangulada $\cd$, que suponemos con un conjunto de generadores, aparece como aglutinaci\'on de dos categor\'ias trianguladas (v\'ease la Proposici\'on \ref{naive general parametrization}).

\begin{definicion}
Sea $k$ un anillo conmutativo. Una categor\'ia dg $k$-lineal $\ca$ es \emph{$k$-plana} si para cualquier par de objetos $A$ y $A'$ de $\ca$ resulta que tensorizar con el correspondiente espacio de morfismos
\[?\otimes_{k}\ca(A,A'): \cc k\ra\cc k
\]
conserva complejos ac\'iclicos.
\end{definicion}

El siguiente teorema (Teorema \ref{TTF are he}) es una respuesta a la pregunta 5 de arriba. Utiliza lo que llamamos \emph{epimorfismos homol\'ogicos de categor\'ias dg}, que son una generalizaci\'on natural de los ``epimorfismos homol\'ogicos'' de W. Geigle y H. Lenzing \cite{GeigleLenzing}.

\begin{teorema}
Sea $k$ un anillo conmutativo y sea $\ca$ una categor\'ia dg $k$-plana. Para toda terna TTF triangulada $(\cx,\cy,\cz)$ en $\cd\ca$ existe un epimorfismo homol\'ogico $F:\ca\ra\cb$ (biyectivo en objetos) tal que la imagen esencial del funtor de restricci\'on de escalares $F^{*}:\cd\cb\ra\cd\ca$ es $\cy$.
\end{teorema}

Gracias al trabajo de G. Tabuada \cite{Tabuada2005a}, la asunci\'on de $k$-planitud en el teorema anterior es inofensiva. En efecto, uno puede probar (\cf Lemma \ref{k flat up to quasi-equivalence}) que toda categor\'ia triangulada algebraica compactamente generada $k$-lineal es trianguladamente equivalente a la categor\'ia derivada de una categor\'ia dg $k$-plana.

El siguiente teorema (Teorema \ref{several descriptions}) a\'una varios resultados obtenidos en el transcurso de los cap\'itulos 4 y 5:

\begin{teorema}
Sea $k$ un anillo conmutativo, sea $\cd$ una categor\'ia triangulada algebraica compactamente generada $k$-lineal, y sea $\ca$ una categor\'ia dg $k$-plana cuya categor\'ia derivada es trianguladamente equivalente a $\cd$. Existe una biyecci\'on entre:
\begin{enumerate}[1)]
\item Subcategor\'ias aplastantes $\cx$ de $\cd$.
\item Ternas TTF trianguladas $(\cx,\cy,\cz)$ en $\cd$.
\item (Clases de equivalencia de) aglutinaciones para $\cd$.
\item (Clases de equivalencia de) epimorfismos homol\'ogicos de categor\'ias dg de la forma $F:\ca\ra\cb$ (que podemos escoger de modo que sean biyectivos en objetos).
\end{enumerate}
M\'as a\'un, si denotamos por $\cs$ a cualquiera de los conjuntos (equipotentes) de arriba, entonces existe una aplicaci\'on suprayectiva $\cR\ra\cs$, donde $\cR$ es la clase de objetos $P$ de $\cd$ tales que $\{P[n]\}^{\bot}_{n\in\Z}$ es cerrada para coproductos peque\~{n}os.
\end{teorema}

Los siguientes resultados pretenden complementar el trabajo de H.~Krause sobre subcategor\'ias aplastantes de categor\'ias trianguladas compactamente generadas \cite{Krause2005}. Empecemos con el Teorema \ref{from ideals to devissage wrt hc}:
\begin{teorema}
Sea $\cd$ una categor\'ia compactamente generada y sea $\ci$ un ideal bil\'atero idempotente de $\cd^c$ cerrado para traslaciones en ambos sentidos. Entonces existe una terna TTF triangulada $(\cx,\cy,\cz)$ en $\cd$ tal que:
\begin{enumerate}[1)]
\item $\cx=\Tria(\cp)$, para cierto conjunto $\cp$ de col\'imites de Milnor (\cf Definici\'on \ref{Milnor colimit}) de sucesiones de morfismos de $\ci$.
\item $\cy=\ci^{\bot}$, donde $\ci^{\bot}$ es por definici\'on la clase de objetos $M$ de $\cd$ tales que $\cd(f,M)=0$ para todo morfismo $f\in\ci$.
\item Un morfismo de $\cd^c$ pertenece a $\ci$ si y s\'olo si factoriza a trav\'es de un objeto de $\cp$.
\end{enumerate}
\end{teorema}

Esto, junto con la afirmaci\'on 2') de \cite[Theorem 4.2]{Krause2000} da una prueba breve y directa del siguiente teorema (Teorema \ref{our result}), que est\'a en la l\'inea de la biyecci\'on que H.~Krause establece en \cite[Corollary 12.5, Corollary 12.6]{Krause2005}:

\begin{teorema}
Sea $\cd$ una categor\'ia triangulada compactamente generada. Si $\cy$ es una clase de objetos de $\cd$ denotamos por $\cm or(\cd^c)^{\cy}$ a la clase de morfismos $f$ de $\cd^c$ tales que $\cd(f,Y)=0$ para todo objeto $Y\in\cy$. Entonces, las aplicaciones
\[(\cx,\cy,\cz)\mapsto\cm or(\cd^c)^{\cy}\ \ \ \ \text{ e }\ \ \ \ \ci\mapsto(^{\bot}(\ci^{\bot}),\ci^{\bot}, \ci^{\bot\bot})
\] 
definen una biyecci\'on entre el conjunto de todas las ternas TTF trianguladas en $\cd$ y el conjunto de todos los ideales bil\'ateros idempotentes cerrados (\cf Definici\'on \ref{closed ideals}) $\ci$ de $\cd^c$ tales que $\ci[1]=\ci$. 
\end{teorema}

Como consecuencia obtenemos el Corolario \ref{gsc}, que responde positivamente a la pregunta 4.

\begin{corolario}
Sea $\cd$ una categor\'ia triangulada compactamente generada. Entonces toda subcategor\'ia aplastante $\cx$ de $\cd$ es de la forma $\cx=\Tria(\cp)$, donde $\cp$ es un conjunto de col\'imites de Milnor de objetos compactos.
\end{corolario}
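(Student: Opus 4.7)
The plan is to obtain this corollary as an essentially immediate consequence of the two preceding theorems (Theorem \ref{from ideals to devissage wrt hc} and Theorem \ref{our result}), plus the well-known correspondence between smashing subcategories and triangulated TTF triples in a compactly generated triangulated category.

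First I would start from a smashing subcategory $\cx$ of $\cd$. By definition $\cx$ fits into a t-structure $(\cx,\cy)$ with $\cy$ closed under small coproducts, and because $\cd$ is compactly generated the associated truncation functor preserves small coproducts, so $\cx$ is itself closed under small coproducts; standard arguments (already recorded earlier in the thesis, and reflected in the statement of Theorem \ref{our result}, which presupposes this equivalence) then produce a triangulated TTF triple $(\cx,\cy,\cz)$ in $\cd$ extending the smashing t-structure. Hence the task is reduced to: every first aisle of a triangulated TTF triple in $\cd$ is of the form $\Tria(\cp)$ with $\cp$ a set of Milnor colimits of compact objects.

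Next I would feed this TTF triple into Theorem \ref{our result}. That bijection associates to $(\cx,\cy,\cz)$ the closed idempotent two-sided ideal $\ci:=\cm or(\cd^c)^{\cy}$ of $\cd^c$, which is moreover stable under both shifts, and which determines $(\cx,\cy,\cz)$ completely via the inverse map $\ci\mapsto (^{\bot}(\ci^{\bot}),\ci^{\bot},\ci^{\bot\bot})$. In particular, $\cx={}^{\bot}(\ci^{\bot})$ depends only on $\ci$. Now I would apply Theorem \ref{from ideals to devissage wrt hc} to this very ideal $\ci$: it furnishes a set $\cp$ of Milnor colimits of sequences of morphisms of $\ci$ together with a triangulated TTF triple $(\cx',\cy',\cz')$ in $\cd$ whose first aisle is $\cx'=\Tria(\cp)$ and whose middle piece is $\cy'=\ci^{\bot}$. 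Since $\cy'=\ci^{\bot}=\cy$, the uniqueness part of the bijection in Theorem \ref{our result} forces $\cx'=\cx$, and therefore $\cx=\Tria(\cp)$.

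Finally, I would observe that the morphisms making up the sequences whose Milnor colimits constitute $\cp$ are morphisms in the ideal $\ci\subseteq\cd^c$, i.e.\ morphisms between compact objects; thus each element of $\cp$ is a Milnor colimit of compact objects, as required. The only real content of the argument is contained in Theorem \ref{from ideals to devissage wrt hc}, which is where the set $\cp$ is actually constructed; the anticipated main obstacle, which has already been dealt with at that earlier stage, is the verification that the Milnor colimits of the chosen sequences of maps in $\ci$ really generate (as a localizing subcategory) the left orthogonal of $\ci^{\bot}$, rather than just a smaller subcategory. Once that is granted, the present corollary is purely a matter of matching the outputs of the two theorems through the bijection.
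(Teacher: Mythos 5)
Your argument is correct and follows essentially the same route as the paper: pass from the smashing subcategory to a TTF triple via Proposition \ref{smashing are TTF}, use Theorem \ref{our result} to write $\cy=\ci^{\bot}$ for a closed idempotent two-sided ideal $\ci$ of $\cd^c$ with $\ci[1]=\ci$, and then invoke Theorem \ref{from ideals to devissage wrt hc} (whose construction of $\cp$ carries all the real content) to conclude $\cx={}^{\bot}(\ci^{\bot})=\Tria(\cp)$ with $\cp$ a set of Milnor colimits of compact objects.
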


El Teorema \ref{our result} tambi\'en implica el Corolario \ref{Krause algebraic}, que para el caso algebraico da una nueva demostraci\'on de la biyecci\'on de H. Krause:

\begin{corolario}
Sea $\cd$ una categor\'ia triangulada algebraica compactamente ge\-ne\-ra\-da. Las aplicaciones
\[(\cx,\cy,\cz)\mapsto\cm or(\cd^c)^{\cy}\ \ \ \ \text{ e }\ \ \ \ \ci\mapsto(^{\bot}(\ci^{\bot}),\ci^{\bot}, \ci^{\bot\bot})
\] 
definen una biyecci\'on entre el conjunto de todas las ternas TTF trianguladas en $\cd$ y el conjunto de todos los ideales bil\'ateros idempotentes saturados $\ci$ de $\cd^c$ cerrados para traslaciones en ambos sentidos.
\end{corolario}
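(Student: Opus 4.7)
The plan is to deduce this corollary from Theorem \ref{our result}, which already establishes a bijection between triangulated TTF triples in $\cd$ and \emph{closed} idempotent two-sided ideals of $\cd^c$ stable under both shifts, via exactly the same two formulas displayed in the statement. Hence, once Theorem \ref{our result} is in hand, what remains is the purely ideal-theoretic claim that, when $\cd$ is algebraic and compactly generated, the closed idempotent two-sided shift-stable ideals of $\cd^c$ are precisely the saturated ones.

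The easy direction ``closed $\Rightarrow$ saturated'' is formal and does not use the algebraic hypothesis. Suppose $\ci=\cm or(\cd^c)^{\cy}$ for some class $\cy$ of objects of $\cd$. Given a triangle $P'\arr{u}P\arr{v}P''\ra P'[1]$ in $\cd^c$ and a morphism $f\in\cd^c(P,Q)$ with $v\in\ci$ and $fu\in\ci$, apply $\cd(-,Y)$ for an arbitrary $Y\in\cy$: the hypotheses give $\cd(v,Y)=0$ and $\cd(fu,Y)=0$, so the long exact sequence forces $\cd(f,Y)=0$; since $Y\in\cy$ was arbitrary, $f\in\ci$.

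The main obstacle is the reverse direction ``saturated $\Rightarrow$ closed'', and this is where the algebraic hypothesis enters. Given a saturated idempotent two-sided shift-stable ideal $\ci$ of $\cd^c$, I would apply Theorem \ref{from ideals to devissage wrt hc} to attach to $\ci$ a triangulated TTF triple $(\cx,\cy,\cz)$ in $\cd$ with $\cy=\ci^{\bot}$, $\cx=\Tria(\cp)$ for a set $\cp$ of Milnor colimits of sequences of morphisms of $\ci$, and with the crucial property that a morphism of $\cd^c$ lies in $\ci$ if and only if it factors through some object of $\cp$. The candidate closed ideal attached to this TTF triple by Theorem \ref{our result} is $\cm or(\cd^c)^{\cy}$, and the containment $\ci\subseteq\cm or(\cd^c)^{\cy}$ is automatic from $\cy=\ci^{\bot}$.

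For the reverse containment $\cm or(\cd^c)^{\cy}\subseteq\ci$, take $g\in\cm or(\cd^c)^{\cy}$; equivalently, the truncation $\tau^{\cy}g$ vanishes, which means $g$ factors through the aisle $\cx=\Tria(\cp)$. The algebraic hypothesis, via \cite[Theorem 4.3]{Keller1994a}, lets us realize $\cd$ as the derived category of a small dg category, where compact objects enjoy the usual finiteness features. In particular, since the source of $g$ is compact, a devissage of $\cx$ together with compactness shows that the factorization through $\Tria(\cp)$ can be refined to a factorization through a single object built from finitely many morphisms of $\ci$ via a Milnor colimit process; the saturation of $\ci$ is then invoked step by step along this devissage (each time a map factors through an object obtained by a mapping cone of a morphism already known to be in $\ci$, saturation promotes the composite to $\ci$) to conclude $g\in\ci$. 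This closes the equivalence and hence, combined with Theorem \ref{our result}, yields the stated bijection.
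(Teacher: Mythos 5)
Your overall architecture coincides with the paper's: by Theorem \ref{our result} and the formal implication ``closed $\Rightarrow$ saturated'' (your argument is exactly Lemma \ref{closed are saturated}), everything reduces to proving that, in the algebraic case, every saturated idempotent two-sided ideal $\ci$ of $\cd^c$ with $\ci[1]=\ci$ is closed, and you attack this, as the paper does, via the TTF triple of Theorem \ref{from ideals to devissage wrt hc}, the trivial inclusion $\ci\subseteq\cm or(\cd^c)^{\cy}$ for $\cy=\ci^{\bot}$, and Lemma \ref{equivalent definitions ideals} to reinterpret $\cm or(\cd^c)^{\cy}$ as the morphisms factoring through $\cx=\Tria(\cp)$.

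The gap lies exactly in the converse inclusion $\cm or(\cd^c)^{\cy}\subseteq\ci$, which is the entire content of the corollary. You assert that compactness of the source lets one ``refine the factorization through $\Tria(\cp)$ to a factorization through a single object built from finitely many morphisms of $\ci$ via a Milnor colimit process'', and then apply saturation ``step by step''. Compactness in $\cd$ does not deliver this: a general object of $\Tria(\cp)$ is not known to be a Milnor colimit of a tower of finite extensions of coproducts of objects of $\cp$ (that description, via Theorem \ref{Krause on perfects}, would require the objects of $\cp$ to be perfect in $\Tria(\cp)$, which is not established --- they are Milnor colimits of compacts, not compacts), and each object of $\cp$ is itself an \emph{infinite} Milnor colimit of $\ci$-morphisms, so no ``finitely many morphisms of $\ci$'' object arises. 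What the paper actually does is work below the triangulated level: it passes to $\cd\ca$ by \cite[Theorem 4.3]{Keller1994a}, replaces the compact source by an object of the class $\cs$ so that Hom out of it commutes with direct limits in $\cc\ca$ (Lemma \ref{good properties of s}; plain compactness does not see through a transfinite direct limit of dg modules), uses Theorem \ref{localizations via small object argument} to present $x\tau_{\cx}Q$ as the direct limit of a $\lambda$-sequence whose successive cokernels lie in $\cp$, factors the given morphism through some stage $X_{\alpha}$, and then runs a \emph{transfinite} induction on $\alpha$: at a successor stage, compactness against the Milnor colimit defining the relevant $P\in\cp$ lands the map in a finite stage $P_{t}$, and a specific octahedron produces a triangle in which the two morphisms required by the saturation axiom are already in $\ci$; limit stages use the direct-limit smallness again. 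None of this is a routine iteration of saturation, and the finite d\'evissage you invoke is not available; this is precisely where the algebraic hypothesis does real work, beyond merely identifying $\cd$ with the derived category of a dg category.
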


Una posible respuesta a la pregunta 6 ser\'ia la siguiente: en el caso algebraico, la `omnipresencia' de los epimorfismos homol\'ogicos de categor\'ias dg nos permite dar una demostraci\'on sencilla de la afirmaci\'on 2') del \cite[Theorem 4.2]{Krause2000} (v\'ease la demostraci\'on de la Proposici\'on \ref{for the idempotency}), la cual es el resultado necesario, junto con el Teorema \ref{from ideals to devissage wrt hc}, para dar la nueva demostraci\'on de la biyecci\'on de H.~Krause.
\bigskip

\textbf{Cap\'itulo 6}
\bigskip

Los resultados de este cap\'itulo, que tratan la pregunta 3, aparecer\'an en el art\'iculo \cite{NicolasSaorin2007d} en preparaci\'on. El primer resultado importante, y muy general, es el siguiente (Proposici\'on \ref{parametrization right bounded recollements}):

\begin{proposicion}
Sea $\ca$ una categor\'ia dg. Las siguientes afirmaciones son e\-qui\-va\-len\-tes:
\begin{enumerate}[1)]
\item $\cd^-\ca$ es una aglutinaci\'on de $\cd^-\cb$ y $\cd^-\cc$, para ciertas categor\'ias dg $\cb$ and $\cc$.
\item Existen conjuntos $\cp\ko \cq$ de objetos de $\cd^-\ca$ tales que:
\begin{enumerate}[2.1)]
\item $\cp$ est\'a contenido en $\Susp(\ca)[n_{\cp}]$ y $\cq$ est\'a contenido en $\Susp(\ca)[n_{\cq}]$ para ciertos enteros $n_{\cp}$ y $n_{\cq}$. 
\item $\cp$ y $\cq$ son dualmente acotados por la derecha.
\item $\Tria(\cp)\cap\cd^-\ca$ est\'a exhaustivamente generada a izquierda por $\cp$ y los objetos de $\cp$ son compactos en $\cd\ca$.
\item $\Tria(\cq)\cap\cd^-\ca$ est\'a exhaustivamente generada a izquierda por $\cq$ y los objetos de $\cq$ son compactos en $\Tria(\cq)\cap\cd^-\ca$.
\item $(\cd\ca)(P[n],Q)=0$ para cada $P\in\cp\ko Q\in\cq$ y $n\in\Z$.
\item $\cp\cup\cq$ genera $\cd\ca$.
\end{enumerate}
\end{enumerate}
\end{proposicion}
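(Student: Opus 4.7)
My plan is to reduce the statement to the unbounded recollement theorem from Chapter 5 (Corollary \ref{recollements unbounded derived}) together with B. Keller's Morita theorem for dg categories, working throughout with the description $\cd^-\ca=\bigcup_{n\in\Z}\Susp(\ca)[n]$ from Chapter 4.

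For the direction $2)\Rightarrow 1)$, I would first assemble the generators $\cp$ and $\cq$ into dg categories: let $\cb$ be a dg model whose objects correspond bijectively to those of $\cp$ and whose morphism complexes are (a cofibrant replacement of) the dg Hom between the corresponding objects of $\cp$, and define $\cc$ analogously from $\cq$. Keller's theorem applies, since the objects of $\cp$ are compact in $\cd\ca$ and the objects of $\cq$ are compact in $\Tria(\cq)$, yielding triangle equivalences $\cd\cb\arr{\sim}\Tria(\cp)$ and $\cd\cc\arr{\sim}\Tria(\cq)$. Condition 2.1) locates the representable modules inside $\Susp(\ca)[n_{\cp}]$ and $\Susp(\ca)[n_{\cq}]$, and the exhaustive left-generation hypotheses 2.3)--2.4), together with Lemma \ref{looking for coherence}, should restrict these equivalences to $\cd^-\cb\simeq\Tria(\cp)\cap\cd^-\ca$ and $\cd^-\cc\simeq\Tria(\cq)\cap\cd^-\ca$. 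Finally, from 2.5) and 2.6) I would build the t-structures $(\Tria(\cp)\cap\cd^-\ca,\Tria(\cp)^{\bot}\cap\cd^-\ca)$ and $(\ ^{\bot}\Tria(\cq)\cap\cd^-\ca,\Tria(\cq)\cap\cd^-\ca)$ on $\cd^-\ca$; their interlocking orthogonality combined with the generation by $\cp\cup\cq$ is exactly the data needed to assemble the recollement of $\cd^-\ca$ with pieces $\cd^-\cb$ and $\cd^-\cc$.

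For the direction $1)\Rightarrow 2)$, suppose a recollement is given with the standard six functors. I would take $\cp=\{j_{!}(C^{\we})\}_{C\in\cc}$, where $j_{!}$ is the left adjoint of the ``quotient'' functor $\cd^-\ca\to\cd^-\cc$, and $\cq=\{i_{*}(B^{\we})\}_{B\in\cb}$, where $i_{*}$ is the embedding from $\cd^-\cb$. Each $C^{\we}$ belongs to $\Susp(\cc)$ and is compact in $\cd\cc$; since $j_{!}$ commutes with translations and has image inside $\cd^-\ca$, the image $\cp$ lies in $\Susp(\ca)[n_{\cp}]$ for some $n_{\cp}\in\Z$, and compactness in $\cd\ca$ follows because $j_{!}$ admits a right adjoint commuting with small coproducts. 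The analogous argument for $\cq$ yields compactness only relative to $\Tria(\cq)$, since $i_{*}$ generally fails to preserve compactness globally; this is precisely why 2.4) is weaker than 2.3). The remaining conditions (dual right-boundedness 2.2), orthogonality 2.5), generation 2.6), and the two exhaustive left-generation properties) follow from the standard consequences of the recollement axioms applied to the images of the representable dg modules, together with the fact that $\cp$ and $\cq$ generate their respective recollement pieces.

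The technical heart, which I expect to be the main obstacle, is the passage between right-bounded derived categories of dg categories and the triangulated hulls $\Tria(\cp)$ inside $\cd\ca$. In the unbounded case treated in Chapter 5, $\Tria(\cp)$ itself is the relevant piece; here one must simultaneously control both the $\Susp$-layer in which the generators live and the cohomological tails of arbitrary objects of $\Tria(\cp)\cap\cd^-\ca$. This is what makes conditions 2.1)--2.2) and the exhaustive left-generation hypotheses indispensable, and it is where Lemma \ref{looking for coherence} provides the precise bookkeeping needed to identify right-bounded truncations on both sides of the equivalence $\cd^-\cb\simeq\Tria(\cp)\cap\cd^-\ca$ and its counterpart for $\cq$.
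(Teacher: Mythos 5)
Your outline follows the paper's broad shape (images of representables for $1)\Rightarrow 2)$, $\Tria$-constructions plus a Morita-type equivalence for $2)\Rightarrow 1)$), but there are genuine gaps at exactly the points you flag as the "technical heart". In $2)\Rightarrow 1)$ you invoke Keller's unbounded Morita theorem to get $\cd\cc\arr{\sim}\Tria(\cq)$, but hypothesis 2.4) only gives compactness of the objects of $\cq$ in $\Tria(\cq)\cap\cd^-\ca$, not in $\Tria(\cq)$ inside $\cd\ca$, so Corollary \ref{remark Keller} does not apply to $\cq$; this is precisely why the paper proves a right-bounded Morita statement (Lemma \ref{Morita theory for right bounded}), giving $\cd^-\cb\arr{\sim}\Tria(\cq)\cap\cd^-\ca$ directly from 2.1), 2.4) and exhaustive left generation. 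Moreover, "interlocking orthogonality plus generation" does not by itself produce t-structures on $\cd^-\ca$: one must exhibit the truncation triangles inside $\cd^-\ca$, and that is exactly what condition 2.2) buys — by Lemma \ref{Morita theory for right bounded}, dual right boundedness is \emph{equivalent} to $\Tria(\cp)\cap\cd^-\ca$ (resp. $\Tria(\cq)\cap\cd^-\ca$) being an aisle in $\cd^-\ca$, because it says the right adjoint $\R\ch om_{\ca}(X,?)$ preserves right-bounded objects. Lemma \ref{looking for coherence} is not the relevant bookkeeping tool here; it only enters in the non-positive-cohomology variant (Corollary \ref{parametrization right bounded hn recollements}).

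In $1)\Rightarrow 2)$ the quick adjoint argument for compactness of $\cp$ in $\cd\ca$ does not work: the adjoints of the recollement live on $\cd^-\ca$, which is not closed under small coproducts, so compactness of $C^{\we}$ in $\cd\cc$ transfers neither directly nor to the unbounded level. The paper's route is to observe that $\cx'$ is exhaustively generated to the left by $\cp$ with $\cp$ superperfect in $\cx'$, lift the TTF triple to $\cd\ca$ via Corollary \ref{easy restriction}, thereby identify $\cx'=\Tria(\cp)\cap\cd^-\ca$ and $\cy'=\Tria(\cq)\cap\cd^-\ca$ (the latter by infinite d\'evissage, Lemma \ref{e implies d implies g}), and only then deduce compactness in $\cd^-\ca$ and, via Lemma \ref{preservation of coproducts and compacity}, in $\cd\ca$; these identifications, which your sketch takes for granted, are where the work lies. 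Finally, 2.1) needs a uniform integer $n_{\cp}$ for the whole (possibly infinite) set $\cp$: "each $j_{!}(C^{\we})$ lies in $\cd^-\ca$" only gives a layer for each object separately; the paper gets uniformity by applying $j_{!}$ to the coproduct $\coprod_{C\in\cc}C^{\we}\in\cd^-\cc$ and using the claim in the proof of Lemma \ref{preservation of coproducts and compacity}.
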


Hay varias cosas de este resultado que precisan aclaraci\'on.

En primer lugar, nos encontramos con la noci\'on de ser ``dualmente acotado por la derecha''. La definici\'on precisa se puede encontrar en la Definici\'on \ref{dually right bounded}. Dicha definici\'on es algo abstracta ya que usa resoluciones fibrantes en una cierta estructura modelo en la categor\'ia de $\ca$-m\'odulos dg por la derecha. Sin embargo, a veces admite una caracterizaci\'on mucho m\'as expl\'icita. Por ejemplo, si $A$ es un \'algebra ordinaria y $P$ es un complejo acotado por la derecha de $A$-m\'odulos tal que $(\cd A)(P,P[n])=0$ para $n\geq 1$, entonces $P$ es dualmente acotado por la derecha si y s\'olo si es cuasi-isomorfo a un complejo acotado de $A$-m\'odulos proyectivos. Para demostrarlo se puede utilizar el Lema \ref{characterization of dually right bounded} junto con el criterio de S.~K\"{o}nig que caracteriza a $\ch^b(\Proj A)$ dentro de $\cd^-A$ (\cf el principio de la demostraci\'on de \cite[Theorem 1]{Konig1991}).

En segundo lugar, nos encontramos con la noci\'on de estar ``exhaustivamente generada a izquierda''. Decimos que una categor\'ia triangulada $\cd$ est\'a \emph{exhaustivamente generada (a izquierda)} por una clase $\cp$ de objetos de $\cd$ si se satisfacen las siguientes condiciones:
\begin{enumerate}[1)]
\item La existencia de coproductos peque\~{n}os de extensiones finitas de coproductos peque\~{n}os (de traslaciones no negativas) de objetos de $\cp$ est\'a garantizada en $\cd$.
\item Para todo objeto $M$ de $\cd$ existe un entero $i\in\Z$ junto con un tri\'angulo
\[\coprod_{n\geq 0}Q_{n}\ra\coprod_{n\geq 0}Q_{n}\ra M[i]\ra \coprod_{n\geq 0}Q_{n}[1]
\]
de $\cd$ tal que cada $Q_{n}$ es una extensi\'on finita de coproductos peque\~{n}os (de traslaciones no negativas) de objetos de $\cp$.
\end{enumerate}

Las categor\'ias trianguladas exhaustivamente generadas aparecen frecuentemente en la pr\'actica como categor\'ias compactamente o incluso perfectamente generadas.

Cuando s\'olo tratamos con categor\'ias dg con cohomolog\'ia concentrada en grados no positivos (por ejemplo, \'algebras ordinarias), la proposici\'on anterior admite la siguiente reformulaci\'on m\'as sencilla (Corolario \ref{parametrization right bounded hn recollements}):

\begin{corolario}
Sea $\ca$ una categor\'ia dg. Las siguientes afirmaciones son e\-qui\-va\-len\-tes:
\begin{enumerate}[1)]
\item $\cd^-\ca$ es una aglutinaci\'on de $\cd^-\cb$ y $\cd^-\cc$, parta ciertas categor\'ias dg $\cb$ y $\cc$ con cohomolog\'ia concentrada en grados no positivos.
\item Existen conjuntos $\cp\ko \cq$ de objetos de $\cd^-\ca$ tales que:
\begin{enumerate}[2.1)]
\item $\cp$ est\'a contenido en $\Susp(\ca)[n_{\cp}]$ y $\cq$ est\'a contenido en $\Susp(\ca)[n_{\cq}]$ para ciertos enteros $n_{\cp}$ y $n_{\cq}$.
\item $\cp$ y $\cq$ son dualmente acotados por la derecha.
\item Los objetos de $\cp$ son compactos en $\cd\ca$ y satisfacen
\[(\cd\ca)(P,P'[n])=0
\] 
para todos $P\ko P'\in\cp$ y $n\geq 1$.
\item Los objetos de $\cq$ son compactos en $\Tria(\cq)\cap\cd^-\ca$ y satisfacen 
\[(\cd\ca)(Q,Q'[n])=0
\] 
para todos $Q\ko Q'\in\cp$ y $n\geq 1$.
\item $(\cd\ca)(P[n],Q)=0$ para cada $P\in\cp\ko Q\in\cq$ y $n\in\Z$.
\item $\cp\cup\cq$ genera $\cd\ca$.
\end{enumerate}
\end{enumerate}
\end{corolario}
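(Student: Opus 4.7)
The plan is to deduce this corollary directly from the preceding Proposition \ref{parametrization right bounded recollements}. All hypotheses on the sets $\cp$ and $\cq$ coincide in both statements except for the new self-Hom vanishing conditions placed inside 2.3) and 2.4), and on the dg-categorical side the only refinement is the requirement that $\cb$ and $\cc$ have cohomology concentrated in non-positive degrees. So the task reduces to matching these two refinements against each other.

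For $1)\Rightarrow 2)$, I take the recollement of $\cd^-\ca$ by $\cd^-\cb$ and $\cd^-\cc$ and apply the proposition to produce sets $\cp$ and $\cq$ satisfying 2.1)--2.6) of that statement. Tracing its proof, $\cp$ and $\cq$ can be chosen as the images, under the two fully faithful functors of the recollement, of the representable dg modules $\{B^\we\}_{B\in\cb}$ and $\{C^\we\}_{C\in\cc}$. For $P=B^\we$ and $P'=(B')^\we$, fully faithfulness yields
$(\cd\ca)(P,P'[n])\cong (\cd\cb)(B^\we,(B')^\we[n])\cong \H{n}\cb(B,B')$,
which vanishes for $n\geq 1$ by hypothesis; the argument for $\cq$ and $\cc$ is symmetric. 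This delivers the reinforced 2.3) and 2.4).

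For $2)\Rightarrow 1)$, I run the proposition in the other direction starting from sets $\cp$ and $\cq$ that satisfy the reinforced conditions. The constructive half of its proof, via Keller's Morita theory for dg categories, builds $\cb$ as a dg category quasi-equivalent to the full dg subcategory, on a set of cofibrant replacements of the objects of $\cp$, of the dg category of $\ca$-modules, and analogously for $\cc$ and $\cq$. With this choice one has canonical isomorphisms $\H{n}\cb(B_P,B_{P'})\cong(\cd\ca)(P,P'[n])$ and $\H{n}\cc(C_Q,C_{Q'})\cong(\cd\ca)(Q,Q'[n])$. Hypotheses 2.3) and 2.4) then force $\H{n}\cb=0$ and $\H{n}\cc=0$ for $n\geq 1$, which is precisely the desired cohomological concentration.

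The main obstacle is the second direction: one has to verify that the dg categories produced in the proof of the proposition really admit the explicit form described above. Concretely, the Morita-type construction of $\cb$ and $\cc$ must be arranged so that the morphism complex $\cb(B_P,B_{P'})$ computes $\R\Hom_{\ca}(P,P')$ on the nose. This is standard once the objects of $\cp$ and $\cq$ are replaced by cofibrant resolutions in the projective model structure on $\ca$-modules, but it requires checking that this replacement can be carried out without disturbing any of the properties 2.1)--2.6) already verified for $\cp$ and $\cq$, in particular the dually-right-bounded condition and the exhaustive left generation of $\Tria(\cp)\cap\cd^-\ca$ and $\Tria(\cq)\cap\cd^-\ca$.
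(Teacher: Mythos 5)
Your direction $1)\Rightarrow 2)$ is fine and matches the paper: apply Proposition \ref{parametrization right bounded recollements}, take for $\cp$ and $\cq$ the images of the representable modules under the fully faithful functors of the recollement, and read off the vanishing $(\cd\ca)(P,P'[n])=0$, $(\cd\ca)(Q,Q'[n])=0$ for $n\geq 1$ from the cohomological concentration of $\cb$ and $\cc$. Likewise, your observation that in the converse direction the concentration of $\cb$ and $\cc$ in non-positive degrees follows from these vanishing conditions (since $\H n\cb(B_P,B_{P'})\cong(\cd\ca)(P,P'[n])$ for the fibrant/cofibrant replacement construction) is correct and is exactly how the paper concludes.

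However, there is a genuine gap in your $2)\Rightarrow 1)$, caused by a misreading of the statement. The hypotheses of assertion 2) in this corollary do \emph{not} coincide with those of the proposition plus extra Hom-vanishing: conditions 2.3) and 2.4) here \emph{replace} the clauses ``$\Tria(\cp)\cap\cd^-\ca$ (resp.\ $\Tria(\cq)\cap\cd^-\ca$) is exhaustively generated to the left by $\cp$ (resp.\ $\cq$)'' of the proposition by the vanishing of positive self-extensions. You even list ``the exhaustive left generation of $\Tria(\cp)\cap\cd^-\ca$ and $\Tria(\cq)\cap\cd^-\ca$'' among the properties ``already verified'', but it is not a hypothesis of 2) and must be \emph{proved} before Proposition \ref{parametrization right bounded recollements} can be invoked. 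This is precisely the content of Corollary \ref{right bounded compactly generated implies exhaustive} (resting on Proposition \ref{g implies e to the left}, the right-bounded analogue of Keller's tower/Milnor-colimit construction): from right boundedness, dual right boundedness, compactness in $\Tria(\cp)\cap\cd^-\ca$ and $(\cd\ca)(P,P'[n])=0$ for $n\geq 1$, one deduces that $\Tria(\cp)\cap\cd^-\ca$ is exhaustively generated to the left by $\cp$, and similarly for $\cq$. Without this step your appeal to the proposition is unjustified, and the technical worry you do raise (arranging the Morita construction so that morphism complexes compute $\R\Hom$) is not where the difficulty lies — that part is standard and handled by the fibrant-replacement construction of subsection \ref{B. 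Keller's Morita theory for derived categories}.
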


Gracias al Lema \ref{route to Konig2}, la versi\'on para \'algebras ordinarias (\cf Teorema \ref{Konig}) es esencialmente el teorema de S.~K\"{o}nig \cite[Theorem 1]{Konig1991}:

\begin{teorema}
Sean $A$, $B$ y $C$ \'algebras ordinarias. Las siguientes afirmaciones son equivalentes: 
\begin{enumerate}[1)]
\item $\cd^-A$ es una aglutinaci\'on de $\cd^-C$ y $\cd^-B$. 
\item Existen dos objetos $P\ko Q\in\cd^-A$ que satisfacen las siguientes propiedades:
\begin{enumerate}[2.1)]
\item Existen isomorfismos de \'algebras $C\cong(\cd A)(P,P)$ y $B\cong(\cd A)(Q,Q)$. 
\item $P$ es excepcional e isomorfo en $\cd A$ a un complejo acotado de $A$-m\'odulos proyectivos finitamente generados. 
\item $(\cd A)(Q,Q[n]^{(\Lambda )})=0$ para cualquier conjunto $\Lambda$ y todo $n\in\Z\setminus\{0\}$, la aplicaci\'on can\'onica $(\cd A)(Q,Q)^{(\Lambda)}\ra(\cd A)(Q,Q^{(\Lambda )})$ es un isomorfismo, y $Q$ es isomorfo en $\cd A$ a un complejo acotado de $A$-m\'odulos proyectivos. 
\item $(\cd A)(P[n],Q)=0$ para todo $n\in\Z$. 
\item $P\oplus Q$ genera $\cd A$.
\end{enumerate}
\end{enumerate}
\end{teorema}
\bigskip

%FIN DE LA INTRODUCCION EN ESPA\~{N}OL

%INTRODUCCION EN INGLES

\chapter*{Introduction}
\addcontentsline{lot}{chapter}{Introduction}
\bigskip

\textbf{Motivation}
\bigskip

In his work \cite{Dickson1966}, S. E. Dickson defined the notion of \emph{torsion theory} (now called \emph{torsion pair}) in the general framework of abelian categories, which generalizes  the concept of `torsion' appearing in the theory of abelian groups. A \emph{torsion pair} on an abelian category $\ca$ consists of a pair $(\cx,\cy)$ of full subcategories of objects of $\ca$ such that:
\begin{enumerate}[1)]
\item $\cd(X,Y)=0$ for all $X\in\cx$ and $Y\in\cy$,
\item $\cx$ is closed under quotients and $\cy$ is closed under subobjects,
\item each object $M$ of $\ca$ occurs in a short exact sequence
\[0\ra M_{\cx}\ra M\ra M^{\cy}\ra 0
\]
in which $M_{\cx}$ belongs to $\cx$ and $M^{\cy}$ belongs to  $\cy$.
\end{enumerate}
When this short exact sequence splits for each $M$, we say that the torsion pair is \emph{split}. Since S. E. Dickson's work, torsion pairs have played an important r\^{o}le in algebra: they have been a fundamental tool for developing a general theory of noncommutative localization \cite{Stenstrom}, they have had a great impact in the representation theory of Artin algebras \cite{HappelRingel1982, HappelReitenSmalo1996, AssemSaorin2005},\dots One of the important concepts related to torsion theory is that of a \emph{torsion-torsionfree(=TTF) theory} (now called \emph{TTF triple}), introduced by J. P. Jans in \cite{Jans}. It consists of a triple $(\cx,\cy,\cz)$ of full subcategories of an abelian category such that both $(\cx,\cy)$ and $(\cy,\cz)$ are torsion pairs. We say that the TTF triple $(\cx,\cy,\cz)$ is \emph{left (respectively, right) split} when $(\cx,\cy)$ (respectively, $(\cy,\cz)$) is split. A TTF triple is \emph{centrally split} if it is both left and right split. When the ambient abelian category is the category of modules $\Mod A$ over a ring $A$ (associative, with unit), then we have the following result due to J. P. Jans \cite[Corollary 2.2]{Jans}:

\begin{thm}
TTF triples on $\Mod A$ are in bijection with idempotent two-sided ideals of $A$. Moreover, this bijection induces a bijection between centrally split TTF triples on $\Mod A$ and (two-sided ideals generated by) central idempotents of $A$.
\end{thm}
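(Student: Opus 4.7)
The plan is to set up a bijection between TTF triples on $\Mod A$ and idempotent two-sided ideals via the middle class $\cy$. In one direction, starting from an idempotent two-sided ideal $I=I^2$, I would define $\cx=\{M : MI=M\}$, $\cy=\{M : MI=0\}$, $\cz=\{M : \ann_{M}(I)=0\}$. I would verify $(\cx,\cy)$ is a torsion pair using the short exact sequence $0\to MI\to M\to M/MI\to 0$, noting that $(MI)I=MI^{2}=MI$ places $MI$ in $\cx$. For $(\cy,\cz)$ the candidate sequence is $0\to\ann_{M}(I)\to M\to M/\ann_{M}(I)\to 0$; that $M/\ann_{M}(I)$ lies in $\cz$ is again forced by $I^{2}=I$, since $mI\subseteq\ann_{M}(I)$ gives $mI^{2}=0$, hence $mI=0$.

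In the opposite direction, given a TTF triple $(\cx,\cy,\cz)$, I would set $I:=A_{\cx}$, the $\cx$-torsion submodule of the regular module $A_A$. Left multiplication by any $a\in A$ is an endomorphism of $A_A$, so functoriality of the torsion radical forces $aI\subseteq I$, making $I$ two-sided. To obtain idempotency, consider $A/I^{2}$: its submodule $I/I^{2}$ carries a right action factoring through $A/I$, so it is an $A/I$-module; since $A/I\in\cy$ and every $A/I$-module is a quotient of a coproduct of copies of $A/I$, and $\cy$ is closed under coproducts and quotients, we get $I/I^{2}\in\cy$. Then $A/I^{2}$ is an extension of $A/I\in\cy$ by $I/I^{2}\in\cy$, hence in $\cy$. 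Because $I$ is the smallest submodule of $A$ with $A/I\in\cy$ (smallest by intersection: $\cy$ is closed under products and subobjects), this forces $I\subseteq I^{2}$, whence $I=I^{2}$. Mutual inversion of the two constructions then follows from the cyclic argument: $M\in\cy$ iff every cyclic subobject $mA\cong A/\ann(m)$ lies in $\cy$ iff $I\subseteq\ann(m)$ for all $m\in M$ iff $MI=0$.

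For the centrally split case, I would first show that centrally split forces $\cx=\cz$. If $M\in\cx$, the splitting of $(\cy,\cz)$ writes $M=M_{\cy}\oplus M^{\cz}$; but $M_{\cy}$ is a $\cy$-subobject of the $\cx$-object $M$, so $M_{\cy}\in\cx\cap\cy=0$, giving $M=M^{\cz}\in\cz$. The symmetric argument gives $\cz\subseteq\cx$. Applied to $M=A$ this produces $A=A_{\cx}\oplus A_{\cy}$ as right modules, so $A_{\cx}=eA$ and $A_{\cy}=(1-e)A$ for an idempotent $e$ with $I=eA$. Now the key step: both summands are two-sided ideals. For $A_{\cx}$ this is already known, while for $A_{\cy}$, left multiplication by $a\in A$ carries $A_{\cy}$ to a quotient of a $\cy$-object inside $A$, hence into the maximal $\cy$-subobject $A_{\cy}$. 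Two-sidedness of both $eA$ and $(1-e)A$ translates to $ae=eae$ and $a(1-e)=(1-e)a(1-e)$ for all $a$; subtracting gives $ae=ea$, so $e$ is central.

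Conversely, given a central idempotent $e$, the ideal $I=eA=Ae$ is idempotent, and for any $M$ the decomposition $M=Me\oplus M(1-e)$ is a decomposition of right modules precisely because $e$ is central; a direct check shows $Me\in\cx\cap\cz$ and $M(1-e)\in\cy$, so both torsion pairs split simultaneously. The main obstacle I anticipate is the verification of idempotency of $I=A_{\cx}$: one must use the minimality characterization of $I$ together with the closure properties of $\cy$ in the right order, and this is the step that genuinely uses the TTF assumption rather than just the existence of a torsion pair. Once this is in place, the remaining verifications are routine bookkeeping with the torsion-theoretic machinery.
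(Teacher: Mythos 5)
Your proposal is correct, and for the first half it is close in spirit to the paper's argument (Theorem \ref{parametrizando TTF ternas}): both directions send a TTF triple to $I=\tau_{\cx}(A)$, obtain two-sidedness from the fact that left multiplications are the endomorphisms of $A_{A}$, and recover $\cy$ as the modules killed by $I$. The differences are in how the identifications are justified: the paper routes through TTF classes (citing Stenstr\"om) and through $A/I$ being a (compact projective) generator of $\cy$, essentially Gabriel's characterization, whereas you verify the two torsion pairs directly from the sequences $0\to MI\to M\to M/MI\to 0$ and $0\to\ann_{M}(I)\to M\to M/\ann_{M}(I)\to 0$ and use a cyclic-submodule argument; you also make explicit the idempotency step (the extension $I/I^{2}\to A/I^{2}\to A/I$ inside $\cy$ plus minimality of $I$), which the paper leaves implicit. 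The second half is where your route genuinely diverges: the paper does not argue this directly in Chapter 3 but defers to Stenstr\"om or to Corollary \ref{centrally split TTF triples on abelian categories}, whose proof goes through 2-decompositions of $\add(P)$ for a projective generator and central idempotents of $\End(P)$ --- machinery built to generalize to Grothendieck and triangulated settings --- while you give a direct ring-theoretic argument: centrally split forces $\cx=\cz$, hence $A=eA\oplus(1-e)A$ with both summands two-sided, hence $e$ central, and conversely. Your version is more elementary and self-contained; the paper's buys uniformity with its later categorical results. Two small precision points: when you conclude $M_{\cy}\in\cx$ for $M\in\cx$, the reason is that $M_{\cy}$ is a direct summand, hence a quotient of $M$ and $\cx$ is closed under quotients (torsion classes need not be closed under subobjects); and identifying the complement $(1-e)A$ with the maximal $\cy$-submodule of $A$ uses $\Hom_{A}(\cy,eA)=0$, which is available only because you have already shown $eA\in\cx=\cz$ --- worth saying explicitly, but neither point is a gap.
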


A natural question arises:
\bigskip

\noindent \emph{Question 1:} Which are the two-sided ideals of $A$ corresponding to left or right split TTF triples on $\Mod A$?
\bigskip

In their work \cite{BeilinsonBernsteinDeligne} on perverse sheaves,  A.~A.~Beilinson, J.~Bernstein and P.~Deligne defined the notion of \emph{t-structure} on a triangulated category, which formalizes the notion of `(intelligent) truncation' of a cochain complex on an abelian category. A  \emph{t-structure} on a triangulated category $(\cd,?[1])$ consists of a pair $(\cx,\cy)$ of strictly(=closed under isomorphisms) full subcategories of $\cd$ satisfying:
\begin{enumerate}[1)]
\item $\cd(X,Y)=0$ for all $X\in\cx$ and $Y\in\cy$,
\item $\cx$ is closed under positive shifts and $\cy$ is closed under negative shifts,
\item each object $M$ of $\cd$ occurs in a triangle
\[ M_{\cx}\ra M\ra M^{\cy}\ra M_{\cx}[1].
\]
\end{enumerate}
They proved that the general theory of t-structures is sophisticated enough to capture one of the most important phenomena related to truncation: the t-structure $(\cx,\cy)$ defines an abelian category $\cx\cap(\cy[1])$ inside the triangulated category $\cd$ and linked to it by a cohomological functor
$\cd\ra \cx\cap(\cy[1])$. To emphasize this fact, one usually writes $(\cx,\cy[1])$  to refer to the t-structure $(\cx,\cy)$. t-structures have given rise to a fair amount of results and are important in many branches of mathematics: theory of perverse sheaves \cite{BeilinsonBernsteinDeligne, KashiwaraShapira}, link between Grothendieck-Roos duality theory and tilting theory \cite{KellerVossieck88a}, tilting theory itself \cite{KellerVossieck88a, HappelReitenSmalo1996}, motivic cohomology and $\mathbb{A}^{1}$-homotopy theory in the sense of V. Voevodsky \cite{Voevodsky2000, Morel2003}, commutative algebra \cite{Neeman1992a, AlonsoJeremiasSaorin2007}, \dots As one can see, the notion of t-structure is the formal analogue of the notion of torsion pair for abelian categories. Accordingly, the `triangulated' analogue of a TTF triple, called \emph{triangulated TTF triple}, consists of a triple $(\cx,\cy,\cz)$ such that both $(\cx,\cy)$ and $(\cy,\cz)$ are t-structures. Although a triangulated TTF triple is made of t-structures, it does not provide us abelian categories inside the ambient triangulated category. Instead, the general philosophy of triangulated TTF triples is that they allow us to regard a triangulated category as glued together from two others triangulated categories. More precisely, triangulated TTF triples on a triangulated category $\cd$ are in `bijection' with ways of expressing $\cd$ as a \emph{glueing} or \emph{recollement} (a notion defined by A.~A.~Beilinson, J.~Bernstein and P.~Deligne in \cite{BeilinsonBernsteinDeligne}) of two others triangulated categories. In this sense, triangulated TTF triples have made a life by their own, apart from t-structures. They appear under the name of ``recollement'' in: the work of E. Cline, B. Parshall and L. L. Scott \cite{ClineParshallScott1988, ParshallScott1988}, the theory of diagram algebras \cite{MartinGreenParker2006},...

Inspired by the work of E.~Cline, B.~Parshall and L.~L.~Scott and using the work of J.~Rickard \cite{Rickard1989}, S.~K\"{o}nig gave in \cite[Theorem 1]{Konig1991} necessary and sufficient conditions for the existence of recollement situations. Precisely, he proved the following:

\begin{thm}
The right bounded derived category $\cd^-A$ of a ring $A$ is a recollement of the right derived categories of two rings $B$ and $C$ if and only if there exist two complexes $P$ and $Q$ in $\cd^-A$ quasi-isomorphic to a bounded complex of projective $A$-modules such that:
\begin{enumerate}[1]
\item $P$ is compact in $\cd A$ (\ie quasi-isomorphic to a bounded complex of finitely generated projective $A$-modules) and $(\cd A)(Q,?)$ commutes with small coproducts of copies of $Q$.
\item $(\cd A)(P,P[n])=0$ and $(\cd A)(Q,Q[n])=0$ for all $n\in\Z$.
\item There exist ring isomorphisms $(\cd A)(P,P)\cong C$ and $(\cd A)(Q,Q)\cong B$.
\item $(\cd A)(P[n],Q)=0$ for all $n\in\Z$.
\item If $M$ is an object of $\cd^-A$ such that $(\cd A)(P[n],M)=0$ and $(\cd A)(Q[n],M)=0$ for all $n\in\Z$, then $M=0$.
\end{enumerate}
\end{thm}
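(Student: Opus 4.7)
My plan is to prove the two implications separately; the substantive work lies in producing a recollement from the complexes $P$ and $Q$.

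For $(1)\Rightarrow(2)$ I would start from the recollement
\[\cd^-B \arr{i_*} \cd^-A \arr{j^*} \cd^-C\]
with the six adjoints $i^*,i^!,j_!,j_*$, and set $Q:=i_*(B)$ and $P:=j_!(C)$, where $B$ and $C$ stand for the regular rank-one modules on the two smaller categories. The fully faithful embeddings $i_*$ and $j_!$ then give the ring isomorphisms of (3); the identity $j^*i_*=0$ that is built into any recollement, combined with the adjunction $(j_!,j^*)$, yields the orthogonality in (4); and (5) simply rephrases the fact that $(i^!,j^*)$ jointly detect zero objects. Compactness of $P$ and the self-compactness of $Q$ I would track through the adjoints: left adjoints preserve compactness whenever their right adjoints preserve small coproducts, and the fully faithfulness of $i_*$ transfers the analogous property from $B\in\cd^-B$ to $Q\in\cd^-A$.

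For $(2)\Rightarrow(1)$ the plan is to apply Rickard--Keller style Morita theory twice. Since $P$ is compact and exceptional in $\cd A$ with $\End_{\cd A}(P)\cong C$, one obtains a triangle equivalence $\cd C \simeq \Tria(P)\subset\cd A$ sending $C$ to $P$; because $P$ is quasi-isomorphic to a bounded complex of finitely generated projectives, this should restrict to an equivalence $\cd^-C\simeq \Tria(P)\cap\cd^-A$. The self-compactness and exceptionality of $Q$ together with $\End_{\cd A}(Q)\cong B$ similarly yield $\cd^-B\simeq \Tria(Q)\cap \cd^-A$. Condition (4) forces $\Tria(P)\subset {}^{\bot}\Tria(Q)$, and condition (5) rules out any nonzero object of $\cd^-A$ right-orthogonal to both $\{P[n]\}_{n\in\Z}$ and $\{Q[n]\}_{n\in\Z}$. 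Assembled together, these give a semi-orthogonal decomposition $\bigl(\Tria(P)\cap\cd^-A,\;\Tria(Q)\cap \cd^-A\bigr)$ of $\cd^-A$ which, combined with the compactness properties, upgrades to a recollement in the sense of Beilinson--Bernstein--Deligne.

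The hard part, I expect, will be keeping everything inside the \emph{right bounded} derived categories. Tilting naturally produces triangle equivalences at the unbounded level, so the key issue is to show that they restrict to $\cd^-$; this is precisely where the hypothesis that $P$ and $Q$ be quasi-isomorphic to bounded complexes of projectives has to be used. Concretely, producing the right adjoint $i^!$ amounts to constructing, for each $M\in\cd^-A$, a glueing triangle
\[P_M\ra M\ra Q_M\ra P_M[1]\]
with $P_M\in\Tria(P)\cap\cd^-A$ and $Q_M\in\Tria(Q)\cap\cd^-A$, and then verifying that $Q_M$ remains right bounded. This boundedness check is the most delicate technical point and is what forces the rigid boundedness-plus-projectivity assumptions on $P$ and $Q$ in the statement.
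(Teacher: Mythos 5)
Your overall strategy (pass through tilting/Morita theory and then fight to stay inside the right bounded categories) is the same one this thesis follows, but there is a genuine gap at the single point where the real difficulty sits: in $(2)\Rightarrow(1)$ you assert that ``the self-compactness and exceptionality of $Q$ together with $\End_{\cd A}(Q)\cong B$ similarly yield $\cd^-B\simeq\Tria(Q)\cap\cd^-A$''. The hypothesis of the statement only says that $(\cd A)(Q,?)$ commutes with small coproducts of copies of $Q$ itself (in degree $0$) and that $(\cd A)(Q,Q[n])=0$; this is far weaker than $Q$ being compact in $\Tria(Q)$ or in $\Tria(Q)\cap\cd^-A$, which is what any Rickard--Keller type argument (Corollary \ref{remark Keller}, Lemma \ref{Morita theory for right bounded}) actually needs. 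To run that machinery one must control $(\cd A)(Q,Q[n]^{(\Lambda)})$ for all shifts $n$ and all sets $\Lambda$, and this does not follow formally from the stated hypotheses: Lemma \ref{TrlifajLemma} and Example \ref{TrlifajExample} exhibit a module $Q$ over a hereditary ring with $\Hom_A(Q,?)$ preserving all small coproducts and $\Ext^1_A(Q,Q)=0$, yet $\Ext_A^1(Q,Q^{(\Lambda)})\neq 0$ for every infinite $\Lambda$. This is precisely why Theorem \ref{Konig} in Chapter \ref{Restriction of TTF triples in triangulated categories} replaces K\"onig's condition by the stronger requirement that $(\cd A)(Q,Q[n]^{(\Lambda)})=0$ for all $n\neq 0$ and all $\Lambda$ and that the canonical map $(\cd A)(Q,Q)^{(\Lambda)}\ra(\cd A)(Q,Q^{(\Lambda)})$ be an isomorphism; the accompanying remark states explicitly that this condition could not be derived from K\"onig's hypotheses. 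So the step you dismiss with ``similarly yield'' is exactly the step that is not known to work as stated.

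Even granting the strengthened hypotheses, the compactness of $Q$ in $\Tria(Q)\cap\cd^-A$ is not automatic and is the most delicate part of the paper's argument: it is obtained via Lemma \ref{route to Konig2}, which builds, for each $M\in\Tria_{\cd A}(Q)\cap\cd^-A$, a filtration by extensions of coproducts of non-negative shifts of $Q$ whose graded supports are controlled using the finite width of $Q$ as a bounded complex of projectives; this is how the paper verifies condition 2.4) of Corollary \ref{parametrization right bounded hn recollements} and not merely, as you suggest, by checking that a glueing triangle stays right bounded. Relatedly, the aisle property of $\Tria(P)\cap\cd^-A$ and $\Tria(Q)\cap\cd^-A$ inside $\cd^-A$ is what the paper encodes as ``dually right bounded'' (Lemma \ref{Morita theory for right bounded}, Lemma \ref{characterization of dually right bounded}), and in the converse direction $(1)\Rightarrow(2)$ your sketch does not say how the complexes $i_*(B)$ and $j_!(C)$ are shown to be quasi-isomorphic to bounded complexes of projectives, which requires K\"onig's criterion characterizing $\ch^b(\Proj A)$ inside $\cd^-A$ (or the dually-right-bounded formalism). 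These are fixable omissions, but the $Q$-compactness issue in the first paragraph is a real gap in the proposed proof of the statement as written.
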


S.~K\"{o}nig proved his theorem at around 1990, and at that time a Morita theory for unbounded derived categories was not available. Some years latter, 
B.~Keller developed in \cite{Keller1994a} a Morita (and Koszul) theory for unbounded derived category of dg categories. Then, natural questions arise:
\bigskip

\noindent \emph{Question 2:} Can we use B. Keller's theory to parametrize all the ways of expressing the unbounded derived category $\cd A$ of a ring $A$ as a recollement of unbounded derived categories of two other rings? 
\bigskip

If the answer is yes, 
\bigskip

\noindent \emph{Question 3:} can we develop an `unbounded' approach to S.~K\"{o}nig's work? Namely, can we use the study of recollements at the unbounded level to understand recollements at the right bounded level?
\bigskip

According to \cite{BokstedtNeeman1993}, the notion of \emph{smashing subcategory} of a triangulated category $\cd$ originated in the work of D. Ravenel. A full triangulated subcategory $\cx$ of a triangulated category $\cd$ with small coproducts is a \emph{smashing subcategory} if there exists a t-structure $(\cx,\cy)$ such that $\cy$ is closed under small coproducts. If the ambient triangulated category is, for instance, compactly generated, then smashing subcategories  are in bijection with triangulated TTF triples. The \emph{generalized smashing conjecture} is a generalization to arbitrary compactly generated triangulated categories of a conjecture due to D.~Ravenel \cite[1.33]{Ravenel1984} and, originally, A.~K.~Bousfield \cite[3.4]{Bousfield1979}. It predicts the following:
\bigskip

\noindent If $\cx$ is a smashing subcategory of a compactly generated triangulated category $\cd$, then there exists a set $\cp$ of compact objects of $\cd$ such that $\cx$ is the smallest full triangulated subcategory of $\cd$ closed under small coproducts and containing $\cp$.
\bigskip

This conjecture was disproved by B.~Keller in \cite{Keller1994}. However, sometimes important results proved for compact objects remain true (or admit nice generalizations) after replacing ``compact'' by ``perfect'', ``superperfect'' or something related. Therefore one could ask:
\bigskip

\noindent \emph{Question 4:} Does the conjecture become true after replacing ``compact'' by something related?
\bigskip

In his algebraic approach to the generalized smashing conjecture \cite{Krause2000, Krause2005}, H.~Krause proved the following:

\begin{thm}
Let $\cd$ be a compactly generated triangulated category. Smashing subcategories of $\cd$ are in bijection with saturated idempotent two-sided ideals of $\cd^c$(=the full subcategory of $\cd$ formed by the compact objects) closed under shifts in both directions.
\end{thm}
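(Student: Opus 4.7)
The plan is to exhibit mutually inverse maps $\Phi$ and $\Psi$ between the two sets. Given a smashing subcategory $\cx$, the associated triangulated TTF triple $(\cx,\cy,\cz)$ exists because $\cd$ is compactly generated and $\cy$ is closed under small coproducts (Brown representability supplies the right adjoint producing $\cz$). I would associate to $\cx$ the ideal
\[\Phi(\cx) := \{f\in\cd^c \mid \cd(f,Y)=0 \text{ for all } Y\in\cy\}.\]
That this is a two-sided ideal closed under shifts is immediate, since $\cy$ is a triangulated subcategory. Saturation is a direct two-out-of-three argument along a triangle $P'\xrightarrow{u}P\xrightarrow{v}P''\to P'[1]$ in $\cd^c$: if $f\in\cd^c(P,Q)$ satisfies $fu,v\in\Phi(\cx)$, then for any $g:Q\to Y$ with $Y\in\cy$ one has $gfu=0$, so $gf$ factors through $v$ by some $P''\to Y$; but $v\in\Phi(\cx)$ forces $gf=0$. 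Idempotency is more delicate: given $f:P\to P'$ in $\Phi(\cx)$, the truncation triangle gives a factorization of $f$ through an object $X\in\cx$, and a compact approximation of this factorization, using that $\cx$ is generated (under coproducts and extensions) by images of compact objects, yields the desired factorization through compacts inside $\Phi(\cx)$.

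In the other direction, given a saturated idempotent shift-closed ideal $\ci$ of $\cd^c$, I would set
\[\cy:=\ci^\perp=\{M\in\cd\mid\cd(f,M)=0 \text{ for all } f\in\ci\},\qquad \Psi(\ci):={}^\perp\cy.\]
The class $\cy$ is closed under small coproducts because the sources of morphisms in $\ci$ are compact. To show $({}^\perp\cy,\cy)$ is a t-structure — that is, that $\Psi(\ci)$ is indeed smashing — one realizes $\Psi(\ci)$ concretely as the closure under small coproducts and extensions of Milnor colimits of sequences of morphisms of $\ci$. This realization is precisely what Theorem \ref{from ideals to devissage wrt hc} will later supply, and it is the key input from the compact-generation hypothesis in this direction.

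The verification that $\Phi\circ\Psi$ is the identity reduces to the equality $\ci=\ci^{\perp\perp}\cap\cd^c$, in which all three conditions on $\ci$ intervene: shift-closure supplies the triangulated framework, idempotency ensures that maps factoring through $\Psi(\ci)$ are detected by $\ci$ itself rather than by its completion, and saturation is exactly what forces a morphism of $\cd^c$ killed by every object of $\cy=\ci^\perp$ to lie back in $\ci$. The verification that $\Psi\circ\Phi$ is the identity amounts to $\cy=\Phi(\cx)^\perp$; the inclusion $\cy\subseteq\Phi(\cx)^\perp$ is tautological, while the reverse uses compact generation: if $M\in\Phi(\cx)^\perp$, then $\cd(-,M)$ annihilates every morphism of $\cd^c$ factoring through $\cx$, and a density argument propagates this to vanishing of $\cd(X,M)$ for all $X\in\cx$, placing $M$ in $\cy$.

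The main obstacle is the pair of dual statements that $\Phi(\cx)$ is idempotent and that $\ci=\ci^{\perp\perp}\cap\cd^c$ for saturated idempotent shift-closed $\ci$. Both sit at the heart of why the word \emph{saturated} appears in the statement, and both require a nontrivial approximation argument passing between objects of $\cx$ — which are typically \emph{not} compact — and compact objects, in a way controlled by the ideal. This is the step that cannot be bypassed by purely formal reasoning and that motivated H.~Krause's introduction of saturated ideals in this context; once it is in place, the rest of the bijection is a careful but formal unwinding of definitions.
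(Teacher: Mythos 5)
Your maps are the right ones---they are exactly the assignments $(\cx,\cy,\cz)\mapsto\cm or(\cd^c)^{\cy}$ and $\ci\mapsto({}^{\bot}(\ci^{\bot}),\ci^{\bot},\ci^{\bot\bot})$ that underlie Theorem \ref{Krause result} and Theorem \ref{our result}---and the formal verifications you carry out (shift-stability and two-sidedness of $\cm or(\cd^c)^{\cy}$, the two-out-of-three argument for saturation, closure of $\ci^{\bot}$ under small coproducts because sources and targets of morphisms in $\ci$ are compact, the appeal to Theorem \ref{from ideals to devissage wrt hc} to see that ${}^{\bot}(\ci^{\bot})$ is smashing) are correct and agree with what is done in section \ref{Idempotent two-sided ideals}. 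But the two steps you yourself single out as ``the main obstacle'' are precisely the mathematical content of the theorem, and you prove neither of them; flagging them as hard is not a substitute for an argument.

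Concretely: (i) idempotency of $\cm or(\cd^c)^{\cy}$ requires showing that every morphism $P\ra M$ from a compact object to an object $M\in\cx$ factors as $P\ra P'\ra M$ with $P'$ compact and $P\ra P'$ again factoring through $\cx$; this is Proposition \ref{for the idempotency}, \ie assertion 2') of \cite[Theorem 4.2]{Krause2000}. Your proposed mechanism---``a compact approximation \dots\ using that $\cx$ is generated (under coproducts and extensions) by images of compact objects''---is unjustified: read as ``$\cx$ is generated by compact objects of $\cd$ lying in $\cx$'' it is the generalized smashing conjecture, which is false (Keller's counterexample \cite{Keller1994}), while the correct substitute, namely $\cx=\Tria(\cp)$ for a set $\cp$ of Milnor colimits of compact objects (Corollary \ref{gsc}), is in this paper deduced from the very bijection you are proving, so invoking it here is circular. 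The same unproved approximation is also hidden in your ``density argument'' for $\Phi(\cx)^{\bot}\subseteq\cy$: vanishing of $\cd(f,M)$ on morphisms between compacts factoring through $\cx$ does not formally yield $\cd(X,M)=0$ for all $X\in\cx$. (ii) In the opposite direction, the equality $\ci=\cm or(\cd^c)^{\ci^{\bot}}$ for a saturated idempotent shift-stable ideal is exactly the assertion that saturated ideals are closed; Theorem \ref{from ideals to devissage wrt hc}(3) only says that $\ci$ consists of the morphisms factoring through an object of $\cp$, and passing from ``factors through an object of $\Tria(\cp)$'' (which is what Lemma \ref{equivalent definitions ideals} gives you) to ``factors through an object of $\cp$'' is where saturation must genuinely be used---in this paper via the transfinite induction in the proof of Corollary \ref{Krause algebraic}, and even there only for algebraic $\cd$; in general it is \cite[Theorems 11.1 and 12.1, Corollary 12.6]{Krause2005}. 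So your text establishes the easy half of the bijection and quotes the hard half as folklore; note that the paper itself does not reprove these two inputs in full generality either, but imports them from \cite{Krause2000,Krause2005} and supplies new proofs only in the algebraic case (Proposition \ref{for the idempotency} via homological epimorphisms, and Corollary \ref{Krause algebraic}).
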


Here \emph{saturated} means that whenever there exists a triangle
\[P'\arr{u} P\arr{v} P''\ra P'[1]
\]
in $\cd^c$ and a morphism $f\in\cd^c(P,Q)$ with $fu\ko v\in\ci$, then $f\in\ci$.

Notice the analogy with J. P. Jans' theorem above. This analogy is not strange, since module categories are `compactly generated' abelian categories. However, several facts suggest that the true `triangulated' analogue of module categories are not just compactly generated triangulated categories but compactly generated \emph{algebraic} triangulated categories. These, thanks to B. Keller's theorem \cite[Theorem 4.3]{Keller1994a}, are precisely the derived categories of dg categories.
\bigskip

\noindent \emph{Question 5:} Can we parametrize all the triangulated TTF triples on (equivalently, smashing subcategories of) the derived category $\cd\ca$ of a dg category $\ca$ in terms of $\ca$ itself or, at least, in `dg terms'? 
\bigskip

If the answer is yes, 
\bigskip

\noindent \emph{Question 6:} which is the link between this parametrization and Krause's?
\bigskip

Pretriangulated categories (in the sense of A. Beligiannis \cite{Beligiannis2001}) are a generalization of abelian and triangulated categories. In pretriangulated categories one can define the notion of \emph{pretriangulated torsion pair}, which is a generalization of the notion of torsion pair and t-structure. Pretriangulated torsion pairs (and so t-structures and torsion pairs) have many basic properties which should be deduced from a theory of torsion in additive categories. Moreover,  several facts suggest that the underlying additive category is essentially enough to understand centrally split TTF triples on an abelian category or centrally split triangulated TTF triples (splitness here is defined as we did for abelian categories). To formalize this observation one should try to answer the following question:
\bigskip

\noindent \emph{Question 7:} Can we define a torsion theory for additive categories so that split TTF triples on richer categories can be essentially understood by just looking at the underlying additive TTF triples?
\bigskip

When answering the question above one realizes that the property of being a \emph{balanced} category plays an important r\^{o}le in the characterization of centrally split TTF triples. Notice that both abelian and triangulated categories are necessarily balanced. However, this is not the case for pretriangulated categories. A typical example of a (not necessarily abelian or triangulated) pretriangulated category is the stable category of an abelian category associated to a functorially finite subcategory (\cf A. Beligiannis' papers \cite{Beligiannis2000, Beligiannis2001}). Hence, the following is a natural question:
\bigskip

\noindent \emph{Question 8:} Can we characterize when `homologically nice' stable categories of abelian categories are balanced?
\bigskip 

This work is structured in six chapters in which we study and answer the questions above.
\bigskip

\textbf{Chapter 1}
\bigskip

This chapter deals with question 7, and it develops the basics of a torsion theory for additive categories. In section \ref{Compatible torsion theories}, we prove that abelian, pretriangulated, triangulated,\dots torsion pairs are precisely the additive torsion pairs which satisfy certain extra conditions. In section \ref{Split torsion pairs}, we define what we mean by \emph{split additive torsion pair}. In section \ref{Characterization of centrally split TTF triples}, we define and characterize the so called \emph{centrally split additive TTF triples}. 

Notice that in pretriangulated categories the following (strictly, see Example \ref{weakly balanced not balanced}) weak version of the notion of ``balance'' is possible:

\begin{defin}
A pretriangulated category $(\cd,\Omega,\Sigma)$ is \emph{weakly balanced} if a morphism $f:M\ra N$ is an isomorphism whenever there exist a left triangle of the form
\[\Omega N\ra 0\ra M\arr{f}N
\]
and a right triangle of the form
\[M\arr{f}N\ra 0\ra\Sigma M.
\]
\end{defin}

In case the ambient additive category has canonical factorizations, or it is a weakly balanced pretriangulated (and the additive TTF triples under consideration are also pretriangulated), the characterization of centrally split additive TTF triples is improved in Proposition \ref{centrally split TTF triples on additive}: 

\begin{prop}
Let $(\cx,\cy,\cz)$ be an additive TTF triple on an additive category $\cd$. The following assertions are equivalent:
\begin{enumerate}[1)]
\item It is left split and $\cx=\cz$.
\item It is right split and $\cx=\cz$.
\item It is centrally split.
\end{enumerate}
When  $\cd$ has canonical factorizations or it is a weakly balanced pretriangulated category with both $(\cx,\cy)$ and $(\cy,\cz)$ pretriangulated torsion pairs, then the above conditions are equivalent to:
\begin{enumerate}[4)]
\item $\cx=\cz$.
\end{enumerate}
\end{prop}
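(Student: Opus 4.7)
The plan is to dispatch $1)\Leftrightarrow 2)\Leftrightarrow 3)$ in full generality and then, under the extra hypotheses, to prove $4)\Rightarrow 3)$ (the reverse being contained in the first block). For $3)\Rightarrow 1),2)$ the only new point is $\cx=\cz$: given $X\in\cx$, right splitness yields $X\cong y\tau_{\cy}X\oplus z\tau^{\cz}X$, and the projection $X\twoheadrightarrow y\tau_{\cy}X$ is forced to vanish because it lies in $\cd(X,\cy)=0$, so $X\in\cz$; the reverse inclusion is dual using left splitness.

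For $1)\Rightarrow 3)$ (and symmetrically $2)\Rightarrow 3)$), left splitness decomposes $M\cong X\oplus Y$ with $X=x\tau_{\cx}M\in\cx=\cz$ and $Y=y\tau^{\cy}M\in\cy$. Using the Hom-vanishings $\cd(\cy,\cz)=0$ and $\cd(\cy,\cx)=0$ (the latter coming from $\cx=\cz$), one checks that the inclusion $Y\hookrightarrow M$ represents the coreflection of $M$ into $\cy$ and the projection $M\twoheadrightarrow X$ represents the reflection of $M$ into $\cz$; the second weakly exact sequence is then literally the split sequence $Y\hookrightarrow X\oplus Y\twoheadrightarrow X$, so $(\cy,\cz)$ is split.

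Now assume $4)$. In the canonical factorizations setting, consider for each $M$ the composite $\gamma_{M}\colon x\tau_{\cx}M\xrightarrow{\delta_{\cx,M}}M\xrightarrow{\eta^{\cz}_{M}}z\tau^{\cz}M\cong x\tau^{\cx}M$, a morphism internal to $\cx$. Using the universal properties of the two adjoints $\tau_{\cx}$ and $\tau^{\cx}$, together with the uniqueness of the epi-mono factorization of every morphism in $\cd$ that factors through $M$ between objects of $\cx$, one shows that $\gamma_{M}$ is an isomorphism; then $\gamma_{M}^{-1}\circ\eta^{\cz}_{M}$ retracts $\delta_{\cx,M}$, so $(\cx,\cy)$ is split and by the first block $3)$ follows. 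In the weakly balanced pretriangulated case, the weakly exact sequence $x\tau_{\cx}M\to M\to y\tau^{\cy}M$ extends by hypothesis to a right triangle with connecting morphism in $\cd(y\tau^{\cy}M,\Sigma x\tau_{\cx}M)$ and to a left triangle with connecting morphism in $\cd(\Omega y\tau^{\cy}M,x\tau_{\cx}M)$; the closure of $\cx$ under $\Sigma$ and of $\cy$ under $\Omega$ built into the notion of a pretriangulated torsion pair, combined with $\cd(\cy,\cx)=\cd(\cy,\cz)=0$, forces both connecting morphisms to vanish. One thus extracts a section of $\eta^{\cy}_{M}$ and a retraction of $\delta_{\cx,M}$; weak balance, applied to the induced comparison morphism between $M$ and $x\tau_{\cx}M\oplus y\tau^{\cy}M$ sitting in the relevant pair of triangles with vanishing outer terms, upgrades these partial data to a genuine splitting.

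The main obstacle is the last step in the weakly balanced pretriangulated case: although the orthogonality produced by $\cx=\cz$ trivialises the connecting morphisms, glueing the resulting sections and retractions into an actual isomorphism $M\cong x\tau_{\cx}M\oplus y\tau^{\cy}M$ rests essentially on the weak-balance hypothesis, and requires a careful verification that $\Sigma$ and $\Omega$ preserve the relevant subcategories of a pretriangulated TTF triple. In the canonical factorizations case the analogous difficulty is concentrated in showing that $\gamma_{M}$ is an isomorphism without already assuming the splitting one is trying to produce.
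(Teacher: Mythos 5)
Your handling of the unconditional equivalences $1)\Leftrightarrow 2)\Leftrightarrow 3)$ is correct and coincides with the paper's argument (the vanishing projection for $3)\Rightarrow 1),2)$, and the observation that a decomposition $M\cong X\oplus Y$ with $X\in\cx=\cz$, $Y\in\cy$ forces $(\cy,\cz)$ to split). The genuine gap is in $4)\Rightarrow 3)$ for the weakly balanced pretriangulated case. From the right triangle $x\tau_{\cx}M\ra M\ra y\tau^{\cy}M\arr{0}\Sigma(x\tau_{\cx}M)$ and the left triangle $\Omega(y\tau^{\cy}M)\arr{0}x\tau_{\cx}M\ra M\ra y\tau^{\cy}M$, whose connecting morphisms do indeed vanish as you say, you cannot ``extract a section of $\eta^{\cy}_{M}$ and a retraction of $\delta_{\cx,M}$'': in a one-sided triangulated category only the contravariant Hom-functor is cohomological on right triangles, so a zero connecting morphism only makes $\eta^{\cy}_{M}$ an epimorphism and $\delta_{\cx,M}$ a monomorphism; splitting a (right or left) triangle with zero connecting map is a genuinely triangulated fact, not available here. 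Note also that if such a retraction of $\delta_{\cx,M}$ did exist you would already have the splitting, and weak balance would be superfluous --- a sign that this step cannot be right. What is actually needed, and what your unspecified ``comparison morphism sitting in the relevant pair of triangles with vanishing outer terms'' does not supply, is a right triangle and a left triangle on the single morphism $\gamma_{M}:=\eta^{\cz}_{M}\delta_{\cx,M}:x\tau_{\cx}M\ra z\tau^{\cz}M$ (a morphism between objects of $\cx$, since $\cx=\cz$) whose third terms are literally zero. The paper obtains these by applying the octahedral-type axiom RT4) and its dual to the composable pair $(\delta_{\cx,M},\eta^{\cz}_{M})$, feeding in the torsion triangles of \emph{both} pairs with their vanishing connecting maps; the resulting cones $U$ and $V$ are shown to lie in both $\cx$ and $\cy$ (using $\Sigma\cx\subseteq\cx$, $\Sigma\cy\subseteq\cy$, $\Omega\cz\subseteq\cz$, $\Omega\cy\subseteq\cy$ together with $\cd(\cx,\cy)=\cd(\cy,\cz)=0$), hence vanish, and only then does weak balance apply, to $\gamma_{M}$ itself, making it invertible and $\delta_{\cx,M}$ a section. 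This construction is the missing content; the preservation of the subcategories by $\Sigma$ and $\Omega$ that worries you is automatic from the definition of a pretriangulated torsion pair.

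In the canonical-factorizations case your plan is viable and is a mild variant of the paper's: the paper proves that $\left[\begin{array}{cc}\delta_{\cx,M}&\delta_{\cy,M}\end{array}\right]:x\tau_{\cx}M\oplus y\tau_{\cy}M\ra M$ is both a monomorphism and an epimorphism and invokes balance, whereas one can equally show that your $\gamma_{M}$ is mono and epi by canonically factoring a test morphism in two ways and noting that its image embeds into both $x\tau_{\cx}M\in\cz=\cy^{\bot}$ and $y\tau_{\cy}M\in\cy$ (respectively, its coimage is a quotient of objects of $\cx$ and of $\cy={}^{\bot}\cx$), hence vanishes. But you do not carry this out --- you explicitly list it as the remaining obstacle --- so, as written, neither of the two conditional cases is actually proved in your proposal.
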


It is remarkable (\cf Proposition \ref{split TTF triple}) that in case the ambient category $\cd$ is triangulated, then a triangulated TTF triple $(\cx,\cy,\cz)$ is centrally split if and only if it is left split, if and only if it is right split, if and only if $\cx=\cz$. Therefore, in the triangulated world, either a triangulated TTF triple is centrally split or it does not split at all.

In section \ref{Parametrization of centrally split TTF triples}, we use idempotents to parametrize:
\begin{enumerate}[a)]
\item Proposition \ref{centraly split TTF, 2-decomposition and idempotents}: Centrally split additive TTF triples on additive categories with splitting idempotents.
\item Corollary \ref{centrally split TTF triples on abelian categories}: Centrally split TTF triples on abelian categories of a certain type which includes the Grothendieck categories having a projective generator. In particular, we generalize and give an alternative proof of a well-known fact concerning torsion pairs on module categories: centrally split TTF triples on the category $\Mod A$ of modules over an algebra $A$ are in bijection with central idempotents of $A$.
\item Corollary \ref{centrally split TTF in compactly generated triangulated categories}: Centrally split triangulated TTF triples on compactly generated triangulated categories. In particular, we prove that centrally split triangulated TTF triples on the derived category $\cd\ca$ of a small dg category $\ca$ are in bijection with (central) idempotents $(e_{A})_{A\in\ca}$ of $\prod_{A\in\ca}\H 0\ca(A,A)$ such that for each integer $n$ and each $f\in\H n\ca(A,B)$ we have $e_{B}\cdot f=f\cdot e_{A}$ in $\H n\ca(A,B)$. Hence, if $A$ is an ordinary algebra, centrally split triangulated TTF triples on $\cd A$ are in bijection with central idempotents of $A$.
\item Corollary \ref{centrally split TTF in derived categories of abelian categories}: Centrally split triangulated TTF triples on derived categories of complete and cocomplete abelian categories.
\end{enumerate}
\bigskip

\textbf{Chapter 2}
\bigskip

The results of this chapter, which deals with question 8, will appear in \cite{NicolasSaorin2007b}.  

One of the conclusions of this chapter is that the condition of being balanced is quite restrictive for the stable category of an abelian category and has to do with splitness. This becomes clear with the following result (Corollary \ref{balanced in Serre case}):

\begin{notat}
Let $\ca$ be an additive category and let $\ct$ be a class of objects of $\ca$. We denote by $\ct^{\bot}$ the class of objects $M$ of $\ca$ such that $\ca(T,M)=0$ for every object $T$ in $\ct$. Dually, one defines $\ ^{\bot}\ct$.
\end{notat}

\begin{cor}
Let $\ca$ be an abelian category and $\ct$ be a Serre class in $\ca$. The following assertions are equivalent:
\begin{enumerate}[1)]
\item $\ct$ is contravariantly (respectively, covariantly) finite in $\ca$ and the stable category $\ul{\ca}$ of $\ca$ associated to $\ct$ is balanced.
\item $\ct$ is functorially finite in $\ca$ and $\ul{\ca}$ is weakly balanced.
\item $\ct$ satisfies the following properties:
\begin{enumerate}[3.1)]
\item $\cd(M,T)=0$ for all $T\in\ct$ and $M\in\ct^{\bot}$.
\item Every object of $\ca$ is the direct sum of an object of $\ct$ and an object of $\ct^{\bot}$.
\end{enumerate}
\end{enumerate}
\end{cor}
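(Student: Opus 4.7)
The implication $3)\Rightarrow 1)\wedge 2)$ is the easy direction. Condition $3)$ expresses $\ca$ as the additive product $\ct\times\ct^{\bot}$, with no non-zero morphisms between the two factors: for any decomposition $M\cong T_M\oplus N_M$ with $T_M\in\ct$ and $N_M\in\ct^{\bot}$, the two projections simultaneously serve as left and right $\ct$-approximations of $M$, so $\ct$ is functorially finite. By $3.1$ no morphism in $\ca$ with source in $\ct^{\bot}$ can factor non-trivially through an object of $\ct$, hence the stable category $\ul{\ca}$ identifies with $\ct^{\bot}$. Being a Serre subcategory of $\ca$, the latter is abelian and therefore balanced (in particular, weakly balanced).

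For the converse $1)\Rightarrow 3)$ I treat the contravariantly finite case; the covariantly finite case is dual. Since $\ct$ is Serre and contravariantly finite in $\ca$, the pair $(\ct,\ct^{\bot})$ is a hereditary torsion pair, and every $M$ has its canonical torsion sequence
\[0\ra t(M)\ra M\arr{q_{M}}\bar M\ra 0,\qquad \bar M:=M/t(M)\in\ct^{\bot}.\]
The key step is to check that $q_{M}$ is simultaneously a monomorphism and an epimorphism in $\ul{\ca}$. For the mono part: if $q_{M}(g_1-g_2)$ factors through some $T\in\ct$, the composition $T\ra\bar M$ vanishes since $\bar M\in\ct^{\bot}$, so $q_{M}(g_1-g_2)=0$ in $\ca$ and the image of $g_1-g_2$ lies in $\ker q_{M}=t(M)\in\ct$. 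For the epi part: if $g q_{M}$ factors as $M\arr{\psi}T\arr{\phi}Y$, then $\phi\psi$ vanishes on $t(M)$, and replacing $T$ by the Serre quotient $T/\psi(t(M))\in\ct$ descends $\psi$ to a map $\bar M\ra T/\psi(t(M))$ through which $g$ factors. Balance of $\ul{\ca}$ then forces $q_{M}$ to be an isomorphism in $\ul{\ca}$; since $\bar M\in\ct^{\bot}$ kills every morphism $\bar M\ra T\ra\bar M$ with $T\in\ct$, any representative of the inverse is a genuine section $s:\bar M\ra M$ of $q_{M}$ in $\ca$, splitting the torsion sequence and establishing $3.2$. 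To derive $3.1$, given a hypothetical non-zero $f:N\ra T$ with $N\in\ct^{\bot}$ and $T\in\ct$, I build an auxiliary pushout extension whose canonical torsion decomposition, already split by $3.2$, produces a new morphism that is both mono and epi in $\ul{\ca}$ but fails to be an isomorphism unless $f=0$, contradicting balance.

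The implication $2)\Rightarrow 3)$ follows the same pattern but more symmetrically: weak balance replaces balance, and the two approximation functors $\Omega$ and $\Sigma$ afforded by functorial finiteness let one test the mono/epi status of $q_{M}$ against both left and right triangles of the pretriangulated stable category $\ul{\ca}$. The main obstacle is the explicit construction of the auxiliary object witnessing $3.1$ and the careful bookkeeping of triangles in $\ul{\ca}$; once $3.2$ is available, the argument for $3.1$ has to combine it with a further application of the mono/epi$\,\Rightarrow\,$iso principle on a morphism tailored to $f$.
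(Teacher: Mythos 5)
Your direction $3)\Rightarrow 1),2)$ and your splitting argument for $3.2)$ are correct. In fact, for $3.2)$ you take a route slightly different from (and arguably cleaner than) the paper's: you split the torsion sequence $0\ra t(M)\ra M\ra \bar M\ra 0$ by noting that any stable inverse $s$ of $\ol{q_M}$ satisfies $q_M s=\id_{\bar M}$ on the nose, because the error term $\id_{\bar M}-q_M s$ factors as $\bar M\ra T\ra\bar M$ and the second arrow dies against $\bar M\in\ct^{\bot}$; the paper instead proves $3.1)$ first and invokes the dual of Proposition \ref{triangles for Serre classes}(2), which needs $\bar M\in\ ^{\bot}\ct$. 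So far, so good.

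The genuine gap is $1)\Rightarrow 3.1)$ (and the corresponding part of $2)\Rightarrow 3)$). You never construct the ``auxiliary pushout extension'' nor the morphism that is supposedly mono and epi in $\ul{\ca}$ without being an isomorphism; this is exactly the step where balance must actually be used to kill morphisms $\ct^{\bot}\ra\ct$, and it is not obvious how a pushout built from $f$ plus $3.2)$ would produce the required contradiction. The argument that works (and is the paper's) needs no pushout: given $f:N\ra T$ with $N\in\ct^{\bot}$, $T\in\ct$, replace $T$ by $\im(f)$ so that $f$ is epi, and apply the mono$+$epi$\Rightarrow$iso principle not to $f$ but to the kernel inclusion $f^k:\ker(f)\ra N$, whose kernel is $0\in\ct$ and whose cokernel is $\im(f)\in\ct$; then the very same ``erase the $\ct$-factorization'' trick you used for $3.2)$ (legitimate here because $\ker(f)\in\ct^{\bot}$, being a subobject of $N$) upgrades the stable isomorphism $\ol{f^k}$ to a genuine split mono in $\ca$, so $\im(f)$ becomes a direct summand of $N\in\ct^{\bot}$ and hence vanishes. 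Note also that your $2)\Rightarrow 3)$ is only gestured at: weak balance applies only to morphisms that are simultaneously \emph{strong} monomorphisms and \emph{strong} epimorphisms, so you must verify strongness of $\ol{q_M}$ and of $\ol{f^k}$; this is where contravariant/covariant finiteness and the Serre hypothesis really enter (the corner of the pullback along a $\ct$-precover, respectively the pushout along a $\ct$-preenvelope, lies in $\ct$, as in Propositions \ref{left zero} and \ref{triangles for Serre classes}), and ``testing against left and right triangles'' does not by itself supply these verifications.
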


The next theorem (Theorem \ref{characterization balanced}), which is the main result of section \ref{Balance when T consists of projective objects}, completely characterizes when a stable category of an abelian category associated to a subcategory of projectives is balanced:

\begin{thm}
Let $\ca$ be an abelian category and  $\ct$ be a full subcategory consisting of projective objects which is closed under finite direct sums and direct summands. The following assertions are equivalent:
\begin{enumerate}[1)]
\item The stable category $\ul{\ca}$ of $\ca$ associated to $\ct$ is balanced.
\item If $\mu:T\ra M$ is a non-split monomorphism with $T\in\ct$, then there exists a morphism $h:M\ra T'$, with $T'\in\ct$, such that no morphism $\tilde{h}:M\ra (h\mu)(T)$ coincides with $h$ on $T$.
That is to say, we do not have any commutative diagram as follows:
\[\xymatrix{T\ar@{->>}[d]\ar[r]^{\mu} & M\ar[d]^{h}\ar@{.>}[dl]_{\tilde{h}} \\
(h\mu)(T)\ar@{^(->}[r] & T'
}
\]
\item If $f:M\ra N$ is an epimorphism satisfying conditions $i)$ and $ii)$ below, then it is a retraction:
\begin{enumerate}
\item[i)] Its kernel $f^k:\ker(f)\ra M$ factors through an object of $\ct$.
\item[ii)] For every $h:M\ra T$, with $T\in\ct$, the canonical epimorphism $\ker(f)\twoheadrightarrow h(\ker(f))$ factors through $f^k:\ker(f)\ra M$.
\end{enumerate}
\end{enumerate}
\end{thm}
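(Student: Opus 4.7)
The plan is to establish the chain $1) \Rightarrow 2) \Leftrightarrow 3) \Rightarrow 1)$.

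For $1) \Rightarrow 2)$, given a non-split monomorphism $\mu: T \to M$ with $T \in \ct$, the central tool is the cokernel projection $\pi: M \to M/T$. Two preparatory facts, both proved by projective-lifting arguments, are crucial: (a) $\pi$ is always a monomorphism in $\ul{\ca}$, because if $\pi g$ factors through some $T'' \in \ct$, projectivity of $T''$ lets one lift the factoring back to $M$ and write $g$ as the sum of a morphism through $T''$ and a morphism through $\mu$ --- both of which factor through $\ct$; and (b) $\pi$ is an isomorphism in $\ul{\ca}$ if and only if $\mu$ splits, because any candidate section of $\pi$ in $\ul{\ca}$ can be adjusted to an actual section by absorbing the error term via a projective lift, yielding a splitting. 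Since $\mu$ is non-split, $\pi$ is mono but not iso in $\ul{\ca}$, so by the balance hypothesis it fails to be epi. Pick a witness $g: M/T \to X$ with $\ul{g}\neq 0$ and $g\pi = \alpha h_0$ for some $h_0: M \to T^0$, $\alpha: T^0 \to X$ with $T^0 \in \ct$. The key identity $\alpha h_0 \mu = g\pi \mu = 0$ forces $\alpha$ to vanish on $(h_0\mu)(T) \subseteq T^0$, so writing $\iota: (h_0\mu)(T) \hookrightarrow T^0$ for the inclusion, one has $\alpha\iota = 0$. One then verifies that $h := h_0$ is the required witness: any hypothetical $\tilde h: M \to (h_0\mu)(T)$ with $\iota\tilde h \mu = h_0\mu$ would give a decomposition $h_0 = \iota\tilde h + k\pi$ (for some $k: M/T \to T^0$), and composition with $\alpha$ collapses $g\pi = \alpha h_0$ to $g = \alpha k$, placing $g$ in the ideal of morphisms factoring through $\ct$ --- contradicting $\ul{g}\neq 0$.

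The equivalence $2) \Leftrightarrow 3)$ rests on the observation that condition $3.ii)$ is the formal negation of the conclusion of $2)$ applied to the kernel inclusion $\mu = f^k$. For $3) \Rightarrow 2)$, apply $3)$ to the cokernel $f = \pi$ of a given non-split $\mu: T \to M$, $T \in \ct$: condition $3.i)$ is automatic via the trivial factorization $\mu = \mu \circ 1_T$, and the failure of $2)$ for $\mu$ would instantly supply $3.ii)$, forcing $\pi$ to be a retraction and $\mu$ to split --- a contradiction. For $2) \Rightarrow 3)$, given $f$ satisfying $3.i)$ and $3.ii)$ with factorization $f^k = \iota\psi$ through some $T^0 \in \ct$, an argument parallel to the $1) \Rightarrow 2)$ construction produces, under the assumption that $f^k$ does not split, a non-split monomorphism from $\ct$ violating $2)$; hence $f^k$ must split and $f$ is a retraction.

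For $3) \Rightarrow 1)$, take $\ul{f}: M \to N$ mono and epi in $\ul{\ca}$, pick a representative $f: M \to N$ in $\ca$, and factor $f = jp$ as an epimorphism followed by a monomorphism in $\ca$. Reduce to the separate claims that $\ul{p}$ and $\ul{j}$ are isomorphisms in $\ul{\ca}$. For $\ul{p}$, condition $3.i)$ is immediate from $\ul{p}$ being a monomorphism in $\ul{\ca}$ (apply mono-ness to the kernel inclusion itself), and condition $3.ii)$ is extracted from the epi hypothesis on $\ul{f}$ together with projectivity of $\ct$, producing for each $h: M \to T \in \ct$ the required extension $\phi: M \to h(\ker p)$ of the corestriction $\ker(p) \to h(\ker p)$ along $p^k$; then $3)$ makes $p$ a retraction and so $\ul{p}$ iso. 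A dual argument using the cokernel of $j$ handles $\ul{j}$. The main obstacle is precisely this verification of $3.ii)$ in the final step, which requires a careful interplay between the mono-epi structure of $\ul{f}$ in $\ul{\ca}$, the exactness of $\ca$, and the projectivity of objects in $\ct$ to manufacture the extension $\phi$ from purely stable-category data.
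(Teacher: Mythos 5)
Your implication $1)\Rightarrow 2)$ is correct (and is a pleasantly direct argument: the paper instead routes everything through characterizations of stable monos/epis/isos for epimorphisms of $\ca$), and your $3)\Rightarrow 2)$ is fine. The two remaining arrows, however, contain genuine gaps, and the one you yourself flag as ``the main obstacle'' in $3)\Rightarrow 1)$ is not merely unfinished: the intermediate claims you want there are false. Take $A=k[\epsilon]/(\epsilon^2)$, $\ca=\Mod A$, $\ct=\Proj A$ (so $\ul{\ca}$ is even triangulated, hence balanced, and conditions 1)--3) all hold), and $f=\epsilon\cdot\,:A\ra A$. Then $\ol{f}$ is a morphism between stably zero objects, so it is a monomorphism, an epimorphism and an isomorphism in $\ul{\ca}$; but in the epi--mono factorization $A\overset{p}{\twoheadrightarrow}\epsilon A\overset{j}{\hookrightarrow}A$ the morphism $\ol{p}$ is \emph{not} an epimorphism of $\ul{\ca}$ (its target is $\ol{\epsilon A}\cong\ol{k}\neq 0$), $\ol{j}$ is not a monomorphism, and condition $3.ii)$ fails for $p$: for $h=\id_A$ no morphism $A\ra\epsilon A$ restricts to the identity on $\ker(p)=\epsilon A$. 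So $3.ii)$ for $p$ cannot be ``extracted from the epi hypothesis on $\ul{f}$'', and the reduction to ``$\ul{p}$ and $\ul{j}$ are isomorphisms'' is unsound; moreover there is no ``dual argument'' available for $\ul{j}$, since $\ct$ consists of projectives and condition 3) speaks only of epimorphisms. The correct move is not to factor $f$ but to change the representative: by Lemma \ref{reduction to epis}, $\ol{f}$ is represented by an actual epimorphism $f'=\left[\begin{array}{cc}f&g\end{array}\right]:M\oplus T\ra N$ of $\ca$, and for an epimorphism of $\ca$ one has the equivalences ``$\ol{f'}$ mono $\Leftrightarrow$ kernel map factors through $\ct$'' (Proposition \ref{epimorphisms which become stable-monos}), ``$\ol{f'}$ epi $\Leftrightarrow$ the $\tilde{h}$-condition'' (Proposition \ref{epimorphisms which become stable-epis}) and ``$\ol{f'}$ iso $\Leftrightarrow$ $f'$ is a retraction with kernel in $\ct$'' (Proposition \ref{epimorphisms which become stable-isos}); condition 3) applied to $f'$ then closes the loop.

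The implication $2)\Rightarrow 3)$ is also not established by your one-sentence sketch. Given $f$ satisfying $i)$ and $ii)$ with $f^k$ factoring as $K\overset{u}{\ra}T^0\ra M$, the only natural candidate for ``a monomorphism from $\ct$'' is the pushout $\mu':T^0\ra M'$ of $f^k$ along $u$; the ``for every $h$ there exists $\tilde{h}$'' property does transfer from $f^k$ to $\mu'$ via the universal property of the pushout, but $\mu'$ may very well split even though $f^k$ does not (already the pushout of a non-split monomorphism $K\ra T^0$ along itself splits, via the fold map), so in general there is \emph{no} non-split monomorphism from $\ct$ ``violating 2)'' to be produced, and 2) gives you nothing in that case. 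One must then argue directly that the splitting of $\mu'$, together with the $\tilde{h}$-property, forces $f^k$ itself to split; this is exactly the second case in the paper's pushout argument for this equivalence (the case analysis in the proof of Theorem \ref{characterization balanced}), and your proposal does not address it, nor does it explain what the ``argument parallel to the $1)\Rightarrow 2)$ construction'' is supposed to be in the absence of the balance hypothesis.
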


As a consequence we get that if $\ct$ consists of injective objects, then the corresponding stable category is always balanced since condition 2) always holds. This allows to give an example (Example \ref{hereditary non-abelian non-triangulated}) of a balanced pretriangulated category which is neither abelian nor triangulated.

The main result (Theorem \ref{characterization weakly-balanced}) of section \ref{Weak balance when T consists of projective objects} deals with weakly balanced pretriangulated stable categories:

\begin{thm} 
Let $\ca$ be an abelian category and $\ct\subseteq\ca$ be a functorially finite full subcategory consisting of projective objects and closed under direct summands. Consider the following assertions:
\begin{enumerate}[1)]
\item For every $T\in\ct\setminus\{0\}$, there is a non-zero morphism $\varphi:T\ra T'$, with $T'\in\ct$, which factors through an injective object of $\ca$. 
\item The stable category $\ul{\ca}$ of $\ca$ associated to $\ct$ (which is pretriangulated) is weakly balanced. 
\item If $j: T\ra M$ is a non-zero monomorphism in $\ca$, with $T\in\ct$, then there is a morphism $h:M\ra T'$ such that $h j\neq 0$, for some $T'\in\ct$. 
\end{enumerate}
Then $1)\Rightarrow 2)\Leftrightarrow 3)$ and, in case $\ca$ has enough injectives, all assertions are equivalent.
\end{thm}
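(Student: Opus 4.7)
The plan is to establish the chain $1)\Rightarrow 3)\Rightarrow 2)\Rightarrow 3)$ in general (yielding $2)\Leftrightarrow 3)$ and $1)\Rightarrow 2)$), together with $3)\Rightarrow 1)$ under the enough-injectives hypothesis to close the four-way equivalence in that case. The implications relating 1) and 3) are direct consequences of injective extension. For $1)\Rightarrow 3)$, given a non-zero monomorphism $j:T\hookrightarrow M$ with $T\in\ct$, pick via 1) a non-zero $\varphi:T\to T'$ in $\ct$ factored as $\varphi=\psi\iota$ through an injective $E$; injectivity of $E$ lets us extend $\iota$ along the monomorphism $j$ to some $\bar{\iota}:M\to E$, and then $h:=\psi\bar{\iota}:M\to T'$ satisfies $hj=\psi\iota=\varphi\ne 0$. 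Under the enough-injectives hypothesis, the converse $3)\Rightarrow 1)$ is equally direct: embed any non-zero $T\in\ct$ into an injective $E$ via some $j:T\hookrightarrow E$ and apply 3) to obtain $h:E\to T'\in\ct$ with $hj\ne 0$; the morphism $hj:T\to T'$ is then a non-zero map in $\ct$ factoring through $E$.

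For $2)\Rightarrow 3)$, I argue by contrapositive. Suppose there exists a non-zero monomorphism $j:T\hookrightarrow M$ with $T\in\ct$ which is killed by every morphism from $M$ into $\ct$. In particular, the $\ct$-preenvelope $e_M:M\to T^M$ satisfies $e_Mj=0$, so it factors through the canonical projection $p:M\twoheadrightarrow N:=M/j(T)$ as $e_M=\bar{e}_Mp$. The square
\[
\xymatrix{M\ar[r]^{e_M}\ar[d]_p & T^M\ar@{=}[d] \\ N\ar[r]_{\bar{e}_M} & T^M}
\]
is a pushout (because $p$ is epimorphic and both $p$ and $e_M$ vanish on the same subobject $j(T)$), so the cone of $p$ in the right-triangulated structure of $\underline{\ca}$ equals $T^M\in\ct$, producing a right triangle $M\arr{p}N\to 0\to\Sigma M$. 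Simultaneously, the short exact sequence $0\to T\arr{j}M\arr{p}N\to 0$ with $T\in\ct$ delivers a left triangle $\Omega N\to 0\to M\arr{p}N$. Weak balance then forces $p$ invertible in $\underline{\ca}$, so some $q:N\to M$ gives $\id_M-qp=\alpha\beta$ factoring through $T''\in\ct$ as $M\arr{\beta}T''\arr{\alpha}M$. Precomposing with $j$ and using $pj=0$ yields $j=\alpha(\beta j)$; but $\beta j=0$ by hypothesis, forcing $j=0$, a contradiction.

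The technical heart is $3)\Rightarrow 2)$. Take $f:M\to N$ admitting both triangles. Unpacking the right triangle gives an exact sequence $M\arr{(f,-e_M)}N\oplus T^M\to C\to 0$ in $\ca$ with $C,T^M\in\ct$; projectivity of $C$ splits off a retraction, and setting $K:=\ker(f,-e_M)=\ker(f)\cap\ker(e_M)$, one obtains $N\oplus T^M\cong M/K\oplus C$ with the induced map $\bar{f}:M/K\to N$ becoming an isomorphism in $\underline{\ca}$. Hence $f$ is an isomorphism in $\underline{\ca}$ if and only if the projection $p:M\twoheadrightarrow M/K$ is. Since $K\subseteq\ker(e_M)$ and every morphism $M\to T'\in\ct$ factors through the preenvelope $e_M$, every such morphism vanishes on $K$; by condition 3) no non-zero monomorphism from an object of $\ct$ can land in $K$. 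Combining this with the left-triangle data, which supplies a pullback producing $F\in\ct$ with an epimorphism $F\twoheadrightarrow M$ through which the preimage of $K$ splits as $K\oplus\Omega N\hookrightarrow F$, one concludes that the inclusion $K\hookrightarrow M$ is stably negligible, so $p$, and thereby $f$, is an isomorphism in $\underline{\ca}$. The principal obstacle is this final step: when the preenvelope $e_M$ fails to be a monomorphism, controlling the kernel $K$ requires delicately combining condition 3) with the pullback data from the left triangle to ensure that the candidate stable inverse built from the right-triangle splitting is genuine.
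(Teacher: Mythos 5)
Your implications $1)\Rightarrow 3)$, $3)\Rightarrow 1)$ (under enough injectives) and $2)\Rightarrow 3)$ are correct; the last one is in fact a pleasantly streamlined version of the paper's route, building the two triangles over $p:M\ra M/j(T)$ directly instead of passing through an auxiliary condition on epimorphisms. The genuine problem is $3)\Rightarrow 2)$, which is where all the work lies, and there your final step is a non sequitur. After your (correct) reduction, what you know about $K=\ker(f)\cap\ker(e_M)$ is: every morphism $M\ra T'$ with $T'\in\ct$ vanishes on $K$; condition 3) rules out a nonzero monomorphism from an object \emph{of} $\ct$ landing inside $K$; and the inclusion $K\hookrightarrow M$ factors through the pullback object $F\in\ct$ supplied by the left triangle (by the way, $F\ra M$ need not be an epimorphism, since a $\ct$-precover need not be epic — $\ca$ is not assumed to have enough projectives — though this slip is inessential). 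From these facts the conclusion that $\ol{p}$ is invertible in $\ul{\ca}$ does not follow: stable negligibility of $\ol{\iota_K}$ is much weaker than invertibility of $\ol{p}$. Invertibility of $\ol{p}$ would force (by the characterization of stable isomorphisms among epimorphisms, Proposition \ref{epimorphisms which become stable-isos}) that $p$ is a retraction with $K\in\ct$, and only then do condition 3) and "all maps $M\ra\ct$ kill $K$" combine to give $K=0$; but nothing you have established places $K$, or any nonzero subobject of $K$, in $\ct$, and condition 3) can only be applied to monomorphisms whose \emph{source} lies in $\ct$. So the "delicate combination" you allude to at the end is precisely the missing proof, not a routine verification.

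The paper closes exactly this gap by retaining more of the strong mono/epi structure than survives your reduction. It first represents the strong epimorphism by an epimorphism $f$ of $\ca$ (Lemma \ref{reduction to epis}); then the strong monomorphism condition gives $\ker(f)\in\ct$ (Proposition \ref{epimorphisms which become strong monos}), and the strong epimorphism condition gives that $e_M(\ker f)$ is a direct summand of $T^M$, which after a matrix argument yields a decomposition $M=T'\oplus M'$ with $T'\subseteq\ker(f)$, $f=\left[\begin{array}{cc}0 & f'\end{array}\right]:T'\oplus M'\ra N$, and every morphism $M'\ra\ct$ vanishing on $\ker(f')$ (Proposition \ref{objetos cero en right triangles2} and Corollary \ref{epimorphisms with zero left-and-right vertex}). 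Condition 3) then forces $f'$ to be an isomorphism in $\ca$ (otherwise the inclusion of $\ker(f')\in\ct$ would be a nonzero monomorphism killed by every map to $\ct$), and Proposition \ref{epimorphisms which become stable-isos} concludes that $\ol{f}$ is invertible. The two ingredients your sketch discards — the epimorphism representative with $\ker(f)\in\ct$, and the splitting of $e_M(\ker f)$ inside $T^M$ — are exactly what makes condition 3) applicable; without them the argument cannot be completed as written.
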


The link between balance and splitness in stable categories becomes again clear thanks to the following consequence (Proposition \ref{hereditary weakly balanced}) of the theorem above. It characterizes (weak) balance in abelian categories in which the class of projective objects is closed under subobjects.

\begin{prop}
Let $\ch$ be an abelian category such that its class of projective objects is closed under subobjects. Let $\ct$ be a covariantly finite full subcategory of $\ch$ consisting of projective objects and closed under direct summands. The following assertions are equivalent: 
\begin{enumerate}[1)]
\item The stable category $\ul{\ch}$ of $\ch$ associated to $\ct$ is  balanced. 
\item  $^\perp\ct$ is closed under subobjects. 
\item The pair $(^\perp\ct,\Sub(\ct))$ is a hereditary (split) torsion pair in $\ch$, where $\Sub(\ct)$ is the full subcategory of $\ch$ formed by the subobjects of objects in $\ct$.
\end{enumerate}
When $\ct$ is contravariantly finite in $\ch$, the above assertions are equivalent to:
\begin{enumerate}[4)]
\item  $\ul{\ch}$ is weakly balanced
\end{enumerate}
When $\ch$ has enough injectives,  assertions $1)-3)$ are also  equivalent to:
\begin{enumerate}[5)]
\item For every object $T\in\ct$, there is a monomorphism $T\ra E$, where $E$ is an injective(-projective) object of $\ch$ which belongs to $\ct$.
\end{enumerate}
\end{prop}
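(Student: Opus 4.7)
The plan is to prove the equivalences in three stages. The central structural ingredient is a canonical decomposition of each $M \in \ch$. By covariant finiteness of $\ct$ we obtain a left approximation $\lambda_M: M \to T_M$, giving $0 \to K \to M \to I \to 0$ with $K = \ker(\lambda_M)$ and $I = \im(\lambda_M) \subseteq T_M$; the sequence splits because $T_M$ is projective and projectives in $\ch$ are closed under subobjects, so $I$ is projective, yielding $M \cong K \oplus I$ with $I \in \Sub(\ct)$. I claim $K \in {}^\perp\ct$ unconditionally: given $f: K \to T'$ with $T' \in \ct$, extend $f$ via the splitting to $\widetilde f: M \to T'$ vanishing on $I$; the universal property of $\lambda_M$ gives $\widetilde f = \psi \lambda_M$, and restricting to $K$ yields $f = \psi(\lambda_M|_K) = 0$. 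Since $\Sub(\ct)$ is trivially closed under subobjects, this decomposition makes $({}^\perp\ct, \Sub(\ct))$ a split torsion pair precisely when ${}^\perp\ct$ is closed under subobjects, and in that case it is automatically hereditary; this proves $2 \Leftrightarrow 3$.

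For $1 \Leftrightarrow 2$ I would invoke Theorem \ref{characterization balanced}. To prove $1 \Rightarrow 2$, suppose $N \in {}^\perp\ct$ admits a subobject $N' \subseteq N$ with a nonzero $h: N' \to T$ for some $T \in \ct$; form the pushout $P$ of $N' \hookrightarrow N$ along $h$, so the induced $\mu: T \to P$ is a mono, and it is non-split because $\Hom(N, T) = 0$ prevents extending $h$ to $N$. Balance would give $H: P \to T'$ with no $\widetilde H: P \to (H\mu)(T)$ lifting the canonical surjection; but every $H$ corresponds via the pushout to $(H_N, H_T)$ with $H_N = 0$ (since $\Hom(N, T') = 0$), forcing $H_T h = 0$, after which $\widetilde H_N = 0$ together with $\widetilde H_T: T \twoheadrightarrow \im(H_T)$ the canonical epi yields a valid $\widetilde H$, a contradiction. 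For $2 \Rightarrow 1$, write a non-split mono $\mu: T \to M$ as $\mu = (\mu_K, \mu_I)$ using the decomposition $M = K \oplus I$. Because $\Hom(K, T) = 0$ kills the $K$-component of any retraction of $\mu$, non-splitness forces $\mu_I$ not to be a split mono; moreover $\ker(\mu_I)$ embeds into $K$ via $\mu_K$ (as $\mu$ is mono), so by 2) it lies in ${}^\perp\ct$, and the nonzero inclusion $\ker(\mu_I) \hookrightarrow T$ would violate $\Hom(\ker(\mu_I), T) = 0$ unless $\ker(\mu_I) = 0$. Thus $\mu_I$ is a genuine non-split mono, $\mu_I(T) \cong T$ is not a direct summand of $I$, and $h = \lambda_M$ witnesses the required condition of Theorem \ref{characterization balanced}.

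For the remaining equivalences, $5 \Rightarrow 1$ follows directly from Theorem \ref{characterization balanced}: for a non-split $\mu: T \to M$, use the injective-projective $E \in \ct$ provided by 5 and its injectivity to extend $T \hookrightarrow E$ to $h: M \to E$; then $(h\mu)(T) \cong T$, and any $\widetilde h: M \to T$ with $\widetilde h\mu = \mathrm{id}_T$ would retract $\mu$. For $1 \Rightarrow 5$, choose an injective hull $T \hookrightarrow E$ in $\ch$ and decompose $E = K_E \oplus I_E$; since the torsion pair from $2 \Leftrightarrow 3$ yields $\Hom(\Sub(\ct), {}^\perp\ct) = 0$, the composition $T \hookrightarrow E \twoheadrightarrow K_E$ vanishes, so $T \hookrightarrow I_E$; as $I_E$ is a summand of the injective $E$ it is itself injective, hence the inclusion $I_E \hookrightarrow T_E$ splits and $I_E$ belongs to $\ct$ by closure under summands. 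The step I expect to be most delicate is $4 \Rightarrow 1$ under contravariant finiteness; the idea is to rerun the pushout argument of $1 \Rightarrow 2$ while invoking Theorem \ref{characterization weakly-balanced} in lieu of Theorem \ref{characterization balanced}. Weak balance alone only provides a nonzero $H_T$ vanishing on $h(N')$, so the main obstacle is to use functorial finiteness of $\ct$ (now available since $\ct$ is both covariantly and contravariantly finite) to upgrade this conclusion to the full bad condition, for instance by replacing $N$ in the test construction with its right $\ct$-approximation to eliminate the escape route.
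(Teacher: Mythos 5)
Your treatment of $1)\Leftrightarrow 2)\Leftrightarrow 3)$ and of $5)\Rightarrow 1)$ is essentially correct and close in spirit to the paper (your decomposition $M\cong K\oplus I$ is exactly the split torsion pair of Lemma \ref{covariantly-finite in hereditary}, and taking $h=\lambda_M$ in Theorem \ref{characterization balanced} for $2)\Rightarrow 1)$ is a clean shortcut; the quibble that the pair is a split torsion pair even without $2)$, heredity being the only content of $3)$, is harmless). The first genuine gap is the equivalence with $4)$: you explicitly leave it open, and your proposed repair (re-running the pushout argument and "replacing $N$ by its right $\ct$-approximation") does not resolve the difficulty you yourself identify, namely that weak balance applied to the pushed-out mono $T\ra P$ only yields a nonzero $H_T$ with $H_Th=0$. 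The missing idea is to apply the weak-balance criterion (condition $5)$ of Theorem \ref{characterization weakly-balanced}, available because $\ct$ is now functorially finite) not to a mono into the pushout, but to a mono whose target lies in ${}^\perp\ct$, where all maps to $\ct$ vanish: if $\mu=\left[\begin{smallmatrix}u\\ v\end{smallmatrix}\right]\colon T\ra N\oplus P$ is a monomorphism with $N\in{}^\perp\ct$ and $P$ projective, then $T_1:=\ker(v)$ is a direct summand of $T$ (images in projectives are projective), hence lies in $\ct$, and embeds in $N$ via $u$; since $\Hom(N,\ct)=0$, the criterion forces $T_1=0$, i.e.\ $v$ is a monomorphism. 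With this component lemma in hand, the bicartesian-square argument of $1)\Rightarrow 2)$ carries over verbatim to give $4)\Rightarrow 2)$, which is exactly how the paper proceeds.

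The second gap is in $1)\Rightarrow 5)$: you invoke $\Hom(\Sub(\ct),{}^\perp\ct)=0$, but for the torsion pair $({}^\perp\ct,\Sub(\ct))$ the orthogonality goes the other way, and $\Hom(\ct,{}^\perp\ct)$ is typically nonzero even when $1)$--$3)$ hold; for instance, in Example \ref{hereditary non-abelian non-triangulated} the projection $e_3H\twoheadrightarrow S_3$ is a nonzero map from $\ct$ to ${}^\perp\ct$. Moreover, "enough injectives" does not provide injective hulls, and for a non-minimal embedding $T\ra E=E'\oplus P$ (with $E'\in{}^\perp\ct$, $P\in\Sub(\ct)$) the component $T\ra E'$ genuinely can be nonzero, so your deduction "$T\hookrightarrow I_E$" collapses as written. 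The repair is again the component lemma above: for any monomorphism $T\ra E'\oplus P$ the component $T\ra P$ is a monomorphism, and $P$, being an injective direct summand of $E$ lying in $\Sub(\ct)$, splits into some object of $\ct$ and hence belongs to $\ct$; this is the paper's argument for $3)\Rightarrow 5)$. (Alternatively, when injective envelopes do exist, heredity of the torsion pair makes $\Sub(\ct)$ closed under them, which also saves your argument.)
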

\bigskip

\textbf{Chapter 3}
\bigskip

The results of this chapter, which give a complete answer to question 1 above, have appeared published in \cite{NicolasSaorin2007a} (see also \cite{NicolasSaorin2007e}).

Let $A$ be an arbitrary ring. Recall that an $A$-module $M$ is \emph{hereditary $\Sigma$-injective} if every quotient of a (possibly infinite) coproduct of copies of $M$ is an injective $A$-module. 

It turns out that left split TTF triples on $\Mod A$ are relatively easily characterized in terms of hereditary $\Sigma$-injective modules (Corollary \ref{clasificacion left split}):

\begin{cor}
Jans' bijection between TTF triples on $\Mod A$ and idempotent two-sided ideals of $A$ restricts to a bijection between:
\begin{enumerate}[1)]
\item Left split TTF triples on $\Mod A$.
\item Ideals of $A$ of the form $I=eA$ where $e$ is an idempotent of $A$ such that $eA(1-e)$ is hereditary $\Sigma$-injective as a right $(1-e)A(1-e)$-module.
\end{enumerate}
\end{cor}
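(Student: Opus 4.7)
The plan is to translate left-splitness through a triangular matrix ring reformulation of $\Mod A$. Recall that under Jans' bijection the idempotent two-sided ideal $I$ yields the first torsion pair $(\cx,\cy)$ with $\cx=\{M:MI=M\}$ and $\cy=\{M:MI=0\}\cong\Mod(A/I)$, and left-splitness is the condition that the canonical sequence $0\ra MI\ra M\ra M/MI\ra 0$ splits in $\Mod A$ for every $M$.

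First I would test splitness on $M=A$: writing $A=I\oplus J$ and $1=e+f$ with $e\in I$, $f\in J$, the two-sidedness of $I$ forces $fe=e-e^{2}\in I\cap J=0$, so $e$ is idempotent, $I=eA$, $J=(1-e)A$, and $(1-e)Ae=0$. Under $(1-e)Ae=0$, the Peirce decomposition exhibits $A$ as a formal upper triangular matrix ring
\[A\cong\begin{pmatrix}R & N\\ 0 & S\end{pmatrix},\quad R:=eAe,\ S:=(1-e)A(1-e),\ N:=eA(1-e),\]
and a right $A$-module $M$ corresponds to a triple $(M_{1},M_{2},\phi)$ with $M_{1}=Me\in\Mod R$, $M_{2}=M(1-e)\in\Mod S$, and $\phi:M_{1}\ten_{R}N\ra M_{2}$ right $S$-linear.

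A direct computation of the right action of $I=eA$ on such a triple gives $MI=(M_{1},\im(\phi),\phi)$, so the canonical torsion sequence becomes
\[0\ra(M_{1},\im(\phi),\phi)\ra(M_{1},M_{2},\phi)\ra(0,M_{2}/\im(\phi),0)\ra 0.\]
Since the right-hand term has zero linking map, an $A$-linear splitting of this sequence is exactly an $S$-linear splitting of $0\ra\im(\phi)\ra M_{2}\ra M_{2}/\im(\phi)\ra 0$ (the compatibility with the linking maps becomes vacuous because the section lands where $\phi$ vanishes). Hence left-splitness of the TTF triple amounts to saying that for every triple $(M_{1},M_{2},\phi)$, the $S$-submodule $\im(\phi)\subseteq M_{2}$ is a direct summand.

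The last step is to identify which $S$-submodules arise as $\im(\phi)$: taking $M_{1}=R^{(\Lambda)}$ gives $M_{1}\ten_{R}N=N^{(\Lambda)}$, so letting $\phi$ surject onto an arbitrary quotient and then embedding into an arbitrary over-module realises every $X\in\Gen(N)$ as $\im(\phi)$ inside an arbitrary $S$-module that contains it. Consequently, left-splitness is equivalent to the assertion that every $X\in\Gen(N)$ is a direct summand of every $S$-module in which it sits, i.e.\ every object of $\Gen(N)$ is injective in $\Mod S$, which is precisely the statement that $N=eA(1-e)$ is hereditary $\Sigma$-injective as a right $(1-e)A(1-e)$-module. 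The converse direction follows by running the same reformulation backwards. The main obstacle is pinning down the class $\{\im(\phi)\}$ as exactly $\Gen(N)$; once this is settled, the equivalence with hereditary $\Sigma$-injectivity is immediate from the standard characterisation of an injective class by the splitting of every short exact sequence with target in the class.
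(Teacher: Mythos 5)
Your proof is correct, and in the crucial direction it takes a genuinely different route from the paper's. The paper proves the corollary via Theorem \ref{izquierda}: after extracting $e$ with $I=eA$ and $(1-e)Ae=0$ exactly as you do, it establishes hereditary $\Sigma$-injectivity of $eA(1-e)$ homologically — given a quotient $E$ of a coproduct of copies of $eA(1-e)$, it forms a pushout of the non-split sequence $0\ra eA(1-e)A^{(S)}\ra eA^{(S)}\ra F\ra 0$ along the quotient map and deduces $\Ext^1_A(T',E)=0$ for all $T'\in\cy$ from $\Hom_A(\cy,\cz)=0$ together with the vanishing of $\Ext^1_A(\cy,\cx)$ forced by splitness. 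You instead stay entirely inside the triple description of $\Mod A$: you compute the torsion radical explicitly as $MI=(M_1,\im(\phi),\phi)$, observe that an $A$-linear splitting of the torsion sequence is literally an $S$-linear splitting of $\im(\phi)\subseteq M_2$ (the compatibility condition being vacuous), and then realise every embedding of every member of $\Gen(N)$ as such an image by taking $M_1$ free over $R=eAe$; right-exactness of $?\otimes_R N$ gives the reverse containment $\{\im(\phi)\}\subseteq\Gen(N)$, so splitness translates word-for-word into ``every object of $\Gen(N)$ is injective over $S$'', which is the definition of hereditary $\Sigma$-injectivity. Your converse (injectivity of $\im(\phi)$ splits the torsion sequence) coincides in substance with the paper's implication $3)\Rightarrow 2)$. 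What your approach buys is a completely elementary, Ext-free argument in which the equivalence is a direct dictionary; what the paper's buys is that the splitness hypothesis is used only through the vanishing $\Ext^1_A(\cy,\cx)=0$, a formulation that slots naturally into its broader torsion-theoretic machinery. One small point of bookkeeping: idempotence of $e$ and $I=eA$ come from splitness applied to $A_A$, while it is two-sidedness of $I$ alone that yields $Ae\subseteq eA$ and hence $(1-e)Ae=0$; your write-up slightly blurs which hypothesis is responsible for which conclusion, but the mathematics is right.
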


An $A$-module $M$ is \emph{hereditary $\Pi$-projective} if every submodule of a (possible infinite) product of copies of $M$ is projective. 
Notice that one can consider several `duals' of a left $A$-module $M$. For instance, one can consider the right $A$-module $\Hom_{\Z}(M,\mathbf{Q}/\Z)$. Also, one can put $S:=\End_{A}(M)^{op}$ and consider the  right $A$-module $\Hom_{S}(M,Q)$ where $Q$ is a minimal injective cogenerator of $\Mod S$.
Proposition \ref{proposicion de hereditary pi-projective dual} says that one of these duals is hereditary $\Pi$-projective if and only if so is the other. In this case, we say that $M$ \emph{has hereditary $\Pi$-projective dual}. 

A complete characterization of right split TTF triples is more difficult and needs something more than hereditary $\Pi$-projective modules. However, sometimes this kind of modules suffices, as shown in the following partial characterization (\cf Corollary \ref{right split para buenos2}):

\begin{cor}
Jans' bijection between TTF triples on $\Mod A$ and idempotent two-sided ideals of $A$ restricts to a bijection between:
\begin{enumerate}[1)]
\item Right split TTF triples on $\Mod A$ whose associated idempotent ideal $I$ is finitely generated on the left.
\item Ideals of the form $I=Ae$, where $e$ is an idempotent of $A$ such that the left ${(1-e)A(1-e)}$-module $(1-e)Ae$ has hereditary $\Pi$-projective dual.
\end{enumerate}
In particular, when $A$ satisfies either one of the two following conditions, the class $1)$ above covers all the right split TTF triples on $\Mod A$:
\begin{enumerate}[i)]
\item $A$ is semiperfect.
\item Every idempotent Êideal of $A$ which is pure on the left is also finitely generated on the left (\eg if $A$ is left N\oe therian).
\end{enumerate}
\end{cor}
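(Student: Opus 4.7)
The plan is to push the right-split condition through Jans' bijection and reformulate it by means of a Peirce decomposition of $A$. Write $(\cx,\cy,\cz)$ for a TTF triple on $\Mod A$ and $I$ for its associated idempotent two-sided ideal.

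First, suppose the triple is right split and $I$ is left finitely generated. A standard argument using idempotency produces an idempotent $e\in I$ with $I=Ae$: writing $I=Ax_1+\cdots+Ax_n$ and, using $I=I^2$, expressing $x_i=\sum_j w_{ij}x_j$ with $w_{ij}\in I$, the matrix identity $(\mathrm{Id}_n-W)\vec{x}=0$ yields the idempotent. The two-sidedness of $Ae$ is equivalent to $eA(1-e)=0$, which gives the Peirce decomposition
\[A=\begin{pmatrix} eAe & 0 \\ (1-e)Ae & (1-e)A(1-e) \end{pmatrix}.\]
Setting $B=eAe$, $C=(1-e)A(1-e)$ and $N=(1-e)Ae$ (a $(C,B)$-bimodule), $\Mod A$ becomes equivalent to the category of triples $(M_1,M_2,\alpha)$ with $M_1\in\Mod B$, $M_2\in\Mod C$, and $\alpha\colon M_2\otimes_C N\ra M_1$ a morphism of right $B$-modules.

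Second, under this equivalence the torsion class $\cy=\{M:MI=0\}$ becomes $\{(M_1,M_2,\alpha):M_1=0\}$ and the torsion submodule is $t_\cy(M)=(0,\ker\alpha^\vee,0)$, where $\alpha^\vee\colon M_2\ra\Hom_B(N,M_1)$ is the $C$-linear adjoint of $\alpha$. Hence the right-splitness of $(\cx,\cy,\cz)$ is equivalent to the condition that, for every triple $(M_1,M_2,\alpha)$, the kernel $\ker\alpha^\vee$ is a direct summand of $M_2$ as a $C$-module. Equivalently, every right $C$-submodule of every $\Hom_B(N,M_1)$, with $M_1\in\Mod B$, is projective.

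Third, I reformulate this as a $\Pi$-projectivity statement for a dual of $N$. Since every $M_1\in\Mod B$ embeds in a product $Q^{\Lambda}$ of copies of an injective cogenerator $Q$ of $\Mod B$, and $\Hom_B(N,Q^{\Lambda})=\Hom_B(N,Q)^{\Lambda}=(N^\vee)^{\Lambda}$ with $N^\vee:=\Hom_B(N,Q)$, the submodules appearing in the previous step are exactly the $C$-submodules of products of copies of $N^\vee$. Hence right-splitness is equivalent to $N^\vee$ being hereditary $\Pi$-projective, and by Proposition~\ref{proposicion de hereditary pi-projective dual} this property is independent of the chosen dual. This gives precisely the condition that the left $C$-module $(1-e)Ae$ has hereditary $\Pi$-projective dual, and a direct verification shows the assignment $(\cx,\cy,\cz)\mapsto Ae$ is a bijection onto the set of such ideals.

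Finally, for the ``in particular'' statements: when $A$ is semiperfect, the decomposition of $1$ into finitely many primitive orthogonal idempotents forces every idempotent two-sided ideal to be generated by an idempotent and hence finitely generated on both sides. For condition (ii), right-splitness implies that $I$ is left pure in $A$ (from the splitting of the canonical sequence associated with $\cy$ evaluated at $A$), whence by hypothesis $I$ is left finitely generated. The most delicate step is the third paragraph, where one must verify rigorously that as $(M_1,M_2,\alpha)$ varies the modules $\ker\alpha^\vee$ account for all $C$-submodules of products of copies of $N^\vee$; here Proposition~\ref{proposicion de hereditary pi-projective dual} is essential to switch freely between the different natural duals of $(1-e)Ae$.
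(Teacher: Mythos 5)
Your middle steps --- the Peirce decomposition determined by $e$, the identification of $\cy$ with the triples whose first component vanishes, the computation $\tau_{\cy}(M)=(0,\ker\alpha^{\vee},0)$, the translation of right splitness into ``every $C$-submodule of every $\Hom_B(N,M_1)$, equivalently of every product of copies of $N^{\vee}$, is projective'', and the appeal to Proposition \ref{proposicion de hereditary pi-projective dual} --- reproduce the paper's proof of Theorem \ref{clasificacion right split para buenos}. The genuine gap is at the very first step: it is \emph{not} true that a two-sided ideal which is idempotent and finitely generated on the left must be of the form $Ae$. The manipulation behind ``$(\mathrm{Id}_n-W)\vec{x}=0$ yields the idempotent'' is a commutative (determinant/adjugate) argument and does not survive noncommutativity. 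Concretely, for $A=\left[\begin{array}{cc}k&k\\0&k\end{array}\right]$ and $I=\left[\begin{array}{cc}k&k\\0&0\end{array}\right]$, the ideal $I$ is two-sided, idempotent and generated on the left by two elements, yet every nonzero idempotent lying in $I$ generates only a one-dimensional left ideal, so $I\neq Ae$ for any idempotent $e$. The hypothesis you never use at this point is right splitness: as in Proposition \ref{anhadida}, right splitness makes $(\cx,\cy)$ hereditary (Proposition \ref{primeras propiedades de los right split}), hence $I$ is pure as a left ideal (Lemma \ref{hereditary igual a pure igual a condicion aritmetica}), so $A/I$ is flat as a left module; combined with finite generation of $_AI$ (so $_A(A/I)$ is finitely presented) this gives projectivity of $_A(A/I)$, the splitting of $0\to I\to A\to A/I\to 0$ on the left, and only then $I=Ae$.

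The same omission undermines the ``in particular'' part. For (i) you claim that over a semiperfect ring every idempotent two-sided ideal is generated by an idempotent and hence finitely generated on both sides; this is false: for $A=\left[\begin{array}{cc}k&V\\0&k\end{array}\right]$ with $V$ an infinite-dimensional $k$-vector space, $A$ is semiprimary (hence semiperfect) and $I=\left[\begin{array}{cc}k&V\\0&0\end{array}\right]=e_{11}A$ is an idempotent two-sided ideal which is not finitely generated as a left ideal. What is true, and what the paper uses, is that for a \emph{right split} triple the ideal is automatically finitely generated on the left when $A$ is semiperfect: by Proposition \ref{anhadida} this is equivalent to $A/I$ having a projective cover, which holds over any semiperfect ring (alternatively, purity makes $_A(A/I)$ a cyclic flat module, and over a semiperfect ring finitely generated flat modules are projective). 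For (ii) your conclusion is correct but the justification is off: the splitting of the $(\cy,\cz)$-torsion sequence of $A_A$ only exhibits $\ann_A(I)$ as a direct summand; the purity of $I$ as a left ideal comes instead from the hereditariness of $(\cx,\cy)$, i.e.\ from Proposition \ref{primeras propiedades de los right split} together with Lemma \ref{hereditary igual a pure igual a condicion aritmetica}. Note that this purity argument is exactly the ingredient missing in your first step; once it is inserted there, the rest of your proof goes through.
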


Now we will present the general parametrization of right split TTF triples. For this we need some previous definitions.

\begin{defin}
Let $I$ be an idempotent two-sided ideal of $A$ and let $M$ be a right $A$-module. A submodule $N$ of $M$ is \emph{$I$-saturated} if an element $m$ of $M$ belongs to $N$ provided $mI$ is contained in $N$.
\end{defin}

\begin{defin}
An idempotent two-sided ideal $I$ of $A$ is \emph{right splitting} if:
\begin{enumerate}[1)]
\item for every integer $n\geq 1$ and every $I$-saturated submodule $K$ of $A^n$, the quotient $A^n/(K+I^n)$ is a projective $A/I$-module,
\item $I$ is a pure left $A$-module,
\item the left annihilator of $I$ in $A$ vanishes, \ie $\lann_{A}(I)=0$.
\end{enumerate}
\end{defin}

\begin{cor}
Jans' bijection between TTF triples on $\Mod A$ and idempotent two-sided ideals of $A$ restricts to a bijection between:\begin{enumerate}[1)]
\item Right split TTF triples on $\Mod A$.
\item Idempotent ideals $I$ such that, for some idempotent $e\in A$, $\lann_A(I)=(1-e)A$ and $I$ is a right splitting ideal of
$eAe$ with $eAe/I$ a hereditary perfect ring.
\end{enumerate}
\end{cor}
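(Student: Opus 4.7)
Recall that under Jans' bijection the TTF triple $(\cx,\cy,\cz)$ associated with the idempotent two-sided ideal $I$ has torsion-free class $\cz = \{M : \ann_M(I) = 0\}$ with respect to $(\cy,\cz)$, so right splitness means that the canonical sequence $0 \to \ann_M(I) \to M \to M/\ann_M(I) \to 0$ splits for every right $A$-module $M$. The first move is to apply splitness to $M = A_A$: the torsion part is $\lann_A(I)$, so it must be a direct summand of $A_A$, whence $\lann_A(I) = (1-e)A$ for some idempotent $e \in A$. Since $\lann_A(I)$ is a two-sided ideal, the inclusion $A \cdot (1-e)A \subseteq (1-e)A$ forces $eA(1-e)=0$; combined with $(1-e)I=0$ (from $1-e\in\lann_A(I)$), this yields $I=eI$ and $I(1-e)=eI(1-e)\subseteq eA(1-e)=0$, so $I=eIe\subseteq eAe$. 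Thus $I$ becomes an idempotent two-sided ideal of the corner ring $eAe$ with $\lann_{eAe}(I)=0$.

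The second step is to push the problem into the corner. Every right $A$-module decomposes Peirce-wise as $M=Me\oplus M(1-e)$, and $(1-e)I=0$ forces $M(1-e)\subseteq\ann_M(I)$; using $eA(1-e)=0$ so that the $A$-action on $Me$ factors through $eAe$, a direct calculation gives $\ann_M(I)=M(1-e)\oplus\ann_{Me}(I)$. Consequently the Jans sequence for $M$ splits over $A$ if and only if $0\to\ann_{Me}(I)\to Me\to Me/\ann_{Me}(I)\to 0$ splits over $eAe$. Since every right $eAe$-module arises as $Me$ (extend trivially on the $(1-e)$-component), the right splitness of the original TTF triple is equivalent to the right splitness of the TTF triple on $\Mod eAe$ associated to $I\subseteq eAe$. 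This reduces the corollary to the case $\lann_A(I)=0$: under this hypothesis one must prove that the TTF triple is right split if and only if $I$ is a right splitting ideal and $A/I$ is hereditary perfect.

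For the reduced case I would argue as follows. In the forward direction, test right splitness against the finitely generated modules $A^n/K$ for $I$-saturated $K\subseteq A^n$: a retraction of the Jans sequence on these cyclic quotients forces both the projectivity of $A^n/(K+I^n)$ over $A/I$ and the purity of $I$ as a left $A$-module (via the standard characterization of purity through finitely presented test modules); the homological properties of $A/I$ itself are then read off by transferring information from $\cy\simeq\Mod(A/I)$ and exploiting the splittings. In the converse direction, the three conditions on $I$ yield canonical splittings of the Jans sequence on every finitely presented module (the finitely generated case already handled by Corollary \ref{right split para buenos2} serves as the model), and the hereditary-perfect hypothesis on $A/I$ is exactly what promotes these local splittings to a global one: perfectness furnishes projective covers for the $A/I$-module $\ann_M(I)$, and heredity ensures that the syzygies created by assembling the local data stay projective.

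The main obstacle is this last promotion step in the converse: splitness of short exact sequences is not preserved by arbitrary direct limits, so the three essentially finitary conditions on $I$ do not, by themselves, give splittings beyond the finitely presented realm. The role of the hereditary-perfect hypothesis is precisely to bypass this difficulty via Bass' theorem (flat $A/I$-modules are projective, every $A/I$-module has a projective cover), together with the Goodearl-type results cited in the acknowledgements; purity of $I$ guarantees that the local splittings on finitely presented pieces are compatible enough to be coherently assembled, while the hereditary condition eliminates the higher obstructions that could otherwise obstruct the assembly.
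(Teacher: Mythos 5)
Your reduction to the corner ring is correct and is essentially the paper's own first step (Lemma \ref{pre right split} and the proposition following it): right splitness applied to $A_A$ gives $\lann_A(I)=(1-e)A$, two-sidedness forces $eA(1-e)=0$ and $I\subseteq eAe$, and your Peirce computation does show that the triple on $\Mod A$ is right split if and only if the triple on $\Mod eAe$ determined by $I$ is. The gap lies in the reduced case $\lann_A(I)=0$, i.e.\ in what is Theorem \ref{clasificacion right split}. In the forward direction your test objects are vacuous: if $K\subseteq A^n$ is $I$-saturated then $A^n/K$ lies in $\cz$, so $\ann_{A^n/K}(I)=0$ and its Jans sequence is trivially split; a retraction on these modules forces neither the projectivity of $A^n/(K+I^n)$ nor the purity of ${}_AI$. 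What right splitness actually yields is $\Ext^1_A(F,T)=0$ for all $F\in\cz$, $T\in\cy$, whence $F/FI$ is projective over $A/I$ (apply $\Hom_A(?,T)$ to $0\to FI\to F\to F/FI\to 0$); purity of ${}_AI$ comes not from finitely presented tests but from splitness at injective envelopes, which shows $\cy$ is closed under injective envelopes, hence $(\cx,\cy)$ is hereditary, and then one invokes Azumaya's criterion (Lemma \ref{hereditary igual a pure igual a condicion aritmetica}); and right perfectness of $A/I$ is not simply ``read off'' from $\cy\simeq\Mod A/I$ --- it requires projectivity of $A^{\cs}/A^{\cs}I$ over $A/I$ for arbitrary infinite sets $\cs$ together with Goodearl's theorem \cite[Theorem 5.1]{Goodearl}. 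None of these genuinely infinite inputs appears in your plan.

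In the converse direction, splitting the Jans sequence on finitely presented modules and then ``assembling'' the local splittings is not a mechanism that works, and you flag the difficulty (splitness does not pass to direct limits) without resolving it. The argument that succeeds avoids assembling splittings altogether: purity of ${}_AI$ makes $(\cx,\cy)$ hereditary, so $\tau_{\cy}(M)\cap MI=0$ and the composite $\tau_{\cy}(M)\hookrightarrow M\twoheadrightarrow M/MI$ is a monomorphism of $A/I$-modules whose cokernel is $F/FI$ with $F=M/\tau_{\cy}(M)\in\cz$; for finitely generated $F$ this cokernel is projective by the right-splitting condition (Lemma \ref{new-splitting}), and for arbitrary $F$ one writes $F$ as the directed union of its finitely generated submodules (all in $\cz$), so $F/FI$ is a direct limit of projectives, hence flat, hence projective because $A/I$ is right perfect (Bass). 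Projectivity of the cokernel splits the monomorphism over $A/I$, and therefore splits $\tau_{\cy}(M)\hookrightarrow M$ over $A$. Thus the direct-limit problem is handled at the level of flatness and projectivity of $F/FI$, where it does pass to the limit, not at the level of compatible splittings; note also that heredity of $A/I$ plays no role in sufficiency --- it is a purely necessary condition, extracted in the forward direction from $\mathfrak{b}I=\mathfrak{b}\cap I$ for right ideals $\mathfrak{b}\supseteq I$ of $A$.
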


This result (Corollary \ref{clasification right split2}) allows us to give an example (Example \ref{l=c<r}) of a commutative ring such that: every left split TTF triple is centrally split, but there are right split TTF triples which are not centrally split.
\bigskip

\textbf{Chapter 4}
\bigskip

In this chapter we recall well-known results on triangulated categories and set the terminology for the subsequent chapters. Also, we define the \emph{right bounded derived category of a dg category} and study some of its basic properties. Let us briefly state here this definition, since it will be needed for the summary of chapter 6.

Let $\ca$ be a small dg category. Let $\Susp(\ca)$ be the smallest full suspended subcategory of $\cd\ca$ containing the right dg $\ca$-modules $A^{\we}$ represented by objects $A$ of $\ca$ and closed under small coproducts. We say that the \emph{right bounded derived category of $\ca$} is the full triangulated subcategory of $\cd\ca$ obtained as the union of the shifts of $\Susp(\ca)$:
\[\cd^-\ca:=\bigcup_{n\in\Z}\Susp(\ca)[n].
\]
This definition agrees with the classical one when $\ca$ is the dg category associated to an ordinary algebra $A$, and with the natural one when $\ca$ has cohomology concentrated in non-positive degrees (see Lemma \ref{looking for coherence}).
\bigskip

\textbf{Chapter 5}
\bigskip

The results of this chapter appear in the preprint \cite{NicolasSaorin2007c}. Let us fix some terminology before giving the answer to question 2 above.

\begin{notat}
Given a class $\cq$ of objects of a triangulated category $\cd$ we denote by $\Tria(\cq)$ the smallest full triangulated subcategory of $\cd$ containing $\cq$ and closed under small coproducts.
\end{notat}

\begin{defin}
Let $\cd$ be a triangulated category and let $\cq$ be a set of objects of $\cd$. We say that $\cq$ \emph{generates} $\cd$ if an object $M$ of $\cd$ vanishes provided
\[\cd(Q[n],M)=0
\]
for every object $Q$ of $\cq$ and every integer $n\in\Z$.
\end{defin}

\begin{defin}
A triangulated category $\cd$ is \emph{aisled} if it has a set of generators, it has small coproducts and for every set $\cq$ of objects of $\cd$ we have that $\Tria(\cq)$  is the first class of objects of a t-structure on $\cd$.
\end{defin}

\begin{ex}
The derived category of a small dg category is aisled (\cf Corollary \ref{derived categories are aisled}).
\end{ex}

\begin{defin}
An object $M$ of a triangulated category $\cd$ is \emph{exceptional} provided it has no `self-extensions', \ie $\cd(M,M[n])=0$ for all $n\in\Z\setminus\{0\}$.
\end{defin}

Now we can state our Corollary \ref{recollements unbounded derived}, which is the unbounded version of S.~K\"{o}nig's theorem:

\begin{cor}
Let $\cd$ be a triangulated category which is either perfectly generated or aisled. The following assertions are equivalent:
\begin{enumerate}[1)]
\item $\cd$ is a recollement of triangulated categories generated by a single compact (and exceptional) object.
\item There are (exceptional) objects $P$ and $Q$ of $\cd$ such that:
\begin{enumerate}[2.1)]
\item $P$ is compact.
\item $Q$ is compact in $\Tria(Q)$.
\item $\cd(P[n],Q)=0$ for each $n\in\Z$.
\item $\{P, Q\}$ generates $\cd$.
\end{enumerate}
\item There is a compact (and exceptional) object $P$ such that $\Tria(P)^{\bot}$ is generated by a compact (and exceptional) object in $\Tria(P)^{\bot}$.
\end{enumerate}
In case $\cd$ is compactly generated by a single object we can add:
\begin{enumerate}[4)]
\item There is a compact (and exceptional) object $P$ (such that $\Tria(P)^{\bot}$ is generated by an exceptional compact object).
\end{enumerate}
In case $\cd$ is algebraic we can add:
\begin{enumerate}[5)]
\item $\cd$ is a recollement of derived categories of dg algebras (concentrated in degree $0$, \ie ordinary algebras).
\end{enumerate}
\end{cor}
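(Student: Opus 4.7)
The plan is to deduce this corollary from the general parametrization of triangulated TTF triples on $\cd$ established earlier (Proposition \ref{naive general parametrization}), together with the identification between recollements of $\cd$ and triangulated TTF triples on $\cd$. The key structural input is that, under either of the two standing hypotheses (aisled or perfectly generated), the smallest triangulated subcategory closed under small coproducts containing a prescribed set of objects is automatically an aisle, so the t-structures we need materialize without having to be constructed by hand.

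For (1) $\Rightarrow$ (2): a recollement expressing $\cd$ as glued from two triangulated categories, each generated by a single compact (and exceptional) object, yields, upon pulling back the generators along the two fully faithful inclusion functors, a pair $P, Q \in \cd$. The object $P$ is compact in $\cd$ because the corresponding piece embeds as a smashing subcategory (the relevant inclusion preserves small coproducts), while $Q$ is compact only in $\Tria(Q)$. Condition 2.3 is the $\Hom$-vanishing built into any recollement, and 2.4 is the joint generation. The converse (2) $\Rightarrow$ (1) feeds the data $(P,Q)$ into Proposition \ref{naive general parametrization} to manufacture the triangulated TTF triple $(\Tria(P), \Tria(P)^{\bot}, \Tria(P)^{\bot\bot})$. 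The equivalence (2) $\Leftrightarrow$ (3) is then obtained by identifying $\Tria(P)^{\bot}$ as the middle piece: condition 2.3 places $Q$ in $\Tria(P)^{\bot}$, condition 2.4 forces $\{Q[n]\}_{n \in \Z}$ to generate $\Tria(P)^{\bot}$, and 2.2 is the compactness of $Q$ in $\Tria(P)^{\bot} = \Tria(Q)$; the converse is immediate by taking $Q$ to be the prescribed compact generator.

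The heart of the matter, and the main obstacle, is (3) $\Rightarrow$ (1). Given $P$ compact, $\Tria(P)^{\bot}$ is closed under small coproducts (since $\Tria(P)^{\bot} = \bigcap_{n \in \Z} P[n]^{\bot}$ and $P$ is compact), and the hypothesis on $\cd$ makes $(\Tria(P), \Tria(P)^{\bot})$ into a t-structure. The delicate point is to transport the structural hypothesis from $\cd$ to $\Tria(P)^{\bot}$: one needs $\Tria(P)^{\bot}$ to inherit either aisledness or perfect generation, so that the second t-structure $(\Tria(P)^{\bot}, \Tria(P)^{\bot\bot})$ exists with aisle generated by the compact object $Q$ supplied by (3). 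Once both t-structures are in place the triangulated TTF triple is complete, hence the recollement. Each resulting piece is by construction generated by the single compact (and exceptional) object $P$ or $Q$.

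The addendum (4) follows because in the singly compactly generated case the generator of $\Tria(P)^{\bot}$ can always be lifted to be compact in all of $\cd$ (not merely in $\Tria(P)^{\bot}$) by a standard Neeman--Thomason style argument. The addendum (5) invokes B.~Keller's dg Morita theorem \cite{Keller1994a}: an algebraic triangulated category generated by a single compact exceptional object is triangle equivalent to $\cd \cb$ for a dg algebra $\cb$, and exceptionality forces the cohomology of $\cb$ to be concentrated in degree zero, so $\cb$ is quasi-isomorphic to an ordinary algebra. Applying this separately to each of the two pieces of the recollement produced in (1) yields the assertion.
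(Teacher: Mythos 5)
Your outline follows the paper's overall strategy (TTF triples = recollements, produce the triple from $P$ and $Q$), but there is a genuine gap at exactly the point you yourself flag as ``the heart of the matter''. You say that for $3)\Rightarrow 1)$ one must transport the standing hypothesis (aisledness or perfect generation) from $\cd$ to $\Tria(P)^{\bot}$ so that the second t-structure $(\Tria(P)^{\bot},\Tria(P)^{\bot\bot})$ exists, and you never prove that transport nor give a substitute argument. The same missing step also undermines your $2)\Rightarrow 1)$ and your $2)\Leftrightarrow 3)$: producing the TTF triple $(\Tria(P),\Tria(P)^{\bot},\Tria(P)^{\bot\bot})$ requires $\Tria(P)^{\bot}$ itself to be an aisle in $\cd$, and identifying the middle piece as $\Tria(Q)$ (so that it is generated by a \emph{compact} object, as assertion 1) demands) is precisely what you assert without justification when you write $\Tria(P)^{\bot}=\Tria(Q)$.

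The paper closes this point without any inherited hypothesis on $\Tria(P)^{\bot}$: since $Q$ is compact (hence perfect) in $\Tria(Q)$, Theorem \ref{Krause on perfects} applies directly inside $\cd$ (it only needs small coproducts) and shows that $\Tria(Q)$ is an aisle in $\cd$; then, because 2.3) and 2.4) say that $Q$ generates $\Tria(P)^{\bot}$ and $\Tria(Q)\subseteq\Tria(P)^{\bot}$ (the latter being closed under small coproducts as $P$ is compact), Lemma \ref{generators and t-structures} yields $\Tria(Q)=\Tria(P)^{\bot}$. This simultaneously gives the second t-structure, the TTF triple, and the compact generation of the middle piece, and it is the step your proposal lacks. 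A secondary inaccuracy: for the extra assertion 4) the paper does not ``lift'' a generator of $\Tria(P)^{\bot}$ to a compact object of $\cd$; rather, if $G$ is a compact generator of $\cd$, then $\tau^{\cy}(G)$ generates $\cy=\Tria(P)^{\bot}$ by Lemma \ref{generators and t-structures} and is compact \emph{in} $\cy$ by Lemma \ref{truncating special objects and from local to global via smashing}, because the t-structure is smashing; your appeal to a Neeman--Thomason style lifting addresses a different (and unneeded) statement. Your treatment of $1)\Rightarrow 2)$ and of 5) agrees with the paper.
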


\begin{defin}
Let $k$ be a commutative ring. A $k$-linear dg category $\ca$ is \emph{$k$-flat} if for every two objects $A$ and $A'$ of $\ca$ we have that tensoring with the corresponding morphisms space
\[?\otimes_{k}\ca(A,A'): \cc k\ra\cc k
\]
preserves acyclic complexes.
\end{defin}

The following theorem (Theorem \ref{TTF are he}) is an answer to question 5 above. It uses what we call \emph{homological epimorphisms of dg categories}, which are a natural generalization of the ``homological epimorphisms'' of W. Geigle and H. Lenzing \cite{GeigleLenzing}.

\begin{thm}
Let $k$ be a commutative ring and let $\ca$ be a $k$-flat dg category. For every triangulated TTF triple $(\cx,\cy,\cz)$ on $\cd\ca$ there exists a homological epimorphism $F:\ca\ra\cb$ (bijective on objects) such that the essential image of the restriction of scalars functor $F^{*}:\cd\cb\ra\cd\ca$ is $\cy$.
\end{thm}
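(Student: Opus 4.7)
Since $(\cx,\cy,\cz)$ is a triangulated TTF triple on $\cd\ca$, the inclusion $\cy\hookrightarrow\cd\ca$ admits both a left adjoint $\tau^{\cy}:\cd\ca\to\cy$ (from the t-structure $(\cx,\cy)$) and a right adjoint (from $(\cy,\cz)$); in particular $\cy$ is closed under small coproducts and products in $\cd\ca$. For each $A\in\ca$, the unit of the left adjunction produces a reflection $\eta^{\cy}_{A^{\we}}:A^{\we}\to\tau^{\cy}(A^{\we})=:Y_{A}$, and the family $\{Y_{A}\}_{A\in\ca}$ generates $\cy$ because $\{A^{\we}\}_{A\in\ca}$ generates $\cd\ca$ and $\tau^{\cy}$ commutes with coproducts. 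The plan is to realize $\cy$ as the derived category of a dg category $\cb$ whose morphism spaces record derived Hom between the $Y_{A}$'s, and to take $F:\ca\to\cb$ to be the identity on objects.

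\textbf{Step 1: construction of $\cb$ and $F$.} Endow $\cc\ca$ with a suitable Bousfield localization of the projective model structure whose local objects represent $\cy$. Using functorial cofibrant and fibrant replacement, lift the assignment $A\mapsto Y_{A}$ to a strict dg functor from $\ca$ into $\cc\ca$ taking values in h-projective, fibrant dg modules, together with a strictly natural quasi-isomorphism $A^{\we}\to Y_{A}$ realizing $\eta^{\cy}_{A^{\we}}$. Define $\cb$ to have the same objects as $\ca$ and morphism complexes $\cb(A,A'):=\Hom_{\cc\ca}(Y_{A},Y_{A'})$, and let $F:\ca\to\cb$ be the identity on objects with morphism action induced by the functoriality of $A\mapsto Y_{A}$. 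The $k$-flatness hypothesis is invoked here so that $\otimes_{\ca}^{L}$ is well-behaved and the construction stays within strict dg categories rather than producing only an $A_\infty$-functor.

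\textbf{Step 2: $F$ is a homological epimorphism with essential image $\cy$.} Because $Y_{A}$ is h-projective and fibrant, $\cb(A,A')$ computes $\R\Hom_{\ca}(Y_{A},Y_{A'})$, so $\H n\cb(A,A')=(\cd\ca)(Y_{A},Y_{A'}[n])$. For the representable dg $\cb$-module $B_{A}:=\cb(-,A)$, the universal property of the reflection together with the Yoneda lemma yield a quasi-isomorphism $F^{*}(B_{A})\simeq Y_{A}$ in $\cd\ca$; consequently $(\cd\cb)(B_{A},B_{A'}[n])=\H n\cb(A,A')=(\cd\ca)(F^{*}B_{A},F^{*}B_{A'}[n])$. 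Since $F^{*}$ preserves small coproducts and triangles and $\cd\cb$ is generated by the $B_{A}$, a standard d\'evissage forces $F^{*}$ to be fully faithful, which is exactly the condition that $F$ be a homological epimorphism. Its essential image is a triangulated subcategory of $\cd\ca$ closed under small coproducts and containing every $Y_{A}$, hence equals $\cy$.

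\textbf{Main obstacle.} The crux lies in Step 1: producing strict dg functoriality of $A\mapsto Y_{A}$, rather than merely functoriality up to coherent homotopy. This is precisely where the Bousfield-localized model structure on $\cc\ca$ together with its functorial replacements enters, and where the $k$-flatness of $\ca$ is used in an essential way to compare derived and underived tensor products and to guarantee that the model-categorical replacements output a genuine dg category $\cb$ with the intended derived category. Once this strict functoriality is achieved, the identification of the essential image of $F^{*}$ with $\cy$ and the proof that $F$ is a homological epimorphism are purely formal consequences of the universal property of the $Y_{A}$'s.
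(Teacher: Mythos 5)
Your Step 2 is essentially the formal part and is fine (it is the same d\'evissage via the characterization of homological epimorphisms that the paper uses), but Step 1 contains a genuine gap, and it is exactly at the point you yourself identify as the crux. A functorial fibrant replacement in a (Bousfield localized) model structure on $\cc\ca$ is a functor on the ordinary category $\cc\ca=\Zy 0(\cc_{dg}\ca)$, i.e.\ it is only functorial with respect to \emph{closed degree-zero} morphisms of dg modules; it is not a dg functor on $\cc_{dg}\ca$. Consequently it does not give you what you need: a strict dg functor $A\mapsto Y_{A}$ must act on the full morphism complexes $\ca(A,A')$, which contain non-closed elements of arbitrary degree, and composing the dg Yoneda embedding with a merely $\Zy 0$-level replacement functor produces no such action, hence no map of complexes $\ca(A,A')\ra\cb(A,A')=\Hom_{\cc\ca}(Y_{A},Y_{A'})$ compatible with differentials and composition. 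Invoking $k$-flatness does not repair this: in the paper $k$-flatness is not about making $\otimes^{\bf L}_{\ca}$ behave or avoiding $A_\infty$-functors, but about one-sided restriction of bimodule resolutions (see below). A further small inaccuracy: $\eta^{\cy}_{A^{\we}}:A^{\we}\ra Y_{A}$ is not a quasi-isomorphism in general (its cone is a shift of an object of $\cx$); it is only a weak equivalence in the localized structure, so your phrase ``strictly natural quasi-isomorphism'' cannot be literally correct.

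The paper circumvents the strictification problem differently, and this is the content you are missing. One chooses, for each $A$, a triangle $X_{A}\ra A^{\we}\ra Y_{A}\ra X_{A}[1]$ with $Y_{A}$ $\ch$-injective, lets $\cc\subseteq\cc_{dg}\ca$ be the dg category on the objects $Y_{A}$, and observes that the $Y_{A}$ assemble into a single $\cc$-$\ca$-bimodule $Y$ with $Y(A,Y_{A'})=Y_{A'}(A)$; no functoriality in $A$ beyond this bimodule structure is required. One then takes an $\ch$-injective resolution $Y\ra Y'$ in $\ch(\cc^{op}\otimes_{k}\ca)$ and defines $\cb$ to be (the opposite of) the full dg subcategory of $\cc_{dg}(\cc^{op})$ on the left $\cc$-modules $Y'(A,?)$; the dg functor $F:\ca\ra\cb$, identity on objects, now exists automatically because the action of $\ca$ on the bimodule $Y'$ is strict by construction. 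The $k$-flatness of $\ca$ is used precisely here: the restriction of an $\ch$-injective $\cc$-$\ca$-bimodule to a left dg $\cc$-module is again $\ch$-injective (its left adjoint $?\otimes_{k}\ca$ preserves acyclics), which is what guarantees that the complexes $\cb(A,A')$ have the correct quasi-isomorphism type, that $F^{*}\cb(?,Y'(A,?))\cong\tau^{\cy}(A^{\we})$, and hence that $F$ is a homological epimorphism with essential image $\cy$. So your overall strategy (reflect the representables into $\cy$, read off $\cb$ from their endomorphisms, conclude by d\'evissage) is the right one, but the mechanism you propose for obtaining a strict dg functor does not work as stated and must be replaced by a bimodule-resolution argument of this kind (or some other genuine rectification device).
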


Thanks to the work of G. Tabuada \cite{Tabuada2005a}, the $k$-flatness assumption in the theorem above is harmless. Indeed, one can prove (\cf Lemma \ref{k flat up to quasi-equivalence}) that every compactly generated algebraic $k$-linear triangulated category is triangle equivalent to the derived category of a $k$-flat dg category.

The following theorem (Theorem \ref{several descriptions}) summarizes many results obtained in the course of chapters 4 and 5:

\begin{thm}
Let $k$ be a commutative ring, let $\cd$ be a compactly generated algebraic $k$-linear triangulated category, and let $\ca$ be a $k$-flat dg category whose derived category is triangle equivalent to $\cd$. There exists a bijection between:
\begin{enumerate}[1)]
\item Smashing subcategories $\cx$ of $\cd$.
\item Triangulated TTF triples $(\cx,\cy,\cz)$ on $\cd$.
\item (Equivalence classes of) recollements for $\cd$.
\item (Equivalence classes of) homological epimorphisms of dg categories of the form $F:\ca\ra\cb$ (which can be taken to be bijective on objects).
\end{enumerate}
Moreover, if we denote by $\cs$ any of the given (equipotent) sets, then there exists a surjective map $\cR\ra\cs$, where $\cR$ is the class of objects 
$P$ of $\cd$ such that $\{P[n]\}^{\bot}_{n\in\Z}$ is closed under small coproducts.
\end{thm}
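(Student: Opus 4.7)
The plan is to assemble the four equivalences using results established earlier in the thesis and the present chapter, rather than proving anything genuinely new from scratch. The equivalences $1)\Leftrightarrow 2)\Leftrightarrow 3)$ are essentially formal in the compactly generated setting. For $1)\Leftrightarrow 2)$: a full triangulated subcategory $\cx\subseteq\cd$ closed under small coproducts is smashing (i.e.\ $\cx^{\bot}$ is also closed under small coproducts) precisely when $(\cx,\cx^{\bot},\cx^{\bot\bot})$ is a triangulated TTF triple, with compact generation supplying the left adjoint making $\cx^{\bot\bot}$ the aisle of the second t-structure. For $2)\Leftrightarrow 3)$: recollements of $\cd$ by two triangulated categories are, up to equivalence, parameterized by triangulated TTF triples via the Beilinson--Bernstein--Deligne dictionary, with the middle term $\cy$ being the essential image of the ``upper star'' functor.

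The substantial input is $2)\Leftrightarrow 4)$. One direction is clean: a homological epimorphism $F:\ca\to\cb$ induces a fully faithful restriction-of-scalars $F^{*}:\cd\cb\to\cd\ca$ whose essential image is the middle term of a triangulated TTF triple on $\cd\ca\simeq\cd$; this is exactly what ``homological epimorphism'' is designed to encode. The converse, that every triangulated TTF triple on $\cd\ca$ arises in this way when $\ca$ is $k$-flat, is precisely Theorem \ref{TTF are he}, and the bijective-on-objects normalization is a standard adjustment. Well-definedness on equivalence classes reduces to observing that two homological epimorphisms $F,F'$ inducing the same $\cy$ differ by a quasi-equivalence compatible with the structure maps from $\ca$, which is the very equivalence relation imposed on the right-hand side of 4).

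For the surjective map $\cR\to\cs$, I would first note that for any object $P$ of $\cd$ one has $\Tria(P)^{\bot}=\{P[n]\}^{\bot}_{n\in\Z}$, since the right orthogonal of a set of objects coincides with that of its triangulated hull under small coproducts. Hence $P\in\cR$ exactly when $\Tria(P)$ is smashing, so sending $P$ to the smashing subcategory $\Tria(P)$ (viewed inside $\cs$ through the bijections above) yields a well-defined map. Surjectivity is supplied by Corollary \ref{gsc}: every smashing subcategory $\cx$ has the form $\cx=\Tria(\cp)$ for a set $\cp$ of Milnor colimits of compact objects, so taking $P:=\coprod_{Q\in\cp}Q$ gives $\Tria(P)=\Tria(\cp)=\cx$, and the smashing property of $\cx$ forces $P\in\cR$.

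The main obstacle is the implication $2)\Rightarrow 4)$: producing, from an abstract triangulated TTF triple, a concrete homological epimorphism of dg categories. This is where the $k$-flatness of $\ca$ becomes indispensable (to keep the underived and derived tensor products comparable in the homological-epimorphism criterion) and where the full strength of Theorem \ref{TTF are he} must be invoked; that $k$-flatness is no loss of generality is itself a nontrivial input, but is already handled by Lemma \ref{k flat up to quasi-equivalence}. The remaining steps are then essentially bookkeeping once the earlier chapters and Corollary \ref{gsc} are in hand.
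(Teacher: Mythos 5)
Your assembly of the four equivalences follows the paper's own route exactly: $1)\Leftrightarrow 2)$ is Proposition \ref{smashing are TTF}, $2)\Leftrightarrow 3)$ is Propositions \ref{TTF triples are recollements} and \ref{recollements are TTF triples}, the map $2)\to 4)$ is Theorem \ref{TTF are he}, and $4)\to 2)$ is the construction at the beginning of section \ref{Homological epimorphisms of dg categories}; moreover, since equivalence of homological epimorphisms is \emph{defined} (Definition \ref{equivalent he}) by equality of the essential images of the restriction functors, well-definedness on equivalence classes is immediate, so your extra remark about quasi-equivalences compatible with the structure maps is unnecessary (and is not literally the relation imposed in $4)$).

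Where you genuinely diverge is the surjection $\cR\ra\cs$. The paper gets it from Proposition \ref{naive general parametrization}: given a TTF triple $(\cx,\cy,\cz)$, push a set $\cg$ of (compact) generators of $\cd$ through $\tau_{\cx}z\tau^{\cz}$ to obtain a recollement-defining set inside $\cx$, then bundle it into a single object by taking the coproduct, using $\{(\coprod P)[n]\}^{\bot}_{n\in\Z}=\{P[n]\}^{\bot}_{n\in\Z,\,P\in\cp}$. You instead invoke Corollary \ref{gsc} to write a smashing $\cx$ as $\Tria(\cp)$ for a set $\cp$ of Milnor colimits of compact objects and take $P=\coprod_{Q\in\cp}Q$; this works (note you are implicitly using that $\Tria(P)$, being closed under small coproducts, is closed under direct summands, so that $\Tria(P)=\Tria(\cp)$), and it is not circular since Corollary \ref{gsc} does not depend on this theorem. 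But it is a heavier route: Corollary \ref{gsc} sits downstream of Theorem \ref{our result}, Theorem \ref{from ideals to devissage wrt hc} and Proposition \ref{for the idempotency}, whereas the paper's argument is elementary, needs no ideal-theoretic input, and already works in the aisled or perfectly generated setting of Proposition \ref{naive general parametrization}. What your version buys in exchange is a more explicit description of a preimage of a given smashing subcategory, namely a coproduct of Milnor colimits of compacts.
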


The next results are meant to complement H.~Krause's work on smashing subcategories of compactly generated triangulated categories \cite{Krause2005}. Let us start with Theorem \ref{from ideals to devissage wrt hc}:

\begin{thm}
Let $\cd$ be a compactly generated triangulated category and let $\ci$ be an idempotent two-sided ideal of $\cd^c$ closed under shifts in both directions. There exists a triangulated TTF triple $(\cx,\cy,\cz)$ on $\cd$ such that:
\begin{enumerate}[1)]
\item $\cx=\Tria(\cp)$, for a certain set $\cp$ of Milnor colimits (\cf Definition \ref{Milnor colimit}) of sequences of morphisms of $\ci$.
\item $\cy=\ci^{\bot}$, where $\ci^{\bot}$ is the class of those objects $M$ of $\cd$ such that $\cd(f,M)=0$ for every $f\in\ci$.
\item A morphism of $\cd^c$ belongs to $\ci$ if and only if it factors through an object of $\cp$.
\end{enumerate}
\end{thm}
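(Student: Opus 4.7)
The plan is to construct the TTF triple by first assembling a set $\cp$ of Milnor colimits attached to $\ci$, setting $\cx:=\Tria(\cp)$, and then verifying the three properties (together with the existence of the t-structure) in sequence. The proofs of (3) and (2) support each other, so I would organize the argument by proving (3) first, then using it to identify $\cy$.

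First I would define $\cp$ to be the (essentially small) set of all Milnor colimits $\widetilde{P}=\Mcolim(P_{n},u_{n})$ arising from sequences $P_{0}\arr{u_{0}}P_{1}\arr{u_{1}}P_{2}\arr{u_{2}}\cdots$ with $u_{n}\in\ci$. To prove the $\Rightarrow$ direction of (3), given $f\in\ci$, I would use additive idempotency $\ci=\ci^{2}$ (which lets us write any morphism of $\ci$ as a composition $g\circ h$ of two morphisms of $\ci$, after taking direct sums) iteratively: starting from $v_{0}=f\colon P_{0}\ra Q$, factor $v_{n}=v_{n+1}\circ u_{n}$ with $u_{n},v_{n+1}\in\ci$. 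Assembling $(v_{n})$ into a map $\coprod P_{n}\ra Q$ which kills $1-\sigma$, the defining triangle of the Milnor colimit yields a map $\widetilde{P}\ra Q$ through which $f$ factors. For the converse, suppose $f\colon P'\ra Q$ in $\cd^{c}$ factors as $P'\arr{g}\widetilde{P}\ra Q$ with $\widetilde{P}\in\cp$. Compactness of $P'$ makes $g$ factor through the canonical $i_{n}\colon P_{n}\ra\widetilde{P}$ for some $n$; using the identity $i_{n}=i_{n+1}\circ u_{n}$, we rewrite $f$ as a composition in which $u_{n}\in\ci$ appears, so $f\in\ci$ since $\ci$ is a two-sided ideal.

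Next I would set $\cx:=\Tria(\cp)$. Because $\cd$ is compactly generated, it is aisled (this is recorded in the earlier material), so $\cx$ is the aisle of a t-structure $(\cx,\cx^{\bot})$. To establish (2), namely $\cx^{\bot}=\ci^{\bot}$: for the inclusion $\cx^{\bot}\subseteq\ci^{\bot}$, use (3) to factor any $f\in\ci$ through some $\widetilde{P}\in\cp\subseteq\cx$; any post-composition to $M\in\cx^{\bot}$ vanishes, whence $\cd(f,M)=0$. For $\ci^{\bot}\subseteq\cx^{\bot}$, apply to $\widetilde{P}$ the Milnor sequence
\[
0\ra {\lim}^{1}\cd(P_{n},M[-1])\ra\cd(\widetilde{P},M)\ra\lim\cd(P_{n},M)\ra 0.
\]
For $M\in\ci^{\bot}$, each transition map in the tower $(\cd(P_{n},M))$ is induced by some $u_{n}\in\ci$ and hence vanishes; since $\ci$ is shift-closed the same holds for $(\cd(P_{n},M[-1]))$. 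A tower with zero transition maps has both $\lim$ and $\lim^{1}$ trivial, so $\cd(\widetilde{P},M)=0$. Thus $\cp\subseteq{}^{\bot}(\ci^{\bot})$; since ${}^{\bot}(\ci^{\bot})$ is triangulated and closed under small coproducts, $\cx\subseteq{}^{\bot}(\ci^{\bot})$, i.e.\ $\ci^{\bot}\subseteq\cx^{\bot}$.

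Finally, to obtain the TTF triple I would show that $\cx$ is smashing, i.e.\ that $\cy:=\cx^{\bot}=\ci^{\bot}$ is closed under small coproducts. This is immediate from the compactness of the endpoints of any morphism in $\ci$: a map from a compact object to $\coprod_{\alpha}M_{\alpha}$ factors through a finite subcoproduct, which lies in $\ci^{\bot}$ by additivity. Smashingness of $\cx$ then produces the right half $(\cy,\cz)$ of the TTF triple with $\cz:=\cy^{\bot}$, concluding the proof. I expect the main technical hurdle to be the vanishing of $\cd(\widetilde{P},M)$ for $M\in\ci^{\bot}$; fortunately the identically-zero transition maps in the tower trivialize both $\lim$ and $\lim^{1}$ at once, which is the clean observation that makes everything fit together.
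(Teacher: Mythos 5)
Your proposal is correct and takes essentially the same route as the paper's proof: the same set $\cp$ of Milnor colimits of sequences of morphisms in $\ci$ (the paper just fixes a small skeleton of $\cd^c$ to ensure $\cp$ is a set), the same idempotency-driven factorization argument for assertion 3), the same identification $\Tria(\cp)^{\bot}=\ci^{\bot}$ (your $\lim$--$\lim^{1}$ sequence with vanishing transition maps is a repackaging of the paper's observation that $\cd(\sigma,Y[n])=0$ forces $\cd(P,Y)=0$ in the long exact sequence of the Milnor triangle), and the same compactness argument showing that $\ci^{\bot}$ is closed under small coproducts, whence the TTF triple via Proposition \ref{smashing are TTF}. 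The one point to adjust is your justification that $\Tria(\cp)$ is an aisle: the earlier material only records the aisled property for derived categories of small dg categories (Corollary \ref{derived categories are aisled}), not for arbitrary compactly generated triangulated categories, and since the objects of $\cp$ need not be perfect you cannot invoke Theorem \ref{Krause on perfects} either; the paper covers exactly this step by citing \cite[Corollary 3.12]{Porta2007}, and with that citation in place of your appeal to the earlier material your argument goes through verbatim.
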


This, together with the assertion 2') of \cite[Theorem 4.2]{Krause2000}, gives a short proof of the following result (Theorem \ref{our result}) in the spirit of H.~Krause's bijection of \cite[Corollary 12.5, Corollary 12.6]{Krause2005}:

\begin{thm}
Let $\cd$ be a compactly generated triangulated category. If $\cy$ is a class of objects of $\cd$ we denote by $\cm or(\cd^c)^{\cy}$ the class of morphisms $f$ of $\cd^c$ such that $\cd(f,Y)=0$ for every object $Y\in\cy$. Then, the maps
\[(\cx,\cy,\cz)\mapsto\cm or(\cd^c)^{\cy}\ \ \ \ \text{ and }\ \ \ \ \ci\mapsto(^{\bot}(\ci^{\bot}),\ci^{\bot}, \ci^{\bot\bot})
\] 
define a bijection between the set of triangulated TTF triples on $\cd$ and the set of closed (\cf Definition \ref{closed ideals}) idempotent two-sided ideals $\ci$ of $\cd^c$ such that $\ci[1]=\ci$.
\end{thm}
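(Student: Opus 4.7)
The plan is to verify that the two maps are well-defined and mutually inverse. The two key external ingredients are Theorem \ref{from ideals to devissage wrt hc} (d\'evissage of a shift-closed idempotent ideal $\ci$ into a triangulated TTF triple whose middle class is $\ci^\bot$ and whose left class is $\Tria(\cp)$ for a suitable set $\cp$ of Milnor colimits of sequences of morphisms of $\ci$) and assertion 2') of \cite[Theorem 4.2]{Krause2000}, which asserts that $\cm or(\cd^c)^{\cy}$ is idempotent for every triangulated TTF triple $(\cx,\cy,\cz)$.

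For well-definedness of map 1, starting from $(\cx,\cy,\cz)$, the class $\ci:=\cm or(\cd^c)^{\cy}$ is plainly a two-sided ideal, and $\ci[1]=\ci$ because the middle class $\cy$ of a triangulated TTF triple is a triangulated subcategory of $\cd$ (it is closed under shifts in both directions, being simultaneously the right class of $(\cx,\cy)$ and the left class of $(\cy,\cz)$); idempotency is assertion 2') of Krause, and closedness in the sense of Definition \ref{closed ideals} is essentially built in since $\ci$ is by construction the collection of morphisms killed by $\cy$. For well-definedness of map 2, applying Theorem \ref{from ideals to devissage wrt hc} to $\ci$ yields a triangulated TTF triple with middle class $\ci^\bot$, and standard TTF orthogonality forces the outer classes to be ${}^\bot(\ci^\bot)$ and $\ci^{\bot\bot}$, matching the output prescribed by map 2. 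The composition ``ideal $\to$ TTF $\to$ ideal'' then returns $\cm or(\cd^c)^{\ci^\bot}$, which equals $\ci$ by the closedness assumption.

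For the composition ``TTF $\to$ ideal $\to$ TTF'', it suffices (by TTF orthogonality, since a TTF triple is determined by any of its three classes) to check $\ci^\bot=\cy$. The inclusion $\cy\subseteq\ci^\bot$ is tautological from the definition of $\ci$. For the reverse inclusion I would feed $\ci$ back through Theorem \ref{from ideals to devissage wrt hc} to obtain an auxiliary TTF triple $(\cx',\ci^\bot,\ci^{\bot\bot})$ with $\cx'=\Tria(\cp)$, and then compare it with $(\cx,\cy,\cz)$. The easy half $\cx'\subseteq\cx$ follows from a direct Milnor-colimit computation showing $\cp\subseteq{}^\bot\cy=\cx$: for a sequence $X_{0}\arr{f_{0}}X_{1}\arr{f_{1}}\cdots$ in $\ci$ and any $Y\in\cy$, each transition map $f_{n}^{*}\colon\cd(X_{n+1},Y)\to\cd(X_{n},Y)$ vanishes, so applying $\cd(-,Y)$ to the Milnor-colimit triangle forces $\cd(\mathrm{Mcolim}(X_{n}),Y)=0$. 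The main obstacle I anticipate is the reverse containment $\cx\subseteq\cx'$: it is here that Krause's assertion 2') earns its keep, furnishing the iterated factorizations of morphisms in $\ci$ that allow each object of $\cx$ to be realized inside $\Tria(\cp)$. Once this is granted, $\cx=\cx'$, hence $\cy=\ci^\bot$, and the bijection follows.
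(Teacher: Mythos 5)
Your overall architecture is sound and close in spirit to the paper's: well-definedness of $\ci\mapsto({}^{\bot}(\ci^{\bot}),\ci^{\bot},\ci^{\bot\bot})$ via Theorem \ref{from ideals to devissage wrt hc}, the composite ``ideal $\to$ TTF $\to$ ideal'' recovered from closedness, the reduction of the other composite to $\ci^{\bot}=\cy$, and your Milnor-colimit computation showing $\cp\subseteq{}^{\bot}\cy=\cx$ is correct. The genuine gap is the decisive inclusion $\ci^{\bot}\subseteq\cy$, which you rephrase as $\cx\subseteq\cx'=\Tria(\cp)$ and then only assert (``once this is granted''). The mechanism you offer --- that assertion 2') furnishes ``iterated factorizations of morphisms in $\ci$ that allow each object of $\cx$ to be realized inside $\Tria(\cp)$'' --- is not an argument: iterated factorization of morphisms of $\ci$ is precisely what builds the set $\cp$ in Theorem \ref{from ideals to devissage wrt hc}, but it gives no construction exhibiting an arbitrary object of $\cx$ as an object of $\Tria(\cp)$ (such objects need not be Milnor colimits of compacts; indeed $\cx\subseteq\Tria(\cp)$ is exactly Corollary \ref{gsc}, which the paper deduces \emph{from} this theorem). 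Note also that passing to $\cx\subseteq\cx'$ buys nothing: taking right orthogonals, it is literally equivalent to the inclusion $\ci^{\bot}\subseteq\cy$ you are trying to prove.

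What closes the gap --- and is the paper's actual proof --- is an orthogonality argument rather than a realization argument. By Lemma \ref{equivalent definitions ideals}, $\ci=\cm or(\cd^c)^{\cy}$ is exactly the class of morphisms of $\cd^c$ factoring through an object of $\cx$; Proposition \ref{for the idempotency} (Krause's assertion 2')) then says that every morphism $f:P\ra N$ with $P$ compact and $N\in\cx$ factors as $f=vu$ with $u$ a morphism of $\cd^c$ lying in $\ci$. Now take $M\in\ci^{\bot}$ and the truncation triangle $x\tau_{\cx}M\ra M\ra y\tau^{\cy}M\ra x\tau_{\cx}M[1]$; since $\cy\subseteq\ci^{\bot}$, the object $x\tau_{\cx}M$ lies in $\cx\cap\ci^{\bot}$, and any morphism $P\ra x\tau_{\cx}M$ from a compact object, being of the form $vu$ with $u\in\ci$, must vanish because $x\tau_{\cx}M$ kills $\ci$. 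As the compact objects generate $\cd$, this forces $x\tau_{\cx}M=0$, hence $M\in\cy$. This two-line argument also makes your detour through the auxiliary triple $(\cx',\ci^{\bot},\ci^{\bot\bot})$ unnecessary (though harmless). A smaller compression in the same vein: ``idempotency is assertion 2')'' needs Lemma \ref{equivalent definitions ideals} as intermediary, since 2') is a factorization statement about morphisms into objects of $\cx$, and one must first identify $\ci$ with the morphisms of $\cd^c$ factoring through $\cx$ before concluding $\ci=\ci^{2}$.
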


As a consequence, we get Corollary \ref{gsc}, which gives a positive answer to question 4.

\begin{cor}
Let $\cd$ be a compactly generated triangulated category. Then every smashing subcategory $\cx$ of $\cd$ is of the form $\cx=\Tria(\cp)$, where $\cp$ is a set of Milnor colimits of compact objects.
\end{cor}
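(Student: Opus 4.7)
The plan is to combine the bijection of Theorem \ref{our result} with the existence result of Theorem \ref{from ideals to devissage wrt hc}. Specifically, given a smashing subcategory $\cx$ of $\cd$, I will extract the closed idempotent two-sided ideal $\ci$ of $\cd^c$ associated to the TTF triple it determines, and then apply the construction of Theorem \ref{from ideals to devissage wrt hc} to $\ci$ to obtain the desired description.

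First I would observe that since $\cd$ is compactly generated, the smashing subcategory $\cx$ fits into a triangulated TTF triple $(\cx,\cy,\cz)$ on $\cd$. By Theorem \ref{our result}, this triple corresponds bijectively to the closed idempotent two-sided ideal $\ci:=\cm or(\cd^c)^{\cy}$ of $\cd^c$ (which automatically satisfies $\ci[1]=\ci$), and moreover under this bijection one has $\cy=\ci^{\bot}$.

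Next I would apply Theorem \ref{from ideals to devissage wrt hc} to $\ci$. This produces a triangulated TTF triple $(\cx',\cy',\cz')$ on $\cd$ with $\cx'=\Tria(\cp)$ for a certain set $\cp$ of Milnor colimits of sequences of morphisms of $\ci$, and with $\cy'=\ci^{\bot}$. Since every morphism of $\ci$ is by definition a morphism of $\cd^c$, i.e.\ between compact objects, the set $\cp$ consists of Milnor colimits of sequences of compact objects.

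Finally, to close the argument I would compare the two triples: both have second coaisle equal to $\ci^{\bot}$, so $\cy=\cy'$, and hence by the uniqueness part of the parametrization of TTF triples (their first class is determined by the second as $^{\bot}\cy$) one obtains $\cx=\cx'=\Tria(\cp)$. I do not foresee a serious obstacle: all the heavy lifting has already been done in Theorem \ref{our result} and Theorem \ref{from ideals to devissage wrt hc}; the only mild subtlety is to record that the Milnor colimits involved come from morphisms of $\cd^c$, so that their ``building blocks'' are compact, which is immediate from the construction in Theorem \ref{from ideals to devissage wrt hc}.
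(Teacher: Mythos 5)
Your proposal is correct and follows essentially the same route as the paper: invoke Proposition \ref{smashing are TTF} and Theorem \ref{our result} to identify $\cy=\cx^{\bot}$ with $\ci^{\bot}$ for a closed idempotent ideal $\ci$ of $\cd^c$ with $\ci[1]=\ci$, and then apply Theorem \ref{from ideals to devissage wrt hc} to conclude $\cx=\Tria(\cp)$ with $\cp$ a set of Milnor colimits of sequences of morphisms between compact objects. Your explicit comparison of the two TTF triples via their common middle class is exactly the (implicit) uniqueness step in the paper's argument, so there is nothing to add.
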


Another consequence of Theorem \ref{our result} is Corollary \ref{Krause algebraic}, which gives a new proof of H.~Krau\-se's bijection for the algebraic case:

\begin{cor}
Let $\cd$ be a compactly generated algebraic triangulated category. The maps
\[(\cx,\cy,\cz)\mapsto\cm or(\cd^c)^{\cy}\ \ \ \ \text{ and }\ \ \ \ \ci\mapsto(^{\bot}(\ci^{\bot}),\ci^{\bot}, \ci^{\bot\bot})
\] 
define a bijection between the set of triangulated TTF triples on $\cd$ and the set of saturated idempotent two-sided ideals $\ci$ of $\cd^c$ closed under shifts in both directions.
\end{cor}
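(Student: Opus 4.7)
The plan is to derive this statement as a direct specialization of Theorem \ref{our result}, with the only non-formal step being the identification of closed and saturated idempotent shifted ideals of $\cd^c$ under the algebraicity assumption. Theorem \ref{our result} already supplies, for any compactly generated $\cd$, a bijection of the stated form between triangulated TTF triples on $\cd$ and closed idempotent two-sided ideals $\ci$ of $\cd^c$ with $\ci[1]=\ci$. Thus everything reduces to proving that, under the algebraic hypothesis, such an ideal is closed if and only if it is saturated.

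One inclusion ought to hold without algebraicity: a closed ideal is saturated. Indeed, given a closed $\ci$, a triangle $P'\xrightarrow{u} P\xrightarrow{v} P''\ra P'[1]$ in $\cd^c$, and $f\in\cd^c(P,Q)$ with $fu,v\in\ci$, closedness (\ie $\ci=\cm or(\cd^c)^{\ci^{\bot}}$) reduces the verification of $f\in\ci$ to checking $\cd(f,Y)=0$ for every $Y\in\ci^{\bot}$. For such $Y$ and any $\alpha\in\cd(Q,Y)$, the hypothesis $fu\in\ci$ forces $\alpha fu=0$, so $\alpha f$ factors as $\beta v$ for some $\beta\in\cd(P'',Y)$ via the triangle; the hypothesis $v\in\ci$ then yields $\beta v=0$, whence $\alpha f=0$.

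The converse is the substantive step, and here algebraicity is essential. By B.~Keller's theorem combined with Lemma \ref{k flat up to quasi-equivalence}, I would realize $\cd$ as $\cd\ca$ for some $k$-flat small dg category $\ca$. Given a saturated idempotent shifted ideal $\ci$, Theorem \ref{from ideals to devissage wrt hc} produces a triangulated TTF triple $(\cx,\cy,\cz)$ on $\cd$ together with a set $\cp$ of Milnor colimits of sequences of morphisms of $\ci$ such that $\cx=\Tria(\cp)$ and a morphism of $\cd^c$ lies in $\ci$ precisely when it factors through some object of $\cp$. Assertion 2') of Krause's Theorem 4.2 (whose algebraic version is proved via the omnipresence of homological epimorphisms of dg categories in Proposition \ref{for the idempotency}) then identifies $\ci$ with $\cm or(\cd^c)^{\cy}$, which is closed by the very definition of closed ideals.

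The main obstacle is precisely this implication ``saturated implies closed''. Its unconditional validity would force the generalized smashing conjecture to hold and thus contradict B.~Keller's counterexample, so the dg enhancement is necessarily used in an essential way, and the bulk of the delicate work is packaged in Proposition \ref{for the idempotency}. Once that is in hand, the bijection asserted in the corollary follows immediately by restricting the bijection of Theorem \ref{our result} along the equality ``closed = saturated'' just established.
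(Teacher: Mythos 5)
Your reduction of the corollary to the comparison ``closed $=$ saturated'' via Theorem \ref{our result}, and your argument that closed ideals are saturated, coincide with the paper's strategy (the latter is Lemma \ref{closed are saturated}). The gap is in the converse, which is the entire content of the paper's proof, and your citation chain does not deliver it. Proposition \ref{for the idempotency} does not ``identify $\ci$ with $\cm or(\cd^c)^{\cy}$'': applied to a morphism $f\colon Q'\ra Q$ of $\cd^c$ that factors through some $X\in\cx=\Tria(\cp)$ (by Lemma \ref{equivalent definitions ideals} this is exactly what membership in $\cm or(\cd^c)^{\cy}$ means), it only yields a factorization $f=vu$ through a compact object in which $u$ again merely factors through an object of $\cx$ --- circular for your purpose, because part 3) of Theorem \ref{from ideals to devissage wrt hc} characterizes $\ci$ as the morphisms factoring through an object of the specific set $\cp$ of Milnor colimits, not through arbitrary objects of $\Tria(\cp)$. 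In the paper, Proposition \ref{for the idempotency} is used only inside the proof of Theorem \ref{our result}, to show that $\cm or(\cd^c)^{\cy}$ is idempotent and that its right orthogonal is $\cy$. The missing inclusion $\cm or(\cd^c)^{\ci^{\bot}}\subseteq\ci$ for a saturated $\ci$ is proved separately and occupies most of the proof of Corollary \ref{Krause algebraic}: one realizes $\cd$ as $\cd\ca$, writes $x\tau_{\cx}Q$ as the direct limit of an $\ci_{\cq}$-cell $\lambda$-sequence (Theorem \ref{localizations via small object argument}), uses Lemma \ref{good properties of s} to factor the given morphism through a stage $X_{\alpha}$, and argues by transfinite induction on $\alpha$; the successor step uses the Milnor-colimit presentation of the objects of $\cp$, an octahedron, and --- essentially --- the saturation of $\ci$. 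Your sketch never uses saturation in this direction, which is already a warning sign: a plain idempotent shift-closed ideal need not be closed (otherwise saturation would be redundant in Krause's classification), so no argument that avoids the saturation hypothesis can be correct.

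A secondary point: your closing justification is off. The bijection with saturated idempotent shift-closed ideals holds in arbitrary compactly generated triangulated categories --- this is Krause's theorem, recorded as Theorem \ref{Krause result} --- so the unconditional validity of ``saturated implies closed'' neither forces the generalized smashing conjecture nor conflicts with Keller's counterexample. Algebraicity enters only because this particular proof uses the dg model (Lemma \ref{good properties of s}, Theorem \ref{localizations via small object argument}, and the dg proof of Proposition \ref{for the idempotency}), not because the statement fails in the non-algebraic case.
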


A possible answer to question 6 is the following: in the algebraic setting, the `omnipresence' of homological epimorphisms of dg categories enables us to give a simple proof of assertion 2') of \cite[Theorem 4.2]{Krause2000} (see the proof of Proposition \ref{for the idempotency}), which is the needed statement, together with Theorem \ref{from ideals to devissage wrt hc}, in order to give a new proof of H. Krause's bijection.
\bigskip

\textbf{Chapter 6}
\bigskip

The results of this chapter, which deals with question 3, will appear in a preprint \cite{NicolasSaorin2007d}. The first main, very general, result is the following (Proposition \ref{parametrization right bounded recollements}):

\begin{prop}
Let $\ca$ be a dg category. The following assertions are equivalent:
\begin{enumerate}[1)]
\item $\cd^-\ca$ is a recollement of $\cd^-\cb$ and $\cd^-\cc$, for certain dg categories $\cb$ and $\cc$.
\item There exist sets $\cp\ko \cq$ in $\cd^-\ca$ such that:
\begin{enumerate}[2.1)]
\item $\cp$ is contained in $\Susp(\ca)[n_{\cp}]$ and $\cq$ is contained in $\Susp(\ca)[n_{\cq}]$ for some integers $n_{\cp}$ and $n_{\cq}$. 
\item $\cp$ and $\cq$ are dually right bounded.
\item $\Tria(\cp)\cap\cd^-\ca$ is exhaustively generated to the left by $\cp$ and the objects of $\cp$ are compact in $\cd\ca$.
\item $\Tria(\cq)\cap\cd^-\ca$ is exhaustively generated to the left by $\cq$ and the objects of $\cq$ are compact in $\Tria(\cq)\cap\cd^-\ca$.
\item $(\cd\ca)(P[n],Q)=0$ for each $P\in\cp\ko Q\in\cq$ and $n\in\Z$.
\item $\cp\cup\cq$ generates $\cd\ca$.
\end{enumerate}
\end{enumerate}
\end{prop}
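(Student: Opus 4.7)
The plan is to reduce the equivalence to the unbounded recollement/TTF correspondence established in Chapter 5 (see Theorem \ref{several descriptions} and the preceding results), and then to manage the passage between the unbounded and the right bounded levels using conditions 2.1--2.4. Both directions follow the same philosophy: work at the unbounded level where the machinery is fully available, and use ``dually right bounded'' together with ``exhaustively generated to the left'' as the bridge down to the right bounded level.

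For the direction $(1)\Rightarrow(2)$, given a recollement of $\cd^-\ca$ by $\cd^-\cb$ and $\cd^-\cc$ with the standard six functors $i_*,i^*,i^!,j_!,j^*,j_*$, I would set $\cq:=i_*(\{B^{\we}:B\in\cb\})$ and $\cp:=j_!(\{C^{\we}:C\in\cc\})$. Since the representables lie in $\Susp(\cb)$ and $\Susp(\cc)$ and the triangulated functors $i_*,j_!$ take small coproducts to small coproducts, their images land in a fixed shift of $\Susp(\ca)$, which gives 2.1. The orthogonality $j^*i_*=0$ built into any recollement yields 2.5, the glueing triangle yields 2.6, and compactness of representables transported by the fully faithful $i_*$ and $j_!$ gives the compactness halves of 2.3 and 2.4. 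Exhaustive generation to the left is then obtained by applying $i_*$ and $j_!$ to the canonical presentations of right bounded objects of $\cd^-\cb$ and $\cd^-\cc$ by representables, while condition 2.2 reflects the fact that the right adjoints $i^!,j^*$ respect right boundedness.

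For $(2)\Rightarrow(1)$, the first step is to promote the data to an unbounded recollement. Hypotheses 2.5 and 2.6, combined with the compactness in 2.3, yield a triangulated TTF triple on $\cd\ca$ with first class $\Tria(\cp)$ and third class $\Tria(\cq)^{\bot}$. By Chapter 5 this TTF triple corresponds to a recollement of $\cd\ca$ by the unbounded derived categories $\cd\cb$ and $\cd\cc$ of two dg categories, and one can arrange that the representables of $\cb$ and $\cc$ correspond under the fully faithful recollement functors to $\cq$ and $\cp$ respectively. The remaining task, which is the essential content of the proposition, is to restrict this unbounded recollement to the right bounded level.

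The main obstacle is precisely this restriction step. That the two fully faithful embeddings send right bounded to right bounded is the easier half: condition 2.1 together with the exhaustive generation halves of 2.3--2.4 allows one to express any object of $\cd^-\cb$ as a Milnor-type build-up from coproducts of shifts of representables whose images, by 2.1, stay in a fixed shift of $\Susp(\ca)$ and hence in $\cd^-\ca$; the same argument works for $\cd^-\cc$. The harder half is that the right adjoints $i^!,j^*$ preserve right boundedness, and this is precisely what hypothesis 2.2 is designed to guarantee: being dually right bounded is tailored so that the relevant adjoints keep one in the right bounded part. Combining 2.2 with the compactness assertions, the adjoints can be controlled on basic objects first and then propagated to all of $\cd^-\ca$ via the exhaustive generation. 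Once all six functors restrict, the recollement axioms descend automatically, producing the recollement required in $(1)$.
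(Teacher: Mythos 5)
Your $(1)\Rightarrow(2)$ direction is essentially the paper's argument (take $\cp$ and $\cq$ to be the images of the representables under $j_{!}$ and $i_{*}$), but two points you treat as automatic need real input: condition 2.1 does not follow from $j_{!}$ and $i_{*}$ preserving coproducts, it follows from the claim inside Lemma \ref{preservation of coproducts and compacity} (a family of objects of $\bigcup_{n}\Susp(\ca)[n]$ admitting a coproduct there must lie in a single shift of $\Susp(\ca)$); and compactness of the objects of $\cp$ in all of $\cd\ca$, not merely in $\cd^-\ca$, requires first lifting the induced TTF triple on $\cd^-\ca$ to one on $\cd\ca$ (Corollary \ref{easy restriction}) and then invoking Lemma \ref{preservation of coproducts and compacity}.

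The genuine gap is in $(2)\Rightarrow(1)$. You want to realize the TTF triple $(\Tria(\cp),\Tria(\cq),\Tria(\cq)^{\bot})$ on $\cd\ca$ as a recollement by unbounded derived categories $\cd\cb$ and $\cd\cc$ \emph{with the representables of $\cb$ and $\cc$ corresponding to $\cq$ and $\cp$}, and then restrict the six functors. For $\cc$ this works (the objects of $\cp$ are compact in $\cd\ca$, so Corollary \ref{remark Keller} applies), but for $\cb$ it does not: hypothesis 2.4 only gives compactness of the objects of $\cq$ in $\Tria(\cq)\cap\cd^-\ca$, which does not imply compactness in $\Tria(\cq)$ --- this weakening is precisely the point of the right-bounded statement (compare condition 2.3 of Theorem \ref{Konig}, where $Q$ is not assumed compact). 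Hence there is no identification $\cd\cb\simeq\Tria(\cq)$ sending representables to $\cq$ available to restrict; the middle class is compactly generated, but by the images of the $A^{\we}$, not by $\cq$. Consequently the two facts your plan relies on --- that $\Tria(\cp)\cap\cd^-\ca$ and $\Tria(\cq)\cap\cd^-\ca$ are aisles in $\cd^-\ca$ (this is where 2.2 enters, and your ``controlled on basic objects and propagated by exhaustive generation'' sentence is exactly the claim to be proved, not a proof of it), and that these subcategories are triangle equivalent to $\cd^-\cc$ and $\cd^-\cb$ for fibrant replacements of $\cp$ and $\cq$ --- have to be established directly from the right-bounded data. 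That is the content of Lemma \ref{Morita theory for right bounded}: the equivalence $?\otimes^{\L}_{\cb}X:\cd^-\cb\arr{\sim}\Tria(\cq)\cap\cd^-\ca$ built by d\'evissage from 2.1 and 2.4, together with the equivalence ``aisle in $\cd^-\ca$ if and only if dually right bounded''. This is the real work of the proposition and is absent from your sketch; accordingly, the paper never lifts to a recollement of $\cd\ca$ by $\cd\cb$ and $\cd\cc$, but assembles the recollement of $\cd^-\ca$ directly from these two right-bounded aisles.
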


There are several things to clarify in the proposition above. 

First, one encounters the notion of being ``dually right bounded''. One can find the precise definition in Definition \ref{dually right bounded}. This definition is a little bit abstract since it uses fibrant resolutions for a certain model category structure on the category of right dg $\ca$-modules. However, sometimes it admits a much more explicit characterization. For instance, if $A$ is an ordinary algebra and $P$ is a right bounded complex of $A$-modules such that $(\cd A)(P,P[n])=0$ for $n\geq 1$, then $P$ is dually right bounded if and only if it is quasi-isomorphic to a bounded complex of projective $A$-modules. To prove this one can use Lemma \ref{characterization of dually right bounded} together with S.~K\"{o}nig's criterion which characterizes $\ch^b(\Proj A)$ inside $\cd^-A$ (\cf the beginning of the proof of \cite[Theorem 1]{Konig1991}).

Secondly, one encounters the notion of ``being exhaustively generated to the left''. We say that a triangulated category $\cd$ is \emph{exhaustively generated (to the left)} by a class $\cp$ of objects of $\cd$ if the following two conditions hold:
\begin{enumerate}[1)]
\item The existence of small coproducts of finite extensions of small coproducts (of non-negative shifts) of objects of $\cp$ is guaranteed in $\cd$.
\item For every object $M$ of $\cd$ there exists an integer $i\in\Z$ together with a triangle
\[\coprod_{n\geq 0}Q_{n}\ra\coprod_{n\geq 0}Q_{n}\ra M[i]\ra \coprod_{n\geq 0}Q_{n}[1]
\]
in $\cd$ such that each $Q_{n}$ is a finite extension of small coproducts (of non-negative shifts) of objects of $\cp$.
\end{enumerate}

Exhaustively generated triangulated categories appear frequently in practice as compactly or even perfectly generated triangulated categories.

When we are only concerned with dg categories with cohomology concentrated in non-positive degrees (for instance, ordinary algebras), the proposition above can be simplified as follows (Corollary \ref{parametrization right bounded hn recollements}):

\begin{cor}
Let $\ca$ be a dg category. The following assertions are equivalent:
\begin{enumerate}[1)]
\item $\cd^-\ca$ is a recollement of $\cd^-\cb$ and $\cd^-\cc$, for certain dg categories $\cb$ and $\cc$ with cohomology concentrated in non-positive degrees.
\item There exist sets $\cp\ko \cq$ in $\cd^-\ca$ such that:
\begin{enumerate}[2.1)]
\item $\cp$ is contained in $\Susp(\ca)[n_{\cp}]$ and $\cq$ is contained in $\Susp(\ca)[n_{\cq}]$ for some integers $n_{\cp}$ and $n_{\cq}$.
\item $\cp$ and $\cq$ are dually right bounded.
\item The objects of $\cp$ are compact in $\cd\ca$ and satisfy 
\[(\cd\ca)(P,P'[n])=0
\] 
for all $P\ko P'\in\cp$ and $n\geq 1$.
\item The objects of $\cq$ are compact in $\Tria(\cq)\cap\cd^-\ca$ and satisfy 
\[(\cd\ca)(Q,Q'[n])=0
\] 
for all $Q\ko Q'\in\cp$ and $n\geq 1$.
\item $(\cd\ca)(P[n],Q)=0$ for each $P\in\cp\ko Q\in\cq$ and $n\in\Z$.
\item $\cp\cup\cq$ generates $\cd\ca$.
\end{enumerate}
\end{enumerate}
\end{cor}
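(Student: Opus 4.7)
The plan is to deduce this corollary from the general Proposition \ref{parametrization right bounded recollements} by matching, via Keller's dg Morita theory, the cohomological concentration requirement on $\cb$ and $\cc$ with the no-positive-self-extensions clauses in 2.3 and 2.4 stated here. All other items of the two statements (2.1, 2.2, 2.5, 2.6) are already identical, so only these two conditions need to be reconciled.

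For $1)\Rightarrow 2)$, I would invoke Proposition \ref{parametrization right bounded recollements} on the given recollement to obtain sets $\cp$ and $\cq$ satisfying all six items of the general statement. These can be chosen so that $\cp$ (respectively $\cq$) is the image, under the fully faithful functor $\cd\cc\to\cd\ca$ (respectively $\cd\cb\to\cd\ca$) coming from the recollement, of cofibrant resolutions of the representables $C^{\we}$ (respectively $B^{\we}$). Since $\cc$ has cohomology in non-positive degrees,
\[
(\cd\ca)(P,P'[n]) \cong \H n\cc(C,C') = 0 \quad \text{for } n\geq 1,
\]
and the same computation with $\cb$ applies to $\cq$; this gives the self-extension vanishing clauses of 2.3 and 2.4, while the remaining items are inherited verbatim from the general proposition.

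For $2)\Rightarrow 1)$, I would construct $\cc$ as the dg endomorphism category of a cofibrant resolution of $\cp$, so that $\H n\cc(P,P')\cong(\cd\ca)(P,P'[n])$, which by the vanishing in 2.3 is zero for $n\geq 1$; hence $\cc$ is cohomologically concentrated in non-positive degrees, and $\cb$ built from $\cq$ likewise. Keller's Morita theorem makes $\cd\cc\to\cd\ca$ fully faithful with essential image $\Tria(\cp)$. The key step is to check that this equivalence restricts to $\cd^-\cc\iso\Tria(\cp)\cap\cd^-\ca$, which then yields the exhaustive-generation part of condition 2.3 of Proposition \ref{parametrization right bounded recollements}; the analogous statement for $\cq$ and $\cb$ gives 2.4. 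Conclusion 1) follows by applying the general proposition to the resulting $\cb$ and $\cc$.

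The main obstacle is this last equivalence $\cd^-\cc\iso\Tria(\cp)\cap\cd^-\ca$. The forward inclusion uses 2.1, namely $\cp\subseteq\Susp(\ca)[n_{\cp}]$, so that coproducts of shifts of representables of $\cc$ land in right-bounded objects of $\cd\ca$, together with the dually-right-bounded hypothesis 2.2 to control the cofibrant replacements entering the construction of $\cc$. The reverse inclusion is exactly where Lemma \ref{looking for coherence} is decisive: because $\cc$ has cohomology in non-positive degrees, membership in $\cd^-\cc$ is detected purely cohomologically as bounded-above cohomology of $M(C)$ at the objects $C$ of $\cc$, and this intrinsic characterization transports faithfully through the equivalence to the condition defining $\Tria(\cp)\cap\cd^-\ca$.
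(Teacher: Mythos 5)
Your implication $1)\Rightarrow 2)$ and the $\cp$-half of $2)\Rightarrow 1)$ are essentially sound and close to the paper: for $\cp$ the objects are compact in $\cd\ca$, so Corollary \ref{remark Keller} does give $\cd\cc\simeq\Tria(\cp)$, and the restriction $\cd^-\cc\simeq\Tria(\cp)\cap\cd^-\ca$ can indeed be extracted from 2.1 together with dual right boundedness (Definition \ref{dually right bounded}, or Lemma \ref{characterization of dually right bounded} via Lemma \ref{looking for coherence}); a small caveat is that dual right boundedness is defined through \emph{fibrant} replacements, so your use of cofibrant resolutions needs the independence-of-replacement remark.

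The genuine gap is in the phrase ``the analogous statement for $\cq$ and $\cb$ gives 2.4''. For $\cq$ the hypothesis only provides compactness of its objects in $\Tria(\cq)\cap\cd^-\ca$, not in $\Tria(\cq)$ or in $\cd\ca$, so Keller's Morita theorem (Corollary \ref{remark Keller}) cannot be invoked to produce a fully faithful $\cd\cb\ra\cd\ca$ with essential image $\Tria(\cq)$; this weaker ``self-compactness relative to right bounded objects'' is exactly the phenomenon the right bounded theory is designed to handle (compare condition 2.3 of K\"onig's theorem). The only equivalence available at this level is the restricted one of Lemma \ref{Morita theory for right bounded}, $\cd^-\cb\simeq\Tria(\cq)\cap\cd^-\ca$, but that lemma has as hypothesis precisely that $\Tria(\cq)\cap\cd^-\ca$ is exhaustively generated to the left by $\cq$ — the very statement you propose to \emph{deduce} from the equivalence. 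So your argument is circular for $\cq$: essential surjectivity of $?\otimes^{\bf L}_{\cb}X$ onto $\Tria(\cq)\cap\cd^-\ca$ cannot be ``transported'' from $\cd^-\cb$, it has to be proved by constructing, for each $M$ in the intersection, an approximating tower of objects built from non-negative shifts of $\cq$ whose Milnor colimit is $M$. This constructive step, using the degree bound $k_M$ coming from right boundedness, the vanishing $(\cd\ca)(Q,Q'[n])=0$ for $n\geq 1$ to keep the degrees of the successive cones increasing, and compactness inside $\Tria(\cq)\cap\cd^-\ca$, is exactly Proposition \ref{g implies e to the left}, packaged as Corollary \ref{right bounded compactly generated implies exhaustive}; it is the real content of $2)\Rightarrow 1)$ and is missing from your proposal. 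Once that corollary is in place, conditions 2.3 and 2.4 of Proposition \ref{parametrization right bounded recollements} hold and the rest of your outline (including the cohomological concentration of $\cb$ and $\cc$ from the Hom computations) goes through as in the paper.
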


After Lemma \ref{route to Konig2}, the version for ordinary algebras (\cf Theorem \ref{Konig}) is essentially S.~K\"{o}nig's theorem \cite[Theorem 1]{Konig1991}:

\begin{thm}
Let $A$, $B$ and $C$ be ordinary algebras. The following assertions are equivalent: 
\begin{enumerate}[1)]
\item $\cd^-A$ is a recollement of $\cd^-C$ and $\cd^-B$. 
\item There are two objects $P\ko Q\in\cd^-A$ satisfying the following properties:
\begin{enumerate}[2.1)]
\item There are isomorphisms of algebras $C\cong(\cd A)(P,P)$ and $B\cong(\cd A)(Q,Q)$. 
\item $P$ is exceptional and isomorphic in $\cd A$ to a bounded complex of finitely generated projective $A$-modules. 
\item $(\cd A)(Q,Q[n]^{(\Lambda )})=0$ for every set $\Lambda$ and every $n\in\Z\setminus\{0\}$, the canonical map $(\cd A)(Q,Q)^{(\Lambda)}\ra(\cd A)(Q,Q^{(\Lambda )})$ is an isomorphism, and $Q$ is isomorphic in $\cd A$ to a bounded complex of projective $A$-modules. 
\item $(\cd A)(P[n],Q)=0$ for all $n\in\Z$. 
\item $P\oplus Q$ generates $\cd A$.
\end{enumerate}
\end{enumerate}
\end{thm}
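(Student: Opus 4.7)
The plan is to obtain this theorem as the specialization to ordinary algebras of the preceding Corollary on right bounded recollements of dg categories with cohomology concentrated in non-positive degrees. Since an ordinary algebra, viewed as a dg category with a single object, has cohomology concentrated in degree $0$ (trivially non-positive), the hypotheses of that Corollary apply to $A$, and on the other side of a putative recollement the target dg categories will be forced to be ordinary algebras precisely when their endomorphism algebras of the distinguished compact generators are concentrated in degree $0$. The bridge between the abstract dg-style conditions of the Corollary (``dually right bounded'', ``compact in $\Tria(\cq)\cap\cd^-\ca$'') and S.~K\"{o}nig's concrete conditions involving bounded complexes of projective modules is the cited Lemma \ref{route to Konig2}.

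For the implication $(2)\Rightarrow (1)$, I would set $\cp=\{P\}$ and $\cq=\{Q\}$ and verify conditions 2.1)--2.6) of the preceding Corollary one by one. Being isomorphic to a bounded complex of projective $A$-modules places $P$ and $Q$ in some $\Susp(\ca)[n]$ and, via Lemma \ref{route to Konig2}, makes them dually right bounded. Compactness of $P$ in $\cd A$ is exactly 2.2), and the coproduct-commutation plus Hom-vanishing of 2.3) together with Lemma \ref{route to Konig2} give that $Q$ is compact in $\Tria(Q)\cap\cd^-A$; the exceptionality hypotheses then yield both the non-positive cohomology of the endomorphism algebras and the required containment in shifts of $\Susp(\ca)$. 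Orthogonality $(\cd A)(P[n],Q)=0$ and generation by $P\oplus Q$ are transcribed directly. The Corollary then produces a recollement $\cd^-\ca = \mbox{recollement}(\cd^-\cb, \cd^-\cc)$ where $\cb$ and $\cc$ are the endomorphism dg algebras of $Q$ and $P$; but by 2.1) these are (quasi-isomorphic to) $B$ and $C$ respectively, giving 1).

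For the implication $(1)\Rightarrow(2)$, I would invoke the Corollary to extract objects $P$ and $Q$ satisfying its conditions 2.1)--2.6), and then unpack them back into K\"onig's explicit form. Here Lemma \ref{route to Konig2} is used in the reverse direction: the dual right boundedness of $P$ together with its compactness in $\cd A$ and its location in a shift of $\Susp(\ca)$ force it to be isomorphic in $\cd A$ to a bounded complex of finitely generated projectives, giving 2.2), while dual right boundedness of $Q$ gives the bounded complex of projectives in 2.3) and compactness of $Q$ in $\Tria(Q)\cap\cd^-A$ translates into the coproduct-commutation statement for $(\cd A)(Q,?)$ restricted to coproducts of copies of $Q$. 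The self-Ext vanishing is built into the definition of exceptionality coming from the Corollary (possible because $B$ and $C$ were assumed to be ordinary), and orthogonality and generation are again direct. The isomorphisms of algebras $C\cong(\cd A)(P,P)$ and $B\cong(\cd A)(Q,Q)$ come from identifying the compact generators of the two pieces of the recollement with $P$ and $Q$, since the equivalence $\cd^-(\End P)\simeq \Tria(P)\cap\cd^-A$ is induced by $\RHom(P,?)$.

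The hard part will be the careful translation between ``dually right bounded'' and ``isomorphic to a bounded complex of (finitely generated, for $P$) projective $A$-modules'' --- this rests entirely on Lemma \ref{route to Konig2}, and in particular one must check that the compactness and exceptionality assumptions are strong enough for this translation to go through in both directions for $Q$, which lives only in $\cd^-A$ and is compact only in $\Tria(Q)\cap\cd^-A$ rather than in $\cd A$. A subsidiary subtlety is guaranteeing, in the $(1)\Rightarrow(2)$ direction, that the endomorphism dg algebras supplied by the Corollary have their cohomology concentrated in degree zero so that they genuinely coincide with $B$ and $C$; this is precisely what the assumption that $B$ and $C$ are ordinary buys us, combined with the exceptionality assertions in the non-positive cohomology version of the Corollary.
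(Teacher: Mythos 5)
Your plan is essentially the paper's proof: Theorem \ref{Konig} is deduced from Corollary \ref{parametrization right bounded hn recollements} by taking $\cp=\{P\}$ and $\cq=\{Q\}$, with the only substantial work being the verification, via Lemma \ref{route to Konig2}, that $Q$ is compact in $\Tria_{\cd A}(Q)\cap\cd^-A$ — exactly the point you single out as the hard step. One minor correction of attribution: the translation between ``dually right bounded'' and ``isomorphic to a bounded complex of (finitely generated) projective $A$-modules'' rests on Lemma \ref{characterization of dually right bounded} (together with compactness of $P$, respectively S.~K\"onig's criterion for $Q$), while Lemma \ref{route to Konig2} is what supplies the filtration argument giving the compactness of $Q$ in the right bounded piece.
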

\bigskip

%FIN DE LA INTRODUCCION EN INGLES

\mainmatter
\chapter{Torsion theory in additive categories}\label{Torsion theory in additive categories}
\addcontentsline{lot}{chapter}{Cap\'itulo 1. Teor\'ias de torsi\'on en categor\'ias aditivas}

\section{Introduction}
\addcontentsline{lot}{section}{1.1. Introducci\'on}

%\textcolor{red}{Est\'{a}n los objetos de torsi\'{o}n de ``On the derived category of 1-motives, I'', Luca Barbieri-Viale and Bruno Kahn, en la clase de torsi\'{o}n de alg\'{u}n par de torsi\'{o}n aditivo?}
\subsection{Motivation}
\addcontentsline{lot}{subsection}{1.1.1. Introducci\'on}

The first observation is that the essence of splitting phenomena concerning torsion pairs (\eg split t-structures on triangulated categories) can be basically understood already at the additive level. This motivates the study of torsion theory in arbitrary additive categories and its interplay with more sophisticated torsion theories, like pretriangulated torsion pairs or t-structures, specially in the situation of splitting.

\subsection{Outline of the chapter}
\addcontentsline{lot}{subsection}{1.1.2 Esbozo del cap\'itulo}

In section \ref{Additive torsion pairs}, we introduce the notion of torsion pair of an arbitrary additive category. In section \ref{(Co)suspended, triangulated and pretriangulated categories}, we recall the definition of (co)suspended (or left/right triangulated) categories, and remind two ways in which they appear in nature, namely, as the stable categories of exact categories with enough injectives (or projectives), or as stable categories of abelian categories associated to a covariantly (or contravariantly) finite subcategory. We also recall the notion of pretriangulated category in the sense of A. Beligiannis, and the definition of pretriangulated torsion pair. In section \ref{Compatible torsion theories}, we prove that abelian, pretriangulated, triangulated,\dots torsion pairs are precisely the additive torsion pairs which satisfy certain extra conditions. In section \ref{Split torsion pairs}, we define what we mean by split torsion pair. In section \ref{Characterization of centrally split TTF triples}, we define and characterize the so called centrally split torsion torsionfree(=TTF) triples. In case the ambient additive category has canonical factorizations, or it is a weakly balanced pretriangulated (and the TTF triples under consideration are also pretriangulated), this characterization is improved. In section \ref{Parametrization of centrally split TTF triples}, we use idempotents to parametrize:
\begin{enumerate}[a)]
\item Centrally split TTF triples on additive categories with splitting idempotents.
\item Centrally split TTF triples on abelian categories of a certain type which includes the Grothendieck categories having a projective generator. In particular, we generalize and give an alternative proof of the second statement of J. P. Jans' theorem: centrally split TTF triples on the category $\Mod A$ of modules over an algebra $A$ are in bijection with central idempotents of $A$.
\item Centrally split triangulated TTF triples on compactly generated triangulated categories. In particular, we prove that centrally split triangulated TTF triples on the derived category $\cd\ca$ of a small dg category $\ca$ are in bijection with (central) idempotents $(e_{A})_{A\in\ca}$ of $\prod_{A\in\ca}\H 0\ca(A,A)$ such that for each integer $n$ and each $f\in\H n\ca(A,B)$ we have $e_{B}\cdot f=f\cdot e_{A}$ in $\H n\ca(A,B)$. Hence, if $A$ is an ordinary algebra, centrally split triangulated TTF triples on $\cd A$ are in bijection with central idempotents of $A$.
\item Centrally split triangulated TTF triples on derived categories of complete and cocomplete abelian categories.
\end{enumerate}

\subsection{Notation}
\addcontentsline{lot}{subsection}{1.1.3. Notaci\'on}

By \emph{strictly full}\index{subcategory!strictly full} subcategory we mean `full and closed under isomorphisms'. Given a class $\cq$ of objects of an additive category $\cd$, we denote by $\cq^{\bot_{\cd}}$\index{$\cq^{\bot_{\cd}}$} (or $\cq^{\bot}$\index{$\cq^{\bot}$} if the category $\cd$ is clear) the full subcategory of $\cd$ formed by the objects $M$ which are \emph{right orthogonal}\index{orthogonal!right} to each object of $\cq$, \ie such that $\cd(Q,M)=0$ for all $Q$ in $\cq$. Dually for $\ ^{\bot_{\cd}}\cq$\index{$\ ^{\bot_{\cd}}\cq$} denotes the full subcategory of $\cd$ formed by the objects which are \emph{left orthogonal}\index{orthogonal!left} to each object of $\cq$. Also, we denote by $\add(\cq)$\index{$\add(?)$} the class of objects of $\cd$ which are direct summands of finite coproducts of objects of $\cq$. If $\cq$ is a class of objects of an abelian category, we denote by $\Gen(\cq)$\index{$\Gen(?)$} the class of objects which are quotients of small coproducts of objects of $\cq$.

\section{Additive torsion pairs}\label{Additive torsion pairs}
\addcontentsline{lot}{section}{1.2. Pares de torsi\'on aditivos}

We start with some elementary properties of adjoint pairs of functors.

\begin{lemma}\label{properties of adjunctions}
Let
\[\xymatrix{\cc\ar@<1ex>[d]^R \\
\cd\ar@<1ex>[u]^{L}
}
\]
be an adjoint pair of functors between arbitrary categories. Put 
\[\theta: \cc(LN,M)\arr{\sim}\cd(N,RM)
\] 
for the adjunction isomorphism. The \emph{unit}\index{adjunction!unit of an} will be denoted by $\eta_{?}:=\theta(\id_{L?})$ and the \emph{counit}\index{adjunction!counit of an} by $\delta_{?}:=\theta^{-1}(\id_{R?})$. The following assertions hold:
\begin{enumerate}[1)]
\item $R(\delta_{M})\eta_{RM}=\id_{RM}$ for each $M\in\cc$.
\item $\delta_{LN}L(\eta_{N})=\id_{LN}$ for each $N\in\cd$.
\item $\theta(f)=R(f)\eta_{N}$ for each $f\in\cc(LN,M)\ko N\in\cd\ko M\in\cc$.
\item $\theta^{-1}(g)=\delta_{M}L(g)$ for each $g\in\cd(N,RM)\ko N\in\cd\ko M\in\cc$.
\item $L$ is fully faithful if and only if $\eta$ is an isomorphism.
\item $R$ is fully faithful if and only if $\delta$ is an isomorphism.
\item If $L$ is full, then for each $M\in\cc$ we have that $\delta_{M}$ is a retraction if and only if $\delta_{M}$ is an isomorphism.
\item If $R$ is full, then for each $N\in\cd$ we have that $\eta_{N}$ is a section if and only if $\eta_{N}$ is an isomorphism.
\item Assume $\cc$ and $\cd$ are additive. Then $\eta_{N}=0$ if and only if $LN=0$.
\item Assume $\cc$ and $\cd$ are additive. Then $\delta_{M}=0$ if and only if $RM=0$.
\item If $L$ is full, a morphism $f$ satisfies $\delta_{M}=\delta_{M}f$ if and only if $f=\id_{LRN}$.
\item If $R$ is full, a morphism $g$ satisfies $\eta_{N}=g\eta_{N}$ if and only if $g=\id_{RLN}$.
\end{enumerate}
\end{lemma}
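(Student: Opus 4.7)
The plan is to group the twelve assertions into clusters exploiting formal features of the adjunction $(L,R,\theta)$. The backbone consists of items 3) and 4), which are immediate consequences of the naturality of $\theta$: applying the naturality square to $\id_{LN}\colon LN\to LN$ shows $\theta(f)=\theta(f\circ\id_{LN})=R(f)\circ\theta(\id_{LN})=R(f)\eta_{N}$ for $f\colon LN\to M$, and dually for 4). Once these formulas are in place, items 1) and 2) are obtained by specialization: substituting $f=\delta_{M}$ in 3) gives $R(\delta_{M})\eta_{RM}=\theta(\delta_{M})=\theta\theta^{-1}(\id_{RM})=\id_{RM}$, and analogously substituting $g=\eta_{N}$ in 4) yields 2). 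So the first four items are essentially free.

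For 5) and 6) I would argue by Yoneda. Combining 3) with the naturality of $\eta$, the composite
\[
\cd(N,N')\xrightarrow{\,L\,}\cc(LN,LN')\xrightarrow{\,\theta\,}\cd(N,RLN')
\]
sends $u\mapsto R(L(u))\eta_{N}=\eta_{N'}\circ u$, i.e.\ it is composition with $\eta_{N'}$. Since $\theta$ is always a bijection, $L$ is fully faithful precisely when this composite is bijective for every $N$, which by Yoneda is equivalent to $\eta_{N'}$ being an isomorphism for every $N'$. Item 6) is dual.

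The only delicate items are 7) and 8), and they contain the whole technical idea the rest of the lemma relies on. Suppose $L$ is full and $\delta_{M}\circ s=\id_{M}$ for some $s\colon M\to LRM$. Then $s\circ\delta_{M}$ is an endomorphism of $LRM$, so by fullness of $L$ there is $h\colon RM\to RM$ with $L(h)=s\delta_{M}$. The relation $\delta_{M}\circ L(h)=\delta_{M}\circ s\circ\delta_{M}=\delta_{M}$ transported through 3) together with naturality of $\eta$ gives
\[
h=\id_{RM}\circ h=R(\delta_{M})\eta_{RM}\,h=R(\delta_{M})R(L(h))\eta_{RM}=\theta(\delta_{M}L(h))=\theta(\delta_{M})=\id_{RM},
\]
so $s\delta_{M}=L(h)=\id_{LRM}$, proving $\delta_{M}$ is an isomorphism. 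Item 8) is dual. I expect this to be the main obstacle, not because it is hard but because it is the one place where fullness must really be leveraged to upgrade a one-sided inverse to a two-sided one.

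Finally, items 9), 10), 11), 12) follow from the same circle of ideas. Since adjoints between additive categories are additive, $\theta$ is a group homomorphism, so $\eta_{N}=\theta(\id_{LN})=0$ iff $\id_{LN}=0$ iff $LN=0$, giving 9); dually for 10). For 11), if $L$ is full and $\delta_{M}f=\delta_{M}$ with $f\colon LRM\to LRM$, write $f=L(g)$ and repeat the computation used in 7): applying $\theta$ to both sides and exploiting naturality of $\eta$ collapses $R(\delta_{M})\eta_{RM}g$ to $g$, and the right hand side to $\id_{RM}$, so $g=\id_{RM}$ and hence $f=\id_{LRM}$. Item 12) is dual.
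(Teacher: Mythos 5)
Your proposal is correct, and for the substantive items 7), 8), 11), 12) (and 1)--4), 9), 10)) it runs along essentially the same lines as the paper's proof: everything is reduced, via the formulas $\theta(f)=R(f)\eta_{N}$ and $\theta^{-1}(g)=\delta_{M}L(g)$ and fullness of $L$ (resp.\ $R$), to the bijectivity of $\theta$. The one genuine divergence is in 5) and 6), where the paper cites \cite[Proposition II.7.5]{HiltonStammbach} for one implication and verifies the converse by an explicit computation, whereas you give a uniform, self-contained argument by identifying $\theta\circ L$ on hom-sets with post-composition by $\eta_{N'}$ and invoking Yoneda; this is a slightly cleaner route that handles both implications at once, and it is perfectly valid.
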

\begin{proof}
Assertions $1)-4)$ are well-known.

5) Thanks to \cite[Proposition II.7.5]{HiltonStammbach}, we have that if $L$ is fully faithful then $\eta$ is an isomorphism. Conversely, assume that $\eta$ is an isomorphism. If $f\in\cc(LN,LM)$, then $f=L(\eta_{N})^{-1}\circ LR(f)\circ L(\eta_{M})=L(\eta^{-1}_{N}\circ R(f)\circ\eta_{M})$ and so $L$ is full. Finally, if $L(f)=L(g)$, then $RL(f)=RL(g)$ and, since the unit is an isomorphism, we deduce that $f=g$. This proves that $L$ is faithful.

6) Follows from 5) by duality.

7) Assume there exists a morphism $s$ such that $\delta_{M}\circ s=\id_{M}$. If $L$ is full, then $s\circ\delta_{M}=L(f)$ for some $f\in\cd(RM,RM)$. Now
\[\theta^{-1}(f)=\delta_{M}\circ L(f)=\delta_{M}\circ s\circ \delta_{M}=\delta_{M}=\theta^{-1}(\id_{RM}),
\] 
and so $f=\id_{RM}$, which implies $s\circ\delta_{M}=\id_{LRM}$.

8) Follows from 7) by duality.

9) $\theta(\id_{LN})=\eta_{N}=0=\theta(0_{LN})$ is equivalent to $\id_{LN}=0_{LN}$.

10) Follows from 9) by duality.

11) Assume $\delta_{M}=\delta_{M}f$. Since $L$ is full, there exists a morphism $g$ such that $L(g)=f$. But then we have
$\theta^{-1}(g)=\delta_{M}L(g)=\delta_{M}f=\delta_{M}=\theta^{-1}(\id_{RM})$. This is equivalent to $g=\id_{RM}$, which implies $f=\id_{LRM}$.

12) Follows from 11) by duality.
\end{proof}

\begin{definition}
Let $L\arr{f}M\arr{g}N$ be a sequence of morphisms in an additive category $\cd$. We say that $f$ is a \emph{weak kernel}\index{weak!kernel} of $g$ if the induced sequence
\[\cd(?,L)\ra\cd(?,M)\ra\cd(?,N)
\]
is exact, \ie if the following conditions hold:
\begin{enumerate}[1)]
\item $gf=0$.
\item If $h$ is a morphism such that $gh=0$, then there exists a morphism $h'$ such that $fh'=h$.
\[\xymatrix{L\ar[r]^f & M\ar[r]^g & N \\
 K\ar[ur]_{h}\ar@{.>}[u]^{h'}\ar@/_1pc/[urr]_{0} &&
}
\]
\end{enumerate}
\bigskip

Dually, $g$ is a \emph{weak cokernel}\index{weak!cokernel} of $f$ is the induced sequence
\[\cd(N,?)\ra\cd(M,?)\ra\cd(L,?)
\]
is exact, \ie, the following conditions hold:
\begin{enumerate}[1)]
\item $gf=0$.
\item If $h$ is a morphism such that $hf=0$, then there exists a morphism $h'$ such that $h'g=h$.
\[\xymatrix{L\ar[r]^f\ar@/_1pc/[drr]_{0} & M\ar[r]^g\ar[dr]_{h} & N\ar@{.>}[d]^{h'} \\
 &&C
}
\]
\end{enumerate}
The sequence $L\arr{f}M\arr{g}N$ is \emph{weakly exact}\index{weakly!exact} if $f$ is a weak kernel of $g$ and $g$ is a weak cokernel of $f$.
\end{definition}

\begin{proposition}\label{properties of additive torsion pairs}
Let $\cd$ be an additive category and let $(\cx,\cy)$ be a pair of strictly full subcategories of $\cd$ such that:
\begin{enumerate}[i)]
\item $\cd(X,Y)=0$ for each $X\in\cx$ and $Y\in\cy$.
\item The inclusion functor $x:\cx\ra\cd$ has a right adjoint $\tau_{\cx}$\index{$\tau_{\cx}$}. We put 
\[\theta_{\cx}:\cd(xN,M)\arr{\sim}\cx(N,\tau_{\cx}M)
\] 
for the adjunction isomorphism, $\eta_{\cx}$\index{$\eta_{\cx}$} for the unit and $\delta_{\cx}$\index{$\delta_{\cx}$} for the counit.
\item The inclusion functor $y:\cy\ra\cd$ has a left adjoint $\tau^{\cy}$\index{$\tau^{\cy}$}. We put 
\[\theta^{\cy}:\cy(\tau^{\cy}N,M)\arr{\sim}\cd(N,yM)
\] 
for the adjunction isomorphism, $\eta^{\cy}$\index{$\eta^{\cy}$} for the unit and $\delta^{\cy}$\index{$\delta^{\cy}$} for the counit.
\item For each $M\in\cd$ the sequence
\[x\tau_{\cx}M\arr{\delta_{\cx,M}}M\arr{\eta^{\cy}_{M}}y\tau^{\cy}M
\]
is weakly exact.
\end{enumerate}
Then, the following assertions hold:
\begin{enumerate}[1)]
\item $\cx=\ ^{\bot}\cy$ and $\cy=\cx^{\bot}$.
\item For an object $M\in\cd$ the following properties are equivalent:
\begin{enumerate}[a)]
\item $M$ belongs to $\cx$. 
\item $\delta_{\cx,M}$ is an isomorphism. 
\item $\tau^{\cy}M=0$.
\end{enumerate}
\item For an object $M\in\cd$ the following properties are equivalent:
\begin{enumerate}[a)]
\item $M$ belongs to $\cy$. 
\item $\eta^{\cy}_{M}$ is an isomorphism. 
\item $\tau_{\cx}M=0$.
\end{enumerate}
\item The endofunctors $x\tau_{\cx}$ and $y\tau^{\cy}$ are idempotent.
\item For an object $M\in\cd$ the following properties are equivalent:
\begin{enumerate}[a)]
\item $\delta_{\cx,M}$ is a section. 
\item $\eta^{\cy}_{M}$ is a retraction.
\end{enumerate}
In this case, $M\cong x\tau_{\cx}M\oplus y\tau^{\cy}M$.
\end{enumerate}
\end{proposition}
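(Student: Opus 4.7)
The plan is to exploit throughout the observation that both $x$ and $y$ are fully faithful inclusions, so Lemma 1.1(5,6) already makes the unit $\eta_{\cx}$ and the counit $\delta^{\cy}$ natural isomorphisms. The triangle identities then immediately deliver ``$M\in\cx\Rightarrow \delta_{\cx,M}$ iso'' (and dually), handling the implication $a)\Rightarrow b)$ in (2) and (3); the converse $b)\Rightarrow a)$ is free, since $\cx$ and $\cy$ are strictly full, so the iso exhibits $M$ as (isomorphic to) an object in the image of $x$ or $y$.

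For part (1), the inclusion $\cx\subseteq{}^{\bot}\cy$ is condition (i). For the reverse I would argue that $M\in{}^{\bot}\cy$ forces $\eta^{\cy}_{M}=0$ (its codomain lies in $\cy$), and then invoke the weak-kernel clause of (iv): $\delta_{\cx,M}$ becomes a weak kernel of the zero map, so $\id_{M}$ factors through it, making $\delta_{\cx,M}$ a retraction; Lemma 1.1(7) (applicable because $x$ is full) upgrades this to an isomorphism, whence $M\in\cx$. The equivalence with $\tau^{\cy}M=0$ in (2) then follows along the same line: $M\in\cx$ gives $\cy(\tau^{\cy}M,\tau^{\cy}M)\cong\cd(M,y\tau^{\cy}M)=0$ by (1), so $\tau^{\cy}M=0$; and conversely $\tau^{\cy}M=0\Rightarrow\eta^{\cy}_{M}=0$, to which the argument of (1) applies verbatim. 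Part (3) is entirely dual, and (4) drops out by applying (2) and (3) to the objects $x\tau_{\cx}M\in\cx$ and $y\tau^{\cy}M\in\cy$ respectively.

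The hard part will be (5). By duality I need only, starting from a retraction $r$ of $\delta_{\cx,M}$, produce a section of $\eta^{\cy}_{M}$. My plan is to notice that $\id_{M}-\delta_{\cx,M}r$ is annihilated by $\delta_{\cx,M}$ on the right, so the weak-cokernel clause of (iv) provides some $s:y\tau^{\cy}M\to M$ with $s\eta^{\cy}_{M}=\id_{M}-\delta_{\cx,M}r$. Then
\[\eta^{\cy}_{M}\,s\,\eta^{\cy}_{M}\;=\;\eta^{\cy}_{M}\bigl(\id_{M}-\delta_{\cx,M}r\bigr)\;=\;\eta^{\cy}_{M},\]
since $\eta^{\cy}_{M}\delta_{\cx,M}=0$, and Lemma 1.1(12)---applicable because $y$ is full---upgrades this identity to $\eta^{\cy}_{M}s=\id$. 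For the splitting $M\cong x\tau_{\cx}M\oplus y\tau^{\cy}M$, the only genuinely new verification is that the off-diagonal entry $rs\in\cd(y\tau^{\cy}M,x\tau_{\cx}M)$ vanishes, but via the adjunction $x\dashv\tau_{\cx}$ this hom-group is isomorphic to $\cx(\tau_{\cx}(y\tau^{\cy}M),\tau_{\cx}M)$ and $\tau_{\cx}(y\tau^{\cy}M)=0$ by (3); the remaining entries are either the defining equalities of $r$ and $s$ or the weak-exactness relation $\eta^{\cy}_{M}\delta_{\cx,M}=0$, and the mutual inverse property reduces to $\delta_{\cx,M}r+s\eta^{\cy}_{M}=\id_{M}$, which is again the defining equation of $s$. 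The main obstacle is isolating the correct use of the weak-exactness clause in (5) and recognizing that it must be closed with Lemma 1.1(12); this is precisely where the full-faithfulness of the inclusions $x$ and $y$ becomes indispensable, since without it $\eta^{\cy}_{M}s\eta^{\cy}_{M}=\eta^{\cy}_{M}$ could not be promoted to $\eta^{\cy}_{M}s=\id$.
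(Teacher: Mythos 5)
Your parts 1)--4) and the core equivalence in 5) are correct and are essentially the paper's own argument: the factorization of $\id_{M}$ (resp.\ of $\id_{M}-\delta_{\cx,M}r$) through the weak kernel (resp.\ weak cokernel), closed off with the full-faithfulness items of Lemma \ref{properties of adjunctions}. The genuine flaw is in your verification of the splitting in 5). You claim that $\cd(y\tau^{\cy}M,x\tau_{\cx}M)\cong\cx(\tau_{\cx}(y\tau^{\cy}M),\tau_{\cx}M)$ ``via the adjunction $x\dashv\tau_{\cx}$'' and hence that $rs=0$. That adjunction only identifies morphisms \emph{out of} an object of $\cx$, namely $\cd(xN,M')\cong\cx(N,\tau_{\cx}M')$; it says nothing about morphisms from an object of $\cy$ \emph{into} $x\tau_{\cx}M$, and the group you claim vanishes is nonzero in general. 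For the usual torsion pair on $\Mod\Z$ and $M=\Z\oplus\Z/2\Z$ (whose torsion sequence does split), one has $y\tau^{\cy}M\cong\Z$, $x\tau_{\cx}M\cong\Z/2\Z$ and $\Hom_{\Z}(\Z,\Z/2\Z)\neq 0$, while $\cx(\tau_{\cx}\Z,\Z/2\Z)=0$; all that exists is a (generally non-bijective) map between these two hom-groups induced by $\tau_{\cx}$.

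The damage is local and easily repaired, but as written the step fails. With your choice of $s$ one has $rs\,\eta^{\cy}_{M}=r(\id_{M}-\delta_{\cx,M}r)=r-r=0$, and since you have already proved $\eta^{\cy}_{M}s=\id$, the morphism $\eta^{\cy}_{M}$ is a split epimorphism, so $rs=0$ does hold -- just not for the reason you give. Alternatively, the composite $\left[\begin{array}{c}r\\ \eta^{\cy}_{M}\end{array}\right]\circ\left[\begin{array}{cc}\delta_{\cx,M} & s\end{array}\right]$ equals $\left[\begin{array}{cc}\id & rs\\ 0 & \id\end{array}\right]$, which is invertible whatever $rs$ is, and together with the identity $\delta_{\cx,M}r+s\eta^{\cy}_{M}=\id_{M}$ in the other order this already gives $M\cong x\tau_{\cx}M\oplus y\tau^{\cy}M$. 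The paper itself avoids the issue differently: once $\delta_{\cx,M}$ is a section and $\eta^{\cy}_{M}$ a retraction, the weak kernel and weak cokernel are a genuine kernel and cokernel, and Lemma \ref{split in additive categories} (whose auxiliary morphism $\sigma'=(\id-f\rho)\sigma$ plays exactly the role of your $s$) produces the decomposition.
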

\begin{proof}
1) The inclusion $\cx\subseteq\ ^{\bot}\cy$ is clear. Conversely, let $M\in\ ^{\bot}\cy$. Then $\eta^{\cy}_{M}=0$ and, since $\delta_{\cx,M}$ is a weak kernel of $\eta^{\cy}_{M}$, we have that the identity morphism $\id_{M}$ factors through $\delta_{\cx,M}$. Hence, $\delta_{\cx,M}$ is a retraction and, by the lemma above, it is in fact an isomorphism. This implies that $M\in\cx$. Dually, one gets $\cy=\cx^{\bot}$.

2) Thanks to the proof of 1) we already know that an object $M\in\cd$ belongs to $\cx$ if and only if $\delta_{\cx,M}$ is an isomorphism, and that if $\tau^{\cy}M=0$ then $M$ belongs to $\cx$. Now, if $M\in\cx$ then $\eta^{\cy}_{M}=0$ and, by Lemma \ref{properties of adjunctions}, we have that $\tau^{\cy}M=0$.

3) It follows from 2) by duality.

4) For every $M\in\cd$, since $x\tau_{\cx}M\in\cx$ we have that
\[\delta_{\cx,x\tau_{\cx}M}:x\tau_{\cx}x\tau_{\cx}M\ra x\tau_{\cx}M
\]
is an isomorphism. This proves that $x\tau_{\cx}$ is idempotent. Dually, one proves that $y\tau^{\cy}$ is idempotent.

5) Assume $\delta_{\cx,M}$ is a section, \ie there exists a morphism $r$ such that $r\delta_{\cx,M}=\id$. Since $(\id-\delta_{\cx,M}r)\delta_{\cx,M}=0$, there exists a morphism $s$ such that $s\eta^{\cy}_{M}=\id-\delta_{\cx,M}r$. We depict the situation:
\[\xymatrix{x\tau_{\cx}M\ar[r]^{\delta_{\cx,M}}\ar@/_1pc/[dr]_{0} & M\ar[d]_{\id-\delta_{\cx,M}r}\ar[r]^{\eta^{\cy}_{M}} & y\tau^{\cy}M\ar@{.>}[ld]^{s} \\
& M &
}
\]
Hence, $\eta^{\cy}_{M}=\eta^{\cy}_{M}s\eta^{\cy}_{M}$ and, by Lemma \ref{properties of adjunctions}, we have $\eta^{\cy}_{M}s=\id$, \ie $\eta^{\cy}_{M}$ is a retraction.
Notice that, since $\delta_{\cx,M}$ is a section then it is the kernel of $\eta^{\cy}_{M}$. Conversely, since $\eta^{\cy}_{M}$ is a retraction, then it is the cokernel of $\delta_{\cx,M}$. In this case we have that $M\cong x\tau_{\cx}M\oplus y\tau^{\cy}M$, thanks to Lemma \ref{split in additive categories} below.
\end{proof}

\begin{lemma}\label{split in additive categories}
Let $\cd$ be an additive category, and let
\[L\arr{f}M\arr{g}N
\]
be a pair of morphisms in $\cd$ such that:
\begin{enumerate}[1)]
\item it is an exact pair, \ie $f$ is the kernel of $g$ and $g$ is the cokernel of $f$,
\item $f$ is a section, \ie there exists a morphism $\rho$ such that $\rho f=\id_{L}$,
\item $g$ is a retraction, \ie there exists a morphism $\sigma$ such that $g\sigma=\id_{N}$.
\end{enumerate}
Then, the morphism
\[\left[\begin{array}{c}\rho \\ g\end{array}\right]:M\ra L\oplus N
\]
is an isomorphism making commutative the following diagram
\[\xymatrix{L\ar[rr]^{f}\ar[d]^{\id_{L}} && M\ar[rr]^{g}\ar[d] && N\ar[d]^{\id_{N}} \\
L\ar[rr]_{\scriptsize{\left[\begin{array}{c}\id_{L} \\ 0\end{array}\right]}} && L\oplus N\ar[rr]_{\scriptsize{\left[\begin{array}{cc}0&\id_{N}\end{array}\right]}} && N
}
\]
\end{lemma}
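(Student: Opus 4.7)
The verification of the commutativity of the diagram is essentially a routine bookkeeping using matrix notation: the left square commutes because $\rho f=\id_L$ (given) and $gf=0$ (since $f$ is the kernel of $g$, in particular $gf=0$), while the right square commutes automatically from the formula for matrix composition, $\bigl[0\ \id_N\bigr]\bigl[\begin{smallmatrix}\rho\\g\end{smallmatrix}\bigr]=g$. So the only real work is to show that $\bigl[\begin{smallmatrix}\rho\\g\end{smallmatrix}\bigr]$ is an isomorphism.

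For this, my plan is to exhibit an explicit two-sided inverse. The natural candidate is a morphism of the shape $\phi=[f\ \sigma']:L\oplus N\to M$ for some $\sigma':N\to M$. The equation $\bigl[\begin{smallmatrix}\rho\\g\end{smallmatrix}\bigr]\phi=\id_{L\oplus N}$ expands to the matrix identity
\[\begin{bmatrix}\rho f & \rho\sigma' \\ gf & g\sigma'\end{bmatrix}=\begin{bmatrix}\id_L & 0 \\ 0 & \id_N\end{bmatrix},\]
which, given $\rho f=\id_L$ and $gf=0$, reduces to the two conditions $g\sigma'=\id_N$ and $\rho\sigma'=0$. The given $\sigma$ satisfies the first but not necessarily the second; this is corrected by the replacement $\sigma':=\sigma-f\rho\sigma$, which still satisfies $g\sigma'=\id_N$ (since $gf=0$) and now also $\rho\sigma'=\rho\sigma-\rho\sigma=0$ (since $\rho f=\id_L$). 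This gives one side of the inverse identity.

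The other identity $\phi\bigl[\begin{smallmatrix}\rho\\g\end{smallmatrix}\bigr]=\id_M$ is where the exact pair hypothesis enters essentially. Setting $h:=\id_M-\phi\bigl[\begin{smallmatrix}\rho\\g\end{smallmatrix}\bigr]=\id_M-f\rho-\sigma g+f\rho\sigma g$, I would first compute $gh$ and check $gh=g-gf\rho-g\sigma g+gf\rho\sigma g=g-0-g-0=0$. Since $f$ is the kernel of $g$, this gives a (unique) factorization $h=fh'$ with $h':M\to L$. Applying $\rho$ and expanding yields $h'=\rho h=\rho-\rho f\rho-\rho\sigma g+\rho f\rho\sigma g=\rho-\rho-\rho\sigma g+\rho\sigma g=0$, so $h=f\cdot 0=0$.

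\textbf{Main obstacle.} The computation is short, so there is no serious obstacle; the only subtle point is recognizing that the given $\sigma$ must be modified to $\sigma-f\rho\sigma$ in order to secure orthogonality ($\rho\sigma'=0$), and that the exact pair hypothesis is really only used once, namely to invoke the universal property of the kernel $f$ at the step $gh=0\Rightarrow h=fh'$. The cokernel half of the exact pair hypothesis is not needed for this lemma (the dual argument using $g$ as cokernel of $f$ would give an alternative proof).
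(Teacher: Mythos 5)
Your proof is correct and follows essentially the same route as the paper: the same corrected splitting $\sigma'=(\id_M-f\rho)\sigma$, the same direct matrix verification of the right-inverse identity, and the same use of the kernel property of $f$ (followed by applying $\rho$) to show $\id_M-f\rho-\sigma'g=0$. No substantive differences to report.
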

\begin{proof}
Put $\sigma':=(\id_{M}-f\rho)\sigma$. Of course, the morphism
\[\left[\begin{array}{cc}f&\sigma'\end{array}\right]:L\oplus N\ra M
\]
is a right inverse to the morphism
\[\left[\begin{array}{c}\rho \\ g\end{array}\right]:M\ra L\oplus N.
\]
Conversely, we have
\[\left[\begin{array}{cc}f&\sigma'\end{array}\right] \left[\begin{array}{c}\rho \\ g\end{array}\right]=f\rho+\sigma' g,
\]
and it turns out that this sum is $\id_{M}$. Indeed, since $g(\id_{M}-f\rho-\sigma' g)=0$, there exists a morphism $h:M\ra L$ such that $\id_{M}-f\rho-\sigma' g=fh$. Therefore, $h=\rho fh=0$.
\end{proof}

In the sight of Proposition \ref{properties of additive torsion pairs}, the following seems to be a natural definition.

\begin{definition}
Let $\cd$ be an additive category and let $(\cx,\cy)$ be a pair of strictly full subcategories. We say that it is an \emph{(additive) torsion pair}\index{pair!torsion!(additive)} on $\cd$ if it satisfies the conditions i)--iv) of the proposition above. The sequences 
\[x\tau_{\cx}M\arr{\delta_{\cx,M}}M\arr{\eta^{\cy}_{M}}y\tau^{\cy}M
\] 
will be called \emph{torsion sequences}\index{torsion!sequence}. The functor $\tau_{\cx}$ (respectively, $\tau^{\cy}$) is said to be the \emph{torsion functor}\index{torsion!functor} (respectively, \emph{torsionfree functor}\index{torsionfree!functor}) associated to the torsion pair $(\cx,\cy)$.
\end{definition}

\section{(Co)suspended, triangulated and pretriangulated categories}\label{(Co)suspended, triangulated and pretriangulated categories}
\addcontentsline{lot}{section}{1.3. Categor\'ias (co)suspendidas, trianguladas y pretrianguladas}

We recall here the definition of \emph{(co)suspended category}, due to B.~Keller and D.~Vossieck \cite{KellerVossieck1987}, but also used by A. Beligiannis and N. Marmaridis \cite{BeligiannisMarmaridis1994} under the name of \emph{left/right triangulated category}:

\begin{definition}\label{definition suspended category}
Let $\cd$ be an additive category endowed with an additive endofunctor $\Sigma:\cd\ra\cd$. We put $\cR\ct(\cd,\Sigma)$ for the category whose objects are diagrams in $\cd$ of the form $L\arr{f} M\arr{g} N\arr{h}\Sigma L$ and a morphism between two such diagrams is a triple of morphisms $(l,m,n)$ making the following diagram commutative:
\[\xymatrix{L\ar[r]^{f}\ar[d]^{l} & M\ar[r]^{g}\ar[d]^m & N\ar[r]^{h}\ar[d]^n & \Sigma L\ar[d]^{\Sigma l} \\
L'\ar[r]^{f'} & M'\ar[r]^{g'} & N'\ar[r]^{h'} & \Sigma L 
}
\]
A \emph{suspended or right triangulated category}\index{category!suspended}\index{category!right triangulated} is an additive category $\cd$ endowed with an additive endofunctor $\Sigma:\cd\ra\cd$, called \emph{suspension functor}\index{functor!suspension}, and a strictly full subcategory of $\cR\ct(\cd,\Sigma)$, whose objects are called \emph{right triangles}\index{triangle!right}, satisfying the following axioms:
\begin{enumerate}[RT1)]
\item For each object $M\in\cd$ the sequence $0\ra M\arr{\id_{M}}M\ra 0$ is a right triangle.
\item If $L\arr{f}M\arr{g}N\arr{h}\Sigma L$ is a right triangle, then so is $M\arr{g}N\arr{h}\Sigma L\arr{-\Sigma f}\Sigma M$.
\item If $L\arr{f}M\arr{g}N\arr{h}\Sigma L$ and $L'\arr{f'}M'\arr{g'}N'\arr{h'}\Sigma L'$ are right triangles and $l$ and $m$ are morphisms such that $mf=f'l$, then there is a morphism $n$ such that $(l,m,n)$ is a morphism of triangles
\[\xymatrix{L\ar[r]^{f}\ar[d]^{l} & M\ar[r]^{g}\ar[d]^{m} & N\ar[r]^{h}\ar@{.>}[d]^{n} & \Sigma L\ar[d]^{\Sigma l} \\
L'\ar[r]^{f'} & M'\ar[r]^{g'} & N'\ar[r]^{h'} & \Sigma L'
}
\]
\item For each pair of morphisms $L\arr{f}M\arr{g}N$ there is a commutative diagram
\[\xymatrix{L\ar[r]^{f}\ar@{=}[d] & M\ar[r]^{m}\ar[d]^{g} & N'\ar[r]\ar[d] & \Sigma L\ar@{=}[d] \\
L\ar[r]^{gf} & N\ar[r]\ar[d] & M'\ar[r]\ar[d] & \Sigma L\ar[d]^{\Sigma f} \\
& L'\ar@{=}[r]\ar[d]^{l} & L'\ar[r]^{l}\ar[d] & \Sigma M \\
& \Sigma M\ar[r]^{\Sigma m} & \Sigma N' &
}
\]
such that the two first rows and the two central columns are right triangles.
\end{enumerate}
A \emph{full suspended subcategory}\index{subcategory!suspended} $\cx$ of a suspended category $(\cd,\Sigma)$ consists of a full additive subcategory $\cx$ closed under $\Sigma$ and under extensions, \ie if
\[L\arr{f}M\arr{g}N\arr{h}\Sigma L
\]
is a right triangle of $\cd$ with $L\ko N\in\cx$, then $\Sigma L\ko M\in\cx$.

The dual notions are that of \emph{cosuspended or left triangulated category}\index{category!cosuspended}\index{category!left triangulated} and \emph{full cosuspended subcategory}\index{subcategory!cosuspended}. In this case the endofunctor is the \emph{loop functor}\index{functor!loop}, denoted by $\Omega:\cd\ra\cd$, and the distinguished sequences are \emph{left triangles}\index{triangle!left}.
\end{definition}

\begin{remark}
Let $\cd$ be a suspended category. For each right triangle $L\arr{f}M\arr{g}N\arr{h}\Sigma L$ of $\cd$ and each object $V\in\cd$, the induced sequence
\[\dots\ra\cd(\Sigma L,V)\ra\cd(N,V)\ra\cd(M,V)\ra\cd(L,V)
\]
is exact. In particular, $gf=hg=(\Sigma f)h=0$. See \cite[Proposition 1.5.3]{KashiwaraShapira} for a proof.
\end{remark}

(Co)suspended categories arise in nature as quotients of exact categories in the sense of D. Quillen \cite{Quillen73}. 

\begin{definition}\label{ideal of an additive category}
A \emph{two-sided ideal}\index{ideal!two-sided... of an additive category} of an additive category $\cc$ is a class $\ci$ of morphisms of $\cc$ such that:
\begin{enumerate}
\item[I.1)] For any two objects $M\ko N\in\cc$, the set $\ci\cap\cc(M,N)$ is a subgroup of $\cc(M,N)$.
\item[I.2)] In any sequence of three composable morphisms
\[L\arr{f}M\arr{g}N\arr{h}U
\]
we have $hgf\in\ci$ whenever $g\in\ci$.
\end{enumerate}
\end{definition}

\begin{definition}\label{quotient category}
If $\ci$ is a two-sided ideal of the additive category $\cc$, we define the \emph{quotient category}\index{category!quotient (by a two-sided ideal)} $\cc/\ci$ as follows: it has the same objects as $\cc$ and its morphisms space $(\cc/\ci)(M,N)$ is the quotient of the abelian group $\cc(M,N)$ by the subgroup $\ci\cap\cc(M,N)$. In this category the composition is given by the composition of the representatives. Associated to the quotient category we have the \emph{quotient functor}
\[\cc\ra\cc/\ci,
\]
which takes the object $X$ to itself, and the morphism $f$ to its class $\ol{f}$.
\end{definition}

It is well-known that the quotient category $\cc/\ci$ is also an additive category making the quotient functor into an additive functor.

\begin{definition}\label{stable category}
The \emph{stable category}\index{category!stable} of an additive category $\cc$ associated to a full additive subcategory $\ct$ is the quotient $\cc/\langle\ct\rangle$ of $\cc$ by the two-sided ideal $\langle\ct\rangle$\index{$\langle\ct\rangle$} of morphisms factoring through an object of $\ct$. When $\ct$ is clear, the stable category is denoted by $\ul{\cc}$. 
\end{definition}

\begin{definition}\label{exact category}
An \emph{exact category}\index{category!exact} is an additive category $\cc$ endowed with a distinguished class of sequences $\ce$ closed under isomorphisms
\[L\arr{i}M\arr{p}N,
\]
such that $(i,p)$ is an \emph{exact pair}\index{pair!exact}, \ie $i$ is the kernel of $p$ and $p$ is the cokernel of $i$. Following \cite{GabrielRoiter}, the morphisms $p$ are called \emph{deflations}\index{deflation}, the morphisms $i$ \emph{inflations}\index{inflation} and the pairs $(i, p)$ \emph{conflations}\index{conflation}. The class of conflations have to satisfy the following axioms:
\begin{enumerate}
\item[Ex0\ )] The identity morphism of the zero object is a deflation.
\item[Ex1\ )] The composition of two deflations is a deflation.
\item[Ex1')] The composition of two inflations is an inflation.
\item[Ex2\ )] Deflations admit and are stable under base change (\ie pullbacks).
\item[Ex2')] Inflations admit and are stable under cobase change (\ie pushouts).
\end{enumerate}
\end{definition}

\begin{remark}\label{Keller Quillen}
As shown by B. Keller \cite{Keller1990}, these axioms are equivalent to D. Quillen's \cite{Quillen73} and they imply that if $\cc$ is
small, then there is a fully faithful functor from $\cc$ into an ambient abelian category $\cc'$ whose image
is an additive subcategory closed under extensions and such that a sequence of $\cc$ is a
conflation if and only if its image is a short exact sequence of $\cc'$. Conversely, one easily checks that
an extension closed full additive subcategory $\cc$ of an abelian category $\cc'$ endowed with all exact pairs which induce short exact sequences in $\cc'$ is always exact.
\end{remark}

\begin{definition}
An object $I$ of an exact category $(\cc,\ce)$ is \emph{$\ce$-injective}\index{object!injective} if the sequence
\[\cc(M,I)\arr{i^{\che}}\cc(L,I)\ra 0
\]
is exact for each conflation $(i,p)\in\ce$. The exact category has \emph{enough $\ce$-injectives}\index{enough!injectives} if for each object $M$ there exists a conflation
\[M\arr{i_{M}}IM\arr{p_{M}}\Sigma M
\]
such that $IM$ is $\ce$-injective. Dually, one has the notion of \emph{enough $\ce$-projectives}\index{enough!projectives}. An exact category with enough $\ce$-injectives, enough $\ce$-projectives and in which an object is $\ce$-injective if and only if it is $\ce$-projective is said to be a \emph{Frobenius category}\index{category!Frobenius}.
\end{definition}

Recall that if $(\cc,\ce)$ is an exact category with enough $\ce$-injectives, then its stable category $\ul{\cc}$ associated to the subcategory of the $\ce$-injective objects is an additive category making the quotient functor $\cc\ra\ul{\cc}$ into an additive functor. However, it hardly carries an exact structure making the quotient functor into an \emph{exact functor}\index{functor!exact} (\ie an additive functor taking conflations to conflations). Nevertheless, it admits a natural structure of suspended category \cite{KellerVossieck1987, Keller1996}. The suspension functor $\Sigma$ of $\ul{\cc}$ is obtained by choosing a conflation 
\[M\arr{i_{M}}IM\arr{p_{M}}\Sigma M
\]
for each object $M$, with $IM$ being $\ce$-injective. Each triangle is isomorphic to a standard triangle $(\ol{i},\ol{p},\ol{e})$ obtained by embedding a conflation $(i,p)$ into a commutative diagram
\[\xymatrix{L\ar[r]^{i}\ar[d]^{\id} & M\ar[r]^{p}\ar[d] & N\ar[d]^{e} \\
L\ar[r]^{i_{L}} & IL\ar[r]^{p_{L}} & \Sigma L
}
\]
Dually, the stable category of an exact category with enough $\ce$-projectives associated to the subcategory of the $\ce$-projective objects becomes a cosuspended category.

Therefore, if we start with an exact structure and then we quotient by those objects which behave like injectives with respect to this exact structure, we get a suspended category. As pointed out by A. Beligiannis and N. Marmaridis \cite{BeligiannisMarmaridis1994}, sometimes one can also get suspended categories by reversing this procedure. Namely, we can start with a certain class of objects and then to pick up those short sequences which regard these objects as `injectives'. More precisely, we can consider quotients of abelian categories by covariantly finite subcategories in the sense of M.~Auslander and S.~O.~Smal{\o} \cite{AuslanderSmalo1980}.

\begin{definition}\label{preenvelope}
Let $\cc$ be an additive category and let $\ct$ be a full subcategory of $\cc$. A morphism $i_{M}:M\ra T_{M}$ in $\cc$ is a \emph{left $\ct$-approximation}\index{approximation!left} \cite{AuslanderSmalo1980} or a \emph{$\ct$-preenvelope}\index{preenvelope} \cite{Enoch1981} of $M$ if $T_{M}$ is an object of $\ct$ and the induced map
\[\cc(T_{M},T)\ra\cc(M,T)
\]
is surjective for every object $T$ of $\ct$. The subcategory $\ct$ is \emph{covariantly finite}\index{subcategory!covariantly finite} \cite{AuslanderSmalo1980} in $\cc$ if any object of $\cc$ admits a $\ct$-preenvelope. In this case, the stable category $\ul{\cc}$ is the quotient of $\cc$ by the ideal of morphisms which factors through an object of $\ct$. The dual notions are \emph{right $\ct$-approximation}\index{approximation!right} or \emph{$\ct$-precover}\index{precover}, and \emph{contravariantly finite}\index{subcategory!contravariantly finite}. The subcategory $\ct$ is \emph{functorially finite}\index{subcategory!functorially finite} in $\cc$ if it is both covariantly and contravariantly finite.
\end{definition}

If $\ct$ is a covariantly finite subcategory in an abelian category $\cc$, then the corresponding stable category $\ul{\cc}$ admits a structure of right triangulated category \cite{BeligiannisMarmaridis1994}. Indeed, the suspension functor $\Sigma$ is constructed by fixing, for each object $M$, a right exact sequence
\[M\arr{i_{M}}T_{M}\arr{i^c_{M}}\Sigma M\ra 0,
\]
where $i_{M}$ is a $\ct$-preenvelope of $M$. The right triangles are those objects of $\cR\ct(\ul{\cc},\Sigma)$ isomorphic to `induced triangles', constructed as follows: let
\[L\arr{i}M\arr{p}N\ra 0
\]
be a right exact sequence such that
\[\cc(M,T)\arr{i^{\che}}\cc(L,T)\ra 0
\]
is exact for each $T\in\ct$. Then we have a commutative diagram
\[\xymatrix{L\ar[r]^{i}\ar[d]^{\id} & M\ar[r]^{p}\ar[d] & N\ar[d]^{e} \\
L\ar[r]^{i_{L}} & T_{L}\ar[r]^{i^c_{L}} & \Sigma L
}
\]
which produces the induce triangle
\[L\arr{\ol{i}}M\arr{\ol{p}}N\arr{\ol{e}}\Sigma L.
\]
Alternatively \cite[Proposition 2.9]{BeligiannisMarmaridis1994}, one can take the right triangles to be those objects of $\cR\ct(\ul{\cc},\Sigma)$ isomorphic to `distinguished triangles', constructed as follows: given an arbitrary morphism $f\in\cc(L,M)$, we have a commutative diagram
\[\xymatrix{L\ar[r]^{i_{L}}\ar[d]^{f} & T_{L}\ar[r]^{i^c_{L}}\ar[d] & \Sigma L\ar[d]^{\id} \\
M\ar[r]^{g} & N\ar[r]^{h} & \Sigma L
}
\]
in which the left square is cocartesian and which produces the distinguished triangle
\[L\arr{\ol{f}}M\arr{\ol{g}}N\arr{\ol{h}}\Sigma L.
\]
Dually, the stable category of an abelian category associated to a contravariantly finite subcategory becomes a left triangulated category.

When an additive category admits both a structure of right and left triangulated category in a compatible way, then it is a \emph{pretriangulated category}. This notion is due to A. Beligiannis \cite{Beligiannis2001} and it is inspired by the \emph{pretriangulated categories} in the sense M. Hovey \cite{Hovey1999} (do not confuse with the \emph{pretriangulated categories} in the sense of A. I. Bondal and M. M. Kapranov \cite{BondalKapranov1990} or in the sense of A. Neeman \cite{Neeman2001}).

\begin{definition}
A \emph{pretriangulated category}\index{category!pretriangulated} is an additive category $\cd$ endowed with an adjoint pair $(\Sigma,\Omega)$ of additive endofunctors such that:
\begin{enumerate}[PT1)]
\item The pair $(\cd,\Omega)$ can be completed to a left triangulated category.
\item The pair $(\cd,\Sigma)$ can be completed to a right triangulated category.
\item Given a right triangle $L\arr{f}M\arr{g}N\arr{h}\Sigma L$, a left triangle $\Omega N'\arr{f'}L'\arr{g'}M'\arr{h'}N'$ and morphisms $l, m, n', l'$ such that $mf=f'l$ and $l'h=h'n'$, there exist morphisms $n$ and $m'$ making the following diagrams commutative
\[\xymatrix{L\ar[r]^{f}\ar[d]^{l} & M\ar[r]^{g}\ar[d]^{m}&N\ar[r]^{h}\ar@{.>}[d]^{n}&\Sigma L\ar[d]^{\delta_{N'}(\Sigma l)} \\
\Omega N'\ar[r]^{f'}&L'\ar[r]^{g'}&M'\ar[r]^{h'}&N'
}\]
\[\xymatrix{L\ar[r]^{f}\ar[d]_{(\Omega l')\eta_{L}} & M\ar[r]^{g}\ar@{.>}[d]_{m'}&N\ar[r]^{h}\ar[d]_{n'}&\Sigma L\ar[d]_{l'} \\
\Omega N'\ar[r]^{f'}&L'\ar[r]^{g'}&M'\ar[r]^{h'}&N'
}
\]
where $\delta$ is the counit and $\eta$ is the unit of the adjoint pair $(\Sigma,\Omega)$.
\end{enumerate}
\end{definition}

\begin{example}\label{abelian categories are pretriangulated}
An abelian category $\cd$ becomes pretriangulated when we take $\Sigma=\Omega=0$ and we take the exact sequences of the form $L\ra M\ra N\ra 0$ (respectively, $0\ra L\ra M\ra N$) to be the right (respectively, left) triangles. 
\end{example}

\begin{example}
The homotopy category of an additive closed model category in the sense of D. Quillen \cite{Quillen1967,Hovey1999} becomes a pretriangulated category. In particular, the stable category of an abelian category associated to a functorially finite category is pretriangulated \cite{Beligiannis2000, Beligiannis2001}.
\end{example}

It is easy to prove that the following definition agrees with the standard one \cite{Verdier1996}:

\begin{definition}\label{triangulated category}
A \emph{triangulated category}\label{category!triangulated} is a suspended category $(\cd,\Sigma)$ in which the suspension functor $\Sigma$ is an equivalence. In this case, we put $\Sigma=?[1]$\index{$?[1]$} and call it \emph{shift functor}\index{functor!shift}. A quasi-inverse will be denoted by $?[-1]$\index{$?[-1]$}, and right triangles are called \emph{distinguished triangles}\index{triangle!distinguished} or simply \emph{triangles}\index{triangle}. Alternatively, one can also use cosuspended categories to define triangulated categories. A \emph{full triangulated subcategory}\index{subcategory!triangulated} $\cx$ of a triangulated category $(\cd,?[1])$ is a full additive subcategory $\cx$ of $\cd$ which is closed under shifts and closed under extensions, \ie if
\[L\arr{f}M\arr{g}N\arr{h}L[1]
\]
is a triangle of $\cd$ with $L\ko N\in\cx$, then $L[1]\ko L[-1]\ko M\in\cx$.
\end{definition}

Notice that a triangulated category $(\cd,[1])$ always gives rise to a pretriangulated category $(\cd,\Sigma,\Omega)$ in which $\Sigma=[1]$, $\Omega=[-1]$ and both the right triangles and the left triangles are given by the distinguished triangles. 

\begin{definition}\label{triangle functor}
Let $\cd$ and $\ct$ be triangulated categories. A \emph{triangle functor}\index{functor!triangle} from $\cd$ to $\ct$ is a pair $(F,\alpha)$ such that $F:\cd\ra\ct$ and $\alpha$ defines a natural transformation 
\[\alpha_{M}: F(M[1])\ra (FM)[1]
\]
such that for each triangle
\[L\arr{f}M\arr{g}N\arr{h}L[1]
\]
of $\cd$ we have a triangle
\[FL\arr{Ff}FM\arr{Fg}FN\arr{\alpha_{L}(Fh)}(FL)[1]
\]
in $\ct$.
\end{definition}

\begin{remark}
If $(F,\alpha)$ is a triangle functor, then $F$ is additive (\cf \cite[Remarque II.1.2.7]{Verdier1996}) and $\alpha$ is an isomorphism of functors (\cf \cite[section 8]{Keller1996}).
\end{remark}

\begin{definition}\label{triangle equivalence}
A triangle functor $(F,\alpha)$ is a \emph{triangle equivalence}\index{equivalence!triangle} if $F$ is an equivalence of categories. 
\end{definition}

\begin{remark}
Despite the former definition is not `correct' \emph{prima facie}, since it seems to forget the presence of $\alpha$, it turns out to be equivalent to the `correct one' (\cf \cite{KellerVossieck1987}). 
\end{remark}

The following is a very important example of triangulated category:

\begin{example}
Thanks to D. Happel's theorem (\cf \cite[Theorem 2.6]{Happel1988}), the stable category of a Frobenius category is always triangulated. According to 
B.~Keller (see for instance \cite{Keller2007}), we say that a triangulated category is \emph{algebraic}\index{category!triangulated!algebraic} if it is triangle equivalent to the stable category of a Frobenius category.
\end{example}

In pretriangulated categories we also have a reasonable notion of `torsion pair' \cite{BeligiannisReiten2001}:

\begin{definition}\label{definition pretriangulated category}
Let $(\cd,\Sigma,\Omega)$ be a pretriangulated category. A pair $(\cx,\cy)$ of strictly full triangulated subcategories of $\cd$ is a \emph{pretriangulated torsion pair}\index{pair!torsion!pretriangulated} in $\cd$ if:
\begin{enumerate}[1)]
\item $\cd(X,Y)=0$ for each $X\in\cx$ and $Y\in\cy$.
\item $\Sigma\cx\subseteq\cx$ and $\Omega\cy\subseteq\cy$.
\item For each object $M\in\cd$ there exists a left triangle
\[\Omega(M^{\cy})\arr{g_{M}}M_{\cx}\arr{f_{M}}M\arr{g^{M}}M^{\cy}
\]
and a right triangle
\[M_{\cx}\arr{f_{M}}M\arr{g^M}M^{\cy}\arr{f^{M}}\Sigma(M_{\cx})
\]
with $M_{\cx}\in\cx$ and $M^{\cy}\in\cy$.
\end{enumerate}
If $(\cx,\cy)$ is a pretriangulated torsion pair in a triangulated category $(\cd,?[1])$ regarded as a pretriangulated category, then we say that $(\cx,\cy)$ is a \emph{triangulated torsion pair}\index{pair!torsion!triangulated} and that $(\cx,\cy[1])$ is a \emph{t-structure}\index{t-structure}, according with A.~A.~Beilinson, J.~Bernstein and P.~Deligne \cite{BeilinsonBernsteinDeligne}.
\end{definition}

\begin{remark}
In the definition above we have that:
\begin{enumerate}[i)]
\item The map $M\mapsto M_{\cx}$ underlies a functor $\tau_{\cx}:\cd\ra\cx$ which is right adjoint to the inclusion.
\item The map $M\mapsto M^{\cy}$ underlies a functor $\tau^{\cy}:\cd\ra\cy$ which is left adjoint to the inclusion.
\item For each $M\in\cd$ we have that $M_{\cx}\arr{f_{M}}M\arr{g^M}M^{\cy}$ is a weakly exact.
\end{enumerate}
Therefore, every pretriangulated torsion pair is an additive torsion pair.
\end{remark}

When dealing with t-structures, it is extremely useful the point of view developed by B.~Keller and D.~Vossieck \cite{KellerVossieck88b}. They introduced the following definition:

\begin{definition}\label{aisle}
An \emph{aisle}\index{aisle} in a triangulated category $\cd$ is a strictly full suspended subcategory $\cx$ of $\cd$ whose corresponding inclusion functor has a right adjoint. Dually, a \emph{coaisle}\index{coaisle} in $\cd$ consists of a strictly full cosuspended subcategory whose corresponding inclusion functor admits a left adjoint.
\end{definition}

We know that if $(\cx,\cy[1])$ is a t-structure on a triangulated category $\cd$, then $\cx$ (respectively, $\cy$) is an aisle (respectively, a coaisle) in $\cd$, called the \emph{aisle (respectively, coaisle) of the t-structure}. The point is that the converse also holds, as B.~Keller and D.~Vossieck pointed out in \cite[Proposition 1.1]{KellerVossieck88b}

\begin{proposition}\label{aisles are t-structures}
Let $\cd$ be a triangulated category. The map 
\[\cx\mapsto (\cx,\cx^{\bot}[1])
\]
induces a one-to-one correspondence between the class of aisles in $\cd$ and the class of t-structures on $\cd$.
\end{proposition}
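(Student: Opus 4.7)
The plan is to construct both directions of the correspondence explicitly. One direction is given: an aisle $\cx$ is sent to $(\cx,\cx^{\bot}[1])$, which I must show really is a t-structure. The inverse will send a t-structure $(\cx,\cy[1])$ to its first component $\cx$, and the substantive part is verifying $\cy=\cx^{\bot}$ so the two maps are mutually inverse.

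For the forward direction, let $\cx$ be an aisle with right adjoint $\tau_{\cx}$ to the inclusion $x:\cx\hookrightarrow\cd$, and let $\delta_{\cx}$ be the counit. For each $M\in\cd$ I would complete $\delta_{\cx,M}:\tau_{\cx}M\ra M$ to a triangle $\tau_{\cx}M\arr{\delta_{\cx,M}}M\ra C\ra\tau_{\cx}M[1]$ and aim to show $C[-1]\in\cx^{\bot}$. Together with the axiomatic $\cx[1]\subseteq\cx$, the resulting $\cx^{\bot}[-1]\subseteq\cx^{\bot}$ (because $X[1]\in\cx$ for $X\in\cx$), and the tautological orthogonality $\cd(\cx,\cx^{\bot})=0$, this yields the triangulated torsion pair $(\cx,\cx^{\bot})$, equivalently the t-structure $(\cx,\cx^{\bot}[1])$. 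To see $C[-1]\in\cx^{\bot}$, for $X\in\cx$ I apply $\cd(X,?)$ to the rotated triangle
\[\tau_{\cx}M[-1]\ra M[-1]\ra C[-1]\ra\tau_{\cx}M\ra M.\]
Both the leftmost and rightmost maps of the resulting long exact sequence coincide with adjunction isomorphisms: the rightmost is $\cx(X,\tau_{\cx}M)\iso\cd(X,M)$ via postcomposition with $\delta_{\cx,M}$ (Lemma \ref{properties of adjunctions}(4)), and the leftmost is the analogous isomorphism for $X[1]\in\cx$ after rewriting $\cd(X,?[-1])=\cd(X[1],?)$. Exactness then squeezes $\cd(X,C[-1])=0$.

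For the reverse direction, a t-structure $(\cx,\cy[1])$ is by definition a triangulated torsion pair $(\cx,\cy)$, and the associated torsion functor is already a right adjoint to $x:\cx\hookrightarrow\cd$, so $\cx$ is an aisle. It remains to check $\cy=\cx^{\bot}$. One containment is orthogonality. For the other, I take $M\in\cx^{\bot}$ and its approximation triangle $M_{\cx}\ra M\ra M^{\cy}\ra M_{\cx}[1]$, and apply $\cd(M_{\cx},?)$ to the rotated form $M^{\cy}[-1]\ra M_{\cx}\ra M\ra M^{\cy}$. The flanking groups $\cd(M_{\cx},M^{\cy}[-1])=\cd(M_{\cx}[1],M^{\cy})$ and $\cd(M_{\cx},M)$ both vanish (by orthogonality combined with $\cx[1]\subseteq\cx$, and by $M\in\cx^{\bot}$ respectively), forcing $\id_{M_{\cx}}=0$ and hence $M\cong M^{\cy}\in\cy$. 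Consequently, expanding a t-structure $(\cx,\cy[1])$ from $\cx$ recovers $(\cx,\cx^{\bot}[1])=(\cx,\cy[1])$, while projecting an aisle from the t-structure it generates returns the aisle, so the two maps are mutually inverse.

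I expect the main technical obstacle to be the long exact sequence bookkeeping in the forward direction: one needs to recognize the connecting homomorphisms induced by $\delta_{\cx}$ as the adjunction isomorphisms, which amounts to invoking the identities of Lemma \ref{properties of adjunctions} for the fully faithful inclusion $x:\cx\hookrightarrow\cd$ together with the suspendedness $\cx[1]\subseteq\cx$, which is precisely what lets $X[1]$ serve as a test object alongside $X$.
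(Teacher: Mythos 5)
Your reverse direction is fine (and is essentially Proposition \ref{properties of additive torsion pairs}(1) together with the remark following Definition \ref{definition pretriangulated category}; note you should also record that $\cx={}^{\bot}\cy$ is closed under extensions, so that it really is a \emph{suspended} subcategory as the definition of aisle requires). The forward direction, however, has a genuine gap at exactly its central point. With the paper's conventions, the t-structure $(\cx,\cx^{\bot}[1])$ means the torsion pair $(\cx,\cx^{\bot})$, so for each $M$ you need the cone $C$ of the counit $\delta_{\cx,M}:x\tau_{\cx}M\ra M$ to satisfy $\cd(X,C)=0$ for \emph{all} $X\in\cx$. Your long exact sequence squeeze only proves $\cd(X,C[-1])=0$, i.e. $\cd(X[1],C)=0$ for all $X\in\cx$, which says $C\in\cx^{\bot}[1]$, a strictly weaker statement (in BBD indexing: $C\in\cd^{\geq 0}$ rather than $C\in\cd^{\geq 1}$). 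The squeeze cannot be upgraded by the same device: to get $\cd(X,C)=0$ from the sequence $\cd(X,\tau_{\cx}M)\ra\cd(X,M)\ra\cd(X,C)\ra\cd(X,\tau_{\cx}M[1])\ra\cd(X,M[1])$ you would need injectivity of the last map, i.e. of $\cd(X[-1],\tau_{\cx}M)\ra\cd(X[-1],M)$, and $X[-1]$ need not lie in the aisle, so the adjunction gives you nothing there. Tellingly, your argument never uses that an aisle is closed under extensions, whereas this is precisely what makes the Keller--Vossieck statement nontrivial.

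The standard way to close the gap (this is the heart of \cite[Proposition 1.1]{KellerVossieck88b}; the paper itself only cites it and gives no proof) is: given $X\in\cx$ and $\psi:X\ra\tau_{\cx}M[1]$ with $\delta_{\cx,M}[1]\circ\psi=0$, form the triangle $\tau_{\cx}M\arr{i}E\ra X\arr{\psi}\tau_{\cx}M[1]$; extension-closedness gives $E\in\cx$. Since $\delta_{\cx,M}\circ\psi[-1]=0$, the counit extends to some $g:E\ra M$ with $gi=\delta_{\cx,M}$; by adjunction $g=\delta_{\cx,M}g'$ for some $g':E\ra\tau_{\cx}M$, and injectivity of the adjunction isomorphism $\cd(\tau_{\cx}M,\tau_{\cx}M)\arr{\sim}\cd(\tau_{\cx}M,M)$ forces $g'i=\id$, so $i$ is a split monomorphism and hence $\psi=0$. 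This proves the needed injectivity, and only then does the exact sequence give $\cd(X,C)=0$, i.e. $C\in\cx^{\bot}$ and $(\cx,\cx^{\bot})$ is a triangulated torsion pair. As written, your proof establishes orthogonality of $C$ only against $\cx[1]$, which is the easy part, so the proposal is incomplete.
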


From now on, we will use without mention the fact that aisles `are' t-structures.

\section{Compatible torsion theories}\label{Compatible torsion theories}
\addcontentsline{lot}{section}{1.4. Teor\'ias de torsi\'on compatibles}

Abelian categories are particular cases of additive categories with `canonical factorizations':

\begin{definition}
An additive category $\cd$ has \emph{canonical factorizations}\index{factorization!canonical} if: 
\begin{enumerate}[1)]
\item Every morphism $f$ factors as the composition $f=ip$ of an epimorphism $p$ followed by a monomorphism $i$.
\item If $f: M\arr{p} L\arr{i} N$ and $f:M\arr{p'}L'\arr{i'}N$ are two such factorizations, then there exists an isomorphism $g:L\arr{\sim}L'$ such that the following diagram is commutative
\[\xymatrix{M\ar[rr]^{f\ \ \ \ }\ar@{=}[dd]\ar[dr]_{p} & & N\ar@{=}[dd] \\
&L\ar[ur]_{i}\ar[dd]^{g}& \\
M\ar[rr]^{f\ \ \ \ }\ar[dr]_{p'}&&N \\
&L'\ar[ur]_{i'}&
}
\]
\end{enumerate}
\end{definition}

\begin{remark}\label{canonical factorizations implies balanced}
Notice that additive categories with canonical factorizations are \emph{balanced}, \ie a morphism is an isomorphism if and only if it is both an epimorphism and a monomorphism.
\end{remark}

If the additive category $\cd$ has additional structure, then the most interesting additive torsion pairs are those compatible with the extra structure, which allows us to recover the notions of: torsion pairs for abelian or exact categories, pretriangulated torsion pairs, t-structures,\dots The following is a sample of these compatible torsion pairs.

\begin{proposition}\label{compatibility proposition}
Let $(\cx,\cy)$ be an additive torsion pair on an additive category $\cd$. Then:
\begin{enumerate}[1)]
\item If $\cd$ has canonical factorizations, then every torsion sequence yields a short exact sequence
\[0\ra x\tau_{\cx}M\arr{\delta_{\cx,M}}M\arr{\eta^{\cy}_{M}}y\tau^{\cy}M\ra 0.
\]
\item If $\cd$ is an exact category, then $\delta_{\cx,M}$ is an inflation if and only if $(\delta_{\cx,M},\eta^{\cy}_{M})$ is a conflation if and only if $\eta^{\cy}_{M}$ is a deflation.
\item If $(\cd,\Sigma,\Omega)$ is a pretriangulated category, then $\Sigma(\cx)\subseteq\cx$ if and only if $\Omega(\cy)\subseteq\cy$. In this case, the pair $(\cx,\cy)$ is a pretriangulated torsion pair if and only if every torsion sequence is both the beginning of a right triangle and the end of a left triangle.
\item If $(\cd,?[1])$ is a triangulated category, then $(\cx,\cy[1])$ is a t-structure on $\cd$ if and only if $\cx[1]\subseteq\cx$.
\end{enumerate}
\end{proposition}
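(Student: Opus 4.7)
The plan is to treat the four assertions in sequence, with parts (1) and (2) sharing a common mechanism, (3) being formal, and (4) reducing to Proposition \ref{aisles are t-structures}.

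For part (1), I would factor $\delta_{\cx,M} = i\circ p$ canonically, with $p$ epic and $i$ monic. From $\eta^\cy_M \delta_{\cx,M} = 0$ and $p$ epic we get $\eta^\cy_M i = 0$, so the weak-kernel property produces $h'$ with $\delta_{\cx,M} h' = i$; comparing with the factorization and using that $i$ is monic forces $p h' = \id$. Substituting gives $\delta_{\cx,M} (h' p) = \delta_{\cx,M}$, and since the inclusion $x:\cx\hookrightarrow\cd$ is fully faithful, Lemma \ref{properties of adjunctions}.11 yields $h' p = \id$; thus $p$ is an isomorphism and $\delta_{\cx,M}$ is monic. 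Dually, $\eta^\cy_M$ is epic, and combined with weak exactness this gives the desired short exact sequence. Part (2) runs along the same lines: the cokernel $p$ of the inflation $\delta_{\cx,M}$ determines a unique $\bar\eta$ with $\eta^\cy_M = \bar\eta\circ p$, and the weak-cokernel property of $\eta^\cy_M$ combined with Lemma \ref{properties of adjunctions}.12 (applied to the fully faithful inclusion $y$) produces a two-sided inverse of $\bar\eta$, showing $(\delta_{\cx,M},\eta^\cy_M)$ is a conflation; the equivalence with $\eta^\cy_M$ being a deflation is dual.

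For part (3), the equivalence $\Sigma\cx\subseteq\cx \Leftrightarrow \Omega\cy\subseteq\cy$ is immediate from the adjunction $\cd(\Sigma X,Y)\cong\cd(X,\Omega Y)$ together with the orthogonality identities $\cx = {}^{\bot}\cy$ and $\cy = \cx^{\bot}$ of Proposition \ref{properties of additive torsion pairs}. The second claim is essentially definitional: one direction is the definition of pretriangulated torsion pair, while the other amounts to reading off axioms PT1)--PT3) from the hypothesis, with the stability conditions on $\Sigma\cx$ and $\Omega\cy$ supplied by the first claim.

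For part (4), I would verify that $\cx$ is an aisle and invoke Proposition \ref{aisles are t-structures}. Strict fullness and stability under $[1]$ are given; closure under extensions is obtained by applying $\cd(?,Y)$ with $Y\in\cy$ to a triangle $A\to B\to C\to A[1]$ with $A,C\in\cx$ and observing that the outer terms of the resulting long exact sequence vanish, so $\cd(B,Y)=0$ and $B\in {}^{\bot}\cy = \cx$. Since the hypothesis provides a right adjoint $\tau_\cx$ to the inclusion, $\cx$ is an aisle, hence $(\cx,\cx^{\bot}[1]) = (\cx,\cy[1])$ is a t-structure; the converse is immediate from the t-structure axioms. The main technical care sits in parts (1) and (2), where Lemma \ref{properties of adjunctions}.11 and .12 are used to upgrade weak kernels and cokernels to honest ones; once this is done, the remaining parts are formal consequences of the orthogonality and the aisle/t-structure correspondence.
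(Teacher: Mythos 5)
Your proof is correct and follows essentially the same route as the paper: canonical factorization together with Lemma \ref{properties of adjunctions}(11)/(12) to upgrade the weak (co)kernels in parts (1)--(2), the adjunction $(\Sigma,\Omega)$ plus the orthogonality $\cx={}^{\bot}\cy$, $\cy=\cx^{\bot}$ for part (3), and the aisle criterion of Proposition \ref{aisles are t-structures} for part (4). The only cosmetic difference is in part (1), where you obtain the splitting of $p$ from the weak-kernel property applied to the monomorphism $i$, whereas the paper first observes that the image object lies in $\cx$ and produces the same map via the adjunction isomorphism; both variants then conclude with Lemma \ref{properties of adjunctions}(11).
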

\begin{proof}
1) Let $\delta_{\cx,M}=pi$ be a canonical factorization of $\delta_{\cx,M}$,
\[\xymatrix{x\tau_{\cx}M\ar[dr]_{p}\ar[rr]^{\delta_{\cx,M}} & & M\ar[r]^{\eta^{\cy}_{M}}& y\tau^{\cy}M \\
& T\ar[ur]_{i} & &
}
\]
Since $p$ is an epimorphism and $x\tau_{\cx}M\in\cx=\ ^{\bot}\cy$, then $T\in\ ^{\bot}\cy=\cx$ and so $T=xT'$ for some $T'\in\cx$. By using the isomorphisms
\[\cd(T,M)\underset{\sim}{\arr{\theta_{\cx}}}\cx(T',\tau_{\cx}M)\underset{\sim}{\arr{x}}\cd(T,x\tau_{\cx}M)
\]
we map $i$ to a morphism $j\in\cd(T,x\tau_{\cx}M)$ such that $\delta_{\cx,M}j=i$. Hence $\delta_{\cx,M}jp=\delta_{\cx,M}$. Since $x$ is full, by Lemma \ref{properties of adjunctions} we have $jp=\id$, which implies that $p$ is an isomorphism and thus $\delta_{\cx,M}$ is a monomorphism. Therefore, $\delta_{\cx,M}$ is the kernel of $\eta^{\cy}_{M}$. Dually, $\eta^{\cy}_{M}$ is the cokernel of $\delta_{\cx,M}$.

2) Assume that $\delta_{\cx,M}$ is an inflation, and we have to prove that $\eta^{\cy}_{M}$ is its cokernel. Let $c:M\ra C$ be the cokernel of $\delta_{\cx,M}$. Hence, there exists a morphism $\beta$ such that $\beta\eta^{\cy}_{M}=c$ and a unique morphism $\alpha$ such that $\alpha c=\eta^{\cy}_{M}$.
\[\xymatrix{x\tau_{\cx,M}\ar[r]^{\delta_{\cx,M}} & M\ar[r]^{\eta^{\cy}_{M}}\ar[dr]_{c} & y\tau^{\cy}M\ar@<1ex>[d]^{\beta} \\
&& C\ar@<1ex>[u]^{\alpha}
}
\]
Then, $\beta\alpha c=c$, which implies $\beta\alpha=\id$. Also, $\alpha\beta\eta^{\cy}_{M}=\eta^{\cy}_{M}$, and by Lemma \ref{properties of adjunctions} we have $\alpha\beta=\id$. Hence $\alpha$ and $\beta$ are isomorphisms and $\eta^{\cy}_{M}$ is the cokernel of $\delta_{\cx,M}$. Dually, if $\eta^{\cy}_{M}$ is a deflation, then $\delta_{\cx,M}$ is the corresponding inflation.

3) Assume $\Sigma\cx\subseteq\cx$, and let $Y$ be an object of $\cy$. We want to prove $\Omega Y\in\cy$. This happens if and only if $\cd(X,\Omega Y)=0$ for all $X\in\cx$, which follows by adjunction. Dually, one proves that $\Omega\cy\subseteq\cy$ implies $\Sigma\cx\subseteq\cx$. The remainder of the assertion is just a direct application of the definitions.

4) Since $\cx=\ ^{\bot}\cy$, then $\cx$ is closed under extensions. Therefore, if we assume $\cx$ to be closed under positive shifts, then $\cx$ is a full suspended subcategory of $\cd$ whose inclusion functor admits a right adjoint, \ie $\cx$ is an aisle in $\cd$.
\end{proof}

\begin{remark} Let $\cd$ be an additive category.
\begin{enumerate}[1)]
\item If $\cd$ is abelian, an (abelian) torsion pair on $\cd$ is precisely an additive torsion pair on $\cd$. If we regard the abelian category $\cd$ as a pretriangulated category (\cf Example \ref{abelian categories are pretriangulated}), then the pretriangulated torsion pairs are precisely the additive torsion pairs. Therefore, when dealing with (abelian) torsion pairs on abelian categories, we will refer to them just as \emph{torsion pairs}\index{pair!torsion!(abelian)}, without using the word ``
abelian'' before.
\item If $\cd$ is exact, an exact torsion pair on $\cd$ is precisely an additive torsion pair $(\cx,\cy)$ on $\cd$ such that each $\delta_{\cx,M}$ is an inflation.
\item If $(\cd,\Omega,\Sigma)$ is pretriangulated, a pretriangulated torsion pair on $\cd$ is an additive torsion pair $(\cx,\cy)$ on $\cd$ such that $\Sigma\cx\subseteq\cx$ and every torsion sequence is both at the beginning of a right triangle and at the end of a left triangle.
\item If $(\cd,?[1])$ is a triangulated category, a t-structure is precisely a pair $(\cx,\cy[1])$ such that $(\cx,\cy)$ is an additive torsion pair on $\cd$ satisfying $\cx[1]\subseteq\cx$.
\end{enumerate}
\end{remark}

\section{Split torsion pairs}\label{Split torsion pairs}
\addcontentsline{lot}{section}{1.5. Pares de torsi\'on escindidos}

\begin{lemma}\label{split torsion pair lemma}
Let $(\cx,\cy)$ be a pair of strictly full subcategories of an additive category $\cd$ such that:
\begin{enumerate}[1)]
\item $\cd(X,Y)=0$ for each $X\in\cx\ko Y\in\cy$.
\item Every object $M\in\cd$ is the coproduct of an object of $\cx$ and an object of $\cy$.
\end{enumerate}
Then $(\cx,\cy)$ is a torsion pair.
\end{lemma}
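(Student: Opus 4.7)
The plan is to verify directly the four axioms i)--iv) from the definition of additive torsion pair. Fix, for each $M\in\cd$, a biproduct decomposition $M=M_{\cx}\oplus M^{\cy}$ with $M_{\cx}\in\cx$ and $M^{\cy}\in\cy$, together with its canonical inclusions $\iota_{M}\ko\iota^{M}$ and projections $\pi_{M}\ko\pi^{M}$.

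The key observation is that hypothesis 1) forces every morphism $f:M\ra M'$ to be ``upper triangular'' with respect to these decompositions: the component $\pi^{M'}\circ f\circ\iota_{M}:M_{\cx}\ra M'^{\cy}$ is a morphism from an object of $\cx$ to one of $\cy$, hence it vanishes. This lets me set $\tau_{\cx}M:=M_{\cx}$ and $\tau_{\cx}f:=\pi_{M'}\circ f\circ\iota_{M}$, and check that $\tau_{\cx}:\cd\ra\cx$ is a functor (the vanishing is exactly what makes ``projecting onto the $\cx$-summand'' compatible with composition). Dually, $\tau^{\cy}M:=M^{\cy}$ and $\tau^{\cy}f:=\pi^{M'}\circ f\circ\iota^{M}$ define a functor $\tau^{\cy}:\cd\ra\cy$.

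For the adjunction between $x$ and $\tau_{\cx}$: given $N\in\cx$ and $M\in\cd$, every morphism $xN\ra M$ decomposes uniquely as $\iota_{M}\alpha+\iota^{M}\beta$ with $\alpha\in\cd(xN,M_{\cx})$ and $\beta\in\cd(xN,M^{\cy})$, but $\beta=0$ by hypothesis 1). Hence $f\mapsto\pi_{M}\circ f$ yields a bijection $\theta_{\cx}:\cd(xN,M)\iso\cx(N,\tau_{\cx}M)$ with inverse $g\mapsto\iota_{M}\circ xg$; naturality in both variables is routine, and the resulting counit is $\delta_{\cx,M}=\iota_{M}$. Dually one obtains a left adjoint $\tau^{\cy}$ to $y$ with unit $\eta^{\cy}_{M}=\pi^{M}$.

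Finally, the sequence
\[x\tau_{\cx}M\arr{\iota_{M}}M\arr{\pi^{M}}y\tau^{\cy}M\]
is the split exact pair associated to a direct sum decomposition, hence in particular weakly exact: if $h:K\ra M$ satisfies $\pi^{M}h=0$, then $h=(\iota_{M}\pi_{M}+\iota^{M}\pi^{M})h=\iota_{M}(\pi_{M}h)$ factors through $\iota_{M}$, and the weak cokernel condition is dual. The whole argument is essentially routine bookkeeping; the only conceptual ingredient is hypothesis 1), invoked repeatedly to kill off-diagonal components, so I do not anticipate any real obstacle.
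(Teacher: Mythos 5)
Your proposal is correct and follows essentially the same route as the paper: fix biproduct decompositions, use hypothesis 1) to see every morphism is "upper triangular", define $\tau_{\cx}$ and $\tau^{\cy}$ by taking the diagonal components, and read off the adjunctions from the vanishing of the off-diagonal component. The only difference is cosmetic — you spell out the weak exactness of the split sequence, which the paper leaves implicit.
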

\begin{proof}
For each object $M\in\cd$ we fix a coproduct $M=M_{\cx}\oplus M^{\cy}$ with $M_{\cx}\in\cx$ and $M^{\cy}\in\cy$. The map $M\mapsto M_{\cx}$ defines $\tau_{\cx}$ on objects. Notice that an arbitrary morphism 
\[f:M=M_{\cx}\oplus M^{\cy}\ra N_{\cx}\oplus N^{\cy}=N
\] 
is of the form
\[f=\left[\begin{array}{cc}f_{11} & f_{12} \\ 0& f_{22}\end{array}\right].
\]
Then, the map $f\mapsto f_{11}$ defines $\tau_{\cx}$ on morphisms. It is easy to check that $\tau_{\cx}$ so defined is a functor right adjoint to the inclusion of $\cx$ in $\cd$. Indeed, any morphism $X\ra M=M_{\cx}\oplus M^{\cy}$ from an object $X$ of $\cx$ to an arbitrary object $M$ of $\cd$ is uniquely determined by its component $X\ra M_{\cx}$ since its component $X\ra M^{\cy}$ vanishes. Similarly, we define $\tau^{\cy}$.
\end{proof}

\begin{definition}
A torsion pair $(\cx,\cy)$ on an additive category $\cd$ \emph{splits}\index{pair!torsion!split} if in each torsion sequence
\[x\tau_{\cx}M\arr{\delta_{\cx,M}}M\arr{\eta^{\cy}_{M}}y\tau^{\cy}M
\]
the morphism $\delta_{\cx,M}$ is a section (if and only if $\eta^{\cy}_{M}$ is a retraction). Notice that this implies that $M=x\tau_{\cx}M\oplus y\tau^{\cy}M$.
A pretriangulated torsion pair or a t-structure \emph{splits}, if so does the underlying (additive) torsion pair.
\end{definition}

\begin{remark}
This is the same as requiring that $(\cx,\cy)$ is a pair satisfying conditions 1) and 2) of the lemma above.
\end{remark}

\begin{proposition}
Let $\cd$ be an additive category and let $\cd'$ be a full subcategory closed under finite direct sums and direct summands. If $(\cx,\cy)$ is a split torsion pair on $\cd$, then $(\cd'\cap\cx,\cd'\cap\cy)$ is a split torsion pair on $\cd'$.
\end{proposition}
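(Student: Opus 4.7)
The strategy is to verify the two hypotheses of Lemma \ref{split torsion pair lemma} for the pair $(\cd'\cap\cx,\cd'\cap\cy)$ viewed inside $\cd'$, which will immediately yield that it is a torsion pair, and indeed a split one by construction.

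First I would observe that both $\cd'\cap\cx$ and $\cd'\cap\cy$ are strictly full subcategories of $\cd'$: fullness is inherited from the fullness of $\cd'$ in $\cd$ and of $\cx,\cy$ in $\cd$, and closure under isomorphisms in $\cd'$ is automatic since $\cx$ and $\cy$ are closed under isomorphisms in $\cd$ and isomorphisms in $\cd'$ are precisely isomorphisms in $\cd$ between objects of $\cd'$.

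Next, the orthogonality condition is essentially free: for $X\in\cd'\cap\cx$ and $Y\in\cd'\cap\cy$ we have $\cd'(X,Y)=\cd(X,Y)=0$ by fullness of $\cd'\subseteq\cd$ and the orthogonality of $\cx$ and $\cy$ in $\cd$.

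For the decomposition condition, I would fix $M\in\cd'$ and use splitness of $(\cx,\cy)$ on $\cd$ to get an isomorphism $M\cong x\tau_{\cx}M\oplus y\tau^{\cy}M$ in $\cd$, with $x\tau_{\cx}M\in\cx$ and $y\tau^{\cy}M\in\cy$. Since $\cd'$ is closed under direct summands, both $x\tau_{\cx}M$ and $y\tau^{\cy}M$ lie in $\cd'$, hence in $\cd'\cap\cx$ and $\cd'\cap\cy$ respectively. Applying Lemma \ref{split torsion pair lemma} then yields that $(\cd'\cap\cx,\cd'\cap\cy)$ is a torsion pair on $\cd'$; and since every object of $\cd'$ is in fact an \emph{internal} coproduct of its two pieces, this torsion pair is split by definition.

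There is no real obstacle here; the only subtle point is making sure that ``direct summand'' is interpreted consistently (summands inside $\cd$ of objects of $\cd'$ land in $\cd'$), which is explicitly our hypothesis on $\cd'$.
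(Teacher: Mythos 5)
Your proof is correct and follows exactly the route the paper takes: the paper simply says that the pair $(\cd'\cap\cx,\cd'\cap\cy)$ "clearly" satisfies the two conditions of Lemma \ref{split torsion pair lemma}, and your argument just spells out that verification (orthogonality via fullness, and the internal decomposition via splitness in $\cd$ plus closure of $\cd'$ under direct summands). No gaps.
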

\begin{proof}
Clearly, the pair $(\cd'\cap\cx,\cd'\cap\cy)$ satisfies the conditions of the Lemma \ref{split torsion pair lemma}.
\end{proof}

\section{Characterization of centrally split TTF triples}\label{Characterization of centrally split TTF triples}
\addcontentsline{lot}{section}{1.6. Caracterizaci\'on de las ternas TTF centralmente escindidas}

\begin{definition}\label{definition weakly balanced}
A pretriangulated category $(\cd,\Omega,\Sigma)$ is \emph{weakly balanced}\index{category!pretriangulated!weakly balanced} if a morphism $f:M\ra N$ is an isomorphism whenever there exist a left triangle of the form
\[\Omega N\ra 0\ra M\arr{f}N
\]
and a right triangle of the form
\[M\arr{f}N\ra 0\ra\Sigma M.
\]
\end{definition}

\begin{remark}\label{triangulated categories are weakly balanced}
If $(\cd,?[1])$ is a triangulated category, then it is weakly balanced. Indeed, if there exists a triangle
\[M\arr{f}N\ra 0\ra M[1],
\]
then we have a morphism of triangles
\[\xymatrix{M\ar[r]^{f}\ar[d]_{f} & N\ar[r]\ar[d]^{\id_{N}} & 0\ar[r]\ar[d]^{\id_{0}} & M[1]\ar[d]^{f[1]} \\
N\ar[r]^{\id_{N}} & N \ar[r] & 0\ar[r] & N[1]
}
\]
But from the fact that the vertical morphisms $\id_{N}$ and $\id_{0}$ are isomorphism, we deduce that the third vertical morphism $f$ is also an isomorphism. In this argument we use the following property of triangulated categories: If in the morphism of triangles
\[\xymatrix{X\ar[r]^{u}\ar[d]^f & Y\ar[r]^{v}\ar[d]^g & Z\ar[r]^{w}\ar[d]^h & X[1]\ar[d]^{f[1]} \\
X'\ar[r]^{u'} & Y'\ar[r]^{v'} & Z'\ar[r]^{w'} & X'[1]
}
\]
both $f$ and $g$ are isomorphisms, then so is $h$. The proof is as follows. For any object $U$ we get a morphism of long exact sequences
\[\xymatrix{\cd(U,X)\ar[r]^{u^{\we}}\ar[d]_{\wr} & \cd(U,Y)\ar[r]^{v^{\we}}\ar[d]_{\wr} & \cd(U,Z)\ar[r]^{w^{\we}}\ar[d]^{h^{\we}} & \cd(U,X[1])\ar[r]\ar[d]_{\wr} &\cd(U,Y[1])\ar[d]_{\wr} \\
\cd(U,X')\ar[r]^{u^{\we}} & \cd(U,Y')\ar[r]^{v^{\we}} & \cd(U,Z')\ar[r]^{w^{\we}} & \cd(U,X'[1])\ar[r] &\cd(U,Y'[1]) 
}
\]
Hence, 5-lemma implies that $h^{\we}$ is an isomorphism for every $U\in\cd$, and Yoneda's Lemma implies that $h$ is an isomorphism. The problem with pretriangulated categories is that we can not extend this kind of exact sequences arbitrarily to both sides. Indeed, if 
\[\Omega N\ra L\ra M\ra N
\]
is a left triangle, then any object $U\in\cd$ induces a long exact sequence
\[\dots\ra\cd(U,\Omega M)\ra\cd(U,\Omega M)\ra\cd(U,\Omega N)\ra\cd(U,L)\ra\cd(U,M)\ra\cd(U,N),
\]
but nothing more is guaranteed for the right side.
\end{remark}

Chapter \ref{Balance in stable categories} is devoted to the study of (weak) balance in stable categories of abelian categories.

\begin{definition}\label{TTF triple}
Let $\cd$ be an additive category and let $(\cx,\cy,\cz)$ be a triple of full subcategories. We say that it is an \emph{torsion torsionfree(=TTF) triple}\index{TTF triple} on $\cd$ in case both $(\cx,\cy)$ and $(\cy,\cz)$ are torsion pairs on $\cd$. We will say that the TTF triple is \emph{left (respectively, right) split}\index{TTF triple!left split}\index{TTF triple!right split} if $(\cx,\cy)$ (respectively, $(\cy,\cz)$) is split. A left and right split TTF triple will be called \emph{centrally split}\index{TTF triple!centrally split}. This terminology also applies in case $\cd$ is abelian.

Let $\cd$ be a pretriangulated category and let $(\cx,\cy,\cz)$ be a triple of full subcategories. We say that it is a \emph{pretriangulated TTF triple}\index{TTF triple!pretriangulated} on $\cd$ in case both $(\cx,\cy)$ and $(\cy,\cz)$ are pretriangulated torsion pairs on $\cd$. In case $\cd$ is not only pretriangulated but even triangulated, then we will speak of \emph{triangulated TTF triples}\index{TTF triple!triangulated}; notice that in this case these are precisely the triples $(\cx,\cy,\cz)$ such that $(\cx,\cy)$ and $(\cy,\cz)$ are t-structures. We will say that the pretriangulated TTF triple $(\cx,\cy,\cz)$ is \emph{left (respectively, right, centrally) split} if so is its underlying (additive) TTF triple.
\end{definition}

\begin{proposition}\label{centrally split TTF triples on additive}
Let $(\cx,\cy,\cz)$ be a TTF triple on an additive category $\cd$. The following assertions are equivalent:
\begin{enumerate}[1)]
\item It is left split and $\cx=\cz$.
\item It is right split and $\cx=\cz$.
\item It is centrally split.
\end{enumerate}
When  $\cd$ has canonical factorizations or it is a weakly balanced pretriangulated category with both $(\cx,\cy)$ and $(\cy,\cz)$ pretriangulated torsion pairs, then the above conditions are equivalent to:
\begin{enumerate}[4)]
\item $\cx=\cz$.
\end{enumerate}
\end{proposition}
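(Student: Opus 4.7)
The strategy is to prove $1)\Leftrightarrow 2)\Leftrightarrow 3)$ in general, and then to deduce $4)\Rightarrow 3)$ under each extra hypothesis by showing that the natural comparison morphism
\[
\phi := \eta^{\cz}_{M}\circ\delta_{\cx,M}\colon x\tau_{\cx}M \longrightarrow x\tau^{\cz}M
\]
is an isomorphism for every $M$; for then $\phi^{-1}\eta^{\cz}_{M}$ is a left inverse of $\delta_{\cx,M}$, the pair $(\cx,\cy)$ splits by Proposition \ref{properties of additive torsion pairs}, and the triple is centrally split by the prior equivalences. For the equivalences themselves, $3)\Rightarrow 1)$ reduces to the inclusion $\cx\subseteq\cz$, which is obtained by decomposing $X\in\cx$ via the right splitting as $X\cong y\tau_{\cy}X\oplus z\tau^{\cz}X$ and noting that $\cx={}^{\bot}\cy$ is closed under direct summands (so $y\tau_{\cy}X\in\cx\cap\cy=0$, forcing $X\in\cz$); the reverse inclusion is symmetric. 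Conversely, $1)$ together with $\cx=\cz$ provides decompositions $M\cong X\oplus Y$ with $X\in\cz$ and $Y\in\cy$, so Lemma \ref{split torsion pair lemma} applied to $(\cy,\cz)$, using $\cd(\cy,\cz)=0$ from the second pair, yields the splitting of $(\cy,\cz)$; $2)\Rightarrow 3)$ is dual.

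In Case (i), canonical factorizations make both torsion sequences short exact by Proposition \ref{compatibility proposition}, and the symmetric r\^{o}le of $\cx=\cz$ forces $\cx$ to be closed under both subobjects and quotients: a monomorphism $A\hookrightarrow X$ with $X\in\cz$ kills every morphism $Y\to A$ from $Y\in\cy$ by left-cancellation (the composite lands in $\cd(\cy,\cz)=0$), placing $A$ in $\cy^{\bot}=\cz=\cx$, and dually for quotients using $\cx={}^{\bot}\cy$. To show $\phi$ is monic, assume $\phi k=0$: canonically factor $k=\iota\pi$ with $\pi$ epi and $\iota$ mono, cancel $\pi$ to get $\phi\iota=0$, observe that the source of $\iota$ now lies in $\cx$ by sub-closure, note that $\delta_{\cx,M}\iota$ then factors through $\ker(\eta^{\cz}_{M})=y\tau_{\cy}M\in\cy$ via a morphism in $\cd(\cx,\cy)=0$, and conclude $\delta_{\cx,M}\iota=0$, hence $\iota=0$ by the monic property of $\delta_{\cx,M}$. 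The epic property of $\phi$ is dual, using quotient-closure of $\cx$ and $\cd(\cy,\cz)=0$; balance (Remark \ref{canonical factorizations implies balanced}) then turns $\phi$ into an isomorphism.

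In Case (ii), $\Omega\cx\subseteq\cx$ holds because $\Omega\cz\subseteq\cz$ by the pretriangulated axiom for $(\cy,\cz)$, while the adjoints of the inclusion $\cx=\cz\hookrightarrow\cd$ restrict to the identity on $\cx$ and annihilate the orthogonal class $\cy$. Applying the left adjoint $\tau^{\cz}$ to the right triangle coming from $(\cx,\cy)$ and invoking the naturality square of $\eta^{\cz}$ together with the isomorphism $\tau^{\cz}x\tau_{\cx}M\cong\tau_{\cx}M$ produces a right triangle
\[
x\tau_{\cx}M\arr{\phi}x\tau^{\cz}M\longrightarrow 0\longrightarrow \Sigma x\tau_{\cx}M;
\]
dually, applying the right adjoint $\tau_{\cx}$ to the left triangle coming from $(\cy,\cz)$ and using naturality of $\delta_{\cx}$ together with the isomorphism $\tau_{\cx}x\tau^{\cz}M\cong\tau^{\cz}M$ produces a left triangle
\[
\Omega x\tau^{\cz}M\longrightarrow 0\longrightarrow x\tau_{\cx}M\arr{\phi}x\tau^{\cz}M.
\]
Weak balance of $\cd$ then forces $\phi$ to be an isomorphism. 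The trickiest steps are the sub- and quotient-closure argument in Case (i)—which hinges essentially on $\cx$ playing both the torsion and the torsionfree r\^{o}le—and the faithful transport of the two distinguished triangles through the adjoints $\tau^{\cz}$ and $\tau_{\cx}$ in Case (ii).
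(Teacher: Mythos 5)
Your general equivalences $1)\Leftrightarrow 2)\Leftrightarrow 3)$ are essentially the paper's argument, and your treatment of the case with canonical factorizations is correct but genuinely different: the paper proves directly that $\left[\begin{array}{cc}\delta_{\cx,M}&\delta_{\cy,M}\end{array}\right]:x\tau_{\cx}M\oplus y\tau_{\cy}M\ra M$ is both monic and epic, whereas you prove that the comparison map $\phi=\eta^{\cz}_{M}\delta_{\cx,M}$ is monic and epic, using that $\cx=\cz$ is closed under subobjects (via $\cz=\cy^{\bot}$) and under quotients (via $\cx=\ ^{\bot}\cy$), together with the short exactness of the torsion sequences from Proposition \ref{compatibility proposition} and balance from Remark \ref{canonical factorizations implies balanced}. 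That route is valid and arguably more symmetric than the paper's.

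The weakly balanced pretriangulated case, however, has a genuine gap. You ``apply the left adjoint $\tau^{\cz}$ to the right triangle coming from $(\cx,\cy)$'' and ``apply the right adjoint $\tau_{\cx}$ to the left triangle coming from $(\cy,\cz)$'' and assert that the images are again right, respectively left, triangles. But $\tau^{\cz}$ and $\tau_{\cx}$ are only known to be additive functors here; nothing in the hypotheses makes them compatible with the suspended or cosuspended structure of $\cd$, and naturality of $\eta^{\cz}$ (resp. $\delta_{\cx}$) only identifies the induced morphism with $\phi$ — it cannot certify that
\[x\tau_{\cx}M\arr{\phi}z\tau^{\cz}M\ra 0\ra\Sigma(x\tau_{\cx}M)
\]
is a right triangle. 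Even in the triangulated setting the exactness of these truncations when both classes are shift-stable is a theorem (an adjoint of a triangle functor is a triangle functor), and no analogue is available in the paper's pretriangulated framework; establishing it would require its own octahedral argument. This is precisely what the paper supplies instead: applying axiom RT4) of Definition \ref{definition suspended category} to the composable pair $\delta_{\cx,M}$, $\eta^{\cx}_{M}$ produces a diagram in which the cone $U$ of $\phi$ appears simultaneously as an extension of $\Sigma(x\tau_{\cx}M)$ by $x\tau^{\cx}M$ (hence $U\in\cx$) and as an extension of $\Sigma(y\tau_{\cy}M)$ by $y\tau^{\cy}M$ (hence $U\in\cy$), so $U=0$; the dual diagram in the cosuspended structure gives $V=0$, and only then does weak balance apply. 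To repair your proof you must either prove that $z\tau^{\cz}$ and $x\tau_{\cx}$ preserve right, respectively left, triangles under the hypothesis $\cx=\cz$, or replace that step by the RT4 diagram chase as in the paper.
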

\begin{proof}
$1)\Rightarrow 3)$ is clear thanks to lemma \ref{split torsion pair lemma}.

$3)\Rightarrow 1)$ For an object $X\in\cx$, we consider its decomposition $X=y\tau_{\cy}X\oplus z\tau^{\cz}X$. Since the projection on $y\tau_{\cy}X$ vanishes, we get that $y\tau_{\cy}X=0$, \ie $X\in\cz$. Conversely, one proves the inclusion $\cz\subseteq\cx$.

The equivalence $2)\Leftrightarrow 3)$ follows dually.

$4)\Rightarrow 1)$ \emph{Suppose that $\cd$ has canonical factorizations:}

We have to prove that each $M\in\cd$ admits a decomposition $M=X\oplus Y$ with $X\in\cx$ and $Y\in\cy$. For this, consider the short exact sequences:
\[0\ra x\tau_{\cx}M\arr{\delta_{\cx,M}}M\arr{\eta^{\cy}_{M}}y\tau^{\cy}M\ra 0
\]
and
\[0\ra y\tau_{\cy}M\arr{\delta_{\cy,M}}M\arr{\eta^{\cz}_{M}}z\tau^{\cz}M\ra 0.
\]
Let us prove that
\[\left[\begin{array}{cc}\delta_{\cx,M}&\delta_{\cy,M}\end{array}\right]:x\tau_{\cx}M\oplus y\tau_{\cy}M\ra M
\]
is an isomorphism. For this, thanks to Remark \ref{canonical factorizations implies balanced} it suffices to prove that it is an epimorphism and a monomorphism. Consider a morphism
\[\left[\begin{array}{c} u \\ -v\end{array}\right]:N\ra x\tau_{\cx}M\oplus y\tau_{\cy}M
\]
such that 
\[0=\left[\begin{array}{cc}\delta_{\cx,M}&\delta_{\cy,M}\end{array}\right]\left[\begin{array}{c} u \\ -v\end{array}\right]=\delta_{\cx,M}u-\delta_{\cy,M}v.
\]
Put $f:=\delta_{\cx,M}u=\delta_{\cy,M}v$ and consider canonical factorizations:
\[\xymatrix{N\ar[rr]^f\ar[dr]_{p}&&M,\\
&L\ar[ur]_{i}&
}
\xymatrix{N\ar[rr]^u\ar[dr]_{u'}&&x\tau_{\cx}M\\
&U\ar[ur]_{u''}&
}
\text{ and }
\xymatrix{N\ar[rr]^v\ar[dr]_{v'}&&y\tau_{\cy}M\\
&V\ar[ur]_{v''}&
}
\]
Since $\delta_{\cx,M}$ and $\delta_{\cy,M}$ are monomorphisms, then $f$ also admits the following factorizations:
\[\xymatrix{N\ar[rr]^f\ar[dr]_{u'}&&M\\
&U\ar[ur]_{\delta_{\cx,M}u''}&
}
\text{ and }
\xymatrix{N\ar[rr]^f\ar[dr]_{v'}&&M\\
&V\ar[ur]_{\delta_{\cy,M}v''}&
}
\]
Uniqueness of canonical factorizations implies that we have a monomorphism $L\ra x\tau_{\cx}M$. Since $x\tau_{\cx}M\in\cz=\cy^{\bot}$, then $L\in\cy^{\bot}$. Similarly, we have a monomorphism $L\ra y\tau_{\cy}M$. Since $y\tau_{\cy}M\in\cy=\cx^{\bot}$, then $L\in\cx^{\bot}=\cy$. Therefore, $L=0$ and thus $f=0$. But $\delta_{\cx,M}$ and $\delta_{\cy,M}$ are monomorphisms, which implies that $u=0$ and $v=0$.

This proves that $\left[\begin{array}{cc}\delta_{\cx,M}&\delta_{\cy,M}\end{array}\right]$ is a monomorphism. To prove that it is also an epimorphism, consider a morphism $w:M\ra N$ such that $w\delta_{\cx,M}=0$ and $w\delta_{\cy,M}=0$. Since $\eta^{\cy}_{M}$ is the cokernel of $\delta_{\cx,M}$, there exists a morphism $a:y\tau^{\cy}M\ra N$ such that $a\eta^{\cy}_{M}=w$. Similarly, there exists a morphism $b:z\tau^{\cz}\ra N$ such that $b\eta^{\cz}_{M}=w$. Consider the canonical factorizations:
\[\xymatrix{M\ar[rr]^{w}\ar[dr]_{p}&&N,\\
&L\ar[ur]_{i}&
}
\xymatrix{y\tau^{\cy}M\ar[rr]^{a}\ar[dr]_{a'}&& N\\
&A\ar[ur]_{a''}&
}
\text{ and }
\xymatrix{z\tau^{\cz}M\ar[rr]^{b}\ar[dr]_{b'}&& N\\
&B\ar[ur]_{b''}&
}
\]
Since $\eta^{\cy}_{M}$ and $\eta^{\cz}_{M}$ are epimorphisms, then $w$ also admits the following factorizations:
\[\xymatrix{ M\ar[rr]^{w}\ar[dr]_{a'\eta^{\cy}_{M}} && N \\
& A\ar[ur]_{a''} &
}
\text{ and }
\xymatrix{M\ar[rr]^{w}\ar[dr]_{b'\eta^{\cz}_{M}}&&N\\
&B\ar[ur]_{b''}&
}
\]
Uniqueness of canonical factorizations implies that there exists two epimorphisms $A\ra L$ and $B\ra L$. Since $a'$ is an epimorphism and $y\tau^{\cy}M\in\cy=\ ^{\bot}\cz$, then $A\in^{\bot}\cz$, which implies $L\in^{\bot}\cz=\cy$. Similarly, since $b'$ is an epimorphism and $z\tau^{\cz}M\in\cz=\cx=\ ^{\bot}\cy$, then $B\in\ ^{\bot}\cy$, which implies $L\in\ ^{\bot}\cy=\cx$. Therefore, $L=0$, and $w=0$.

\emph{Suppose that $\cd$ is a weakly balanced pretriangulated category:} Assume that $(\cx,\cy,\cx)$ is a pretriangulated TTF triple. We want to prove that $(\cx,\cy)$ splits,\ie that for each $M\in\cd$ the morphism $\delta_{\cx,M}$ is a section. Indeed, thanks to axiom RT4) of Definition \ref{definition suspended category}, there exists a commutative diagram of the form
\[\xymatrix{x\tau_{\cx}M\ar[r]^{\delta_{\cx,M}}\ar@{=}[d] & M\ar[r]\ar[d]^{\eta^{\cx}_{M}} & y\tau^{\cy}M\ar[r]^{0}\ar[d] & \Sigma(x\tau_{\cx}M)\ar@{=}[d] \\
x\tau_{\cx}M\ar[r]^{\eta^{\cx}_{M}\delta_{\cx,M}} & x\tau^{\cx}M\ar[d]^0\ar[r] & U\ar[r]\ar[d] & \Sigma(x\tau_{\cx}M) \\
& \Sigma(y\tau_{\cy}M)\ar@{=}[r]\ar[d]_{-\Sigma \delta_{\cy,M}} & \Sigma(y\tau_{\cy}M)\ar[d] & \\
& \Sigma M\ar[r] & \Sigma(y\tau^{\cy}M) &
}
\]
in which the two first rows and the two central columns are triangles. Since $\Sigma\cx\subseteq\cx$ and $\cx=\ ^{\bot}\cy$, then $U\in\cx$. Similarly, since $\Sigma\cy\subseteq\cy$ and $\cy=\ ^{\bot}\cx$, then $U\in\cy$. Therefore, $U=0$. Dually, there exists a commutative diagram of the form
\[\xymatrix{& \Omega(y\tau_{\cy}M)\ar[r]\ar[d] & \Omega M\ar[d]^{-\Omega \eta^{\cy}_{M}} & \\
& \Omega(y\tau^{\cy}M)\ar@{=}[r]\ar[d] &  \Omega(y\tau^{\cy}M)\ar[d]^0 & \\
 \Omega(x\tau^{\cx}M)\ar[r]\ar@{=}[d] & V\ar[r]\ar[d] & x\tau_{\cx}M\ar[r]^{\eta^{\cx}_{M}\delta_{\cx,M}}\ar[d]^{\delta_{\cx,M}} & x\tau^{\cx}M\ar@{=}[d] \\
 \Omega(x\tau^{\cx}M)\ar[r]^0 & y\tau_{\cy}M\ar[r] & M\ar[r]^{\eta^{\cx}_{M}} & x\tau^{\cx}M
}
\]
in which the two last rows and the two central columns are triangles. Since $\Omega\cx\subseteq\cx$ and $\cx=\cy^{\bot}$, then $V\in\cx$. Similarly, since $\Omega\cy\subseteq\cy$ and $\cy=\cx^{\bot}$, then $V\in\cy$. Therefore, $V=0$. But we are assuming that $\cd$ is weakly balanced, which implies that $\eta^{\cx}_{M}\delta_{\cx,M}$ is an isomorphism, and thus $\delta_{\cx,M}$ is a section.
\end{proof}

\begin{remark}
The Proposition above implies the equivalence $a)\Leftrightarrow d)$ of \cite[Proposition VI.8.5]{Stenstrom}.
\end{remark}

In the setting of triangulated categories, the former result can be improved. Before the improvement, we recall two well-known and very useful lemmas:

\begin{lemma}\label{quotient by coaisles}
Let $\cn$ be a strictly full triangulated subcategory of a triangulated category $(\cd,?[1])$, and let $\ck:=^{\bot}\cn$ be the left orthogonal to $\cn$ in $\cd$. 
\begin{enumerate}[1)]
\item For two objects $K\in\ck$ and $M\in\cd$, the canonical morphism
\[q:\cd(K,M)\ra(\cd/\cn)(K,M)
\]
is an isomorphism. In particular, the restriction of the quotient functor $q:\cd\ra\cd/\cn$ to $\ck$ is a full embedding.
\item If $(\ck,\cn)$ is a t-structure on $\cd$, then the restriction of the quotient functor $q:\cd\ra\cd/\cn$ to $\ck$ is a triangle equivalence.
\end{enumerate}
\end{lemma}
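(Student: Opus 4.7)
The plan is to use Verdier's calculus of fractions for $\cd/\cn$: morphisms $K\to M$ in the quotient are represented by roofs $K\larr{s}K'\arr{g}M$ in which the cone of $s$ lies in $\cn$, and a morphism $f\in\cd(K,M)$ vanishes in $\cd/\cn$ iff there exists $t:K'\ra K$ with $\cone(t)\in\cn$ and $ft=0$ in $\cd$. The key observation, on which both parts rest, is that when $K\in{}^{\bot}\cn$ any such $s$ or $t$ is split by a morphism of $\cd$ itself, so every roof out of $K$ collapses to an honest morphism of $\cd$.

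For part $1)$, to prove injectivity I would suppose $f\in\cd(K,M)$ satisfies $q(f)=0$, extract $t:K'\ra K$ with $\cone(t)\in\cn$ and $ft=0$, complete $t$ to a triangle $K'\arr{t}K\arr{p}N\ra K'[1]$ with $N\in\cn$, and use $ft=0$ to factor $f=hp$ for some $h\in\cd(N,M)$; the orthogonality $\cd(K,N)=0$ then forces $p=0$, whence $f=0$. For surjectivity I would start from an arbitrary roof $K\larr{s}K'\arr{g}M$, complete $s$ to a triangle $K'\arr{s}K\ra N\ra K'[1]$ with $N\in\cn$, and apply $\cd(K,-)$. Since $\cn$ is triangulated it is closed under shifts in both directions, so $\cd(K,N)=0=\cd(K,N[-1])$ and the long exact sequence yields an isomorphism $s_{*}:\cd(K,K')\arr{\sim}\cd(K,K)$. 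The unique $u\in\cd(K,K')$ with $su=\id_K$ then serves as a morphism of roofs between $(\id_K,gu)$ and $(s,g)$, showing that the original roof represents $q(gu)$. Together these prove that $q|_{\ck}$ is fully faithful.

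For part $2)$, I would first note that triangulatedness of $\cn$ makes $\ck={}^{\bot}\cn$ itself a full triangulated subcategory of $\cd$ (closed under both shifts because $\cn$ is, and extension-closed as any left orthogonal), so $q|_{\ck}$ is automatically a triangle functor with respect to the induced structure. Essential surjectivity then comes from the t-structure axiom: every $M\in\cd$ fits in a triangle $K_M\ra M\ra N_M\ra K_M[1]$ with $K_M\in\ck$ and $N_M\in\cn$, and since $N_M[1]\in\cn$ is the cone of $N_M\ra 0$, the object $N_M$ becomes isomorphic to $0$ in $\cd/\cn$; the image triangle then forces $q(K_M)\cong M$ in the quotient. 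Combined with part $1)$, this yields the desired triangle equivalence. No single step looks genuinely hard; the only place that deserves a moment of care is the verification that $u$ is a valid morphism of roofs between $(\id_K,gu)$ and $(s,g)$, which is immediate from $su=\id_K$ and $gu=gu$.
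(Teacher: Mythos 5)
Your proof is correct and follows essentially the same route as the paper: both parts reduce to applying $\cd(K,?)$ to a triangle whose third vertex lies in $\cn$ and exploiting $\cd(K,\cn)=0$, and part 2) is the same essential-surjectivity argument via the t-structure triangle. The only (cosmetic) difference is that you represent morphisms in $\cd/\cn$ by right fractions $K\larr{s}K'\arr{g}M$ and split the denominator $s$ by a section, whereas the paper works with left fractions $K\arr{f}L\larr{s}M$ and lifts/descends morphisms along $s$; both rest on the same orthogonality computation.
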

\begin{proof}
1) \emph{Surjectivity:} Consider a morphism $s^{-1}f$ of $(\cd/\cn)(K,M)$, represented by the left fraction
\[\xymatrix{&L& \\
K\ar[ur]^{f}&&M\ar[ul]_{s}
}
\]
By construction of $\cd/\cn$ (see for instance the books \cite{Verdier1996, Neeman2001}), there exists a triangle
\[M\arr{s}L\ra N\ra M[1]
\]
with $N\in\cn$, and so
\[\cd(K,M)\arr{s^{\we}}\cd(K,L)
\]
is surjective. In particular, $f=sg$ for some $g\in\cd(K,M)$, and so $q(g)=\id^{-1}g=s^{-1}f$.

\emph{Injectivity:} Suppose that $q(f)=\id^{-1}f=0$ in $(\cd/\cn)(K,M)$. This happens if and only if there exists a triangle
\[M\arr{s}L\ra N\ra M[1]
\]
with $N\in\cn$ and a commutative diagram of the form
\[\xymatrix{&M\ar[d]_{s}& \\
K\ar[ur]^{f}\ar[r]\ar[dr]_{0}&L&M\ar[ul]_{\id}\ar[l]_{s} \\
&M\ar[u]^{s}\ar[ur]_{\id}&
}
\]
In particular
\[\cd(K,M)\arr{s^{\we}}\cd(K,L)
\]
is injective, and then we deduce that $f=0$.

2) We already know that the composition
\[\ck\hookrightarrow\cd\arr{q}\cd/\cn
\]
is fully faithful. It remains to prove that it is also essentially surjective. Given an arbitrary object $M$ of $\cd/\cn$, consider a triangle
\[K\ra M\ra N\ra K[1]
\]
in $\cd$ with $K\in\ck$ and $N\in\cn$. The quotient functor $q$ takes this triangle to a triangle in $\cd/\cn$ of the form
\[K\ra M\ra 0\ra K[1],
\]
which proves that $M$ is isomorphic to $K$ in $\cd/\cn$.
\end{proof}

\begin{lemma}\label{x igual a z}
Let $(\cx,\cy,\cz)$ be a triangulated TTF triple on a triangulated category $(\cd,?[1])$. Then the compositions
\[\cx\arr{x}\cd\arr{\tau^{\cz}}\cz
\]
and 
\[\cz\arr{z}\cd\arr{\tau_{\cx}}\cx
\]
are mutually quasi-inverse triangle equivalences.
\end{lemma}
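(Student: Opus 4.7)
The plan is to realize both $\cx$ and $\cz$ as copies of the Verdier quotient $\cd/\cy$ via the canonical quotient functor, and then identify the compositions $\tau^{\cz}\circ x$ and $\tau_{\cx}\circ z$ with the resulting equivalences between these copies. First I would verify that $\cy$ is a strictly full triangulated subcategory of $\cd$: being an orthogonal class, it is automatically closed under extensions, and since $(\cx,\cy)$ and $(\cy,\cz)$ are both triangulated torsion pairs, Proposition \ref{compatibility proposition}(3)--(4) gives $\cy[-1]\subseteq\cy$ (from the first pair) and $\cy[1]\subseteq\cy$ (from the second), so $\cy$ is closed under shifts in both directions.

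Next, since $\cy$ is triangulated we have $\cy=\cy[1]$, and therefore $(\cx,\cy)$ is genuinely a t-structure in the sense of Lemma \ref{quotient by coaisles}. Part (2) of that lemma then says that the Verdier quotient functor $q:\cd\ra\cd/\cy$ restricts to a triangle equivalence $q|_{\cx}:\cx\arr{\sim}\cd/\cy$. The dual argument, obtained by passing to $\cd^{op}$ and observing that $(\cy,\cz)$ becomes a t-structure there with aisle $\cz$ and coaisle $\cy$, shows that $q|_{\cz}:\cz\arr{\sim}\cd/\cy$ is likewise a triangle equivalence.

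Now I would connect the two pictures through the truncation functors themselves. For $X\in\cx$, the $(\cy,\cz)$-torsion triangle
\[y\tau_{\cy}X\ra X\arr{\eta^{\cz}_{X}} z\tau^{\cz}X\ra (y\tau_{\cy}X)[1]
\]
has leftmost vertex in $\cy$, which becomes zero in $\cd/\cy$, so $q(\eta^{\cz}_{X})$ is an isomorphism. This gives a natural isomorphism $q|_{\cz}\circ\tau^{\cz}|_{\cx}\cong q|_{\cx}$ of triangle functors $\cx\ra\cd/\cy$. Since both $q|_{\cx}$ and $q|_{\cz}$ are triangle equivalences, so is $\tau^{\cz}|_{\cx}:\cx\ra\cz$; the symmetric argument with the $(\cx,\cy)$-torsion triangle of $Z\in\cz$ yields $q|_{\cx}\circ\tau_{\cx}|_{\cz}\cong q|_{\cz}$ and hence that $\tau_{\cx}|_{\cz}:\cz\ra\cx$ is a triangle equivalence. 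Combining the two displayed natural isomorphisms shows that $q|_{\cx}\circ\tau_{\cx}\circ\tau^{\cz}\cong q|_{\cx}$ on $\cx$ and dually on $\cz$, so the fully faithfulness of $q|_{\cx}$ and $q|_{\cz}$ yields natural isomorphisms $\tau_{\cx}\tau^{\cz}X\cong X$ for $X\in\cx$ and $\tau^{\cz}\tau_{\cx}Z\cong Z$ for $Z\in\cz$; that is, the two compositions are mutually quasi-inverse.

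The main point requiring care is ensuring that $\tau_{\cx}$ and $\tau^{\cz}$ carry canonical triangle-functor structures, so that the equivalences produced above are genuine triangle equivalences rather than merely additive ones; this is standard for truncation functors associated to a t-structure in a triangulated category and follows because $\cx$, $\cy$, $\cz$ are all stable under $[1]$ and $[-1]$ in the TTF situation, making the relevant adjoints commute with the shift up to canonical isomorphism.
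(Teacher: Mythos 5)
Your proof is correct and follows essentially the same route as the paper: both arguments factor the compositions through the Verdier quotient $\cd/\cy$ and invoke Lemma \ref{quotient by coaisles}(2) (and its dual) to identify $\cx$ and $\cz$ with $\cd/\cy$. You simply spell out the identification of $\tau^{\cz}|_{\cx}$ and $\tau_{\cx}|_{\cz}$ with these equivalences via the torsion triangles, which the paper leaves implicit.
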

\begin{proof}
The first composition can be regarded as the composition of the following triangle equivalences
\[\cx\arr{\sim}\cd/\cy\arr{\tau^{\cz}}\cz,
\]
and the second composition can be regarded as the composition of the following triangle equivalences
\[\cz\arr{\sim}\cd/\cy\arr{\tau_{\cx}}\cx.
\]
\end{proof}

\begin{proposition}\label{split TTF triple}
Let $(\cx,\cy,\cz)$ be a triangulated TTF triple on a triangulated category $(\cd,?[1])$. The following assertions are equivalent:
\begin{enumerate}[1)]
\item The t-structure $(\cx,\cy)$ splits.
\item The t-structure $(\cy,\cz)$ splits.
\item $\cx=\cz$
\item The canonical composition $y\tau_{\cy}M\arr{\delta_{\cy,M}} M\arr{\eta^{\cy}_{M}} y\tau^{\cy}M$ is an isomorphism for each $M\in\cd$.
\item The canonical composition $y\tau_{\cy}M\arr{\delta_{\cy,M}} M\arr{\eta^{\cy}_{M}} y\tau^{\cy}M$ is a section for each $M\in\cd$.
\item The canonical composition $y\tau_{\cy}M\arr{\delta_{\cy,M}} M\arr{\eta^{\cy}_{M}} y\tau^{\cy}M$ is a retraction for each $M\in\cd$.
\end{enumerate}
\end{proposition}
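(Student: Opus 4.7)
The plan is to prove the equivalence in two stages: first establish $1) \Leftrightarrow 2) \Leftrightarrow 3)$, then link $4), 5), 6)$ to this group. For the first group, Proposition \ref{centrally split TTF triples on additive} applies: since $\cd$ is triangulated, it is weakly balanced (Remark \ref{triangulated categories are weakly balanced}), and any triangulated TTF triple is pretriangulated, so that proposition already yields $3) \Leftrightarrow$ ``centrally split'', which in particular gives $3) \Rightarrow 1)$ and $3) \Rightarrow 2)$. Therefore it remains to prove $1) \Rightarrow 3)$; the implication $2) \Rightarrow 3)$ will follow by duality.

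To show $1) \Rightarrow 3)$, I would first verify $\cx \subseteq \cz = \cy^\bot$. Given any morphism $f: Y \to X$ with $Y \in \cy$ and $X \in \cx$, I complete it to a triangle
\[ Y \arr{f} X \to C \to Y[1] \]
in $\cd$. Since $\cy$ is closed under shifts, $Y[1] \in \cy$, and the triangle above is the decomposition of $C$ with respect to the torsion pair $(\cx,\cy)$: its $\cx$-part is $X$ and its $\cy$-part is $Y[1]$. By hypothesis $1)$ this triangle splits, so the connecting morphism $Y[1] \to X[1]$ vanishes; but that morphism is $f[1]$, forcing $f = 0$. For the reverse inclusion $\cz \subseteq \cx$, I take $Z \in \cz$ and split its $(\cx,\cy)$-decomposition $Z_\cx \to Z \to Z^\cy \to Z_\cx[1]$ to obtain $Z \cong Z_\cx \oplus Z^\cy$. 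Because $\cz = \cy^\bot$ is closed under direct summands, $Z^\cy \in \cz \cap \cy$; any object $W$ in $\cz \cap \cy$ satisfies $\cd(W,W) = 0$ and so is zero, hence $Z \cong Z_\cx \in \cx$.

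Finally, $4) \Rightarrow 5)$ and $4) \Rightarrow 6)$ are immediate. If $5)$ holds, then $\delta_{\cy,M}$ is a section for every $M$, so the $(\cy,\cz)$-decomposition triangle splits everywhere, yielding $2)$. Dually, $6)$ implies that $\eta^\cy_M$ is a retraction, so the $(\cx,\cy)$-triangle splits and we get $1)$. To close the loop I would prove $3) \Rightarrow 4)$: assuming $\cx = \cz$, the TTF triple is centrally split, so every $M$ decomposes as $M \cong Y \oplus Z$ with $Y \in \cy$ and $Z \in \cx = \cz$. Since $\cz = \cy^\bot$, adjunction gives $\cy(-,\tau_\cy Z) \cong \cd(-,Z) = 0$ on $\cy$, whence $\tau_\cy Z = 0$; dually $\tau^\cy Z = 0$. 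Thus $y\tau_\cy M \cong Y \cong y\tau^\cy M$, and the composition in $4)$ becomes the identity on $Y$. The main obstacle is the cone argument for $1) \Rightarrow 3)$: it is the step that genuinely uses the triangulated structure (the existence of cones and the invertibility of $[1]$) to extract the equality $\cx = \cz$ from splitness alone, something Proposition \ref{centrally split TTF triples on additive} does not supply in the purely additive or pretriangulated setting.
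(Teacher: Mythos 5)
Your proof is correct. The core step is shared with the paper: to get $\cx\subseteq\cz$ from $1)$ you form the triangle on $f\colon Y\ra X$, rotate it, identify it (via uniqueness of truncation triangles, Lemma \ref{truncacion funtorial}) with the canonical $(\cx,\cy)$-triangle, and conclude the connecting morphism $\pm f[1]$ vanishes -- exactly the paper's argument. Elsewhere you take a genuinely different route. For $\cz\subseteq\cx$ the paper invokes the triangle equivalence $\tau^{\cz}x\colon\cx\ra\cz$ of Lemma \ref{x igual a z} (hence Verdier-quotient machinery), whereas you split the $(\cx,\cy)$-truncation of $Z\in\cz$ and kill the $\cy$-summand using $\cy\cap\cz=0$, which is more elementary. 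For $3)\Rightarrow 1),2)$ the paper argues directly that the connecting morphism $y\tau^{\cy}M\ra x\tau_{\cx}M[1]$ vanishes since $\cx=\cz=\cy^{\bot}$ and cites Neeman's splitting criterion, while you quote Proposition \ref{centrally split TTF triples on additive} (legitimate, as $\cd$ is weakly balanced by Remark \ref{triangulated categories are weakly balanced} and both pairs are pretriangulated, though that case is itself proved with two octahedra). For $4)$--$6)$ the paper shows $5),6)\Rightarrow 3)$ directly and $3)\Rightarrow 4)$ via an octahedron; you instead note that a composite which is a section (retraction) forces $\delta_{\cy,M}$ (resp. $\eta^{\cy}_{M}$) to be one, giving $2)$ resp. $1)$ immediately, and get $4)$ from central splitness by the computation $\tau_{\cy}Z=\tau^{\cy}Z=0$, avoiding the octahedral axiom; your cycle of implications does close. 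Only tighten the phrase ``the composition becomes the identity on $Y$'': by additivity and naturality of unit and counit, under $M\cong Y\oplus Z$ the composite is $(\eta^{\cy}_{Y}\delta_{\cy,Y})\oplus 0$, and both factors are isomorphisms by Proposition \ref{properties of additive torsion pairs}, which is what assertion $4)$ requires.
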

\begin{proof}
Implications $4)\Rightarrow 5), 6)$ are clear. 

$3)\Rightarrow 1)+2)$ Take for $M\in\cd$ the triangle 
\[x\tau_{\cx}M\ra M\ra y\tau^{\cy}M\arr{w} x\tau_{\cx}M[1]
\] 
associated to the t-structure $(\cx,\cy)$. Since $\cx=\cz$, then $w=0$ and so the triangle splits \cite[Corollary 1.2.7]{Neeman2001}. This proves that $(\cx,\cy)$ splits. Dually, we prove that $(\cy,\cz)$ splits.

$1)\Rightarrow 3)$ Assume $(\cx,\cy)$ splits. Let $f\in\cd(Y,X')$ be an arbitrary morphism with $Y\in\cy\ko X'\in\cx$. From it we form the triangle
\[Y\arr{f}X'\ra M'\ra Y[1].
\]
By rotating we get a triangle of the form
\[X\ra M\ra Y\arr{f}X[1],
\]
with $X:=X'[-1]\in\cx\ko M:=M'[-1]$. This triangle is isomorphic to
\[x\tau_{\cx}M\ra M\ra y\tau^{\cy}M\ra(x\tau_{\cx}M)[1]
\]
which splits. Hence, this implies $f=0$ and so $\cx\subseteq\cy^{\bot}=\cz$.

Therefore, we can take the functor $\tau^{\cz}:\cd\ra\cz$ to be the identity on $\cx$. But then, since the restriction $\cx\arr{x}\cd\arr{\tau^{\cz}}\cz$ is 
an equivalence (\cf Lemma \ref{x igual a z}), in particular it is dense, and so $\cz\subseteq\cx$. That is to say, $\cx=\cz$.

Dually, we prove $2)\Rightarrow 3)$.

$4)\Rightarrow 3)$ We have $M\in\cx$ if an only if $\tau^{\cy}M=0$ if and only if $\tau_{\cy}M=0$ if and only if $M\in\cz$.

$3)\Rightarrow 4)$ By looking to the diagram guaranteed by axiom RT4) of Definition \ref{definition suspended category}
\[\xymatrix{y\tau_{\cy}M\ar[r]^{\delta_{\cy,M}}\ar@{=}[d] & M\ar[r]\ar[d]^{\eta^{\cy}_{M}} & z\tau^{\cz}M\ar[r]\ar[d] & y\tau_{\cy}M[1]\ar@{=}[d] \\
y\tau_{\cy}M\ar[r] & y\tau^{\cy}M\ar[r]\ar[d] & U\ar[r]\ar[d] & y\tau_{\cy}M[1]\ar[d] \\
&x\tau_{\cx}M[1]\ar@{=}[r]\ar[d] & x\tau_{\cx}M[1]\ar[r]\ar[d] & M[1] \\
&M[1]\ar[r]&z\tau^{\cz}M &
}
\]
we realise that the mapping cone $U$ of $\eta^{\cy}_{M}\delta_{\cy,M}$ is in $\cx=\cz$, but also in $\cy$. Hence $U=0$ and so $\eta^{\cy}_{M}\delta_{\cy,M}$ is an isomorphism.

$5)\Rightarrow 3)$ $M\in\cx$ if and only if $\tau^{\cy}M=0$, which implies $\eta^{\cy}_{M}\delta_{\cy,M}=0$, and so $\tau_{\cy}M=0$, \ie $M\in\cz$. Hence $\cx\subseteq\cz$ and so we can assume that the equivalence $\cx\arr{x}\cd\arr{\tau^{\cz}}\cz$ is the identity. This proves $\cx=\cz$.

Dually, we prove $6)\Rightarrow 3)$.
\end{proof}

The proposition above proves that, in the triangulated setting, left split, right split and centrally split TTF triples coincide. The situation is much more subtle in the case of module categories, as we will see in Chapter \ref{Split TTF triples on module categories}.

\section{Parametrization of centrally split TTF triples}\label{Parametrization of centrally split TTF triples}
\addcontentsline{lot}{section}{1.7. Parametrizaci\'on de las ternas TTF centralmente escindidas}

\begin{definition}\label{2-decomposition of an additive category}
Let $\cg$ be a full subcategory of an additive category $\cd$. A \emph{2-decomposition}\index{decomposition!2-} of $\cg$ consists of a pair $(\cx,\cy)$ of strictly full subcategories of $\cg$ such that the functor
\[\cx\times\cy\ra\cg\ko (X,Y)\mapsto X\oplus Y
\]
is an equivalence or, alternatively, such that:
\begin{enumerate}[1)]
\item $\cd(X,Y)=\cd(Y,X)=0$ for each $X\in\cx\ko Y\in\cy$.
\item Every object $M\in\cg$ is the coproduct of an object of $\cx$ and an object of $\cy$.
\end{enumerate}
The class of all 2-decompositions of $\cg$ is denoted by $\cc_{\cg}$.
\end{definition}

\begin{definition}
The \emph{center}\index{center}\index{category!center of an additive} of an additive category $\cd$ is the (big) commutative ring formed by the endomorphisms $\Fun(\id_{\cd},\id_{\cd})$ of the identity functor $\id_{\cd}$.
\end{definition}

\begin{example}\label{centro en categorias aditivas con generadores}
Let $\cd$ be an additive category and let $\cg:=\{G_{i}\}_{i\in I}$ be a family of objects of $\cd$. In this case, the center of $\add(\cg)$ is isomorphic to the subring of $\prod_{i\in I}\cd(G_{i},G_{i})$ formed by the elements $\{f_{i}\}_{i\in I}$ such that $gf_{i}=f_{j}g$ for all morphisms $g\in\cd(G_{i},G_{j})\ko i,j\in I$. Let us sketch how this is proved. Indeed, it is clear that each element of the center of $\add(\cg)$ gives rise to such a family $\{f_{i}\}_{i\in I}$. Conversely, let $\{f_{i}\}_{i\in I}$ be such an element of $\prod_{i\in I}\cd(G_{i},G_{i})$. It defines an element $F$ of the center of $\add(\cg)$ as follows. If $M\in\add(\cg)$, there is a section
\[\sigma:M\ra\bigoplus_{t=1}^{m}G_{i_{t}}
\]
and a retraction
\[\rho:\bigoplus_{t=1}^{m}G_{i_{t}}\ra M,
\]
with $\rho\sigma=\id_{M}$, and where $\bigoplus_{t=1}^{m}G_{i_{t}}$ is a finite direct sum of objects of $\cg$. We convene that $\sigma$ and $\rho$ are the identity morphisms for the objects of $\cg$. Define $F$ on $M$ as the composition
\[M\arr{\sigma}\bigoplus_{t=1}^{m}G_{i_{t}}\arr{\tilde{f}}\bigoplus_{t=1}^{m}G_{i_{t}}\arr{\rho} M,
\]
where $\tilde{f}$ is given by
\[\left[\begin{array}{cccc}f_{i_{1}}& 0 & \dots & 0 \\ 0 & f_{i_{2}} & \dots & 0 \\ \dots &\dots &\dots &\dots \\ 0 & 0 & \dots & f_{i_{m}}\end{array}\right]
\]
\end{example}

\begin{proposition}\label{centraly split TTF, 2-decomposition and idempotents}
Let $\cd$ be an additive category. There is a one-to-one correspondence between:
\begin{enumerate}[1)]
\item Centrally split TTF triples on $\cd$.
\item 2-decompositions of $\cd$.
\end{enumerate}
If idempotents split in $\cd$, then the classes above are in one-to-one correspondence with:
\begin{enumerate}[3)]
\item Idempotents of the center of $\cd$.
\end{enumerate}
When idempotents split in $\cd$ and $\cd=\add(\cg)$ for some family $\cg=\{G_{i}\}_{i\in I}$, the classes above are in one-to-one correspondence with:
\begin{enumerate}[4)]
\item The (central) idempotents $e=(e_{i})_{i\in I}\in\prod_{i\in I}\cd(G_{i},G_{i})$ such that $e_{j}f=fe_{i}$ for each morphism $f\in\cd(G_{i},G_{j})\ko i, j\in I$.
\end{enumerate} 
\end{proposition}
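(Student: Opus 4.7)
The plan is to prove the three equivalences in order: $1)\Leftrightarrow 2)$ in full generality, then $2)\Leftrightarrow 3)$ under the idempotent-splitting assumption, and finally $3)\Leftrightarrow 4)$ under the additional assumption $\cd=\add(\cg)$.

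For $1)\Leftrightarrow 2)$, I would start with a centrally split TTF triple $(\cx,\cy,\cz)$. By Proposition \ref{centrally split TTF triples on additive} we have $\cx=\cz$, and splitness gives $M\cong x\tau_{\cx}M\oplus y\tau^{\cy}M$ for every $M$. The orthogonality condition $\cd(X,Y)=0$ is built into the torsion pair $(\cx,\cy)$, while $\cd(Y,X)=\cd(Y,Z)=0$ follows from $(\cy,\cz)=(\cy,\cx)$ being a torsion pair. Hence $(\cx,\cy)\in\cc_{\cd}$. Conversely, given a 2-decomposition $(\cx,\cy)$, Lemma \ref{split torsion pair lemma} applied to the pair $(\cx,\cy)$ and to the pair $(\cy,\cx)$ produces two split torsion pairs, which assemble into the centrally split TTF triple $(\cx,\cy,\cx)$. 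The two constructions are obviously mutually inverse.

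For $2)\Leftrightarrow 3)$, assuming idempotents split in $\cd$: given a 2-decomposition $(\cx,\cy)$, fix for each $M$ a decomposition $M=xM\oplus yM$ and let $e_{M}:M\to M$ be the composition of the projection onto $xM$ with the inclusion. Any morphism $f:M\to N$ admits the block form $f=\bigl[\begin{smallmatrix} f_{11}&0\\ 0&f_{22}\end{smallmatrix}\bigr]$ thanks to the vanishing of both $\cd(xM,yN)$ and $\cd(yM,xN)$, which forces $f\circ e_{M}=e_{N}\circ f$. Therefore $e\in\Fun(\id_{\cd},\id_{\cd})$, and it is clearly idempotent. Conversely, given an idempotent $e$ in the center of $\cd$, the fact that idempotents split lets us write $M\cong e(M)\oplus(1-e)(M)$ for each $M$. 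Setting
\[
\cx:=\{M\in\cd : e_{M}=\id_{M}\}\ko\quad \cy:=\{M\in\cd : e_{M}=0\},
\]
the naturality of $e$ immediately yields $\cd(X,Y)=\cd(Y,X)=0$ for $X\in\cx\ko Y\in\cy$ (since any $f:X\to Y$ satisfies $f=f\circ e_{X}=e_{Y}\circ f=0$), and the decomposition above witnesses that every object is a sum of an object of $\cx$ and an object of $\cy$. A direct check shows these two constructions are mutually inverse.

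For $3)\Leftrightarrow 4)$, we invoke Example \ref{centro en categorias aditivas con generadores}, which identifies the center of $\add(\cg)$ with the subring of $\prod_{i\in I}\cd(G_{i},G_{i})$ consisting of families $(f_{i})_{i\in I}$ such that $g\circ f_{i}=f_{j}\circ g$ for every $g\in\cd(G_{i},G_{j})$. Under this isomorphism, idempotents in the center correspond precisely to the families described in 4), since a natural transformation is idempotent if and only if each of its components is idempotent. The main obstacle I anticipate is purely bookkeeping: one has to be careful, in the $2)\Leftrightarrow 3)$ step, that the subcategories $\cx$ and $\cy$ attached to an idempotent $e$ are strictly full (closed under isomorphisms), which follows from the naturality of $e$ across any isomorphism, and that the two constructions really are inverse up to equality of subcategories—this last identification relies on recognizing that the projector extracted from a decomposition $M=xM\oplus yM$ coincides with $e_{M}$ for the idempotent produced from $(\cx,\cy)$.
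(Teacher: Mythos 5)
Your proposal is correct and follows essentially the same route as the paper: Proposition \ref{centrally split TTF triples on additive} together with Lemma \ref{split torsion pair lemma} for the equivalence of 1) and 2), the componentwise projector (with the block-matrix form of morphisms giving naturality) for 2)$\Leftrightarrow$3), and Example \ref{centro en categorias aditivas con generadores} for 3)$\Leftrightarrow$4). The extra bookkeeping you flag (strict fullness and mutual inverseness) is exactly the routine verification the paper leaves implicit.
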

\begin{proof}
From $1)$ to $2)$: If $(\cx,\cy,\cz)$ is a centrally split TTF triple, then by Proposition \ref{centrally split TTF triples on additive} we know that $\cx=\cz$ and so $(\cx,\cy)$ is a 2-decomposition.

From $2)$ to $1)$: Use Lemma \ref{split torsion pair lemma} to prove that, if $(\cx,\cy)$ is a 2-decomposition, then $(\cx,\cy,\cx)$ is a centrally split TTF triple.

From $2)$ to $3)$ For each object $M\in\cd$ fix a decomposition $M=M_{\cx}\oplus M^{\cy}$ with $M_{\cx}\in\cx$ and $M^{\cy}\in\cy$. Take the idempotent $(e_{M})_{M\in\cd}$ of the center of $\cd$ defined by
\[e_{M}:=\left[\begin{array}{cc}\id_{M_{\cx}}&0\\ 0&0\end{array}\right].
\]

From $3)$ to $2)$: Let $(e_{M})_{M\in\cd}$ be an idempotent of the center of $\cd$. In particular, each $e_{M}$ is an idempotent endomorphism of $M$. Since $\cd$ has splitting idempotents, then each $M$ is, up to isomorphism, a coproduct $M_{\cx}\oplus M^{\cy}$ such that $e_{M}$ gets identified with
\[e_{M}=\left[\begin{array}{cc}\id_{M_{\cx}}&0\\ 0&0\end{array}\right].
\]
Put $\cx$ for the full subcategory of $\cd$ formed by the objects $M$ with $M^{\cy}=0$ (or, equivalently, such that $e_{M}$ is an isomorphism), and $\cy$ for the full subcategory of $\cd$ formed by the objects $M$ with $M_{\cx}=0$ (or, equivalently, such that $e_{M}=0$).

For the bijection between $3)$ and $4)$ we use Example \ref{centro en categorias aditivas con generadores}.
\end{proof}

\begin{proposition}\label{2-decompositions arriba abajo}
Let $\cd$ be an additive category, $\cg$ a full subcategory of $\cd$ closed under direct summands and such that $\cg^{\bot}=0$. The map
\[\Psi:\cc_{\cd}\ra\cc_{\cg}\ko (\cx,\cy)\mapsto (\cx\cap\cg,\cy\cap\cg)
\]
is injective. The image of $\Psi$ consists of those pairs $(\cx',\cy')\in\cc_{\cg}$ such that $(\cy'^{\bot_{\cd}}, \cx'^{\bot_{\cd}})$ is a 2-decomposition of $\cd$.
\end{proposition}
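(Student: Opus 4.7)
The plan is to prove three things in sequence: well-definedness of $\Psi$, a formula recovering a 2-decomposition of $\cd$ from its image in $\cc_\cg$, and then both injectivity and the image characterization as easy consequences of that formula.

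First I would check that $\Psi$ is well-defined. Given $(\cx,\cy)\in\cc_\cd$, Hom-vanishing in $(\cx\cap\cg,\cy\cap\cg)$ is inherited. For the decomposition condition, any $M\in\cg$ decomposes as $M=X\oplus Y$ with $X\in\cx$, $Y\in\cy$; since $\cg$ is closed under direct summands, $X\in\cx\cap\cg$ and $Y\in\cy\cap\cg$.

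The key step is to show: \emph{for every $(\cx,\cy)\in\cc_\cd$, one has $\cx=(\cy\cap\cg)^{\bot_\cd}$ and $\cy=(\cx\cap\cg)^{\bot_\cd}$.} The inclusion $\cx\subseteq(\cy\cap\cg)^{\bot_\cd}$ is immediate. For the reverse, take $M\in(\cy\cap\cg)^{\bot_\cd}$ and decompose $M=X\oplus Y$ with $X\in\cx$, $Y\in\cy$; since $Y$ is a direct summand of $M$, we have $Y\in(\cy\cap\cg)^{\bot_\cd}$ as well. I will then show $Y=0$ using the hypothesis $\cg^{\bot_\cd}=0$: if $Y\neq 0$, then there exists $G\in\cg$ and a non-zero morphism $f\colon Y\to G$; decomposing $G=X'\oplus Y'$ with $X'\in\cx\cap\cg$, $Y'\in\cy\cap\cg$, the $\cx$-component of $f$ vanishes because $\cd(Y,X')=0$, so $f$ factors as a non-zero morphism $Y\to Y'$ with $Y'\in\cy\cap\cg$, contradicting $Y\in(\cy\cap\cg)^{\bot_\cd}$. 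Hence $Y=0$ and $M\in\cx$. The second equality follows symmetrically.

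From this formula, injectivity of $\Psi$ is immediate: if $\Psi(\cx_1,\cy_1)=\Psi(\cx_2,\cy_2)$, then
\[\cx_1=(\cy_1\cap\cg)^{\bot_\cd}=(\cy_2\cap\cg)^{\bot_\cd}=\cx_2,\]
and similarly $\cy_1=\cy_2$. For the characterization of the image, the direction $(\Rightarrow)$ is the same formula: if $(\cx',\cy')=\Psi(\cx,\cy)$, then $(\cx,\cy)=(\cy'^{\bot_\cd},\cx'^{\bot_\cd})$, so the latter is a 2-decomposition of $\cd$. Conversely, suppose $(\cx,\cy):=(\cy'^{\bot_\cd},\cx'^{\bot_\cd})\in\cc_\cd$; I must verify $\Psi(\cx,\cy)=(\cx',\cy')$. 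The inclusion $\cx'\subseteq\cx\cap\cg$ follows from $\cd(\cx',\cy')=0$ and $\cx'\subseteq\cg$. Conversely, for $M\in\cy'^{\bot_\cd}\cap\cg$, decompose $M=X'\oplus Y'$ inside $\cg$; the projection $M\twoheadrightarrow Y'$ lies in $\cd(M,\cy')=0$, so $\id_{Y'}$ factors through the zero morphism and $Y'=0$, giving $M\in\cx'$. The equality $\cy=\cy'$ follows symmetrically, completing the proof.

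I expect the only real difficulty to be the key formula $\cx=(\cy\cap\cg)^{\bot_\cd}$; everything else reduces to orthogonality bookkeeping once this is established, and the formula itself is precisely where the hypothesis $\cg^{\bot_\cd}=0$ is used.
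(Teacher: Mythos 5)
Your overall strategy is exactly the paper's: prove the recovery formula $\cx=(\cy\cap\cg)^{\bot_{\cd}}$ and $\cy=(\cx\cap\cg)^{\bot_{\cd}}$ for any $(\cx,\cy)\in\cc_{\cd}$, and then deduce injectivity and the description of the image, checking the converse containments by decomposing an object of $\cg$. However, as written the key step does not go through, because you use the orthogonality convention backwards. In this thesis $\cq^{\bot_{\cd}}$ is the \emph{right} orthogonal: the objects $M$ with $\cd(Q,M)=0$ for all $Q\in\cq$. Hence the hypothesis $\cg^{\bot_{\cd}}=0$ yields, for $Y\neq 0$, a non-zero morphism $G\ra Y$ with $G\in\cg$ --- not a non-zero morphism $Y\ra G$ as you assert. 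Likewise $Y\in(\cy\cap\cg)^{\bot_{\cd}}$ forbids non-zero morphisms \emph{from} $\cy\cap\cg$ \emph{into} $Y$, so a non-zero morphism $Y\ra Y'$ with $Y'\in\cy\cap\cg$ is no contradiction. Thus the deduction ``$Y\neq 0$ gives $f\colon Y\ra G$, whose component $Y\ra Y'$ contradicts $Y\in(\cy\cap\cg)^{\bot_{\cd}}$'' fails at both ends. The same reversal reappears in your verification of the image: you invoke the projection $M\twoheadrightarrow Y'$, which lies in $\cd(M,\cy')$, but membership $M\in\cy'^{\bot_{\cd}}$ only kills $\cd(\cy',M)$; you should instead use the split inclusion $Y'\hookrightarrow M$, which lies in $\cd(Y',M)=0$ and hence forces $Y'=0$.

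The misreading is at least consistent, and since the Hom-vanishing in the definition of a 2-decomposition is two-sided, reversing every arrow repairs the argument: for $M\in(\cy\cap\cg)^{\bot_{\cd}}$ write $M=X\oplus Y$ with $X\in\cx$, $Y\in\cy$; then $Y\in(\cy\cap\cg)^{\bot_{\cd}}$ as a direct summand, and if $Y\neq 0$ the hypothesis $\cg^{\bot_{\cd}}=0$ gives a non-zero $g\colon G\ra Y$ with $G\in\cg$; decomposing $G=X'\oplus Y'$ with $X'\in\cx\cap\cg$, $Y'\in\cy\cap\cg$, the component $X'\ra Y$ vanishes because $\cd(\cx,\cy)=0$ and the component $Y'\ra Y$ vanishes because $Y\in(\cy\cap\cg)^{\bot_{\cd}}$, so $g=0$, a contradiction. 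After this correction your proof coincides with the one in the paper, which establishes the symmetric identity $\cy=(\cx\cap\cg)^{\bot_{\cd}}$ by precisely this argument and checks the image characterization the same way you do.
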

\begin{proof}
It is clear that, if $(\cx,\cy)$ is a 2-decomposition of $\cd$, then the map
\[(\cx\cap\cg)\times(\cy\cap\cg)\ra\cg\ko (X,Y)\mapsto X\oplus Y 
\]
is an equivalence of categories, and thus $\Psi$ is well-defined. To prove that it is injective, it suffices to check that $\cy=(\cg\cap\cx)^{\bot}$. The inclusion $\subseteq$ is clear. Conversely, let $M\in(\cx\cap\cg)^{\bot}$ and consider its decomposition $M=x\tau_{\cx}M\oplus y\tau^{\cy}M$. Since both $M$ and $y\tau^{\cy}M$ are in $(\cx\cap\cg)^{\bot}$, then so is $x\tau_{\cx}M$. If $x\tau_{\cx}M\neq 0$, then there exists a non-zero morphism $g:G\ra x\tau_{\cx}M$ with $G\in\cg$. Consider the decomposition of $G$ and put $g$ in the corresponding matrix form
\[g=\left[\begin{array}{cc}g_{1} & g_{2}\end{array}\right]: x\tau_{\cx}G\oplus y\tau^{\cy}G\ra x\tau_{\cx}M
\]
Since $ x\tau_{\cx}G\in\cx\cap\cg$, then $g_{1}=0$, which implies $g_{2}\neq 0$. But this contradicts the fact that $\cx=\cy^{\bot}$. Now, let $(\cx',\cy')\in\cc_{\cg}$ be such that $(\cy'^{\bot_{\cd}},\cx'^{\bot_{\cd}})$ is a 2-decomposition of $\cd$. We have to prove that $\cx'=\cy'^{\bot_{\cd}}\cap\cg$ and $\cy'=\cx'^{\bot_{\cd}}\cap\cg$. But this is easy. Indeed, if $G\in\cy'^{\bot_{\cd}}\cap\cg$, then in the decomposition $G=G_{\cx'}\oplus G^{\cy'}$ with $G_{\cx'}\in\cx'\ko G^{\cy'}\in\cy'$ we have that $G^{\cy'}$ is forced to vanish.
\end{proof}

\begin{definition}\label{detects centrally split TTF triples}
Let $\cd$ be an additive category and $\cg$ a full subcategory closed under direct summands and such that $\cg^{\bot}=0$. We will say that $\cg$ \emph{detects centrally split TTF triples}\index{detects centrally split TTF triples} on $\cd$ in case the map $\Psi:\cc_{\cd}\ra\cc_{\cg}$ of Proposition \ref{2-decompositions arriba abajo} is bijective, \ie every 2-decomposition $(\cx',\cy')$ of $\cg$ induces a 2-decomposition $(\cy'^{\bot_{\cd}},\cx'^{\bot_{\cd}})$ of $\cd$.
\end{definition}

\begin{lemma}\label{2-decompositions and add}
Let $\cd$ be an additive category with splitting idempotents and let $\cg$ be a full subcategory of $\cd$ closed under direct summands. Then
\begin{enumerate}[1)]
\item The map
\[\cc_{\add(\cg)}\ra\cc_{\cg}\ko (\cx,\cy)\mapsto(\cx\cap\cg,\cy\cap\cg)
\]
is bijective.
\item If $\cg$ detects centrally split TTF triples on $\cd$ and it is contained in a full subcategory $\cg'$ of $\cd$ closed under direct summands, then $\cg'$ detects centrally split TTF triples on $\cd$.
\end{enumerate}
\end{lemma}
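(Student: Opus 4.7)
The strategy for part 1) is to first apply Proposition \ref{2-decompositions arriba abajo} with ambient category $\add(\cg)$ and subcategory $\cg$. The hypothesis $\cg^{\bot_{\add(\cg)}}=0$ is immediate: if $M\in\add(\cg)$ is orthogonal to every object of $\cg$, write $M$ as a retract of some $\bigoplus_{i}G_{i}$ with $G_{i}\in\cg$; the retraction $\bigoplus_{i}G_{i}\to M$ vanishes componentwise, so $\id_{M}=0$ and $M=0$. Thus the restriction map is injective. For surjectivity, given a 2-decomposition $(\cx',\cy')$ of $\cg$, I propose to lift it to $(\add(\cx'),\add(\cy'))$ on $\add(\cg)$. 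Orthogonality transfers to the $\add$-closures by additivity. To decompose an arbitrary $M\in\add(\cg)$, choose $M\oplus M'\cong\bigoplus_{i}G_{i}$ and split each $G_{i}=X_{i}\oplus Y_{i}$ according to $(\cx',\cy')$. Writing the idempotent which projects $\bigoplus_{i}G_{i}$ onto $M$ as a matrix with respect to the decomposition $\bigoplus_{i}X_{i}\oplus\bigoplus_{i}Y_{i}$, the off-diagonal entries vanish by orthogonality, so the idempotent is diagonal. Splitting idempotents in $\cd$ then produces a decomposition $M\cong P_{X}\oplus P_{Y}$ with $P_{X}\in\add(\cx')$ and $P_{Y}\in\add(\cy')$, since $\add(\cq)$ is closed under direct summands by its very definition. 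Finally, the equality $\add(\cx')\cap\cg=\cx'$ (and dually for $\cy'$) holds because any $G\in\add(\cx')\cap\cg$ decomposes as $X\oplus Y$ via $(\cx',\cy')$, and $Y\in\cy'$ appears as a direct summand of $G\in\add(\cx')$, which forces $Y=0$ by orthogonality.

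For part 2), I would factor the assumed bijection $\Psi:\cc_{\cd}\arr{\sim}\cc_{\cg}$ as
\[\cc_{\cd}\arr{\alpha}\cc_{\cg'}\arr{\beta}\cc_{\cg},\]
where both arrows are restriction of 2-decompositions. Proposition \ref{2-decompositions arriba abajo} applied to the ambient $\cd$ with subcategory $\cg'$ shows $\alpha$ is injective (using $\cg'^{\bot_{\cd}}\subseteq\cg^{\bot_{\cd}}=0$). The same proposition, applied with ambient category $\cg'$ and subcategory $\cg$, shows $\beta$ is injective (using $\cg^{\bot_{\cg'}}\subseteq\cg^{\bot_{\cd}}=0$; recall that $\cg$ is closed under direct summands in $\cg'$ since it is so in $\cd$). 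To see that $\alpha$ is surjective, given $E\in\cc_{\cg'}$, apply surjectivity of $\Psi=\beta\alpha$ to $\beta(E)\in\cc_{\cg}$ to find $C\in\cc_{\cd}$ with $\beta\alpha(C)=\beta(E)$; injectivity of $\beta$ then gives $\alpha(C)=E$. Hence $\alpha$ is bijective, which is the claim that $\cg'$ detects centrally split TTF triples on $\cd$.

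\textbf{Main obstacle.} The only genuinely technical step is producing the decomposition in the surjectivity half of part 1): one must verify that the idempotent splitting takes place inside the correct subcategories $\add(\cx')$ and $\add(\cy')$, rather than in some uncontrolled enlargement. The orthogonality of $(\cx',\cy')$ reduces the abstract projection idempotent to a diagonal one, at which point the splitting property of $\cd$ (automatically inherited by $\add(\cx')$ and $\add(\cy')$) completes the argument. Part 2) is then a purely diagrammatic consequence of two applications of Proposition \ref{2-decompositions arriba abajo}.
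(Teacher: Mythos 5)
Your proof is correct and follows essentially the same route as the paper: in part 1) you lift $(\cx',\cy')$ to $(\add(\cx'),\add(\cy'))$ and decompose an arbitrary $M\in\add(\cg)$ by observing that the projection idempotent onto $M$ is diagonal with respect to $\bigoplus_i X_i\oplus\bigoplus_i Y_i$ and then splitting it, which is exactly the paper's argument, and in part 2) you use the same factorization of the bijection through $\cc_{\cg'}$ together with injectivity of the second restriction map. The one small wrinkle is that you invoke Proposition \ref{2-decompositions arriba abajo} with ambient category $\cg'$, which need not be additive and so falls outside the proposition's stated hypotheses; this is harmless (its proof only uses the given 2-decomposition and fullness, and in any case part 1) lets you replace $\cg'$ and $\cg$ by $\add(\cg')$ and $\add(\cg)$, which is precisely how the paper handles it).
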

\begin{proof}
1) By Proposition \ref{2-decompositions arriba abajo}, we already know that the map is injective. Let us prove that it is also surjective. If $(\cx',\cy')$ is a 2-decomposition of $\cg$, consider the pair $(\cy'^{\bot},\cx'^{\bot})$ of full subcategories of $\add(\cg)$, with orthogonals taken in $\add(\cg)$. Notice that $\cy'^{\bot}=\add(\cx')$ and $\cx'^{\bot}=\add(\cy')$. The aim is to prove that $(\add(\cx'),\add(\cy'))$ is a 2-decomposition of $\add(\cg)$. If $M\in\add(\cg)$, then there exists $N\in\add(\cg)$ such that
\[M\oplus N=G_{1}\oplus\dots\oplus G_{n},
\]
with $G_{i}\in\cg$. Now, each $G_{i}$ admits a decomposition $G_{i}=G_{i_{\cx'}}\oplus G_{i}^{\cy'}$ with $G_{i_{\cx'}}\in\cx'$ and $G_{i}^{\cy'}\in\cy'$. Put $G_{\cx'}:=\coprod_{i=1}^nG_{i_{\cx'}}\ko G^{\cy'}:=\coprod_{i=1}^nG_{i}^{\cy'}$ and consider $M\oplus N=G_{\cx'}\oplus G^{\cy'}$. Notice that the endomorphisms ring of $M\oplus N$ decomposes as follows
\[\op{End}_{\ca}(M\oplus N)\cong\left[\begin{array}{cc}\op{End}_{\ca}(G_{\cx'})&0\\0&\op{End}_{\ca}(G^{\cy'})\end{array}\right]
\]
and so the idempotent
\[e_{M}=\left[\begin{array}{cc}\id_{M}&0\\0&0\end{array}\right]:M\oplus N\ra M\oplus N
\]
of $\op{End}_{\ca}(M\oplus N)$ corresponds to 
\[\left[\begin{array}{cc}e_{\cx'} & 0 \\ 0 & e_{\cy'}\end{array}\right],
\]
where $e_{\cx'}$ is an idempotent of $\op{End}_{\ca}(G_{\cx'})$ and $e_{\cy'}$ is an idempotent of $\op{End}_{\ca}(G^{\cy'})$. Since idempotents split, then $e_{\cx'}$ corresponds to a direct summand $M_{\cx'}$ of $G_{\cx'}$ and $e_{\cy'}$ corresponds to a direct summand $M^{\cy'}$ of $G^{\cy'}$. It is clear that $M=M_{\cx'}\oplus M^{\cy'}$, with $M_{\cx'}\in\add(\cx')$ and $M^{\cy'}\in\add(\cy')$.

2) The first part of this lemma tells us that we have the following commutative diagram made of (injective) restriction maps
\[\xymatrix{\cc_{\cd}\ar[r] & \cc_{\add(\cg')}\ar[r]\ar[d]^{\wr} & \cc_{\add(\cg)}\ar[d]^{\wr} \\
&\cc_{\cg'}\ar[r] & \cc_{\cg}
}
\]
Finally, if the composition of the two maps in the top row is bijective, then so is the map $\cc_{\cd}\ra\cc_{\add(\cg')}\arr{\sim}\cc_{\cg'}$.
\end{proof}

\begin{definition}
An object $P$ of an abelian category $\ca$ is a \emph{generator}\index{category!abelian!generator of}\index{generator!of an abelian category} of $\ca$ if the functor $\ca(P,?)$ is faithful.
\end{definition}

We recall here some basic properties of generators of abelian categories:

\begin{lemma}
Let $P$ be an object of an abelian category $\ca$.
\begin{enumerate}[1)]
\item If $\ca$ is cocomplete and $P$ is a generator, then for every object $M$ of $\ca$ there is an epimorphism $P^{(I)}\ra M$ for some index set.
\item If $P$ is projective, then it is a generator if and only if there exists a non-zero morphism $P\ra M$ for each object $M\neq 0$.
\end{enumerate}
\end{lemma}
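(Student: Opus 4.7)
For part 1), the plan is to form the canonical morphism
\[\phi\colon P^{(\ca(P,M))}\ra M\]
whose component indexed by $f\in\ca(P,M)$ is the morphism $f$ itself; this exists by the universal property of the coproduct, which is available since $\ca$ is cocomplete. I would then let $q\colon M\ra C$ be the cokernel of $\phi$ and observe that by construction $q\circ f=0$ for every $f\in\ca(P,M)$. Hence the induced map $\ca(P,q)\colon\ca(P,M)\ra\ca(P,C)$ coincides with $\ca(P,0_{M,C})$. Faithfulness of $\ca(P,?)$ then forces $q=0$, and since $q$ is an epimorphism this means $C=0$, so $\phi$ itself is an epimorphism. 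The only subtlety is checking that $\phi$ is well-defined from the universal property, which is immediate.

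For part 2), one implication is easy and does not even use projectivity: if $P$ is a generator and $M\neq 0$, then $\id_{M}\neq 0_{M}$, so their images under $\ca(P,?)$ must differ by faithfulness, which is impossible if $\ca(P,M)=0$; hence $\ca(P,M)\neq 0$.

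The converse is the substantive direction and is where projectivity enters. Given a non-zero $f\colon M\ra N$, I would use the canonical factorization of $f$ in the abelian category $\ca$ as $M\twoheadrightarrow\im(f)\hookrightarrow N$ and note that $\im(f)\neq 0$. By hypothesis there is a non-zero morphism $g\colon P\ra\im(f)$. Projectivity of $P$ lifts $g$ through the epimorphism $M\twoheadrightarrow\im(f)$ to some $\tilde g\colon P\ra M$, and then $f\tilde g$ agrees with the composition of $g$ with the monomorphism $\im(f)\hookrightarrow N$, which is non-zero because $g\neq 0$ and a monomorphism is left-cancellable. Therefore $\ca(P,f)\neq 0$, proving faithfulness. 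The essential obstacle here (and the reason projectivity is hypothesized) is precisely the lifting step: without projectivity, a non-zero morphism $P\ra\im(f)$ need not come from a morphism $P\ra M$, and the argument breaks down.
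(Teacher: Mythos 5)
Your proof is correct. Note that the paper itself does not give an argument here: it simply cites Stenstr\"om (Propositions IV.6.2 and IV.6.3 of \emph{Rings of quotients}), so there is no in-paper proof to compare against; your argument is the standard one behind those citations. Both halves are sound — in 1) the cokernel of the canonical map $P^{(\ca(P,M))}\ra M$ is killed by faithfulness of $\ca(P,?)$, and in 2) the epi–mono factorization plus the projective lifting does exactly what is needed — and your remark that the ``generator $\Rightarrow$ nonzero maps into nonzero objects'' direction uses no projectivity is also accurate.
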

\begin{proof}
1) is \cite[Proposition IV.6.2]{Stenstrom}, and 2) is \cite[Proposition IV.6.3]{Stenstrom}.
\end{proof}

\begin{corollary}\label{centrally split TTF triples on abelian categories}
Let $\ca$ be a complete and cocomplete abelian category such that for each family $M_{i}\ko i\in I$, of objects of $\ca$, the canonical morphism
\[\coprod_{i\in I}M_{i}\ra\prod_{i\in I}M_{i}
\]
is a monomorphism. If $P$ is a projective generator of $\ca$, then $\add(P)$ detects centrally split TTF triples on $\ca$. In particular, there is a one-to-one correspondence between:
\begin{enumerate}[1)]
\item Centrally split TTF triples on $\ca$.
\item Decompositions $P=P_{1}\oplus P_{2}$ such that $\ca(P_{1},P_{2})=\ca(P_{2},P_{1})=0$.
\item Central idempotents of the endomorphisms ring $\ca(P,P)$.
\end{enumerate}
\end{corollary}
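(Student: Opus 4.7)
The plan is to verify that $\add(P)$ detects centrally split TTF triples on $\ca$ (in the sense of Definition \ref{detects centrally split TTF triples}) and then apply Proposition \ref{centraly split TTF, 2-decomposition and idempotents}. By part 1) of Lemma \ref{2-decompositions and add}, 2-decompositions of $\add(P)$ correspond bijectively to direct sum decompositions $P=P_{1}\oplus P_{2}$ with $\ca(P_{1},P_{2})=\ca(P_{2},P_{1})=0$, and via the matrix representation of endomorphisms this is clearly the same as the data of a central idempotent of $\ca(P,P)$ (the off-diagonal components being $\ca(P_{2},P_{1})$ and $\ca(P_{1},P_{2})$, whose simultaneous vanishing is equivalent to the centrality of the projection). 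So the bijection $2)\Leftrightarrow 3)$ is a purely formal matrix calculation.

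The key step is to show that every such decomposition $P=P_{1}\oplus P_{2}$ extends to a 2-decomposition of $\ca$. First I would establish the lemma $\ca(P_{i},P_{j}^{(I)})=0$ for $i\neq j$ and any set $I$: the hypothesis that $\coprod\to\prod$ is monic gives an injection $\ca(P_{i},P_{j}^{(I)})\hookrightarrow\ca(P_{i},P_{j}^{I})=\prod_{I}\ca(P_{i},P_{j})=0$. Then define
\[\cx:=\{M\in\ca \,:\, \ca(P_{2},M)=0\},\qquad \cy:=\{M\in\ca \,:\, \ca(P_{1},M)=0\}.\]

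Next I would build the decomposition of an arbitrary $M\in\ca$. Since $P$ is a projective generator and $\ca$ is cocomplete, there is a presentation $P^{(J)}\arr{u}P^{(I)}\ra M\ra 0$. Decomposing $P^{(I)}=P_{1}^{(I)}\oplus P_{2}^{(I)}$ and similarly for $P^{(J)}$, the off-diagonal blocks of $u$ live in $\ca(P_{i}^{(J)},P_{j}^{(I)})$ with $i\neq j$, which vanish by the preceding lemma (coproducts out are sent to products by $\ca(-,P_{j}^{(I)})$, and the factors vanish). Hence $u=u_{1}\oplus u_{2}$ and
\[M=\cok(u)=\cok(u_{1})\oplus\cok(u_{2})=:M_{1}\oplus M_{2}.\]
Projectivity of $P_{2}$ and the vanishing $\ca(P_{2},P_{1}^{(I)})=0$ force $\ca(P_{2},M_{1})=0$, i.e.\ $M_{1}\in\cx$, and dually $M_{2}\in\cy$.

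Finally I would verify the orthogonality $\ca(X,Y)=\ca(Y,X)=0$ for $X\in\cx$, $Y\in\cy$. For $f:X\to Y$, precomposing with an epimorphism $P^{(I)}\twoheadrightarrow X$ and decomposing into components shows the composite vanishes on $P_{1}^{(I)}$ (since $\ca(P_{1},Y)=0$, so $\ca(P_{1}^{(I)},Y)=\prod\ca(P_{1},Y)=0$) and on $P_{2}^{(I)}$ (since $\ca(P_{2},X)=0$ already kills the map $P_{2}^{(I)}\to X$). Hence $f=0$ by epicity. For $f:Y\to X$, faithfulness of $\ca(P,-)$ reduces matters to showing $fg=0$ for every $g:P\to Y$; splitting $g=(g_{1},g_{2})$ gives $g_{1}=0$ and $fg_{2}\in\ca(P_{2},X)=0$. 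This confirms $(\cx,\cy)$ is a 2-decomposition of $\ca$, and hence (via Proposition \ref{centraly split TTF, 2-decomposition and idempotents}) yields the required centrally split TTF triple, proving $\add(P)$ detects them. The main obstacle is organising the manipulations with infinite coproducts: the hypothesis on $\coprod\to\prod$ being monic is precisely what is needed to propagate the vanishing from single factors to infinite coproducts and thereby carry the block-diagonal argument through.
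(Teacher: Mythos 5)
Your proof is correct, but the heart of it runs along a different line than the paper's. Both arguments reduce, exactly as you do, to the purely formal bijections $2)\Leftrightarrow 3)$ and (via Lemma \ref{2-decompositions and add} and Proposition \ref{2-decompositions arriba abajo}, the latter giving the injectivity of the restriction map, where $\add(P)^{\bot}=0$ because $P$ is a generator) to showing that a decomposition $P=P_{1}\oplus P_{2}$ with $\ca(P_{1},P_{2})=\ca(P_{2},P_{1})=0$ induces a 2-decomposition of $\ca$. Where you differ is in how the decomposition $M\cong M_{1}\oplus M_{2}$ of an arbitrary object is produced. The paper works torsion-theoretically: it first proves $\Gen(\cp_{2})=\cp_{1}^{\bot}$ and $\Gen(\cp_{1})=\cp_{2}^{\bot}$ (using projectivity and the monomorphism $\coprod\ra\prod$), takes $M_{1}:=t_{1}(M)$ to be the trace of $\add(P_{1})$ in $M$, and splits the resulting short exact sequence $0\ra t_{1}(M)\ra M\ra M/t_{1}(M)\ra 0$ by an $\Ext^{1}$-vanishing argument based on a projective presentation and the long exact sequence. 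You instead choose a presentation $P^{(J)}\arr{u}P^{(I)}\ra M\ra 0$, kill the off-diagonal blocks of $u$ via your lemma $\ca(P_{i},P_{j}^{(I)})=0$ for $i\neq j$ (which uses the $\coprod\ra\prod$ hypothesis in the same way the paper does), and read off $M\cong\cok(u_{1})\oplus\cok(u_{2})$ directly, so the splitting comes for free and no $\Ext$ groups or trace subobjects are needed; your direct checks of $M_{1}\in\cx$, $M_{2}\in\cy$ and of the orthogonality $\ca(\cx,\cy)=\ca(\cy,\cx)=0$ are sound. The trade-off is that the paper's route exhibits the decomposition intrinsically (as the trace, i.e.\ the torsion subobject of the associated split torsion pair) and ties the statement to the $\Gen$-class description used elsewhere, whereas your block-diagonalization is more elementary and computational but depends on a chosen presentation, which is harmless here since a 2-decomposition only requires the existence of the decompositions together with the Hom-vanishing.
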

\begin{proof}
Clearly, there is a one-to-one correspondence between 3) and 2) and, on the other side, by using the part 1) of the Lemma \ref{2-decompositions and add} and the techniques of its proof one shows that the decompositions of 2) are in bijection with the 2-decompositions of the additive category \add(P). Our task is hence reduced to prove that the restriction map $\cc_{\ca}\ra\cc_{\add(P)}$ is surjective. The aim is to prove that if $(\cp_{1},\cp_{2})$ is a 2-decomposition of \add(P), then $(\cp_{2}^{\bot},\cp_{1}^{\bot})$ is a 2-decomposition of $\ca$. Since every object of $\ca$ is a quotient of a coproduct of objects of \add(P), one readily sees that $\cp_{1}^{\bot}\subseteq\Gen(\cp_{2})$ and $\cp_{2}^{\bot}\subseteq\Gen(\cp_{1})$. On the other hand, since the objects of the classes $\cp_{i}$ are projective, by using the fact that morphisms from coproducts to products are monomorphisms we conclude that $\Gen(\cp_{2})=\cp_{1}^{\bot}$ and $\Gen(\cp_{1})=\cp_{2}^{\bot}$. Then $\ca(M_{1},M_{2})=\ca(M_{2},M_{1})=0$ for $M_{1}\in\cp_{2}^{\bot}\ko M_{2}\in\cp_{1}^{\bot}$. Now, given an object $M\in\ca$, we denote by $t_{1}(M)$ the trace of $\cp_{1}$ in $M$, \ie the sum of the images of all the morphisms from objects of $\cp_{1}$ to $M$. Then $t_{1}(M)\in\Gen(\cp_{1})=\cp_{2}^{\bot}$ and, by using that objects of $\cp_{1}$ are projective, we also have $M/t_{1}(M)\in\cp_{1}^{\bot}$. It remains to show that the short exact sequence
\[0\ra t_{1}(M)\ra M\ra M/t_{1}(M)\ra 0
\]
splits. But in fact we have that $\Ext_{\ca}^{1}(M_{2},M_{1})=0$ for each $M_{1}\in\cp_{2}^{\bot}\ko M_{2}\in\cp_{1}^{\bot}$. Indeed, consider a projective presentation of $M_{2}$
\[0\ra K\ra P\ra M_{2}\ra 0
\]
with $P\in\Sum(\cp_{2})$. Since both $P$ and $M_{2}$ are in $\cp_{1}^{\bot}$, then so is $K$. Now, we have an exact sequence
\[\dots\ra0=\ca(K,M_{1})\ra\Ext_{\ca}^{1}(M_{2},M_{1})\ra\Ext_{\ca}^{1}(P,M_{1})=0\ra\dots
\]
which implies that $\Ext_{\ca}^{1}(M_{2},M_{1})=0$.
\end{proof}

\begin{example}
If $A$ is a $k$-algebra, we can take $\ca=\Mod A$ and $P=A$. Notice that we have a ring isomorphism $\ca(P,P)\cong A$ and so Corollary \ref{centrally split TTF triples on abelian categories} generalizes the equivalence $c)\Leftrightarrow g)$ of \cite[Proposition VI.8.5]{Stenstrom}.
\end{example}

%\begin{corollary}
%Let $\ca$ be a cocomplete abelian category with a family $P_{i}\ko i\in I$ of compact projective generators and consider 
%$P=\bigoplus_{i\in I} P_{i}$. Then $\add(P)$ detects the centrally split TTF triples of $\ca$. In particular, we have a one-to-one correspondence between:
%\begin{enumerate}[a)]
%\item Centrally split TTF triples of $\mathcal{A}$.
%\item Decompositions $P=Q_{1}\oplus Q_{2}$ such that $\ca(Q_{1},Q_{2})=\ca(Q_{2},Q_{1})=0$.
%\item Central idempotents of the ring $\mathcal{A}(P,P)$.
%\item (Central) idempotents $e=(e_{i})_{i\in I}$ of the ring $\prod_{i\in I}\ca(P_{i} ,P_{i})$ such that $e_{j}\circ f=f\circ e_{i}$, for all $i, j\in I$ and every morphism $f\in\ca(P_{i},P_{j})$.
%\end{enumerate}
%\end{corollary}

We refer to Chapter \ref{TTF triples on triangulated categories} for the notion of ``compactly generated triangulated category''.

\begin{corollary}\label{centrally split TTF in compactly generated triangulated categories}
Let $\cd$ be a triangulated category with small coproducts which is compactly generated by the family $\cg=\{G_{i}\}_{i\in I}$. There is a one-to-one correspondence between:
\begin{enumerate}[1)]
\item Centrally split triangulated TTF triples on $\cd$.
\item 2-decompositions $(\add(\cg)_{1},\add(\cg)_{2})$ of $\add(\cg)$ such that 
\[\cd(M_{1},M_{2}[n])=\cd(M_{2},M_{1}[n])=0
\] 
for each $M_{1}\in\add(\cg)_{1}\ko M_{2}\in\add(\cg)_{2}$ and $n\in\Z$.
\item Central idempotents $e=\{e_{i}\}_{i\in I}$ of the ring $\prod_{i\in I}\cd(G_{i},G_{i})$ such that $(e_{j}[n])f=fe_{i}$ for each $i, j\in I\ko n\in\Z$ and each morphism $f\in\cd(G_{i},G_{j}[n])$.
\end{enumerate}
\end{corollary}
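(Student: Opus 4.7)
The plan is to reduce the statement to Proposition \ref{centraly split TTF, 2-decomposition and idempotents} applied to $\cd$, and then to identify the 2-decompositions of $\cd$ with the data in (2) and (3). Since $\cd$ is compactly generated, idempotents split in $\cd$ by \cite{BokstedtNeeman1993}, so Proposition \ref{centraly split TTF, 2-decomposition and idempotents} already yields the bijection between centrally split TTF triples on $\cd$ and 2-decompositions of $\cd$. Moreover, Proposition \ref{split TTF triple} forces any centrally split triangulated TTF triple $(\cx,\cy,\cz)$ on $\cd$ to satisfy $\cx=\cz$, and both $\cx$ and $\cy$ are triangulated subcategories, in particular closed under all shifts.

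The equivalence $(2)\Leftrightarrow(3)$ should be a direct application of Example \ref{centro en categorias aditivas con generadores} to $\cg$: a 2-decomposition $(\add(\cg)_1,\add(\cg)_2)$ of $\add(\cg)$ amounts to splittings $G_i=G_{i,1}\oplus G_{i,2}$ with associated idempotents $e_i\in\cd(G_i,G_i)$, and the extra shift orthogonality in (2) translates, for every $f\in\cd(G_i,G_j[n])$, into the vanishing of the composition $G_{i,1}\hookrightarrow G_i\arr{f}G_j[n]\twoheadrightarrow G_{j,2}[n]$, which together with its symmetric counterpart is equivalent to $(e_j[n])f=fe_i$. The implication $(1)\Rightarrow(2)$ is immediate by restriction: if $(\cx,\cy)$ is the 2-decomposition of $\cd$ coming from a centrally split triangulated TTF triple, then $(\cx\cap\add(\cg),\cy\cap\add(\cg))$ is a 2-decomposition of $\add(\cg)$, and the shift orthogonality follows from the closure of $\cx$ and $\cy$ under shifts.

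The main obstacle is $(3)\Rightarrow(1)$, which requires extending idempotent data on the compact generators to a 2-decomposition of all of $\cd$. Given $(e_i)$ satisfying (3), splitting yields decompositions $G_i=G_{i,1}\oplus G_{i,2}$, and the shift compatibility forces $\cd(G_{i,1},G_{j,2}[n])=\cd(G_{i,2},G_{j,1}[n])=0$ for all $i,j,n$. I would then define
\[\cx:=\Tria(\{G_{i,1}\}_{i\in I})\ko\cy:=\Tria(\{G_{i,2}\}_{i\in I}).\]
Each $G_{i,k}$ is compact as a direct summand of a compact object, so the class $\{G_{i,1}[n]\}^{\bot}_{i,n}$ is closed under shifts, small coproducts, extensions and direct summands; since it contains every $G_{j,2}[m]$, it contains $\cy$, whence $\cy\subseteq\cx^\bot$, and symmetrically $\cx\subseteq\cy^\bot$. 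Because $\cx$ is generated by a set of compact objects of $\cd$, standard localization results (Brown representability) make $\cx$ an aisle in $\cd$, producing for each $M\in\cd$ a triangle $X\ra M\ra Y\ra X[1]$ with $X\in\cx$ and $Y\in\cx^\bot$. Applying the analogous $(\cy,\cy^\bot[1])$-truncation to $Y$ and using $\cx^\bot\cap\cy^\bot=0$---which follows from $\cg$ generating $\cd$ together with $\cd(G_i[n],M)=\cd(G_{i,1}[n],M)\oplus\cd(G_{i,2}[n],M)$---forces $Y\in\cy$; the triangle then splits since $\cd(\cy,\cx)=0$, giving $M\cong X\oplus Y$ and hence a 2-decomposition of $\cd$. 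That the two constructions are mutually inverse is a routine check.
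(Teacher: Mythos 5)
Your proposal is correct and follows essentially the same route as the paper's proof: translate the idempotent data into splittings $G_{i}=G_{i,1}\oplus G_{i,2}$ with vanishing crossed morphism spaces in all shifts, and rebuild the triple as $\Tria(\{G_{i,1}\}_{i\in I})$ and $\Tria(\{G_{i,2}\}_{i\in I})$ using compactness of the summands and the adjoint functor argument, with splitting of idempotents giving the passage between decompositions and central idempotents. The only cosmetic difference is that the paper organizes the restriction through the shift-closure $\hat{\cg}=\bigcup_{n\in\Z}\add(\cg)[n]$ and identifies the coaisle via Lemma \ref{generators and t-structures}, whereas you split each object directly by a double truncation using $\cx^{\bot}\cap\cy^{\bot}=0$.
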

\begin{proof}
\emph{First step:} Put $\cc'_{\cd}$ for the class of 2-decompositions of $\cd$ stable under $?[1]$ and $?[-1]$, and similarly for $\cc'_{\hat{\cg}}$, where $\hat{\cg}:=\bigcup_{n\in\Z}\add(\cg)[n]$. Thanks to Proposition \ref{compatibility proposition} and Proposition \ref{centraly split TTF, 2-decomposition and idempotents}, we know that the map $(\cx,\cy,\cx)\mapsto(\cx,\cy)$ induces a bijection between the class of centrally split triangulated TTF triples on $\cd$ and the class $\cc'_{\cd}$. Now, thanks to Proposition \ref{2-decompositions arriba abajo}, we have an injective map
\[\Psi: \cc_{\cd}\ra\cc_{\hat{\cg}},
\]
which induces an injective map
\[\Psi:\cc'_{\cd}\ra\cc'_{\hat{\cg}},
\]
and the claim is that it is also surjective. Indeed, take $(\hat{\cg}_{1},\hat{\cg}_{2})\in\cc'_{\hat{\cg}}$ and let $\cx_{i}:=\Tria(\hat{\cg}_{i})$ be the smallest full triangulated subcategory of $\cd$ containing $\hat{\cg}_{i}$ and closed under small coproducts. The adjoint functor argument (\cf Lemma \ref{adjoint functor argument}) implies that each $\cx_{i}$ is a triangulated aisle in $\cd$. It is easy to prove (\cf Lemma \ref{right orthogonal and devissage}) that $\cx_{1}^{\bot}$ is the class of objects which are right orthogonal to all the shifts in both directions of objects of $\hat{\cg}_{1}$. Let $M\in\hat{\cg}_{1}$ and consider the objects $N\in\cx_{2}$ such that $\cd(M[n],N)=0$ for every $n\in\Z$. They form a full triangulated subcategory of $\cx_{2}$ containing $\hat{\cg}_{2}$ and closed under coproducts. This proves that $\cx_{2}\subseteq\cx_{1}^{\bot}$. Notice that $\cx_{1}^{\bot}$ is a full triangulated subcategory of $\cd$ generated by $\hat{\cg}_{2}$. Then, by Lemma \ref{generators and t-structures} we have that $\cx_{2}=\cx_{1}^{\bot}$, and so $(\cx_{1},\cx_{2})$ belongs to $\cc'_{\cd}$ and is easily seen to be mapped to $(\hat{\cg}_{1},\hat{\cg}_{2})$ by $\Psi$.

\emph{Second step: Bijection between 1) and 2):} To prove this bijection we only need to show that the image of the restriction map
\[\Psi:\cc'_{\hat{\cg}}\ra\cc_{\add(\cg)}
\]
is precisely the class of 2-decompositions described in part 2). It is clear that the 2-decompositions in the image satisfy the conditions of part 2). Conversely, if $(\add(\cg)_{1},\add(\cg)_{2})$ is a 2-decomposition of $\add(\cg)$ satisfying the extra assumption, then $(\hat{\cg}_{1},\hat{\cg}_{2})$, with $\hat{\cg}_{i}:=\cup_{n\in\Z}\add(\cg)_{i}[n]$, belongs to $\cc'_{\hat{\cg}}$. Finally, since any object of $\add(\cg)_{1}[n]\cap\add(\cg)$ is left orthogonal to $\add(\cg)_{2}$, and any object of $\add(\cg)_{2}[n]\cap\add(\cg)$ is right orthogonal to $\add(\cg)_{1}$, we have that $\add(\cg)_{i}[n]\cap\add(\cg)\subseteq\add(\cg)_{i}$ for each $n\in\Z$, and so $(\hat{\cg}_{1},\hat{\cg}_{2})$ is sent to $(\add(\cg)_{1},\add(\cg)_{2})$ via $\Psi$.

\emph{Third step: bijection between 2) and 3):} We use Proposition \ref{centraly split TTF, 2-decomposition and idempotents} to check that the 2-decompositions of $\add(\cg)$ satisfying the condition of assertion 2) are in bijection with the idempotents $\{e_{i}\}_{i\in I}\in\prod_{i\in I}\cd(G_{i},G_{i})$ such that the compositions
\[C_{i}\arr{e_{i}}C_{i}\arr{f}C_{j}[n]\arr{(\id-e_{j})[n]}C_{j}[n]
\]
and
\[C_{i}\arr{\id-e_{i}}C_{i}\arr{f}C_{j}[n]\arr{e_{j}[n]}C_{j}[n]
\]
vanish for every morphism $f$. But this is equivalent to require that $(e_{j}[n])f=fe_{i}$.
\end{proof}

\begin{example}\label{centrally split TTF in compactly generated triangulated categories in examples}
Let $\cd=\cd\ca$ be the derived category of a small dg category $\ca$ (\cf \cite{Keller1994a, Keller2006b}, see also Chapter \ref{TTF triples on triangulated categories}), and take $\cg=\{A^{\we}\}_{A\in\ca}$. Then, Corollary \ref{centrally split TTF in compactly generated triangulated categories} says that there is a bijection between the class of centrally split triangulated TTF triples on $\cd\ca$ and the class of (central) idempotents $\{e_{A}\}_{A\in\ca}$ of $\prod_{A\in\ca}\H 0\ca(A,A)$ such that for each integer $n$ and each $f\in\H n\ca(A,B)$ we have $e_{B}\cdot f=f\cdot e_{A}$ in $\H n\ca(A,B)$. In particular, if $A$ is an ordinary $k$-algebra, centrally split triangulated TTF triples on $\cd A$ are in bijection with central idempotents of $A$.
\end{example}

We refer to \cite{Verdier1996} for the notion of ``derived category of an abelian category''.

\begin{corollary}\label{centrally split TTF in derived categories of abelian categories}
Let $\ca$ be a complete and cocomplete abelian category, let $\cg$ be a full subcategory which detects centrally split TTF triples of $\ca$ and let $\cd$ be a full triangulated subcategory of the derived category $\cd\ca$ containing the objects of $\cg$ regarded as stalk complexes. There is a one-to-one correspondence between:
\begin{enumerate}[1)]
\item Centrally split triangulated TTF triples on $\cd\ca$.
\item Centrally split triangulated TTF triples on $\cd$.
\item Centrally split TTF triples on $\ca$.
\end{enumerate}
\end{corollary}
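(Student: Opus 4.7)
By Proposition \ref{centraly split TTF, 2-decomposition and idempotents} and Proposition \ref{split TTF triple}, each of 1), 2), 3) is in natural bijection with the corresponding class of 2-decompositions $\cc_{\cd\ca}$, $\cc_\cd$, $\cc_\ca$. Since by hypothesis $\cg$ detects centrally split TTF triples on $\ca$, we already have $\cc_\ca \cong \cc_\cg$, so it suffices to produce compatible bijections $\cc_{\cd\ca} \cong \cc_\cg$ and $\cc_\cd \cong \cc_\cg$, factoring through $\cc_\ca$.

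The plan for $\cc_\ca \cong \cc_{\cd\ca}$ is to pass through central idempotents. A 2-decomposition of $\ca$ corresponds to a central idempotent $e$ of $\End(\id_\ca)$, equivalently to a product decomposition $\ca = \ca_1 \times \ca_2$ of abelian categories. Applied componentwise, $e$ defines an idempotent endotransformation of $\id_{\cc\ca}$ that commutes with differentials and null-homotopies, hence descends to a central idempotent $E$ of $\End(\id_{\cd\ca})$, producing the triangulated 2-decomposition $\cd\ca = \cd\ca_1 \times \cd\ca_2$. Conversely, every 2-decomposition $(\cx, \cy)$ of $\cd\ca$ restricts to $\ca$: for $M \in \ca$ the idempotent $E_M$ of the decomposition lies in $\End_\ca(M)$ by full faithfulness of $\ca \hookrightarrow \cd\ca$ and splits in the abelian category $\ca$. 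These constructions are mutual inverses because a triangulated 2-decomposition of $\cd\ca$ is uniquely determined by its restriction to $\ca$: the subcategory $\cx$ must contain every complex with entries in $\ca_1 := \cx \cap \ca$ (by closure of $\cx$ under shifts, triangles and small coproducts and products, which hold since $\cx$ is a direct factor of $\cd\ca$) and can contain nothing more, by the orthogonality to $\cy$ and the dual argument.

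For $\cc_\cd \cong \cc_\cg$, I would use the obvious restriction map $\cc_\cd \to \cc_\cg$ and build its inverse as the composition $\cc_\cg \cong \cc_\ca \cong \cc_{\cd\ca} \to \cc_\cd$, where the last arrow restricts along the full inclusion $\cd \subseteq \cd\ca$. This restriction is well-defined because for $M \in \cd$ the central endomorphism $E_M$ lies in $\End_\cd(M)$ (full faithfulness), and the splitting $M = X_1 \oplus X_2$ in $\cd\ca$ gives summands in $\cd$: the triangle $X_1 \to M \to X_2 \to X_1[1]$ has vanishing connecting morphism, and one shows that $X_1$ and $X_2$ can be realised inside $\cd$ using that $\cd$ is triangulated and contains $\cg$ together with the componentwise nature of the central action inherited from $\ca$. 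For injectivity of $\cc_\cd \to \cc_\cg$, two central idempotents of $\End(\id_\cd)$ agreeing on $\cg$ must coincide on every $M \in \cd$ by naturality, since for every morphism $G \to M$ with $G \in \cg$ the naturality square forces $(E'_M - E''_M)\circ (G \to M)=0$, and this suffices to conclude $E'_M = E''_M$ because the family of such morphisms separates endomorphisms of $M$ (which reduces via the inclusion $\cd \hookrightarrow \cd\ca$ to the detection hypothesis for $\cg$ on $\ca$, together with the $t$-structure on $\cd\ca$).

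The main obstacle I expect is the closure of $\cd$ under the splittings induced by the central idempotent. Since $\cd$ is only assumed to be a full triangulated subcategory of $\cd\ca$ (not a priori thick), the fact that $E_M$ is an endomorphism in $\cd$ does not formally guarantee that its image and kernel stay in $\cd$. I plan to overcome this by working not at the level of an abstract idempotent, but with the explicit central natural transformation coming from $e \in \End(\id_\ca)$: the summands $X_1$, $X_2$ of each $M \in \cd$ can be constructed from $M$ using triangles, shifts and the action of $e$ on the generating class $\cg \subseteq \cd$, hence they remain in $\cd$ by its closure under triangles and shifts.
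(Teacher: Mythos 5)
Your forward construction is, in substance, the paper's: the central idempotent $e$ of the center of $\ca$ acts componentwise on complexes and decomposes every object of $\cd\ca$, and the orthogonality of the two classes of complexes is exactly the calculus-of-fractions computation hidden in your appeal to $\cd(\ca_{1}\times\ca_{2})\simeq\cd\ca_{1}\times\cd\ca_{2}$ (decompose the roof $N=N_{1}\oplus N_{2}$ and note that $N_{2}$ is acyclic). Where you diverge from the paper, gaps appear. Your uniqueness argument for $\cc_{\ca}\cong\cc_{\cd\ca}$ (``$\cx$ must contain every complex with entries in $\ca_{1}$'') requires rebuilding an arbitrary unbounded complex inside $\cx$ from its components, which for $\ca$ merely complete and cocomplete needs homotopy (co)limits of truncations and exactness properties of (co)products that are not available for free; likewise your claim that morphisms from shifts of $\cg$ separate endomorphisms of every object of $\cd$ is stronger than the detection hypothesis yields. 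The paper avoids both points: all restriction maps in its diagram are injective by the short orthogonality argument of Proposition~\ref{2-decompositions arriba abajo} (using $\cg^{\bot}=0$ and closure under direct summands), and the single substantive verification is the surjectivity of $\cc'_{\cd\ca}\ra\cc_{\ca}$, proved by the componentwise decomposition plus the roof argument.

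The decisive gap is the one you flag yourself, and your proposed repair does not close it. Nothing in the hypotheses says that $\cg$ generates $\cd$ as a triangulated category, so ``constructing $X_{1},X_{2}$ from $M$ using triangles, shifts and the action of $e$ on the generating class $\cg$'' is not meaningful for a general $M\in\cd$; the only datum the central transformation gives you on such an $M$ is the idempotent $e_{M}\in\End_{\cd}(M)$, whose cone is $X_{2}\oplus X_{1}[1]$. Isolating $X_{1}$ or $X_{2}$ from this is precisely the splitting of an idempotent, which a full triangulated subcategory that is not closed under direct summands need not permit: by Thomason-type $K_{0}$ considerations, a subcategory of $\cd\ca$ cut out by a congruence condition on Euler characteristics contains $M$, the morphism $e_{M}$ and every object obtainable from them by shifts and cones, but neither summand; hence no argument using only triangles and shifts inside $\cd$ can produce $X_{1}$ and $X_{2}$. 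This is exactly the step the paper organizes differently: the bijections involving $\cc'_{\cd}$ are deduced formally from the injectivity of the restriction maps together with the bijectivity of the composite $\cc'_{\cd\ca}\ra\cc_{\ca}\ra\cc_{\add(\cg)}$, rather than by constructing the splitting of an individual object of $\cd$ out of $e$ and $\cg$. To keep your direct route you need an additional input at precisely this point, for instance closure of $\cd$ under the relevant direct summands or a genuine generation hypothesis relating $\cd$ to $\cg$.
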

\begin{proof}
We have a commutative diagram of inclusions of additive categories:
\[\xymatrix{\add(\cg)\ar[r]\ar[d] & \ca\ar[d] \\
\cd\ar[r] &\cd\ca
}
\]
Then, the compositions
\[\cc'_{\cd\ca}\ra\cc'_{\cd}\ra\cc_{\add(\cg)}
\]
and
\[\cc'_{\cd\ca}\ra\cc_{\ca}\ra\cc_{\add(\cg)}
\]
coincide, where $\cc'_{?}$ is the subclass of $\cc_{?}$ formed by the 2-decompositions closed under $?[1]$ and $?[-1]$. Notice that the elements of $\cc'_{\cd}$ are precisely the 2-decompositions corresponding to the centrally split triangulated TTF triples of $\cd$. Hence, we just have to check that one of the two compositions above is bijective, for then all the arrows in these compositions will be bijective maps. Since $\cg$ detects centrally split TTF triples on $\ca$, then by Lemma \ref{2-decompositions and add} the map $\cc_{\ca}\ra\cc_{\add(\cg)}$ is bijective. It remains to prove that $\cc'_{\cd\ca}\ra\cc_{\ca}$ is surjective. For this, take $(\ca_{1},\ca_{2})\in\cc_{\ca}$ and define $\cd_{i}$ to be the full subcategory of $\cd\ca$ formed by those objects which are isomorphic in $\cd\ca$ to cochain complexes of objects of $\ca_{i}$. Clearly, each object of $\cd\ca$ decomposes as the direct sum of an object of $\cd_{1}$ and an object of $\cd_{2}$. It remains to prove that $(\cd\ca)(M_{1},M_{2})=(\cd\ca)(M_{2},M_{1})=0$ for $M_{i}\in\cd_{i}$. Take, for instance, a map in $(\cd\ca)(M_{1},M_{2})$. This is represented by a fraction $\ol{f}\ol{s}^{-1}$ of morphisms of the category up to homotopy $\ch\ca$:
\[\xymatrix{&N\ar[dl]_{\ol{s}}\ar[dr]^{\ol{f}}& \\
M_{1} & &M_{2}
}
\]
where $s$ is a quasi-isomorphism. Consider the decomposition $N=N_{1}\oplus N_{2}$ with $N_{i}\in\cd_{i}$. Since $s$ and $f$ are cochain maps, then we can ensure they are of the form 
\[s=\left[\begin{array}{cc}s_{1}&0\end{array}\right]:N_{1}\oplus N_{2}\ra M_{1}
\] 
and 
\[f=\left[\begin{array}{cc}0&f_{2}\end{array}\right]:N_{1}\oplus N_{2}\ra M_{2}.
\]
But since $s$ is a quasi-isomorphism, then $N_{2}$ is acyclic and so $\ol{f}\ol{s}^{-1}=0$.
\end{proof}
\bigskip

\begin{example}
Let $A$ be a $k$-algebra and let $\ca=\Mod A$ be its category of modules. Then there is a bijection between:
\begin{enumerate}[1)]
\item Centrally split triangulated TTF triples on the unbounded derived category $\cd A$.
\item Centrally split triangulated TTF triples on the right bounded derived category $\cd^{-}A$.
\item Centrally split triangulated TTF triples on the left bounded derived category $\cd^{+}A$.
\item Centrally split triangulated TTF triples on bounded derived category $\cd^bA$.
\item Centrally split triangulated TTF triples on the category of compact objects $\per A$ of $\cd A$.
\item Central idempotents of $A$.
\end{enumerate}
\end{example}

\chapter{Balance in stable categories}\label{Balance in stable categories}
\addcontentsline{lot}{chapter}{Cap\'itulo 2. Equilibrio en categor\'ias estables}

\section{Introduction}
\addcontentsline{lot}{section}{2.1. Introducci\'on}

\subsection{Motivation}
\addcontentsline{lot}{subsection}{2.1.1. Motivaci\'on}

We know that pretriangulated categories (\cf section \ref{(Co)suspended, triangulated and pretriangulated categories}) are a generalization of abelian and triangulated categories. One of the common features of abelian and triangulated categories is that they are \emph{balanced}\index{category!balanced}, \ie every morphism which is both a monomorphism and an epimorphism is necessarily an isomorphism. That property no longer holds in arbitrary pretriangulated categories, although monomorphisms and epimorphisms in them are easily detected using triangles. Indeed, we have:
\begin{lemma} \label{mono in left-triangulated}
Let $(\cd, \Omega)$ be a left triangulated category. For a morphism $f:M\ra N$ in $\cd$, the following assertions are
equivalent:
\begin{enumerate}[1)]
\item $f$ is a monomorphism.
\item There is a left triangle $\Omega N\ra L\arr{0}M\arr{f}N$.
\end{enumerate}
\end{lemma}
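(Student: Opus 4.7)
The plan is to exploit the fact that, in a left triangulated category, applying a representable functor $\cd(K,-)$ to a left triangle yields a long exact sequence. Both directions then reduce to the elementary observation that a morphism is a monomorphism if and only if the induced map on $\cd(K,-)$ is injective for every $K\in\cd$.

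For the implication $2) \Rightarrow 1)$, I would fix an arbitrary object $K\in\cd$ and apply $\cd(K,-)$ to the left triangle $\Omega N\ra L\arr{0}M\arr{f}N$. Since the middle arrow is the zero morphism, this produces an exact sequence
\[
\cd(K,L)\arr{0}\cd(K,M)\arr{f_{*}}\cd(K,N),
\]
so $f_{*}$ has zero kernel. As $K$ was arbitrary, $f$ is a monomorphism.

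For the converse $1) \Rightarrow 2)$, I would first complete $f$ to a left triangle. By the (left version of) axiom RT2 applied, for instance, to the pair $f:M\ra N$ regarded in the standard way, there exists a left triangle of the form
\[
\Omega N\arr{h} L\arr{g} M\arr{f} N.
\]
The standard consequence that consecutive morphisms in a left triangle compose to zero (obtained by applying $\cd(L,-)$ to this triangle and chasing $\id_L$, or directly from the left-triangulated axioms) gives $fg=0$. Since $f$ is assumed to be a monomorphism, this forces $g=0$, and the triangle thus has the required form.

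There is no real obstacle here; the only mild subtlety is to make sure one invokes the left-triangulated analogues of the axioms RT1)--RT4) introduced for suspended categories in Definition \ref{definition suspended category} (the dual definition in Definition \ref{definition suspended category} already provides them) rather than the right-triangulated ones, and to use the cohomological property of the Hom functor with respect to left triangles, which is the dual of the remark following that definition.
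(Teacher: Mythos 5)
Your proof is correct and follows essentially the same route as the paper: for $1)\Rightarrow 2)$ you complete $f$ to a left triangle $\Omega N\ra L\arr{g}M\arr{f}N$ and use $fg=0$ plus the mono hypothesis to force $g=0$, and for $2)\Rightarrow 1)$ you apply $\cd(K,?)$ to the triangle with zero middle map and read off injectivity of $f_{*}$ from exactness, exactly as in the paper. The only cosmetic slip is attributing the existence of a left triangle over $f$ to (the dual of) RT2, which is the rotation axiom; in the paper's axiomatics this completion comes from (the dual of) RT4, but this does not affect the argument.
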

\begin{proof}
Let $f$ be an arbitrary morphism in $\cd$. We know that in the left triangle
\[\Omega N\ra L\arr{g}M\arr{f}N
\]
we have $fg=0$. If $f$ is a monomorphism, this implies $g=0$. Conversely, assume there is a left triangle
\[\Omega N\ra L\arr{0}M\arr{f}N,
\]
and a morphism $h:U\ra M$ such that $fh=0$. Consider the exact sequence
\[\dots\ra\cd(U,L)\arr{0}\cd(U,M)\arr{f^{\we}}\cd(U,N)
\]
Then $h\in\ker(f^{\we})=\im(0)=0$.
\end{proof}

In view of the above lemma, it makes sense to give the following:

\begin{definition}
Let $(\cd,\Omega)$ be a left triangulated category. A morphism $f:M\ra N$ in $\cd$ will be said to be a \emph{strong monomorphism}\index{strong!monomorphism} if there is a left triangle $\Omega N\ra 0\ra M\arr{f}N$ in $\cd$. The dual notion on a right triangulated category is that of \emph{strong epimorphism}\index{strong!epimorphism}. Recall (\cf Definition \ref{definition weakly balanced}) that, if $\cd$ is a pretriangulated category, we shall say that it is \emph{weakly balanced} in case that every morphism which is both a strong monomorphism and strong epimorphism is an isomorphism.
\end{definition}

It is natural to expect that the class of (weakly) balanced pretriangulated categories is an interesting one. The moral goal of this chapter is to show that (weak) balance is a very restrictive condition on stable categories. We show that by considering the stable category (\cf Definition \ref{stable category}) $\ul{\ca}$ of an arbitrary abelian category $\ca$ associated to a full subcategory $\ct$ when $\ct$ is a Serre class or when $\ct$ consists of projective objects (and, by duality, when $\ct$ consists of injective objects). 

\subsection{Outline of the chapter}
\addcontentsline{lot}{subsection}{2.1.2. Esbozo del cap\'itulo}

In Section \ref{Monomorphisms and strong monomorphisms in stable categories}, we characterize monomorphism and strong monomorphisms in
$\ul{\ca}$, for any full additive subcategory $\ct$ closed under direct summands. In Section \ref{When T is a Serre class}, we show
that if $\ct$ is a covariantly finite Serre class in $\ca$, then $\ul{\ca}$ is (weakly) balanced if and only if $\ct$ is a direct
summand  of $\ca$ as an additive category (Corollary \ref{balanced in Serre case}). In the latter sections of the chapter, $\ct$ is
supposed to consist of projective objects, in which case the balance of $\ul{\ca}$ is characterized in Section \ref{Balance
when T consists of projective objects} (Theorem \ref{characterization balanced}) and the weak balance in Section \ref{Weak balance when T consists of projective objects} (Theorem \ref{characterization weakly-balanced}). A byproduct is that if $\ct$ consists of projective-injective objects, then $\ul{\ca}$ is
always balanced. When $\ca=\ch$ has the property that subobjects of projective objects are projective (\eg if $\ch$ is a
hereditary abelian category), these results give that if $\ct$ is covariantly finite then $\ul{\ch}$ is (weakly) balanced if and only if $^\perp\ct$ is closed under subobjects (Proposition \ref{hereditary weakly balanced}). The final section of the chapter gives some examples in categories of modules showing, in
particular, that stable balanced categories need be neither abelian nor triangulated and, also, that weakly balanced stable categories need not be balanced.

\subsection{Notation}
\addcontentsline{lot}{section}{2.1.3. Notaci\'on}

All throughout the chapter $\ca$ will be an abelian category,  $\ct$ will be  a full additive subcategory closed under direct summands, ${\langle\ct\rangle}$ will be the ideal of $\ca$ formed by the morphisms that factors through an object of $\ct$ and $\ul{\ca}:=\ca/{\langle\ct\rangle}$ will be the stable category of $\ca$ associated to $\ct$ (\cf Definition \ref{stable category}). Recall that the image of a morphism $f$ under the canonical quotient functor $\ca\ra\ul{\ca}$ will be denoted by $\ol{f}$. The hypothesis of $\ct$ being closed under direct summands is not strictly necessary for the ideals ${\langle\ct\rangle}$ and ${\langle\add(\ct )\rangle}$ to coincide (remember that $\add(?)$ denotes the closure under finite direct sums and direct summands).  But it simplifies the statements and we will assume it in the sequel.

For the sake of intuition and simplification of proofs, we shall allow ourselves some abuse of terminology concerning abelian categories and shall use expressions of module theory, like ``canonical inclusion $\im(f)\hookrightarrow Y$'' for a morphism $f:M\ra N$, meaning the (unique up to isomorphism) monomorphism appearing in the epi-mono factorization of $f$. For the final section, the terminology concerning rings can be found in \cite{AndersonFuller1992} and \cite{Rotman1979} while that concerning finite dimensional algebras can be found in \cite{Ringel1984} and \cite{AuslanderReitenSmalo1995}.

\section{Monomorphisms and strong monomorphisms in stable categories}\label{Monomorphisms and strong monomorphisms in stable categories}
\addcontentsline{lot}{section}{2.2. Monomorfismos y monomorfismos fuertes en categor\'ias estables}

In this section we undertake the identification of monomorphisms and strong monomorphisms in $\ul{\ca}$. 

\begin{proposition}\label{monos}
For a morphism $f:M\ra N$ in $\ca$ the following assertions are equivalent:
\begin{enumerate}[1)]
\item $\ol{f}$ is a monomorphism in $\ul{\ca}$.
\item For every $p\in\ca(T,N)$ with $T\in\ct$, the parallel to $p$ in the pullback of $f$ and $p$ factors through an object of $\ct$.
\end{enumerate}

When, in addition, $\ct$ is contravariantly finite in $\ca$ the following assertions are also equivalent:

\begin{enumerate}[3)]
\item For some $\ct$-precover $p_{N}:T_{N}\ra N$ the parallel to $p_{N}$ in the pullback of $f$ and $p_{N}$ factors through an object of $\ct$.
\end{enumerate}
\begin{enumerate}[4)]
\item In the left triangulated category $\ul{\ca}$ there is a left triangle of the form
\[\Omega N\ra L\arr{\ol{0}} M\arr{\ol{f}} N
\]
\end{enumerate}
\end{proposition}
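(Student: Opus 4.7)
The plan is to prove the equivalence $1)\Leftrightarrow 2)$ in full generality by a pullback argument, and then, assuming $\ct$ is contravariantly finite, to establish $2)\Leftrightarrow 3)\Leftrightarrow 4)$ using the dual of the construction of the distinguished triangles recalled in Section \ref{(Co)suspended, triangulated and pretriangulated categories}.

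For $1)\Leftrightarrow 2)$: a morphism $\bar f$ is a monomorphism in $\underline{\ca}$ precisely when, for every $g:X\to M$ with $fg\in\langle\ct\rangle$, one has $g\in\langle\ct\rangle$. Given $p:T\to N$ with $T\in\ct$, I form the pullback $(P,\pi_1,\pi_2)$ of $f$ and $p$; then $f\pi_1=p\pi_2$ factors through $\ct$, so if $\bar f$ is mono then $\bar\pi_1=0$, giving $2)$. Conversely, if $fg=pu$ with $p:T\to N$, $T\in\ct$, the universal property of the pullback of $f$ and $p$ factors $g$ through the parallel projection $\pi_1:P\to M$; if $\pi_1$ factors through $\ct$, so does $g$, yielding $\bar g=0$ and hence $1)$.

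For $2)\Leftrightarrow 3)$ when $\ct$ is contravariantly finite, the implication $2)\Rightarrow 3)$ is tautological. For the converse, any $p:T\to N$ with $T\in\ct$ factors through the fixed precover $p_N:T_N\to N$ via some $q:T\to T_N$. The universal property of pullbacks then induces a morphism from the pullback $P$ of $(f,p)$ to the pullback $P_N$ of $(f,p_N)$ commuting with the parallel projections to $M$, so the projection $\pi_1:P\to M$ factors as $P\to P_N\to M$; if the latter arrow lies in $\langle\ct\rangle$, so does $\pi_1$.

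For $3)\Leftrightarrow 4)$, I shall use the dual of the construction described in Section \ref{(Co)suspended, triangulated and pretriangulated categories}: for each $N\in\ca$, fix a left exact sequence $0\to\Omega N\xrightarrow{k} T_N\xrightarrow{p_N} N$ with $p_N$ a $\ct$-precover; the distinguished left triangle in $\underline{\ca}$ attached to $f:M\to N$ is obtained by pulling back $p_N$ along $f$, producing $(P,\pi_1,\pi_2)$ and a canonical map $j:\Omega N\to P$ with $\pi_2 j=k$ and $\pi_1 j=0$, yielding the left triangle $\Omega N\xrightarrow{\bar j} P\xrightarrow{\bar\pi_1} M\xrightarrow{\bar f} N$. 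Thus condition $3)$ (that $\pi_1$ factors through $\ct$) is exactly the statement that the middle map of this distinguished triangle is $\bar 0$, which is condition $4)$ after rotating through Lemma \ref{mono in left-triangulated}. The equivalence $1)\Leftrightarrow 4)$ is then a direct application of Lemma \ref{mono in left-triangulated} to the left triangulated category $\underline{\ca}$. The only mildly delicate point is ensuring that the distinguished left triangle produced from $f$ in $\underline{\ca}$ is (up to isomorphism) the one built from the pullback of $p_N$ and $f$; this is the dual of the ``distinguished triangle from a morphism'' construction reviewed before, so the verification is routine.
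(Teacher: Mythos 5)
Your proof is correct and takes essentially the same route as the paper: the pullback argument for $1)\Leftrightarrow 2)$, the canonical construction of left triangles from the $\ct$-precover for $3)\Rightarrow 4)$, and Lemma \ref{mono in left-triangulated} for $1)\Leftrightarrow 4)$. Your additional direct argument for $3)\Rightarrow 2)$, factoring the pullback of $(f,p)$ through the pullback of $(f,p_{N})$, is valid but logically redundant, since the chain $3)\Rightarrow 4)\Rightarrow 1)\Rightarrow 2)$ already closes the equivalences.
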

\begin{proof}
$1)\Rightarrow 2)$ From the cartesian square
\[\xymatrix{L\ar[r]^{g}\ar[d]^{h} & M\ar[d]^{f} \\
T\ar[r]_{p} & N
}
\]
we have $\ol{f}\ol{g}=\ol{0}$ and so $\ol{g}=\ol{0}$, \ie $g$ factors through an object of $\ct$.

$2)\Rightarrow 1)$ Assume $\ol{f}\ol{u}=\ol{0}$ for some morphism $u:U\ra M$. Then we have a commutative square
\[\xymatrix{U\ar[r]^{u}\ar[d]_{v} & M\ar[d]^{f} \\
T\ar[r]_{p} & N
}
\]
By doing the pullback of $f$ and $p$ we get the following commutative diagram
\[\xymatrix{U\ar@{.>}[dr]_{w}\ar@/^/[drr]^{u}\ar@/_/[ddr]_{v} & & \\
& L\ar[r]^{g}\ar[d]_{h} & M\ar[d]^{f} \\
& T\ar[r]_{p} & N
}
\]
Since, by hypothesis, $\ol{g}=\ol{0}$, and we have $\ol{u}=\ol{g}\ol{w}$, we conclude $\ol{u}=\ol{0}$.

$1)\Leftrightarrow 4)$ and $2)\Rightarrow 3)$ are clear (see Lemma \ref{mono in left-triangulated}). As for $3)\Rightarrow 4)$, use the canonical construction of left triangles (\cf \cite{BeligiannisMarmaridis1994} or section \ref{(Co)suspended, triangulated and pretriangulated categories}).
\end{proof}

The following result identifies strong monomorphisms in stable categories.

\begin{proposition}\label{left zero}
Assume that $\ct$ is contravariantly finite in $\ca$. The following assertions are equivalent for a morphism $f\in\ca(M,N)$:
\begin{enumerate}[1)]
\item $\ol{f}$ is a strong monomorphism in the left triangulated category $\ul{\ca}$.
\item For every (respectively, some) $\ct$-precover $p_{N}:T_{N}\ra N$ the parallel to $p_{N}$ in the pullback of $f$ and $p_{N}$ is a $\ct$-precover of $M$.
\item For every (respectively, some) $\ct$-precover $p_{N}:T_{N}\ra N$, in the cartesian square
\[\xymatrix{T\ar[r]^g\ar[d]_{h} & M\ar[d]^{f} \\
T_{N}\ar[r]_{p_{N}} & N
}
\]
we have $T\in\ct$.
\end{enumerate}
\end{proposition}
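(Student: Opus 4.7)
The plan is to establish the equivalence by proving $2)\Leftrightarrow 3)$ via a direct pullback argument, then $1)\Leftrightarrow 3)$ in the ``some'' version via the canonical construction of distinguished left triangles, and finally bridging the ``some'' and ``every'' quantifiers by a five-lemma argument. Starting with $2)\Leftrightarrow 3)$ (with matching quantifier): the implication $2)\Rightarrow 3)$ is trivial since a $\ct$-precover of $M$ has its domain in $\ct$, while for $3)\Rightarrow 2)$ one supposes $T\in\ct$ and takes any $q:T'\ra M$ with $T'\in\ct$; the composite $fq$ factors through $p_N$ by the precover property, yielding $u:T'\ra T_N$ with $p_Nu=fq$, and the universal property of the cartesian square then produces a morphism $T'\ra T$ through which $q$ factors, showing $g:T\ra M$ is itself a $\ct$-precover of $M$.

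For $1)\Leftrightarrow 3)$ with the ``some'' quantifier, I would invoke the construction of distinguished left triangles in $\ul{\ca}$ due to Beligiannis and Marmaridis \cite{BeligiannisMarmaridis1994} (dual to the right-triangle construction recalled in Section \ref{(Co)suspended, triangulated and pretriangulated categories}): for any $\ct$-precover $p_N:T_N\ra N$, the pullback of $f$ along $p_N$ fits in a distinguished left triangle $\Omega N\ra T\arr{\ol{g}}M\arr{\ol{f}}N$ in $\ul{\ca}$, with $\Omega N=\ker(p_N)$. Now $T$ is isomorphic to $0$ in $\ul{\ca}$ if and only if $\id_T$ factors through an object of $\ct$, which by closure of $\ct$ under direct summands is equivalent to $T\in\ct$; hence $T\in\ct$ amounts to an isomorphism of left triangles between this distinguished triangle and $\Omega N\ra 0\ra M\arr{\ol{f}}N$, which is precisely the definition of $\ol{f}$ being a strong monomorphism.

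Finally, to bridge the ``some'' and ``every'' quantifiers, the decisive point is that two distinguished left triangles built from distinct $\ct$-precovers of $N$ have isomorphic middle objects in $\ul{\ca}$. Applying the axiom dual to RT3 to the identity square on the base $\ol{f}$ produces a morphism of left triangles, and the classical five-lemma applied to the long exact sequences obtained from $\ul{\ca}(U,-)$---which have the five consecutive terms needed because $\Omega$ may be iterated leftward---shows the middle map $\ul{\ca}(U,T)\ra\ul{\ca}(U,T')$ is bijective for every $U$; Yoneda then promotes it to an isomorphism in $\ul{\ca}$, so $T\in\ct\Leftrightarrow T'\in\ct$. The main obstacle I anticipate is verifying that the pullback construction genuinely yields a distinguished left triangle for an \emph{arbitrary} $\ct$-precover (not only for the fixed one used to define $\Omega$), and that the five-lemma argument goes through here; both rest on standard facts implicit in the Beligiannis--Marmaridis framework: closure of the class of distinguished left triangles under isomorphism in $\ul{\ca}$ and the homological exactness of $\ul{\ca}(U,-)$.
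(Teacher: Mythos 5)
Your proposal is correct and follows essentially the same route as the paper: $2)\Leftrightarrow 3)$ by the universal property of the cartesian square together with the precover property (exactly the paper's argument for $3)\Rightarrow 2)$), and $1)\Leftrightarrow 3)$ by appealing to the Beligiannis--Marmaridis construction of left triangles in $\ul{\ca}$, which is all the paper itself says. The only difference is that you make explicit the cocone-uniqueness step (dual of RT3 plus the five lemma on the leftward-extended sequences, then Yoneda) and the identification ``$T\cong 0$ in $\ul{\ca}$ iff $T\in\ct$'' that bridge the ``some'' and ``every'' quantifiers, details the paper leaves implicit in its citation of the construction of left triangles.
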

\begin{proof}
$2)\Rightarrow 3)$ is clear and $3)\Leftrightarrow 1)$ follows from the construction of left triangles in $\ul{\ca}$ (\cf \cite{BeligiannisMarmaridis1994} or section \ref{(Co)suspended, triangulated and pretriangulated categories}).

$3)\Rightarrow 2)$ Consider the cartesian square of 2) and a morphism $g':T'\ra M$ with $T'\in\ct$. Since $p_{N}$ is a $\ct$-precover,
there exists a morphism $h':T'\ra T_{N}$ such that $fg'=p_{N}h'$. Now, by the universal property of the cartesian square, there exists
a morphism $t:T'\ra T$ such that $gt=g'$.
\end{proof}

\begin{remark}\label{observaciones}
\begin{enumerate}[1)]
\item Applying the duality principle, we obtain corresponding results, dual to  the above,  when considering the right triangulated structure in $\ul{\ca}$ defined by a covariantly finite additive full subcategory $\ct$ of $\ca$ closed under direct summands. We leave the statements to the reader.
\item Under the hypotheses of Proposition ~\ref{left zero}, if $f:M\ra N$ is a morphism in $\ca$ such that $\ol{f}$ is a strong monomorphism  in
$\ul{\ca}$, then $\ol{f}\cong\ol{g}$, for some morphism $g:M'\ra N$ such that $\ker(g)\in\ct$. Indeed, take 
\[g=\left[\begin{array}{cc}f&p_Y\end{array}\right]:M\oplus T_N\ra N,
\] 
where $p_N:T_N\ra N$ is a $\ct$-precover.
\item If, in the situation of $2)$, the subcategory $\ct$ consists of injective objects of $\ca$, then $g$ can be chosen to be a monomorphism in $\ca$. Indeed, according to $2)$, we can assume from the beginning that $\ker(f)\in\ct$. Then the monomorphism $\ker(f)\ra M$ is a section and we can write
\[f=\left[\begin{array}{cc}0&f_{|M'}\end{array}\right]:\ker(f)\oplus M'\ra N,
\] 
for some complement $M'$ of $\ker(f)$ in $M$. Now choose $g=f_{|M'}$
\end{enumerate}
\end{remark}

\section{When $\ct$ is a Serre class}\label{When T is a Serre class}
\addcontentsline{lot}{section}{2.3. Cuando $\ct$ es una clase de Serre}

Recall the following definition:

\begin{definition}\label{Serre class}
A full subcategory $\ct$ of an abelian category $\ca$ is a \emph{Serre class}\index{subcategory!Serre}\index{class!Serre} if, given an exact sequence $0\ra L\ra M\ra
N\ra 0$ in $\ca$, the object $M$ belongs to $\ct$ if and only if $L$ and $N$ belong to $\ct$. 
\end{definition}

All throughout this section, we assume that $\ct$ is a Serre class in $\ca$. The selfdual condition of Serre classes allows to give a dual result of any one that we shall give here.

In Definition \ref{preenvelope} we introduced the notion of \emph{precover}. In the proof of the following lemma we will use the stronger notion of \emph{cover}, which we remind:

\begin{definition}\label{cover}
Let $\cc$ be an additive category and let $\cu$ be a full subcategory of $\cc$ closed under direct sums and direct summands. A morphism $p_{M}:U_{M}\ra M$ is a \emph{$\cu$-cover}\index{cover} of $M$ if it is a $\cu$-precover and any endomorphism $f:U_{M}\ra U_{M}$ is an automorphism provided the following diagram commutes:
\[\xymatrix{U_{M}\ar[r]^{p_{M}}\ar[d]_{f} & M \\
U_{M}\ar[ur]_{p_{M}} &
}
\]
The dual notion is that of \emph{$\cu$-envelope}\index{envelope}. Alternatively, a $\cu$-cover is also called \emph{minimal right $\cu$-approximation}\index{approximation!right!minimal} and a $\cu$-envelope is also called \emph{minimal left $\cu$-approximation}\index{approximation!left!minimal}.
\end{definition}

\begin{lemma} \label{(co- contra-)variantly finite Serre classes}
Let $\ca$ be an abelian category and $\cu$ be a full subcategory closed under quotients and extensions. The following assertions are equivalent:
\begin{enumerate}[1)]
\item $\cu$ is contravariantly finite in $\ca$.
\item The inclusion functor $i:\cu\hookrightarrow\ca$ has a right adjoint.
\item $(\cu ,\cu^\perp )$ is a  torsion pair in $\ca$.
\end{enumerate}
\end{lemma}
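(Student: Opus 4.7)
The plan is to prove $3) \Rightarrow 2) \Rightarrow 1) \Rightarrow 3)$, with the first two implications being essentially formal and the third carrying the real content.

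Starting with $3) \Rightarrow 2)$, I would note this is immediate from the very definition of a torsion pair in an abelian category: the assignment $M \mapsto M_{\cu}$ provides the right adjoint. For $2) \Rightarrow 1)$ I would invoke the standard fact that, given an adjunction $i \dashv \tau_{\cu}$, the counit $\delta_{M} : \tau_{\cu} M \ra M$ is automatically a $\cu$-precover of $M$: any $f : U \ra M$ with $U \in \cu$ factors as $\delta_{M} \circ \theta(f)$ via the adjunction isomorphism.

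The main step is $1) \Rightarrow 3)$. Fix a $\cu$-precover $p_{M} : U_{M} \ra M$ for each $M \in \ca$ and let $t(M) := \im(p_{M}) \hookrightarrow M$. The first thing to establish is that $t(M)$ is the largest subobject of $M$ lying in $\cu$ and that $M \mapsto t(M)$ is functorial. Since $\cu$ is closed under quotients one has $t(M) \in \cu$; the precover property implies that every morphism $U \ra M$ from $\cu$ factors through $p_{M}$, hence through the monomorphism $t(M) \hookrightarrow M$, and the factoring is unique because this inclusion is monic. This both makes $t$ functorial and produces the adjunction isomorphism, so that $t$ is right adjoint to $i$ with monic counit $\delta_{M} = (t(M) \hookrightarrow M)$. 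I would then set $M^{\cu^{\bot}} := M/t(M)$, obtaining a short exact sequence
\[ 0 \ra t(M) \ra M \ra M^{\cu^{\bot}} \ra 0, \]
and verify $M^{\cu^{\bot}} \in \cu^{\bot}$; the remaining torsion-pair axioms (vanishing of Hom and closure of $\cu^{\bot}$ under subobjects) are automatic from the definition of right orthogonal. Given any $g : U \ra M/t(M)$ with $U \in \cu$, the plan is to pull back along the canonical epimorphism $M \twoheadrightarrow M/t(M)$, obtaining an extension
\[ 0 \ra t(M) \ra E \ra U \ra 0. \]
Closure of $\cu$ under extensions forces $E \in \cu$, so the induced morphism $E \ra M$ factors through $t(M) \hookrightarrow M$, making the composite $E \ra M \ra M/t(M)$ zero; since $E \twoheadrightarrow U$ is epi and this composite equals $g$ precomposed with it, we conclude $g = 0$.

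The delicate point, and the only place where the two closure hypotheses of the lemma are really used, is $1) \Rightarrow 3)$: closure under quotients is what ensures that the image $t(M)$ of a precover lies in $\cu$, so that the precover upgrades to the counit of a genuine adjunction, while closure under extensions is precisely what lets the pullback trick produce an object of $\cu$ and force morphisms into $M/t(M)$ to vanish. I expect this extension-closure step to be the main obstacle to watch for when writing up the full proof, although the pullback trick dispatches it cleanly.
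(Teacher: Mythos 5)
Your proof is correct and follows essentially the same route as the paper: the formal implications $3)\Rightarrow 2)\Rightarrow 1)$, then for $1)\Rightarrow 3)$ taking $t(M)=\im(p_M)$ (in $\cu$ by quotient-closure) as the counit of a right adjoint, and using extension-closure to show $M/t(M)\in\cu^\perp$. Your pullback of $g:U\ra M/t(M)$ is exactly the preimage argument the paper uses to show $u(M/u(M))=0$, so the two proofs coincide in substance.
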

\begin{proof}
$3)\Rightarrow 2)$ and $2)\Rightarrow 1)$ are clear. We just need to prove $1)\Rightarrow 3)$. If $p_M:U_M\ra M$ is a $\cu$-precover of $M$, then $\im(p_M)=:u(M)$ belongs to $\cu$ and one easily shows that the inclusion morphism $j_M:u(M)\ra M$ is a $\cu$-cover. Indeed, it is straightforward to check that it is a $\cu$-precover, and if $f:u(M)\ra u(M)$ is an endomorphism such that $j_{M}f=j_{M}$, we have $f=\id_{u(M)}$ since $j_{M}$ is a monomorphism. Therefore, $j_{M}$ is a $\cu$-cover and whence uniquely determined up to isomorphism. We leave to the reader the easy verification that the assignment $M\mapsto u(M)$ extends to a functor $u:\ca\ra\cu$ which is right adjoint to
the inclusion.

Now $\cu^\perp$ is precisely the class of those $F\in\ca$ such that $u(F)=0$ and we just need to prove that $\cu =^\perp (\cu^\perp )$ or, what is enough, that $u(M/u(M))=0$ for all $M\in\ca$. But this is clear for $u(M/u(M))={U}/{u(M)}$ for some subobject $U$ of $M$ containing $u(M)$ and fitting in a short exact sequence $0\ra u(M)\ra U\ra
u(M/u(M))\ra 0$. Since the two outer terms belong to $\cu$ we also have $U\in\cu$. But then the canonical inclusion $U\hookrightarrow M$ factors through $j_M:u(M)\ra M$, which implies that $U\subseteq u(M)$ and, hence, we get $u(M/u(M))=0$ as desired.
\end{proof}

Our next result identifies the monomorphisms and epimorphisms in $\ul{\ca}$.

\begin{proposition} \label{triangles for Serre classes}
The following statements hold for a morphism $f:M\ra N$ in $\ca$:
\begin{enumerate}[1)]
\item $\ol{f}$ is a monomorphism in $\ul{\ca}$ if and only if the kernel $\ker(f)$ of $f$ belongs to $\ct$. In case $\ct$ is contravariantly
finite in $\ca$, that happens if and only if $\ol{f}$ is a strong monomorphism in $\ul{\ca}$.
\item[1')] $\ol{f}$ is an epimorphism in $\ul{\ca}$ if and only if the cokernel $\cok(f)$ of $f$ belongs to $\ct$. In case $\ct$ is covariantly finite in $\ca$, that happens if and
only if $\ol{f}$ is a strong epimorphism in $\ul{\ca}$.
\item If $M$ belongs to $\ct^\perp$, then $\ol{f}$ is an isomorphism in $\ul{\ca}$ if and only if $f$ is a section in $\ca$ whose cokernel $\cok(f)$ belongs to $\ct$.
\end{enumerate}
\end{proposition}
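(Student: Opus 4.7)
The plan is to treat parts (1), (1'), and (2) in that order, with (1') reduced to (1) by duality. For (1), I would apply Proposition~\ref{monos} and exploit the two standard closure properties of a Serre class: closure under subobjects (from the exactness axiom applied to $0\ra L\ra M$) and closure under extensions. For the direction ``$\ker(f)\in\ct\Rightarrow \ol{f}$ mono'', I form the pullback of $f$ against an arbitrary $p:T\ra N$ with $T\in\ct$, observe that the pullback projection $L\ra T$ has kernel isomorphic to $\ker(f)\in\ct$ and image a subobject of $T\in\ct$ (hence in $\ct$), and invoke the extension axiom to conclude $L\in\ct$, so that the parallel morphism $L\ra M$ trivially factors through $\ct$. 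For the converse I would evaluate ``$\ol{f}$ mono'' on the canonical inclusion $k:\ker(f)\hookrightarrow M$: since $fk=0$ we get $\ol{k}=\ol{0}$, so $k$ factors through some $T\in\ct$, and monicity of $k$ forces $\ker(f)$ to be a subobject of $T$, hence in $\ct$ by subobject-closure.

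For the strong-monomorphism equivalence under contravariant finiteness of $\ct$, I would invoke Proposition~\ref{left zero}: it is enough to verify that in the cartesian square over a fixed $\ct$-precover $p_N:T_N\ra N$ the pullback object $T$ lies in $\ct$. Using the standard identification $\ker(T\ra T_N)\cong\ker(f)$ in a pullback square, this kernel is in $\ct$ by the already-proven part (1); on the other side, the image of $T\ra T_N$ is a subobject of $T_N\in\ct$; the Serre extension axiom applied to the exact sequence $0\ra\ker(T\ra T_N)\ra T\ra\im(T\ra T_N)\ra 0$ then closes the argument. Part (1') is the formal dual of (1), and the Serre-class hypothesis is self-dual, so it requires no separate argument.

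For (2), the ``if'' direction is immediate: when $f$ is a section whose cokernel lies in $\ct$, the induced decomposition $N\cong M\oplus\cok(f)$ makes the complementary summand vanish in $\ul{\ca}$, so $\ol{f}$ becomes an isomorphism. For the ``only if'' direction, I would combine (1) and (1') with the orthogonality hypothesis. Since $\ol{f}$ is an isomorphism, it is both mono and epi in $\ul{\ca}$, so $\ker(f)$ and $\cok(f)$ both lie in $\ct$. The inclusion $\ker(f)\hookrightarrow M$ must then be zero because $M\in\ct^{\bot}$, forcing $f$ to be monic. Lifting an inverse $\ol{g}$ of $\ol{f}$ to a morphism $g:N\ra M$, the difference $gf-\id_M$ factors through an object of $\ct$; but any morphism from $M\in\ct^{\bot}$ into an object of $\ct$ is zero, so $gf=\id_M$ and $f$ is a section.

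I do not foresee a genuine obstacle. The only subtlety worth flagging is the identification of kernels across a pullback square used in the strong-monomorphism step, together with the need to locate both the kernel and the image of the pullback projection inside $\ct$ before appealing to the extension axiom; the rest is routine bookkeeping with the Serre-class axioms and the orthogonality relation $M\in\ct^{\bot}$.
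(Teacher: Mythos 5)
Your treatment of 1) and 1') is exactly the paper's: for 1) you factor the kernel inclusion through an object of $\ct$ and use closure under subobjects in one direction, and in the other you pull back along a morphism $T\ra N$ with $T\in\ct$ and run the exact sequence $0\ra\ker(f)\ra L\ra T$ through the extension axiom; the strong-monomorphism refinement via Proposition \ref{left zero} (the pullback corner over a $\ct$-precover lies in $\ct$ by the same kernel/image/extension argument) and the reduction of 1') to 1) by self-duality of the Serre hypothesis are likewise the arguments in the text, as is the easy ``if'' half of 2) and the observation that $\ker(f)\ra M$ must vanish because $\ker(f)\in\ct$ and $M\in\ct^{\bot}$.

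There is, however, one wrong justification in the ``only if'' half of 2). You conclude $gf=\id_M$ from the claim that ``any morphism from $M\in\ct^{\bot}$ into an object of $\ct$ is zero''. That is not what $M\in\ct^{\bot}$ says: by definition it gives $\ca(T,M)=0$ for all $T\in\ct$, i.e. it kills morphisms \emph{into} $M$, not morphisms out of it. The statement you invoke is the inclusion $\ct^{\bot}\subseteq{}^{\bot}\ct$, which for a Serre class is a genuinely extra condition — it is exactly condition 3.1) of Corollary \ref{balanced in Serre case}, equivalent there to balancedness of $\ul{\ca}$ — so it cannot be assumed at this point. The repair is immediate and is what the paper does: write $\id_M-gf$ as a composite $M\ra T\ra M$ with $T\in\ct$; it is the \emph{second} factor $T\ra M$ that vanishes, precisely because $M\in\ct^{\bot}$, and this already forces $\id_M-gf=0$, so $f$ is a section with $\cok(f)\in\ct$ by 1'). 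With that one correction your argument coincides with the paper's proof throughout.
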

\begin{proof}
$1')$ follows from $1)$ by duality. So, we prove assertion $1)$. If $\ol{f}$ is a monomorphism in $\ul{\ca}$, then we know that we
have a factorization as follows:
\[f^k: \ker(f   )\arr{j}T\arr{v} X
\]
with $T\in\ct$ (notice that $j$ is a monomorphism). Since $\ct$ is closed under subobjects, we conclude that $\ker(f)\in\ct$.

Suppose now that $\ker(f)\in\ct$. According to Proposition ~\ref{monos}, we need to prove that, for every morphism $h:T\ra N$
with $T\in\ct$, the parallel to $h$ in the pullback of $f$ and $h$ factors through an object of $\ct$. But here we have even more,
for if $L$ is the upper left corner of that pullback, we have an exact sequence $0\ra \ker(f)\ra L\arr{h}T$. The fact that $\ct$ is
a Serre class implies that $L\in\ct$. This same argument also proves, with the help of Proposition ~\ref{left zero}, that if
$\ct$ is covariantly finite, then $\ol{f}$ is a strong monomorphism in $\ul{\ca}$. That ends the proof of assertion $1)$.

We next prove (the `only if' part of) assertion $2)$. If $M\in\ct^\perp$ and $\ol{f}$ is an isomorphism in $\ul{\ca}$, then
assertions $1)$ and $1')$ imply that $\ker(f), \cok(f)\in\ct$. But then $\ker(f)\in\ct\cap\ct^\perp$, which implies $\ker(f)=0$. On the other hand, since $\ol{f}$ is an isomorphism there exists a morphism $g:N\ra M$ such that $\id_M-gf$ factors through an object of $\ct$. That is, we
have a factorization $\id_M-gf:M\ra T\ra M$. But, since $M\in\ct^\perp$,  the second morphism of that factorization is
zero, so that $\id_M-gf=0$  and, hence,  $f$ is a section in $\ca$ with $\cok(f)\in\ct$.
\end{proof}

The main result of this section follows now easily:

\begin{corollary} \label{balanced in Serre case}
Let $\ca$ be an abelian category and $\ct$ be a Serre class in $\ca$. The following assertions are equivalent:
\begin{enumerate}[1)]
\item $\ct$ is contravariantly (respectively, covariantly) finite in $\ca$ and $\ul{\ca}$ is balanced.
\item $\ct$ is functorially finite in $\ca$ and $\ul{\ca}$ is weakly balanced.
\item $(\ct,\ct^{\bot})$ is a 2-decomposition of $\ca$.
\end{enumerate}
\end{corollary}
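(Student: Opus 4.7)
The plan is to use Proposition \ref{triangles for Serre classes}, which characterizes (strong) monomorphisms and epimorphisms in $\ul{\ca}$, together with Lemma \ref{(co- contra-)variantly finite Serre classes} and its dual, which produce torsion pairs from (contra- or co-)variantly finite Serre classes. The implication $3)\Rightarrow 1), 2)$ is immediate: the projections and inclusions coming from the $2$-decomposition $(\ct,\ct^{\bot})$ serve as $\ct$-preenvelopes and $\ct$-precovers, so $\ct$ is functorially finite, and the vanishing of hom in both directions makes the quotient functor identify $\ul{\ca}$ with the full subcategory $\ct^{\bot}$ of $\ca$. Since $\ct^{\bot}$ inherits kernels, cokernels and extensions from $\ca$ (closure under quotients follows from $\ca(\ct^{\bot},\ct)=0$), it is abelian, whence $\ul{\ca}$ is balanced and in particular weakly balanced.

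For $1)\Rightarrow 3)$ in the contravariantly finite case, Lemma \ref{(co- contra-)variantly finite Serre classes} provides the torsion pair $(\ct,\ct^{\bot})$ with canonical sequences $0\ra t(M)\ra M\arr{\pi}M/t(M)\ra 0$. By Proposition \ref{triangles for Serre classes}, $\ol{\pi}$ is simultaneously a monomorphism (its kernel $t(M)\in\ct$) and an epimorphism (its cokernel vanishes) in $\ul{\ca}$, so balance forces $\ol{\pi}$ to be an isomorphism; lifting a section $\ol{s}$ to $s\colon M/t(M)\ra M$ in $\ca$, the relation $\pi s-\id\in\langle\ct\rangle$ combined with $M/t(M)\in\ct^{\bot}$ yields $\pi s=\id$, so the torsion sequence splits as required. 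The covariantly finite case proceeds dually, using the torsion pair $({}^{\bot}\ct,\ct)$ and the inclusion $\iota\colon t'(M)\ra M$; the dual of part 2 of Proposition \ref{triangles for Serre classes} makes $\iota$ a section in $\ca$, producing a decomposition $M=t'(M)\oplus M''$ with $t'(M)\in{}^{\bot}\ct$ and $M''\in\ct$. To promote this to a $(\ct,\ct^{\bot})$-decomposition I must establish ${}^{\bot}\ct=\ct^{\bot}$, which reduces to $\ca(\ct,{}^{\bot}\ct)=0$: given $\phi\colon T\ra N$ with $T\in\ct$ and $N\in{}^{\bot}\ct$, factoring $\phi$ through its image yields a short exact sequence $0\ra\im(\phi)\ra N\ra\cok(\phi)\ra 0$ whose outer terms lie in $\ct$ (Serre closure under quotients) and in ${}^{\bot}\ct$ (closure of ${}^{\bot}\ct$ under quotients) respectively, and applying balance together with the dual of part 2 to the quotient map $N\ra\cok(\phi)$ shows this sequence splits in $\ca$; since ${}^{\bot}\ct$ is closed under direct summands, $\im(\phi)\in\ct\cap{}^{\bot}\ct=0$, hence $\phi=0$.

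For $2)\Rightarrow 3)$, the functorial finiteness of $\ct$ permits Proposition \ref{triangles for Serre classes} to upgrade the mono/epi statements for $\ol{\pi}$ to strong-mono and strong-epi statements, so weak balance suffices to force $\ol{\pi}$ to be an isomorphism and the contravariantly finite argument applies verbatim. The main technical obstacle throughout is the covariantly finite case of $1)\Rightarrow 3)$: plain balance produces only a decomposition with respect to the pair $({}^{\bot}\ct,\ct)$, and the Serre hypothesis is indispensable in order to bridge to the stated pair $(\ct,\ct^{\bot})$ via the image-factorization argument above.
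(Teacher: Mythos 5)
Your argument for $1)\Rightarrow 3)$ in the contravariantly finite case stops too early: after showing that every torsion sequence $0\ra t(M)\ra M\arr{\pi}M/t(M)\ra 0$ splits you declare condition 3) proved, but a 2-decomposition (Definition \ref{2-decomposition of an additive category}) also requires $\ca(F,T)=0$ for all $F\in\ct^{\bot}$ and $T\in\ct$, and this vanishing is \emph{not} a consequence of the splitting. Concretely, let $A=k\vec{A_{2}}$, let $E$ be the indecomposable of length two with top $S$ and socle $S'$, and put $\ct=\add(S)$ in $\ca=\mod A$: this is a functorially finite Serre class, $\ct^{\bot}=\add(S'\oplus E)$, and every torsion sequence of $(\ct,\ct^{\bot})$ splits because each indecomposable is either torsion or torsionfree, yet $\Hom_{A}(E,S)\neq 0$, so $(\ct,\ct^{\bot})$ is no 2-decomposition. (This does not contradict the Corollary — there $\ul{\ca}$ fails to be balanced — but it shows that your write-up never invokes balance again after producing the splitting, and balance is exactly what is needed to kill maps $\ct^{\bot}\ra\ct$.) The same omission propagates to your $2)\Rightarrow 3)$, which ``applies the contravariantly finite argument verbatim''. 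The missing step is precisely the first paragraph of the paper's proof of $1)\Rightarrow 3)$: given $f:F\ra T$ with $F\in\ct^{\bot}$, $T\in\ct$, replace $T$ by $\im(f)$ so that $f$ is epi; the kernel inclusion $\ker(f)\ra F$ then has kernel $0$ and cokernel $\im(f)\in\ct$, hence is both a monomorphism and an epimorphism in $\ul{\ca}$, hence an isomorphism by balance, and Proposition \ref{triangles for Serre classes}(2) (applicable because $\ker(f)\in\ct^{\bot}$, as $\ct^{\bot}$ is closed under subobjects) makes it a section; thus $\im(f)$ is a direct summand of $F$ and lies in $\ct\cap\ct^{\bot}=0$, so $f=0$. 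Note this is nothing but the dual of the image-factorization argument you yourself carry out in the covariant case, so your closing remark that the only ``bridge'' is needed in the covariantly finite case is misleading: the hom-vanishing has to be proved by a separate balance argument in both cases.

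Two further remarks. First, in the covariant case you cite ``the dual of part 2 of Proposition \ref{triangles for Serre classes}'' to split the inclusion $\iota:t'(M)\ra M$; that dual has its hypothesis on the \emph{target} lying in ${}^{\bot}\ct$, whereas here it is the \emph{source} $t'(M)$ that lies in ${}^{\bot}\ct$. The statement you need is true, by the same one-line argument you used for $\pi$ (lift a retraction $\ol{r}$ of $\ol{\iota}$; then $r\iota-\id$ factors as $t'(M)\ra T\ra t'(M)$ with the first arrow zero), but it is not literally the cited one. Apart from these points your route agrees with the paper's: $3)\Rightarrow 2)\Rightarrow 1)$ via the equivalence $\ct^{\bot}\simeq\ul{\ca}$ and Proposition \ref{triangles for Serre classes}, and $1)\Rightarrow 3)$ via balance applied to the canonical torsion sequences of Lemma \ref{(co- contra-)variantly finite Serre classes}; the paper disposes of the covariantly finite version of 1) by self-duality instead of your explicit ${}^{\bot}\ct=\ct^{\bot}$ bridge, which is fine once the missing vanishing above is supplied.
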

\begin{proof}
Since assertion $2)$ is selfdual we just consider the contravariantly finite version of assertion $1)$.

$2)\Rightarrow 1)$ follows from Proposition ~\ref{triangles for Serre classes}.

$3)\Rightarrow 2)$ For an arbitrary object $M$ of $\ca$ we consider the decomposition $M=M'\oplus M''$ with $M'\in\ct$ and $M''\in\ct^{\bot}$, guaranteed by the hypothesis of 3). It is straightforward to check that the canonical morphisms $M'\ra M$ and $M\ra M'$ are a $\ct$-precover and a $\ct$-preenvelope, respectively. This proves that $\ct$ is functorially finite. Now, the composition of the quotient with the inclusion
\[\ct^{\bot}\hookrightarrow\ca\ra\ul{\ca}
\]
is an additive functor, which is obviously fully faithful. It is also essentially surjective, since any object $M$ of $\ca$ is isomorphic to $M''$ in $\ul{\ca}$. This gives us an equivalence of additive categories
\[\ct^{\bot}\simeq\ul{\ca}.
\]
Therefore, since $\ct^{\bot}$ is abelian, so is $\ul{\ca}$. In particular, $\ul{\ca}$ is balanced.

$1)\Rightarrow 3)$  We first prove that $\ca (F,T)=0$, for all $F\in\ct^\perp$ and $T\in\ct$ or, equivalently, that
$\ct^\perp\subseteq ^\perp\ct$. Indeed, let $f:F\ra T$ be a morphism. Replacing $T$ by $\im(f)$ if necessary, we can
assume that $f$ is an epimorphism in $\ca$ and we need to prove that $T=0$. For that we consider the monomorphism $f^k:\ker(f)\ra F$. By Proposition ~\ref{triangles for Serre classes}, we know that $\ol{f^k}$ is both a monomorphism and an epimorphism in $\ul{\ca}$. Since this is a balanced category, we conclude that $\ol{f^k}$ is an isomorphism. But then Proposition ~\ref{triangles for Serre classes} gives that $f^k$ is a section (with cokernel in
$\ct$). Then $f$ is a retraction, but any section for it must be zero because $T\in\ct$ and $F\in\ct^\perp$. Therefore $T=0$ as desired.

On the other hand, since $(\ct ,\ct^\perp )$ is a torsion pair (\cf Lemma \ref{(co- contra-)variantly finite Serre classes}) we can consider,  for every object $M\in\ca$, the canonical exact sequence 
\[0\ra t(M)\ra M\arr{p_M}M/t(M)\ra 0.
\] 
According to Proposition \ref{triangles for Serre classes},  we know that
$\ol{p}_M$ is both a monomorphism and an epimorphism, whence an isomorphism,  in $\ul{\ca}$. Since $M/t(M)\in\ct^\perp\subseteq
^\perp\ct$ the dual of Proposition \ref{triangles for Serre classes}(2) says that $p_M$ is a retraction in $\ca$. Then we have
an isomorphism $M\cong t(M)\oplus(M/t(M))$ and the proof is finished.
\end{proof}

\section{Balance when $\ct$ consists of projective objects}\label{Balance when T consists of projective objects}
\addcontentsline{lot}{section}{2.4. Equilibrio cuando $\ct$ est\'a formada por proyectivos}

Throughout the rest of the chapter, unless explicitly said
otherwise,   the full subcategory $\ct$ of $\ca$ consists of
projective objects. Notice that we do not assume that $\ca$ has
enough projectives. Dual results of ours can be obtained by
assuming instead that $\ct$ consists of injective objects. We
leave their statement to the reader.

Our next observation is that every epimorphism in $\ul{\ca}$ can be represented by an epimorphism of $\ca$, something that was already noticed  by M. Auslander and M.~Bridger in the (classical) projectively stable category of a module category (\cf \cite{AuslanderBridger1969} and, for related
questions, see also \cite{Kato2005}).

\begin{lemma}\label{reduction to epis}
Let $f\in\ca(M,N)$ be such that $\ol{f}$ is an epimorphism in $\ul{\ca}$. Then the canonical monomorphism $\im(f)\ra N$ yields
a retraction in $\ul{\ca}$ and there exists an epimorphism $f'\in\ca(M',N)$ such that $\ol{f'}\cong\ol{f}$ in $\ul{\ca}$.
\end{lemma}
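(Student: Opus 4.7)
The idea is to exploit projectivity of the objects in $\ct$ in order to split off the inclusion $\im(f)\hookrightarrow N$ up to the ideal $\langle\ct\rangle$. Write $f=j\circ p$ for the epi-mono factorization, with $p:M\twoheadrightarrow\im(f)$ and $j:\im(f)\hookrightarrow N$, and let $q:N\twoheadrightarrow\cok(f)$ be the canonical projection. Since $q\circ f=0$, we have $\ol{q}\circ\ol{f}=0$ in $\ul{\ca}$, and since $\ol{f}$ is an epimorphism, $\ol{q}=0$, i.e., $q$ factors as $q=b\circ a$ with $a:N\ra T$, $b:T\ra\cok(f)$ and $T\in\ct$.

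The key step is then to use that $T$ is projective: since $q$ is an epimorphism, $b$ lifts to some $b':T\ra N$ with $q\circ b'=b$. Hence $q\circ(\id_N-b'\circ a)=0$, so $\id_N-b'\circ a$ factors through the kernel of $q$, which is $\im(f)$; pick $s:N\ra\im(f)$ with $j\circ s=\id_N-b'\circ a$. Thus
\[\id_N=j\circ s+b'\circ a.
\]
Passing to $\ul{\ca}$ the term $b'\circ a$ vanishes, giving $\ol{j}\circ\ol{s}=\ol{\id_N}$. This proves the first assertion: $\ol{j}$ is a retraction in $\ul{\ca}$.

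For the second assertion, set $M':=M\oplus T$ and $f':=\left[\begin{array}{cc}f & b'\end{array}\right]:M\oplus T\ra N$. To see $f'$ is an epimorphism in $\ca$, suppose $g:N\ra X$ satisfies $g\circ f'=0$; then $g\circ f=0$ and $g\circ b'=0$. The first gives $g\circ j\circ p=0$, and since $p$ is epi, $g\circ j=0$. Using the identity decomposition,
\[g=g\circ\id_N=g\circ j\circ s+g\circ b'\circ a=0.
\]
Finally, the canonical section $i_M:M\ra M\oplus T$ becomes an isomorphism in $\ul{\ca}$ (its retraction $\pi_M:M\oplus T\ra M$ satisfies $\pi_M\circ i_M=\id_M$, while $\id_{M\oplus T}-i_M\circ\pi_M$ factors through $T\in\ct$), and clearly $f'\circ i_M=f$, so $\ol{f'}\circ\ol{i_M}=\ol{f}$, witnessing $\ol{f'}\cong\ol{f}$ as morphisms into $N$ in $\ul{\ca}$.

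The main subtlety is the projectivity-based lifting of $b$ to $b'$, which is what turns a mere factorization $q=b\circ a$ through $\ct$ into the additive splitting $\id_N=j\circ s+b'\circ a$; all the rest is bookkeeping with the epi-mono factorization and with the fact that direct summands from $\ct$ are annihilated in $\ul{\ca}$.
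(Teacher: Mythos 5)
Your proof is correct and follows essentially the same route as the paper: factor $f^c=\cok(f)$ through an object $T\in\ct$ using that $\ol{f^c}=\ol{0}$, lift along the projectivity of $T$ to get $b':T\ra N$, deduce the decomposition $\id_N=js+b'a$ (hence $\ol{j}$ is a retraction), and take $f'=\left[\begin{array}{cc}f & b'\end{array}\right]:M\oplus T\ra N$. The only cosmetic difference is that you verify $f'$ is an epimorphism via the identity decomposition rather than via the universal property of the cokernel, and you spell out explicitly why $M\oplus T\cong M$ in $\ul{\ca}$, both of which are fine.
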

\begin{proof}
Let $f^c$ be the cokernel of $f$ in $\ca$. From $f^cf=0$ and the fact that $\ol{f}$ is an epimorphism we deduce that $\ol{f^c}=\ol{0}$. Therefore,
there exists a factorization as follows:
\[f^c:N\arr{h}T\arr{p}\cok(f)
\]
with $T\in\ct$.
%Observe that since $f^c$ is an epimorphism, then $p$ is an epimorphism and thus $\op{cok}(f)$ is always an epimorphic image of an object of $\ct$.

Now, since $T$ is projective there exists a morphism $g:T\ra N$ such that $f^cg=p$, and so $f^c(gh-\id_{N})=0$. Hence $gh-\id_{N}$
factors through the canonical morphism $\im(f)\ra N$, which is the kernel of $f^c$. Then $\im(f)\ra N$ yieds a retraction in
$\ul{\ca}$. This proves the first assertion.

Of course, the morphism 
\[f':=\left[\begin{array}{cc}f&g\end{array}\right]: M\oplus T\ra N
\] 
is a morphism such that  $\ol{f}'=\ol{f}$ in
$\ul{\ca}$. Let us see that $f'$ is an epimorphism in $\ca$. If we have $\varphi f'=0$, then $\varphi f=0$ and $\varphi g=0$. Hence,
there exists a morphism $u$ such that $uf^c=\varphi$, which implies $uf^cg=0$, \ie $up=0$. Since $p$ is an epimorphism we get $u=0$ and so $\varphi=0$.
\end{proof}

Since our first goal is to characterize when $\ul{\ca}$ is balanced, the above observation suggests to characterize those epimorphisms in $\ca$ which become epimorphisms, monomorphisms or isomorphisms when passing to $\ul{\ca}$. Our next results go in that direction.

\begin{proposition}\label{epimorphisms which become stable-epis}
Let $f\in\ca(M,N)$ be an epimorphism in $\ca$. The following assertions are equivalent:
\begin{enumerate}[1)]
\item $\ol{f}$ is an epimorphism in $\ul{\ca}$.
\item For every morphism $h:M\ra T$ with $T\in\ct$, there is a morphism $\tilde{h}:M\ra h(\ker(f))$ such that $h$ and $\tilde{h}$ coincide
on $\ker(f)$:
\[\xymatrix{\ker(f)\ar[r]^{f^k} & M\ar[r]^f\ar[dr]^{h}\ar@{.>}[d]_{\tilde{h}} & N \\
& h(\ker(f))\ar@{^(->}[r] & T
}
\]

When $\ct$ is also covariantly finite, the above conditions are also equivalent to:

\item For every (respectively, some) $\ct$-preenvelope $\mu^M:M\ra T^M$, there is a morphism $\tilde{\mu}:M\ra \mu^M(\ker(f))$ such that $\mu^M$ and
$\tilde{\mu}$ coincide on $\ker(f)$:
\[\xymatrix{\ker(f)\ar[r]^{f^k} & M\ar[r]^f\ar[dr]^{\mu^M}\ar@{.>}[d]_{\tilde{\mu}} & N \\
& \mu^{M}(\ker(f))\ar@{^(->}[r] & T^M
}
\]
\end{enumerate}
\end{proposition}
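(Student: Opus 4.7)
The plan is to prove $1) \Leftrightarrow 2)$ directly using the characterization of $\bar{f}$ being an epimorphism (morphisms vanishing after composing with $\bar{f}$ factor through $\ct$) together with the projectivity of the objects of $\ct$. Then $2) \Leftrightarrow 3)$ in the covariantly finite case will fall out almost for free from the universal property of the $\ct$-preenvelope.

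For $1) \Rightarrow 2)$, given $h:M\ra T$ with $T\in\ct$, my idea is to consider the canonical epimorphism $p:T\twoheadrightarrow T/h(\ker(f))$. Since $ph$ vanishes on $\ker(f)$ and $f$ is an epimorphism in $\ca$, there is a unique $g:N\ra T/h(\ker(f))$ with $gf=ph$. The composition $gf$ factors through $T$, so $\bar{g}\bar{f}=\bar{0}$; by 1), $\bar{g}=\bar{0}$, i.e., $g=qr$ with $r:N\ra T'$, $q:T'\ra T/h(\ker(f))$ and $T'\in\ct$. Now I use projectivity of $T'$ to lift $q$ along the epimorphism $p$ to a morphism $\tilde{q}:T'\ra T$. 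A diagram chase then shows that $h-\tilde{q}rf$ factors through the kernel of $p$, which is $h(\ker(f))$, producing (up to sign) the desired $\tilde{h}:M\ra h(\ker(f))$; restricted to $\ker(f)$ it agrees with $h$ because $\tilde{q}rff^{k}=0$.

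For $2) \Rightarrow 1)$, take any $g:N\ra L$ with $\bar{g}\bar{f}=\bar{0}$, so $gf=kh$ for some $h:M\ra T$, $k:T\ra L$, $T\in\ct$. Since $kh f^{k}=gff^{k}=0$, the morphism $k$ kills $h(\ker(f))$, so $k=\bar{k}p$ for some $\bar{k}:T/h(\ker(f))\ra L$. Applying 2) to $h$ yields $\tilde{h}:M\ra h(\ker(f))$ agreeing with $h$ on $\ker(f)$; then $(h-\iota\tilde{h})f^{k}=0$, where $\iota:h(\ker(f))\hookrightarrow T$, so $h-\iota\tilde{h}=g'f$ for some $g':N\ra T$ (using that $f$ is the cokernel of $f^{k}$). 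Composing with $p$ and using $p\iota=0$, I obtain $g=\bar{k}pg'$, which factors through $T\in\ct$, hence $\bar{g}=\bar{0}$.

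For the equivalence with 3), the implication $2)\Rightarrow 3)$ (``for every preenvelope'') is immediate by specialization. For ``for some preenvelope implies 2)'', given $h:M\ra T$, factor $h=\varphi\mu^{M}$ using that $\mu^{M}$ is a $\ct$-preenvelope; then $\varphi$ restricts to a morphism $\bar{\varphi}:\mu^{M}(\ker(f))\ra h(\ker(f))$, and $\tilde{h}:=\bar{\varphi}\tilde{\mu}$ does the job. The main subtlety I anticipate is the careful bookkeeping in $1)\Rightarrow 2)$: one has to make sure the lifted morphism $\tilde{q}$ exists (this requires $T'\in\ct$ being projective and $p$ being a genuine epimorphism in $\ca$, which is guaranteed since $p$ is a cokernel) and that the signs and kernel-of-$p$ identifications are consistent so that the resulting $\tilde{h}$ lands in $h(\ker(f))$ and truly restricts to $hf^{k}$ on $\ker(f)$.
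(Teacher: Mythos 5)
Your proposal is correct and follows essentially the same route as the paper: the converse direction and the preenvelope equivalence are argued identically, and your quotient $T/h(\ker(f))$ in $1)\Rightarrow 2)$ is exactly the pushout corner of $f$ and $h$ that the paper uses (via the dual of Proposition \ref{monos}), with the same projectivity lifting of the $\ct$-factorization and the same identification of $\ker(p)$ with $h(\ker(f))$.
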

\begin{proof}
$1)\Rightarrow 2)$ According to the dual of Proposition ~\ref{monos}, if $\ol{f}$ is an epimorphism then, for every
morphism $h:M\ra T$ to an object  $T\in \ct$, the parallel to $h$ in the pushout of $f$ and $h$ factors through an
object of $\ct$. We consider that pushout:
\[\xymatrix{M\ar[r]^{f}\ar[d]_{h} & N\ar[d]^{v} \\
 T\ar[r]_{u} & L
}\] 
so that we have a factorization $v:N\arr{v_1}T'\arr{v_2}L$, with $T'\in\ct$. Since $u$ is an epimorphism in $\ca$ and $T'$ is projective, we get a morphism $\varphi :T'\ra T$ such that $u\varphi =v_2$. Then one gets $u\varphi v_1=v$. Hence $u(h-\varphi v_1 f)=uh-vf=0$. From that we get a morphism
$\tilde{h} :M\ra \ker(u)$ such that $h-\varphi v_1 f=u^{k}\tilde{h}$, where $u^k$ is the kernel of $u$, and thus it follows easily that $h$ and $\tilde{h}$ coincide on $\ker(f)$. We only have to notice that, since $f$ is an epimorphism in $\ca$, a classical construction of pushouts in abelian categories gives
that $u$ is the cokernel map of the composition $\ker(f)\arr{f^k} M\arr{h} T$. Then $u^k:\ker(u)\ra T$ gets identified with the canonical inclusion $h(\ker(f))\hookrightarrow T$. 

$2)\Rightarrow 1)$ Let $g:N\ra L$ be a morphism in $\ca$ such that $\ol{g}\ol{f}=\ol{0}$ in $\ul{\ca}$. Then we
have a factorization $gf:M\arr{h}T\arr{p}L$, where $T\in\ct$. The hypothesis says that we have a morphism
$\tilde{h}:M\ra h(\ker(f))$ such that $j\tilde{h}f^k=hf^k$, where $j:h(\ker(f))\ra T$ is the monomorphism of the canonical epi-mono factorization:
\[\xymatrix{\ker(f)\ar[rr]^{hf^k}\ar[dr]_{q} && T \\
& h(\ker(f))\ar[ur]_{j} &
}
\]
Then $(h-j\tilde{h})f^k=0$, which implies that $h-j\tilde{h}$ factors through the cokernel of $f^k$, \ie through $f$. Then we have a morphism $\xi :N\ra T$ such that $h-j\tilde{h}=\xi f$ and we get $gf=ph=p(j\tilde{h}+\xi f)$. Since $pjq=phf^k=gff^k=0$ we get that $pj=0$ and so $gf=p\xi f$. Being $f$ an epimorphism, we
conclude that $g=p\xi$,  and hence  $\ol{g}=\ol{0}$.

Suppose now that $\ct$ is covariantly finite.  We only have to prove (the weak version of) $3)\Rightarrow 2)$. Let us fix a $\ct$-preenvelope $\mu^M:M\ra T^M$ for which condition $3)$ holds. If now $h:M\ra T$ is a morphism in $\ca$ with $T\in\ct$, then there is a factorization $h:M\arr{\mu^M}T^M\arr{g}T$.
Then $g$ induces a morphism $\tilde{g}:\mu^M(\ker(f))\ra h(\ker(f))$. The hypothesis says that we have a morphism $\tilde{\mu}:M\ra\mu^M(\ker(f))$ such that $\mu^M$ and $\tilde{\mu}$ coincide on $\ker(f)$. Put now $\tilde{h}=:\tilde{g}\tilde{\mu}:M\ra h(\ker(f))$
and one easily checks that $h$ and $\tilde{h}$ coincide on $\ker(f)$.
\end{proof}

\begin{proposition} \label{epimorphisms which become stable-monos}
Let $f:M\ra N$ be an epimorphism in $\ca$. Then $\ol{f}$ is a monomorphism in $\ul{\ca}$ if and only if the canonical monomorphism $f^k: \ker(f)\ra M$ factors through an object of $\ct$.
\end{proposition}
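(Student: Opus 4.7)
The forward implication is essentially formal and does not even use that $f$ is epi. Since $f f^k=0$ in $\ca$, passing to $\ul{\ca}$ gives $\ol{f}\,\ol{f^k}=\ol{0}$. Because $\ol{f}$ is a monomorphism in $\ul{\ca}$, this forces $\ol{f^k}=\ol{0}$, which by definition of the ideal $\langle\ct\rangle$ means that $f^k:\ker(f)\ra M$ factors through an object of $\ct$.

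For the converse I will invoke Proposition \ref{monos}: $\ol{f}$ is a monomorphism in $\ul{\ca}$ iff, for every $p:T'\ra N$ with $T'\in\ct$, the parallel to $p$ in the pullback of $f$ and $p$ factors through an object of $\ct$. So fix such a $p$ and form the pullback
\[\xymatrix{L\ar[r]^{g}\ar[d]_{h} & M\ar[d]^{f} \\ T'\ar[r]_{p} & N}\]
Here is where the two hypotheses come in together. Since $f$ is an epimorphism in $\ca$, so is its pullback $h:L\ra T'$; since $T'\in\ct$ is projective, this epimorphism splits via some section $s:T'\ra L$. Using the pullback universal property with the identity on $T'$ and any lift $p':T'\ra M$ of $p$ through $f$ (which exists, again by projectivity of $T'$ and $f$ being epi), one computes that $L\cong\ker(f)\oplus T'$, with $g$ corresponding under this isomorphism to the matrix $[f^k,\,p']:\ker(f)\oplus T'\ra M$.

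Now I use the assumption that $f^k$ factors as $f^k=\alpha\beta$ with $\beta:\ker(f)\ra T''$, $\alpha:T''\ra M$ and $T''\in\ct$. Then $g$ factors as
\[\ker(f)\oplus T'\xrightarrow{\beta\oplus\id}T''\oplus T'\xrightarrow{[\alpha,\,p']}M,\]
and the middle object $T''\oplus T'$ lies in $\ct$ since $\ct$ is a full additive subcategory closed under direct summands (hence under finite direct sums). So $g$ factors through an object of $\ct$, and Proposition \ref{monos} delivers that $\ol{f}$ is a monomorphism in $\ul{\ca}$.

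The one step that requires slight care is the explicit identification $L\cong\ker(f)\oplus T'$ with the stated form of $g$; this is the standard description of pullbacks in an abelian category when one leg is a split epi, but I expect to spell it out to make the factorization transparent. Beyond that, everything is a direct combination of Proposition \ref{monos} with the projectivity of objects in $\ct$ and the fact that pullbacks preserve epimorphisms in an abelian category.
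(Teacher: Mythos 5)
Your proof is correct and follows essentially the same route as the paper: the forward direction is the same formal argument, and for the converse the paper likewise invokes Proposition \ref{monos}, splits the pullback $L\cong\ker(f)\oplus T'$ using projectivity of $T'$ (writing the morphism parallel to $p$ as $\left[\begin{array}{cc}f^k&\xi\end{array}\right]$ with $f\xi=p$), and factors it through $T''\oplus T'\in\ct$ exactly as you do. The only detail you defer, the explicit identification of the pullback with $\ker(f)\oplus T'$, is asserted in the same way in the paper, so nothing is missing.
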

\begin{proof}
If $\ol{f}$ is a monomorphism then, since $\ol{f}\ol{f^k}=\ol{0}$, we have $\ol{f^k}=\ol{0}$, \ie $f^k$ factors through $\ct$. On the other hand, suppose that the
canonical monomorphism $f^k:\ker(f)\ra M$ factors in the form $f^k:\ker(f)\arr{u}T\arr{q}M$, where $T\in\ct$. According to Proposition \ref{monos}, it is enough to prove that, for every morphism $p:T'\ra N$  from an object $T'\in\ct$,  the parallel to $p$ in the pullback of $f$ and $p$ is a morphism which factors
through an object of $\ct$. We then consider that pullback:
\[\xymatrix{ L\ar[r]^{r}\ar[d]_{g} & M\ar[d]^{f} \\
T'\ar[r]_{p} & N }
\]
Since $f$ is epimorphism it follows that $g$ is also epimorphism, and we have an exact sequence
\[0\ra \ker(f)\ra L\arr{g}T'\ra 0
\]
in $\ca$. Since $T'$ is projective this sequence splits and we can identify $L=\ker(f)\oplus T'$, 
\[g=\left[\begin{array}{cc}0&\id\end{array}\right]:\ker(f)\oplus T'\ra T'
\] 
and 
\[r=\left[\begin{array}{cc}f^k&\xi\end{array}\right]:\ker(f)\oplus T'\ra M,
\] 
where $\xi:T'\ra M$ is a morphism such that $f\xi=p$. But then we have the following factorization of $r$: \[\xymatrix{ \ker(f)\oplus T'\ar[rr]^{\scriptsize{\left[\begin{array}{cc}u&0\\0&\id\end{array}\right]}} &&
T\oplus T'\ar[rr]^{\scriptsize{\left[\begin{array}{cc}q&\xi\end{array}\right]}} && M }
\]
Therefore $r$ factors through the object $T\oplus T'\in\ct$.
\end{proof}

\begin{corollary}\label{epimorphisms which become stable-monepis}
Let $f:M\ra N$ be an epimorphism in $\ca$. The following assertions are equivalent:
\begin{enumerate}[1)]
 \item $\ol{f}$ is both a monomorphism and an epimorphism in $\ul{\ca}$ 
 \item The canonical monomorphism $f^k:\ker(f)\ra M$ factors through an object of $\ct$ and, for every morphism $h:M\ra T$ to an object of $\ct$, there is a morphism $\tilde{h}:M\ra h(\ker(f))$ such that $h$ and $\tilde{h}$ coincide on $\ker(f)$.
\end{enumerate}
\end{corollary}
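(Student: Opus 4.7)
The plan is to observe that this corollary is an immediate conjunction of the two immediately preceding propositions. First, I would note that assertion $1)$ is, by definition, the conjunction of ``$\ol{f}$ is a monomorphism in $\ul{\ca}$'' and ``$\ol{f}$ is an epimorphism in $\ul{\ca}$''. Since $f$ is assumed to be an epimorphism in $\ca$, Proposition \ref{epimorphisms which become stable-monos} applies and tells us that $\ol{f}$ is a monomorphism in $\ul{\ca}$ if and only if the canonical monomorphism $f^k:\ker(f)\ra M$ factors through an object of $\ct$; this gives exactly the first clause of condition $2)$.

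Next, since $f$ is still an epimorphism in $\ca$, the equivalence $1)\Leftrightarrow 2)$ of Proposition \ref{epimorphisms which become stable-epis} applies and tells us that $\ol{f}$ is an epimorphism in $\ul{\ca}$ if and only if for every morphism $h:M\ra T$ with $T\in\ct$ there exists a morphism $\tilde{h}:M\ra h(\ker(f))$ such that $h$ and $\tilde{h}$ coincide on $\ker(f)$; this is exactly the second clause of condition $2)$.

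Combining these two equivalences, we get $1)\Leftrightarrow 2)$ as stated. Since both ingredients have already been established, there is no real obstacle: the proof is just a two-line remark pointing back to Proposition \ref{epimorphisms which become stable-monos} and Proposition \ref{epimorphisms which become stable-epis}. No additional covariant finiteness assumption on $\ct$ is needed here, because we are using only the unconditional forms of those propositions.
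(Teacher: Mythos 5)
Your proposal is correct and is exactly the argument the paper intends: the corollary is stated without proof precisely because it is the conjunction of Proposition \ref{epimorphisms which become stable-monos} and the equivalence $1)\Leftrightarrow 2)$ of Proposition \ref{epimorphisms which become stable-epis}, both of which apply since $f$ is an epimorphism in $\ca$, and neither of which requires covariant finiteness of $\ct$ in the form you use. Nothing further is needed.
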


The following result finishes this series of preliminaries leading
to the characterization of when $\ul{\ca}$ is balanced.

\begin{proposition} \label{epimorphisms which become stable-isos}
Let $f:M\ra N$ be an epimorphism in $\ca$. The following assertions are equivalent:
\begin{enumerate}[1)]
\item $\ol{f}$ is an isomorphism in $\ul{\ca}$. 
\item $f$ is a retraction in $\ca$ with $\ker(f)\in\ct$. 
\item $f$ is a retraction in $\ca$ whose kernel map $f^k:\ker(f)\ra M$ factors through an object of $\ct$.
\end{enumerate}
\end{proposition}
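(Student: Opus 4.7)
The implication $2)\Rightarrow 3)$ is trivial, since if $\ker(f)\in\ct$ then $f^k:\ker(f)\to M$ obviously factors through an object of $\ct$, namely through $\ker(f)$ itself.

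For $3)\Rightarrow 1)$, I would argue as follows. If $f$ is a retraction in $\ca$, there is a section $g:N\ra M$ with $fg=\id_N$, so in particular $\ol{f}\ol{g}=\id_N$ in $\ul{\ca}$. The morphism $\id_M-gf$ satisfies $f(\id_M-gf)=0$, so it factors through the kernel map $f^k:\ker(f)\ra M$. By hypothesis $f^k$ factors through an object of $\ct$, hence so does $\id_M-gf$. Therefore $\ol{gf}=\ol{\id_M}$ in $\ul{\ca}$, and combined with $\ol{fg}=\id_N$ this shows that $\ol{f}$ is an isomorphism with inverse $\ol{g}$.

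The nontrivial direction is $1)\Rightarrow 2)$, where the difficulty is to upgrade the factorizations of stable identities into honest splittings and to conclude that $\ker(f)$ itself belongs to $\ct$. Since $\ol{f}$ is in particular a monomorphism, Proposition~\ref{epimorphisms which become stable-monos} guarantees that $f^k:\ker(f)\ra M$ factors as $f^k=qu$ with $u:\ker(f)\ra T$ and $q:T\ra M$ for some $T\in\ct$. On the other hand, pick $g\in\ca(N,M)$ representing the inverse of $\ol{f}$; then $\id_N-fg$ factors through some $T_2\in\ct$, say $\id_N-fg=\beta\alpha$ with $\alpha:N\ra T_2$ and $\beta:T_2\ra N$. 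Since $f$ is an epimorphism in $\ca$ and $T_2$ is projective, $\beta$ lifts to some $\beta':T_2\ra M$ with $f\beta'=\beta$. Then $\id_N=f(g+\beta'\alpha)$, so $f$ is a retraction in $\ca$ with section $s:=g+\beta'\alpha$.

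It remains to show that $\ker(f)\in\ct$. Since $f$ is a split epimorphism, the canonical monomorphism $f^k:\ker(f)\ra M$ is a section: the associated projection $\pi:M\ra\ker(f)$, characterized by $\pi f^k=\id_{\ker(f)}$ and $f^k\pi=\id_M-sf$, gives a left inverse to $f^k$. Combining this with the factorization $f^k=qu$ yields $\id_{\ker(f)}=\pi f^k=(\pi q)\circ u$, so $u:\ker(f)\ra T$ is a section. Hence $\ker(f)$ is a direct summand of $T\in\ct$, and since $\ct$ is closed under direct summands we conclude $\ker(f)\in\ct$, as required. The main obstacle is precisely this last step: one must exploit both the projectivity of the objects of $\ct$ (to lift $\beta$ and thereby produce a section of $f$ in $\ca$, not just in $\ul{\ca}$) and the closure of $\ct$ under direct summands, in order to pass from ``$f^k$ factors through $\ct$'' to ``$\ker(f)\in\ct$''.
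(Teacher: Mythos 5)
Your proof is correct, but the hard implication is handled by a genuinely different argument than the paper's. The paper factors the problem as $1)\Rightarrow 3)$ and $3)\Rightarrow 2)$, and for $1)\Rightarrow 3)$ it invokes Corollary \ref{epimorphisms which become stable-monepis}: from the factorization $\id_M-gf=vh$ through $T'\in\ct$ it uses the stable-epimorphism condition of Proposition \ref{epimorphisms which become stable-epis} to produce $\tilde{h}:M\ra h(\ker(f))$ agreeing with $h$ on $\ker(f)$, and then the explicit computation $\hat{v}\tilde{h}f^k=\id_{\ker(f)}$ shows directly that $f^k$ is a section; splitness of $f$ is then implicit and $3)\Rightarrow 2)$ finishes via closure under direct summands. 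You instead bypass Proposition \ref{epimorphisms which become stable-epis} and Corollary \ref{epimorphisms which become stable-monepis} entirely: you only use the stable-mono characterization (Proposition \ref{epimorphisms which become stable-monos}) to factor $f^k$ through $T\in\ct$, and you exploit projectivity of $T_2\in\ct$ to lift the stable right inverse $g$ along the epimorphism $f$ to an honest section $s=g+\beta'\alpha$ in $\ca$; from $fs=\id_N$ you get that $f^k$ is split, and composing its retraction with the factorization $f^k=qu$ exhibits $\ker(f)$ as a direct summand of $T$, hence in $\ct$. Both arguments rest on the same standing hypotheses (objects of $\ct$ projective, $\ct$ closed under direct summands); yours is shorter and more self-contained, while the paper's fits the proposition into its chain of stable-epi/stable-mono characterizations that are reused later, e.g.\ in the proof of Theorem \ref{characterization balanced}. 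Your $3)\Rightarrow 1)$ is also fine and is essentially a direct verification where the paper simply routes $2)\Rightarrow 1)$ and leaves it as clear.
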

\begin{proof}
$2)\Rightarrow 1)$ and $2)\Rightarrow 3)$ are clear.

$3)\Rightarrow 2)$ If 3) holds then $\ker(f)$ is a direct summand of an object of $\ct$, and our assumption on $\ct$ implies
that $\ker(f)\in\ct$.

$1)\Rightarrow 3)$ Since $\ol{f}$ is an isomorphism it is also a monomorphism and an epimorphism in $\ul{\ca}$, so that Corollary \ref{epimorphisms which become stable-monepis} applies. Let us choose $g:N\ra M$ such that $\ol{g}=\ol{f}^{-1}$. Then $\id_M-gf$ factors in the form
$M\arr{h}T'\arr{v}M$, with $T'\in\ct$. Applying Corollary \ref{epimorphisms which become stable-monepis}, we get a morphism $\tilde{h}:M\ra h(\ker(f))$ such that $j\tilde{h} f^k=h f^k$, where $j:h(\ker(f))\hookrightarrow T'$ is the canonical inclusion (\ie, the monomorphism in the epi-mono factorization of $h f^k$).

Notice now that the restriction of $v h=\id_M-g f$ to $\ker(f)$ is the identity map, in particular $(vh)(\ker(f))=\ker(f)$. If we denote by $\hat{v}$ the  morphism
$h(\ker(f))\ra (vh)(\ker(f))=\ker(f)$ induced by $v$ then, by definition, we have $f^k\hat{v}=v j$. But then
$f^k\hat{v}\tilde{h} f^k= vj\tilde{h} f^k=v h f^k=(\id_M-g f)f^k=f^k$.  Since $f^k$ is a monomorphism we then get that $\hat{v}\tilde{h} f^k$ is the identity on
$\ker(f)$. Therefore $f^k$ is a section which, according to Corollary \ref{epimorphisms which become stable-monepis}, factors through an object of $\ct$.
\end{proof}

We come now to the main result of this section.

\begin{theorem}\label{characterization balanced}
Let $\ca$ be an abelian category and  $\ct$ be a full subcategory consisting of projective objects which is closed under finite direct sums and direct summands. The following assertions are equivalent:
\begin{enumerate}[1)]
\item $\ul{\ca}$ is balanced.
\item If $\mu:K\ra M$ is a non-split monomorphism which factors through an object of $\ct$,  then there exists a morphism $h:M\ra T'$, with $T'\in\ct$,
such that no morphism $\tilde{h}:M\ra (h\mu)(K)$ coincides with $h$ on $K$.
\item If $\mu:T\ra M$ is a non-split monomorphism with $T\in\ct$, then there exists a morphism $h:M\ra T'$, with $T'\in\ct$, such that no morphism $\tilde{h}:M\ra (h\mu)(T)$ coincides with $h$ on $T$.
\item If $f:M\ra N$ is an epimorphism satisfying conditions $i)$ and $ii)$ below, then it is a retraction:
\begin{enumerate}
\item[i)] $f^k:\ker(f)\ra M$ factors through an object of $\ct$.
\item[ii)] For every $h:M\ra T\in\ct$ the canonical epimorphism $\ker(f)\twoheadrightarrow h(\ker(f))$ factors through $f^k:\ker(f)\ra M$.
\end{enumerate}
\item For every non-split epimorphism $f:M\ra N$ with $\ker(f)\in\ct$, there exists a morphism $h:M\ra T'$, with $T'\in\ct$, such that $\ker(f)+\ker(h-gf)$ is strictly contained in $M$ for all morphisms $g:N\ra T'$.
\end{enumerate}
\end{theorem}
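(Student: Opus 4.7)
The strategy is to establish the core equivalence $(1) \Leftrightarrow (4)$, add $(1) \Leftrightarrow (2) \Rightarrow (3)$ and $(3) \Leftrightarrow (5)$, observe $(4) \Rightarrow (5)$ as trivial, and close the cycle with the main step $(5) \Rightarrow (4)$ via a pushout argument. The core tools are Lemma \ref{reduction to epis} (every stable epimorphism is represented by an epi in $\ca$), Corollary \ref{epimorphisms which become stable-monepis} (for $f$ epi in $\ca$, $\ol{f}$ is both mono and epi in $\ul{\ca}$ iff (i) and (ii) hold) and Proposition \ref{epimorphisms which become stable-isos} ($\ol{f}$ is iso iff $f$ is a retraction with $\ker(f)\in\ct$). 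Combining them gives $(1) \Leftrightarrow (4)$ directly, after noting that if $f$ is a retraction and $f^k = vu$ with $T\in\ct$, then $\ker(f)$ is a direct summand of $T$ and hence lies in $\ct$ by closure under summands.

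For $(1) \Leftrightarrow (2)$: to each non-split mono $\mu : K\to M$ factoring through $\ct$ I would associate $f := \cok(\mu) : M \to N$, so that $f^k = \mu$ gives (i) and $f$ fails to be a retraction. The existence of $\tilde h$ as in (2) for every $h$ is exactly condition (ii) for $f$, so if both (i) and (ii) held then $\ol f$ would be mono and epi; balance would force $\ol f$ iso and hence $f$ a retraction, a contradiction. Conversely, any mono$+$epi $\ol\phi$ in $\ul{\ca}$ is represented by some epi $f$ via Lemma \ref{reduction to epis}, and if $f$ were not a retraction then $f^k$ would be a non-split mono factoring through $\ct$ by Proposition \ref{epimorphisms which become stable-monos}, so (2) would produce an $h$ preventing (ii), contradicting Proposition \ref{epimorphisms which become stable-epis}. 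The implication $(2) \Rightarrow (3)$ is trivial, since a non-split mono $\mu : T \to M$ with $T\in\ct$ factors through $\ct$ via the identity.

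For $(3) \Leftrightarrow (5)$ I would observe the elementary equivalence: for an epimorphism $f$ and a morphism $h : M \to T'$, the existence of some $g$ with $\ker(f)+\ker(h-gf)=M$ is the same as $\im(h-gf) \subseteq h(\ker(f))$, and this rewrites $h-gf$ as the composition of a morphism $\tilde h : M\to h(\ker(f))$ with the inclusion $h(\ker(f)) \hookrightarrow T'$, which is exactly the condition that $\tilde h$ coincides with $h$ on $\ker(f)$. Under this reformulation, applying (3) to the kernel mono $f^k$ of a non-split epi $f$ with $\ker(f)\in\ct$ is exactly (5), and conversely passing from a non-split mono $\mu : T\to M$ with $T\in\ct$ to $f := \cok(\mu)$ yields a non-split epi with $\ker(f) = T \in \ct$. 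The implication $(4) \Rightarrow (5)$ is trivial.

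The main obstacle is $(5) \Rightarrow (4)$, which requires bridging from ``$f^k$ factors through $\ct$'' to ``$\ker(f)\in\ct$''. Assuming $f:M\to N$ satisfies (i) and (ii) but is not a retraction, and writing $f^k = vu$ with $u:\ker(f)\to T$ mono and $T\in\ct$, I would form the pushout
\[
\xymatrix{
\ker(f) \ar[r]^{u} \ar[d]_{f^k} & T \ar[d]^{\alpha} \\
M \ar[r]_{\beta} & P
}
\]
in the abelian category $\ca$, so that $\alpha$ is a monomorphism with cokernel $N$, producing an epi $f' : P \to N$ with $\ker(f') = T \in \ct$. The easier verification is that (ii) transfers from $f$ to $f'$: given $h' : P \to T''$, set $h := h'\beta$ and let $\tilde h : M \to h(\ker(f))$ be the morphism provided by (ii) for $f$; the identity $\tilde h f^k = q'u$ (where $q'$ is the epi part of $h'\alpha$) makes $(q', \tilde h)$ compatible over the pushout and yields the required $\tilde h' : P \to h'(\ker(f'))$. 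The delicate point, where I expect the real work to lie, is showing that $f'$ is not a retraction: were $f'$ a retraction, the short exact sequence $0 \to T \to P \to N \to 0$ would split, giving $P \cong T \oplus N$ with $\beta = (\beta_T, f)$ and $\beta_T f^k = u$ (from $\beta f^k = \alpha u$); applying (ii) for $f$ to $h := \beta_T : M \to T$ would produce $\tilde h : M \to u(\ker(f))$ with $\tilde h f^k = \bar u$, where $u = j\bar u$ is the epi-mono factorization and $\bar u$ is an isomorphism because $u$ is mono; then $\bar u^{-1}\tilde h$ would retract $f^k$, contradicting the non-retraction of $f$. Finally, applying (5) to $f'$ produces an $h'$ violating the (ii) just transferred, the required contradiction.
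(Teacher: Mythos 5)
Your proof is correct and takes essentially the same route as the paper: the same stable-category criteria yield $1)\Leftrightarrow 4)$ (and your direct $1)\Leftrightarrow 2)$ is just the paper's ``clear'' $2)\Leftrightarrow 4)$ combined with it), your elementary equivalence underlying $3)\Leftrightarrow 5)$ is exactly the paper's computation (including the small cokernel argument needed for the direction $\im(h-gf)\subseteq h(\ker f)\Rightarrow \ker f+\ker(h-gf)=M$, which you should spell out), and your pushout of $u$ and $f^k$ is literally the paper's pushout square, with your transfer of condition (ii) across the pushout and your non-splitness claim for $f'$ corresponding precisely to the non-split and split cases in the paper's proof of $3)\Rightarrow 2)$. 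The only difference is organizational: you place the pushout bridge at $5)\Rightarrow 4)$ instead of $3)\Rightarrow 2)$, which changes nothing of substance.
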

\begin{proof}
$4)\Leftrightarrow 2)$ and $2)\Rightarrow 3)$ are clear.

$1)\Leftrightarrow 4)$ is a consequence of Corollary \ref{epimorphisms which become stable-monepis} and Proposition
\ref{epimorphisms which become stable-isos}, bearing in mind that
every epimorphism of $\ul{\ca}$ can be represented by an
epimorphism of $\ca$.

$3)\Rightarrow 2)$ Let $\mu:K\ra M$ be a non-split monomorphism of $\ca$ which admits  a factorization
\[\mu:K\arr{u}T\arr{v}M
\]
with $T\in\ct$. Consider the following pushout diagram $(*)$ with exact rows
\[\xymatrix{0\ar[r] & K\ar[r]^{\mu}\ar[d]^{u} & M\ar[r]\ar[d]^{u'} & N\ar[r]\ar@{=}[d] & 0 \\
0\ar[r] & T\ar[r]^{\mu'} & M'\ar[r] & N\ar[r] & 0 }
\]
We work separately the case in which $\mu '$ does not split and the case in which it  splits. In the first case, assertion $3)$ gives a morphism $h':M'\ra T'$, with $T'\in\ct$, such that no morphism $\tilde{h}':M'\ra h'\mu'(T)$ coincides with $h'$ on $T$. We claim that $h=:h'u'$ satisfies the property needed
in assertion $2)$. Suppose not, so that there exists a morphism $\tilde{h}:M\ra (h\mu)(K)=(h'u'\mu)(K)=(h'\mu 'u)(K)$ which coincides with $h$ on $K$. Since $u(K)\subseteq T$ we get $(h'\mu 'u)(K)\subseteq (h'\mu ')(T)$ and the composition $M\arr{\tilde{h}}(h'\mu 'u)(K)\hookrightarrow (h'\mu ')(T)$ is denoted by $\alpha$. If $\hat{h}':T\ra (h'\mu ')(T)$ denotes the unique morphism such that the composition $T\arr{\hat{h}'}(h'\mu')(T)\hookrightarrow T'$ coincides with $h'\mu '$, then we have the following commutative diagram, which can be completed with $\tilde{h'}$ due to the universal property of pushouts:
\[\xymatrix{K\ar[r]^{\mu}\ar[d]^{u} & M\ar[d]^{u'}\ar@/^/[ddr]^{\alpha} & \\
T\ar[r]^{\mu'}\ar@/_/[drr]_{\hat{h}'} & M'\ar@{.>}[dr]^{\tilde{h'}} & \\
&& (h'\mu')(T) }
\]
But then $\tilde{h}'$ coincides with $h'$ on $T$ due to the equality $\tilde{h}'\mu '=\hat{h}'$. That contradicts the choice of $h'$.

Suppose next that $\mu'$ splits. Then we rewrite the pushout diagram $(*)$ as:
\[\xymatrix{0\ar[r] & K\ar[r]^{\mu}\ar[d]^{u} & M\ar[rr]^{f}\ar[d]^{\scriptsize{\left[\begin{array}{c}h\\f\end{array}\right]}} && N\ar@{=}[d]\ar[r] & 0 \\
0\ar[r] & T\ar[r]_{\scriptsize{\left[\begin{array}{c}\id\\0\end{array}\right]}\ \ \ \ \ } & T\oplus
N\ar[rr]_{\scriptsize{\left[\begin{array}{cc}0&\id\end{array}\right]}} && N\ar[r] & 0 }
\]
We claim that $h$ satisfies the property required by assertion $2)$. Indeed, suppose not, so that  there exists $\tilde{h}:M\ra (h\mu)(K)=u(K)$ that coincides with $h$ on $K$. Since $u$ is a monomorphism the induced morphism $\hat{u}:K\arr{\sim}u(K)$ is an isomorphism, which, in addition, has the property that
$u\hat{u}^{-1}$ is the canonical inclusion $j:u(K)=(h\mu)(K)\hookrightarrow T$. Consider the composition 
\[\alpha:M\arr{\tilde{h}}u(K)\arr{\hat{u}^{-1}}K.
\]
Then $u\alpha\mu=u\hat{u}^{-1}\tilde{h}\mu=j\tilde{h}\mu=h\mu=u$. Since $u$ is a monomorphism we get $\alpha\mu=\id_{K}$, and so $\mu$ split, which is a contradiction.

$3)\Rightarrow 5)$ Let $f:M\ra N$ be an epimorphism as indicated and let $\mu:T:=\ker(f)\ra M$ be its kernel. Choose a morphism
$h:M\ra T'$ as given by assertion $3)$. Suppose that there is a morphism $g:N\ra T'$ such that $T+\ker(h-gf)=X$. Then we have:
$\im(h-gf)=(h-gf)(X)=(h-gf)(T)\subseteq h(T)+(gf)(T)=h(T)$. That gives a morphism $\tilde{h}:M\ra h(T)$ such that $i\tilde{h}=h-gf$, where $i:h(T)\ra T'$ is the canonical inclusion. But then $i\tilde{h}\mu=(h-gf)\mu=h\mu$, which contradicts the choice of $h$.

$5)\Rightarrow 3)$ Let $\mu:T\ra M$ be a non-split monomorphism with $T\in\ct$ and let $f:M\ra N$ be its cokernel. We pick up a
morphism $h:M\ra T'\in\ct$ satisfying the hypothesis of condition $5)$. We will prove that it also satisfies the requirements of condition $3)$. Suppose that there is a morphism $\tilde{h}:M\ra \im(h\mu)$ such that in the diagram 
\[\xymatrix{T\ar[r]^{\mu}\ar[d]_{p} & M\ar[d]^{h}\ar[dl]_{\tilde{h}}\\
\im(h\mu)\ar[r]_{i} & T' }
\]
we have $h\mu=i\tilde{h}\mu$, where $ip=h\mu$ is the canonical epi-mono factorization of $h\mu$. Then $(h-i\tilde{h})\mu=0$ and so there exists a morphism $g:N\ra T'$ such that $i\tilde{h}=h-gf$. Thus $\ker(h-gf)=\ker(i\tilde{h})\cong\ker(\tilde{h})$. We claim that the morphism
\[\left[\begin{array}{cc}\tilde{h}^k&\mu\end{array}\right]: \ker(\tilde{h})\oplus T\ra M
\]
is an epimorphism and that will imply that $\ker(h-gf)+\ker(f)=M$, thus yielding a contradiction. Indeed, from $\tilde{h}\mu=p$ we get that $\tilde{h}$ is an epimorphism, and so it is the cokernel of its kernel. Now, if $a$ is a morphism such that $a\tilde{h}^k=0$ and $a\mu=0$, then there exists a morphism $b$
such that $b\tilde{h}=a$. Hence $0=b\tilde{h}\mu=bp$, which implies $b=0$, and therefore $a=0$.
\end{proof}

\begin{corollary} \label{balanced wrt projective-injective}
Let $\ca$ be an abelian category and let $\ct$ be a full subcategory of $\ca$ consisting of projective-injective objects, closed under finite direct sums and direct summands. Then $\ul{\ca}$ is balanced.
\end{corollary}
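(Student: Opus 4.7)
The plan is to verify condition 3) of Theorem \ref{characterization balanced}, which states that $\ul{\ca}$ is balanced if and only if for every non-split monomorphism $\mu:T\ra M$ with $T\in\ct$ there exists a morphism $h:M\ra T'$ with $T'\in\ct$ such that no $\tilde{h}:M\ra (h\mu)(T)$ coincides with $h$ on $T$.

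The key observation is that the hypothesis of condition 3) becomes vacuous under our extra assumption. Indeed, let $\mu:T\ra M$ be any monomorphism with $T\in\ct$. Since $\ct$ consists of injective objects, $T$ is injective in $\ca$, so the identity $\id_T:T\ra T$ extends along $\mu$, producing a retraction $r:M\ra T$ with $r\mu=\id_T$. Hence every monomorphism from an object of $\ct$ splits, and the premise ``$\mu$ is a non-split monomorphism with $T\in\ct$'' is never satisfied.

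Therefore condition 3) holds vacuously, and Theorem \ref{characterization balanced} yields that $\ul{\ca}$ is balanced. There is no real obstacle: the whole argument is a one-line application of injectivity reducing the nontrivial hypothesis of the theorem to the empty set. Notice that the projectivity assumption on the objects of $\ct$ is needed only in order to be in the situation where Theorem \ref{characterization balanced} applies; it is the injectivity that does the actual work here.
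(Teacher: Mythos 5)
Your proof is correct and is exactly the paper's argument: the paper simply notes that condition 3) of Theorem \ref{characterization balanced} "trivially holds", which is precisely your observation that injectivity of the objects of $\ct$ forces every monomorphism $T\ra M$ with $T\in\ct$ to split, so the hypothesis of condition 3) is vacuous. Nothing to add.
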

\begin{proof}
Condition $3)$ of Theorem ~\ref{characterization balanced} trivially holds.
\end{proof}

\section{Weak balance when $\ct$ consists of projective objects}\label{Weak balance when T consists of projective objects}
\addcontentsline{lot}{section}{2.5. Equilibrio d\'ebil cuando $\ct$ est\'a formada por proyectivos}

In this section the hypotheses on $\ct$ are the same as in section 3, but, in order to have suitable one-sided (pre)triangulated structures on $\ul{\ca}$, further assumptions will be made in each particular result. We already know that strong epimorphisms in $\ul{\ca}$, whenever they make sense, are represented by epimorphisms. In order to characterize the weak balance of $\ul{\ca}$ in case $\ct$ is functorially finite, we will identify first those epimorphisms in $\ca$ which become strong epimorphisms and strong monos in $\ul{\ca}$.

\begin{proposition}\label{objetos cero en right triangles2}
Let $f\in\ca(M,N)$ be an epimorphism and suppose that $\ct$ is covariantly finite in $\ca$. The following assertions are equivalent:
\begin{enumerate}[1)]
\item $\ol{f}$ is a strong epimorphism in the right triangulated category $\ul{\ca}$. 
\item For every (respectively, some) $\ct$-preenvelope $\mu^M:M\ra T^M$, one has that the canonical monomorphism $\mu^M(\ker(f))\ra T^M$ splits. 
\item There is a decomposition $M=T'\oplus M'$, with $T'\in\ct$, and
\[f=\left[\begin{array}{cc}0 & f'\end{array}\right]: T'\oplus M'\ra N
\]
such that $\ca(f',T):\ca(N,T)\arr{\sim}\ca(M',T)$ is an isomorphism for every  $T\in\ct$.
\item There is a decomposition $M=T'\oplus M'$, with $T'\in\ct$, and
\[f=\left[\begin{array}{cc}0 & f'\end{array}\right]: T'\oplus M'\ra N
\]
such that every morphism $h:M'\ra T$ to an object of $\ct$ vanishes on $\ker(f')$.
\end{enumerate}
\end{proposition}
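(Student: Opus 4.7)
The plan is to prove $1)\Leftrightarrow 2)$ by a direct pushout argument, the essentially tautological $3)\Leftrightarrow 4)$, and the substantial equivalence $2)\Leftrightarrow 4)$. For $1)\Leftrightarrow 2)$ I would invoke the dual of Proposition \ref{left zero}: $\ol{f}$ is a strong epimorphism in $\ul{\ca}$ if and only if, in the pushout square of $f$ along any (respectively, some) $\ct$-preenvelope $\mu^M:M\ra T^M$, the remaining corner lies in $\ct$. Since $f$ is an epimorphism of $\ca$, this pushout yields a short exact sequence $0\ra\mu^M(\ker(f))\ra T^M\ra L\ra 0$, and since $T^M$ is projective the condition $L\in\ct$ is equivalent to the splitting of the inclusion $\mu^M(\ker(f))\hookrightarrow T^M$ (using that $\ct$ is closed under direct summands). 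The equivalence $3)\Leftrightarrow 4)$ is immediate: the map $\ca(f',T)$ is automatically injective because $f'$ is an epimorphism of $\ca$, and its surjectivity for every $T\in\ct$ says exactly that every $h:M'\ra T$ factors through $f'$, which is to say that $h$ vanishes on $\ker(f')$.

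For $2)\Rightarrow 4)$, suppose the inclusion $k:T':=\mu^M(\ker(f))\hookrightarrow T^M$ is split by a retraction $r:T^M\ra T'$, and write $\pi:\ker(f)\twoheadrightarrow T'$ for the epimorphism in the epi-mono factorisation $\mu^M\circ f^k=k\pi$. Since $T'\in\ct$ is projective, $\pi$ admits a section $s$; set $\iota:=f^k s:T'\ra M$. The identity $r\mu^M\iota=rk\pi s=\id_{T'}$ exhibits $\iota$ as a split monomorphism with retraction $r\mu^M$, yielding a decomposition $M=T'\oplus M'$, and $f=\left[\begin{smallmatrix}0 & f'\end{smallmatrix}\right]$ because $f\iota=f f^k s=0$. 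The same calculation delivers the key identity $\mu^M\iota=k\pi s=k$. Given any morphism $h:M'\ra T$ with $T\in\ct$, extend it to $\tilde h=\left[\begin{smallmatrix}0 & h\end{smallmatrix}\right]:M\ra T$ and use the preenveloping property of $\mu^M$ to find $g:T^M\ra T$ with $g\mu^M=\tilde h$; precomposing with $\iota$ yields $gk=g\mu^M\iota=\tilde h\iota=0$. On the other hand, the composition $\ker(f')\hookrightarrow M'\hookrightarrow M$ lands in $\ker(f)$, so $\mu^M$ restricted to $\ker(f')$ factors through $k$; post-composing with $g$ gives $h|_{\ker(f')}=\tilde h|_{\ker(f')}=g\mu^M|_{\ker(f')}=0$, establishing $4)$.

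For $4)\Rightarrow 2)$, I would construct a $\ct$-preenvelope directly from the decomposition: take a $\ct$-preenvelope $\mu^{M'}:M'\ra T^{M'}$ of $M'$ and form the block-diagonal $\mu^M:=\left(\begin{smallmatrix}\id_{T'} & 0 \\ 0 & \mu^{M'}\end{smallmatrix}\right):T'\oplus M'\ra T'\oplus T^{M'}$, which is manifestly a $\ct$-preenvelope of $M$. Since $\ker(f)=T'\oplus\ker(f')$ as subobjects of $M=T'\oplus M'$, and since the hypothesis of $4)$ forces $\mu^{M'}$ (a morphism into $T^{M'}\in\ct$) to vanish on $\ker(f')$, we obtain $\mu^M(\ker(f))=T'\oplus 0$ inside $T'\oplus T^{M'}$, which is plainly a direct summand, so the inclusion splits.

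The main obstacle is $2)\Rightarrow 4)$: one has to manufacture a decomposition of $M$ in $\ca$ out of a splitting that a priori lives only inside $T^M$, and then exploit the identity $\mu^M\iota=k$ to control the behaviour of arbitrary morphisms $M'\ra T$ upon lifting through the preenvelope. Once this interplay between the section of $\pi$ on $\ker(f)$ and the preenveloping property of $\mu^M$ is set up, the remaining steps either cite the dual of Proposition \ref{left zero} or consist of routine abelian-categorical manipulations.
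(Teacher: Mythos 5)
Your proof is correct, and its skeleton coincides with the paper's: $1)\Leftrightarrow 2)$ via the dual of Proposition \ref{left zero} plus the observation that the pushout corner is $T^M/\mu^M(\ker(f))$, which lies in $\ct$ iff the inclusion $\mu^M(\ker(f))\hookrightarrow T^M$ splits (projectivity of $T^M$ and closure of $\ct$ under direct summands); and $3)\Leftrightarrow 4)$ exactly as in the paper. Where you genuinely diverge is in the block $2)\Leftrightarrow 4)$. For $4)\Rightarrow 2)$ the paper works with an \emph{arbitrary} preenvelope $\mu^M=\left[\begin{array}{cc}u & v\end{array}\right]$, shows $u$ is a section, rewrites $\mu^M$ in upper-triangular form with a preenvelope $\psi$ of $M'$ in the corner, and concludes $\mu^M(\ker(f))=T'\oplus 0$ directly; this yields the ``every'' form of $2)$ with no detour. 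You instead build one specific block-diagonal preenvelope $\id_{T'}\oplus\mu^{M'}$ and obtain only the ``some'' form, then rely on the $1)\Leftrightarrow 2)$ bridge to upgrade to ``every''; this is legitimate, since the dual of Proposition \ref{left zero} already carries the every/some equivalence, and it makes your argument a bit shorter at the price of routing the strong form through assertion $1)$. For $2)\Rightarrow 4)$ your construction is the same idea as the paper's but more explicit: where the paper splits $\mu^M(\ker(f))$ off $\im(\mu^M)$ and then says ``we can assume without loss of generality'' that $M=T'\oplus M'$ with $T'\subseteq\ker(f)$, you exhibit the splitting concretely via $\iota=f^ks$ with retraction $r\mu^M$, exploit the identity $\mu^M\iota=k$, and finish with the clean observation $gk=0$ for any lift $g$ of $\tilde h$ through the preenvelope. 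One small point worth making explicit: in $2)\Rightarrow 4)$ you need $T'=\mu^M(\ker(f))\in\ct$ (not merely projective) for the decomposition required in $4)$; this follows because $T'$ is a direct summand of $T^M\in\ct$ and $\ct$ is closed under direct summands.
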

\begin{proof}
$1)\Leftrightarrow 2)$ Let $\mu^M:M\ra T^M$ be any $\ct$-preenvelope. Since $f$ is an epimorphism a canonical construction gives a cocartesian square of the form:
\[\xymatrix{M\ar[r]^{\mu^M}\ar[d]_{f} & T^{M}\ar[d]^{\pi} \\
N\ar[r] & T^{M}/\mu^M(\ker(f))
}
\]
where $\pi$ is the canonical projection. Now, from the dual of Proposition ~\ref{left zero}, we get that condition $1)$ holds if and only if $T^M/\mu^M(\ker(f))\in\ct$. Since $\ct$ consists of projective objects, the latter is equivalent to $2)$.

$3)\Leftrightarrow 4)$ Let $f':M'\ra N$ be an epimorphism in $\ca$. Clearly, the map $\ca (f',T):\ca (N,T)\ra\ca (M',T)$ is an
isomorphism if and only if every morphism $h:M'\ra T$ vanishes on $\ker(f')$. From that the equivalence of assertions $3)$ and $4)$ is obvious.

$4)\Rightarrow 2)$ If we have a decomposition 
\[f=\left[\begin{array}{cc}0&f'\end{array}\right]:M=T'\oplus M'\ra N
\] 
as given by condition $4)$, then $f'$ is necessarily an epimorphism in $\ca$. Let now 
\[\mu^M=\left[\begin{array}{cc}u & v \end{array}\right]:T'\oplus M'\ra T^M
\]
be any $\ct$-preenvelope. One readily sees that $u$ is a section, so that $\mu^M$ can be written as a matrix 
\[\mu^M=\left[\begin{array}{cc}\id&\varphi\\0&\psi\end{array}\right]:T'\oplus M'\ra T'\oplus T''=T^M.
\] 
One then checks that $\psi:M'\ra T''$ is a $\ct$-preenvelope and, in particular, we have $\varphi =\rho\psi$, for some morphism $\rho
:T''\ra T'$. Now we have 
\[\mu^M(\ker(f))=\left[\begin{array}{cc}\id&\rho\psi\\0&\psi\end{array}\right](T'\oplus \ker(f')).
\] 
By hypothesis, $\psi (\ker(f'))=0$, from which it follows that $\mu^M(\ker(f))=T'\oplus 0$ is a direct summand of $T^M$.

$2)\Rightarrow 4)$ Let us fix a $\ct$-preenvelope $\mu^M:M\ra T^M$ satisfying condition $2)$. Then the composition of inclusions
$\mu^M(\ker(f))\hookrightarrow \im(\mu^M)\hookrightarrow T^M$ is a section in $\ct$, which implies that $T'=:\mu^M(\ker(f))$ is also a
direct summand of $\im(\mu^M)$, so that we have a decomposition $\im(\mu^M)=T'\oplus V$. Looking now at the induced epimorphisms
$\tilde{\mu}:M\twoheadrightarrow \im(\mu^M)=T'\oplus V$ and $\ker(f)\twoheadrightarrow \mu^M(\ker(f))=T'$,  and bearing in mind
that $T'$ is a projective object, we can assume without loss of generality that there are decompositions $M=T'\oplus M'$ and
\[f=\left[\begin{array}{cc}0 & f' \end{array}\right]:T'\oplus M'\ra N
\] 
(\ie, $T'\subseteq \ker(f)$). Then $\mu^M$ gets identified with
\[\left[\begin{array}{cc}1 & 0\\ 0 & \mu^{M'}\end{array}\right]:T'\oplus M'\ra T'\oplus T^{M'}=T^{M},
\] 
where $\mu^{M'}:M'\ra T^{M'}$ is a $\ct$-preenvelope such that $\mu^{M'}(\ker(f'))=0$. Since every morphism $h:M'\ra T$, with $T\in\ct$, factors through $\mu^{M'}$ assertion $4)$ follows.
\end{proof}

\begin{proposition} \label{epimorphisms which become strong monos}
Let $f\in\ca (M,N)$ be an epimorphism and suppose that $\ct$ is contravariantly finite in $\ca$. In the left triangulated category $\ul{\ca}$, the morphism $\ol{f}$ is a strong monomorphism if and only if $\ker(f)\in\ct$. 
\end{proposition}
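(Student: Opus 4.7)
The plan is to apply the characterization of strong monomorphisms provided by Proposition \ref{left zero}: namely, that $\ol{f}$ is a strong monomorphism in $\ul{\ca}$ if and only if, for some (equivalently, every) $\ct$-precover $p_{N}:T_{N}\ra N$, the upper-left corner $T$ of the cartesian square of $f$ and $p_{N}$ belongs to $\ct$. Once this reformulation is in place, the proof reduces to analyzing that pullback.

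First, I would fix any $\ct$-precover $p_{N}:T_{N}\ra N$ and form the pullback
\[\xymatrix{T\ar[r]^g\ar[d]_{h} & M\ar[d]^{f} \\
T_{N}\ar[r]_{p_{N}} & N
}
\]
Since $f$ is an epimorphism and pullbacks of epimorphisms along arbitrary morphisms in an abelian category are again epimorphisms, the parallel morphism $h:T\ra T_{N}$ is an epimorphism and the induced map $\ker(h)\ra\ker(f)$ is an isomorphism. This yields a short exact sequence
\[0\ra\ker(f)\ra T\arr{h}T_{N}\ra 0.\]
The whole argument will pivot on the splitting of this sequence, exploiting that objects of $\ct$ are projective and that $\ct$ is closed under finite direct sums and direct summands.

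For the ``only if'' direction: assuming $\ol{f}$ is a strong monomorphism, Proposition \ref{left zero} gives $T\in\ct$, so $T$ is projective. Then $T_{N}$ being projective (as an object of $\ct$) forces the displayed sequence to split, exhibiting $\ker(f)$ as a direct summand of $T\in\ct$. Closure of $\ct$ under direct summands yields $\ker(f)\in\ct$.

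For the ``if'' direction: assuming $\ker(f)\in\ct$, the object $T_{N}\in\ct$ is projective, so the sequence splits and $T\cong\ker(f)\oplus T_{N}$, which lies in $\ct$ by closure under finite direct sums. Proposition \ref{left zero} then gives that $\ol{f}$ is a strong monomorphism. I do not anticipate any serious obstacle here; the only mildly delicate point is making sure that the pullback-is-epi and kernel-preservation facts are invoked correctly, but these are standard properties of abelian categories and require no extra hypotheses beyond those already in force.
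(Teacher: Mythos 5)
Your proof is correct and follows essentially the same route as the paper: pull back $f$ along a $\ct$-precover $p_N$, observe that the resulting short exact sequence $0\ra\ker(f)\ra T\ra T_N\ra 0$ splits because $T_N$ is projective, so $T\in\ct$ if and only if $\ker(f)\in\ct$, and conclude via Proposition \ref{left zero}. No gaps.
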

\begin{proof}
If $p_N:T_N\ra N$ is any $\ct$-precover, then the pullback of $f$ and $p_{N}$
\[\xymatrix{L\ar[r]^{g}\ar[d]_{r} & T_N\ar[d]^{p_N} \\
 M\ar[r]_{f} & N
}
\]
gives rise to a short exact sequence $0\ra \ker(f)\ra L\arr{g} T_N\ra 0$ in $\ca$, which splits due to the fact that $T_N$ is projective. Then $\ker(f)$ belongs to $\ct$ if and only if $L$ belongs to $\ct$. Then the result follows from Proposition ~\ref{left zero}.
\end{proof}

\begin{corollary} \label{epimorphisms with zero left-and-right vertex}
Suppose that $\ct$ is functorially finite in $\ca$ and let $f:M\ra N$ be an epimorphism in $\ca$. The following assertions are equivalent:
\begin{enumerate}[1)]
 \item $\ol{f}$ is a strong monomorphism and a strong epimorphism in the pretriangulated category $\ul{\ca}$.
\item $\ker(f)\in\ct$ and, for some $\ct$-preenvelope $\mu^M:M\ra T^M$, the object $\mu^M(\ker(f))$ is a direct summand of $T^M$.
\item $\ker(f)\in\ct$ and there is a decomposition 
\[f=\left[\begin{array}{cc}0&f'\end{array}\right]:M=T'\oplus M'\ra N
\] 
such
that every morphism $h:M'\ra T$ to an object of $\ct$ vanishes on $\ker(f')$
\end{enumerate}
\end{corollary}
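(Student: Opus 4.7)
The plan is to obtain this corollary as an immediate conjunction of the two preceding propositions, using that functorial finiteness of $\ct$ gives us both the left and right triangulated structures on $\ul{\ca}$ needed to talk about strong monomorphisms and strong epimorphisms simultaneously. By definition, assertion $1)$ amounts to saying that $\ol{f}$ is both a strong monomorphism in the left triangulated category $(\ul{\ca},\Omega)$ and a strong epimorphism in the right triangulated category $(\ul{\ca},\Sigma)$.

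First I would handle the ``strong monomorphism'' half. Since $\ct$ is contravariantly finite in $\ca$, Proposition \ref{epimorphisms which become strong monos} applies to the epimorphism $f$ and tells us that $\ol{f}$ is a strong monomorphism in $\ul{\ca}$ if and only if $\ker(f)\in\ct$. This yields the common first conditions in $2)$ and $3)$.

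Next I would handle the ``strong epimorphism'' half. Since $\ct$ is covariantly finite in $\ca$, Proposition \ref{objetos cero en right triangles2} applies to $f$, giving the equivalence of: $\ol{f}$ is a strong epimorphism; the canonical monomorphism $\mu^M(\ker(f))\ra T^M$ splits for some (equivalently, every) $\ct$-preenvelope $\mu^M:M\ra T^M$; and the existence of a decomposition $f=\left[\begin{array}{cc}0 & f'\end{array}\right]:T'\oplus M'\ra N$ with $T'\in\ct$ such that every morphism $h:M'\ra T$ with $T\in\ct$ vanishes on $\ker(f')$. Conjoining ``$\ker(f)\in\ct$'' with condition $2)$ of Proposition \ref{objetos cero en right triangles2} produces assertion $2)$ of the corollary, while conjoining ``$\ker(f)\in\ct$'' with condition $4)$ of Proposition \ref{objetos cero en right triangles2} produces assertion $3)$.

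There is no real obstacle here: the work has already been done in the two propositions, and the corollary is just their intersection when both one-sided triangulated structures are available. The only point worth spelling out explicitly in the writeup is that the hypothesis of functorial finiteness is exactly what is needed to invoke both propositions at once, so no additional hypothesis (such as enough projectives in $\ca$) is necessary.
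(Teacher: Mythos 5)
Your proposal is correct and is exactly the paper's argument: the corollary is stated there as a direct consequence of Propositions \ref{objetos cero en right triangles2} and \ref{epimorphisms which become strong monos}, with functorial finiteness providing both one-sided structures simultaneously, just as you explain. Nothing is missing.
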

\begin{proof}
Direct consequence of Propositions \ref{objetos cero en right triangles2} and \ref{epimorphisms which become strong monos}.
\end{proof}

All these preliminaries lead to the following main result of this section.

\begin{theorem} \label{characterization weakly-balanced}
Let $\ca$ be an abelian category and $\ct\subseteq\ca$ be a functorially finite full subcategory consisting of projective objects and closed under direct summands. Consider the following assertions:
\begin{enumerate}[1)]
\item For every object $T\in\ct$, there is a monomorphism $T\rightarrowtail E$, where $E$ is an injective(-projective) object of $\ca$ which belongs to $\ct$. \item For every $T\in\ct\setminus\{0\}$, there is a non-zero morphism $\varphi:T\ra T'$, with $T'\in\ct$, which factors through an injective object of $\ca$. 
\item The pretriangulated category $\ul{\ca}=\ca/\langle\ct\rangle$ is weakly balanced. 
\item If $f:M\ra N$ is an epimorphism in $\ca$ such that $\ker(f)\in\ct$ and every morphism $h:M\ra T\in\ct$ vanishes on $\ker(f)$, then $f$ is an isomorphism in $\ca$. 
\item If $j: T\ra M$ is a non-zero monomorphism in $\ca$, with $T\in\ct$, then there is a morphism $h:M\ra T'$ such that $h j\neq 0$, for some $T'\in\ct$. 
\item If $N$ is an object of $\ca$ such that the covariant functor
\[\Ext_{\ca}^1(N,?)_{|\ct}:\ct\ra \Mod\Z
\]
is non-zero, then $\Ext_{\ca}^1(N,?)_{|\ct}$ does not contain a non-zero representable subfunctor.
\end{enumerate}
Then $1)\Rightarrow 2)\Rightarrow 3)\Leftrightarrow 4)\Leftrightarrow 5)\Leftrightarrow 6)$ and, in case $\ca$ has enough injectives, all assertions $2)-6)$ are equivalent.
\end{theorem}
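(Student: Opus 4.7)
I plan to establish the equivalences by a ring of implications $(1)\Rightarrow(2)\Rightarrow(3)\Leftrightarrow(4)\Leftrightarrow(5)\Leftrightarrow(6)$, closed by $(5)\Rightarrow(2)$ under the hypothesis of enough injectives. The two key technical inputs already in place are Lemma \ref{reduction to epis} (every morphism with $\ol{f}$ an epi in $\ul{\ca}$ can be represented by an epi of $\ca$) and Corollary \ref{epimorphisms with zero left-and-right vertex} (which translates the simultaneous condition ``strong mono and strong epi'' into an explicit decomposition $M=T'\oplus M'$, $f=[0,f']$ with $\ker f'\in\ct$ and every $h:M'\to T$ vanishing on $\ker f'$). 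Everything else is a diagram chase or a short-exact-sequence manipulation.

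For $(1)\Rightarrow(2)$, given $T\in\ct\setminus\{0\}$ one simply takes the embedding $T\hookrightarrow E$ with $E\in\ct$ injective-projective: it is non-zero and factors (trivially) through the injective $E$. For $(2)\Rightarrow(3)$, I assume $\ol{f}$ is both strong mono and strong epi, reduce to $f$ epi via Lemma \ref{reduction to epis}, and apply the corollary to get the decomposition $f=[0,f']$ with $\ker f'\in\ct$. If $\ker f'=0$ then $f'$ is iso and so is $\ol f$; otherwise $(2)$ produces a non-zero $\varphi:\ker f'\to T''$ with $\varphi=\beta\alpha$ through an injective $E$, and injectivity lets me extend $\alpha$ along the mono $\ker f'\hookrightarrow M'$ to $\tilde\alpha:M'\to E$, producing $h:=\beta\tilde\alpha:M'\to T''$ which does not vanish on $\ker f'$, contradicting the corollary. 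For $(3)\Rightarrow(4)$, an $f$ satisfying the hypotheses of $(4)$ trivially meets the corollary's decomposition condition with $T'=0$, so $\ol f$ is iso in $\ul{\ca}$; choosing $g:N\to M$ with $\id_M-gf=vh$ for some $h:M\to T\in\ct$ and restricting to $j:\ker f\hookrightarrow M$ yields $j=j-gfj=vhj=0$, hence $\ker f=0$ and $f$ is iso. The reverse $(4)\Rightarrow(3)$ uses the same corollary to reduce to checking $(4)$ on the component $f'$. The equivalence $(4)\Leftrightarrow(5)$ is transparent: a failure of $(5)$ via $j:T\to M$ gives the cokernel $f:M\to M/j(T)$ meeting $(4)$'s hypotheses but not its conclusion, and conversely the inclusion of $\ker f$ into $M$ translates $(4)$ into $(5)$.

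For $(5)\Leftrightarrow(6)$, I use the standard long exact sequence
\[\ca(M,T')\longrightarrow\ca(T,T')\xrightarrow{\,\partial\,}\Ext^1_{\ca}(N,T')\longrightarrow\Ext^1_{\ca}(M,T')\]
attached to $0\to T\to M\to N\to 0$ with $N:=M/j(T)$. The vanishing of every $h:M\to T'$ on $j(T)$ is equivalent, $T'$ varying in $\ct$, to the connecting map $\partial$ being injective for all $T'\in\ct$; by Yoneda this is precisely the statement that the natural transformation $\ct(T,?)\to\Ext^1_{\ca}(N,?)_{|\ct}$ corresponding to the extension class is a monomorphism of functors. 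So $(5)$ fails exactly when some $\Ext^1_{\ca}(N,?)_{|\ct}$ admits a non-zero representable subfunctor, which is precisely the negation of $(6)$. Finally, $(5)\Rightarrow(2)$ when $\ca$ has enough injectives is immediate: for $T\in\ct\setminus\{0\}$ embed $T\hookrightarrow E$ with $E$ injective in $\ca$, apply $(5)$ to obtain $h:E\to T'\in\ct$ with $hj\neq 0$, and set $\varphi:=hj$, which is non-zero, lands in $\ct$, and factors through the injective $E$.

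The main obstacle is the careful bookkeeping in $(5)\Leftrightarrow(6)$, specifically verifying that the Yoneda correspondence between natural transformations $\ct(T,?)\to\Ext^1_{\ca}(N,?)_{|\ct}$ and elements of $\Ext^1_{\ca}(N,T)$ intertwines ``subfunctor'' with ``$\partial$ is pointwise injective on $\ct$'', and that ``non-zero subfunctor'' corresponds to ``$T\neq 0$'' (not merely ``extension non-split''). All remaining steps are routine once Lemma \ref{reduction to epis} and Corollary \ref{epimorphisms with zero left-and-right vertex} are in hand.
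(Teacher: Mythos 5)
Your proof is correct and follows essentially the same route as the paper's: the same reduction of stable epimorphisms to epimorphisms of $\ca$, the same use of Corollary \ref{epimorphisms with zero left-and-right vertex} (together with the injectivity-extension trick for condition $2)$), and the same Yoneda argument for the $\Ext^1$-condition, where the paper's pushout description of $\mu_{T'}$ is exactly the connecting homomorphism in your long exact sequence. The only cosmetic difference is that you prove $2)\Rightarrow 3)$ and $5)\Leftrightarrow 6)$ directly where the paper proves $2)\Rightarrow 4)$ and $4)\Leftrightarrow 6)$, which is immaterial given the established equivalences $3)\Leftrightarrow 4)\Leftrightarrow 5)$.
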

\begin{proof}
$1)\Rightarrow 2)$ is clear.

$2)\Rightarrow 4)$ Let $f:M\ra N$ be an epimorphism in $\ca$ satisfying the hypothesis of $(4)$. Suppose that $f$ is not an isomorphism. Then $\ker(f)\in\ct\setminus\{0\}$, and by $(2)$ there exists a non-zero morphism $\varphi:\ker(f)\ra T'$, for some $T'\in\ct$, which factors through an injective object of $\ca$.
Then $\varphi$ extends to $M$ and this contradicts the hypothesis of $4)$.

$3)\Leftrightarrow 4)$ is a direct consequence of Corollary \ref{epimorphisms with zero left-and-right vertex} and Proposition \ref{epimorphisms which become stable-isos}, bearing in mind that strong epimorphisms of $\ul{\ca}$ can be represented by epimorphisms of $\ca$.

$4)\Leftrightarrow 5)$ is clear.

$4)\Rightarrow 6)$ Let $N\in\ca$ be an object such that $\Ext_{\ca}^1(N,?)_{|\ct}:\ct\ra\Mod\Z$ is a non-zero functor. Suppose that we have a monomorphism of functors $\mu:\ct (T,?)\ra\Ext_{\ca}^1(N,?)_{|\ct}$.  We choose the element $\tilde{\mu}\in\Ext_{\ca}^1(N,T)$ corresponding to $\mu$ by Yoneda's lemma. Then $\tilde{\mu}$ represents a short exact sequence $0\ra T\arr{j}M\arr{f}N\ra 0$ in $\ca$. Then $f$ is an epimorphism in $\ca$ such that $\ker(f)\in\ct$. On the other hand, if $h:M\ra T'$ is a morphism to an object of $\ct$, then $\mu_{T'}:\ct (T,T')\ra \Ext_{\ca}^1(N,T')$ maps $hj$ onto the (exact) lower row of the following pushout diagram:
\[\xymatrix{0\ar[r] & T\ar[r]^{j}\ar[d]_{hj} & M\ar[r]\ar[d] & N\ar[r]\ar@{=}[d] & 0 \\
0\ar[r] & T'\ar[r] & M'\ar[r] & N\ar[r] & 0
}
\]
That lower row splits because $hj$ factors through $j$. Therefore $\mu_{T'}(hj)=0$. The monomorphic condition of $\mu$ implies that $hj=0$. Thus, every morphism $h:M\ra T'$, with $T'\in\ct$, vanishes on $T=\ker(f)$. Now assertion $(4)$ says that $f$ is an isomorphism in $\ca$ and, hence, that $T=0$.

$6)\Rightarrow 4)$ Let $f:M\ra N$ be an epimorphism as indicated in assertion $4)$ and consider the associated short exact sequence $0\ra T\arr{j} M\arr{f}N\ra 0$ in $\ca$. By Yoneda's lemma, we get an induced morphism of functors $\mu :\ct (T,?)\ra\Ext_{\ca}^1(N,?)_{|\ct}$ (functors $\ct\ra\Mod\Z$). We claim
that $\mu$ is a monomorphism and then the hypothesis will give $\ct (T,?)=0$, whence $T=0$, which will end the proof of this implication. Let us prove our claim. If $T'\in\ct$ is any object then $\mu_{T'}:\ct (T,T')\ra \Ext_{\ca}^1(N,T')$ maps a morphism $\varphi:T\ra T'$ onto the (exact) lower row of the
following pushout diagram:
\[\xymatrix{0\ar[r] & T\ar[r]^{j}\ar[d]_{\varphi} & M\ar[r]\ar[d] & N\ar[r]\ar@{=}[d] & 0 \\
0\ar[r] & T'\ar[r] & M'\ar[r] & N\ar[r] & 0
}
\]
One has that $\mu_{T'}(\varphi )=0$ if and only if  that lower row splits. But this is equivalent to say that $\varphi$ is the restriction to $T=\ker(f)$ of a morphism $h:M\ra T'$. In that case, the hypothesis on $f$ says that $\varphi =0$. Therefore $\mu$ is a monomorphism of functors as desired.

We finally prove $5)\Rightarrow 2)$ in case $\ca$ has enough injectives. Let us take $0\neq T\in\ct$. There is a monomorphism $j:T\ra E$, with $E$ injective. Now assertion $5)$ says that there exists a morphism $h:E\ra T'$, with $T'\in\ct$, such that $\varphi =hj\neq 0$. That ends the proof.
\end{proof}

\begin{example}
Let $A$ be a right perfect left coherent ring such that the injective envelope of the (regular) right $A$-module $A_{A}$ is projective (\eg if $A$ is a QF3 in the sense of C. M. Ringel and H. Tachikawa \cite{Tachikawa1973}). Since the class $\op{Proj}A$ of projective modules is closed under small products it follows that $\op{Proj}A$ is covariantly finite in $\Mod A$, and hence functorially finite. The stable category of $\Mod A$ module the projective $A$-modules is a pretriangulated category which is weakly balanced due to Theorem \ref{characterization weakly-balanced}.
\end{example}

We shall end the section by characterizing (weak) balance in abelian categories in which subobjects of projective objects are projective. We denote by $\Sub(\ct)$\index{$\Sub(\ct)$} the full subcategory formed by the subobjects of objects in $\ct$. The following is an auxiliary lemma.

\begin{lemma} \label{covariantly-finite in hereditary}
Let $\ch$ be an abelian category in which subobjects of projective objects are projective and let $\ct$ be a full subcategory of $\ch$ consisting of projective objects and closed under direct summands. The following assertions are equivalent:
\begin{enumerate}[1)]
\item $\ct$ is covariantly finite in $\ch$. 
\item $\Sub(\ct)$ is covariantly finite in $\ch$ and every $P\in\Sub(\ct)$ has a $\ct$-preenvelope.  
\item $(^\perp\ct ,\Sub(\ct))$ is a (split) torsion pair in $\ch$ and every $P\in\Sub(\ct)$ has a $\ct$-preenvelope.
\end{enumerate}
\end{lemma}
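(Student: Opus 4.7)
The plan is to prove the cycle $1)\Rightarrow 2)\Rightarrow 3)\Rightarrow 1)$, exploiting throughout the fact that every object of $\Sub(\ct)$ is projective in $\ch$ (by the hypothesis on $\ch$), that $\Sub(\ct)$ is tautologically closed under subobjects, and that it is closed under finite direct sums because $\ct$ is.

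For $1)\Rightarrow 2)$, given any $M\in\ch$ I would take a $\ct$-preenvelope $\mu:M\to T_M$ and factor it as $M\twoheadrightarrow\im(\mu)\hookrightarrow T_M$, so that $\im(\mu)\in\Sub(\ct)$. Then the epi $\pi:M\twoheadrightarrow\im(\mu)$ is a $\Sub(\ct)$-preenvelope: for any $f:M\to P$ with $P\in\Sub(\ct)$, fix a monomorphism $\iota:P\hookrightarrow T'$ into some $T'\in\ct$; the composite $\iota f$ factors through $\mu$ by the preenvelope property, whence $\ker(\mu)\subseteq\ker(\iota f)=\ker(f)$ (the last equality since $\iota$ is mono), so $f$ factors through $\pi$. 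The second condition in 2) is immediate since $\ct$ is covariantly finite.

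For $2)\Rightarrow 3)$, the Hom-orthogonality $\ch({}^\perp\ct,\Sub(\ct))=0$ is clear by embedding the target into an object of $\ct$, and ${}^\perp\ct$ is trivially closed under quotients while $\Sub(\ct)$ is closed under subobjects. The core step is the construction of the torsion sequence for $M\in\ch$. I would take a $\Sub(\ct)$-preenvelope $\mu:M\to P$, factor it as $M\twoheadrightarrow\im(\mu)\hookrightarrow P$, and set $t(M):=\ker(\mu)$, so that $M/t(M)\cong\im(\mu)\in\Sub(\ct)$. To verify $t(M)\in{}^\perp\ct$, given $f:t(M)\to T$ with $T\in\ct$ I would form the pushout of $f$ along the inclusion $t(M)\hookrightarrow M$; the resulting object $N$ fits in a short exact sequence $0\to T\to N\to\im(\mu)\to 0$ which splits because $\im(\mu)\in\Sub(\ct)$ is projective, so $N\cong T\oplus\im(\mu)\in\Sub(\ct)$ (embedding into $T\oplus T'\in\ct$). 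The preenvelope property of $\mu$ then forces the induced map $M\to N$ to factor through $P$, and chasing the diagram along the mono $T\hookrightarrow N$ forces $f=0$. Splitness of the torsion pair is automatic since $M/t(M)$ is projective.

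For $3)\Rightarrow 1)$, for each $M\in\ch$ I would use the split decomposition $M\cong t(M)\oplus(M/t(M))$ with $M/t(M)\in\Sub(\ct)$ and a $\ct$-preenvelope $\nu:M/t(M)\to T$ supplied by the second half of 3); the composite $M\twoheadrightarrow M/t(M)\xrightarrow{\nu}T$ is then a $\ct$-preenvelope of $M$, because any morphism from $M$ to an object of $\ct$ annihilates $t(M)\in{}^\perp\ct$ and hence factors first through $M/t(M)$ and then through $\nu$. The main obstacle will be the implication $2)\Rightarrow 3)$, specifically verifying $\ker(\mu)\in{}^\perp\ct$: the pushout-and-split argument sketched above rests delicately on the hereditary hypothesis on $\ch$ (to conclude $\im(\mu)$ is projective) and on the additive closure of $\ct$ (to conclude $T\oplus\im(\mu)\in\Sub(\ct)$); without either of these the map $M\to N$ would not factor through $P$ and the conclusion $f=0$ would fail.
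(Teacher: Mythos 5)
Your argument is correct, and its backbone coincides with the paper's: the implication $1)\Rightarrow 2)$ is exactly the paper's (epi-mono factorization of a $\ct$-preenvelope yields a $\Sub(\ct)$-preenvelope), and your $3)\Rightarrow 1)$ is the paper's $2)\Rightarrow 1)$ in disguise, since the projection $M\twoheadrightarrow M/t(M)$ is a $\Sub(\ct)$-preenvelope and you compose it with a $\ct$-preenvelope of $M/t(M)$. The one genuine divergence is the passage to the torsion pair: the paper disposes of $2)\Leftrightarrow 3)$ by quoting the dual of Lemma \ref{(co- contra-)variantly finite Serre classes} (a subcategory closed under subobjects and extensions is covariantly finite iff it is the torsionfree class of a torsion pair), noting that $\Sub(\ct)$ is closed under subobjects and, since it consists of projectives, under extensions. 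You instead build the torsion radical by hand: $t(M)=\ker$ of a $\Sub(\ct)$-preenvelope, with the pushout-plus-splitting argument showing $t(M)\in{}^\perp\ct$. Your route is more self-contained and makes visible exactly where the two hypotheses enter (the hereditary assumption on $\ch$ to split the extension, the additivity of $\ct$ to keep the pushout inside $\Sub(\ct)$), and it delivers the splitness of the torsion pair on the spot; the paper's route is shorter and reuses a general statement it has already proved, at the cost of leaving the dualization and the extension-closure check implicit. Both are sound, and your verification that every morphism to an object of $\Sub(\ct)$ kills $\ker(\mu)$ (via a monomorphism into $\ct$) is exactly the detail the paper's terse $1)\Rightarrow 2)$ leaves to the reader.
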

\begin{proof}
$1)\Rightarrow 2)$ If $\mu^M:M\ra T^M$ is a $\ct$-preenvelope and we take its epi-mono factorization $M\arr{p}P\stackrel{j}{\hookrightarrow}T^M$,
then $p$ is a $\Sub(\ct)$-preenvelope.

$2)\Rightarrow 1)$ If $\lambda:M\ra P$ and $\mu:P\ra T^P$ are a $\Sub(\ct)$-preenvelope and a $\ct$-preenvelope, respectively, then $\mu\lambda$ is a
$\ct$-preenvelope.

$2)\Leftrightarrow 3)$ is a direct consequence of the dual of Lemma \ref{(co- contra-)variantly finite Serre classes}.
\end{proof}

We remind the following definition which we are about to use:

\begin{definition}\label{hereditary torsion pair}
A torsion pair $(\cx ,\cy )$ on an abelian category is called \emph{hereditary}\index{pair!torsion!hereditary}  if the torsion class $\cx$ is closed under subobjects.
\end{definition}

\begin{proposition} \label{hereditary weakly balanced}
Let $\ch$ be an abelian category in which subobjects of projective objects are projective, and let $\ct$ be a covariantly finite full subcategory of $\ch$ consisting of projective objects and closed under direct summands. The following assertions are equivalent: 
\begin{enumerate}[1)]
\item The stable category $\ul{\ch}$ of $\ch$ associated to $\ct$ is  balanced. 
\item  $^\perp\ct$ is closed under subobjects. 
\item The pair $(^\perp\ct,\Sub(\ct))$ is a hereditary (split) torsion pair in $\ch$.
\end{enumerate}
When $\ct$ is contravariantly finite in $\ch$, the above assertions are equivalent to:
\begin{enumerate}[4)]
\item  $\ul{\ch}$ is weakly balanced
\end{enumerate}
When $\ch$ has enough injectives,  assertions $1), 2)$ and $3)$ are also  equivalent to:
\begin{enumerate}[5)]
\item For every object $T\in\ct$, there is a monomorphism $T\ra E$, where $E$ is an injective(-projective) object of $\ch$ which belongs to $\ct$.
\end{enumerate}
\end{proposition}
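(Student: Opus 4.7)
The plan is to establish the unconditional equivalences $1)\Leftrightarrow 2)\Leftrightarrow 3)$ first, then add $4)$ under the contravariant finiteness hypothesis, and finally $5)$ when $\ch$ has enough injectives. The equivalence $2)\Leftrightarrow 3)$ is immediate from Lemma \ref{covariantly-finite in hereditary}: covariant finiteness of $\ct$ ensures $(^\perp\ct,\Sub(\ct))$ is a split torsion pair in $\ch$, and this pair is \emph{hereditary} (i.e., $^\perp\ct$ closed under subobjects) precisely when $2)$ holds.

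For $2)\Rightarrow 1)$ I would verify condition $3)$ of Theorem \ref{characterization balanced}. Given a non-split monomorphism $\mu:T\hookrightarrow M$ with $T\in\ct$, the split torsion decomposition $M=M_{1}\oplus M_{2}$ forces $\mu$ to factor through $M_{2}\in\Sub(\ct)$ (the component into $M_{1}\in{}^\perp\ct$ must vanish), and $M_{2}$ embeds into some $T'\in\ct$. Taking $h:M\twoheadrightarrow M_{2}\hookrightarrow T'$, any $\tilde{h}:M\to (h\mu)(T)$ coinciding with $h$ on $T$ would provide a retraction of $\mu$, contradicting non-splitness. For $1)\Rightarrow 2)$ I would proceed by contradiction: given $K\subseteq M\in{}^\perp\ct$ with $K\notin{}^\perp\ct$, any nonzero $K\to T\in\ct$ has a projective image (by the hereditary hypothesis), and the resulting epi splits off this image as a direct summand of $K$, hence as a nonzero subobject of $M$ lying in $\Sub(\ct)$. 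Relabelling, assume $K\in\Sub(\ct)$, $K\neq 0$, and fix $\iota:K\hookrightarrow T\in\ct$. Form the pushout $M':=M\sqcup_{K}T$; the induced mono $\tilde{\iota}:T\hookrightarrow M'$ is non-split, since otherwise a retraction $M'\to T$ composed with the structure map $M\to M'$ would vanish (as $M\in{}^\perp\ct$), forcing the pushout relation to yield $\iota=0$ and hence $K=0$. Apply condition $3)$ of Theorem \ref{characterization balanced} to obtain $h:M'\to T'$ admitting no $\tilde{h}$ coinciding with $h$ on $T$; then $h\circ(M\to M')=0$ makes $h$ factor through $M'/M\cong T/K$, and this factorization produces the forbidden $\tilde{h}$ as the composition $M'\twoheadrightarrow T/K\twoheadrightarrow(h\tilde{\iota})(T)$, the final contradiction.

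The implication $1)\Rightarrow 4)$ is immediate since balanced implies weakly balanced. For the converse $4)\Rightarrow 2)$ the same pushout construction should work, now invoking condition $5)$ of Theorem \ref{characterization weakly-balanced}, which yields only a nonzero $h:M'\to T'$ with $h\tilde{\iota}\neq 0$; the factorization of $h$ through $T/K$ translates this merely into $T/K\notin{}^\perp\ct$, which is not an immediate contradiction --- and this is the main obstacle of the proof. To close the gap I would switch to the equivalent condition $6)$ of Theorem \ref{characterization weakly-balanced} and, for the given nonzero $K\in\Sub(\ct)$ inside $M\in{}^\perp\ct$, choose the target $T$ so that $T/K\in{}^\perp\ct$; such a $T$ can be produced by iteratively trimming the $\Sub(\ct)$-part of $T_{0}/K$ for an initial embedding $K\hookrightarrow T_{0}$, using the split torsion pair and the functorial finiteness of $\ct$ to keep the construction inside $\ct$. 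The pushout class $\xi\in\Ext_{\ch}^{1}(M/K,T)$ is nonzero (since $M\in{}^\perp\ct$ obstructs any splitting), and the condition $T/K\in{}^\perp\ct$ makes the Yoneda transformation $\ct(T,?)\to\Ext_{\ch}^{1}(M/K,?)_{|\ct}$, $\phi\mapsto\phi_{*}\xi$, monomorphic, contradicting condition $6)$. Finally, for $2)\Leftrightarrow 5)$ when $\ch$ has enough injectives, the direction $5)\Rightarrow 2)$ follows from the chain $1)\Rightarrow 2)\Rightarrow 3)$ of Theorem \ref{characterization weakly-balanced} (our $5)$ corresponds to condition $1)$ there), while $2)\Rightarrow 5)$ follows by embedding each $T\in\ct$ into an injective $E$ of $\ch$, decomposing $E=E_{1}\oplus E_{2}$ via the split torsion pair, and observing that the embedding lands in $E_{2}$ --- which is injective as a summand of an injective and projective as a member of $\ct$.
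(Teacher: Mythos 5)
Your reductions $2)\Leftrightarrow 3)$ and $1)\Rightarrow 4)$, and your pushout proof of $1)\Rightarrow 2)$, are fine (the latter is a pleasant variant of the paper's bicartesian-square argument). The central problem is the recurring claim that a map from an object of $\ct$ into an object of $^\perp\ct$ vanishes: in $2)\Rightarrow 1)$ you assert that $\mu:T\to M_1\oplus M_2$ has zero component into $M_1\in{}^\perp\ct$, and in $2)\Rightarrow 5)$ that the embedding $T\to E_1\oplus E_2$ lands in $E_2$. This confuses $^\perp\ct$ (defined by $\ch(M_1,T')=0$ for all $T'\in\ct$) with $\ct^\perp$; there is no reason for $\ch(T,M_1)$ to vanish (over $k\vec{A_2}$ with $\ct=\add$ of the indecomposable projective-injective, the projection onto its top is a nonzero map into an object of $^\perp\ct$). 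What is true, and what the paper isolates as the first step of its proof, is that the component $v:T\to M_2$ into the $\Sub(\ct)$-summand is a \emph{monomorphism}; this is exactly where hypothesis $2)$ enters: $\im(v)$ is projective (a subobject of a projective), so $\ker(v)$ is a direct summand of $T$ and hence lies in $\ct$, while it embeds into $M_1$ and hence lies in $^\perp\ct$ by heredity, forcing $\ker(v)=0$. As written, your $2)\Rightarrow 1)$ and $2)\Rightarrow 5)$ never use hypothesis $2)$ at all, so they would show that $\ul{\ch}$ is always balanced under the standing hypotheses --- false, e.g.\ for $\Mod H$ over the Kronecker algebra with $\ct=\Proj H$ (cf.\ Example \ref{balancedMorita}). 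Once the monomorphism claim is supplied, your retraction arguments do go through essentially as in the paper: the would-be $\tilde h$ kills the $^\perp\ct$-summand because its \emph{target} is isomorphic to an object of $\ct$, which is the correct use of the orthogonality.

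Two further gaps. For $5)\Rightarrow 2)$ you route through Theorem \ref{characterization weakly-balanced} and then through $4)\Rightarrow 2)$; both steps need $\ct$ contravariantly finite, which is not assumed in the ``enough injectives'' part of the statement (only covariant finiteness is). The paper proves $5)\Rightarrow 1)$ directly and without finiteness hypotheses: embed $T$ in an injective $E\in\ct$, extend along the non-split mono $\mu$ by injectivity to get $h$ with $h\mu=j$, and note that any $\tilde h$ agreeing with $h$ on $T$ would be a retraction of $\mu$, verifying condition $3)$ of Theorem \ref{characterization balanced}. For $4)\Rightarrow 2)$, your key step --- producing $T\in\ct$ containing $K$ with $T/K\in{}^\perp\ct$ --- is justified only by ``iterative trimming kept inside $\ct$ by functorial finiteness'', and functorial finiteness does not put the trimmed subobject back into $\ct$. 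The statement is nevertheless true in one step: writing $T_0/K=X\oplus Y$ with $X\in{}^\perp\ct$ and $Y\in\Sub(\ct)$, the preimage $T_1$ of $X$ satisfies $T_0/T_1\cong Y$, which is projective, so $T_1$ is a direct summand of $T_0$, hence $T_1\in\ct$ and $T_1/K\cong X\in{}^\perp\ct$; with this choice even condition $5)$ of Theorem \ref{characterization weakly-balanced} suffices, since your $h$ then factors through $T_1/K\in{}^\perp\ct$ and therefore vanishes. The paper avoids all of this by running for $4)\Rightarrow 2)$ the same argument as for $1)\Rightarrow 2)$, invoking weak balance only inside the preliminary monomorphism claim.
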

\begin{proof}
We start by proving that, under anyone  of conditions $1)$, $3)$ or $4)$,  if
\[\lambda:=\left[\begin{array}{c}u\\ v\end{array}\right]:T\ra N\oplus P
\]
is a monomorphism, where $T\in\ct$, $N\in ^\perp\ct$ and $P$ is projective, then $v$ is also a monomorphism. Indeed, since subobjects of projective objects are projective we get that $T_2:=\im(v)$ is projective and the induced epimorphism $\tilde{v}:T\ra T_2$ splits. Then we can decompose $T\cong
T_1\oplus T_2$, where $T_1:=\ker(v)$, and then rewrite $\lambda$ as 
\[\left[\begin{array}{cc}u_1 & u_2 \\ 0 & v_2 \end{array}\right].
\] 
Since $\lambda$ is a monomorphism it follows easily that $u_1$ is a monomorphism. Under conditions 1) or 4), using condition $3)$ of Theorem ~\ref{characterization balanced} or condition $5)$ of Theorem \ref{characterization weakly-balanced},  one readily sees that necessarily $T_1=0$.
Under condition 3), one has that $T_1\in \ct\cap ^\perp\ct$ since the torsion pair $(^\perp\ct,\Sub(\ct))$ is hereditary. But then $T_1=0$ as well.

$1)\Rightarrow 2)$ By Lemma \ref{covariantly-finite in hereditary}, we know that $(^\perp\ct, \Sub(\ct ))$ is a split torsion pair in $\ch$. Let $i:M\ra N$ be a monomorphism, where $N\in ^\perp\ct$, and take any morphism $h:M\ra T$, with $T\in\ct$. We then take the bicartesian square:
\[\xymatrix{M\ar[r]^{i}\ar[d]_{h} & N\ar[d]^{\eta} \\
  T\ar[r]_{\lambda} & V
}
\]
Then $\lambda$ is a monomorphism. We decompose $V=V'\oplus P$, where $V'\in ^\perp\ct$ and $P\in\Sub(\ct)$. According to the
first paragraph of this proof, we can rewrite 
\[\lambda=\left[\begin{array}{c}\gamma \\ j\end{array}\right]:T\ra V'\oplus P,
\]
where $j:T\ra P$ is a monomorphism. On the other hand, since $(^\perp\ct, \Sub(\ct))$ is a torsion pair, we have $\ca(N,P)=0$. Then the second componen of $\eta$ vanishes
\[\eta=\left[\begin{array}{c}u\\ 0 \end{array}\right]:N\ra V'\oplus P.
\] 
The commutativity of the square implies that $jh=0$ and, since $j$ is a
monomorphism, we conclude that $h=0$. Therefore $M\in ^\perp\ct$.

$2)\Rightarrow 3)$ is a direct consequence of Lemma \ref{covariantly-finite in hereditary}.

$3)\Rightarrow 1)$ We shall check assertion $3)$ of Theorem ~\ref{characterization balanced}. Let $\mu:T\ra M$ be a non-split monomorphism in $\ch$ with $0\neq T\in\ct$.  By hypothesis, we can decompose $M=M'\oplus P$, where $M'\in ^\perp\ct$ and $P\in\Sub(\ct)$. Then the first paragraph of
this proof says that we have 
\[\mu=\left[\begin{array}{c}u \\ v\end{array}\right]:T\ra M'\oplus P,
\] 
where $v:T\ra P$ is a monomorphism. But, since $P\in \Sub(\ct)$, we have a monomorphism $w:P\ra T'$, with $T'\in\ct$. Clearly $wv\neq 0$ (it is actually a monomorphism). We then claim that 
\[h:=\left[\begin{array}{cc}0&w\end{array}\right]:M'\oplus P=M\ra T'
\] 
is a morphism such that no morphism $\tilde{h}:M=M'\oplus P\ra (h\mu )(T)=wv(T)$ coincides
with $h$ on $T$. To see that notice that, if it exists, such a $\tilde{h}$ can be written as 
\[\tilde{h}=\left[\begin{array}{cc}0 & \beta \end{array}\right]:M'\oplus P\ra wv(T),
\] 
because $M'\in ^\perp\ct$ and $wv(T)\cong T\in\ct$. If we denote by $i$ the canonical inclusion $wv(T)\hookrightarrow T'$ and by
$\xi:T\arr{\sim}wv(T)=h\mu (T)$ the isomorphism induced by $h\mu$ (or $wv$), then we have 
\[i\xi =h\mu=i\tilde{h}\mu =i\left[\begin{array}{cc}0 & \beta\end{array}\right] \left[\begin{array}{c}u\\v \end{array}\right]=i\beta v.
\] 
Since $i$ is a monomorphism we conclude that $\xi =\beta v$, so that $v$ is a section. But then the morphism $\mu$ is also a section, which contradicts the hypothesis. Therefore $\tilde{h}$ cannot exists and the proof of this implication is finished.

Suppose now that $\ct$ is contravariantly finite in $\ch$. Then $1)\Rightarrow 4)$ clearly holds. The proof of $4)\Rightarrow 2)$ is identical to that of $1)\Rightarrow 2)$, done above.

Finally, suppose  that $\ca$ has enough injectives.

$5)\Rightarrow 1)$ We check condition $3)$ of Theorem \ref{characterization balanced}. Let $\mu:T\ra M$ be a non-split monomorphism. By hypothesis, there is another monomorphism $j:T\ra E$, where $E$ is an injective object of $\ch$ belonging to $\ct$. For simplicity, we view $j$ as
an inclusion. Now $j$ factors through $\mu$, so that we get a morphism $h:M\ra E$ such that $h\mu =j$. If there were a morphism $\tilde{h}:M\ra h\mu (T)=j(T)=T$ agreeing with $h$ on $T$, then $\tilde{h}$ would be a retraction for $\mu$, which is absurd.

$3)\Rightarrow 5)$ Let $0\neq T\in\ct$ be any non-zero object. We then have a monomorphism $\lambda :T\ra E$, where $E$ is an injective object of $\ca$. We decompose $E=E'\oplus P$, with $E'\in ^\perp\ct$ and $P\in \Sub(\ct)$. Notice that then $P$ is injective(-projective) and necessarily belongs to $\ct$. If we write now 
\[\lambda =\left[\begin{array}{c}u \\ v \end{array}\right]:T\ra E'\oplus P,
\] 
then the first paragraph of this proof says that $v$ is a monomorphism. That ends the proof.
\end{proof}

\begin{remark}
On an arbitrary abelian category $\ca$,  N. Yoneda defined the big abelian group  $\Ext_{\ca}^n(M,N)$ of  $n$-extensions (\cf \cite[III.5]{MacLane1975}), for all objects $M,N\in\ca$, which was functorial on both variables.  Due to J. L. Verdier \cite[Ch. III, 3.2.12]{Verdier1996}, $\Ext_\ca^n(M,N)=(\cd\ca)(M,N[n])$, where $\cd\ca$ is the (possibly large) derived category of $\ca$. As a consequence, the long exact sequences of $\Ext$'s hold in every abelian category. In particular, if $\ca =\ch$ is hereditary (namely, $\Ext_{\ch}^2(?,?)=0$), then every subobject of a projective object in $\ch$ is projective. Thus our Proposition \ref{hereditary weakly balanced} applies to hereditary abelian categories.
\end{remark}

\section{Some examples}\label{Some examples}
\addcontentsline{lot}{section}{2.6. Algunos ejemplos}

In this section we illustrate the previous results with some examples in module categories. Unless said otherwise, all rings are associative with unit and modules are right modules. For a given ring $A$, we denote by $\Mod A$ (respectively, $\mod A$) the category of all (respectively, finitely presented) $A$-modules.

\begin{example}
Let $A$ be a right semihereditary left coherent ring. Let $\ct =\proj A$ the full subcategory of the abelian category $\ch=\mod A$ consisting of finitely generated projective modules. By \cite[Corollary 3.11]{DingChen1993}, we know that $\ct$ is covariantly finite in $\ch$. On the other hand, the right semihereditary condition of $A$ gives that in $\ch$ submodules of projective objects are projective, so that we are in the situation
of Proposition \ref{hereditary weakly balanced}. Then the stable category of $\mod A$ associated to $\proj A$ is (weakly) balanced if and only if the class $^\perp\ct$, formed by all the modules $M\in\mod A$ such that $\Hom_A(M,A)=0$, is closed under (finitely generated) submodules. Two particular cases of this situation are those in which $A$ is a commutative principal ideal domain or $A=k\vec{A_{n}}$ is the path algebra
of the Dynkin quiver 
\[\vec{A_{n}}:1\ra 2\ra ...\ra n
\] 
over a field $k$.
\end{example}

It is well-known that the above path algebra $k\vec{A_{n}}$ is isomorphic to $T_n(k)$, the ring of lower triangular $n\times n$ matrices with coefficients in $k$.  When one moves from finitely presented to arbitrary modules, only this example survives.

\begin{example}\label{balancedMorita}
Let $H$ be a right hereditary ring. The following assertions are equivalent:
\begin{enumerate}[1)]
\item The category $\Proj H$ of projective $H$-modules is covariantly finite in $\Mod H$ and the stable category of $\Mod H$ associated to $\Proj H$ is a (weakly) balanced category.
\item $H$ is Morita equivalent to a finite direct product $T_{n_1}(D_1)\times ...\times T_{n_r}(D_r)$ of rings of lower triangular matrices, for some division rings $D_i$ and natural numbers $n_i$
\end{enumerate}
\end{example}
\begin{proof}
$1)\Rightarrow 2)$ Since $\Proj H$ is covariantly finite in $\Mod H$ it is closed under products, so that $H$ is semiprimary (\cf \cite{Chase1960} and \cite{Small1967}). On the other hand,  since assertion $4)$ of Proposition \ref{hereditary weakly balanced} holds, we get that $E(H_{H})$ is projective, so
that $H$ is right QF-3 in the sense of Ringel and Tachikawa (\cf \cite[Proposition 4.1]{Tachikawa1973} ). Then, by \cite[Theorem and Remark 1]{ColbyRutter1968}, one concludes that $H$ is Morita equivalent to the indicated direct product of rings of lower triangular matrix.

$2)\Rightarrow 1)$ There is no loss of generality in assuming  that $H=T_n(D)$, for some natural number $n$ and some division ring $D$. Since $H$ is  (two-sided) Artinian, $\Proj H$ is covariantly finite in $\Mod H$ \cite[Corollary 3.5]{RadaSaorin1998}. On the other hand,
$H$ is a serial ring and the row module $E=\left[\begin{array}{cccc} D& D& \cdots &D\end{array}\right]$ is the unique injective-projective indecomposable $H$-module, which cogenerates all other projective $H$-modules. Then assertion $5)$ of Proposition \ref{hereditary weakly balanced} holds, so that the stable category of $\Mod H$ associated to $\Proj H$ is balanced.
\end{proof}

In the situation of Proposition \ref{hereditary weakly balanced}, whenever $\ct$ is closed under subobjects, one has an equivalence of categories 
\[\ul{\ch}\simeq ^\perp\ct,
\] 
so that $\ul{\ch}$ is an abelian category. That is no longer true if $\ct$ is not closed under subobjects, as the following example shows.

\begin{example} \label{hereditary non-abelian non-triangulated}
Let $H=k\vec{A_{3}}$,  where $\vec{A_{3}}$ is the Dynkin quiver $1\ra 2\ra 3$ and $k$ is a field. Let $e_i$ be the primitive idempotent of $H$ given by the vertex $i\ko 1\leq i\leq 3$ and take $\ch =\mod H$ and $\ct =\add(e_{3}H)$, which is the full subcategory of projective-injective objects of $\mod H$. Then the stable category $\ul{\ch}$ of $\mod H$ associated to $\ct$ is a balanced category which is neither abelian nor triangulated (for its usual pretriangulated structure).
\begin{proof}
Due to Corollary \ref{balanced wrt projective-injective}, the stable category $\ul{\ch}$ is balanced. If 
\[f:e_{2}H\ra S_2=e_{2}H/e_{2}J(H)
\] 
is the canonical projection, then, by Proposition \ref{epimorphisms which become stable-epis}, we have that $\ol{f}$ is not an epimorphism in $\ul{\ch}$. If this latter category were abelian, we would have a morphism $g$ of $\ch$ such that $0\neq\ol{g}=\ol{f}^c$. But such a morphism $g:S_2\ra C$ necessarily factors through the (unique up to multiplication by scalars) irreducible morphism $j:S_{2}\ra E(S_2)=e_{3}H/Soc(e_{3}H)$ starting at $S_2$. Therefore there exists a morphism $h:E(S_2)\ra C$ such that $hj=g$. Notice that, in principle,  we have $C\cong S_2^n\oplus E(S_2)^m$ but, since $\Hom_A(E(S_2),S_2)=0$, we then get that $h=\left[\begin{array}{cc} 0 & h' \end{array}\right]^t:E(S_2)\ra S_2^n\oplus E(S_2)^m$ and hence $g=\left[\begin{array}{cc} 0 & g' \end{array}\right]^t:S_2\ra S_2^n\oplus E(S_2)^m$, with the obvious meaning.  The universal property of the cokernel implies that $n=0$ and $m=1$. In other word, up to multiplication by a nonzero scalars, we would have $\ol{g}=\ol{j}$. But this is absurd for $\ol{j}$ is not an epimorphism in $\ul{\mathcal{H}}$: if 
\[p:E(S_2)\ra e_{3}H/e_{3}J(H)=S_3
\] 
is the canonical projection, then $pj=0$ but $\ol{p}\neq 0$.

Notice also that $\ct^\perp\neq 0$ (for instance $e_{1}H\in\ct^\perp$). Thus, the loop functor $\Omega$, given by the pretriangulated structure of $\ul{\ch}$, vanishes on some non-zero object. Therefore $\ul{\ch}$ is not a triangulated category either.
\end{proof}
\end{example}

Corollary \ref{balanced in Serre case} and Proposition \ref{hereditary weakly balanced} might induce the reader to believe that ``weakly balance'' (when it makes sense) and ``balance'' are synonymous concepts for stable categories. Our final example shows that it is not the case, even when $\ct$ consists of projective objects.

\begin{example} \label{weakly balanced not balanced}
Let $k$ be a field and $A$ be the finite dimensional $k$-algebra given by the quiver
\[\xymatrix{1\ar[d]^{\delta} & 2\ar[l]_{\alpha}\ar[r]^{x} & 5\ar[r]^{y} & 6 \\
                  4\ar[r]_{\gamma} & 3\ar[u]_{\beta} &&
}
\]
with the following set of monomial relations  $R=\{yx,x\beta,\beta\gamma,\gamma\delta\alpha,\delta\alpha\beta\}$. Then
$\proj A$ is functorially finite in $\mod A$. We claim that the stable category $\ul{\mod}A$ of $\mod A$ associated to $\proj A$ is weakly balanced but not
balanced. To see that, first notice that assertion $2)$ of Theorem \ref{characterization weakly-balanced} is equivalent in this case to the property that, for every indecomposable projective $A$-module $P$, the canonical restriction map 
\[\Hom_A(E(P),A)\ra \Hom_A(P,A)
\] 
has a non-zero image. This property is trivially satisfied whenever $E(P)$ is projective. In our situation, that is the case for $P=e_{i}A$, with $i\in\{1,2,3,6\}$. On the other hand,
we have 
\[E(e_{4}A)\cong E(e_{5}A)\cong (e_{4}A\oplus e_{5}A)/N=:E,
\]
where $N$ is the cyclic submodule of $e_{4}A\oplus e_{5}A$ generated by $(-\delta\alpha,x)$. One readily sees that $\Hom_A(E,A)$ is a
$2$-dimensional vector space generated by the morphisms $f:\ol{(a,b)}\mapsto \gamma a$ and $g:\ol{(a,b)}\mapsto yb$. Consider now the morphisms
\[i:e_{4}A\ra E\ko a\mapsto\ol{(a,0)}
\] 
and
\[j:e_{5}A\ra E\ko b\mapsto\ol{(0,b)},
\] 
which satisfy $fi\neq 0$ and $gj\neq 0$. That proves that $\ul{\mod}A$ is weakly balanced. Now we shall see now that condition $3)$ of Theorem \ref{characterization balanced} is not satisfied by the morphism $i:e_{4}A\ra E$. Indeed, by the above paragraph, any morphism $h =E\ra A$ is of the form $h =k_1f+k_2g$, for some $k_1,k_2\in k$. Then it maps $\ol{(a,b)}\mapsto k_1\gamma a+k_2yb$ and, hence, $(hi)(a)=k_1\gamma a$. In case $k_1=0$, clearly $h$ does not satisfy condition $3)$ of Theorem \ref{characterization balanced}. But if $k_1\neq 0$ then $\im(hi)=\im(f)$ and, choosing 
\[\tilde{h}=k_1f:E\ra \im(f)\ko \ol{(a,b)}\mapsto k_1\gamma a,
\] 
one has that $\tilde{h}$ coincides with $h$ on $e_{4}A$. That proves that $\ul{\mod}A$ is not balanced.
\end{example}

\chapter{Split TTF triples on module categories}\label{Split TTF triples on module categories}
\addcontentsline{lot}{chapter}{Cap\'itulo 3. Ternas TTF escindidas en categor\'ias de m\'odulos}

\section{Introduction}
\addcontentsline{lot}{section}{3.1. Introducci\'on}

\subsection{Motivation}
\addcontentsline{lot}{subsection}{3.1.1. Motivaci\'on}

Recall that a \emph{torsion torsionfree(=TTF) triple} $(\cx,\cy,\cz)$ on an abelian category consists of a couple of torsion pairs $(\cx,\cy)$ and $(\cy,\cz)$ (\cf Definition \ref{TTF triple}). This notion was introduced by J. P. Jans \cite{Jans}, who gave a
bijection, for an arbitrary (associative unital) algebra $A$, between TTF triples on the module category $\Mod A$ and idempotent
two-sided ideals of $A$ (\cf Theorem \ref{parametrizando TTF ternas}). J. P. Jans also proved that this bijection restricts to another one
between centrally split TTF triples on $\Mod A$ (\cf Definition \ref{TTF triple}) and central idempotents of $A$. However, the existence of
TTF triples $(\cx,\cy,\cz)$ for which only one of the torsion pairs $(\cx, \cy)$ and $(\cy, \cz)$ splits, which we shall call \emph{one-sided
split}, has been known for a long time (\cf \cite{Teply}) and no classification of them has been available. That is, until now, the
idempotent ideals of $A$ which correspond by J. P. Jans' bijection to those one-sided split TTF triples have not been identified, even
though there were some efforts to classify them (see for instance \cite{Azumaya} and \cite{Ikeyama}). The goal of this
chapter is to present such a classification, thus solving a problem which has been open for almost forty years. 

\subsection{Outline of the chapter}
\addcontentsline{lot}{subsection}{3.1.2. Esbozo del cap\'itulo}

In section ~\ref{Torsion torsionfree triples} we give the definitions and relevant known results concerning TTF triples on a module category.
In section ~\ref{Left split TTF triples over arbitrary rings} we give the classification of left split TTF triples on $\Mod A$ (\cf Theorem \ref{izquierda} and Corollary \ref{clasificacion left split}). For the sake of clarity, and as an intermediate step, we present in section ~\ref{Right split TTF triples over `good' rings} a partial classification of right split TTF triples, which is actually total when the ring $A$ belongs to a class which includes semiperfect and left N\oe therian rings (\cf Theorem \ref{clasificacion right split para buenos} and Corollary \ref{right split para buenos2}). Then, Êin section ~\ref{Right split TTF triples over arbitrary rings}, Êwe classify the right split TTF triples $(\cx,\cy,\cz )$ on $\Mod A$ such that $A_A$ belongs to $\cz$ (Theorem \ref{clasificacion right split}), from which the classification of all right split TTF triples (Corollary \ref{clasification right split2}) follows.

\subsection{Notation}
\addcontentsline{lot}{subsection}{3.1.3. Notaci\'on}

All rings appearing in the chapter are associative with identity and, unless explicitly said otherwise, all modules are right modules, the category of which will be denoted $\Mod A$\index{$\Mod A$}. Two-sided ideals will be simply called ``ideals'', Êwhen there is no risk of confusion. The Jacobson radical of a ring $A$ will be denoted by $J(A)$\index{$J(A)$}. If $M$ is a module over a ring $A$, $S$ is a subset of $A$, $N$ a subset of $M$ and $I$ an ideal of $A$ then 
\begin{enumerate}[1)]
\item $\ann_{M}(S)$\index{$\ann_{M}(S)$} is the denotes the set of elements $m\in M$ such that $ms=0$ for each $s\in S$. 
\item $\ann_{A}(N)$\index{$\ann_{A}(N)$} denotes the set of elements $a\in A$ such that $na=0$ for each $n\in N$. 
\item $\lann_{A}(I)$\index{$\lann_{A}(I)$} denotes the set of elements $a\in A$ such that $ai=0$ for each $i\in I$.
\end{enumerate}
We recall that if $(\cx,\cy)$ is a torsion pair on an additive category $\cd$ (in particular, a torsion pair on an abelian category), we write $x:\cx\ra\cd$ and $y:\cy\ra\cd$ for the inclusion functors, $\tau_{\cx}:\cd\ra\cx$ for the right adjoint to $x$ and $\tau^{\cy}$ for the left adjoint to $y$.

\section{Torsion torsionfree triples}\label{Torsion torsionfree triples}
\addcontentsline{lot}{section}{3.2. Ternas de torsi\'on y libres de torsi\'on}

\begin{lemma}
Let $I$ be an ideal of a ring $A$. Then:
\begin{enumerate}[1)]
\item The class $\cy$ of modules annihilated by $I$ is $\Gen(A/I)$.
\end{enumerate}
If $I$ is idempotent we also have:
\begin{enumerate}[2)]
\item The class $\cx$ of modules $M$ such that $MI=M$ is $\Gen(I)$.
\end{enumerate}
\begin{enumerate}[3)]
\item $\cx$ is the left orthogonal to $\cy$, \ie $\cx=\ ^{\bot}\cy$
\end{enumerate}
\end{lemma}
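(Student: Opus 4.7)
The plan is to handle the three assertions in order, each resting on a short direct argument.

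For part 1), I would note that a right $A$-module $M$ satisfies $MI=0$ if and only if its module structure factors through $A/I$, i.e.\ $M$ is naturally an $A/I$-module. Every $A/I$-module is a quotient of a free $A/I$-module $(A/I)^{(\Lambda)}$, so $M\in\Gen(A/I)$. Conversely, any quotient of a coproduct of copies of $A/I$ is still annihilated by $I$, since the class of modules killed by $I$ is closed under quotients and coproducts. This gives $\cy=\Gen(A/I)$.

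For part 2), first observe that $I$ itself belongs to $\cx$ because $II=I^2=I$ by idempotency, and $\cx$ is clearly closed under coproducts and quotients: if $M$ is a quotient of $I^{(\Lambda)}$ via $p$, then $M=p(I^{(\Lambda)})=p(I^{(\Lambda)}I)=p(I^{(\Lambda)})I=MI$. Hence $\Gen(I)\subseteq\cx$. For the reverse inclusion, given $M\in\cx$, for each $m\in M$ define the homomorphism $\varphi_m:I\to M$, $i\mapsto mi$, and assemble them into the map
\[
\varphi:=\bigoplus_{m\in M}\varphi_m\;:\;I^{(M)}\longrightarrow M.
\]
Its image is $\sum_{m\in M}mI=MI=M$, so $\varphi$ is surjective and $M\in\Gen(I)$.

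For part 3), I would prove the two inclusions separately. If $X\in\cx$ and $Y\in\cy$, any $f\in\Hom_A(X,Y)$ satisfies $f(X)=f(XI)=f(X)I\subseteq YI=0$, so $\Hom_A(X,Y)=0$ and $\cx\subseteq{}^{\bot}\cy$. Conversely, let $M\in{}^{\bot}\cy$ and consider the quotient $M/MI$; since $(M/MI)I=0$, part 1) gives $M/MI\in\cy$, so the canonical projection $M\to M/MI$ must vanish, forcing $MI=M$ and $M\in\cx$.

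No step seems genuinely hard; the only mild obstacle is making sure in part 2) that one constructs the surjection $I^{(M)}\twoheadrightarrow M$ correctly (using idempotency via $MI=M$, not merely $M$ being a quotient of a free $A$-module), and in part 3) remembering to use part 1) to identify $M/MI$ as an object of $\cy$.
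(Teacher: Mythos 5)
Your proof is correct and follows essentially the same route as the paper: quotients of copies of $A/I$ (resp.\ $I$) are visibly in $\cy$ (resp.\ in $\cx$, using $I^2=I$), and conversely one exhibits each $M\in\cy$ (resp.\ $M\in\cx$) as such a quotient, while 3) uses exactly the paper's argument that $f(M)=f(MI)\subseteq NI=0$ in one direction and the vanishing of $M\twoheadrightarrow M/MI$ in the other. The only cosmetic difference is in 2), where you build the surjection $I^{(M)}\twoheadrightarrow M$ directly from the multiplication maps $i\mapsto mi$ instead of the paper's computation $(A^{(S)}I+N)/N\cong I^{(S)}/(I^{(S)}\cap N)$ from a presentation $M=A^{(S)}/N$; both amount to the same observation that $MI=M$ makes $M$ a quotient of a coproduct of copies of $I$.
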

\begin{proof}
1) If $M$ is generated by $A/I$ it is clear that $MI=0$. Conversely, assume $M$ is a module annihilated by $I$. There exists an epimorphism $A^{(S)}\twoheadrightarrow M$, which fits into the diagram
\[\xymatrix{I^{(S)}\ar[r] & A^{(S)}\ar@{->>}[r]\ar@{->>}[d] & (A/I)^{(S)}\ar[r] & 0 \\ 
& M & &
}
\]
with exact row. Since $MI=0$, then there exists a factorization
\[\xymatrix{I^{(S)}\ar[r] & A^{(S)}\ar@{->>}[r]\ar@{->>}[d] & (A/I)^{(S)}\ar[r]\ar@{.>}[dl] & 0 \\ 
& M & &
}
\]
so that the dotted arrow is an epimorphism, and so $M$ is in $\Gen(A/I)$.

2) If $M$ is in $\Gen(I)$, then, since $I$ is idempotent, it is clear that $MI=M$. Conversely, let $M$ be a module such that $MI=M$. Assume $M=A^{(S)}/N$ for a certain module $N$. Then we have
\[M=MI=(A^{(S)}I+N)/N=(I^{(S)}+N)/N\cong I^{(S)}/(I^{(S)}\cap N),
\] 
\ie $M$ is generated by $I$.

3) If $M$ is in the left orthogonal to $\cy$, then the epimorphism $M\twoheadrightarrow M/MI$ is zero, which implies that $M/MI=0$, \ie $M=MI$. On the other hand, if $f:M\ra N$ is a morphism of $A$-modules such that $M=MI$ and $N$ is annihilated by $I$, then we have $f(M)=f(MI)\subseteq NI=0$, and so $f=0$.
\end{proof}

Associated to the notion of TTF triple we have:
\begin{definition}
Let $A$ be a ring. A class $\cy$ of $A$-modules is a \emph{TTF class}\index{class!TTF} of $\Mod A$ if it is closed under submodules, quotients, extensions and products.
\end{definition}

The following is an alternative proof of J. P. Jans' result \cite[Corollary 2.2]{Jans} (see also \cite[VI.8]{Stenstrom}).

\begin{theorem}\label{parametrizando TTF ternas}
Let $A$ be a ring. There is a one-to-one correspondence between:
\begin{enumerate}[1)]
\item The set of TTF triples on $\Mod A$.
\item The set of TTF classes of $\Mod A$. 
\item The set of idempotent ideals of $A$. 
\end{enumerate}
This correspondence sends the TTF triple $(\cx,\cy,\cz)$ to the ideal $\tau_{\cx}(A)$ and the ideal $I$ to the TTF triple $(\cx,\cy,\cz)$ where $\cx$ is the full subcategory formed by the modules $M$ such that $MI=M$ and $\cy$ is the full subcategory formed by the modules $M$ such that $MI=0$.
In this situation, we have $\cx=\Gen(I)$, $\cy=\Gen(A/I)$, $\tau_{\cx}(M)=MI$ and $\tau_{\cy}(M)=\ann_{M}(I)$.
\end{theorem}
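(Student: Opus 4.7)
The plan is to first reduce $1)\leftrightarrow 2)$ to a tautology about torsion pairs, then construct the bijection $2)\leftrightarrow 3)$ and check it is well-defined, mutually inverse, and delivers the stated descriptions of $\cx$, $\cy$, $\tau_\cx$, and $\tau_\cy$. For $1)\leftrightarrow 2)$, any TTF class $\cy$ is automatically closed under small coproducts (they embed into products), so $\cy$ is simultaneously a torsion class (via $({}^\bot\cy,\cy)$) and a torsionfree class (via $(\cy,\cy^\bot)$); the two torsion pairs assemble into a TTF triple whose middle class is $\cy$, and conversely the middle class of any TTF triple inherits all four closure properties directly from the torsion pair axioms. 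Since $\cx$ and $\cz$ are determined by $\cy$ as ${}^\bot\cy$ and $\cy^\bot$, the correspondence is bijective.

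For $3)\to 2)$, given an idempotent ideal $I$ take $\cy_I:=\{M : MI=0\}$. Closure under submodules and products is immediate; by assertion $1)$ of the preceding lemma $\cy_I=\Gen(A/I)$, from which closure under quotients follows; closure under extensions uses idempotency: in $0\to L\to M\to N\to 0$ with $LI=NI=0$ one has $MI\subseteq L$ and hence $MI=MI^2\subseteq LI=0$. Thus $\cy_I$ is a TTF class.

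For $2)\to 3)$, given a TTF class $\cy$ with associated TTF triple $(\cx,\cy,\cz)$, set $I:=\tau_\cx(A)$. This is a right ideal by construction, and a left ideal by the functoriality of $\tau_\cx$ applied to each left-multiplication endomorphism $\lambda_a:A\to A$, which produces a restriction $I\to I$ compatible with $I\hookrightarrow A$, whence $aI\subseteq I$. The key identification $\cy=\{M : MI=0\}$ holds: if $M\in\cy$ then $\Hom_A(I,M)=0$ since $I\in\cx$, and applying this to each evaluation map $A\to M$, $1\mapsto m$, forces $mI=0$; conversely if $MI=0$ then $M\in\Gen(A/I)$, and since $A/I=\tau^\cy(A)\in\cy$ together with closure of $\cy$ under quotients and coproducts, $M\in\cy$. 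The main (though mild) obstacle is now the idempotency of $I$: with the description of $\cy$ just obtained, the quotient $I/I^2$ is annihilated by $I$ and hence lies in $\cy$; but $I/I^2$ is also a quotient of $I\in\cx$, and $\cx={}^\bot\cy$ is closed under quotients, so $I/I^2\in\cx$. As $\cx\cap\cy=0$, we conclude $I^2=I$.

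To verify the two constructions are mutually inverse, starting from an idempotent $I$ and running through both maps yields $\tau_\cx(A)=I$: $I\in\cx=\Gen(I)$ by assertion $2)$ of the lemma gives $I\subseteq\tau_\cx(A)$, and $\tau_\cx(A)\in\cx$ gives $\tau_\cx(A)=\tau_\cx(A)\cdot I\subseteq AI=I$; the reverse composition returns the original $\cy$ by the identification above. Finally, $\cx=\Gen(I)$ and $\cy=\Gen(A/I)$ are the two assertions of the lemma, while $\tau_\cx(M)=MI$ and $\tau_\cy(M)=\ann_M(I)$ follow from uniqueness of torsion sequences once we check: $MI\in\cx$ since $MI\cdot I=MI^2=MI$; $M/MI\in\cy$ since $(M/MI)\cdot I=0$; $\ann_M(I)\in\cy$ obviously; and $M/\ann_M(I)\in\cz$ since any $m+\ann_M(I)$ killed by $I$ satisfies $mI^2=0$, whence $mI=0$ by $I^2=I$, so $m\in\ann_M(I)$.
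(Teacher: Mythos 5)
Your proof is correct and follows the same overall strategy as the paper's: reduce TTF triples to TTF classes, pass between TTF classes and idempotent ideals via $I=\tau_{\cx}(A)$, and read off the explicit descriptions from the preceding lemma. The one genuine difference is at the identification $\cy=\{M\mid MI=0\}$: the paper argues at the level of functors, showing that the restriction of $\Hom_A(A/I,?)$ to $\cy$ is isomorphic to the identity, so that $A/I$ is a generator of $\cy$ and hence $\cy=\Gen(A/I)$ by Stenstr\"om's criterion (or Gabriel's characterization of module categories); you instead argue elementwise, using $\Hom_A(I,M)=0$ applied to evaluation maps for one inclusion and closure of $\cy$ under quotients and coproducts for the other. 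Your version is more elementary and, as a bonus, you make explicit several points the paper leaves implicit: the idempotency of $\tau_{\cx}(A)$ (via $I/I^2\in\cx\cap\cy=0$), the verification that the two assignments are mutually inverse, and the formulas $\tau_{\cx}(M)=MI$ and $\tau_{\cy}(M)=\ann_{M}(I)$. The only tacit micro-step is the passage from $\ann_{M/\ann_{M}(I)}(I)=0$ to $M/\ann_{M}(I)\in\cz=\cy^{\bot}$; it is immediate, since the image of any morphism from an object of $\cy$ is annihilated by $I$ and hence lands in the $I$-annihilator, but it deserves a clause.
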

\begin{proof}
If $(\cx,\cy,\cz)$ is a TTF triple on $\Mod A$, then it is clear that $\cy$ is a TTF class. Conversely, if $\cy$ is a TTF class of $\Mod A$, then $(^{\bot}\cy,\cy,\cy^{\bot})$ is a TTF triple (\cf \cite[Proposition VI.2.1, Proposition VI.2.2]{Stenstrom}).

Let $I$ be an ideal of $A$ and take $\cy$ to be the full subcategory of modules $M$ such that $MI=0$. It is clear that $\cy$ is closed under submodules, quotients, small products and small coproducts. If, moreover, $I$ is idempotent, then $\cy$ is closed under extensions, and so it is a TTF class, which implies that it fits in the middle of a TTF triple. For the description of the left orthogonal to $\cy$ apply the lemma above. 

Conversely, let $(\cx,\cy,\cz)$ be a TTF triple on $\Mod A$. Put $I:=\tau_{\cx}(A)$ and consider the short exact sequence
\[0\ra I\hookrightarrow A\twoheadrightarrow A/I\ra 0,
\]
of $\Mod A$ with $I$ in $\cx$ and $A/I$ in $\cy$. Since $\tau_{\cx}$ is a subfunctor of the identity functor $\id_{\Mod A}$ and all the morphisms of $\op{End}_{A}(A)$ are given by multiplications to the left of elements of $A$, then $I$ is a two-sided ideal of $A$. By using the short exact sequence above, one realises that the restriction of $\Hom_{A}(A/I,?)$ to $\cy$ is isomorphic to the identity functor $\id_{\cy}$. This implies that $A/I$ is a compact projective generator of the cocomplete abelian category $\cy$ and so, by P. Gabriel's characterization of module categories among abelian categories \cite{Gabriel1962} we know that $\cy\simeq\Mod A/I$. More directly, since $A/I$ is a generator of $\cy$ and $\cy$ is closed under small coproducts, by \cite[Proposition IV. 6.2]{Stenstrom} the modules of $\cy$ are precisely those which are quotients of a small coproduct of copies of $A/I$, and so, by using the lemma above we have the descriptions of $\cy$ and $\cx$.
\end{proof}

The TTF triple on $\Mod A$ associated to the idempotent ideal $I$ of $A$ can be depicted as follows:
\[\xymatrix{\Gen(A/I)\ar[rr]^y && \Mod A\ar@/_1pc/[ll]_{?\otimes_{A}A/I}\ar@/_-1pc/[ll]^{\Hom_{A}(A/I,?)}\ar[rr]^{?\otimes_{A}I} && \Gen(I)\ar@/_1pc/[ll]_{\Hom_{A}(I,?)}\ar@/_-1pc/[ll]^x
}
\]

The following is also well-known (\cf \cite[Proposition VI.8.5]{Stenstrom} or Corollary \ref{centrally split TTF triples on abelian categories}).

\begin{proposition}
The one-to-one correspondence of Theorem ~\ref{parametrizando TTF ternas} restricts to a one-to-one correspondence between:
\begin{enumerate}[1)]
\item Centrally split TTF triples on $\Mod A$.
\item (Ideals of $A$ generated by) single central idempotents of $A$.
\end{enumerate}
\end{proposition}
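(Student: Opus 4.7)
The plan is to use Proposition \ref{centrally split TTF triples on additive} to reformulate central splitness. Since $\Mod A$ is abelian and hence has canonical factorizations, a TTF triple $(\cx,\cy,\cz)$ on $\Mod A$ is centrally split if and only if it is left split (equivalently, right split) and $\cx=\cz$; equivalently, it corresponds to a 2-decomposition of the additive category $\Mod A$. In particular I shall exploit the fact that this gives a functorial direct-sum decomposition $M\cong M_\cx\oplus M^\cy$ for every right $A$-module $M$.

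For the direction from centrally split TTF triples to central idempotents, let $I=\tau_{\cx}(A)$ be the idempotent ideal associated with a centrally split TTF triple $(\cx,\cy,\cz)$. Applying the splitting of $(\cx,\cy)$ to $M=A$ yields a decomposition of right $A$-modules $A=I\oplus K$ with $K\in\cy$. Functoriality of the 2-decomposition under every right $A$-module endomorphism of $A_A$, together with $\End_{A}(A_A)\cong A$ acting by left multiplication, forces $aI\subseteq I$ and $aK\subseteq K$ for every $a\in A$, so $K$ is also a left ideal and hence a two-sided ideal. Writing $1=e+f$ with $e\in I$ and $f\in K$, the equalities $a=ae+af=ea+fa$ give $ae-ea=fa-af\in I\cap K=0$, so $ae=ea$, i.e.\ $e$ is central. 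The same argument applied to $a=e$ (or $a=f$) gives $ef=fe=0$, so $e^2=e$. Finally, $eA\subseteq I$ is clear, and for $i\in I$ the decomposition $i=ei+fi$ together with $fi\in I\cap K=0$ gives $i=ei\in eA$, whence $I=eA$.

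For the converse, let $e\in A$ be a central idempotent. Then $I=eA=Ae$ is an idempotent two-sided ideal. By Theorem \ref{parametrizando TTF ternas} the associated TTF triple has $\cx=\Gen(I)=\{M:Me=M\}$ and $\cy=\Gen(A/I)=\{M:Me=0\}$. Centrality of $e$ guarantees that $Me$ and $M(1-e)$ are $A$-submodules of $M$, and one obtains a functorial decomposition $M=Me\oplus M(1-e)$ with $Me\in\cx$ and $M(1-e)\in\cy$, thus giving a 2-decomposition of $\Mod A$. By the reformulation above, the TTF triple is centrally split.

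The only delicate point is the proof of centrality of $e$ in the forward direction: it relies on observing that a centrally split TTF triple produces a decomposition of $A_A$ which is not merely a decomposition of right modules but is preserved by all endomorphisms of $A_A$, thereby promoting the complementary right ideal $K$ to a two-sided ideal. Everything else reduces to direct computations in the decomposition $A=I\oplus K$.
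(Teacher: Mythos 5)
Your proof is correct, but it takes a genuinely different route from the paper's. The paper does not argue directly in $\Mod A$ at this point: it records the proposition as known, citing Stenstr\"om and Corollary \ref{centrally split TTF triples on abelian categories}, whose proof works in any complete and cocomplete abelian category with a projective generator $P$ and runs through 2-decompositions of $\add(P)$, the classes $\Gen(\cp_{1})$, $\Gen(\cp_{2})$, trace subobjects and an $\Ext^{1}$-vanishing computation to produce the splitting, the central idempotent appearing as a central idempotent of $\End(P)\cong A$. You instead stay inside $\Mod A$: in the forward direction you extract the idempotent from the Peirce-type decomposition $A=I\oplus K$, using that for a centrally split triple morphisms between $\cx$ and $\cy$ vanish in \emph{both} directions (this is exactly where $\cx=\cz$, i.e.\ central rather than one-sided splitness, is needed), so that left multiplications are diagonal and $K$ is forced to be two-sided; in the converse direction the explicit decomposition $M=Me\oplus M(1-e)$ replaces the paper's trace-plus-$\Ext^{1}$ argument, and is available precisely because one is in a module category. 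The only steps you leave implicit are routine: the vanishing $\Hom_{A}(N,M)=0$ when $Ne=0$ and $Me=M$, so that $(\cx,\cy)$ really is a 2-decomposition, and the identification of the third class with $\cx$ (equivalently, that $\tau_{\cy}(M)=\ann_{M}(I)=M(1-e)$ is a direct summand, so $(\cy,\cz)$ splits as well). What the paper's route buys is generality --- it applies to Grothendieck categories with a projective generator and is the template for the triangulated analogues later in the thesis --- while your route is shorter, self-contained, and makes completely transparent how central splitness manufactures a two-sided complement and hence a central idempotent.
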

%\begin{proof}
%If $(\cx,\cy,\cz)$ is a centrally split TTF triple on $\Mod A$ with associated ideal $I$, then the short exact sequence
%\[0\ra I\ra A\ra A/I\ra 0
%\]
%splits \textcolor{red}{pero en $\Mod A$!} and so $I$ is a direct summand of $A$ as an $A$-$A$-bimodule. It is easy to check that the image of $1_{A}$ via the projection $A\ra I$ is a central idempotent $e$ of $A$ such that $I=AeA$. Conversely, if $e$ is a central idempotent of $A$, then $I=AeA$ is an ideal whose associated TTF triple centrally splits.
%\end{proof}
%EL IDEMPOTENTE CENTRAL QUE GENERA UN IDEAL ES UNICO
%If $eA=\varepsilon A$ with $e^2=e\ko\varepsilon^2=\varepsilon\in Z(A)$, then $e=\varepsilon a\ko \varepsilon=eb$ and so $(1-e)\varepsilon=eb-eb=0$ which implies $\varepsilon=e\varepsilon=\varepsilon e$. Similarly, one proves that $e=\varepsilon e=e\varepsilon$.

Put $\mathfrak{L}$, $\mathfrak{C}$ and $\mathfrak{R}$ for the sets of left, centrally and right split TTF triples on $\Mod A$.

Since there are one-sided split TTF triples which are not centrally split (\cf \cite{Teply}) Êwe should have a diagram of the form:
\[\xymatrix{ & \{\text{TTF triples}\}\simeq\{\text{idempotent ideals}\} &  \\
\mathfrak{L}\simeq ?\ar@{^(->}[ur] & & \mathfrak{R}\simeq ?\ar@{_(->}[ul] \\
& \mathfrak{C}\simeq\{\text{central idempotents}\}\ar@{_(->}[ul]\ar@{^(->}[ur] &
}
\]

The following is the main question Êtackled in the chapter:

\begin{question}
ÊWhat should replace the question marks in the diagram above?
\end{question}
\bigskip

\section{Left split TTF triples over arbitrary rings}\label{Left split TTF triples over arbitrary rings}
\addcontentsline{lot}{section}{3.3. Ternas TTF escindidas por la izquierda sobre anillos arbitrarios}

\begin{definition}\label{triangular matrix ring}
Given rings, $B$ and $C$, and a $B$-$C$-bimodule $M$, we can construct the associated \emph{triangular matrix ring}\index{ring!triangular matrix}\index{triangular matrix ring}:
\[A=\left[\begin{array}{cc}C&0\\M&B\end{array}\right].
\] 
The elements of $A$ are $2\times 2$ matrices
\[\left[\begin{array}{cc}c& 0 \\ m& b\end{array}\right]
\]
with $c\in C\ko m\in M$ and $b\in B$, and it becomes a ring with componentwise addition and matrix multiplication. 
\end{definition}

The category $\Mod A$ admits an explicit description in terms of modules over $C$ and $B$. Indeed, let $\cc_{A}$ be a category whose objects are triples $(L,N;\varphi)$ where $L\in\Mod C\ko N\in\Mod B$ and $\varphi\in\Hom_{C}(N\otimes_{B}M,L)$. The morphisms between two objects $(L,N;\varphi)$ and $(L',N';\varphi')$ are pairs 
\[(\alpha,\beta)\in\Hom_{C}(L,L')\times\Hom_{B}(N,N')
\]
such that the following diagram commutes:
\[\xymatrix{N\otimes_{B}M\ar[r]^{\beta\otimes\id_{M}}\ar[d]^{\varphi} & N'\otimes_{B}M\ar[d]^{\varphi'} \\
L\ar[r]^{\alpha} & L'
}
\]
Componentwise addition of morphisms gives $\cc_{A}$ the structure of an additive category. Now define a functor
\[F:\cc_{A}\ra\Mod A
\]
which takes $(L,N;\varphi)$ to the $A$-module defined by the abelian group $L\coprod N$ with scalar multiplication
\[\left[\begin{array}{cc}l&n\end{array}\right]\cdot \left[\begin{array}{cc}c&0\\m&b\end{array}\right]=\left[\begin{array}{cc}lc+\varphi(n\otimes m)&nb\end{array}\right].
\]
At the level of morphisms, $F$ takes $(\alpha,\beta)$ to the canonical morphism $\alpha\coprod\beta$.

It is well-known (see \cite[Proposition III. 2.2]{AuslanderReitenSmalo1995}) that $F$ is an equivalence, and we shall often identify $\Mod A$ with $\cc_{A}$.
Of course, a similar description of left $A$-module can be made.

 Notice that the assignments $L\mapsto (L,0;0)$ and $N\mapsto (0,N;0)$ give (fully faithful) embeddings $\Mod C\ra\cc_{A}\simeq\Mod A$ and $\Mod B\ra\cc_{A}\simeq\Mod A$. We shall frequently identify $\Mod C$ and $\Mod B$ with their images by these embeddings.

\begin{definition}\label{hereditary sigma injective}
If $C$ is a ring and $M$ is a $C$-module, then $M$ is \emph{hereditary $\Sigma$-injective}\index{injective!hereditary $\Sigma$-} in case every quotient of a direct sum of copies of $M$ is injective.
\end{definition}

We can already give the main result of this section. 

\begin{theorem}\label{izquierda}
Let $A$ be a ring and let $\cy$ be a full subcategory of $\Mod A$. The following assertions are equivalent:
\begin{enumerate}[1)]
\item $(^{\bot}\cy,\cy,\cy^{\bot})$ is a left split TTF triple on $\Mod A$.
\item There exists an idempotent $e\in A$ such that:
\begin{enumerate}[i)]
\item $(1-e)Ae=0$.
\item $NeA$ is a direct summand of $N$ for every $N\in\Mod A$.
\item $\cy$ is formed by the modules $N\in\Mod A$ such that $Ne=0$.
\end{enumerate}
\item The ring $A$ is isomorphic to the triangular matrix ring
\[A\cong\left[\begin{array}{cc}C&0\\M&B\end{array}\right]
\]
where:
\begin{enumerate}[i)]
\item $M$ is a $B$-$C$-bimodule such that $M$ is hereditary $\Sigma$-injective in $\Mod C$.
\item $\cy$ identifies with $\Mod C$.
\end{enumerate}
\end{enumerate}
\end{theorem}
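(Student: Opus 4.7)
The plan is to establish $1)\Leftrightarrow 2)$ directly from Jans' theorem together with a careful reading of what left splitting forces on the regular module $A_A$, and then to obtain $2)\Leftrightarrow 3)$ by means of the Peirce decomposition associated with the idempotent $e$, translating condition $2)ii)$ into an injectivity condition on $M$ via the usual description of $\Mod A$ in terms of the triangular matrix form. Throughout I will use freely that TTF triples on $\Mod A$ correspond to idempotent two-sided ideals $I$ of $A$, with $\cx=\Gen(I)$, $\cy=\{N\in\Mod A: NI=0\}$, and $\tau_{\cx}(M)=MI$ (Theorem~\ref{parametrizando TTF ternas}).

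For $1)\Rightarrow 2)$, let $I$ be the idempotent ideal associated to the TTF triple. Apply splitness to $M=A_A$: from $A=\tau_{\cx}(A)\oplus K=I\oplus K$ we read off $1=e+(1-e)$ with $e\in I$ idempotent, $eA=I$ and $K=(1-e)A$. Since $I=eA$ is a two-sided ideal we get $Ae\subseteq eA$, whence $(1-e)Ae=0$, yielding $i)$. For $ii)$, observe that $MI=MeA$ is precisely $\tau_{\cx}(M)$, and splitness means it is a direct summand of $M$. Condition $iii)$ follows from the trivial equivalence $NeA=0\iff Ne=0$. The converse $2)\Rightarrow 1)$ is essentially a verification: using $(1-e)Ae=0$ one checks $AeA=eA$ so that $I:=eA$ is a two-sided ideal, and $e\cdot e=e$ together with $e\in I\cdot I$ gives $I^2=I$; condition $iii)$ identifies $\cy$ with the middle class of the TTF triple associated to $I$, and condition $ii)$ is exactly the assertion that $\tau_{\cx}(M)=MI=MeA$ is always a direct summand of $M$, i.e.\ that $(\cx,\cy)$ splits.

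For $2)\Rightarrow 3)$, set $f:=1-e$ and apply the Peirce decomposition. The hypothesis $fAe=0$ makes it
\[A=fAf\oplus eAf\oplus eAe,\]
so $A$ is isomorphic to the triangular matrix ring with $C:=fAf$, $B:=eAe$, $M:=eAf$, where $M$ is naturally a $B$-$C$-bimodule, and $e$ corresponds to the bottom-right diagonal idempotent. Under the equivalence $\Mod A\simeq\cc_{A}$ described before the theorem, a right $A$-module $N$ becomes the triple $(Nf,Ne;\varphi_{N})$ with $\varphi_{N}\colon Ne\otimes_{B}M\to Nf$ the multiplication map, so that $\cy=\{N:Ne=0\}$ identifies with $\Mod C$. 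The crucial step is the translation of $ii)$: a direct computation shows that $NeA=(\im\varphi_{N},Ne;\varphi_{N})$ as a submodule of $N=(Nf,Ne;\varphi_{N})$, and hence $NeA$ is a direct summand of $N$ in $\Mod A$ if and only if $\im\varphi_{N}$ is a direct summand of $Nf$ in $\Mod C$. Letting $N$ range over all objects $(L,N';\varphi)$ of $\cc_{A}$, and using that any $C$-submodule of any $C$-module obtained as the image of some $\varphi\colon N'\otimes_{B}M\to L$ is precisely a $C$-quotient of $M^{(I)}$ for some set $I$ (take $N'=B^{(I)}$), condition $ii)$ becomes: every $C$-quotient of every small coproduct of copies of $M$ is a direct summand of every $C$-module containing it, i.e.\ is injective. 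This is exactly hereditary $\Sigma$-injectivity of the $C$-module $M$, yielding $3)$. Conversely, given the data of $3)$ one checks that the same bottom-right diagonal idempotent $e$ in the triangular matrix ring satisfies $i)$ trivially, $iii)$ by the identification $\cy\simeq\Mod C$, and $ii)$ by reversing the above translation.

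The main technical obstacle is the last step of the previous paragraph: identifying $NeA$ inside $N$ via the equivalence $\Mod A\simeq\cc_{A}$ and recognising which $C$-submodules of an arbitrary $C$-module $L$ arise as $\im\varphi$ for some $\varphi\colon N'\otimes_{B}M\to L$. Once the image of $\varphi$ is correctly characterized as an arbitrary $C$-quotient of some $M^{(I)}$, the passage to ``hereditary $\Sigma$-injective'' is a clean application of the standard equivalence between ``every monomorphism $Q\hookrightarrow L$ splits'' and ``$Q$ is injective''. Everything else is book-keeping with Peirce decompositions and direct verifications from Theorem~\ref{parametrizando TTF ternas}.
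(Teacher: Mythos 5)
Your argument is correct, and it splits naturally into a part that mirrors the paper and a part that does not. The equivalence $1)\Leftrightarrow 2)$ is essentially the paper's: Jans' bijection plus the observation that splitness at $A_A$ gives $I=\tau_{\cx}(A)=eA$ for an idempotent $e$; the only cosmetic difference is that you get $(1-e)Ae=0$ from $Ae\subseteq AeA\subseteq eA$, while the paper uses $(1-e)A\cong A/\tau_{\cx}(A)\in\cy$ and $\Hom_A(eA,(1-e)A)=0$. The genuine divergence is in $2)\Rightarrow 3)$, i.e.\ in proving that $M=eA(1-e)$ is hereditary $\Sigma$-injective over $C=(1-e)A(1-e)$. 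The paper argues homologically inside $\Mod A$: for a quotient $E$ of $T:=eA(1-e)A^{(S)}\in\cy$ it forms the pushout of the inclusion $T\ra eA^{(S)}\in\cx$ along $T\twoheadrightarrow E$ and uses $\Hom_A(T',F)=0$ for $F\in\cz$ together with $\Ext^1_A(T',V)=0$ for $V\in\cx$ (the homological form of left splitness) to conclude $\Ext^1_A(T',E)=0$ for all $T'\in\cy\simeq\Mod C$, hence injectivity of $E$; in particular the third class $\cz$ enters the argument. You instead work entirely in the comma-category description $\Mod A\simeq\cc_A$: after identifying $NeA$ inside $N=(Nf,Ne;\varphi_N)$ and checking that $NeA$ is an $A$-direct summand of $N$ if and only if $\im\varphi_N$ is a $C$-direct summand of $Nf$ (the forward direction uses that any complement of $NeA$ is annihilated by $e$, hence lies in $Nf$), you test condition 2.ii) on the modules $(L,B^{(I)};\varphi)$ and see that it says exactly that every embedded quotient of $M^{(I)}$ splits off, i.e.\ is injective. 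This is more elementary (no pushout, no Ext, no use of $\cz$) and yields $2)\Leftrightarrow 3)$ without passing through the TTF formalism at all; what the paper's route buys is that it isolates left splitness in the homological form $\Ext^1_A(\cy,\cx)=0$, which is the shape of argument reused in the right-split analysis later in the chapter. Your $3)\Rightarrow 2)$ agrees in substance with the paper's, where $NeA\cap N(1-e)A=NeA(1-e)A$ plays the role of your $\im\varphi_N$.
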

\begin{proof}
$1)\Rightarrow 2)$ Put $(\cx,\cy,\cz):=(^{\bot}\cy,\cy,\cy^{\bot})$. By hypothesis $I=\tau_{\cx}(A)=eA$, for some idempotent $e\in A$, Ê
and so $(1-e)A\cong A/\tau_{\cx}(A)$ belongs to $\cy$. Hence $(1-e)Ae\cong\Hom_{A}(eA,(1-e)A)=0$.
We know that $\tau_{\cx}(N)=NI=NeA$, which is then a direct summand of $N$, for every $N\in\Mod A$. We also have that a module $N\in\Mod A$ belongs to $\cy$ if and only if $Ne=0$.

$2)\Rightarrow 1)$ Of course, $\cy$ is a TTF class. Then it only remains to prove that the torsion pair $(^{\bot}\cy,\cy)$ splits.
First of all, since $eA\in{}^{\bot}\cy$ we get that $\Gen(eA)\subseteq{}^{\bot}\cy$. On the other hand, decompose an arbitrary $N\in{}^{\bot}\cy$ as
$N=NeA\oplus N'$ where $N'e=0$. Hence $N'=N'(1-e)\in\cy$, and so $N'=0$ and Ê$N\in\Gen(eA)$. Then $(^{\bot}\cy,\cy)=(\Gen(eA),\cy)$, Ê
which is a split torsion pair by 2.ii).

$1)=2)\Rightarrow 3)$ Put $(\cx,\cy,\cz):=(^{\bot}\cy,\cy,\cy^{\bot})$. Take $C:=(1-e)A(1-e)\ko B:=eAe$ and $M:=eA(1-e)$. All the conditions of
3) are clearly satisfied except, perhaps, that $M_{C}$ is hereditary $\Sigma$-injective. Let us prove it. In case $eA(1-e)=0$ we are done, so assume that $eA(1-e)\neq 0$. For an arbitrary set $S$ put
\[T:=eA(1-e)A^{(S)}\in\cy\ko D:=eA^{(S)}\in\cx\ko F:=\left(\frac{eA}{eA(1-e)A}\right)^{(S)}\in\cz.
\]
The short exact sequence
\[0\ra T\overset{i}{\hookrightarrow}D\twoheadrightarrow F\ra 0
\]
is not split. Indeed, since $\Hom_{A}(\cx,\cy)=0$ then $\Hom_{A}(D,T)=0$. Take now a non-zero epimorphism
$p:T\twoheadrightarrow E$, and let us prove that $E$ is injective over $C$. By doing the pushout of $p$ and $i$ we get the commutative diagram
\[\xymatrix{0\ar[r] & T\ar[r]^{i}\ar@{->>}[d]_{p} & D\ar[r]\ar@{->>}[d] & F\ar[r]\ar[d]^{\id_{F}} & 0 \\
0\ar[r] & E\ar[r]^{\mu} & V\ar[r]^{\pi} & F\ar[r] & 0 }
\]
with exact rows. Now, for any $T'\in\cy$, there is an exact sequence
\[\Hom_{A}(T',F)\ra\op{Ext}^{1}_{A}(T',E)\ra\op{Ext}^{1}_{A}(T',V).
\]
But $\Hom_{A}(T',F)=0$ because $T'\in\cy$ and $F\in\cz$, and $\op{Ext}^{1}_{A}(T',V)=0$ because $T'\in\cy\ko V\in\cx$, and $(\cx,\cy)$ splits.
So, $\op{Ext}^{1}_{A}(T',E)=0$, proving $E_{C}$ is injective.

$3)\Rightarrow 2)$ We identify $A$ with the triangular matrix ring 
\[\left[\begin{array}{cc}C & 0 \\ M & B\end{array}\right].
\] 
Taking the idempotent
\[e=\left[\begin{array}{cc}0 & 0 \\ 0 & 1\end{array}\right]
\] 
we trivially have $(1-e)Ae=0$, and $\cy\simeq\Mod C$ is formed by the modules $N\in\Mod A$ such that $Ne=0$.
It only remains to prove that $NeA$ is a direct summand of $N$ for each $N\in\Mod A$. Of course, we have $N=NeA+N(1-e)A$.
Note that $NeA\cap N(1-e)A=NeA(1-e)A$. Indeed, one inclusion is obvious. For the converse one, if $x\in NeA\cap N(1-e)A$, then $xe=0$ because $(1-e)Ae=0$, and so $x=x(1-e)\in NeA(1-e)\subseteq NeA(1-e)A$. Thus, $NeA\cap N(1-e)A$ is generated by 
\[eA(1-e)A=\left[\begin{array}{cc}0&0\\M&0\end{array}\right],
\] 
which is hereditary $\varSigma$-injective in $\Mod C$. Then
$NeA\cap N(1-e)A$ is injective in $\Mod C$, and so it induces a decomposition $N(1-e)A=NeA(1-e)A\oplus N'$ in $\Mod C\subseteq\Mod A$.
It follows that $N=N'\oplus NeA$.
\end{proof}

As a direct consequence of the theorem, the classification of left split TTF triples on $\Mod A$ is at hand.

\begin{corollary} \label{clasificacion left split}
Let $A$ be a ring. The one-to-one correspondence of Theorem ~\ref{parametrizando TTF ternas} restricts to a one-to-one correspondence between:
\begin{enumerate}[1)]
\item Left split TTF triples on $\Mod A$.
\item Ideals of $A$ of the form $I=eA$ where $e$ is an idempotent of $A$ such that $eA(1-e)$ is hereditary $\Sigma$-injective as a right $(1-e)A(1-e)$-module.
\end{enumerate}
\end{corollary}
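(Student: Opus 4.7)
My approach will be to obtain the corollary as a formal consequence of Theorem \ref{izquierda} and Jans' bijection (Theorem \ref{parametrizando TTF ternas}). Jans' bijection already associates to each TTF triple $(\cx,\cy,\cz)$ on $\Mod A$ the idempotent two-sided ideal $I = \tau_{\cx}(A)$, with $\cy$ being the class of modules annihilated by $I$. Hence the content of the corollary is to identify which idempotent ideals arise from left split TTF triples, and to verify that this class matches the one described in part 2).

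For the forward direction, I would start from a left split TTF triple and apply the equivalence $1)\Leftrightarrow 2)$ of Theorem \ref{izquierda} to obtain an idempotent $e\in A$ satisfying $(1-e)Ae=0$ and $\cy = \{N\in\Mod A : Ne=0\}$. I would then check that the Jans ideal $\tau_{\cx}(A)$ is precisely $eA$: the condition $(1-e)Ae=0$ is exactly what makes the right ideal $eA$ two-sided, and its idempotency is automatic from $e\in eA$. The remaining condition on $eA(1-e)$ (being hereditary $\Sigma$-injective as a right $(1-e)A(1-e)$-module) is then read off from the implication $2)\Rightarrow 3)$ of Theorem \ref{izquierda}, using the triangular matrix presentation of $A$ produced there.

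For the reverse direction, I would take an idempotent $e$ as in part 2) and observe that the hypothesis ``$I=eA$ is a two-sided ideal'' already forces $(1-e)Ae=0$; consequently $A$ admits the triangular matrix ring presentation with $C=(1-e)A(1-e)$, $B=eAe$ and $M=eA(1-e)$ appearing in part 3) of Theorem \ref{izquierda}. Applying the implication $3)\Rightarrow 1)$ of that theorem yields a left split TTF triple whose middle class consists of the $A$-modules annihilated by $e$, which coincides with the class of modules annihilated by $I=eA$; thus this is the very TTF triple Jans attaches to $I$. The two assignments are then mutual inverses because each one simply restricts the Jans correspondence. No real obstacle arises at the level of the corollary itself: the entire nontrivial content is already absorbed in Theorem \ref{izquierda}, whose implication $1)\Rightarrow 3)$ — built on the pushout/$\Ext$ argument applied to the non-split sequence $0\to eA(1-e)A^{(S)}\to eA^{(S)}\to (eA/eA(1-e)A)^{(S)}\to 0$ — is the only substantive step.
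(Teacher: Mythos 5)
Your proposal is correct and follows essentially the same route as the paper: the paper's proof consists precisely of the observation that $I=eA$ being two-sided forces $Ae\subseteq eA$, hence $(1-e)Ae=0$, followed by an appeal to (the proof of) Theorem \ref{izquierda}, whose equivalences $1)\Leftrightarrow 2)\Leftrightarrow 3)$ carry all the substance. Your additional bookkeeping (identifying the Jans ideal of the triple as $eA$ via $AeA=eA$, and matching the middle classes so the two assignments restrict the Jans correspondence) is exactly the implicit content of that appeal.
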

\begin{proof}
If $e\in A$ is an idempotent such that $I=eA$ is a two-sided ideal, then $Ae\subseteq eA$ and, hence, $(1-e)Ae=0$. Now apply (the proof of) Theorem \ref{izquierda}.
\end{proof}

\section{Right split TTF triples over `good' rings}\label{Right split TTF triples over `good' rings}
\addcontentsline{lot}{section}{3.4. Ternas TTF escindidas por la derecha sobre `buenos' anillos}

Recall some definitions of module theory:

\begin{definition}
A ring $A$ is
\begin{enumerate}[1)]
\item \emph{right hereditary}\index{ring!right hereditary} if every right ideal is projective (equivalently, any submodule of a projective right $A$-module is projective).
\item \emph{semilocal}\index{ring!semilocal} if $A/J(A)$ is semisimple.
\item \emph{left semiartinian}\index{ring!left semiartinian} if every non-zero left $A$-module has a non-zero socle.
\item \emph{semiprimary}\index{ring!semiprimary} if it is semilocal and $J(A)$ is nilpotent.
\item \emph{semiperfect}\index{ring!semiperfect} if it is semilocal and idempotents lift module $J(A)$.
\end{enumerate}
\end{definition}

Recall now H. Bass' theorem \cite{Bass1960}:

\begin{theorem}\label{Bass theorem}
The following properties of a ring $A$ are equivalent:
\begin{enumerate}[1)]
\item $A$ is left semiartinian and semilocal.
\item $A$ is left semiartinian and semiperfect.
\item Every flat right $A$-module is projective.
\item Every right $A$-module has a projective cover.
\end{enumerate}
\end{theorem}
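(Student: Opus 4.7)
The plan follows H.~Bass' classical strategy. The implication $(2)\Rightarrow(1)$ is immediate from the definitions of semiperfect and semilocal. For $(1)\Rightarrow(2)$, the plan is to prove that the Jacobson radical $J(A)$ is a nil ideal: for $a\in J(A)$ one inspects the descending chain $Aa\supseteq Aa^{2}\supseteq\cdots$ of cyclic left submodules of ${}_AA$ and uses that every quotient has a nonzero socle (the left semiartinian hypothesis) to force some power $a^{n}$ to vanish. The standard fact that idempotents lift modulo nil ideals then upgrades ``semilocal'' to ``semiperfect''.

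Next I would establish the cycle $(2)\Rightarrow(4)\Rightarrow(3)\Rightarrow(1)$. The implication $(4)\Rightarrow(3)$ is the easiest: a projective cover $\pi\colon P\twoheadrightarrow M$ of a flat module $M$ has superfluous kernel $K\subseteq J(A)P$, and the flatness of $M$ yields the Chase-type identity $K\cap P\mathfrak{a}=K\mathfrak{a}$ for every finitely generated left ideal $\mathfrak{a}$, from which superfluity forces $K=0$, so $M\cong P$ is projective. For $(3)\Rightarrow(1)$, I would first derive from $(3)$ that $A$ satisfies DCC on principal left ideals (since the direct limit of any chain of finitely generated projectives is flat, hence projective by $(3)$, which restricts the chain behaviour), from which $A/J(A)$ is easily seen to be semisimple; the left semiartinian property is then obtained by showing that every nonzero left module has a maximal submodule (equivalently, a nonzero socle after passing to duals of the descending chain).

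The principal obstacle is $(2)\Rightarrow(4)$: producing projective covers of \emph{arbitrary} right $A$-modules from the semiperfect plus left semiartinian hypotheses. The semiperfect assumption immediately yields projective covers for finitely generated modules, but globalising this requires that $J(A)$ be \emph{left $T$-nilpotent}, a condition strictly stronger than being nil. This $T$-nilpotency must be extracted from the left semiartinian hypothesis: given any sequence $a_{1},a_{2},\ldots\in J(A)$, one must produce an index $n$ with $a_{n}\cdots a_{1}=0$, which is done by examining the socle series of the direct limit of the cyclic left modules $A/Aa_{n}\cdots a_{1}$ and exploiting that any simple submodule would yield, by Nakayama, a contradiction with $a_{i}\in J(A)$. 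With $T$-nilpotency in hand, projective covers of arbitrary modules can be assembled by transfinite induction along the Loewy filtration determined by the powers of $J(A)$; the convergence of this construction is what makes the step delicate and is the technical core of Bass' theorem, where I expect to spend the most effort.
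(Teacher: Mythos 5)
Note first that the paper does not actually prove this theorem: its ``proof'' is a citation of Stenstr\"om and Anderson--Fuller, so your sketch is measured against the classical argument rather than against anything in the text. Your architecture is the classical one, and your diagnosis of the crux is right: the whole point is to extract T-nilpotency of $J(A)$ from the left semiartinian hypothesis. The cleanest device for that is the Loewy (socle) series of ${}_AA$: since $J(A)$ annihilates every simple left module, $J(A)\cdot\operatorname{Soc}_{\alpha+1}({}_AA)\subseteq\operatorname{Soc}_\alpha({}_AA)$, so the Loewy heights of the nonzero products $a_n\cdots a_1$ strictly decrease, and an infinite strictly decreasing sequence of ordinals is impossible. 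This one remark gives at once the nilpotency of elements of $J(A)$ you want in $(1)\Rightarrow(2)$ and the T-nilpotency you need in $(2)\Rightarrow(4)$; by contrast, your ``direct limit of the cyclic left modules $A/Aa_n\cdots a_1$'' does not literally exist (the natural maps between these cyclics go the wrong way), and the final assembly of projective covers needs no transfinite induction: one lifts a projective cover of the semisimple module $M/MJ(A)$ to a map $P\to M$ and uses that $NJ(A)\ll N$ for \emph{every} right module $N$ once $J(A)$ is right T-nilpotent.

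Two steps, however, contain genuine gaps. In $(4)\Rightarrow(3)$ the Chase identity only yields $K=KJ(A)$ (since $K\subseteq PJ(A)$), and ``superfluity forces $K=0$'' is not a valid inference from this: in a valuation domain $A$ with value group $\mathbf{Q}$ one has $\mathfrak m^2=\mathfrak m$, so $K=\mathfrak m$ is a nonzero superfluous submodule of $P=A$ with $K=KJ(A)$. (There is no contradiction with the statement being proved, because $A/\mathfrak m$ is not flat --- but your argument has already discarded flatness at that point.) The correct proof that a flat module with a projective cover is projective uses flatness more finely, e.g.\ via Villamayor's criterion: for $x\in K$ there is $\theta\colon P\to K$ with $\theta(x)=x$; then $P=K+(1-\theta)(P)$, so $1-\theta$ is surjective by superfluity, it splits because $P$ is projective, and $\ker(1-\theta)$ is a direct summand of $P$ contained in the superfluous submodule $K$, hence zero, whence $x=0$. (The alternative is the T-nilpotent Nakayama lemma, but in your cycle T-nilpotency is not available from $(4)$ alone.) Second, in $(3)\Rightarrow(1)$ the parenthetical ``every nonzero left module has a maximal submodule (equivalently, a nonzero socle\dots)'' is wrong: having a maximal submodule and having a simple submodule are dual, not equivalent, conditions, and over a right perfect ring it is the socle condition, not the maximal-submodule condition, that holds for \emph{left} modules. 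The standard endgame is: DCC on principal left ideals gives both that $A/J(A)$ is semisimple and that $J(A)$ is right T-nilpotent (if $Aa_{n+1}a_n\cdots a_1=Aa_n\cdots a_1$ then $(1-ba_{n+1})a_n\cdots a_1=0$ with $1-ba_{n+1}$ invertible); then for $0\ne x$ in a left module, T-nilpotency produces $y=a_n\cdots a_1x\ne 0$ (possibly $y=x$) with $J(A)y=0$, and $Ay$ is a nonzero module over the semisimple ring $A/J(A)$, hence contains a simple submodule. With these two repairs your outline becomes the standard complete proof.
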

\begin{proof}
\emph{Cf.} \cite[Proposition VIII.5.1]{Stenstrom} or \cite[Theorem 28.4]{AndersonFuller1992}.
\end{proof}

The following definition is due to H. Bass \cite{Bass1960}:

\begin{definition}
A ring $A$ is \emph{right perfect}\index{ring!right perfect} if it satisfies any of the equivalent conditions of the above proposition. Dually, one defines \emph{left perfect}\index{ring!left perfect}.
\end{definition}

\begin{proposition}
Every semiprimary ring is right and left perfect.
\end{proposition}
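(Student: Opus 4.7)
The strategy is to reduce to Bass's theorem (Theorem~\ref{Bass theorem}), which characterises right perfect rings as the semilocal, left semiartinian ones. Being semiprimary is a left-right symmetric condition: both the semisimplicity of $A/J(A)$ and the nilpotency of $J(A)$ are symmetric, and $J(A)$ itself is a two-sided ideal. Consequently, it will be enough to prove that a semiprimary ring is right perfect; applying the same argument to $A^{\op{op}}$, which is again semiprimary with the same Jacobson radical, will then yield left perfectness. Semilocality is already part of the hypothesis, so the whole task reduces to showing that $A$ is left semiartinian, i.e.\ that every non-zero left $A$-module has a non-zero socle.

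To establish this, I would write $J := J(A)$, fix an integer $n \geq 1$ with $J^{n}=0$ given by nilpotency, and for a non-zero left $A$-module $M$ consider the descending filtration
\[
M \;\supseteq\; JM \;\supseteq\; J^{2}M \;\supseteq\; \cdots \;\supseteq\; J^{n}M = 0.
\]
Let $k \geq 1$ be the smallest integer with $J^{k}M = 0$, and set $N := J^{k-1}M$, which is non-zero by the choice of $k$. Since $JN = 0$, the module $N$ inherits the structure of a left $A/J$-module, and its $A$-submodule lattice coincides with its $A/J$-submodule lattice. Because $A/J$ is semisimple, $N$ decomposes as a sum of simple $A/J$-modules, each of which is simple as an $A$-module as well. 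Therefore $N \subseteq \op{soc}(M)$; in particular $\op{soc}(M) \neq 0$, proving that $A$ is left semiartinian. Bass's theorem then gives that $A$ is right perfect, and the same argument for $A^{\op{op}}$ yields that $A$ is left perfect.

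I do not foresee any genuine obstacle: the only substantive use of the semiprimary hypothesis is that the nilpotency of $J$ forces the filtration by its powers to reach zero in finitely many steps, thereby producing the layer $N = J^{k-1}M$ annihilated by $J$ on which the semisimplicity of $A/J$ can be deployed. The rest is the standard identification of simple $A/J$-modules with certain simple $A$-modules. If any subtlety arises it will only be in the symmetry argument for $A^{\op{op}}$, which nevertheless is immediate once one observes that $J(A^{\op{op}}) = J(A)$ is still nilpotent and $A^{\op{op}}/J(A^{\op{op}}) \cong (A/J(A))^{\op{op}}$ is still semisimple.
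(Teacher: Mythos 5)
Your argument is correct, but it takes a different route from the paper: the paper does not prove the proposition at all, it simply cites \cite[Corollary 28.8]{AndersonFuller1992}, whereas you deduce it from Bass's theorem (Theorem \ref{Bass theorem}), which the paper has already quoted. Your reduction is sound: semilocality is part of the semiprimary hypothesis, and your layer argument (take $k$ minimal with $J^{k}M=0$, so $N=J^{k-1}M\neq 0$ is killed by $J$, hence a semisimple $A/J$-module whose simple summands remain simple over $A$, giving $0\neq N\subseteq\op{soc}(M)$) correctly shows every non-zero left module has non-zero socle, so condition 1) of Theorem \ref{Bass theorem} holds and $A$ is right perfect in the sense the paper defines; the passage to $A^{\op{op}}$ is legitimate since $J(A^{\op{op}})=J(A)$ is nilpotent and $(A/J(A))^{\op{op}}$ is again semisimple, so "dually" you get left perfectness. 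What your approach buys is a self-contained proof (modulo the already-stated Bass theorem) exhibiting exactly where nilpotency of the radical and semisimplicity of $A/J$ enter; what the paper's citation buys is brevity, since Anderson--Fuller's Corollary 28.8 packages precisely this standard argument.
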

\begin{proof}
\emph{Cf.} \cite[Corollary 28.8]{AndersonFuller1992}.
\end{proof}

In this section and the next one hereditary perfect rings will play an important r\^{o}le. We gather some known properties of them which will be useful:

\begin{proposition}\label{hereditary perfect}
Let $A$ be a right perfect right hereditary ring. Then it is semiprimary, hereditary on both sides and the class of projective
$A$-modules (on either side) is closed under arbitrary products.
\end{proposition}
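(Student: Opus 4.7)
The plan is to establish the three properties in sequence, each building on the previous.

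First, to show $A$ is semiprimary. Since $A$ is right perfect, Theorem~\ref{Bass theorem} gives that $A$ is semilocal with $J := J(A)$ right T-nilpotent; since every right $A$-module admits a projective cover, idempotents lift modulo $J$ and $A$ is in fact semiperfect, so $A_A = e_1A \oplus \cdots \oplus e_n A$ for a finite family of primitive orthogonal idempotents summing to $1$. Right hereditariness forces $J$, being a right ideal of the projective module $A_A$, to be projective. A stronger theorem of Bass then guarantees that every projective right $A$-module over a right perfect ring decomposes as a direct sum of copies of the $e_i A$, so $J = \bigoplus_{\alpha} e_{i_{\alpha}}A$. Combining this structural description with the right T-nilpotence of $J$ and the fact that the semiperfect decomposition of $A_A$ has only finitely many isomorphism types of summands, one obtains a descending chain analysis of $A \supseteq J \supseteq J^2 \supseteq \cdots$ which forces $J^m = 0$ for some $m \geq 1$.

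Second, to show that $A$ is left hereditary. Having established that $A$ is semiprimary, one invokes Auslander's classical theorem that left and right global dimensions of a semiprimary ring coincide. Right hereditariness is the statement that the right global dimension of $A$ is at most $1$, whence the same bound holds on the left, proving that $A$ is left hereditary.

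Third, to show that products of projective modules are projective on either side. Since $A$ is left hereditary, every finitely generated left ideal of $A$ is projective, hence finitely presented, and $A$ is left coherent. By Chase's theorem, products of flat right $A$-modules are flat; since $A$ is right perfect, flatness and projectivity coincide on the right, and therefore products of projective right $A$-modules are projective. For the left-hand version, the right hereditary hypothesis gives right coherence, and since $A$ is semiprimary (hence in particular left perfect), the same scheme yields that products of projective left $A$-modules are projective.

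The main obstacle is the first step. Right T-nilpotence and right hereditariness are each individually standard, but combining them to obtain genuine nilpotence of $J$ requires a careful dévissage of $J$ through the finite list of indecomposable projective direct summands of $A_A$; this is genuinely needed since a right T-nilpotent ideal is in general not nilpotent. Once Step~1 is secured, Steps~2 and~3 become essentially formal, resting only on the theorems of Auslander and Chase together with Bass's characterization of perfect rings.
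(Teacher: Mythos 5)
Your Steps 2 and 3 are sound and, in substance, follow the same route as the paper's second half: the paper quotes Small for ``semiprimary and hereditary on both sides'' and then applies Chase's theorem, while you supply Auslander's equality of left and right global dimensions over a semiprimary ring as a substitute for the two-sided hereditariness, and then use hereditary $\Rightarrow$ coherent, Chase, and Bass's ``flat $=$ projective over a perfect ring'' on each side. Given semiprimariness, all of that is correct.

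The genuine gap is Step 1, which is exactly the content the paper outsources to Small's results. You correctly reduce to showing that $J=J(A)$ is nilpotent, note that $J_A$ is projective and hence, by Bass, a direct sum of copies of the finitely many $e_iA$, and you rightly stress that right T-nilpotence alone does not give nilpotence. But the sentence ``one obtains a descending chain analysis of $A\supseteq J\supseteq J^2\supseteq\cdots$ which forces $J^m=0$'' is a restatement of the goal, not an argument: nothing you have written rules out that this chain is strictly decreasing forever, and T-nilpotence by itself never forces it to stop. What is missing is the actual d\'evissage you allude to. For instance: each $e_iJ$ is projective, hence a direct sum of copies of various $e_jA$; record an arrow $i\to j$ in a finite quiver whenever $e_jA$ is a direct summand of $e_iJ$. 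A cycle through $i$ yields an element $b\in e_iJe_i$ whose left multiplication on $e_iA$ is injective (a composite of split monomorphisms), so $b^m\neq 0$ for all $m$, contradicting right T-nilpotence applied to the constant sequence $b,b,\dots$; acyclicity of the finite quiver then bounds the radical filtration of each $e_iA$ and gives $J^m=0$. Without an argument of this kind (or the citation to Small, as in the paper), the semiprimary claim --- on which your Steps 2 and 3 entirely depend --- remains unproven.
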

\begin{proof}
From Ê\cite[Corollary 2 and Theorem 3]{Small1967} if follows that $A$ is semiprimary and hereditary (whence coherent) on both sides. Then apply \cite[Theorem 3.3]{Chase1960}.
\end{proof}

We start with some properties of (right split) TTF triples which will be used in the sequel. The following property is due to G. Azumaya \cite[Theorem 6]{Azumaya}:

\begin{lemma}\label{hereditary igual a pure igual a condicion aritmetica}
Let $(\cx,\cy,\cz)$ be a TTF triple on $\Mod A$ with associated idempotent ideal $I$. The torsion pair $(\cx ,\cy )$ is hereditary if, and only if, $I$ is pure as a left ideal.
\end{lemma}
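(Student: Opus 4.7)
The plan is to rephrase the purity condition homologically and then exploit the idempotency of $I$ to translate the hereditary condition into the vanishing of a $\Tor$ group.

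First, I would observe that $I$ being pure as a left ideal is equivalent to saying that the quotient $A/I$ is a flat left $A$-module, or, equivalently by the long exact sequence obtained from $0\ra I\ra A\ra A/I\ra 0$, that $\Tor_1^A(M,A/I)=0$ for every right $A$-module $M$. Indeed, once this is granted, tensoring the inclusion $N\hookrightarrow M$ with $A/I$ yields $N/NI\hookrightarrow M/MI$; if $M\in\cx$ the right-hand side is zero, forcing $N=NI$, \ie $N\in\cx$. This gives one implication.

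For the converse, assume $\cx$ is closed under submodules. Given a right $A$-module $M$, pick a free presentation $0\ra K\arr{\iota} F\ra M\ra 0$. From the corresponding long exact sequence and the flatness of $F$, I obtain
\[\Tor_1^A(M,A/I)=\ker\bigl(K/KI\ra F/FI\bigr)=(K\cap FI)/KI.\]
Here is where the hypothesis kicks in. Since $I$ is idempotent, $(FI)I=FI^2=FI$, so $FI\in\cx$; the hereditary hypothesis then forces $K\cap FI\in\cx$, \ie $(K\cap FI)I=K\cap FI$. But clearly $(K\cap FI)I\subseteq KI$, while $KI\subseteq K\cap FI$ is automatic. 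Thus $K\cap FI=KI$ and the Tor vanishes, completing the converse.

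The plan is therefore short and essentially self-contained; I do not expect any serious obstacles, the only subtle point being the identification of $\Tor_1^A(M,A/I)$ with $(K\cap FI)/KI$, which is a direct computation from the long exact sequence, and the observation that idempotency of $I$ places $FI$ itself inside $\cx$, so that the hereditary assumption can be applied to the submodule $K\cap FI\subseteq FI$.
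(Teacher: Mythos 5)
Your proof is correct. Note that the paper itself does not prove this lemma at all: it simply quotes it from G.~Azumaya's \emph{Some properties of TTF-classes} (Theorem 6), so you are supplying a self-contained argument where the thesis only gives a citation. Your two key reductions are both sound: since $A$ is flat, the long exact sequence for $0\ra I\ra A\ra A/I\ra 0$ identifies purity of $I$ in $_AA$ with flatness of $_A(A/I)$, and flatness immediately gives heredity of $\cx=\{M\mid MI=M\}$ because $N\otimes_A A/I\cong N/NI$ embeds in $M/MI=0$. For the converse, the computation $\Tor_1^A(M,A/I)\cong(K\cap FI)/KI$ from a free presentation $0\ra K\ra F\ra M\ra 0$ is right, and the decisive use of idempotency ($FI\cdot I=FI$, so $FI\in\cx$, hence $K\cap FI\in\cx$ by heredity, forcing $K\cap FI=(K\cap FI)I\subseteq KI\subseteq K\cap FI$) is exactly the kind of element-level argument one finds in Azumaya's original treatment; your version is a clean homological packaging of it and can stand as a proof of the lemma.
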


\begin{proposition}\label{primeras propiedades de los right split}
Let $(\cx,\cy,\cz)$ be a right split TTF triple on $\Mod A$. Then $(\cx,\cy)$ is a hereditary torsion pair and $\cx\subseteq\cz$.
\end{proposition}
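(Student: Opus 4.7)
The plan is to establish the two claims in order: first prove $\cx\subseteq\cz$, then use this containment together with the right-split hypothesis to deduce that $(\cx,\cy)$ is hereditary.

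For the first claim, I would start from an arbitrary module $M\in\Mod A$ and exploit the right-split hypothesis to decompose it as $M=\tau_{\cy}(M)\oplus M'$ with $M'\in\cz$. Since $\tau_{\cy}(M)\in\cy$ is annihilated by $I$, we get $MI=M'I\subseteq M'$. Because $\cz$ is the torsion-free class of the torsion pair $(\cy,\cz)$, it is closed under submodules, so $\tau_{\cx}(M)=MI\in\cz$. Specializing to $M\in\cx$, where $M=MI$, yields $\cx\subseteq\cz$.

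For the second claim, let $X'\subseteq X$ with $X\in\cx$. The idea is that the naive sequence $0\to X'I\to X'\to X'/X'I\to 0$ does not directly fit the shape required by splitness of $(\cy,\cz)$ (which provides $\Ext^{1}_A(Z,Y)=0$ for $Z\in\cz,\, Y\in\cy$), so I would instead work with the exact sequence
\[
0\longrightarrow X'/X'I \longrightarrow X/X'I \longrightarrow X/X'\longrightarrow 0.
\]
Here the kernel $X'/X'I$ lies in $\cy$, while both $X/X'I$ and $X/X'$ are quotients of $X\in\cx$ and hence lie in $\cx$ (torsion classes are closed under quotients). By the first part, $X/X'\in\cx\subseteq\cz$, so the displayed sequence has quotient in $\cz$ and kernel in $\cy$. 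The splitness of $(\cy,\cz)$ therefore forces it to split, exhibiting $X'/X'I$ as a direct summand of $X/X'I\in\cx$. Since $\cx\cap\cy=0$, this forces $X'/X'I=0$, i.e.\ $X'=X'I\in\cx$. Hence $\cx$ is closed under submodules, equivalently (by Lemma~\ref{hereditary igual a pure igual a condicion aritmetica}) $I$ is pure as a left ideal and $(\cx,\cy)$ is hereditary.

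The calculations involved are routine; the main conceptual point, and the only place where care is required, is the choice of the right exact sequence to which the splitness hypothesis can be applied. The obvious canonical torsion sequence for $X'$ sits in $\cy\!-\!\cz$ order opposite to the one governed by the splitness of $(\cy,\cz)$, and one must replace it by the sequence above—obtained by dividing out $X'I$ from both $X'$ and $X$—in order to turn the right-split hypothesis into a usable splitting.
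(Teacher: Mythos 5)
Your proof is correct, but it follows a genuinely different route from the paper's. The paper proves the two claims in the opposite order and with different tools: it first shows that $\cy$ is closed under injective envelopes (for $M\in\cy$ one has $M\subseteq\tau_{\cy}(E(M))$, which by right splitness is a direct summand of $E(M)$, and essentiality of $M$ forces $\tau_{\cy}(E(M))=E(M)$), then invokes Stenstr\"om's characterization that a torsion pair is hereditary if and only if the torsionfree class is closed under injective envelopes, and finally quotes the standard fact that hereditariness of $(\cx,\cy)$ in a TTF triple yields $\cx\subseteq\cz$. You instead work entirely with Jans' description via the idempotent ideal $I$: you get $\cx\subseteq\cz$ first, from the splitting $M=\tau_{\cy}(M)\oplus M'$ and the inclusion $MI=M'I\subseteq M'\in\cz$, and then obtain hereditariness from the exact sequence $0\to X'/X'I\to X/X'I\to X/X'\to 0$, whose kernel is in $\cy$ and whose cokernel is in $\cz$ by the first part, so that the $\Ext$-vanishing coming from the split pair $(\cy,\cz)$ forces $X'/X'I\in\cx\cap\cy=0$. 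Your argument is more elementary and self-contained — no injective envelopes and no external citations, only Theorem~\ref{parametrizando TTF ternas} and the standard fact that a split torsion pair $(\cy,\cz)$ satisfies $\Ext^1_A(Z,Y)=0$ for $Z\in\cz$, $Y\in\cy$ (a fact the paper itself uses without proof in Theorem~\ref{izquierda}, though a one-line verification would not hurt); the paper's argument is shorter given the references and records the slightly stronger intermediate fact that $\cy$ is closed under injective envelopes, and its module-free flavour makes the implication ``hereditary $\Rightarrow\cx\subseteq\cz$'' a purely formal consequence rather than an ideal-theoretic computation.
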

\begin{proof}
Let $M\in\cy$ and let $E(M)$ be its injective envelope. Of course, $M\subseteq \tau_{\cy}(E(M))$ and, by hypothesis, $\tau_{\cy}(E(M))$ is a direct summand of $E(M)$. But $M$ is essential in $E(M)$, which implies that $\tau_{\cy}(E(M))=E(M)$\ko \ie $E(M)\in\cy$. We have proved that $\cy$ is closed under injective envelopes. But then \cite[Proposition VI.3.2]{Stenstrom} says that $(\cx,\cy)$ is hereditary, and consequently $\cx\subseteq\cz$ by \cite[Lemma VI.8.3]{Stenstrom}.
\end{proof}

We first want to classify the following type of right split TTF triples.

\begin{proposition} \label{anhadida}
Let $(\cx,\cy,\cz)$ be a right split TTF triple on $\Mod A$ with associated idempotent ideal $I$. The following conditions are equivalent:
\begin{enumerate}[1)]
\item $\cx$ is closed under products.
\item $I=Ae$ for some idempotent $e\in A$.
\item Ê$A/I$ has a projective cover in $\Mod A$.
\item Ê$I$ is finitely generated as a left ideal.
\end{enumerate}
\end{proposition}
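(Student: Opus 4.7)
The equivalences are most cleanly established in two parts: first proving the cycle $(2)\Rightarrow(1)\Rightarrow(4)\Rightarrow(2)$, which is purely a statement about the structure of the ideal $I$ and uses only the purity consequence of the right-split hypothesis; then handling the equivalence $(2)\Leftrightarrow(3)$ separately via an analysis of projective covers of the cyclic module $A/I$. The easy implications are $(2)\Rightarrow(1)$ and $(2)\Rightarrow(4)$; the non-trivial steps are $(1)\Rightarrow(4)$, $(4)\Rightarrow(2)$, and the two directions of $(2)\Leftrightarrow(3)$.

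\textbf{The cycle.} For $(2)\Rightarrow(1)$, if $I=Ae$ then for every right $A$-module $M$ one has $MI=MAe=Me$, so $\cx=\{M:Me=M\}$; this condition is manifestly preserved under arbitrary products, since $(\prod M_\lambda)e=\prod(M_\lambda e)$. For $(2)\Rightarrow(4)$, the element $e$ generates $I=Ae$ as a left ideal. For $(1)\Rightarrow(4)$, I would consider the product $I^I$ (copies of $I$ indexed by the set $I$), which lies in $\cx$ by hypothesis and therefore satisfies $I^I\cdot I = I^I$; in particular the identity function $\id_I\in I^I$ admits a finite expression $\id_I=\sum_{j=1}^n f_j\cdot i_j$ with $f_j\in I^I$ and $i_j\in I$, and evaluating at any $x\in I$ yields $x=\sum_j f_j(x)\,i_j\in\sum_j Ai_j$, so $I=Ai_1+\cdots+Ai_n$ is finitely generated as a left ideal. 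For the key step $(4)\Rightarrow(2)$, I would combine Proposition~\ref{primeras propiedades de los right split} and Lemma~\ref{hereditary igual a pure igual a condicion aritmetica} to deduce that $I$ is pure as a left ideal of $A$; purity of $0\to I\to A\to A/I\to 0$ gives that $A/I$ is flat on the left, and finite generation of $I$ together with the cyclicity of ${}_AA$ makes $A/I$ finitely presented on the left as well, hence projective. Therefore the sequence splits in $A\text{-}\Mod$ and $I=Ae$ for some idempotent $e\in A$.

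\textbf{The equivalence $(2)\Leftrightarrow(3)$.} Assume $(2)$, so $I=Ae$; since $I$ is two-sided and $e\in I$, one has $eA\subseteq Ae=I$, so the canonical projection $\pi:A\to A/I$ factors through the projective module $(1-e)A\cong A/eA$, producing a surjection $\rho:(1-e)A\to A/I$ whose kernel is $(1-e)A\cap I=(1-e)Ae$. To obtain $(3)$, I would prove that $\rho$ is a projective cover by showing $(1-e)Ae$ is small in $(1-e)A$: every element $(1-e)ae$ squares to zero (since $e(1-e)=0$), and the idempotency $I=I^2$ combined with the right-split decomposition $A=\lann_A(I)\oplus C$ (with $C\in\cz$) should force such elements into the sum of small submodules, i.e.\ into the Jacobson radical of $(1-e)A$. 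Conversely, assuming $(3)$, let $p:P\to A/I$ be a projective cover. Since $A/I$ is cyclic, so is $P$, and $P=e'A$ for some idempotent $e'\in A$. Lifting $\pi:A\to A/I$ through $p$ via projectivity of $A$ yields a splitting $A=e'A\oplus(1-e')A$ of right $A$-modules with $(1-e')A=\ker\psi\subseteq\ker\pi=I$; in particular $1-e'\in I$ and $A(1-e')\subseteq I$. Under this decomposition $I=e'I\oplus(1-e')A$, and the projective cover condition translates to $\ker p\cong e'I$ being small in $e'A$. I would then use the idempotency of $I$ together with the right-split structure to force $e'I=0$, which gives $I=(1-e')A$; since $I$ is a two-sided ideal, a routine check yields $I=A(1-e')$, i.e.\ $I=Ae$ with $e=1-e'$.

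\textbf{Main obstacle.} The delicate part will be the smallness analysis in both directions of $(2)\Leftrightarrow(3)$: neither the assertion ``$(1-e)Ae$ is small in $(1-e)A$'' nor the deduction ``$e'I$ small in $e'A$ forces $e'I=0$'' is formal. Both require synthesizing three ingredients—the idempotency $I^2=I$, the direct-summand decomposition $A=\lann_A(I)\oplus C$ provided by the right-split hypothesis, and the nilpotency/Peirce structure arising from the idempotent $e$ (respectively $e'$)—into a Nakayama-style argument. The rest of the proof is comparatively mechanical, with the only subtle ingredient being the appeal to purity in $(4)\Rightarrow(2)$.
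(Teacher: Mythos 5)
Your cycle $2)\Rightarrow 1)\Rightarrow 4)\Rightarrow 2)$ is correct. The computation $MI=Me$ gives $2)\Rightarrow 1)$; the evaluation of $\mathrm{id}_I\in I^I=I^I\cdot I$ gives $1)\Rightarrow 4)$ and is a good self-contained substitute for the paper's appeal to Azumaya's results for $1)\Leftrightarrow 2)$; and your $4)\Rightarrow 2)$ (purity of ${}_AI$ from Proposition \ref{primeras propiedades de los right split} and Lemma \ref{hereditary igual a pure igual a condicion aritmetica}, then flat plus finitely presented implies projective) is exactly the paper's argument. In $2)\Rightarrow 3)$ you have, however, mislocated the difficulty: the smallness of $(1-e)Ae$ in $(1-e)A$ needs neither idempotency of $I$ nor right-splitness. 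If $U+(1-e)Ae=(1-e)A$ with $U$ a submodule, write $1-e=u+x$ with $u\in U$ and $x\in(1-e)Ae$; multiplying on the right by $e$ gives $0=ue+x$, so $x=-ue\in U$, hence $1-e\in U$ and $U=(1-e)A$. (Two-sidedness of $I=Ae$ is only needed to know that $(1-e)Ae=(1-e)A\cap I$ is a submodule.)

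The genuine gap is $3)\Rightarrow 2)$. Your plan is to force $e'I=0$, where $e'A\ra A/I$ is the projective cover; but $e'I$ is exactly the kernel of that cover, so $e'I=0$ says that $A/I$ is projective as a \emph{right} $A$-module, which is strictly stronger than $2)$ and false under the hypotheses. Take $A=T_2(k)$, the lower triangular $2\times 2$ matrices over a field, and $I=Ae_{11}$: by Theorem \ref{clasificacion right split para buenos} the associated TTF triple is right split (the left $k$-module $(1-e_{11})Ae_{11}\cong k$ has hereditary $\Pi$-projective dual), so $2)$ and hence $3)$ hold, yet the projective cover of $A/I$ is $e_{22}A\ra A/I$ with nonzero kernel $e_{22}Ae_{11}=e'I$. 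So no combination of idempotency and right-splitness can make $e'I$ vanish, and the subsequent ``routine check'' also fails: two-sidedness of $(1-e')A$ yields $e'A(1-e')=0$, not $(1-e')Ae'=0$, and in general $A(1-e')=(1-e')A(1-e')$ is properly contained in $(1-e')A$. The point is that $2)$ is a statement about the \emph{left} structure ($I$ is a direct summand of ${}_AA$), which your analysis of the right-module projective cover cannot reach directly. The paper closes this direction by citing Azumaya's Theorem 8 for $1)\Leftrightarrow 3)$; any hand-made proof should instead extract from $3)$ that $\cx$ is closed under products, or that $I$ is finitely generated as a left ideal, and then re-enter your cycle through $4)\Rightarrow 2)$ — and that extraction, not the smallness in $2)\Rightarrow 3)$, is where the real work lies.
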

\begin{proof}
$1)\Leftrightarrow 2)$ ÊSince $(\cx ,\cy )$ is hereditary (\cf Proposition \ref{primeras propiedades de los right split}), the class $\cx$ is closed
under products if, and only if, $\cx$ is a TTF class. Then Ê\cite[Theorem 3]{Azumaya} applies.

$1)\Leftrightarrow 3)$ ÊIt follows from Ê\cite[Theorem 8]{Azumaya}.

$2)\Rightarrow 4)$ is clear.

$4)\Rightarrow 2)$ Since $I$ is pure submodule of $_{A}A$ (\cf Lemma \ref{hereditary igual a pure igual a condicion aritmetica} and Proposition \ref{primeras propiedades de los right split}) and $_{A}A$ is flat, then \cite[Proposition I.11.1]{Stenstrom} says that $A/I$ is a flat left $A$-module. Now, if $I$ is finitely generated on the left, then $A/I$ is also a finitely presented left $A$-module, and so \cite[Proposition I.11.5]{Stenstrom} states that $A/I$ is a projective left $A$-module. Therefore, the short exact sequence of left $A$-modules
\[0\ra I\hookrightarrow A\twoheadrightarrow A/I\ra 0
\] 
splits. Ê
\end{proof}

\begin{definition}
If $B$ is a ring, aÊ $B$-module $P$ is called \emph{hereditary projective}\index{projective!hereditary}\index{projective!hereditary $\Pi$-} (respectively, \emph{hereditary $\Pi$-projective}) in case every submodule of $P$ (respectively, of a direct product of copies of $P$) is projective. Recall also that a $B$-module $N$ is called \emph{FP-injective}\index{injective!FP-} in case $\Ext^1_B(?,N)$ vanishes on all finitely presented $B$-modules. 
\end{definition}

The following type of modules will appear in our next classification theorem:

\begin{proposition}\label{proposicion de hereditary pi-projective dual}
Let $B$ be a ring and $M$ be a left $B$-module. The following conditions are equivalent:
\begin{enumerate}[1)]
\item For every bimodule structure $_{B}M_{C}$ and every $N\in\Mod C$, the right $B$-module $\Hom_{C}(M,N)$ is hereditary projective.
\item There exists a bimodule structure $_{B}M_{C}$ such that, for every $N\in\Mod C$, the right $B$-module $\Hom_{C}(M,N)$ is hereditary projective.
\item If $C=\op{End}_{B}(M)^{op}$ and $Q$ is the minimal injective cogenerator of $\Mod C$, then $\Hom_{C}(M,Q)$ is hereditary $\Pi$-projective in $\Mod B$.
\item The right $B$-module $M^+=\Hom_{\Z}(M,\mathbf{Q}/\Z)$ is hereditary $\Pi$-projective.
\item $\ann_B(M)=eB$ for some idempotent $e\in B$, the ring $\ol{B}=B/\ann_B(M)$ is hereditary perfect and $M$ is FP-injective as a
left $\overline{B}$-module.
\end{enumerate}
When $B$ is an algebra over a commutative ring $k$, the above assertions are equivalent to:
\begin{enumerate}[6)]
\item If $Q$ is a minimal injective cogenerator of $\Mod k$, then $D(M):=\Hom_{k}(M,Q)$ is hereditary $\Pi$-projective in $\Mod B$.
\end{enumerate}
\end{proposition}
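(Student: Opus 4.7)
My plan is to establish the cycle $1)\Rightarrow 2)\Rightarrow 4)\Rightarrow 5)\Rightarrow 1)$, with a parallel route $2)\Rightarrow 3)\Rightarrow 5)$, and then to handle 6) by a comparison argument with the character module $M^+$. The toolkit I will draw on is the hom-tensor adjunction, Lambek's theorem relating FP-injectivity of ${}_{R}M$ with flatness of $M^+_{R}$, a Chase-type result stating that projectivity of every power $R^{I}_{R}$ forces $R$ to be right perfect, and H.~Bass's Theorem~\ref{Bass theorem}, together with Proposition~\ref{hereditary perfect}.

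The implication $1)\Rightarrow 2)$ is trivial. For $2)\Rightarrow 4)$, I will use the canonical isomorphism of right $B$-modules
\[
M^{+}=\Hom_{\Z}(M,\mathbf{Q}/\Z)\cong\Hom_{C}(M,\Hom_{\Z}(C,\mathbf{Q}/\Z))
\]
coming from $M\otimes_{C}C=M$, together with the fact that $\Hom_{C}(M,?)$ preserves small products. Since $\Hom_{\Z}(C,\mathbf{Q}/\Z)$ is an injective cogenerator of $\Mod C$ and $(M^{+})^{I}\cong\Hom_{C}(M,\Hom_{\Z}(C,\mathbf{Q}/\Z)^{I})$, every power of $M^{+}$ is hereditary projective by~$2)$, so $M^{+}$ is hereditary $\Pi$-projective. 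An identical argument with $Q$ in place of $\Hom_{\Z}(C,\mathbf{Q}/\Z)$ establishes $2)\Rightarrow 3)$.

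The substantial step is $4)\Rightarrow 5)$ (the proof of $3)\Rightarrow 5)$ being parallel). Writing $R=\ol{B}$, I will first observe that since $\mathbf{Q}/\Z$ cogenerates $\Mod\Z$, the right annihilator of $M^{+}$ in $B$ equals $\ann_{B}(M)$, so $M^{+}$ is a faithful right $R$-module. Faithfulness provides a monomorphism $R\hookrightarrow (M^{+})^{M^{+}}$ of right $R$-modules sending $r\mapsto(\varphi r)_{\varphi}$ and, more generally, $R^{I}\hookrightarrow ((M^{+})^{M^{+}})^{I}$; by hypothesis these targets are hereditary $\Pi$-projective right $B$-modules, so every power $R^{I}$ is projective as right $B$-module, and the canonical surjection $B\twoheadrightarrow R$ splits, yielding $\ann_{B}(M)=eB$ for some idempotent $e$. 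Since each $R^{I}$ is annihilated by $\ann_{B}(M)$ and $X\otimes_{B}R=X$ for every right $R$-module $X$, the standard summand trick upgrades projectivity over $B$ to projectivity over $R$. Invoking Chase's theorem on direct products (\cite{Chase1960}), the projectivity of every $R^{I}_{R}$ forces $R$ to be right perfect. The same summand trick shows that every right ideal of $R$, being a submodule of $(M^{+})^{M^{+}}$, is projective over $R$, so $R$ is right hereditary; together with Proposition~\ref{hereditary perfect}, $R$ is hereditary perfect. Finally, $M^{+}$ is projective hence flat as right $R$-module, so Lambek's theorem gives that $M$ is FP-injective as left $R$-module.

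For $5)\Rightarrow 1)$, given any bimodule ${}_{B}M_{C}$ and $N\in\Mod C$, I will note that $\Hom_{C}(M,N)$ is annihilated by $\ann_{B}(M)$ and hence is a right $R$-module. Embedding $N$ in an injective cogenerator $Q$ of $\Mod C$ reduces the question to showing that $\Hom_{C}(M,Q)$ is flat over $R$, which I will verify by another adjunction computation reducing to FP-injectivity of ${}_{R}M$ via Lambek; right perfectness then promotes flat to projective, and right hereditarity upgrades projective to hereditary projective, while the summand $R\hookrightarrow B$ transfers projectivity to $B$. For the $k$-algebra case, $D(M)=\Hom_{k}(M,Q)$ shares with $M^{+}$ both its annihilator in $B$ and its faithful behaviour over $R$, so the proofs of $4)\Rightarrow 5)\Rightarrow 1)$ apply verbatim with $D(M)$ in place of $M^{+}$, establishing $6)\Leftrightarrow 5)$. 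The main obstacle will be extracting the perfect condition on $R$ in $4)\Rightarrow 5)$: distilling right perfectness from the mere abundance of projective right $R$-modules of the form $R^{I}$ is considerably more delicate than the parallel derivation of right hereditarity, and it is here that the full force of a Chase-type theorem is needed.
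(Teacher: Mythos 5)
Your proposal is correct in substance, but it is organized genuinely differently from the paper's proof. The paper closes the circle as $1)\Rightarrow 3)\Rightarrow 4)\Rightarrow 1)$ (plus $1)\Rightarrow 2)\Rightarrow 4)$), obtaining $4)\Rightarrow 1)$ almost for free from the adjunction $\Hom_{C}(M,\Hom_{\Z}(C,\mathbf{Q}/\Z))\cong M^{+}$ together with the fact that every right $C$-module embeds in a power of $C^{+}$; and it deduces 5) from the conjunction of 1), 3) and 4), using 1) with $C=\End_{B}(M)^{op}$ and $N=M$ to split off $\ann_{B}(M)$ (via the biendomorphism ring) and 3) to feed Chase's theorem. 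You instead extract 5) from 4) alone, via the embedding $\ol{B}\hookrightarrow (M^{+})^{M^{+}}$ coming from faithfulness of $M^{+}$ over $\ol{B}$ — a nice self-contained variant of the paper's argument — and you close the cycle with $5)\Rightarrow 1)$ rather than $4)\Rightarrow 1)$. The price of that choice is that $5)\Rightarrow 1)$ needs more than the character-module statement: you must know that $\Hom_{C}(M,E)$ is flat over $R=\ol{B}$ for an arbitrary injective right $C$-module $E$, which rests on the natural isomorphism $\Tor_{1}^{R}(\Hom_{C}(M,E),N)\cong\Hom_{C}(\Ext_{R}^{1}(N,M),E)$ for finitely presented $N$; this is legitimate here because $R$, being hereditary perfect, is coherent, so first syzygies of finitely presented modules are finitely presented — but you should say so, since for a general ring this degree-one duality is delicate. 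The paper sidesteps this by only ever dualizing into $\mathbf{Q}/\Z$ (the Skljarenko citation), and its $4)\Rightarrow 1)$ is the cheaper way to recover statement 1). Also note that what you call ``Lambek's theorem'' (FP-injectivity of $_{R}M$ versus flatness of $M^{+}_{R}$) is the result the paper cites as \cite[Proposition 1.15]{Skljarenko}; Lambek's theorem is the statement that $_{R}M$ is flat iff $M^{+}_{R}$ is injective. Your uses of Chase \cite{Chase1960}, Proposition~\ref{hereditary perfect} and Bass's Theorem~\ref{Bass theorem} match the paper's.

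One step does not work as written: $2)\Rightarrow 3)$ is not ``an identical argument'' to $2)\Rightarrow 4)$, because statement 3) concerns the specific bimodule structure over $C=\End_{B}(M)^{op}$, whereas 2) only provides \emph{some} bimodule structure over some ring $C'$. To repair it, either observe that the $C'$-action factors through a ring homomorphism $C'\to\End_{B}(M)^{op}$, so that for every $N\in\Mod\End_{B}(M)^{op}$ the module $\Hom_{\End_{B}(M)^{op}}(M,N)$ is a $B$-submodule of $\Hom_{C'}(M,N)$ and hence hereditary projective by 2); or simply prove $1)\Rightarrow 3)$ instead (as the paper does), which is available to you since your cycle already makes 1) equivalent to 2), 4), 5), and then the power-of-$Q$ adjunction argument applies verbatim. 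Without one of these fixes your scheme only shows that 3) implies the other statements, not that it follows from them. The remaining loose end — the claim that 6) is handled ``verbatim'' — is acceptable at the level of a sketch, provided you note that the annihilator and faithfulness computations for $D(M)=\Hom_{k}(M,Q)$ use that $Q$ cogenerates $\Mod k$, and that the Tor--Ext duality is now taken over $k$ (this is exactly the paper's ``replace $\Z$ by $k$'').
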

\begin{proof}
$1)\Rightarrow 2)$ Clear.

$1)\Rightarrow 3)$ Let $L$ be a submodule of a product $\Hom_{C}(M,Q)^{I}$ of copies of the right $B$-module $\Hom_{C}(M,Q)$. Since $\Hom_{C}(M,Q)^{I}\cong\Hom_{C}(M,Q^{I})$, we deduce that $L$ is projective

$3)\Rightarrow 4)$ If $E$ is an arbitrary injective cogenerator of $\Mod C$ then we have a section $E\ra Q^{S}$ for some set $S$.
Then $\Hom_{C}(M,E)$ is a direct summand of $\Hom_{C}(M,Q)^{S}$ and hence also $\Hom_{C}(M,E)$ is hereditary $\Pi$-projective. In
particular, this is true for $E=\Hom_{\Z}(C,\mathbf{Q}/\Z)$. By adjunction,
\[\Hom_{C}(M,\Hom_{\Z}(C,\mathbf{Q}/\Z))\cong\Hom_{\Z}(C\otimes_{C}M,\mathbf{Q}/\Z)\cong M^+,
\]
we get that $M^+$ is hereditary $\Pi$-projective.

$4)\Rightarrow 1)$ Let $C$ be a ring and $_{B}M_{C}$ a bimodule structure on $M$. Since $E=\Hom_{\Z}(C,\mathbf{Q}/\Z)$ is an injective cogenerator
of $\Mod C$, every right $C$-module embeds in a direct product of copies of $E$. So, it will be enough to prove that $\Hom_{C}(M,E)$ is hereditary
$\Pi$-projective, which is clear since we have
\[\Hom_{C}(M,E)\cong M^+
\]
by adjunction.

$2)\Rightarrow 4)$ We want to prove that, for any set $I$, every submodule of
\[\Hom_{\Z}(M,\mathbf{Q}/\Z)^{I}\cong\Hom_{C}(M,\Hom_{\Z}(C,\mathbf{Q}/\Z))^{I}\cong\Hom_{C}(M,\Hom_{\Z}(C,\mathbf{Q}/\Z)^{I})
\]
is projective. Take for this $N=\Hom_{\Z}(C,\mathbf{Q}/\Z)$ in 2).

$5)\Rightarrow 4)$ We have $B=eB\oplus (1-e)B$ in $\Mod B$, and thus $\ol{B}\cong(1-e)B$ is a projective right $B$-module. Hence, any projective right $\ol{B}$-module is also projective when regarded as a right $B$-module. Also notice that every submodule of a direct product of
copies of $M^+_B$ is annihilated by $\ann_B(M)$. Then, it suffices to prove that $M^{+}$ is hereditary $\Pi$-projective when regarded as a right $\ol{B}$-module. In fact, since $\ol{B}$ is hereditary perfect, it suffices (\cf Proposition \ref{hereditary perfect}) to prove that $M^{+}_{\ol{B}}$ is projective. Now, $M^{+}_{\ol{B}}$ is flat thanks to \cite[Proposition 1.15]{Skljarenko}, and projectivity follows from H. Bass's theorem \ref{Bass theorem} because $\ol{B}$ is perfect.

$1)=4)\Rightarrow 5)$ Put $C=\op{End}_{B}(M)^{op}$ and let $\varphi: B\ra\op{End}_{C}(M)$ be the canonical ring morphism. By 1),
$\op{End}_{C}(M)$ is hereditary projective as a right $B$-module. Then the short exact sequence in $\Mod B$
\[0\ra\ann_{B}(M)\hookrightarrow B\twoheadrightarrow\im(\varphi)\ra 0
\]
splits, and so $\ann_{B}(M)=eB$, for some idempotent $e\in B$. Now, $\ol{B}_B$ is a submodule of $\op{End}_{C}(M)$, which is hereditary $\Pi$-projective by 3). Then every submodule of a direct product of copies of $\ol{B}_{\ol{B}}$ is projective, which implies that $\ol{B}$ is hereditary perfect by \cite[Theorem 3.3]{Chase1960}. But then, by \cite[Proposition 1.15]{Skljarenko}, the fact that $M^+_{\ol{B}}$ is projective implies that $_{\ol{B}}M$ is FP-injective.

Finally, the equivalence of 6) with $1)-3)$ follows as the equivalence of 4) with $1)-3)$ but replacing $\Z$ by $k$.
\end{proof}

\begin{definition}
If $B$ is a ring, a left $B$-module $M$ satisfying the equivalent conditions of the last proposition will be said to \emph{have hereditary $\Pi$-projective dual}\index{module!have hereditary $\Pi$-projective dual}.
\end{definition}

We can now give the desired partial classification.

\begin{theorem} \label{clasificacion right split para buenos}
Let $A$ be a ring and let $(\cx,\cy,\cz)$ be a TTF triple on $\Mod A$ with associated idempotent Êideal $I$. The following assertions are equivalent:
\begin{enumerate}[1)]
\item $(\cx,\cy,\cz)$ is right split and $I$ is finitely generated as a left ideal.
\item There is an idempotent $e$ of $A$ such that $I=Ae$ and the left $(1-e)A(1-e)$-module $(1-e)Ae$ has hereditary $\Pi$-projective dual.
\item The ring $A$ is isomorphic to a triangular matrix ring 
\[A\cong\left[\begin{array}{cc}C & 0 \\ M & B\end{array}\right]
\]
in such a way that $\cy$ gets identified with $\Mod B$ and $_{B}M$ has hereditary $\Pi$-projective dual.
\item There exist rings $B'\ko H\ko C$, where $H$ is hereditary perfect, Êand bimodules $_{H}M_{C}\ko _{B'}N_{H}$ such that:
\begin{enumerate}[i)]
\item The ring $A$ is isomorphic to the triangular matrix ring
\[A\cong \left[\begin{array}{ccc}C & 0 & 0\\ M & H & 0 \\ 0 & N & B'\end{array}\right].
\]
\item $\cy$ gets identified with the category of right modules over the matrix ring 
\[\left[\begin{array}{cc}H & 0 \\ N & B'\end{array}\right].
\]
\item $_{H}M$ is faithful and FP-injective.
\end{enumerate}
\end{enumerate}
\end{theorem}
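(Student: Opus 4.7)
The plan is to establish $1)\Leftrightarrow 2)\Leftrightarrow 3)$ directly and then derive $3)\Leftrightarrow 4)$ from Proposition \ref{proposicion de hereditary pi-projective dual}.

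For $1)\Rightarrow 2)$, Proposition \ref{anhadida} yields an idempotent $e\in A$ with $I=Ae$, which is finitely generated as a left ideal by $e$. Two-sidedness of $Ae$ forces $eA(1-e)=0$: from $AeA\subseteq Ae$ one has $eA\subseteq Ae$, hence $eA(1-e)\subseteq Ae(1-e)=0$. Setting $C:=eAe$, $B:=(1-e)A(1-e)$ and $M:=(1-e)Ae$, this yields the triangular presentation $A\cong\left[\begin{smallmatrix} C & 0\\ M & B\end{smallmatrix}\right]$ with $\cy$ identified with $\Mod B$, matching $3)$ up to the hereditary $\Pi$-projective dual assertion. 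Conversely, each of $2)$ and $3)$ immediately recovers an $I=Ae$ finitely generated as a left ideal. Hence the actual content of $1)\Leftrightarrow 2)\Leftrightarrow 3)$ lies in showing that right-splitness of $(\cx,\cy,\cz)$ is equivalent to $_{B}M$ having hereditary $\Pi$-projective dual.

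This is the technical heart of the proof. Identifying $\Mod A$ with the category of triples $(L,P;\varphi)$, with $L\in\Mod C$, $P\in\Mod B$ and $\varphi:P\otimes_{B} M\to L$, one sees that a submodule of $N=(L,P;\varphi)$ of the form $(0,P';0)$ lies in $\cy$ if and only if $P'$ is a $B$-submodule of $P$ contained in $\ker\varphi'$, where $\varphi':P\to\Hom_{C}(M,L)$ is the $\otimes$-Hom adjoint of $\varphi$. Hence $\tau_{\cy}N=(0,\ker\varphi';0)$, and its inclusion into $N$ is a section in $\Mod A$ if and only if $\ker\varphi'$ is a direct summand of $P$ in $\Mod B$. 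Consequently, $(\cx,\cy,\cz)$ is right split if and only if, for every $L\in\Mod C$, $P\in\Mod B$ and $B$-linear map $\varphi':P\to\Hom_{C}(M,L)$, the kernel $\ker\varphi'$ splits off $P$. Specializing $P$ to a free $B$-module mapping onto an arbitrary submodule $L'\subseteq\Hom_{C}(M,L)$ shows that this forces every such $L'$ to be projective in $\Mod B$; the converse is straightforward, since $\im\varphi'\cong P/\ker\varphi'$ is itself a submodule of $\Hom_{C}(M,L)$. Right-splitness is therefore equivalent to every submodule of $\Hom_{C}(M,L)$ being projective in $\Mod B$ for every $L\in\Mod C$, which is precisely condition $2)$ of Proposition \ref{proposicion de hereditary pi-projective dual}.

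For $3)\Leftrightarrow 4)$, apply condition $5)$ of Proposition \ref{proposicion de hereditary pi-projective dual} to the left $B$-module $M$: this gives an idempotent $g\in B$ with $\ann_{B}(M)=gB$, the quotient $\overline{B}:=B/\ann_{B}(M)$ hereditary perfect, and $_{\overline{B}}M$ FP-injective. Two-sidedness of $gB$ yields $(1-g)Bg=0$, so that $B$ is lower-triangular of the form $\left[\begin{smallmatrix}(1-g)B(1-g) & 0 \\ gB(1-g) & gBg\end{smallmatrix}\right]$. Putting $H:=(1-g)B(1-g)\cong\overline{B}$, $B':=gBg$ and $N:=gB(1-g)$, the relation $gM=0$ places $M$ entirely in the $H$-row, so substituting this decomposition of $B$ into $A\cong\left[\begin{smallmatrix} C & 0 \\ M & B\end{smallmatrix}\right]$ produces the $3\times 3$ presentation of $4)$; faithfulness of $_{H}M$ follows from $H\cap gB=0$. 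The converse $4)\Rightarrow 3)$ runs by reversing this process: recombine the bottom two block rows and columns to form $B$ and $_{B}M$, and observe that the hypotheses on $H$ and $_{H}M$ in $4)$ are verbatim condition $5)$ of Proposition \ref{proposicion de hereditary pi-projective dual}, whence $_{B}M$ has hereditary $\Pi$-projective dual.

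The main obstacle is the identification $\tau_{\cy}N=(0,\ker\varphi';0)$ via the $\otimes$-Hom adjunction and the translation of the splitting of the torsion sequence into the projectivity of arbitrary submodules of $\Hom_{C}(M,L)$. Once this is in place, everything else reduces to direct manipulation of triangular matrix rings and verbatim application of Propositions \ref{anhadida} and \ref{proposicion de hereditary pi-projective dual}.
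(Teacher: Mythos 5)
Your proof is correct and follows essentially the same route as the paper's: you obtain $I=Ae$ from Proposition \ref{anhadida}, identify $\tau_{\cy}(L,P;\varphi)=(0,\ker\varphi^{t};0)$ via the $\otimes$-$\Hom$ adjunction so that right splitness becomes projectivity of all submodules of $\Hom_{C}(M,L)$ (condition 2) of Proposition \ref{proposicion de hereditary pi-projective dual}), and you pass between $3)$ and $4)$ using condition 5) of that proposition together with the secondary triangular decomposition of $B$, exactly as in the paper. The only difference is organizational (you merge $1)\Leftrightarrow 2)\Leftrightarrow 3)$ into a single argument), not mathematical.
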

\begin{proof}
$2)\Rightarrow 3)$ Since $I=Ae$ is also a right ideal, then $eA\subseteq Ae$ and so $eA(1-e)=0$. Therefore, if $C=eAe\ko M=(1-e)Ae$ and $B=(1-e)A(1-e)$, the map
\[A\ra\left[\begin{array}{cc}C&0\\ M&B\end{array}\right]\ko a\mapsto \left[\begin{array}{cc}eae&0\\ (1-e)ae&(1-e)a(1-e)\end{array}\right]
\]
is a ring isomorphism.

$3)\Rightarrow 2)$ Take $e\in A$ to be the element corresponding to the matrix
\[\left[\begin{array}{cc}1_{C}&0\\ 0&0\end{array}\right].
\]

$3)\Rightarrow 4)$ We identify $A$ with the triangular matrix ring 
\[\left[\begin{array}{cc}C & 0 \\ M & B\end{array}\right].
\]
By Proposition \ref{proposicion de hereditary pi-projective dual}, there exists an idempotent $e'$ of $B$ such that $\ann_{B}(M)=e'B$, Ê
$\ol{B}:=B/\ann_{B}(M)$ is a hereditary perfect ring and $_{\ol{B}}M$ is FP-injective. Notice that, since $e'B$ is a two-sided ideal, then $Be'\subseteq e'B$ and $(1-e')Be'=0$. This implies that $e'Be'=Be'$. We put $B':=e'Be'=Be'\ko H:=(1-e')B(1-e')\cong\ol{B}$ and $N:=e'B(1-e')$, and there is no loss of generality in assuming
\[B=\left[\begin{array}{cc}H & 0 \\ N & B'\end{array}\right]\ko e'=\left[\begin{array}{cc}0 & 0 \\ 0 &
1\end{array}\right],
\]
so that 
\[\ann_{B}(M)=e'B=\left[\begin{array}{cc}0 & 0 \\ N & B'\end{array}\right].
\] 
Representing now the left $B$-module $M$ as a triple
\[(_{H}L,_{B'}N';\varphi:N\otimes_{H}L\ra N')
\] 
we necessarily get $N'=0$. Then, abusing of
notation, Êwe can put $M=\left[\begin{array}{cc}M & 0\end{array}\right]^{t}$. That gives the desired triangularization
\[\left[\begin{array}{cc}C & 0 \\ M & B\end{array}\right]\cong\left[\begin{array}{ccc}C & 0 & 0\\ M & H & 0\\
0 & N & B'\end{array}\right].
\]

\vspace*{0.3cm}
Ê$4)\Rightarrow 3)$ Take $B=\left[\begin{array}{cc}H & 0 \\ N & B'\end{array}\right]$ and $M=\left[\begin{array}{cc}M & 0\end{array}\right]^{^t}$.

\vspace*{0.3cm}

$1)\Rightarrow 2)$ From Proposition \ref{anhadida} we know that $I=Ae$, for some idempotent $e\in A$. Since $I$ is a two-sided ideal, then $eA\subseteq Ae$ and so $eA(1-e)=0$. Therefore, we can identify $A$ with the ring
\[\left[\begin{array}{cc}C & 0 \\ M & B\end{array}\right],
\] 
where $C=eAe\ko M=(1-e)Ae$ and $B=(1-e)A(1-e)$. Then 
\[I=\left[\begin{array}{cc}C & 0 \\ M & 0\end{array}\right]
\] 
and $\cy$ consists of those modules $N\in\Mod A$ annihilated by $I$, \ie such that $N(1-e)=N$.
This latter subcategory gets identified $\Mod B$.

We want to identify now $\tau_{\cy}(N)$ for every $N\in\Mod A$. If 
\[N=(L_{C},N'_{B}; \varphi:N'\otimes_{B}M\ra L)
\] 
then $\tau_{\cy}(N)=
\ann_{N}(I)=\{(0,y)\in N=L\oplus N'\mid \varphi(y\otimes
M)=0\}$. Now, we have that $\varphi(y\otimes M)=0$ if and only if $\varphi^t(y)=0$, where $\varphi^{t}$ comes from the adjunction isomorphism
\[\Hom_{C}(N'\otimes_{B}M,L)\arr{\sim}\Hom_{B}(N',\Hom_{C}(M,L))\ko \varphi\mapsto\varphi^t.
\]
Then $\tau_{\cy}(N)=(0,\ker(\varphi^t), 0)$ and, since it is a direct summand of $N=(L,N';\varphi)$ by
hypothesis, we get that $\ker(\varphi^t)$ is a direct summand of $N'$ in $\Mod B$, for
every $\varphi\in\Hom_{C}(N'\otimes_{B}M,L)$. But then $\ker(\psi)$ is a direct summand of
$N'$ in $\Mod B$ for every $\psi\in\Hom_{B}(N',\Hom_{C}(M,L))$. Since this is valid for arbitrary $L_{C}\ko N'_{B}$, we can take Êfor $N'_{B}$ an arbitrary projective and we get that every $B$-submodule of $\Hom_{C}(M,L)$ is projective. Then by Proposition ~\ref{proposicion de hereditary pi-projective dual} we get that $_{B}M$ has hereditary $\Pi$-projective dual.

$3)\Rightarrow 1)$ We identify $A$ with the ring
\[\left[\begin{array}{cc}C & 0 \\ M & B\end{array}\right].
\] 
Take
\[e=\left[\begin{array}{cc}1 & 0 \\ 0 & 0\end{array}\right] \text{ and } 
I=Ae=\left[\begin{array}{cc}C & 0 \\
M & 0\end{array}\right].
\] 
As we saw before, if $N=(L,N';\varphi)\in\Mod A$, we have the identity $\tau_{\cy}(N)=(0,\ker(\varphi^t);0)$. But $\op{Im}(\varphi^t)$ is a $B$-submodule of $\Hom_{C}(M,L)$, whence projective in $\Mod B$. Thus, $\ker(\varphi^t)$ is a direct summand of $N'$ in $\Mod B$,
and so $\tau_{\cy}(N)$ is a direct summand of $N$ in $\Mod A$.
\end{proof}

\begin{corollary} \label{right split para buenos2}
Let $A$ be a ring. Then the one-to-one correspondence of Theorem ~\ref{parametrizando TTF ternas} restricts to a one-to-one correspondence between:
\begin{enumerate}[1)]
\item Right split TTF triples on $\Mod A$ whose associated idempotent ideal $I$ is finitely generated on the left.
\item Ideals of the form $I=Ae$, where $e$ is an idempotent of $A$ such that the left ${(1-e)A(1-e)}$-module $(1-e)Ae$ has hereditary $\Pi$-projective dual.
\end{enumerate}
In particular, when $A$ satisfies either one of the two following conditions, the class $1)$ above covers all the right split TTF triples on $\Mod A$:
\begin{enumerate}[i)]
\item $A$ is semiperfect.
\item Every idempotent Êideal of $A$ which is pure on the left is also finitely generated on the left (\eg if $A$ is left N\oe therian).
\end{enumerate}
\end{corollary}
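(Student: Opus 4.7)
The plan is to derive this corollary essentially as a packaging of Theorem \ref{clasificacion right split para buenos} with Jans' bijection (Theorem \ref{parametrizando TTF ternas}), together with a short verification that the ``finitely generated on the left'' hypothesis is automatic under conditions i) or ii). So the proof will split naturally into two pieces: first the bijection itself, and then the coverage claims.

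For the bijection, I would observe that Jans' bijection sends a TTF triple to its associated idempotent ideal, and the equivalence $1) \Leftrightarrow 2)$ of Theorem \ref{clasificacion right split para buenos} tells us exactly when such an ideal corresponds to a right split TTF triple whose ideal is finitely generated on the left, namely when $I = Ae$ for an idempotent $e$ such that $(1-e)Ae$ has hereditary $\Pi$-projective dual over $(1-e)A(1-e)$. So the restricted bijection is immediate; only a cosmetic rewriting between ``idempotent $e$ such that $Ae$ is two-sided'' and the triangularization used in the theorem is needed, and this is exactly the computation (performed in the proof of $2) \Rightarrow 3)$ of Theorem \ref{clasificacion right split para buenos}) showing that $Ae$ two-sided forces $eA(1-e) = 0$.

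For the ``in particular'' part, let $(\cx, \cy, \cz)$ be any right split TTF triple on $\Mod A$ with associated idempotent ideal $I$. By Proposition \ref{primeras propiedades de los right split} the torsion pair $(\cx, \cy)$ is hereditary, and then by Lemma \ref{hereditary igual a pure igual a condicion aritmetica} the ideal $I$ is pure as a left ideal of $A$. Under hypothesis ii) this purity immediately forces $I$ to be finitely generated on the left (and for a left N\oe therian $A$ this is automatic since every left ideal is finitely generated). Under hypothesis i), $A$ being semiperfect implies that every finitely generated right $A$-module has a projective cover; in particular the cyclic module $A/I$ has a projective cover, and Proposition \ref{anhadida} (equivalence $3) \Leftrightarrow 4)$) then yields that $I$ is finitely generated on the left.

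The main obstacle is essentially bookkeeping rather than any serious new argument: one must be careful that the correspondence in Theorem \ref{clasificacion right split para buenos} is stated in terms of a triangular matrix ring triangularization, whereas here we formulate it purely in terms of the idempotent $e$ and the bimodule $(1-e)Ae$; so the ``proof'' is mostly a matter of checking that the two formulations transport correctly under Jans' bijection. No genuinely new computations should be required beyond the ones already carried out in the theorem and in Proposition \ref{anhadida}.
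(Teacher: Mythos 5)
Your proposal is correct and follows essentially the same route as the paper: the bijection is exactly the content of the equivalence $1)\Leftrightarrow 2)$ of Theorem \ref{clasificacion right split para buenos}, and the coverage claim under i) and ii) is obtained, as in the paper, from Proposition \ref{anhadida} (via the projective cover of $A/I$ in the semiperfect case) together with the left purity of $I$ coming from Proposition \ref{primeras propiedades de los right split} and Lemma \ref{hereditary igual a pure igual a condicion aritmetica}.
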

\begin{proof}
Using Proposition \ref{anhadida}, under conditions $i)$ or $ii)$ the idempotent Êideal associated to a right split TTF triple on $\Mod A$ is always finitely
generated on the left. With that in mind, the result is a direct consequence of the foregoing theorem.
\end{proof}

\section{Right split TTF triples over arbitrary rings}\label{Right split TTF triples over arbitrary rings}
\addcontentsline{lot}{section}{3.5. Ternas TTF escindidas por la derecha sobre anillos arbitrarios}

Let $(\cx,\cy,\cz)$ be a ÊTTF triple on $\Mod A$ Êwith associated idempotent ideal $I$. 

\begin{lemma}\label{pre right split}
If $\tau_{\cy}(A_A)=(1-\varepsilon )A$, for some idempotent $\varepsilon\in A$, then $\varepsilon A(1-\varepsilon)=0$ and $I\subseteq\varepsilon A\varepsilon$.
\end{lemma}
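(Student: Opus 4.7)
The plan is to translate the hypothesis into a statement about the left annihilator of $I$. By Theorem \ref{parametrizando TTF ternas}, the torsionfree functor associated to the middle class of the TTF triple satisfies $\tau_{\cy}(M) = \ann_{M}(I)$, so specializing to $M = A_A$ gives $\tau_{\cy}(A_A) = \lann_{A}(I)$. The assumption therefore reads $\lann_{A}(I) = (1-\varepsilon)A$. Since $I$ is a two-sided ideal, $\lann_{A}(I)$ is automatically a two-sided ideal of $A$, so $(1-\varepsilon)A$ is two-sided as well.

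With this in hand, the first conclusion is a standard idempotent manipulation. The two-sidedness of $(1-\varepsilon)A$ gives the inclusion $A(1-\varepsilon) \subseteq (1-\varepsilon)A$; left-multiplying by $\varepsilon$ yields
\[\varepsilon A(1-\varepsilon) \subseteq \varepsilon(1-\varepsilon)A = 0,\]
which is the first claim.

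For the second claim, my plan is to show separately that $I = \varepsilon I$ and $I = I\varepsilon$, from which $I = \varepsilon I = \varepsilon I \varepsilon \subseteq \varepsilon A \varepsilon$ follows at once. The identity $I = \varepsilon I$ comes from the fact that $(1-\varepsilon) \in (1-\varepsilon)A = \lann_{A}(I)$, whence $(1-\varepsilon)I = 0$. The identity $I = I\varepsilon$ then uses the first conclusion: any $x \in I$ already lies in $\varepsilon A$, say $x = \varepsilon x'$ with $x' \in A$, and therefore
\[x(1-\varepsilon) = \varepsilon x'(1-\varepsilon) \in \varepsilon A(1-\varepsilon) = 0.\]

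There is essentially no obstacle here; the statement is a direct computation once the identification $\tau_{\cy}(A_A) = \lann_{A}(I)$ has been made. The only subtle point worth highlighting is that $\tau_{\cy}(A_A)$, \emph{a priori} just a right ideal, must be interpreted as a two-sided ideal, which it is because it coincides with the left annihilator of a two-sided ideal.
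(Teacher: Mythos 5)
Your proof is correct. The only place where you diverge from the paper is in the first claim: the paper obtains $\varepsilon A(1-\varepsilon)=0$ torsion-theoretically, observing that $(1-\varepsilon)A=\tau_{\cy}(A)\in\cy$ while $\varepsilon A\cong A/(1-\varepsilon)A=\tau^{\cz}(A)\in\cz$, so that $\varepsilon A(1-\varepsilon)\cong\Hom_{A}((1-\varepsilon)A,\varepsilon A)=0$ by the orthogonality of the torsion pair $(\cy,\cz)$; you instead note that $\tau_{\cy}(A_A)=\lann_{A}(I)$ is automatically a two-sided ideal because $I$ is, and then run a pure idempotent computation. Both are two-line arguments; yours is more elementary in that it never invokes $\Hom_{A}(\cy,\cz)=0$ nor the identification $\Hom_{A}(eA,fA)\cong fAe$, while the paper's has the small advantage of not needing the explicit identification of $\tau_{\cy}$ with an annihilator for this step (it only uses that $(1-\varepsilon)A$ and its cokernel lie in $\cy$ and $\cz$ respectively). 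For the second claim the two proofs coincide: $(1-\varepsilon)I=\tau_{\cy}(A)I=0$ gives $I\subseteq\varepsilon A$, and $\varepsilon A=\varepsilon A\varepsilon$ by the first claim — your extra verification that $I=I\varepsilon$ is just this last equality spelled out elementwise.
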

\begin{proof}
Since $\tau^{\cz}(A)=A/(1-\varepsilon)A=\varepsilon A$, then 
\[0=\Hom_{A}((1-\varepsilon)A,\varepsilon A)\arr{\sim}\varepsilon A(1-\varepsilon).
\] 
Moreover, $0=\tau_{\cy}(A)I=(1-\varepsilon)I$, and so $I\subseteq\varepsilon A=\varepsilon A\varepsilon$.
\end{proof}

\begin{proposition}
The one-to-one correspondence of Proposition ~\ref{parametrizando
TTF ternas} restricts to a one-to-one correspondence between:
\begin{enumerate}[1)]
\item Right split TTF triples on $\Mod A$.
\item Idempotent ideals $I$ of $A$ such that, for some idempotent $\varepsilon\in A$, one has that $\lann_A(I)=(1-\varepsilon )A$ and the
TTF triple on $\Mod\varepsilon A\varepsilon$ associated to $I$ is
right split.
\end{enumerate}
\end{proposition}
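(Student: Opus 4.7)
The plan is to use Jans' bijection (Theorem~\ref{parametrizando TTF ternas}) and characterize which idempotent two-sided ideals correspond to right split TTF triples. Throughout, I write $(\cx,\cy,\cz)$ for the TTF triple associated to the idempotent ideal $I$, so that $\tau_\cy(X)=\ann_X(I)$ for every $X\in\Mod A$; in particular $\tau_\cy(A_A)=\lann_A(I)$.

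For the forward implication, if $(\cx,\cy,\cz)$ is right split then $\lann_A(I)=\tau_\cy(A_A)$ is a direct summand of $A_A$, so $\lann_A(I)=(1-\varepsilon)A$ for some idempotent $\varepsilon\in A$. Lemma~\ref{pre right split} then provides $\varepsilon A(1-\varepsilon)=0$ and $I\subseteq \varepsilon A\varepsilon$, hence $I$ is an idempotent two-sided ideal of $C:=\varepsilon A\varepsilon$ and gives rise, via Jans' bijection, to a TTF triple $(\cx',\cy',\cz')$ on $\Mod C$. Using the triangular matrix presentation $A\cong\left[\begin{array}{cc} C & 0 \\ M & B \end{array}\right]$ with $M=(1-\varepsilon)A\varepsilon$ and $B=(1-\varepsilon)A(1-\varepsilon)$, I would identify $\Mod C$ with the full subcategory of those $X\in\Mod A$ satisfying $X(1-\varepsilon)=0$; this subcategory is closed under direct summands in $\Mod A$, and the torsion radical $\tau_\cy$ restricts on it to $\tau_{\cy'}$, so right splitness of $(\cx,\cy,\cz)$ on $\Mod A$ descends to right splitness of $(\cx',\cy',\cz')$ on $\Mod C$.

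For the converse, I would adopt the equivalent description $\Mod A\simeq \cc_A$ and represent an arbitrary right $A$-module as a triple $X=(L,N;\varphi)$ with $L\in \Mod C$, $N\in \Mod B$ and $\varphi\in \Hom_C(N\otimes_B M,L)$. The crucial observation is that the assumption $\lann_A(I)=(1-\varepsilon)A$ forces the bimodule identity $MI=(1-\varepsilon)A\varepsilon\cdot I=0$, since $(1-\varepsilon)A\cdot I\subseteq \lann_A(I)\cdot I=0$. Combined with the information furnished by Lemma~\ref{pre right split}, a direct computation in $\cc_A$ then yields
\[\tau_\cy(L,N;\varphi)=(\tau_{\cy'}(L),N;\varphi),\qquad \tau^\cz(L,N;\varphi)=(\tau^{\cz'}(L),0;0).\]
The delicate point is that the vanishing of $MI$ makes $\varphi(n\otimes m)\cdot i=\varphi(n\otimes mi)=0$ for every $n\in N$, $m\in M$ and $i\in I$, so the image of $\varphi$ automatically lies in $\ann_L(I)=\tau_{\cy'}(L)$ and no restriction on the $N$-component is required when computing the torsion submodule.

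Finally, a splitting in $\Mod A$ of the short exact sequence
\[0\to (\tau_{\cy'}(L),N;\varphi)\to (L,N;\varphi)\to (\tau^{\cz'}(L),0;0)\to 0\]
is determined by a single morphism $(\tau^{\cz'}(L),0;0)\to (L,N;\varphi)$ in $\cc_A$, which by the very definition of $\cc_A$ is nothing but a $C$-linear section $\tau^{\cz'}(L)\to L$ of the canonical projection. Consequently $(\cy,\cz)$ splits on $\Mod A$ if and only if $(\cy',\cz')$ splits on $\Mod C$, closing the bijection. The main technical obstacle will be the computation of $\tau_\cy(L,N;\varphi)$: one has to verify that the hypothesis $\lann_A(I)=(1-\varepsilon)A$ is strong enough to force the bimodule identity $MI=0$, thereby eliminating the potentially obstructing term $\varphi(N\otimes_B MI)$ and allowing the splitness problem on $\Mod A$ to be fully reduced to the analogous problem over the smaller ring $C=\varepsilon A\varepsilon$.
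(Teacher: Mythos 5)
Your proof is correct and takes essentially the same route as the paper: reduce via Lemma \ref{pre right split}, pass to the triangular matrix presentation, use $\lann_A(I)=(1-\varepsilon)A$ to deduce $MI=0$ and hence $\tau_{\cy}(L,N;\varphi)=(\ann_L(I),N;\tilde{\varphi})$ with no condition on the $N$-component, and transfer splitness between $\Mod A$ and $\Mod \varepsilon A\varepsilon$. The only cosmetic difference is that you produce the splitting as a $C$-linear section of the quotient map, whereas the paper produces the retraction $\left[\begin{array}{cc}p & 1\end{array}\right]$ of the inclusion $\tau_{\cy}(M')\hookrightarrow M'$; these are equivalent formulations of the same argument.
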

\begin{proof}
According to Lemma \ref{pre right split}, Êour goal reduces to prove that if $(\cx,\cy,\cz)$ is a TTF triple such that $\tau_{\cy}(A_A)=(1-\varepsilon )A$ is a direct
summand of $A_A$, then it is right split if, and only if, the TTF triple defined by $I$ in $\Mod\varepsilon A\varepsilon$ is also right split.

\emph{`Only if' part:} Every right $\varepsilon A\varepsilon $-module $M$ can be viewed as a right $A$-module (by defining $M\cdot (1-\varepsilon )A=0$), and
then $M=\tau_{\cy}(M)\oplus F=\ann_{M}(I)\oplus F$ for some $\varepsilon A\varepsilon$-submodule $F$.

\emph{`If' part:} Conversely, suppose that the TTF triple on $\Mod\varepsilon A\varepsilon$ associated to $I$ is right split, and
put $C=\varepsilon A\varepsilon\ko B=(1-\varepsilon )A(1-\varepsilon )$ and $M=(1-\varepsilon)A\varepsilon$. As usual, we can identify $A$ with the triangular matrix ring
\[\left[\begin{array}{cc}C & 0 \\ M & B\end{array}\right],
\] 
and in this case also $I$ with 
\[\left[\begin{array}{cc}I & 0 \\0 & 0\end{array}\right],
\] 
where $I$ is an idempotent Êideal of $C$ such that the TTF triple on $\Mod C$ associated to $I$ is right
split. ÊNotice that, since $\lann_A(I)=(1-\varepsilon )A$, we have $MI=0$.Ê Now, let $M'=(L_{C},N_{B};\varphi)$
be a right $A$-module. Then $\tau_{\cy}(M')=\ann_{M'}(I)=(\ann_{L}(I),N;\tilde{\varphi})$, where $\tilde{\varphi}$ is given by the decomposition of
$\varphi$
\[N\otimes_BM\stackrel{\tilde{\varphi}}{\longrightarrow}\ann_L(I)\stackrel{j}{\hookrightarrow} L.
\] 
Consider now a retraction $p:L\ra\ann_{L}(I)$ in $\Mod C$ for the canonical inclusion $j$, which exists because the
TTF triple induced by $I$ in $\Mod C$ is right split. Then $p\circ\varphi =p\circ j\circ\tilde{\varphi}=1\circ\tilde{\varphi}=\tilde{\varphi}$,
which implies that 
\[\left[\begin{array}{cc}p & 1\end{array}\right]:(L,N;\varphi)=M'\ra \tau_{\cy}(M')=(\ann_{L}(I),N;\tilde{\varphi})
\]
is a morphism of $A$-modules, which is then a retraction for the canonical inclusion $\tau_{\cy}(M')\hookrightarrow M'$.
\end{proof}

In the situation of the above proposition, one has that $\lann_{\varepsilon A\varepsilon}(I)=0$, that is, the TTF triple $(\cx',\cy',\cz')$ in
$\Mod\varepsilon A\varepsilon$ associated to $I$ has the property that $\varepsilon A\varepsilon\in\cz'$. The problem of classifying right split TTF triples gets then reduced to answer the following:

\begin{question} Let $I$ be an idempotent Êideal of a ring $A$ such that $\lann_{A}(I)=0$ (\ie $A\in\cz$ where $(\cx,\cy,\cz)$ is the associated
TTF triple on $\Mod A$). Which conditions on $I$ are equivalent to say that $(\cx,\cy,\cz)$ is right split?
\end{question}

\begin{definition}\label{saturated}
Let $I$ be an idempotent ideal of a ring $A$. Given a Êright $A$-module $M$ and a submodule $N$ , we shall say that $N$ is \emph{$I$-saturated}\index{submodule!$I$-saturated} in $M$ when $xI\subseteq N$, with $x\in M$, implies that $x\in N$. Equivalently, this occurs when $M/N\in\cz$, where $(\cx,\cy,\cz)$ is the TTF triple on $\Mod A$ associated to $I$.
\end{definition}

When $\Lambda$ is a subset of $A$, we shall denote by $\mathcal{M}_{n\times n}(\Lambda)$ the subset of matrices of $\mathcal{M}_{n\times n}(A)$ with entries in $\Lambda$.

\begin{lemma} \label{new-splitting}
Let $I$ be an idempotent ideal of the ring $A$. The following assertions are equivalent:
\begin{enumerate}[1)]
\item For every integer $n>0$ and every $\mathcal{M}_{n\times n}(I)$-saturated right ideal $\mathfrak{a}$ of $\mathcal{M}_{n\times n}(A)$,
there exists Ê$x\in\mathfrak{a}$ such that $(\id_n-x)\mathfrak{a}\subseteq\mathcal{M}_{n\times n}(I)$.
\item For every integer $n>0$ and every $I$-saturated submodule $K$ of $A^{(n)}$, the quotient $\frac{A^{(n)}}{K+I^{(n)}}$ is
projective as a right $\frac{A}{I}$-module.
\end{enumerate}
\end{lemma}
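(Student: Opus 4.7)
The plan is to translate both conditions into statements about a right $A$-submodule $K\subseteq A^{(n)}$ (viewed as column vectors), using a concrete version of the Morita equivalence between $\mathcal{M}_{n\times n}(A)$ and $A$. For a right $A$-submodule $K\subseteq A^{(n)}$, let $\mathfrak{a}_K$ denote the set of matrices whose every column lies in $K$; a routine check shows that this is a right ideal of $\mathcal{M}_{n\times n}(A)$. Conversely, given a right ideal $\mathfrak{a}$, set $K:=\{v\in A^{(n)} : v \text{ is a column of some } m\in\mathfrak{a}\}$. One verifies that $K$ is a right $A$-submodule of $A^{(n)}$, closed under the scalar action via $m\mapsto m\cdot (a\id_n)$ and stable under the column permutations induced by $m\mapsto m\cdot e_{1j}$, and that $\mathfrak{a}=\mathfrak{a}_K$. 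Under this bijection $\mathcal{M}_{n\times n}(I)$ corresponds to $I^{(n)}$, and by testing with matrices supported on a single column one sees that $\mathfrak{a}_K$ is $\mathcal{M}_{n\times n}(I)$-saturated if and only if $K$ is $I$-saturated in $A^{(n)}$.

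Next I would reformulate condition (1) in these terms. An element $x\in\mathfrak{a}_K$ can be regarded as the $A$-linear endomorphism of $A^{(n)}$ given by left multiplication on column vectors, and the assumption $x\in\mathfrak{a}_K$ is precisely that $\im(x)\subseteq K$, whence $\im(\bar x)\subseteq \bar K:=(K+I^{(n)})/I^{(n)}$ for the induced endomorphism $\bar x$ of $(A/I)^{(n)}$. A short calculation shows that the condition $(\id_n-x)\mathfrak{a}_K\subseteq \mathcal{M}_{n\times n}(I)$ is equivalent to $xv\equiv v\pmod{I^{(n)}}$ for every $v\in K$: one direction is obtained by reading off the columns of $(\id_n-x)a$ for a generic $a\in\mathfrak{a}_K$, and the converse by plugging in the matrix all of whose columns equal a given $v\in K$. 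Consequently, condition (1) for the pair $(n,K)$ amounts to the existence of an endomorphism $\bar x$ of $(A/I)^{(n)}$ with image in $\bar K$ restricting to the identity on $\bar K$, that is, of a retraction $(A/I)^{(n)}\twoheadrightarrow \bar K$.

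Since $(A/I)^{(n)}$ is projective, the existence of such a retraction for every $I$-saturated $K$ is equivalent to the splitting of the short exact sequence
\[0\longrightarrow \bar K\longrightarrow (A/I)^{(n)}\longrightarrow A^{(n)}/(K+I^{(n)})\longrightarrow 0\]
of right $A/I$-modules, and hence to $A^{(n)}/(K+I^{(n)})$ being a projective right $A/I$-module, which is exactly condition (2). Therefore $(1)\Leftrightarrow (2)$. The only step requiring a bit of care is the initial bijective correspondence between right ideals of $\mathcal{M}_{n\times n}(A)$ and right $A$-submodules of $A^{(n)}$, together with the matching of saturation conditions; once this is in place, the equivalence of the two statements is purely formal.
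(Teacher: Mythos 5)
Your proof is correct, and at bottom it rests on the same Morita-type translation as the paper's: both exploit the correspondence between right ideals $\mathfrak{a}$ of $\mathcal{M}_{n\times n}(A)$ and right $A$-submodules $K$ of $A^{(n)}$, under which the two saturation conditions match, and both recognize condition 1) as saying that the inclusion of $\bar{K}=(K+I^{(n)})/I^{(n)}$ into $(A/I)^{(n)}$ splits. The execution, however, is genuinely different. The paper invokes the equivalence $\Hom_A(A^{(n)},?)$ abstractly, transfers assertion 2) to the matrix-ring side, reduces to the case $n=1$, and there identifies the existence of $x\in\mathfrak{a}$ with $(1-x)\mathfrak{a}\subseteq I$ with the existence of an idempotent $\bar{x}$ generating $(\mathfrak{a}+I)/I$, i.e.\ with the projection $A/I\ra A/(\mathfrak{a}+I)$ being a retraction. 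You instead make the correspondence explicit via columns, stay at level $n$, and translate the matrix condition downward: $x$ becomes an endomorphism of $(A/I)^{(n)}$ with image in $\bar{K}$ restricting to the identity there, that is, a retraction onto $\bar{K}$. Your version buys self-containedness: the bijection $\mathfrak{a}\leftrightarrow K$, the identification of $\mathcal{M}_{n\times n}(I)$ with $I^{(n)}$, and the matching of saturations are all checked by hand, and no reduction to $n=1$ is needed; the paper's version buys brevity, since after the Morita reduction the whole content is a short computation with idempotents modulo $I$. The one step you should spell out is the passage from a retraction back to an element of $\mathfrak{a}$ in the implication $2)\Rightarrow 1)$: an arbitrary matrix lifting $\bar{x}$ only has columns in $K+I^{(n)}$, so you must lift each column of $\bar{x}$ to a representative lying in $K$ (equivalently, correct an arbitrary lift by a matrix in $\mathcal{M}_{n\times n}(I)$) to land in $\mathfrak{a}_K$; this is immediate in your setup, but it is precisely where $x\in\mathfrak{a}$ is recovered, so it deserves a sentence.
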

\begin{proof}
Fix any integer $n>0$ and consider the equivalence of categories $F=:\Hom_A(A^{(n)},?):\Mod A\arr{\sim}\Mod \mathcal{M}_{n\times n}(A)$. It establishes a
bijection between $I$-saturated submodules $K$ of $A^{(n)}$ and $\mathcal{M}_{n\times n}(I)$-saturated right ideals $\mathfrak{a}$ of the ring
$\mathcal{M}_{n\times
n}(A)=\Hom_A(A^{(n)},A^{(n)})$. Now if $K$ and $\mathfrak{a}$ correspond by that bijection, one has that
\[F\left(\frac{A^{(n)}}{K+I^{(n)}}\right)\cong\frac{\mathcal{M}_{n\times n}(A)}{\mathfrak{a}+\mathcal{M}_{n\times n}(I)}.
\] 
Assertion 2) is equivalent to say that,
for such a $K$, Êthe canonical projection 
\[\left(\frac{A}{I}\right)^{(n)}\cong\frac{A^{(n)}}{I^{(n)}}\twoheadrightarrow \frac{A^{(n)}}{K+I^{(n)}}
\] 
is a retraction Êin $\Mod A$. The proof is hence reduced to check that condition 1) is equivalent to say that the canonical projection
\[\frac{\mathcal{M}_{n\times n}(A)}{\mathcal{M}_{n\times n}(I)}\twoheadrightarrow\frac{\mathcal{M}_{n\times n}(A)}{\mathfrak{a}+\mathcal{M}_{n\times n}(I)}
\] 
is a retraction in $\Mod \mathcal{M}_{n\times n}(A)$, for every
$\mathcal{M}_{n\times n}(I)$-saturated right ideal
$\mathfrak{a}$ of $\mathcal{M}_{n\times n}(A)$. To do that it is not restrictive to
assume that $n=1$, something that we do from now on in this proof. Then the existence of
an element $x\in\mathfrak{a}$ such that $(1-x)\mathfrak{a}\subseteq I$ is equivalent to
say that there is an element $x\in\mathfrak{a}$ such that $\bar{x}=x+I$ generates
$(\mathfrak{a}+I)/I$ and $\bar{x}^2=\bar{x}$. That is clearly equivalent to say
that the canonical projection 
\[\frac{A}{I}\ra\frac{A}{\mathfrak{a}+I}
\] 
is a retraction.
\end{proof}

\begin{definition}\label{definition right splitting}
An idempotent ideal $I$ of a ring $A$ will be called \emph{right splitting}\index{ideal!right splitting} if it satisfies one (and hence both) of the conditions of Lemma \ref{new-splitting}, is pure as a left ideal and $\lann_{A}(I)=0$.
\end{definition}

\begin{example}
Let $I$ be an idempotent ideal of a ring $A$. If $I$ is pure on the left, $\lann_{A}(I)=0$ and $A/I$ is a semisimple ring, then $I$ is right splitting.
\end{example}

\begin{theorem} \label{clasificacion right split}
Let $A$ be a ring and $(\cx,\cy,\cz)$ be a TTF triple on $\Mod A$ with associated idempotent ideal $I$ and such that $A\in\cz$.
The following assertions are equivalent:
\begin{enumerate}[1)]
\item $(\cx,\cy,\cz)$ is right split.
\item $(\cx,\cy)$ is hereditary and $F/FI$ is a projective right $A/I$-module for all $F\in\cz$.
\item $I$ is right splitting and $A/I$ is a hereditary perfect ring.
\end{enumerate}
\end{theorem}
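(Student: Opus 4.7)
The plan is to establish the equivalences through the route $1)\Leftrightarrow 2)$ and $2)\Leftrightarrow 3)$, with the passage $2)\Rightarrow 3)$ being the delicate step.

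First I would handle $1)\Rightarrow 2)$. That $(\cx,\cy)$ is hereditary is already given by Proposition \ref{primeras propiedades de los right split}. To prove $F/FI$ is $A/I$-projective for $F\in\cz$, I would fix an epimorphism $g\colon X\twoheadrightarrow F/FI$ in $\Mod A/I$ and form the pullback $P$ of $g$ along the canonical projection $F\twoheadrightarrow F/FI$ in $\Mod A$. A direct check using $\ann_F(I)=0$ shows $\tau_\cy(P)\cong\ker g$, so the pullback sequence $0\to\ker g\to P\to F\to 0$ has left term in $\cy$ and right term in $\cz$, hence splits by hypothesis. A section $s\colon F\to P$ must have the form $y\mapsto(t(y),y)$ for some $A$-linear $t\colon F\to X$, and because $X$ is annihilated by $I$, $t$ factors through $F/FI$ yielding a section of $g$.

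Next I would do $2)\Rightarrow 1)$ by proving the Ext-vanishing $\Ext^1_A(F,Y)=0$ for every $F\in\cz$ and $Y\in\cy$. Choose $A^{(S)}\twoheadrightarrow F$ with kernel $K$; since $\cz$ is closed under submodules, $K\in\cz$, and because $(\cx,\cy)$ is hereditary we have $KI=A^{(S)}I\cap K$. Consequently the induced map $K/KI\hookrightarrow B^{(S)}$ is injective with cokernel $F/FI$. Hypothesis 2) forces both $K/KI$ and $F/FI$ to be projective over $B:=A/I$, so the sequence $0\to K/KI\to B^{(S)}\to F/FI\to 0$ splits in $\Mod B$. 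Combining this with the adjunction identity $\Hom_A(N,Y)\cong\Hom_B(N/NI,Y)$ for $N\in\{K,A^{(S)}\}$ shows that every morphism $K\to Y$ extends to $A^{(S)}\to Y$, so $\Ext^1_A(F,Y)=0$. Applied to the canonical sequence $0\to\tau_\cy(M)\to M\to\tau^\cz(M)\to 0$, this gives right-splitness.

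For $3)\Rightarrow 2)$ I would first note that $(\cx,\cy)$ hereditary is part of ``$I$ right splitting'' via Lemma \ref{hereditary igual a pure igual a condicion aritmetica}. For the projectivity of $F/FI$ with $F\in\cz$, pick an epimorphism $A^{(S)}\twoheadrightarrow F$ with kernel $K$; the hypothesis $F\in\cz$ means $K$ is $I$-saturated in $A^{(S)}$. Writing $A^{(S)}=\bigcup A^{(S_0)}$ over finite $S_0\subseteq S$, the intersections $K_{S_0}:=K\cap A^{(S_0)}$ are $I$-saturated in $A^{(S_0)}$, and by Lemma \ref{new-splitting} each $A^{(S_0)}/(K_{S_0}+I^{(S_0)})$ is $B$-projective. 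A cofinality argument identifies $F/FI$ with $\varinjlim A^{(S_0)}/(K_{S_0}+I^{(S_0)})$, so $F/FI$ is a directed colimit of $B$-projectives and whence $B$-flat; since $B$ is perfect, by H.~Bass's theorem \ref{Bass theorem} $F/FI$ is $B$-projective.

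Finally I would tackle the hard direction $2)\Rightarrow 3)$. The ``$I$ right splitting'' clauses are immediate: purity is Lemma \ref{hereditary igual a pure igual a condicion aritmetica}, $\lann_A(I)=0$ is the running hypothesis, and the condition of Lemma \ref{new-splitting} is condition 2) applied to $F=A^{(n)}/K$ for $I$-saturated $K$. For ``$A/I$ hereditary perfect'' the hereditariness is easy: for $M\in\Mod B$ and an epi $A^{(S)}\twoheadrightarrow M$ with kernel $K\supseteq A^{(S)}I$, the kernel $K^{sat}\in\cz$ of the universal $\cz$-quotient produces, after the argument of $2)\Rightarrow 1)$, a two-step projective resolution of $M$ over $B$, so $B$ is right hereditary. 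To get \emph{perfect}, the strategy is to invoke Proposition \ref{hereditary perfect} by showing that $\Proj B$ is closed under small products. Every projective $B$-module $P$ arises as $F/FI$ for some $F\in\cz$: if $P\oplus Q=B^{(S)}$, take $F$ to be the preimage in $A^{(S)}$ of $Q$, then $F\in\cz$ and hereditariness of $(\cx,\cy)$ gives $FI=A^{(S)}I$, so $F/FI\cong P$. Given $\{P_t\}_{t\in T}$ with $P_t=F_t/F_tI$ and $F_t\in\cz$, form $F=\prod_tF_t\in\cz$; condition 2) makes $F/FI$ projective over $B$, and the same hereditariness trick applied to $\prod_tF_tI\subseteq F$ shows that the kernel of the surjection $F/FI\twoheadrightarrow\prod_tP_t$ is also $B$-projective. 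Combining these ingredients with right-splitness (available after $2)\Rightarrow 1)$) and the Ext-vanishing argument above, the sequence splits, so $\prod_tP_t$ is $B$-projective.

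\textbf{Main obstacle.} The hardest step will be the ``perfect'' part of $2)\Rightarrow 3)$: one knows that a short exact sequence $0\to K'\to F/FI\to\prod_tP_t\to 0$ exists with the two outer and the middle term all forced to be projective over $B$, but concluding that $\prod_tP_t$ itself is projective requires a careful use of the right-split structure (now available from $2)\Rightarrow 1)$) applied to a suitably chosen extension, avoiding the circularity that plagues the direct splitting argument.
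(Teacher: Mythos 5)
Most of your outline is sound: the pullback argument for $1)\Rightarrow 2)$, the Ext-vanishing route for $2)\Rightarrow 1)$, and the directed-colimit-plus-Bass argument for $3)\Rightarrow 2)$ all work (the paper does $1)\Leftrightarrow 2)$ more quickly, by applying $\Hom_A(?,T)$ to $0\to FI\to F\to F/FI\to 0$ and, conversely, by splitting the monomorphism $\tau_{\cy}(M)\to M/MI$ whose cokernel is $F/FI$; its $3)\Rightarrow 2)$ is essentially yours). The genuine gap is the perfectness half of $2)\Rightarrow 3)$, which you yourself flag and do not close. Two problems arise. First, Proposition \ref{hereditary perfect} goes the wrong way: it deduces closure of $\Proj(A/I)$ under products \emph{from} ``hereditary perfect''; the converse you would need is Chase's theorem \cite[Theorem 3.3]{Chase1960}. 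Second, and more seriously, even granting Chase, your verification that $\prod_t P_t$ is projective stalls exactly where you say it does: you produce $0\to G/GI\to F/FI\to\prod_tP_t\to 0$ with left and middle terms projective over $B=A/I$, but nothing forces this sequence to split. The only splitting tools available from hypothesis 2) — right-splitness and the derived vanishing $\Ext^1_A(F,Y)=0$ — apply to extensions with kernel in $\cy$ and cokernel in $\cz$, whereas here all three terms are annihilated by $I$, i.e. lie in $\cy$; the same obstruction hits the special case $0\to I^{\cs}/A^{\cs}I\to A^{\cs}/A^{\cs}I\to B^{\cs}\to 0$. No ``suitably chosen extension'' is exhibited, and it is not clear one exists: this step is essentially as hard as the perfectness claim itself. (There is also a small slip: the preimage of $Q$ yields $F/FI\cong Q$, so to realize $P$ you should take the preimage of $P$.)

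The paper closes precisely this step by an external input: for every set $\cs$ one has $A^{\cs}\in\cz$, so hypothesis 2) makes $A^{\cs}/A^{\cs}I$ projective over $A/I$, and Goodearl's theorem \cite[Theorem 5.1]{Goodearl} then yields that $A/I$ is right perfect. Note that $A^{\cs}/A^{\cs}I$ is not $(A/I)^{\cs}$, so this is genuinely different from (and weaker-looking than) closure of $\Proj(A/I)$ under products; Goodearl's result is proved by a Chase/Bass-type chain argument, not by a formal splitting trick, which is why massaging the TTF-splitting data alone will not produce it. Your route cannot be completed without importing such a result. Separately, your hereditariness sketch in $2)\Rightarrow 3)$ is vaguer than necessary: for any submodule $N$ of a free $B$-module $B^{(S)}$, its preimage $F$ in $A^{(S)}$ lies in $\cz$ and satisfies $FI=F\cap A^{(S)}I=A^{(S)}I$ by heredity of $(\cx,\cy)$, so $N\cong F/FI$ is projective by 2); the ``two-step projective resolution'' detour is not needed (the paper only treats right ideals $\mathfrak{b}/I$, which already suffices).
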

\begin{proof}
$1)\Rightarrow 2)$ By Proposition ~\ref{primeras propiedades de los right split} we know that $(\cx,\cy)$ is hereditary. Now take
arbitrary modules $T\in\cy\ko F\in\cz$, and apply $\Hom_{A}(?,T)$ to the short exact sequence
\[0\ra FI\hookrightarrow F\ra F/FI\ra 0.
\]
One gets the exact sequence
\[0=\Hom_{A}(FI,T)\ra\Ext^1_{A}(F/FI,T)\ra\Ext^1_{A}(F,T).
\]
The split condition of $(\cy,\cz)$ gives that $\Ext^1_{A}(F,T)=0$, and hence $\Ext^1_{A}(F/FI,T)=0$ for all $T\in\cy=\Mod\frac{A}{I}$.
Then $F/FI$ is a projective right $A/I$-module.

$2)\Rightarrow 1)$ Since $(\cx,\cy)$ is hereditary, it follows that $\tau_{\cy}(M)\cap MI=0$ for every $M\in\Mod A$. So the composition
\[\tau_{\cy}(M)\overset{j}{\hookrightarrow}M\overset{p}{\twoheadrightarrow}M/MI
\]
is a monomorphism with cokernel 
\[\cok(pj)\cong\frac{M}{\tau_{\cy}(M)+MI}\cong\frac{M/\tau_{\cy}(M)}{(M/\tau_{\cy}(M))\cdot I},
\]
which, by hypothesis, is a projective right $A/I$-module. Then $pj$ is a section in ($\Mod\frac{A}{I}$, and so in) $\Mod A$. Thus $j$ is a section in $\Mod A$.

$2)\Rightarrow 3)$ By Lemma ~\ref{hereditary igual a pure igual a condicion aritmetica} we know that $I$ is pure on the left.
Moreover, since $A_{A}\in\cz$ we get $\lann_{A}(I)=0$. On the other hand, the fact that $F/FI$ is projective over $A/I$, for all
$F\in\cz$, implies that if $K<A^{(n)}$ is an $I$-saturated submodule then $A^{(n)}/(K+I^{(n)})$ is projective as a
right $\frac{A}{I}$-module. Then, from Lemma \ref{new-splitting}, we derive that $I$ is right splitting.

Take now any right ideal $\mathfrak{b}/I$ of $A/I$ (notice that we then have $\mathfrak{b}\in\cz$). Since $(\cx,\cy)$ is hereditary
we have $\tau_{\cx}(\mathfrak{b})=\mathfrak{b}\cap\tau_{\cx}(A)$,
\ie $\mathfrak{b}I=\mathfrak{b}\cap I=I$. Thus $\mathfrak{b}/I=\mathfrak{b}/\mathfrak{b}I$ is a projective right $A/I$-module.
That proves that $A/I$ is a right hereditary ring.

Finally, observe that if $\cs$ is any set then $A^{\cs}/A^{\cs}I$ is a projective right $A/I$-module. Now, \cite[Theorem 5.1]{Goodearl} says that $A/I$ is right perfect.

$3)\Rightarrow 2)$ By Lemma ~\ref{hereditary igual a pure igual a condicion aritmetica} we know that $(\cx,\cy)$ is hereditary. Let
us take now $F\in\cz$. If $F$ is finitely generated then $F\cong A^{(n)}/K$, for some $I$-saturated submodule
$K<A^{(n)}$. Then, by Lemma \ref{new-splitting}, we have that $F/FI\cong A^{(n)}/K+I^{(n)}$ is projective as a right
$A/I$-module. In case $F$ is not necessarily finitely generated, then Ê$F=\bigcup F_\alpha$ is the directed union of its finitely
generated submodules, which implies that $F/FI$ is a direct limit of the $F_\alpha/F_\alpha I$. Then $F/FI$ is a direct limit of
projective right $A/I$-modules. Since $A/I$ is right perfect, we conclude that $F/FI$ is projective over $A/I$, for all $F\in\cz$.
\end{proof}

The desired full classification of right split TTF triples on $\Mod A$ is now available:

\begin{corollary} \label{clasification right split2}
Let $A$ be an arbitrary ring. The one-to-one correspondence of Theorem ~\ref{parametrizando TTF ternas} restricts to a one-to-one correspondence between:
\begin{enumerate}[1)]
\item Right split TTF triples on $\Mod A$.
\item Idempotent ideals $I$ such that, for some idempotent $\varepsilon\in A$, $\lann_A(I)=(1-\varepsilon )A$ and $I$ is a right splitting ideal of
$\varepsilon A\varepsilon $ with $\varepsilon A\varepsilon /I$ a hereditary perfect ring.
\end{enumerate}
\end{corollary}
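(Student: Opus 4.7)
The plan is to obtain this corollary by combining the two preceding results of the section: the Proposition immediately before Lemma \ref{pre right split}, which reduces the classification of right split TTF triples on $\Mod A$ to the case $A\in\cz$ after localizing at an appropriate idempotent, and Theorem \ref{clasificacion right split}, which handles that reduced case.

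First I would invoke the aforementioned Proposition to obtain a bijection between right split TTF triples on $\Mod A$ and idempotent ideals $I$ of $A$ such that, for some idempotent $\varepsilon\in A$, one has $\lann_A(I)=(1-\varepsilon)A$ and the TTF triple on $\Mod\varepsilon A\varepsilon$ associated to $I$ is right split. Here I use Lemma \ref{pre right split} to know that $I\subseteq\varepsilon A\varepsilon$, so that $I$ really makes sense as an ideal of $\varepsilon A\varepsilon$, and also to note that $\lann_{\varepsilon A\varepsilon}(I)=0$ (because by construction $\tau_{\cy}(A_A)=\lann_A(I)=(1-\varepsilon)A$, so no further left annihilator remains inside $\varepsilon A\varepsilon$).

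Next, I would apply Theorem \ref{clasificacion right split} to the ring $\varepsilon A\varepsilon$ and the idempotent ideal $I$ of $\varepsilon A\varepsilon$. Since $\lann_{\varepsilon A\varepsilon}(I)=0$, the hypothesis $\varepsilon A\varepsilon\in\cz'$ of that theorem is satisfied, and the equivalence $1)\Leftrightarrow 3)$ of the theorem asserts that the TTF triple on $\Mod\varepsilon A\varepsilon$ associated to $I$ is right split if and only if $I$ is a right splitting ideal of $\varepsilon A\varepsilon$ (in the sense of Definition \ref{definition right splitting}) and $\varepsilon A\varepsilon/I$ is a hereditary perfect ring. Substituting this characterization into the bijection provided by the Proposition yields precisely the required bijection between $1)$ and $2)$.

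There is no genuine obstacle here beyond bookkeeping: the two results were designed exactly to be combined, and both halves of the bijection in the statement are forced by them. The only subtlety worth double-checking is the compatibility of the idempotent $\varepsilon$ appearing in the two steps, namely that the idempotent witnessing $\lann_A(I)=(1-\varepsilon)A$ in the first step is the same as (or can be taken to be) the one inside which $I$ lives as an ideal in the second step; this is ensured by Lemma \ref{pre right split}, which shows both $\varepsilon A(1-\varepsilon)=0$ and $I\subseteq\varepsilon A\varepsilon$ from the sole datum $\tau_\cy(A_A)=(1-\varepsilon)A$.
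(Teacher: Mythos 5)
Your proof is correct and is exactly the intended argument: the corollary is the straightforward combination of the unnamed Proposition preceding it (reduction, via Lemma \ref{pre right split}, to an idempotent ideal $I\subseteq\varepsilon A\varepsilon$ with $\lann_{\varepsilon A\varepsilon}(I)=0$) with the equivalence $1)\Leftrightarrow 3)$ of Theorem \ref{clasificacion right split} applied to the ring $\varepsilon A\varepsilon$. The paper leaves this bookkeeping implicit, and your check that the idempotent $\varepsilon$ is the same in both steps matches what Lemma \ref{pre right split} provides.
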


\begin{remark}\label{condicion necesaria}
If in the definition of right splitting ideal we replace the conditions Êof Lemma \ref{new-splitting} by its corresponding ones with $n=1$, then
the correspondent of Theorem \ref{clasificacion right split} is not true. Indeed, consider 
\[A=\left[\begin{array}{ccc}k & 0 \\ M & H \end{array}\right],
\] 
where $k$ is an algebraically closed field, 
\[H=k(\xymatrix{\cdot\ar@<0.75ex>[r]\ar@<-0.75ex>[r] & \cdot})
\] 
is the Kronecker algebra
and $_HM=\tau_H (S)$, where $\tau_H$ is the Auslander-Reiten translation \cite[Chapter VII]{AuslanderReitenSmalo1995} and $S$ is the
simple injective left $H$-module. Notice that $_{H}M$ is faithful. We put 
\[e=\left[\begin{array}{ccc}1 & 0 \\ 0 & 0 \end{array}\right]
\] 
and take
\[I=Ae=\left[\begin{array}{ccc}k & 0 \\ M & 0 \end{array}\right],
\] 
which is clearly pure as a left ideal and satisfies that
$\lann_A(I)=0$. A right ideal $\mathfrak{a}$ of $A$ is represented by a triple $(V,\mathfrak{u};\varphi:\mathfrak{u}\otimes_HM\longrightarrow V)$,
where $V$ is a vector subspace of $k\oplus M$, $\mathfrak{u}$ is a right ideal of $H$ such that $0\oplus\mathfrak{u}M\subseteq V$ and $\varphi$ is the
canonical multiplication map $h\otimes m\mapsto (0,hm)$. One readily sees that $\mathfrak{a}$ is $I$-saturated if, and only
if, $h\in\mathfrak{u}$ whenever $0\oplus hM\subseteq V$. In that case the canonical morphism 
\[H\longrightarrow \Hom_k(M,k\oplus M)
\] 
in $\Mod H$ induces a monomorphism 
\[H/\mathfrak{a}\ra \Hom_k(M,\frac{k\oplus M}{V}).
\] 
But $\Hom_k(M,(k\oplus M)/V)$ is isomorphic to $D(M)^{(r)}$, for some natural number $r$, where $D=\Hom_k(?,k)$ is
the canonical duality. Then $D(M)=\tau_H^{-1}(T)$, where $T=D(S)$ is the simple projective right $H$-module. It is known that all
cyclic submodules of $D(M)$ are projective (see for instance \cite[Section 3.2]{Ringel1984}). From that one easily derives that, in
our case, $H/\mathfrak{a}$ is a projective right $H$-module and, hence, Êcondition 2 of Lemma \ref{new-splitting}
holds for $n=1$. However, according to Theorem \ref{clasificacion right split para buenos}, the TTF triple on $\Mod A$ associated to $I$ is not right split
because $_HM$ is not FP-injective.
\end{remark}

\begin{example}
Let $H,C$ be rings, the first one being hereditary perfect, and $_HM_C$ be a bimodule such that $_HM$ is Êfaithful. The idempotent ideal
\[I\arr{\sim}\left[\begin{array}{cc}C&0\\M&0\end{array}\right]
\]
of
\[A\arr{\sim}\left[\begin{array}{cc}C&0\\M&H\end{array}\right]
\]
is clearly pure on the left and Ê$\lann_A(I)=0$. Combining
Theorem \ref{clasificacion right split para buenos} and Theorem
\ref{clasificacion right split}, one gets that $I$ is right
splitting (\ie, it satisfies the equivalent conditions of Lemma
\ref{new-splitting}) if, and only if, $_HM$ is FP-injective
(equivalently, $_HM$ has hereditary $\Pi$-projective dual). We
leave as an exercise to check it directly by using an argument
similar to that of Remark \ref{condicion necesaria}.
\end{example}

\begin{example}\label{l=c<r}
If $A$ is commutative and we denote by $\mathfrak{L}$, $\mathfrak{C}$ and $\mathfrak{R}$ the sets of left, centrally and right split
ÊTTF triples on $\Mod A$, respectively, then Ê$\mathfrak{L}=\mathfrak{C}\subset\mathfrak{R}$ and the last inclusion may be strict.
ÊIndeed, since all idempotents in $A$ are central, the equality $\mathfrak{L}=\mathfrak{C}$ follows from Corollary \ref{clasificacion left split}. On the other hand, if Ê$k$ is a field and $A$ is the ring formed by the eventually constant sequences of elements of $k$, then the subset $I=k^{(\N)}$ formed by the eventually zero sequences is an idempotent ideal of $A$ which is pure and satisfies that $\lann_A(I)=0$. Moreover, one has $A/I\cong k$ and then condition 3) in TheoremÊ~\ref{clasificacion right split} holds (see the example after the Definition ~\ref{definition right splitting}). The associated TTF triple is then right split but not centrally split.
\end{example}

\chapter{Preliminary results on triangulated categories}\label{Preliminary results}
\addcontentsline{lot}{chapter}{Cap\'itulo 4. Resultados preliminares sobre categor\'ias trianguladas}

\section{Introduction}
\addcontentsline{lot}{section}{4.1. Introducci\'on}

\subsection{Motivation}
\addcontentsline{lot}{subsection}{4.1.1. Motivaci\'on}

We remind results and set terminology for the subsequent chapters. We prove, however, some apparently new results which measure the distance between ``compact'' and ``self-compact'' (or between ``compact'' and ``perfect''). Also, we study in some detail methods of constructing t-structures and some properties of `right bounded derived categories' that we will need for the last chapter. 

\subsection{Outline of the chapter}
\addcontentsline{lot}{subsection}{4.1.2. Esbozo del cap\'itulo}

In section \ref{TTF triples and recollements}, we remind the notion of \emph{triangulated torsion-torsionfree(=TTF) triple} on a triangulated category $\cd$, and prove that triangulated TTF triples on $\cd$ are in `bijection' with \emph{d\'{e}collements} of $\cd$. In section \ref{Generators and infinite devissage}, we study several ways of generating a triangulated category and the relationship between them. In section \ref{(Super)perfectness, compactness and smashing}, we study the crucial notion of \emph{compact} and \emph{(super)perfect} objects, and the r\^{o}le they play in: the several ways of generating a triangulated category, the construction or existence of t-structures, Brown representability theorem,\dots Also, we recall the definition of \emph{smashing subcategory} and the fact that smashing subcategories are in bijection with triangulated TTF triples, for instance when the ambient triangulated category is perfectly generated. In section \ref{B. Keller's Morita theory for derived categories}, we recall some basis of the seminal B.~Keller's Morita theory of derived categories of dg categories. In section \ref{Some constructions of t-structures}, we study general methods of constructing t-structures. Some of these methods require techniques of homotopical algebra which we recall. Finally, in section \ref{The right bounded derived category of a dg category}, we define and study the `right bounded' derived category of a dg category.

\subsection{Notation}
\addcontentsline{lot}{subsection}{4.1.3. Notaci\'on}

Unless otherwise stated, $k$ will be a commutative (associative, unital) ring and every additive category will be assumed to be $k$-linear. We recall that when $\cd$ is a triangulated category, the \emph{shift} or the \emph{translation functor} will be denoted by $?[1]$. When we speak of ``all the shifts'' or ``closed under shifts'' and so on, we will mean ``shifts in both directions'', that is to say, we will refer to the $n$th power $?[n]$\index{$?[n]$} of $?[1]$ for all the integers $n\in\Z$. In case we want to consider another situation (\eg, non-negative shifts $?[n]\ko n\geq 0$) this will be said explicitly.

If $\cq$ is a class of objects of a triangulated category $\cd$:
\begin{enumerate}[1)]
\item $\cq^+$\index{$\cq^+$} will be the class of all non-negative shifts of objects of $\cq$.
\item $\Sum_{\cd}(\cq)$\index{$\Sum_{\cd}(\cq)$}, or $\Sum(\cq)$\index{$\Sum(\cq)$} if $\cd$ is clear, will be the class of all small coproducts of objects of $\cq$. 
\item $\aisle_{\cd}(\cq)$\index{$\aisle_{\cd}(\cq)$}, or $\aisle(\cq)$\index{$\aisle(\cq)$} if $\cd$ is clear, will be the smallest aisle in $\cd$ (\cf Definition \ref{aisle}) containing $\cq$. Notice that $\aisle_{\cd}(\cq)$ might not exist, but, if it exists, it is closed under small coproducts.
\item $\Susp_{\cd}(\cq)$\index{$\Susp_{\cd}(\cq)$}, or $\Susp(\cq)$\index{$\Susp(\cq)$} if $\cd$ is clear, will be the smallest full suspended subcategory of $\cd$ (\cf Definition \ref{definition suspended category}) containing $\cq$ and closed under small coproducts.
\item $\Tria_{\cd}(\cq)$\index{$\Tria_{\cd}(\cq)$}, or $\Tria(\cq)$\index{$\Tria(\cq)$} if $\cd$ is clear, will be the smallest full triangulated subcategory of $\cd$ containing $\cq$ and closed under small coproducts.
\end{enumerate}

If $\cu$ and $\cv$ are two classes of objects of a triangulated category $\cd$, then $\cu*\cv$\index{$\cu*\cv$} is the class of extensions of objects of $\cv$ by objects of $\cu$, \ie the class formed by those objects $M$ occurring in a triangle
\[U\ra M\ra V\ra U[1]
\]
of $\cd$ with $U\in\cu$ and $V\in\cv$. The axiom RT4) of Definition \ref{definition suspended category} implies that the operation $*$ is associative. For each natural number $n\geq 0$, the objects of 
\[\cu^{*n}:=\cu*\overset{\text{n times}}{\dots}*\cu
\] 
are called \emph{$n$-fold extensions}\index{$\cu^{*n}$} or \emph{extensions of length $n$}\index{extensions!$n$-fold}\index{extensions!of length $n$} of objects of $\cu$.

Recall that if $(\cu,\cv[1])$ is a t-structure on $\cd$ we denote by $u:\cu\hookrightarrow\cd$ and $v:\cv\hookrightarrow\cd$ the inclusion functors, by $\tau_{\cu}$ a right adjoint to $u$ and by $\tau^{\cv}$ a left adjoint to $v$.

\section{Triangulated TTF triples and recollements/d\'{e}collements}\label{TTF triples and recollements}
\addcontentsline{lot}{section}{4.2. Ternas TTF trianguladas y (des)aglutinaciones}

Recall the definition of triangulated TTF triple on a triangulated category:

\begin{definition}\label{triangulated TTF triple}
A \emph{triangulated TTF triple} on a triangulated category $\cd$ is a triple $(\cx,\cy,\cz)$ of full subcategories of $\cd$ such that $(\cx,\cy)$ and $(\cy,\cz)$ are t-structures on $\cd$. Notice that, in particular, $\cx\ko \cy$ and $\cz$ are full triangulated subcategories of $\cd$. 
\end{definition}

\begin{definition}\label{recollement}
Let $\cd\ko \cd_{F}$ and $\cd_{U}$ be triangulated categories. Then $\cd$ is a \emph{recollement of $\cd_{F}$ and $\cd_{U}$}\index{recollement}, diagrammatically expressed by
\[ \xymatrix{ \cd_{F}\ar[r]^{i_{*}=i_{!}} & \cd\ar@/_1.5pc/[l]_{i^{*}}\ar@/_-1pc/[l]^{i^{!}}\ar[r]^{j^*=j^!} & 
\cd_{U}\ar@/_1.5pc/[l]_{j_{*}}\ar@/_-1pc/[l]^{j_{!}},
}
\]
if there exist six triangle functors which satisfies the following four conditions:
\begin{enumerate}[R1)]
\item $(i^*,i_{*}=i_{!},i^!)$ and $(j_{!},j^!=j^*,j_{*})$ are adjoint triples.
\item $i^!j_{*}=0$ (and thus $j^!i_{!}=j^*i_{*}=0$ and $i^*j_{!}=0$).
\item $i_{*}\ko j_{!}$ and $j_{*}$ are full embeddings (and thus $i^*i_{*}\cong i^!i_{!}\cong \id_{\cd_{F}}$).
\item For every object $M$ of $\cd$ there exist two triangles,
\[i_{!}i^!M\ra M\ra j_{*}j^*M\ra (i_{!}i^!M)[1]
\]
and
\[j_{!}j^!M\ra M\ra i_{*}i^*M\ra (j_{!}j^!M)[1],
\]
in which the morphisms $i_{!}i^!M\ra M$ etc. are the corresponding adjunction morphisms.
\end{enumerate}
In this case we say that the data 
\[(\cd_{F},\cd_{U}, i^*,i_{*}=i_{!},i^!, j_{!},j^!=j^*,j_{*})
\] 
is a \emph{d\'{e}collement}\index{d\'{e}collement} of $\cd$.
Two d\'{e}collements of $\cd$,
\[ \xymatrix{ \cd_{F}\ar[r]^{i_{*}=i_{!}} & \cd\ar@/_1.5pc/[l]_{i^{*}}\ar@/_-1pc/[l]^{i^{!}}\ar[r]^{j^*=j^!} & 
\cd_{U}\ar@/_1.5pc/[l]_{j_{*}}\ar@/_-1pc/[l]^{j_{!}}
}
\]
and
\[\xymatrix{ \cd'_{F}\ar[r]^{i'_{*}=i'_{!}} & \cd'\ar@/_1.5pc/[l]_{i'^{*}}\ar@/_-1pc/[l]^{i'^{!}}\ar[r]^{j'^*=j'^!} & 
\cd'_{U}\ar@/_1.5pc/[l]_{j'_{*}}\ar@/_-1pc/[l]^{j'_{!}},
}
\]
are \emph{equivalent}\index{d\'{e}collement!equivalent} if 
\[(\im(i_{*}),\im(j_{*}),\im(j_{!}))=(\im(i'_{*}),\im(j'_{*}),\im(j'_{!})),
\] 
where by $\im(i_{*})$ we mean the essential image of $i_{*}$, and analogously with the other functors.
\end{definition}
 
It is well-known that triangulated TTF triples are in bijection with equivalence classes of d\'{e}collements (\cf \cite[1.4.4]{BeilinsonBernsteinDeligne}, \cite[subsection 9.2]{Neeman2001}). For the sake of completeness we recall here how this bijection works. First, we recall \cite[Proposition 1.1.9]{BeilinsonBernsteinDeligne}:

\begin{lemma}\label{truncacion funtorial}
Let $\cd$ be a triangulated category and let $X\arr{u}M\arr{v}Y\arr{z}X[1]$ and $X'\arr{u'}M'\arr{v'}Y'\arr{z'}X'[1]$ be two triangles such that $\cd(X',Y)=\cd(X,Y')=\cd(X,Y'[-1])=0$. Then every morphism $g\in\cd(M,M')$ can be completed uniquely to a morphism of triangles
\[\xymatrix{X\ar[r]^{u}\ar@{.>}[d]^{f} & M\ar[r]^{v}\ar[d]^g & Y\ar[r]^{w}\ar@{.>}[d]^{h} & X[1]\ar@{.>}[d]^{f[1]} \\
X'\ar[r]^{u'} & M'\ar[r]^{v'} & Y'\ar[r]^{w'} & X'[1]
}
\]
\end{lemma}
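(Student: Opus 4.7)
The plan is to produce the morphisms $f$ and $h$ one at a time using the long exact sequences obtained by applying the cohomological functors $\cd(X,-)$ and $\cd(-,Y')$ to the given triangles, and then to invoke axiom TR3 to package everything as a morphism of triangles. The hypotheses will be used to turn both the \emph{existence} and the \emph{uniqueness} assertions into vanishing statements in these long exact sequences.

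First I would construct $f$. The composition $v'gu : X \to Y'$ lies in $\cd(X, Y')$, which is zero, so $v'gu = 0$. Applying $\cd(X,-)$ to the second triangle yields an exact sequence
\[
\cd(X,X') \xrightarrow{u'_{*}} \cd(X, M') \xrightarrow{v'_{*}} \cd(X, Y'),
\]
so the class $gu \in \cd(X,M')$ lies in the image of $u'_{*}$: there exists $f \in \cd(X,X')$ with $u'f = gu$. For uniqueness, the difference $f_{1}-f_{2}$ of two such candidates satisfies $u'(f_{1}-f_{2})=0$, so it lies in the image of $\cd(X, Y'[-1]) \to \cd(X,X')$ (coming from the rotated triangle), and this group vanishes by hypothesis.

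Second, with the commutative square $u'f = gu$ in hand, axiom TR3 of a triangulated category produces a morphism $h : Y \to Y'$ such that $(f,g,h)$ is a morphism of triangles; in particular $hv = v'g$ and $w'h = f[1]w$. To see that $h$ is unique, suppose $h'$ also satisfies $h'v = v'g$. Then $(h-h')v = 0$, and applying $\cd(-,Y')$ to the first triangle gives the exact sequence
\[
\cd(X[1], Y') \xrightarrow{w^{*}} \cd(Y,Y') \xrightarrow{v^{*}} \cd(M,Y'),
\]
so $h-h'$ lies in the image of $w^{*}$, i.e.\ in a quotient of $\cd(X[1],Y') = \cd(X, Y'[-1]) = 0$. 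Hence $h = h'$, and the same uniqueness argument shows that the whole extension $(f,g,h)$ of $g$ is unique.

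This is essentially routine once one commits to the order ``first $f$, then $h$''; the only thing that requires a moment of thought is matching each of the three vanishing conditions to its r\^ole. The hypothesis $\cd(X,Y')=0$ supplies existence of $f$, the hypothesis $\cd(X,Y'[-1])=0$ supplies uniqueness of both $f$ and $h$, and the hypothesis $\cd(X',Y)=0$ (dual in spirit to the first) gives the symmetric statement that $h$ alone also determines the extension, which one may want in other applications of the lemma. I don't anticipate a real obstacle: the construction of $f$ via TR3 applied to $X \to M \to Y \to X[1]$, combined with the two elementary uniqueness arguments above, completes the proof.
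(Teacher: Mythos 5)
Your proof is correct and follows essentially the same route as the paper: both arguments reduce existence and uniqueness of $f$ and $h$ to the long exact sequences obtained by applying $\cd(X,?)$ to the second triangle and $\cd(?,Y')$ to the first, with $\cd(X,Y')=0$ supplying existence and $\cd(X,Y'[-1])=0$ supplying uniqueness (and, as you observe, $\cd(X',Y)=0$ is not actually needed for the argument). Your detour through TR3 is only a minor variant, with the small advantage of making the commutativity of the third square explicit, a point the paper's proof leaves implicit.
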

\begin{proof}
By applying the functor $\cd(X,?)$ to the second triangle we get an isomorphism
\[\cd(X,X')\arr{\sim}\cd(X,M'),
\]
and so there exists a unique $f\in\cd(X,X')$ such that $gu=u'f$. Similarly, by applying the functor $\cd(?,Y')$ to the first triangle we get an isomorphism
\[\cd(Y,Y')\arr{\sim}\cd(M,Y'),
\]
and so there exists a unique $h\in\cd(Y,Y')$ such that $hv=v'f$.
\end{proof}

\begin{proposition}\label{TTF triples are recollements}
If $(\cx,\cy,\cz)$ is a triangulated TTF triple on $\cd$, then there exists a d\'{e}collement of $\cd$ as follows:
\[\xymatrix{ \cy\ar[r]^{y} & \cd\ar@/_-1pc/[l]^{\tau_{\cy}}\ar@/_1.5pc/[l]_{\tau^\cy}\ar[r]^{\tau_{\cx}} & \cx,\ar@/_-1pc/[l]^{x}\ar@/_1.5pc/[l]_{z\tau^{\cz}x}
}
\]
\end{proposition}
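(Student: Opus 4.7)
The plan is to verify the four axioms R1)--R4) of a décollement for the six functors of the diagram, taking $i_*=i_!=y$, $i^*=\tau^{\cy}$, $i^!=\tau_{\cy}$, $j^*=j^!=\tau_{\cx}$, $j_!=x$ and $j_*=z\tau^{\cz}x$. For R1), three of the four adjunctions, $\tau^{\cy}\dashv y\dashv\tau_{\cy}$ and $x\dashv\tau_{\cx}$, are immediate from the definitions of the torsion pairs $(\cx,\cy)$ and $(\cy,\cz)$. The non-trivial one is $\tau_{\cx}\dashv z\tau^{\cz}x$. First I would apply the triangle functor $\tau^{\cz}$ (a left adjoint between triangulated categories, hence a triangle functor) to the canonical triangle
\[x\tau_{\cx}M\ra M\ra y\tau^{\cy}M\ra(x\tau_{\cx}M)[1]\]
of the t-structure $(\cx,\cy)$. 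Since $\tau^{\cz}y=0$ (for $Y\in\cy$ and $Z\in\cz$ one has $(\cz)(\tau^{\cz}yY,Z)\cong(\cd)(yY,zZ)=0$), this yields a natural isomorphism $\tau^{\cz}x\tau_{\cx}M\arr{\sim}\tau^{\cz}M$. Then chain natural bijections
\[(\cd)(M,z\tau^{\cz}xA)\cong(\cz)(\tau^{\cz}M,\tau^{\cz}xA)\cong(\cz)(\tau^{\cz}x\tau_{\cx}M,\tau^{\cz}xA)\cong(\cx)(\tau_{\cx}M,A),\]
where the first step uses $\tau^{\cz}\dashv z$ and the last uses that $\tau^{\cz}x:\cx\arr{\sim}\cz$ is a triangle equivalence (Lemma \ref{x igual a z}).

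Axioms R2) and R3) are then quick. For R2), $i^!j_*=\tau_{\cy}z\tau^{\cz}x$ vanishes because $z\tau^{\cz}xA\in\cz$ and for any $Z\in\cz$ one has $(\cy)(Y,\tau_{\cy}zZ)\cong(\cd)(yY,zZ)=0$, so $\tau_{\cy}zZ=0$. For R3), $y$ and $x$ are inclusions of full subcategories, hence fully faithful, while $z\tau^{\cz}x$ is the composition of the equivalence $\tau^{\cz}x:\cx\arr{\sim}\cz$ with the full embedding $z$, hence fully faithful.

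For R4), the second required triangle $j_!j^!M\ra M\ra i_*i^*M\ra(j_!j^!M)[1]$ is literally the canonical triangle $x\tau_{\cx}M\ra M\ra y\tau^{\cy}M\ra(x\tau_{\cx}M)[1]$ of $(\cx,\cy)$, whose two maps are, respectively, the counit of $x\dashv\tau_{\cx}$ and the unit of $\tau^{\cy}\dashv y$, as required. For the first triangle I would start from the canonical triangle $y\tau_{\cy}M\ra M\ra z\tau^{\cz}M\ra(y\tau_{\cy}M)[1]$ of $(\cy,\cz)$ and rewrite it via the isomorphism $z\tau^{\cz}M\cong z\tau^{\cz}x\tau_{\cx}M$ established in the first paragraph. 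The main obstacle, in my view, will be checking that after this identification the second map $M\ra z\tau^{\cz}x\tau_{\cx}M$ really coincides with the unit of the adjunction $\tau_{\cx}\dashv z\tau^{\cz}x$ constructed above (and similarly that the counit of $y\dashv\tau_{\cy}$ matches $i_!i^!M\ra M$). This should follow by unwinding the chain of natural bijections and invoking the uniqueness part of Lemma \ref{truncacion funtorial} to compare the two natural transformations $\id\ra j_*j^*$ on $M$.
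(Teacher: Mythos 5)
Your argument is correct and follows the same skeleton as the paper's proof --- checking R1)--R4) for exactly the same assignment of the six functors --- but the two nontrivial steps are handled by a somewhat different mechanism. The paper proves the adjunction $\tau_{\cx}\dashv z\tau^{\cz}x$ purely by Hom-computations: applying $\cd(x\tau_{\cx}M,?)$ to the $(\cy,\cz)$-triangle of $xX$ and $\cd(?,z\tau^{\cz}xX)$ to the $(\cx,\cy)$-triangle of $M$, it identifies both Hom-spaces with $\cd(x\tau_{\cx}M,z\tau^{\cz}xX)$; and it obtains the first triangle of R4) by extending $\eta^{\cz}_{x\tau_{\cx}M}$ along $\delta_{\cx,M}$ and identifying the cone of the resulting map with $y\tau_{\cy}M[1]$ via axiom RT4) and Lemma \ref{truncacion funtorial}. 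You instead first extract the natural isomorphism $\tau^{\cz}x\tau_{\cx}\arr{\sim}\tau^{\cz}$ (using that $\tau^{\cz}$, being left adjoint to the triangle inclusion $z$, is a triangle functor --- a fact the paper itself invokes elsewhere, in the proof of Proposition \ref{characterization of smashing}), and then both transport the adjunction $\tau^{\cz}\dashv z$ through the equivalence of Lemma \ref{x igual a z} and rewrite the $(\cy,\cz)$-torsion triangle of $M$ along this isomorphism; this gives a cleaner route to R4). It also dissolves the point you flag as the main obstacle: since your adjunction is defined by exactly the chain of bijections used to rewrite the triangle, the unit at $M$ is by construction $\eta^{\cz}_{M}$ followed by $z$ applied to the inverse of your isomorphism, which is precisely the second map of the rewritten triangle, while the first map is literally the counit $\delta_{\cy,M}$ coming with the torsion triangle; so no appeal to Lemma \ref{truncacion funtorial} is needed there (the paper, for its part, does not verify this compatibility with the adjunction morphisms explicitly either).
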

\begin{proof}
R1) The only point to check is that $(\tau_{\cx},z\tau^{\cz}x)$ is an adjoint pair. Let us do it: given $M\in\cd$ and $X\in\cx$, by applying $\cd(x\tau_{\cx}(M),?)$ to the triangle 
\[y\tau_{\cy}x(X)\ra xX\ra z\tau^{\cz}x(X)\ra y\tau_{\cy}x(X)[1]
\]
we get an isomorphism
\[\cx(\tau_{\cx}(M),X)\arr{\sim}\cd(x\tau_{\cx}(M),z\tau^{\cz}x(X)).
\]
Similarly, by applying $\cd(?,z\tau^{\cz}x(X))$ to the triangle
\[x\tau_{\cx}(M)\ra M\ra y\tau^{\cy}(M)\ra x\tau_{\cx}(M)[1]
\]
we get an isomorphism
\[\cd(M,z\tau^{\cz}x(X))\arr{\sim}\cd(x\tau_{\cx}(M),z\tau^{\cz}x(X)).
\]

R2) $\tau^{\cy}z\tau^{\cz}x=0$ because $\tau^{\cy}z=0$.

R3) Clearly $y$ and $x$ are full embeddings. Also
\[\xymatrix{ \cx\ar@{^(->}[r]^{x}&\cd\ar[r]^{\tau^{\cz}} & \cz\ar@{^(->}[r]^{z} & \cd
}
\]
is a full embedding since $\tau^{\cz}x:\cx\arr{\sim}\cz$ is a triangle equivalence (\cf Lemma \ref{x igual a z}).

R4) By applying $\cd(?,z\tau^{\cz}x\tau_{\cx}(M))$ to the triangle
\[x\tau_{\cx}M\arr{\delta_{\cx,M}}M\arr{\eta^{\cy}_{M}}y\tau^{\cy}M\ra x\tau_{\cx}(M)[1]
\]
we get an isomorphism
\[\cd(x\tau_{\cx}(M),z\tau^{\cz}x\tau_{\cx}(M))\arr{\sim}\cd(M,z\tau^{\cz}x\tau_{\cx}(M)).
\]
Hence, there exists a unique morphism $\eta$ such that the square
\[\xymatrix{x\tau_{\cx}(M)\ar[d]^{\id}\ar[r]^{\delta_{\cx,M}} & M\ar@{.>}[d]^{\eta} \\
x\tau_{\cx}(M)\ar[r]_{\eta^{\cz}_{x\tau_{\cx}(M)}} & z\tau^{\cz}x\tau_{\cx}(M)
}
\]
commutes. Notice that the mapping cones of $\delta_{\cx,M}$ and $\eta^{\cz}_{x\tau_{\cx}(M)}$ are in $\cy$ and thus, by the axiom RT4 of Definition \ref{definition suspended category}, so is the mapping cone $\cone(\eta)$ of $\eta$. But then, the uniqueness of the torsion triangle associated to the t-structure $(\cx,\cy)$ (\cf Lemma \ref{truncacion funtorial}) implies that $\cone(\eta)\cong y\tau_{\cy}(M)[1]$. Therefore, there exists a triangle
\[y\tau_{\cy}(M)\ra M\ra z\tau^{\cz}x\tau_{\cx}(M)\ra y\tau_{\cy}(M)[1].
\]
\end{proof}

\begin{proposition}\label{recollements are TTF triples}
If
\[\xymatrix{ \cd_{F}\ar[r]^{i_{*}} & \cd\ar@/_-1pc/[l]^{i^!}\ar@/_1.5pc/[l]_{i^*}\ar[r]^{j^*} & \cd_{U}\ar@/_-1pc/[l]^{j_{!}}\ar@/_1.5pc/[l]_{j_{*}}
}
\]
is a d\'{e}collement of $\cd$, then 
\[(j_{!}(\cd_{U}),i_{*}(\cd_{F}),j_{*}(\cd_{U}))
\] 
is a triangulated TTF triple on $\cd$, where by $j_{!}(\cd_{U})$ we mean the essential image of $j_{!}$, and analogously with the other functors. 
\end{proposition}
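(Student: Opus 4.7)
The plan is to verify, directly from the axioms R1--R4, that both $(j_{!}(\cd_{U}), i_{*}(\cd_{F}))$ and $(i_{*}(\cd_{F}), j_{*}(\cd_{U}))$ are t-structures on $\cd$. First I would set $\cx := j_{!}(\cd_{U})$, $\cy := i_{*}(\cd_{F})$, and $\cz := j_{*}(\cd_{U})$, each understood as strictly full (i.e.\ closed under isomorphisms) subcategory of $\cd$. Since $j_{!}, i_{*}, j_{*}$ are triangle functors which are fully faithful by R3, each of $\cx, \cy, \cz$ is a strictly full triangulated subcategory of $\cd$; in particular each is closed under shifts in both directions, so the ``closed under positive/negative shifts'' requirement of a t-structure is automatic.

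Next I would check the Hom vanishing for $(\cx, \cy)$: for any $U \in \cd_{U}$ and $F \in \cd_{F}$, the adjunction in R1 together with the vanishing $i^{*}j_{!} = 0$ recorded in R2 gives
\[\cd(j_{!}U, i_{*}F) \cong \cd_{F}(i^{*}j_{!}U, F) = \cd_{F}(0, F) = 0.\]
The torsion triangles for $(\cx, \cy)$ are then supplied verbatim by the second triangle of R4: for each $M \in \cd$,
\[j_{!}j^{!}M \ra M \ra i_{*}i^{*}M \ra (j_{!}j^{!}M)[1]\]
exhibits $M$ as an extension with left term in $\cx$ and right term in $\cy$. This settles that $(\cx,\cy)$ is a t-structure.

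The verification for $(\cy, \cz)$ is entirely symmetric. Using R1 and the vanishing $j^{*}i_{*} = 0$ (another of the parenthetical consequences in R2) one obtains
\[\cd(i_{*}F, j_{*}U) \cong \cd_{U}(j^{*}i_{*}F, U) = 0,\]
and the first triangle of R4,
\[i_{!}i^{!}M \ra M \ra j_{*}j^{*}M \ra (i_{!}i^{!}M)[1],\]
supplies the torsion triangle for $(\cy, \cz)$. Combining the two t-structures yields the triangulated TTF triple $(\cx,\cy,\cz)$ on $\cd$.

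The only genuine point requiring care is organizational bookkeeping: one must correctly read off the vanishings $i^{*}j_{!} = 0$ and $j^{*}i_{*} = 0$ from R2 (they are the parenthetical consequences of $i^{!}j_{*} = 0$, obtained by passing to adjoints via R1) and invoke the appropriate adjunction when converting the relevant Hom sets. No heavier input is needed; the statement is essentially a direct transcription of the axioms of a d\'ecollement in the language of t-structures, mirroring Proposition~\ref{TTF triples are recollements} in the opposite direction.
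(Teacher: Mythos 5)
Your proof is correct and follows essentially the same route as the paper: identify the three essential images as strictly full triangulated subcategories, obtain the Hom-vanishing from R2 via the adjunctions of R1 (using $i^{*}j_{!}=0$ and $j^{*}i_{*}=0$), and read off the torsion triangles directly from R4. The paper merely verifies the first t-structure and says the second is ``similar'', whereas you spell out both halves; otherwise the arguments coincide.
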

\begin{proof}
Let us check that  $(j_{!}(\cd_{U}),i_{*}(\cd_{F}))$ is a t-structure. Clearly both $j_{!}(\cd_{U})$ and $i_{!}(\cd_{F})$ are strictly full triangulated categories of $\cd$. Also, we already know that for every $M\in\cd$ there exists a triangle
\[j_{!}j^!M\ra M\ra i_{*}i^*M\ra (j_{!}j^!M)[1],
\]
where $j_{!}j^!M\in j_{!}(\cd_{U})$ and $i_{*}i^*M\in i_{*}(\cd_{F})$. Finally, by using the adjunction $(i^*,i_{*})$ we can do 
\[\cd(j_{!}(\cd_{U}),i_{*}(\cd_{F}))\cong\cd(i^*j_{!}(\cd_{U}),\cd_{F})=\cd(0,\cd_{F})=0.
\]
To prove that $(i_{*}(\cd_{F}),j_{*}(\cd_{U}))$ is a t-structure we procede similarly.
\end{proof}

\section{Generators and infinite d\'{e}vissage}\label{Generators and infinite devissage}
\addcontentsline{lot}{section}{4.3. Generadores y d\'evissage infinito}

\begin{definition}\label{generate triangulated category}
We say that a triangulated category $\cd$ is \emph{generated}\index{category!triangulated!generated by a class of objects} by a class $\cq$ of objects if an object $M$ of $\cd$ is zero whenever
\[\cd(Q[n],M)=0
\]
for every object $Q$ of $\cq$ and every integer $n\in\Z$. In this case, we say that $\cq$ is a \emph{class of generators}\index{category!triangulated!generator of}\index{generator!of a triangulated category} of $\cd$. 
\end{definition}

\begin{definition}\label{infinite devissage}
Recall that given a class $\cq$ of objects of a triangulated category $\cd$, we denote by $\Tria(\cq)$ the smallest full triangulated subcategory of $\cd$ containing $\cq$ and closed under small coproducts. We say that a triangulated category $\cd$ satisfies the \emph{principle of infinite d\'{e}vissage}\index{category!triangulated!principle of infinite d\'{e}vissage} with respect to a class of objects $\cq$ if $\cd=\Tria(\cq)$. 
\end{definition}

\begin{lemma}\label{right orthogonal and devissage}
Let $\cq$ be a class of objects of a triangulated category $\cd$. Then $\Tria(\cq)^{\bot}$ is precisely the class of objects $M$ of $\cd$ which are right orthogonal to all the shifts of objects of $\cq$.
\end{lemma}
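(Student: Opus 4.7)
The plan is a standard d\'{e}vissage argument: I would set up an auxiliary subcategory built from the orthogonality condition, verify that it is stable under the three operations that generate $\Tria(\cq)$, and then conclude by minimality.

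First I would dispose of the easy inclusion. If $M\in\Tria(\cq)^{\bot}$, then for every $Q\in\cq$ and every $n\in\Z$ the object $Q[n]$ lies in $\Tria(\cq)$ (since $\Tria(\cq)$ is triangulated and thus closed under shifts in both directions), so $\cd(Q[n],M)=0$. No obstacle here.

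For the reverse inclusion, assume $M$ satisfies $\cd(Q[n],M)=0$ for all $Q\in\cq$ and $n\in\Z$. I would introduce
\[
\cs:=\{N\in\cd\mid \cd(N[n],M)=0\text{ for all }n\in\Z\}
\]
and check that $\cs$ is a strictly full triangulated subcategory of $\cd$ closed under small coproducts which contains $\cq$. Containment of $\cq$ is the hypothesis. Stability under the shift $?[1]$ (and $?[-1]$) is immediate from the definition, since the condition is symmetric in $n$. Stability under extensions follows by applying the cohomological functor $\cd(?,M)$ to a triangle $N'\ra N\ra N''\ra N'[1]$ with $N',N''\in\cs$: the resulting long exact sequence forces $\cd(N[n],M)=0$ for every $n$. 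Stability under small coproducts follows from the canonical isomorphism $\cd(\coprod_i N_i[n],M)\cong\prod_i \cd(N_i[n],M)$, which vanishes when each $N_i\in\cs$.

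By the minimality in the definition of $\Tria(\cq)$, we conclude $\Tria(\cq)\subseteq\cs$. In particular, taking $n=0$, every $N\in\Tria(\cq)$ satisfies $\cd(N,M)=0$, i.e.\ $M\in\Tria(\cq)^{\bot}$, completing the proof. There is no real obstacle in this argument; the only point to be a little careful about is that $\cs$ is defined by orthogonality to \emph{all} shifts of $M$ (equivalently, one could define it by orthogonality to $M$ alone, but then shift-stability would need to be built in separately) so that membership in $\cs$ passes to arbitrary shifts and not merely to $?[1]$ as demanded by the axioms of a triangulated subcategory.
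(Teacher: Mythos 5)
Your argument is correct and is essentially the paper's proof: both define the subcategory of objects $N$ with $\cd(N[n],M)=0$ for all $n\in\Z$, verify it is a full triangulated subcategory closed under small coproducts containing $\cq$, and conclude by the minimality of $\Tria(\cq)$. The extra care you take about shift-stability and the easy inclusion is fine but does not change the route.
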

\begin{proof}
Let $M$ be an object of $\cd$ such that $\cd(Q[n],M)=0$ for all $Q\in\cq$ and $n\in\Z$. Then, it turns out that the full subcategory $\cc$ of $\Tria(\cq)$ formed by all the objects $N$ such that $\cd(N[n],M)=0$, for every integer $n$, is a full triangulated subcategory of $\cd$ containing $\cq$ and closed under small coproducts. This implies that $\cc=\Tria(\cq)$.
\end{proof}

\begin{definition}\label{exhaustively generated}
We say that a triangulated category $\cd$ is \emph{exhaustively generated}\index{category!triangulated!exhaustively generated} by a set $\cq$ of objects of $\cd$ if the following conditions hold:
\begin{enumerate}[1)]
\item Small coproducts of objects of $\bigcup_{n\geq 0}\Sum(\cq)^{*n}$ exist in $\cd$.
\item For each object $M\in\cd$ there exists an integer $i\in\Z$ and a triangle
\[\coprod_{n\geq 0}Q_{n}\ra\coprod_{n\geq 0}Q_{n}\ra M[i]\ra \coprod_{n\geq 0}Q_{n}[1]
\]
in $\cd$ where for each $Q_{n}$ there exists some $m\geq 0$ such that $Q_{n}\in\Sum(\cq)^{*m}$. 
\end{enumerate}
If in the above setting $\cq=\cp^+$ for some set $\cp$, then we also say that $\cd$ is \emph{exhaustively generated to the left}\index{category!triangulated!exhaustively generated to the left} by $\cp$.
\end{definition}

\begin{lemma}\label{e implies d implies g}
Let $\cq$ be set of objects of a triangulated category $\cd$.
\begin{enumerate}[1)] 
\item If $\cd$ is exhaustively generated by $\cq$, then any full triangulated subcategory of $\cd$ containing the small coproducts of extensions of objects of $\Sum(\cq)$ equals $\cd$. In particular, it satisfies the principle of infinite d\'{e}vissage with respect to $\cq$. 
\item If $\cd$ satisfies the principle of infinite d\'{e}vissage with respect to $\cq$, then it is generated by $\cq$.
\end{enumerate}
\end{lemma}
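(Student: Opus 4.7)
The plan is to unwind both parts directly from the definitions, using the triangle supplied by exhaustive generation in part 1) and the earlier Lemma \ref{right orthogonal and devissage} in part 2).

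For part 1), let $\cc$ be a full triangulated subcategory of $\cd$ that contains all small coproducts of the form $\coprod_{n\geq 0}Q_{n}$ with each $Q_{n}\in\Sum(\cq)^{*m_{n}}$ for some $m_{n}\geq 0$. I would pick an arbitrary object $M\in\cd$ and apply the definition of exhaustive generation to produce an integer $i\in\Z$ and a triangle
\[
\coprod_{n\geq 0}Q_{n}\ra\coprod_{n\geq 0}Q_{n}\ra M[i]\ra \coprod_{n\geq 0}Q_{n}[1]
\]
with each $Q_{n}$ an $m_{n}$-fold extension of objects of $\Sum(\cq)$. Both outer terms of this triangle lie in $\cc$ by hypothesis, and since $\cc$ is full triangulated, completing the triangle forces $M[i]\in\cc$; closure under shifts then yields $M\in\cc$. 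Hence $\cc=\cd$. For the ``in particular'' clause, I need only observe that $\Tria(\cq)$ itself contains all such coproducts: it contains $\cq$, is closed under small coproducts (so it contains $\Sum(\cq)$), closed under extensions (so it contains each $\Sum(\cq)^{*m}$), and closed again under small coproducts (so it contains $\coprod_{n\geq 0}Q_{n}$). Applying the previous step with $\cc=\Tria(\cq)$ gives $\Tria(\cq)=\cd$.

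For part 2), suppose $\cd=\Tria(\cq)$ and let $M\in\cd$ satisfy $\cd(Q[n],M)=0$ for every $Q\in\cq$ and every $n\in\Z$. By Lemma \ref{right orthogonal and devissage} we have $\Tria(\cq)^{\bot}=\{N\in\cd\mid \cd(Q[n],N)=0\text{ for all }Q\in\cq,\,n\in\Z\}$, so $M\in\Tria(\cq)^{\bot}=\cd^{\bot}$. In particular $M$ is right orthogonal to itself, so $\id_{M}=0$ and $M=0$.

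I expect no serious obstacle: the only point that requires a moment of care is the verification in part 1) that the specific shape of ``extension of coproducts of objects of $\cq$'' appearing in the definition of exhaustive generation is built out of operations (small coproducts, extensions, shifts) all of which are available in any full triangulated subcategory closed under small coproducts. Once this is checked, the rest reduces to invoking Lemma \ref{right orthogonal and devissage}.
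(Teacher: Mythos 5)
Your proof is correct and follows essentially the same route as the paper: part 1) uses the triangle from exhaustive generation together with shift- and extension-closure of the subcategory, and part 2) reduces to $\cd(M,M)=0$. The only cosmetic difference is that for part 2) you invoke Lemma \ref{right orthogonal and devissage}, whereas the paper repeats that same one-line d\'evissage argument inline; the content is identical.
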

\begin{proof}
1) Let $\cu$ be a full triangulated subcategory of $\cd$ containing all the small coproducts of objects in $\bigcup_{n\geq 0}\Sum(\cq)^{*n}$. Let $M$ be an object of $\cd$. There exists an integer $i\in\Z$ and a triangle 
\[M'\ra M''\ra M[i]\ra M'[1]
\]
with $M'$ and $M''$ being small coproducts of objects of $\bigcup_{n\geq 0}\Sum(\cq)^{*n}$. Therefore, $M'$ and $M''$ belong to $\cu$, and then so does $M$.

2) Let $M$ be an object of $\cd$ such that $\cd(Q[i],M)=0$ for every $Q\in\cq\ko i\in\Z$. Consider the full subcategory $\cu$ of $\cd$ formed by all the objects $N$ such that $\cd(N[i],M)=0$ for every $i\in\Z$. It is a full triangulated subcategory of $\cd$ containing $\cq$ and closed under small coproducts. Then, $\cu=\cd$. In particular, $\cd(M,M)=0$, \ie $M=0$.
\end{proof}

Conversely, the first part of the following lemma states that under certain hypothesis `generators' implies `infinite d\'{e}vissage'.

\begin{lemma}\label{generators and t-structures}
\begin{enumerate}[1)]
\item Let $\cd$ be a triangulated category and let $\cd'$ be a full triangulated subcategory generated by a class $\cq$ of objects.
If $\Tria(\cq)$ is an aisle in $\cd$ contained in $\cd'$, then $\cd'=\Tria(\cq)$.
\item Let $\cd$ be a triangulated category and let $(\cx,\cy)$ be a t-structure on $\cd$ with triangulated aisle. 
\begin{enumerate}[2.1)]
\item If $\cq$ is a class of generators of $\cd$, then $\tau^{\cy}(\cq)$ is a class of generators of $\cy$.
\item A class $\cq$ of objects of $\cx$ generates $\cx$ if and only if the objects of $\cy$ are precisely those which are right orthogonal to all the shifts of objects of $\cq$.
\end{enumerate}
\end{enumerate}
\end{lemma}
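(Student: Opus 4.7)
The plan is to handle the three sub-statements separately; in each case the argument reduces to an orthogonality computation that exploits the triangulatedness of the aisle (so that everything is stable under shifts in both directions) together with Lemma~\ref{right orthogonal and devissage}.

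For 1), the idea is to exploit the t-structure $(\Tria(\cq),\Tria(\cq)^{\bot}[1])$ provided by the hypothesis that $\Tria(\cq)$ is an aisle in $\cd$. Given any $M\in\cd'$, I truncate: choose a triangle $X\to M\to Y\to X[1]$ with $X\in\Tria(\cq)$ and $Y\in\Tria(\cq)^{\bot}$. Since $\Tria(\cq)\subseteq\cd'$ and $\cd'$ is a full triangulated subcategory, $Y$ also lies in $\cd'$. By Lemma~\ref{right orthogonal and devissage}, $Y$ is right orthogonal to every shift of every object of $\cq$; and since $\cq$ generates $\cd'$, this forces $Y=0$, hence $M\cong X\in\Tria(\cq)$.

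For 2.1), the crucial point is that, since the aisle $\cx$ is triangulated, the inclusions $x$ and $y$ are triangle functors, so their adjoints $\tau_{\cx}$ and $\tau^{\cy}$ commute with shifts. Given $N\in\cy$ with $\cy(\tau^{\cy}(Q)[n],N)=0$ for all $Q\in\cq$ and $n\in\Z$, the adjunction and the commutation with shifts give
\[
\cd(Q[n],yN)\;\cong\;\cy(\tau^{\cy}(Q[n]),N)\;\cong\;\cy(\tau^{\cy}(Q)[n],N)\;=\;0.
\]
Since $\cq$ generates $\cd$, this yields $yN=0$, whence $N=0$.

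For 2.2), both implications follow from the same orthogonality game. In $(\Rightarrow)$, the inclusion $\cy\subseteq\cq^{\bot}$ is immediate from $\cq\subseteq\cx$ together with the fact that $\cx$ is stable under shifts in both directions (so $Q[n]\in\cx$ for all $n$) and $\cd(\cx,\cy)=0$. For the converse inclusion, take $M\in\cq^{\bot}$ with truncation triangle $\tau_{\cx}M\to M\to\tau^{\cy}M\to$; the adjunction gives $\cx(Q[n],\tau_{\cx}M)\cong\cd(Q[n],M)=0$ for all $Q\in\cq,\;n\in\Z$, so by the generation hypothesis $\tau_{\cx}M=0$, whence $M\in\cy$. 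In $(\Leftarrow)$, if $X\in\cx$ is right orthogonal to all shifts of $\cq$ then the assumption $\cy=\cq^{\bot}$ puts $X\in\cx\cap\cy=0$. No substantial obstacle is expected; the only recurring subtlety is to invoke the triangulatedness of the aisle at each step so that orthogonality to shifts in one direction transports to orthogonality to shifts in both directions.
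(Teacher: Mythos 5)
Your proposal is correct and follows essentially the same route as the paper: truncate with the t-structure associated to $\Tria(\cq)$ (resp.\ $(\cx,\cy)$) and use the adjunctions, the stability of the triangulated aisle under shifts in both directions, and Lemma \ref{right orthogonal and devissage} to kill the relevant truncation. The paper's proof is just a terser version of the same orthogonality arguments.
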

\begin{proof}
1) Indeed, given an object $M$ of $\cd'$ there exists a triangle
\[M'\ra M\ra M''\ra M'[1]
\]
with $M'$ in $\Tria(\cq)$ and $M''$ in $\Tria(\cq)^{\bot}$. Since $M'$ and $M$ are in $\cd'$, then so is $M''$. But $\cd'$ is generated by $\cq$, which implies that $M''=0$ and so $M$ belongs to $\Tria(\cq)$.

2.1) If $Y\in\cy$ is such that $0=\cy(\tau^{\cy}(Q)[n],Y)\cong\cd(Q[n],Y)$ for each $Q\in\cq\ko n\in\Z$, then $Y=0$.

2.2) Assume the class $\cq$ generates $\cx$. If $M\in\cd$ is right orthogonal to all the shifts of objects of $\cq$, then $\tau_{\cx}M$ has the same property and so $\tau_{\cx}M=0$, \ie $M\in\cy$. Conversely, assume that an object $Y\in\cd$ is in $\cy$ if and only if $\cd(Q[n],Y)=0$ for each $Q\in\cq\ko n\in\Z$. In that case, if an object $X$ of $\cx$ is right orthogonal to all the shifts of objects of $\cq$, then $X\in\cx\cap\cy=0$.
\end{proof}

\section{(Super)perfectness, compactness and smashing}\label{(Super)perfectness, compactness and smashing}
\addcontentsline{lot}{section}{4.4. (Super)perfecci\'on, compacidad y aplastamiento}

\begin{lemma}\label{coproducts of triangles}
Let $\cd$ be a triangulated category and let $L_{i}\arr{f_{i}}M_{i}\arr{g_{i}}N_{i}\arr{h_{i}}L_{i}[1]\ko i\in I$, be a family of triangles of $\cd$. If the coproducts $\coprod_{I}L_{i}$ and $\coprod_{I}M_{i}$ exist in $\cd$, then so does the coproduct $\coprod_{I}N_{i}$. Moreover, the induced sequence
\[\coprod_{I}L_{i}\arr{\coprod_{I}f_{i}}\coprod_{I}M_{i}\arr{\coprod_{I}g_{i}}\coprod_{I}N_{i}\arr{\coprod_{I}h_{i}}\coprod_{I}L_{i}[1]
\]
is a triangle of $\cd$.
\end{lemma}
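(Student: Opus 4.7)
The plan is to build the candidate coproduct triangle by hand and then verify the universal property using the five lemma applied to the long exact sequences associated with triangles.

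First, I would invoke the universal property of the existing coproducts $\coprod_{I}L_{i}$ and $\coprod_{I}M_{i}$ to obtain the morphism
\[
\coprod_{I}f_{i}:\coprod_{I}L_{i}\longrightarrow\coprod_{I}M_{i},
\]
and then complete it (by axiom RT2 of Definition \ref{definition suspended category}, regarded in the triangulated setting) to a triangle
\[
\coprod_{I}L_{i}\arr{\coprod f_{i}}\coprod_{I}M_{i}\arr{g}N\arr{h}\coprod_{I}L_{i}[1].
\]
For each $i\in I$, the canonical morphisms $L_{i}\ra\coprod L_{j}$ and $M_{i}\ra\coprod M_{j}$ commute with $f_{i}$ and $\coprod f_{j}$, so axiom RT3 yields a (not necessarily unique) morphism $u_{i}:N_{i}\ra N$ turning these data into a morphism of triangles.

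Next, I would show that the family $(u_{i})_{i\in I}$ exhibits $N$ as the coproduct of the $N_{i}$ in $\cd$. For this, pick any object $P\in\cd$ and apply $\cd(?,P)$ both to each triangle $L_{i}\ra M_{i}\ra N_{i}\ra L_{i}[1]$ and to the newly built triangle. Taking the product over $i\in I$ of the first family of long exact sequences produces a long exact sequence (exactness is preserved by products of abelian groups), and the universal property of coproducts identifies
\[
\prod_{i\in I}\cd(L_{i}[n],P)\cong\cd\Bigl(\coprod_{I}L_{i}[n],P\Bigr),\quad
\prod_{i\in I}\cd(M_{i}[n],P)\cong\cd\Bigl(\coprod_{I}M_{i}[n],P\Bigr),
\]
for every $n\in\Z$. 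The morphisms $u_{i}$ assemble into a natural map from the long exact sequence for $N$ to the product long exact sequence for the $N_{i}$, which in four out of five consecutive positions is an isomorphism; the five lemma then forces $\cd(N,P)\arr{\sim}\prod_{i\in I}\cd(N_{i},P)$ naturally in $P$. By the Yoneda lemma this exactly says that $(N,\{u_{i}\}_{i\in I})$ is a coproduct of the $N_{i}$.

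Finally, having identified $N$ with $\coprod_{I}N_{i}$, I would check that under this identification the morphisms $g$ and $h$ coincide with $\coprod_{I}g_{i}$ and $\coprod_{I}h_{i}$ respectively. This is again a direct application of the universal property of the coproduct: by construction $g\circ(\text{canonical})_{M_{i}}=u_{i}\circ g_{i}$ and $h\circ u_{i}=(\text{canonical})_{L_{i}[1]}\circ h_{i}$, which characterises $g=\coprod g_{i}$ and $h=\coprod h_{i}$. I do not anticipate a real obstacle here; the only subtle point is that $u_{i}$ is produced by RT3 and is a priori not canonical, but the five-lemma argument shows that \emph{some} choice works, and one then uses the axioms to rewrite the triangle in the stated form. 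The main step, conceptually, is the five-lemma comparison in the second paragraph.
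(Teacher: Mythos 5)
Your proposal is correct and follows essentially the same route as the paper's own proof: complete $\coprod_{I}f_{i}$ to a triangle with third vertex $N$, use RT3) to obtain comparison morphisms $N_{i}\ra N$, and then apply $\cd(?,U)$ together with the five lemma (via products of the long exact sequences and the universal property of the coproducts) to identify $N$ with $\coprod_{I}N_{i}$; your final check that $g$ and $h$ become $\coprod_{I}g_{i}$ and $\coprod_{I}h_{i}$ is a harmless extra verification that the paper leaves implicit. Only a cosmetic point: the existence of a triangle starting with $\coprod_{I}f_{i}$ is given by axiom RT4), not RT2), of Definition \ref{definition suspended category}.
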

\begin{proof}
Axiom RT4) of triangulated categories guarantees the existence a triangle of $\cd$ starting with $\coprod_{I}f_{i}$:
\[\coprod_{I}L_{i}\arr{\coprod_{I}f_{i}}\coprod_{I}M_{i}\ra N\ra \coprod_{I}L_{i}[1].
\]
Notice that for each index $i\in I$ axiom RT3) implies the existence of a morphism $w_{i}$ making the following diagram commutative:
\[\xymatrix{\coprod_{I}L_{i}\ar[r]^{\coprod_{I}f_{i}} & \coprod_{I}M_{i}\ar[r] & N\ar[r] & \coprod_{I}L_{i}[1] \\
L_{i}\ar[r]^{f_{i}}\ar[u]_{\can} & M_{i}\ar[r]^{g_{i}}\ar[u]_{\can} & N_{i}\ar[r]^{h_{i}}\ar[u]_{w_{i}} & L_{i}\ar[u]_{\can[1]}
}
\]
Given an arbitrary object $U$ of $\cd$, we then get a morphism of long exact sequences
\[\xymatrix{\dots \cd(\coprod_{I}L_{i}[1],U)\ar[r]\ar[d]^{\wr} & \cd(N,U)\ar[r]\ar[d] & \cd(\coprod_{I}M_{i},U)\ar[r]\ar[d]^{\wr} & \cd(\coprod_{I}L_{i},U)\ar[d]^{\wr}  \\
\dots \prod_{I}\cd(L_{i}[1],U)\ar[r] & \prod_{I}\cd(N_{i},U)\ar[r] & \prod_{I}\cd(M_{i},U)\ar[r] & \prod_{I}\cd(L_{i},U)
}
\]
and the five lemma implies that $N=\coprod_{I}N_{i}$.
\end{proof}

\begin{definition}
Let $\cd$ be a triangulated category. A \emph{localization}\index{functor!localization} of $\cd$ is a triple $(L,\alpha,\eta)$ where $(L,\alpha)$ is a triangle endofunctor of $\cd$ and $\eta:\id\ra L$ is a natural transformation such that:
\begin{enumerate}[1)]
\item $L\eta:L\arr{\sim}L^2$ is an isomorphism,
\item $L\eta=\eta L$,
\item $\eta$ commutes with the shift functor, \ie for each $M\in\cd$ the following diagram is commutative:
\[\xymatrix{M[1]\ar[r]^{\eta_{M}[1]}\ar[dr]_{\eta_{M[1]}} & (LM)[1] \\
& L(M[1])\ar[u]_{\alpha_{M}}^{\wr}
}
\] 
 \end{enumerate}
The localization is said to be \emph{smashing}\index{functor!localization!smashing} if $L$ preserves small coproducts.
\end{definition}

The motivation for the term ``smashing'' here can be found, for instance, in K.~Br\"{u}ning's thesis \cite[Proposition 2.4.4]{Bruning2007}.

In the following proposition we consider some standard definitions of \emph{smashing subcategory} and study their interplay in case the ambient triangulated category does not have small coproducts.

\begin{proposition}\label{characterization of smashing}
Let $\cd$ be a triangulated category and let $\cx$ be a full triangulated subcategory of $\cd$. Consider the following assertions:
\begin{enumerate}[1)]
\item $\cx$ is the kernel of a triangle functor $\tau^{\cy}:\cd\ra\cy$ having a right adjoint $y:\cy\ra\cd$ which preserves small coproducts and such that the counit of the adjunction is an isomorphism $\tau^{\cy}y\arr{\sim}\id$ (or, equivalently, $y$ is fully faithful).
\item The inclusion functor $\cx^{\bot}\ra\cd$ preserves small coproducts, has a left adjoint and $\ ^{\bot}(\cx^{\bot})=\cx$.
\item $\cx$ is closed under direct summands and small coproducts, and the quotient functor $q:\cd\ra\cd/\cx$ admits a right adjoint which preserves small coproducts.
\item The inclusion functor $x:\cx\ra\cd$ has a right adjoint $\tau_{\cx}$ which preserves small coproducts.
\item $\cx$ is the kernel of a smashing localization functor $(L,\alpha,\eta)$. 
\item $\cx$ is an aisle in $\cd$ such that $\cx^{\bot}$ is closed under small coproducts.
\end{enumerate}
Then $1)\Leftrightarrow 2)\Leftrightarrow 3)\Rightarrow 4)\Leftrightarrow 5)\Rightarrow 6)$. If $\cd$ has small coproducts then the six assertions are equivalent.
\end{proposition}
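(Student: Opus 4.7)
The strategy is to move around the ring of implications by systematically translating between four viewpoints on the same datum: a fully faithful right adjoint $y:\cy\hookrightarrow\cd$ with reflection $\tau^{\cy}$, the reflective subcategory $\cx^{\bot}\subseteq\cd$, the Verdier quotient $q:\cd\ra\cd/\cx$, and the smashing localization $L=y\tau^{\cy}$ on $\cd$. In each translation, the crucial additional datum is the preservation of small coproducts, which transfers along each equivalence in a routine way once the underlying equivalence has been set up. The only nontrivial implication is $6)\Rightarrow 4)$, which uses the existence of all small coproducts in $\cd$ together with Lemma~\ref{coproducts of triangles}.

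\textbf{Step 1: $1)\Leftrightarrow 2)\Leftrightarrow 3)$.} Assume $1)$. Since $y$ is fully faithful with left adjoint $\tau^{\cy}$, its essential image is characterized by $M\in\im(y)$ if and only if the unit $M\ra y\tau^{\cy}M$ is an isomorphism. A standard argument via the adjunction triangle shows $\im(y)=(\ker\tau^{\cy})^{\bot}=\cx^{\bot}$, so $y$ identifies $\cy$ with the strictly full subcategory $\cx^{\bot}$; the preservation of small coproducts by $y$ becomes the preservation of small coproducts by the inclusion $\cx^{\bot}\hookrightarrow\cd$, and the existence of the reflection $\tau^{\cy}$ translates into existence of a left adjoint to this inclusion. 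The equality $^{\bot}(\cx^{\bot})=\cx$ is automatic from $\cx=\ker\tau^{\cy}$. For $2)\Rightarrow 3)$, I use the universal property of the Verdier quotient: the left adjoint $\cd\ra\cx^{\bot}$ annihilates $\cx$, so it factors as $\cd\arr{q}\cd/\cx\arr{\overline{\ell}}\cx^{\bot}$; the hypothesis $^{\bot}(\cx^{\bot})=\cx$ implies that $\overline{\ell}$ is an equivalence, under which the inclusion $\cx^{\bot}\hookrightarrow\cd$ becomes a right adjoint to $q$. The closure of $\cx$ under direct summands and small coproducts follows because $\cx=\ ^{\bot}(\cx^{\bot})$. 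The reverse implication $3)\Rightarrow 2)$ is analogous, identifying $\cd/\cx$ (via the right adjoint of $q$) with a reflective subcategory of $\cd$ which one checks equals $\cx^{\bot}$.

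\textbf{Step 2: $3)\Rightarrow 4)\Leftrightarrow 5)\Rightarrow 6)$.} Given $3)$, call $s:\cd/\cx\ra\cd$ the right adjoint of $q$ and $\eta$ the unit. For $M\in\cd$, complete the unit $M\ra sq(M)$ to a triangle $K_{M}\ra M\ra sq(M)\ra K_{M}[1]$; applying $q$ and using $qs\cong\id$ shows $q(K_{M})=0$, so $K_{M}\in\cx$, and a standard adjunction computation shows $K_{M}$ represents $\tau_{\cx}M$. Preservation of small coproducts by $\tau_{\cx}$ then follows from preservation by $s$ (and hence by $sq$, since $q$ trivially does so) and Lemma~\ref{coproducts of triangles} applied to the family of such triangles. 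Now $4)\Leftrightarrow 5)$: from $4)$, define $L:=\cone(x\tau_{\cx}\ra\id)$ together with $\eta_{M}:M\ra LM$; using Lemma~\ref{coproducts of triangles} and the fact that $\tau_{\cx}$ preserves coproducts, $L$ preserves coproducts; that $(L,\alpha,\eta)$ is a localization with $\ker L=\cx$ is a direct check via the adjunction triangle. Conversely, from a smashing localization $L$ with kernel $\cx$, the same adjunction triangle provides the right adjoint $\tau_{\cx}$ to $x$, and coproduct preservation of $\tau_{\cx}$ follows again from Lemma~\ref{coproducts of triangles} together with coproduct preservation of $L$ and $\id$. Finally $5)\Rightarrow 6)$ is immediate: $\cx=\ker L$ is a triangulated aisle whose coaisle is the essential image of $L$, which is closed under small coproducts since $L$ is smashing.

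\textbf{Step 3: $6)\Rightarrow 4)$ when $\cd$ has small coproducts.} This is the step I expect to be the main obstacle, although in fact it is straightforward. Since $\cx$ is an aisle, $\tau_{\cx}$ exists. Given a family $\{M_{i}\}_{i\in I}$ in $\cd$, choose torsion triangles $x\tau_{\cx}M_{i}\ra M_{i}\ra Y_{i}\ra (x\tau_{\cx}M_{i})[1]$ with $Y_{i}\in\cx^{\bot}$. By Lemma~\ref{coproducts of triangles}, the coproduct sequence
\[
\coprod_{i}x\tau_{\cx}M_{i}\ra\coprod_{i}M_{i}\ra\coprod_{i}Y_{i}\ra\Bigl(\coprod_{i}x\tau_{\cx}M_{i}\Bigr)[1]
\]
is again a triangle. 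Its first term lies in $\cx$ (closed under coproducts) and its third term lies in $\cx^{\bot}$ by the hypothesis of $6)$. By uniqueness of the truncation triangle associated with the t-structure $(\cx,\cx^{\bot}[1])$, this identifies $\tau_{\cx}(\coprod_{i}M_{i})\cong\coprod_{i}\tau_{\cx}M_{i}$ canonically, which is exactly the statement that $\tau_{\cx}$ preserves small coproducts. This closes the cycle and establishes the equivalence of all six conditions under the coproducts hypothesis.
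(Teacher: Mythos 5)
Your Steps 1 and 2 are essentially sound and run parallel to the paper's argument (the paper routes everything through assertion 1) as a hub, proving $1)\Leftrightarrow 2)$, $1)\Leftrightarrow 3)$, $1)\Rightarrow 4)$, $4)\Leftrightarrow 5)$ and $4)\Rightarrow 6)$, whereas you pass $2)\Leftrightarrow 3)$ directly through the universal property of the Verdier quotient and prove $3)\Rightarrow 4)$ and $5)\Rightarrow 6)$ instead; these reorganizations are harmless, modulo the standard facts you invoke, such as full faithfulness of the right adjoint of $q$ and the identification of $\im(L)$ with $\cx^{\bot}$). The problem is Step 3. Proving $6)\Rightarrow 4)$ does \emph{not} ``close the cycle'': together with what you have, it only makes $4)$, $5)$ and $6)$ equivalent among themselves. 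In your implication graph nothing ever returns from $\{4),5),6)\}$ to $\{1),2),3)\}$, so under the coproducts hypothesis you have not shown, e.g., that a coproduct-preserving $\tau_{\cx}$ forces the inclusion $\cx^{\bot}\hookrightarrow\cd$ to preserve small coproducts, which is what the full equivalence of all six assertions requires. Your final sentence claiming the equivalence of all six conditions is therefore unjustified as written.

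The repair is short, and it is exactly what the paper does: assuming $\cd$ has small coproducts, prove $6)\Rightarrow 2)$. Since $\cx$ is an aisle, $(\cx,\cx^{\bot})$ is a t-structure, so the inclusion of $\cx^{\bot}$ has a left adjoint and $^{\bot}(\cx^{\bot})=\cx$; the only thing left is coproduct preservation by the inclusion $\cx^{\bot}\hookrightarrow\cd$, and this is immediate because any family $Y_{i}$ in $\cx^{\bot}$ has a coproduct $\coprod_{I}Y_{i}$ in $\cd$, which lies in $\cx^{\bot}$ by the hypothesis of $6)$ and hence is also the coproduct there. (Alternatively, deduce $2)$ from $4)$ plus all coproducts by the same observation, using the paper's $4)\Rightarrow 6)$ computation $\tau_{\cx}\coprod_{I}Y_{i}\cong\coprod_{I}\tau_{\cx}Y_{i}=0$.) With such an implication added, your argument does establish the proposition; note also that your $6)\Rightarrow 4)$ then becomes redundant, since $6)\Rightarrow 2)\Rightarrow 3)\Rightarrow 4)$.
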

\begin{proof}
$1)\Rightarrow 2)$ Since $y$ is fully faithful, we can identify $\cy$ with a full triangulated subcategory of $\cd$, which turns out to be a coaisle in $\cd$ with torsion functor given by $\tau^{\cy}$ and associated aisle given by $\cx$. In particular, $\cx^{\bot}=\cy$ and $^{\bot}\cy=\cx$.

$2)\Rightarrow 1)$ Put $\cy:=\cx^{\bot}$. Since $\cy$ is a triangulated subcategory of $\cd$, the inclusion functor $\cy\ra\cd$ is a triangle functor and then so is its left adjoint, say $\tau^{\cy}$ (\cf \cite[Proposition 1.6]{KellerVossieck1987}). It remains to prove that $\cx$ is precisely the kernel of $\tau^{\cy}$. If $X$ is an object of $\cx$, then $\cy(\tau^{\cy}X,\tau^{\cy}X)\cong\cd(X,y\tau^{\cy}X)=0$, and so $\tau^{\cy}X=0$. Conversely, if $M$ is an object of $\cd$ such that $\tau^{\cy}M=0$, then for each $Y\in\cx^{\bot}$ we have $0=\cy(\tau^{\cy}M,Y)\cong\cd(M,yY)$, that is to say, $M$ belongs to $\ ^{\bot}(\cx^{\bot})=\cx$.

$1)\Rightarrow 3)$ The functor $\tau^{\cy}$ preserves small coproducts, since it has a right adjoint. Therefore, its kernel $\cx$ is closed under direct summands and small coproducts. Since $y$ is fully faithful, we can identify $\cy$ with a coaisle in $\cd$ whose associated aisle is $\cx$. The quotient functor $q$ identifies then with $\tau^{\cy}$.

$3)\Rightarrow 1)$ Take $\cy:=\cd/\cx$. The existence of a right adjoint $y$ for $q$ ensures that $\cy$ is a `true' category, \ie it has small morphisms spaces. Take $\tau^{\cy}:=q$. Since $\cx$ is closed under direct summands, then $\cx$ is precisely the kernel of $\tau^{\cy}$ (\cf \cite[Lemma 2.1.33]{Neeman2001}). Finally, \cite[Lemma 9.1.7]{Neeman2001}, together with Lemma \ref{properties of adjunctions}, proves that the counit $\tau^{\cy}y\ra\id$ is an isomorphism. Alternatively, one can prove that $y$ is fully faithful as follows. Let $\cs'$ be the class formed by those morphisms $f$ of $\cd$ such that $q(f)$ is an isomorphism. Then $q$ factors through the localization (in the sense of P. Gabriel and M. Zisman \cite{GabrielZisman}) $\cd\ra\cd[\cs'^{-1}]$ via a certain functor $q'$:
\[\xymatrix{\cd\ar[d]_{q}\ar[r] & \cd[\cs'^{-1}]\ar[dl]^{q'} \\
\cd/\cx &
}
\]
Thanks to \cite[Proposition I.1.3]{GabrielZisman}, we know that $y$ is fully faithful if and only if $q'$ is an equivalence. For this, we will prove that $q$ is a localization of $\cd$ with respect to $\cs'$. Indeed, by definition $q$ is the localization of $\cd$ with respect to the class $\cs$ formed by those morphisms $f$ of $\cd$ whose mapping cone $\cone(f)$ belongs to $\cx$. Of course, we have that $\cs$ is contained in $\cs'$. Conversely, let $f$ be a morphism such that $q(f)$ is an isomorphism, \ie $\cone(q(f))=0$ or, equivalently, $q(\cone(f))=0$. Now, \cite[Lemma 2.1.33]{Neeman2001} says that $\cone(f)$ is a direct summand of an object of $\cx$, and so it belongs to $\cx$.

$1)\Rightarrow 4)$ Given an arbitrary object $M$ of $\cd$ we consider the triangle
\[M'\ra M\arr{\eta_{M}}y\tau^{\cy}M\ra M'[1],
\]
where $\eta_{M}$ is the unit of the adjunction $(\tau^{\cy},y)$. Since $\tau^{\cy}(\eta_{M})$ is an isomorphism, we get that $M'$ belongs to $\cx$. By using the adjunction $(\tau^{\cy},y)$ we prove that $\cx$ is contained in $^{\bot}\cy$. Then, Lemma \ref{truncacion funtorial} implies that the construction $M\mapsto M'$ underlies a functor, denoted by $\tau_{\cx}:\cd\ra\cx$, which is easily seen to be right adjoint to the inclusion. Consider now a family $M_{i}\ko i\in I$, of objects of $\cd$ such that its coproduct $\coprod_{I}M_{i}$ exists. Since $\tau^{\cy}$ and $y$ preserve small coproducts, then the coproduct $\coprod_{I}y\tau^{\cy}M_{i}$ also exists. Finally, Lemma \ref{coproducts of triangles} implies the existence of the coproduct $\coprod_{I}\tau_{\cx}M_{i}$ and the canonical isomorphism $\coprod_{I}\tau_{\cx}M_{i}\arr{\sim}\tau_{\cx}\coprod_{I}M_{i}$.

$4)\Rightarrow 5)$ Consider the t-structure $(\cx,\cy)$ on $\cd$ associated to the aisle $\cx$, and take $L$ to be the composition
\[ L:\cd\arr{\tau^{\cy}}\cy\arr{y}\cd.
\]
It is clear that $\cx$ is the kernel of $L$. It remains to prove that $L$ is a smashing localization functor. Take $\eta:\id\ra L$ to be the unit of the adjunction $(\tau^{\cy},y)$. From the basic properties of t-structures one deduces that $L$ is a localization functor. Moreover, it preserves small coproducts. Indeed, let $M_{i}\ko i\in I$, be a family of objects of $\cd$ such that its coproduct $\coprod_{I}M_{i}$ exists. Since $\tau_{\cx}$ preserves small coproducts, we have that $\coprod_{I}x\tau_{\cx}M_{i}$ exists and the canonical morphism $\coprod_{I}x\tau_{\cx}M_{i}\ra x\tau_{\cx}\coprod_{I}M_{i}$ is an isomorphism. Now, Lemma \ref{coproducts of triangles} ensures that the coproduct $\coprod_{I}y\tau^{\cy}M_{i}$ exists and that
\[\coprod_{I}x\tau_{\cx}M_{i}\ra\coprod_{I}M_{i}\ra \coprod_{I}y\tau^{\cy}M_{i}\ra \coprod_{I}x\tau_{\cx}M_{i}[1]
\]
is a triangle of $\cd$. Then, the canonical morphism $\coprod_{I}y\tau^{\cy}M_{i}\ra y\tau^{\cy}\coprod_{I}M_{i}$ is an isomorphism.

$5)\Rightarrow 4)$ Given $M\in\cd$ we consider the triangle
\[M'\ra M\arr{\eta_{M}}LM\ra M'[1].
\]
Since $L(\eta_{M})$ is an isomorphism, then $L(M')=0$. Notice that $\cd(M',LN)=0$ for every $N\in\cd$. Indeed, it suffices to check that the map $\cd(LM,LN)\ra\cd(M,LN)$ induced by $\eta_{M}$ is surjective, which follows from the commutative square
\[\xymatrix{M\ar[rr]^{\eta_{M}}\ar[d]^{f} && LM\ar[d]^{Lf} \\
LN\ar[rr]^{\sim}_{\eta_{LN}=L(\eta_{N})} && L^{2}N
}
\]
valid for every $f$. The fact that $M'$ is left orthogonal to all the objects of the form $LN$ implies, via Lemma \ref{truncacion funtorial}, that the construction $M\mapsto M'$ induces a functor $\tau_{\cx}:\cd\ra\cx$. Let us check that this functor is right adjoint to the inclusion $x:\cx\ra\cd$. For this, it suffices to prove that $\cd(X,LM)=0$ for every $X\in\cx\ko M\in\cd$. Take $f\in\cd(X,LM)$. The commutative square
\[\xymatrix{X\ar[rr]^{\eta_{X}}\ar[d]^{f} && LX=0\ar[d]^{Lf} \\
LM\ar[rr]^{\sim}_{\eta_{LM}=L(\eta_{M})} && L^{2}M
}
\]
proves that $f=0$. Finally, we have to prove that $\tau_{\cx}$ preserves small coproducts. Let $M_{i}\ko i\in I$, be a family of objects of $\cd$ such that the coproduct $\coprod_{I}M_{i}$ exists. Since $L$ is smashing, the coproduct $\coprod_{I}LM_{i}$ exists and the canonical morphism $\coprod_{I}LM_{i}\ra L\coprod_{I}M_{i}$ is an isomorphism. Then, Lemma \ref{coproducts of triangles} says that $\coprod_{I}x\tau_{\cx}M_{i}$ exists and that the induced sequence
\[\coprod_{I}x\tau_{\cx}M_{i}\ra \coprod_{I}M_{i}\ra \coprod_{I}LM_{i}\ra \coprod_{I}\tau_{\cx}M_{i}[1]
\]
is a triangle. This implies that the coproduct $\coprod_{I}\tau_{\cx}M_{i}$ exists in $\cx$ and that the canonical morphism $\coprod_{I}\tau_{\cx}M_{i}\ra\tau_{\cx}\coprod_{I}M_{i}$ is an isomorphism.

$4)\Rightarrow 6)$ Notice that $\cx^{\bot}$ is the kernel of $\tau_{\cx}$. Let $Y_{i}\ko i\in I$, be a family of objects of $\cx^{\bot}$ whose coproduct exists in $\cd$. Then $\tau_{\cx}\coprod_{I}Y_{i}\cong\coprod_{I}\tau_{\cx}Y_{i}=0$, and so $\coprod_{I}Y_{i}$ belongs to $\cx^{\bot}$.

Assume from now on that $\cd$ has small coproducts.

$6)\Rightarrow 2)$ We just have to prove that the inclusion functor $\cx^{\bot}\ra\cd$ preserves small coproducts. Let $Y_{i}\ko i\in I$, be a family of objects of $\cx^{\bot}$ and consider its coproduct $\coprod_{I}Y_{i}$ in $\cd$. Since $\cx^{\bot}$ is closed under small coproducts, then $\coprod_{I}Y_{i}$ belongs to $\cx^{\bot}$, and so it is the coproduct in $\cx^{\bot}$ of the family $Y_{i}\ko i\in I$.
\end{proof}

\begin{definition}
Let $\cd$ be a triangulated category with small coproducts. A full triangulated subcategory of $\cd$ is \emph{smashing}\index{subcategory!smashing} if it satisfies the (equivalent) conditions of Proposition \ref{characterization of smashing}.
\end{definition}

\begin{definition}\label{special objects}
An object $P$ of a triangulated category $\cd$ is \emph{perfect}\index{object!perfect} (respectively, \emph{superperfect}\index{object!superperfect}) if for every countable (respectively, small) family of morphisms $M_{i}\ra N_{i}\ko i\in I$, of $\cd$ such that the coproducts $\coprod_{I}M_{i}$ and $\coprod_{I}N_{i}$ exist, the induced map
\[\cd(P,\coprod_{I}M_{i})\ra\cd(P,\coprod_{I}N_{i})
\]
is surjective provided every map
\[\cd(P,M_{i})\ra\cd(P,N_{i})\ko i\in I
\]
is surjective. Particular cases of superperfect objects are \emph{compact}\index{object!compact} objects, \ie objects $P$ such that $\cd(P,?)$ preserves small coproducts.
\end{definition}

\begin{remark}\label{superperfect closed coproducts}
The class of (super)perfect objects is closed under small coproducts, whereas the subclass of compact objects is not. Indeed, any algebra $A$ is compact in its derived category $\cd A$, but this is not the case for the free $A$-module $A^{(I)}$ over an infinite set $I$ (\cf the proofs of \cite[Lemma 2.1, Proposition 6.3]{Rickard1989}).
\end{remark}

\begin{definition}\label{compactly generated}
A triangulated category with small coproducts is \emph{perfectly}\index{category!triangulated! perfectly generated} (respectively, \emph{superperfectly}\index{category!triangulated! superperfectly generated}, \emph{compactly}\index{category!triangulated! compactly generated}) \emph{generated} if it is generated by a set of perfect (respectively, superperfect, compact) objects. A triangulated TTF triple $(\cx,\cy,\cz)$ on a triangulated category with small coproducts is \emph{perfectly}\index{TTF triple!triangulated!perfectly generated} (respectively, \emph{superperfectly}\index{TTF triple!triangulated!superperfectly generated}, \emph{compactly}\index{TTF triple!triangulated!compactly generated}) \emph{generated} if so is $\cx$ as a triangulated category. 
\end{definition}

Let us prove a very useful lemma under condition 2) (slightly reformulated) of Proposition \ref{characterization of smashing}:

\begin{lemma}\label{truncating special objects and from local to global via smashing}
Let $(\cx,\cy)$ be a t-structure on a triangulated category $\cd$. Assume that $\cx$ is triangulated and that the inclusion functor $y:\cy\hookrightarrow\cd$ preserves small coproducts. The following assertions hold:
\begin{enumerate}[1)]
\item If $M$ is a perfect (respectively, superperfect, compact) object of $\cd$, then $\tau^{\cy}M$ is a perfect (respectively, superperfect, compact) object of $\cy$.
\item If $M$ is a perfect (respectively, superperfect, compact) object of $\cx$, then $M$ is perfect (respectively, superperfect, compact) regarded as an object of $\cd$.
\end{enumerate}
\end{lemma}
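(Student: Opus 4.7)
The plan is to first recognize that the hypotheses (namely, $\cx$ being a triangulated aisle together with the inclusion $y:\cy\hookrightarrow\cd$ preserving small coproducts) place us exactly in condition 6) of Proposition \ref{characterization of smashing}, since $y$ being coproduct-preserving is equivalent to $\cx^{\bot}=\cy$ being closed under small coproducts in $\cd$. We may therefore invoke that proposition to conclude that $\cx$ is smashing, and in particular that both the right adjoint $\tau_{\cx}:\cd\ra\cx$ to $x$ and the right adjoint $y:\cy\ra\cd$ to $\tau^{\cy}$ preserve small coproducts. With that structural fact in hand the two parts become bookkeeping exercises with adjunctions.

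For part 1), let $M\in\cd$ be compact and let $N_{i}\ko i\in I$, be a small family of objects of $\cy$. Using the adjunction $(\tau^{\cy},y)$ together with the fact that $y$ preserves small coproducts, one computes
\[
\cy(\tau^{\cy}M,\coprod_{I}N_{i})\cong\cd(M,y\coprod_{I}N_{i})\cong\cd(M,\coprod_{I}yN_{i})\cong\coprod_{I}\cd(M,yN_{i})\cong\coprod_{I}\cy(\tau^{\cy}M,N_{i}).
\]
For the perfect (respectively, superperfect) case, start with a countable (respectively, small) family of morphisms $f_{i}:N_{i}\ra N'_{i}$ in $\cy$ such that each $\cy(\tau^{\cy}M,f_{i})$ is surjective; by adjunction each $\cd(M,yf_{i})$ is surjective, so by the analogous property of $M$ in $\cd$ the map $\cd(M,\coprod_{I}yf_{i})$ is surjective, and using once more that $y$ preserves coproducts and the adjunction, we conclude that $\cy(\tau^{\cy}M,\coprod_{I}f_{i})$ is surjective.

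For part 2), let $M\in\cx$ be compact and let $N_{i}\ko i\in I$, be a small family of objects of $\cd$. Using the adjunction $(x,\tau_{\cx})$ together with the fact that $\tau_{\cx}$ preserves small coproducts (coproducts of objects of $\cx$ are computed in $\cd$ since $\cx$ is closed under small coproducts, so the equality $\cx(M,?)\cong\cd(xM,?)$ is compatible with coproducts on the $\cx$ side), one gets
\[
\cd(M,\coprod_{I}N_{i})\cong\cx(M,\tau_{\cx}\coprod_{I}N_{i})\cong\cx(M,\coprod_{I}\tau_{\cx}N_{i})\cong\coprod_{I}\cx(M,\tau_{\cx}N_{i})\cong\coprod_{I}\cd(M,N_{i}).
\]
The perfect and superperfect cases are handled by the same surjectivity transfer as in part 1), applied now through the adjunction $(x,\tau_{\cx})$.

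The main conceptual obstacle is the very first step: to see cleanly that the given hypotheses already force $\cx$ to be smashing, so that Proposition \ref{characterization of smashing} delivers coproduct-preservation of both $\tau_{\cx}$ and $y$ at once; once this structural observation is made, both statements reduce to adjunction chases of the kind written above. A minor but worth-noting subtlety is that in part 2) one must verify that the coproduct $\coprod_{I}\tau_{\cx}N_{i}$ computed in $\cx$ agrees with the one computed in $\cd$, which is automatic from $\cx$ being smashing.
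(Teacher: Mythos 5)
Your part 1) is fine and is essentially the paper's argument: the adjunction $(\tau^{\cy},y)$ together with the hypothesis on $y$ gives the comparison square directly, and no smashing input is needed there. The gap is in the structural step you use for part 2). The lemma does not assume that $\cd$ has small coproducts (note that Definition \ref{special objects} is deliberately phrased for families whose coproducts exist, and the techniques of this very lemma are later reused inside $\cd^-\ca$, which is not closed under small coproducts, e.g.\ in the proof of Proposition \ref{parametrization right bounded recollements}). But the implication of Proposition \ref{characterization of smashing} that you invoke --- passing from condition 6) to the coproduct preservation of $\tau_{\cx}$ in condition 4) --- is only established there under the extra hypothesis that $\cd$ has small coproducts; without it, 6) is not known to imply 1)--4), and your identification of ``$y$ preserves small coproducts'' with ``$\cy=\cx^{\bot}$ is closed under small coproducts'' is itself only an equivalence in the cocomplete setting. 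So as written your argument proves the lemma only when $\cd$ has all small coproducts, which is strictly weaker than the statement.

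The missing ingredient has to be produced by hand, family by family, and this is what the paper does: given $M_{i}\ko i\in I$, whose coproduct exists in $\cd$, the left adjoint $\tau^{\cy}$ turns it into a coproduct in $\cy$, which $y$ preserves by hypothesis, so $\coprod_{I}y\tau^{\cy}M_{i}$ exists in $\cd$ and is isomorphic to $y\tau^{\cy}\coprod_{I}M_{i}$; then Lemma \ref{coproducts of triangles}, applied to the truncation triangles of the $M_{i}$, shows that $\coprod_{I}x\tau_{\cx}M_{i}$ exists and sits in a triangle over $\coprod_{I}M_{i}$ whose third vertex lies in $\cy=\cx^{\bot}$, and uniqueness of truncation triangles (Lemma \ref{truncacion funtorial}) identifies $\coprod_{I}x\tau_{\cx}M_{i}\arr{\sim}x\tau_{\cx}\coprod_{I}M_{i}$. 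Once this local statement is available, your adjunction chase for part 2) (and its surjectivity variant for the perfect and superperfect cases) goes through verbatim; it is precisely this coproduct-free formulation that lets the argument be exported later to categories such as $\cd^-\ca$.
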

\begin{proof}
1) Assume $M$ is compact in $\cd$ and let $Y_{i}\ko i\in I$, be a family of objects of $\cy$ whose coproduct exists in $\cy$. The following commutative diagram finishes the proof
\[\xymatrix{\coprod_{i\in I}\cy(\tau^{\cy}M,Y_{i})\ar[d]^{\can}\ar[r]^{\sim} & \coprod_{i\in I}\cd(M,Y_{i})\ar[d]_{\wr}^{\can} \\
\cy(\tau^{\cy}M,\coprod_{i\in I}Y_{i})\ar[r]^{\sim} &\cd(M,\coprod_{i\in I}Y_{i})
}
\] 
In case $M$ is (super)perfect we proceed similarly.

2) For simplicity, we will prove the statement for a compact object. The case of a (super)perfect object is proved similarly.
Let $M_{i}\ko i\in I$, be a family of objects of $\cd$ whose coproduct $\coprod_{I}M_{i}$ exists in $\cd$. Since both $\tau^{\cy}$ and $y$ preserve small coproducts, the coproduct $\coprod_{I}y\tau^{\cy}M_{i}$ exists in $\cd$ and the canonical morphism $\coprod_{I}y\tau^{\cy}M_{i}\ra y\tau^{\cy}\coprod_{I}M_{i}$ is an isomorphism. Lemma \ref{coproducts of triangles} says that the coproduct $\coprod_{I}x\tau_{\cx}M_{i}$ exists in $\cd$ and that the sequence 
\[\coprod_{I}x\tau_{\cx}M_{i}\ra\coprod_{I}M_{i}\ra\coprod_{I}y\tau^{\cy}M_{i}\ra(\coprod_{I}x\tau_{\cx}M_{i})[1]
\]
is a triangle of $\cd$. Moreover, we have a canonical isomorphism $\coprod_{I}x\tau_{\cx}M_{i}\arr{\sim}x\tau_{\cx}\coprod_{I}M_{i}$. Since $\coprod_{I}y\tau^{\cy}M_{i}\arr{\sim}y\tau^{\cy}\coprod_{I}M_{i}\in\cx^{\bot}$, then we have a commutative square
\[\xymatrix{\coprod_{I}\cx(M,\tau_{\cx}M_{i})\ar[r]^{\sim}\ar[d]_{\wr}^{\can} & \coprod_{I}\cd(M,M_{i})\ar[d]^{\can} \\
\cx(M,\coprod_{I}\tau_{\cx}M_{i})\ar[r]^{\sim} & \cd(M,\coprod_{I}M_{i})
}
\]
which proves that $M$ is compact in $\cd$.
\end{proof}

Now we introduce a crucial construction which formally imitates the construction of the direct limit in an abelian category.

\begin{definition}\label{Milnor colimit}
Let $\cd$ be a triangulated category and let
\[M_{0}\arr{f_{0}}M_{1}\arr{f_{1}}M_{2}\arr{f_{2}}\dots
\]
be a sequence of morphisms of $\cd$ such that the coproduct $\coprod_{n\geq 0}M_{n}$ exists in $\cd$. The \emph{Milnor colimit}\index{Milnor!colimit} of this sequence, denoted by $\Mcolim M_{n}$, is given, up to non-unique isomorphism, by the triangle
\[\coprod_{n\geq 0}M_{n}\arr{\id-\sigma}\coprod_{n\geq 0}M_{n}\arr{\pi} \Mcolim M_{n}\ra\coprod_{n\geq 0}M_{n}[1],
\]
where the morphism $\sigma$ has components
\[M_{n}\arr{f_{n}} M_{n+1}\arr{\can}\coprod_{p\geq 0}M_{p}.
\]
The above triangle is the \emph{Milnor triangle}\index{Milnor!triangle} (\cf \cite{Milnor1962, Keller1998b}) associated to the sequence $f_{n}\ko n\geq 0$. The notion of Milnor colimit has appeared in the literature under the name of \emph{homotopy colimit}\index{colimit!homotopy} (\cf \cite[Definition 2.1]{BokstedtNeeman1993}, \cite[Definition 1.6.4]{Neeman2001}) and \emph{homotopy limit}\index{limit!homotopy} (\cf \cite[subsection 5.1]{Keller1994a}). However, we think it is better to keep this terminology for the notions appearing in the theory of derivators \cite{Maltsiniotis2001, Maltsiniotis2005, CisinskiNeeman2005} and in the theory of model categories \cite{Hirschhorn2003}.
\end{definition}

Now let us prove that compact objects transform Milnor colimits into true direct limits:

\begin{lemma}\label{compact and Milnor}
Let $\cd$ be a triangulated category, $P$ a compact object of $\cd$ and
\[M_{0}\arr{f_{0}}M_{1}\arr{f_{1}}M_{2}\arr{f_{2}}\dots
\]
a sequence of morphisms of $\cd$ such that the coproduct $\coprod_{n\geq 0}M_{n}$ exists in $\cd$. Then we have a natural isomorphism of short exact sequences
\[\xymatrix{0\ar[r] & \coprod_{n\geq 0}\cd(P,M_{n})\ar[d]^{\wr}\ar[r]^{\id-s} &  \coprod_{n\geq 0}\cd(P,M_{n})\ar[d]^{\wr}\ar[r] & \lid\cd(P,M_{n})\ar[r]\ar[d]^{\wr} & 0 \\
0\ar[r] & \cd(P,\coprod_{n\geq 0}M_{n})\ar[r]^{(\id-\sigma)^{\we}} & \cd(P,\coprod_{n\geq 0}M_{n})\ar[r] & \cd(P,\Mcolim M_{n})\ar[r] & 0
}
\]
In particular, for every morphism $g\in\cd(P,\Mcolim M_{n})$ there exists a natural number $N\geq 0$ and a morphism $g_{N}\in\cd(P,M_{N})$ such that $g$ factors through $g_{N}$ via the $N$th component $\pi_{N}$ of $\pi:\coprod_{n\geq 0}M_{n}\ra\Mcolim M_{n}$:
\[\xymatrix{P\ar[rr]^{g}\ar[dr]_{g_{N}} && \Mcolim M_{n} \\
& M_{N}\ar[ur]_{\pi_{N}}
}
\]
\end{lemma}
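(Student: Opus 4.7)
The plan is to apply the cohomological functor $\cd(P,?)$ to the Milnor triangle
\[\coprod_{n\geq 0}M_{n}\arr{\id-\sigma}\coprod_{n\geq 0}M_{n}\arr{\pi}\Mcolim M_{n}\ra\Bigl(\coprod_{n\geq 0}M_{n}\Bigr)[1]
\]
and exploit the compactness of $P$ in order to identify the resulting long exact sequence with the standard presentation of a direct limit of abelian groups indexed by $\mathbb{N}$. Since $P$ is compact, the canonical morphisms
\[\coprod_{n\geq 0}\cd(P,M_{n})\arr{\sim}\cd\Bigl(P,\coprod_{n\geq 0}M_{n}\Bigr)\quad\text{and}\quad\coprod_{n\geq 0}\cd(P,M_{n}[1])\arr{\sim}\cd\Bigl(P,\coprod_{n\geq 0}M_{n}[1]\Bigr)
\]
are isomorphisms, and under the first of these the morphism $(\id-\sigma)^{\we}$ translates into $\id-s$, where $s$ is the shift induced by $f_{n\,\we}:\cd(P,M_{n})\ra\cd(P,M_{n+1})$.

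The key point will then be to show that $\id-s$ is a monomorphism of abelian groups, for then the long exact sequence obtained by applying $\cd(P,?)$ breaks into the short exact sequence displayed in the statement. This is the standard verification: if an element $(x_{n})_{n\geq 0}\in\coprod_{n\geq 0}\cd(P,M_{n})$ satisfies $(\id-s)(x_{n})=0$ and $N$ is the largest index with $x_{N}\neq 0$, then looking at the $(N{+}1)$-th coordinate of $(\id-s)(x_{n})$ gives $f_{N,\we}(x_{N})=0$, contradicting the choice of $N$ (unless the element is zero). The cokernel of $\id-s$ is, by construction, the direct limit $\lid\cd(P,M_{n})$, which supplies the vertical isomorphism on the right of the diagram. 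The resulting commutative diagram is natural in $P$ and in the sequence, yielding the first assertion.

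For the ``in particular'' part, I would argue as follows. Given $g\in\cd(P,\Mcolim M_{n})$, the surjectivity just proved shows that $g$ lifts along $\pi_{\we}$ to some element of $\cd(P,\coprod_{n\geq 0}M_{n})\cong\coprod_{n\geq 0}\cd(P,M_{n})$. An element of a coproduct of abelian groups has finite support, so this lift is represented by a finite tuple $(g_{0},\dots,g_{N})$ with $g_{i}\in\cd(P,M_{i})$. Replacing this lift by the single-entry tuple with entry $\sum_{i=0}^{N}f_{N-1}\cdots f_{i}(g_{i})\in\cd(P,M_{N})$ (which differs from the original lift by an element in the image of $\id-s$, hence maps to the same element of $\lid\cd(P,M_{n})$) exhibits $g$ as $\pi_{N}\circ g_{N}$ for a single morphism $g_{N}:P\ra M_{N}$. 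The only obstacle worth noting is the bookkeeping needed to check that $\id-s$ really corresponds to $(\id-\sigma)^{\we}$ under the compactness isomorphism and that all the identifications are natural, but these are routine verifications from the definitions of $\sigma$ and of $\coprod$.
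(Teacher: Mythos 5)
Your overall route is exactly the paper's: apply $\cd(P,?)$ to the Milnor triangle, use compactness of $P$ to identify $(\id-\sigma)^{\we}$ with $\id-s$ in the two relevant degrees, split the long exact sequence using injectivity of $\id-s$, identify the cokernel of $\id-s$ with $\lid\cd(P,M_{n})$, and obtain the factorization of $g$ by lifting it to a finitely supported tuple $(g_{0},\dots,g_{N})$ and pushing all entries forward to the largest occurring stage $N$. Your remark that the single-entry tuple differs from the lift by an element in the image of $\id-s$ is just a repackaging of the paper's computation $g=\sum_{n=0}^{N}\pi_{n}g_{n}=\pi_{N}(f_{N-1}\cdots f_{0}g_{0}+\cdots+g_{N})$, which rests on the identity $\pi_{m+1}f_{m}=\pi_{m}$.

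One step is, however, justified incorrectly: your verification that $\id-s$ is injective. If $N$ is the \emph{largest} index with $x_{N}\neq 0$, then the $(N{+}1)$-th coordinate of $(\id-s)(x)$ equals $x_{N+1}-\cd(P,f_{N})(x_{N})=-\cd(P,f_{N})(x_{N})$, so all you learn is that $\cd(P,f_{N})(x_{N})=0$; this does not contradict $x_{N}\neq 0$, because the map $\cd(P,f_{N})$ need not be injective. The correct (and equally routine) argument runs from the bottom: the $0$-th coordinate of $(\id-s)(x)$ is $x_{0}$, hence $x_{0}=0$, and the $(n{+}1)$-th coordinate is $x_{n+1}-\cd(P,f_{n})(x_{n})$, so by induction $x_{n}=0$ for all $n$; equivalently, look at the \emph{smallest} index with nonzero entry, whose coordinate in $(\id-s)(x)$ is that entry itself. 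With this repair your argument agrees with the paper's proof, which simply asserts the injectivity of $\id-s$ without further comment.
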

\begin{proof}
Compactness of $P$ implies the existence of a natural isomorphism
\[\xymatrix{ \coprod_{n\geq 0}\cd(P,M_{n}[i])\ar[d]^{\wr}\ar[rr]^{\id-s(i)} &&\coprod_{n\geq 0}\cd(P,M_{n}[i])\ar[d]^{\wr} \\
\cd(P,\coprod_{n\geq 0}M_{n}[i])\ar[rr]^{(\id-\sigma[i])^{\we}} && \cd(P,\coprod_{n\geq 0}M_{n}[i])
}
\]
for all $i\in\Z$, where $s(i)$ is the morphism with components
\[\cd(P,M_{n}[i])\arr{(f_{n}[i])^{\we}}\cd(P,M_{n+1}[i])\ra\coprod_{m\geq 0}\cd(P,M_{m}[i]).
\]
Since $\id-s(i)$ is injective, we get the following commutative diagram
\[\xymatrix{0\ar[r] & \coprod_{n\geq 0}\cd(P,M_{n})\ar[d]^{\wr}\ar[r]^{\id-s} &  \coprod_{n\geq 0}\cd(P,M_{n})\ar[d]^{\wr}\ar[r] & \lid\cd(P,M_{n})\ar[r] & 0 \\
0\ar[r] & \cd(P,\coprod_{n\geq 0}M_{n})\ar[r]^{(\id-\sigma)^{\we}} & \cd(P,\coprod_{n\geq 0}M_{n})\ar[r] & \cd(P,\Mcolim M_{n})\ar[r] & 0
}
\]
with exact rows. The universal property of the cokernel $\lid\cd(P,M_{n})$ of $\id-s$ induces the desired isomorphism. Finally, given a morphism $g:P\ra\Mcolim M_{n}$, we know that there exists a family $\{g_{n}\}_{n\geq 0}\in\coprod_{n\geq 0}\cd(P,M_{n})$ mapped to $g$. That is to say, if $N$ is the greatest index such that $g_{N}\neq 0$, then 
\[g=\sum_{n=0}^{N}\pi_{n}g_{n}=\pi_{N}(f_{N-1}\dots f_{0}g_{0}+ f_{N-1}\dots f_{1}g_{1}+\dots +g_{N}).
\]
\end{proof}

\begin{definition}\label{satisfying BRT}
Let $\cd$ be a triangulated category. A contravariant functor $H:\cd\ra\Mod k$ is \emph{cohomological}\index{functor!cohomological} if it maps a triangle
\[L\arr{f}M\arr{g}N
\]
to an exact sequence
\[H(N)\arr{H(g)}H(M)\arr{H(f)}H(L).
\]
We say that $\cd$ \emph{satisfies the Brown's representability theorem}\index{category!triangulated!satisfies the Brown's representability theorem} if every cohomological functor taking small coproducts to small products is representable.
\end{definition}

The following very useful lemma is known as ``The adjoint functor argument''\index{adjoint functor argument}:
\begin{lemma}\label{adjoint functor argument}
If $\cd$ is a triangulated category and $\cd'$ a full triangulated subcategory of $\cd$ such that the inclusion functor $\cd'\hookrightarrow \cd$ preserves small coproducts and $\cd'$ satisfies the Brown representability theorem, then $\cd'$ is an aisle in $\cd$.
\end{lemma}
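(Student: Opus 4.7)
The plan is to construct a right adjoint $\tau:\cd\ra\cd'$ to the inclusion $i:\cd'\hookrightarrow\cd$ using Brown representability object-by-object, and then to upgrade the objectwise construction to a functor via Yoneda. Since $\cd'$ is a full triangulated subcategory of $\cd$ it is in particular suspended (even triangulated), so establishing the right adjoint is the only nontrivial point required to conclude that $\cd'$ is an aisle.

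First I would fix an object $M\in\cd$ and define the contravariant functor
\[H_{M}:\cd'\ra\Mod k\ko X\mapsto\cd(iX,M).\]
The key checks are that $H_{M}$ is cohomological and sends small coproducts to small products. For cohomologicity one uses that $i$ is a triangle functor, so any triangle $L\ra X\ra N\ra L[1]$ of $\cd'$ is sent to a triangle in $\cd$, to which we apply the cohomological functor $\cd(?,M)$. For the product property we use the hypothesis that $i$ preserves small coproducts: given a family $X_{j}\in\cd'$, the canonical morphism $\coprod_{j}iX_{j}\ra i(\coprod_{j}X_{j})$ (the coproduct on the left being taken in $\cd$) is an isomorphism, and hence
\[H_{M}(\coprod_{j}X_{j})=\cd(i\coprod_{j}X_{j},M)\cong\cd(\coprod_{j}iX_{j},M)\cong\prod_{j}\cd(iX_{j},M)=\prod_{j}H_{M}(X_{j}).\]

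Since $\cd'$ satisfies Brown's representability theorem, there exists an object $\tau M\in\cd'$ together with a natural isomorphism $\theta_{M}:\cd'(?,\tau M)\arr{\sim}H_{M}=\cd(i?,M)$. Specializing at $X=\tau M$ and evaluating $\theta_{M}$ on $\id_{\tau M}$ produces a counit morphism $\delta_{M}:i\tau M\ra M$, and by Yoneda's lemma $\theta_{M}$ is precisely the map $f\mapsto \delta_{M}\circ i(f)$. Given any morphism $g:M\ra M'$ in $\cd$, the natural transformation $g_{*}\circ\theta_{M}:\cd'(?,\tau M)\ra\cd(i?,M')$ must, by Yoneda applied inside $\cd'$ and the isomorphism $\theta_{M'}$, come from a unique morphism $\tau g:\tau M\ra\tau M'$. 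Functoriality of $\tau$ and naturality of $\delta$ follow from the uniqueness part of Yoneda, and the family $(\tau,\delta)$ exhibits $\tau$ as right adjoint to $i$.

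The only real obstacle is the bookkeeping in the final paragraph: one must verify that the object-by-object construction given by Brown representability assembles into an honest functor with a natural counit, rather than just a family of representing objects. This is purely formal via Yoneda, but it is the step where one must be careful, because Brown representability only produces the representing object up to (non-canonical) isomorphism. Once $\tau$ is constructed together with its counit, the required adjunction isomorphism is $\theta_{M}$ itself, and $\cd'$ is an aisle in $\cd$ as claimed.
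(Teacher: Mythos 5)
Your proposal is correct and follows essentially the same route as the paper's proof: define $H_{M}=\cd(\iota ?,M)$ on $\cd'$, check it is cohomological and sends coproducts to products using the coproduct-preserving hypothesis on the inclusion, represent it by Brown representability, and assemble the representing objects into a right adjoint via Yoneda. The extra bookkeeping you spell out in the last paragraph is exactly what the paper leaves to the phrase ``by Yoneda's lemma,'' so there is nothing to add.
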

\begin{proof}
If 
\[\iota: \cd'\hookrightarrow\cd
\]
is the inclusion functor, for an object $M$ of $\cd$ we define the functor
\[H(?):=\cd(\iota(?),M):\cd'\ra\Mod k,
\]
which is cohomological and takes small coproducts to small products. Then, by assumption this functor is represented by an object, say $\tau(M)\in\cd'$. By Yoneda's lemma it turns out that the map $M\mapsto\tau(M)$ underlies a functor $\tau:\cd\ra\cd'$ which is right adjoint to $\iota$. Thus $\cd'$ is an aisle in $\cd$.
\end{proof}

Now we present the so-called ``Brown's representability theorem for cohomology'', which is a seminal result in the theory of triangulated categories . One of the main ingredients of the proof is the algebraic analogue of the topological \emph{iterated attaching of cells}. This theorem was independently proved by B.~Keller \cite[Theorem 5.2]{Keller1994a} and A.~Neeman \cite{Neeman1992, Neeman1996} for the case of compactly generated triangulated categories. Then, A. Neeman \cite[Theorem 8.3.3]{Neeman2001} proved the theorem for the more general case of $\aleph_{1}$-\emph{perfectly generated} triangulated categories (\cf \cite[Definition 8.1.2]{Neeman2001}), motivated by V. Voevodsky's work on motivic cohomology. From this, he deduced the theorem for the important class of \emph{well-generated} triangulated categories (\cf \cite[Definition 1.15 and Definition 8.1.6]{Neeman2001}, see also the characterization due to H. Krause \cite{Krause2001}), which are, in particular, perfectly generated. Finally, H. Krause \cite[Theorem A]{Krause2002} gave a short proof for the case of perfectly generated triangulated categories. This theorem deals with the representability of certain functors. From it, via the adjoint functor argument, one can deduce the existence of a t-structure which implies that,  under certain hypotheses, `generators' implies `exhaustivity'. We present now H. Krause's version.

\begin{theorem}\label{Krause on perfects}
Let $\cd$ be a triangulated category with small coproducts and let $\cp$ be a set of objects of $\cd$ which are perfect in $\Tria(\cp)$. Then, $\Tria(\cp)$ satisfies the Brown representability theorem (which implies that it is an aisle in $\cd$) and every object of $\Tria(\cp)$ is the Milnor colimit of a sequence
\[P_{0}\arr{f_{0}}P_{1}\arr{f_{1}}P_{2}\arr{f_{2}}\dots,
\]
where $P_{n}$ is an $n$th extension of small coproducts of shifts of objects of $\cp$. In particular, if $\cp$ generates $\cd$ then $\Tria(\cp)=\cd$.
\end{theorem}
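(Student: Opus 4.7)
The plan is to adapt H.~Krause's ``cellular approximation'' argument. Given a cohomological functor $H:\Tria(\cp)^{op}\to\Mod k$ that sends small coproducts to small products, I would inductively construct a sequence
\[P_0\arr{f_0}P_1\arr{f_1}P_2\arr{f_2}\cdots\]
in $\Tria(\cp)$ together with compatible elements $x_n\in H(P_n)$, and show that $P:=\Mcolim P_n$ represents $H$. First, set $P_0:=\coprod_{Q\in\cp^+,\,s\in H(Q)}Q$, with $x_0\in H(P_0)\cong\prod_{Q,s}H(Q)$ the element whose components are the $s$'s. Given $(P_n,x_n)$, define $K_n(Q)$ as the kernel of the evaluation $\Tria(\cp)(Q,P_n)\to H(Q)$, $\varphi\mapsto H(\varphi)(x_n)$, for $Q$ ranging over $\cp^+$; form $K'_n:=\coprod_{Q\in\cp^+,\,k\in K_n(Q)}Q$ and the canonical morphism $u_n:K'_n\to P_n$, and embed it in a triangle $K'_n\arr{u_n}P_n\arr{f_n}P_{n+1}\to K'_n[1]$. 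Since $H(u_n)(x_n)=0$ by construction, the exact sequence $H(P_{n+1})\to H(P_n)\to H(K'_n)$ yields an $x_{n+1}\in H(P_{n+1})$ mapping to $x_n$. Inductively, each $P_n$ is obtained from small coproducts of shifts of objects of $\cp$ via $n$ successive extension steps, as required by the statement.

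The heart of the argument is to show that the morphism $\Tria(\cp)(Q,P)\to H(Q)$, $\varphi\mapsto H(\varphi)(x)$, induced by a choice of $x\in H(P)$ lifting the system $(x_n)$ through the Milnor triangle for $P$, is an isomorphism for each $Q\in\cp^+$. Surjectivity is immediate because every $s\in H(Q)$ already occurs as a component of the canonical morphism $Q\to P_0\to P$. For injectivity, given $\varphi:Q\to P$ with $H(\varphi)(x)=0$, I would exploit the perfectness of $Q$ in $\Tria(\cp)$ applied to the Milnor triangle defining $P$ (a perfectness analogue of Lemma \ref{compact and Milnor}, which only involves a countable coproduct) to factor $\varphi=f_{n,\infty}\circ\varphi_n$ through some $\varphi_n:Q\to P_n$; then $H(\varphi_n)(x_n)=H(\varphi)(x)=0$, so $\varphi_n\in K_n(Q)$, hence $\varphi_n$ factors through $u_n$ by the very definition of $K'_n$, whence $f_n\varphi_n=0$ and therefore $\varphi=0$. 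A standard d\'evissage then extends the isomorphism from $\cp^+$ to all of $\Tria(\cp)$: the full subcategory of $\Tria(\cp)$ on which the comparison map is an isomorphism is triangulated, closed under small coproducts, and contains $\cp$.

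For the second assertion of the theorem, apply the preceding construction to $H(?):=\Tria(\cp)(?,M)$ for an arbitrary $M\in\Tria(\cp)$; Yoneda's lemma then yields $P\cong M$ canonically, so that $M$ is the Milnor colimit of a sequence of the required form. Finally, if $\cp$ generates $\cd$, the Brown representability just established together with the adjoint functor argument (Lemma \ref{adjoint functor argument}) makes $\Tria(\cp)$ an aisle in $\cd$; by Lemma \ref{right orthogonal and devissage}, $\Tria(\cp)^{\bot}$ coincides with the class of objects right orthogonal to all shifts of $\cp$, which is zero under the generation hypothesis, and the torsion triangle associated with the t-structure $(\Tria(\cp),\Tria(\cp)^{\bot})$ forces $\Tria(\cp)=\cd$. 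The hard part will be the injectivity argument in the middle paragraph: it is precisely here that one needs perfectness (rather than mere compactness) of $Q\in\cp$, in order to bring both the morphism $Q\to\Mcolim P_n$ and the vanishing of $H$ applied to it down to a single stage, where the obstruction is then actually killed by the next cell attachment.
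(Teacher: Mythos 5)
Your overall architecture is the right one: it is precisely the cellular-tower construction behind Krause's theorem, which the paper's own proof does not redo but simply invokes via \cite[Theorem A]{Krause2002}; your surjectivity argument, the Yoneda step identifying $M$ with $\Mcolim P_n$, and the final deduction when $\cp$ generates $\cd$ (adjoint functor argument plus Lemma \ref{generators and t-structures}) all match the paper's quick deductions. The problem is the central injectivity step, which you yourself flag as the hard part: there is no ``perfectness analogue of Lemma \ref{compact and Milnor}'' in the generality you invoke. Perfectness only asserts that surjectivity of the maps $\cd(Q,M_i)\ra\cd(Q,N_i)$ passes to countable coproducts; it gives neither an isomorphism $\lid\cd(Q,M_n)\arr{\sim}\cd(Q,\Mcolim M_n)$ nor even the weaker claim that a morphism $Q\ra\Mcolim M_n$ factors through some finite stage. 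For a merely perfect $Q$ and an arbitrary countable tower this factorization fails (its failure is exactly why the perfectly generated case of Brown representability is strictly harder than the compactly generated one; if your lemma held, the proof would be word for word the compact argument). What is true, and what your argument actually needs, is that for the particular cellular tower you build --- $P_0$ and every $\cone(f_n)$ a small coproduct of shifts of objects of $\cp$ --- the canonical map $\lid\Tria(\cp)(Q,P_n)\ra\Tria(\cp)(Q,\Mcolim P_n)$ is invertible for $Q\in\cp$. Proving this is the technical heart of \cite[Theorem A]{Krause2002} (see also the proof of \cite[Theorem 2.2]{Souto2004}, and it is what the thesis itself quotes later in Chapter \ref{Preliminary results}): perfectness is applied there to carefully chosen countable families of morphisms extracted from the tower, not merely to the Milnor triangle. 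As written, your proposal assumes exactly the point that the paper's citation is there to cover.

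A secondary issue: you should attach cells indexed by \emph{all} shifts $Q[n]$, $n\in\Z$, of objects of $\cp$, not by $\cp^+$, which in this paper's notation contains only the non-negative shifts. The subcategory on which the comparison transformation can be propagated by d\'evissage is the class of objects $N$ such that the comparison map is invertible at every shift $N[n]$ (invertibility at $N$ alone is preserved neither by negative shifts nor by extensions, since the five lemma needs the neighbouring degrees); with control only over $\cp^+$ you cannot place $\cp$ inside this class. Using all shifts also matches the statement being proved, where the $P_n$ are $n$-fold extensions of coproducts of shifts in both directions. The remaining points --- lifting the compatible system $(x_n)$ to some $x\in H(P)$ using that $H$ is cohomological and turns the coproducts in the Milnor triangle into products, and the concluding argument that $\Tria(\cp)=\cd$ when $\cp$ generates --- are fine.
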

\begin{proof}
Let $H:\Tria(\cp)\ra\Mod\Z$ be a cohomological functor which takes small coproducts to small products. It is proved in \cite[Theorem A]{Krause2002} that there exists a sequence
\[P_{0}\arr{f_{0}}P_{1}\arr{f_{1}}P_{2}\arr{f_{2}}\dots,
\]
such that
\begin{enumerate}[$\bullet$]
\item each $P_{n}$ is an $n$th extension of small coproducts of shifts of objects of $\cp$, 
\item if we denote by $P_{\infty}$ the Milnor colimit of the sequence, there exists an isomorphism of functors
\[\pi_{\infty}:\Tria(\cp)(?,P_{\infty})\arr{\sim} H.
\]
\end{enumerate}
This proves that $\Tria(\cp)$ satisfies the Brown's representability theorem. Moreover, if $M$ is an object of $\Tria(\cp)$, we can take $H=\Tria(\cp)(?,M)$ and then by Yoneda's lemma we get an isomorphism $P_{\infty}\arr{\sim}M$. This gives a description of all the objects of $\Tria(\cp)$ which shows that it is exhaustively generated by all the shifts of objects of $\cp$. The last statement follows from Lemma \ref{generators and t-structures}.
\end{proof}

\begin{proposition}\label{smashing are TTF}
If $\cd$ is a perfectly generated triangulated category, then the map
\[\cx\mapsto (\cx,\cx^{\bot},\cx^{\bot\bot})
\]
is a one-to-one correspondence between smashing subcategories of $\cd$ and triangulated TTF triples on $\cd$. 
\end{proposition}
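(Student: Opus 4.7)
The plan is to verify the two directions separately, using the characterization of smashing subcategories in Proposition \ref{characterization of smashing} together with Theorem \ref{Krause on perfects} and the adjoint functor argument.

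First I would show the map is well-defined: given a smashing subcategory $\cx$ of $\cd$, I must check that $(\cx,\cx^{\bot},\cx^{\bot\bot})$ is a triangulated TTF triple. Since $\cx$ is smashing, condition 1) of Proposition \ref{characterization of smashing} gives, in particular, that $\cx$ is an aisle whose coaisle $\cx^{\bot}$ is closed under small coproducts. Hence $(\cx,\cx^{\bot})$ is a t-structure, and it only remains to show that $\cx^{\bot}$ is itself an aisle in $\cd$ (so that $(\cx^{\bot},\cx^{\bot\bot})$ is a t-structure). This is the main technical point. I would fix a set $\cp$ of perfect generators of $\cd$ and consider the set $\cq=\{\tau^{\cx^{\bot}}P\mid P\in\cp\}$. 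Because the inclusion $\cx^{\bot}\hookrightarrow\cd$ preserves small coproducts (by the smashing hypothesis), Lemma \ref{truncating special objects and from local to global via smashing}(1) shows that every object of $\cq$ is perfect in $\cx^{\bot}$; and Lemma \ref{generators and t-structures}(2.1) shows that $\cq$ is a set of generators of $\cx^{\bot}$. Then Theorem \ref{Krause on perfects}, applied inside $\cx^{\bot}$, yields $\cx^{\bot}=\Tria_{\cx^{\bot}}(\cq)$ and, more importantly, $\cx^{\bot}$ satisfies Brown representability. Combined with the fact that $\cx^{\bot}\hookrightarrow\cd$ preserves small coproducts, the adjoint functor argument (Lemma \ref{adjoint functor argument}) then gives that $\cx^{\bot}$ is an aisle in $\cd$, as required.

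Injectivity of the assignment is immediate, since $\cx$ is recovered as the first component of the triple.

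For surjectivity, I would start with a triangulated TTF triple $(\cx,\cy,\cz)$ on $\cd$ and produce a smashing $\cx$ with $\cy=\cx^{\bot}$ and $\cz=\cx^{\bot\bot}$. The identity $\cy=\cx^{\bot}$ is forced by $(\cx,\cy)$ being a t-structure, and similarly $\cz=\cy^{\bot}=\cx^{\bot\bot}$. The only nontrivial check is that $\cx$ is smashing, for which I intend to use condition 6) of Proposition \ref{characterization of smashing}: I already know $\cx$ is an aisle, and I must verify that $\cx^{\bot}=\cy$ is closed under small coproducts in $\cd$. But $(\cy,\cz)$ being a t-structure means $\cy$ is itself an aisle, so its inclusion into $\cd$ is a left adjoint, and a standard argument with the orthogonality $\cy={}^{\bot}\cz$ shows that the $\cd$-coproduct of a family in $\cy$ lies in ${}^{\bot}\cz=\cy$. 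Hence $\cx^{\bot}$ is closed under small coproducts, and Proposition \ref{characterization of smashing} gives that $\cx$ is smashing.

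The main obstacle, and really the only nontrivial step, is the transfer of perfect generators from $\cd$ to $\cx^{\bot}$ in the first part: everything else is bookkeeping with orthogonal classes and the characterizations already recorded. Once Lemma \ref{truncating special objects and from local to global via smashing} is invoked to guarantee perfectness of the truncated generators, Theorem \ref{Krause on perfects} and the adjoint functor argument take care of producing the needed aisle structure on $\cx^{\bot}$.
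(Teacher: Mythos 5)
Your proposal is correct and follows essentially the same route as the paper: the converse direction uses that the middle class of a TTF triple, being an aisle, is closed under small coproducts, while the forward direction transfers the perfect generators of $\cd$ to $\cx^{\bot}$ via $\tau^{\cx^{\bot}}$ (Lemma \ref{generators and t-structures} and Lemma \ref{truncating special objects and from local to global via smashing}) and then invokes Theorem \ref{Krause on perfects} together with the adjoint functor argument to see that $\cx^{\bot}$ is an aisle. The extra bookkeeping you add (injectivity, surjectivity, and citing the equivalent conditions of Proposition \ref{characterization of smashing}) is consistent with the paper's argument.
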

\begin{proof}
Indeed, if $(\cx,\cy,\cz)$ is a triangulated TTF triple, then $\cx$ is a smashing subcategory since $\cy$ being an aisle is always closed under small coproducts. Conversely, if $\cx$ is a smashing subcategory, then $(\cx,\cy)$ is a t-structure on $\cd$. But now, by using the Lemma \ref{generators and t-structures} and Lemma \ref{truncating special objects and from local to global via smashing} we have that $\tau^{\cy}$ takes the set of perfect generators of $\cd$ to a set of perfect generators $\cy$. Therefore, $\cy$ is a perfectly generated triangulated category closed under small coproducts in $\cd$, and by Theorem \ref{Krause on perfects} together with the adjoint functor argument (see Lemma \ref{adjoint functor argument}) we conclude that $\cy$ is an aisle.
\end{proof}

We recall in the following result some of B.~Keller's elegant techniques \cite[Lemma 4.2]{Keller1994a} to recognize fully faithful triangle functors or triangle equivalences:

\begin{lemma}\label{detecting triangle equivalences}
Let $\cd$ and $\cd'$ be triangulated categories, $F:\cd\ra\cd'$ a triangle functor which commutes with small coproducts and $\cq$ a set of compact objects of $\cd$. 
\begin{enumerate}[1)]
\item Assume $\cd$ satisfies the principle of infinite d\'{e}vissage with respect to $\cq$. If $FQ$ is compact in $\cd'$ for each $Q\in\cq$ and $F$ induces an isomorphism
\[\cd(Q,P[n])\arr{\sim}\cd'(FQ,FP[n])
\]
for each $P\ko Q\in\cq$ and $n\in\Z$, then $F$ is fully faithful.
\item Assume $\cd$ is exhaustively generated by $\cq$. Then $F$ is a triangle equivalence if and only if
\begin{enumerate}[2.1)]
\item $FP$ is compact in $\cd'$ for each $P\in\cq$.
\item $F$ induces an isomorphism
\[\cd(Q,P[n])\arr{\sim}\cd'(FQ,FP[n])
\]
for each $P\ko Q\in\cq$ and $n\in\Z$.
\item $\cd'$ is exhaustively generated by the objects $FP\ko P\in\cq$.
\end{enumerate}
\end{enumerate}
\end{lemma}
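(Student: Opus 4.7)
The plan is standard: prove part 1) by a two-step dévissage establishing that the natural maps of Hom-groups are isomorphisms first when the source is in $\cq$ and then for arbitrary source, and then for part 2) bootstrap full faithfulness from part 1) and chase the essential image along the exhaustive generation to obtain essential surjectivity.

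For part 1), fix $Q\in\cq$ and let $\cc_Q$ denote the full subcategory of $\cd$ consisting of those $M$ for which the natural map $\cd(Q[n],M)\ra\cd'(FQ[n],FM)$ is an isomorphism for every $n\in\Z$. A five-lemma argument on long exact Hom-sequences shows $\cc_Q$ is triangulated; it is closed under small coproducts because $Q$ is compact in $\cd$, $FQ$ is compact in $\cd'$ by hypothesis, and $F$ commutes with small coproducts. Since $\cq\subseteq\cc_Q$ by hypothesis, the infinite dévissage assumption on $\cd$ forces $\cc_Q=\cd$. Now fix $M\in\cd$ and let $\cc^M$ consist of those $N\in\cd$ for which $\cd(N[n],M)\ra\cd'(FN[n],FM)$ is an isomorphism for every $n$. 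Again it is triangulated, and closed under small coproducts without any compactness assumption: $\cd(\coprod N_i[n],M)\cong\prod\cd(N_i[n],M)$, with the analogous fact in $\cd'$, and $F(\coprod N_i)\cong\coprod FN_i$. The first step gives $\cq\subseteq\cc^M$, hence $\cc^M=\cd$, proving that $F$ is fully faithful.

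For part 2), only the ``if'' direction needs proof. By Lemma \ref{e implies d implies g}(1), exhaustive generation of $\cd$ by $\cq$ yields $\cd=\Tria(\cq)$, so $\cd$ is infinitely dévissed by $\cq$ and part 1) applies: $F$ is fully faithful. Then $\im(F)$ is a strictly full triangulated subcategory of $\cd'$ (any morphism in $\cd'$ between objects of $\im(F)$ lifts to a morphism in $\cd$ whose cone lies in $\cd$, and $F$ preserves this cone) which is closed under small coproducts of families coming from $\cd$ via $F(\coprod M_i)\cong\coprod FM_i$. Applying Lemma \ref{e implies d implies g}(1) to the exhaustive generation of $\cd'$ by $\{FP\mid P\in\cq\}$ gives $\cd'=\Tria(\{FP\mid P\in\cq\})$. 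Every $M'\in\cd'$ therefore fits into a Milnor triangle whose terms are finite extensions of small coproducts of shifts of objects $FP$; each such building block is the $F$-image of an analogous building block in $\cd$ (whose existence is guaranteed by the exhaustive generation of $\cd$), so by a straightforward induction each layer lifts to $\cd$, the Milnor colimit can be formed in $\cd$ and mapped by $F$ to a Milnor triangle isomorphic to the original one in $\cd'$, giving $M'\in\im(F)$. Hence $F$ is essentially surjective.

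The only mildly delicate point is the essential surjectivity step in part 2): one needs to lift, layer by layer, the coproducts, shifts, and finite extensions appearing in the exhaustive generation of $\cd'$ to the corresponding operations in $\cd$, using full faithfulness to lift morphisms and the definition of exhaustive generation on \emph{both} sides to guarantee that the relevant coproducts exist. All subsequent steps are formal.
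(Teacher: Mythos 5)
Your proposal is correct and follows essentially the same route as the paper: the same two-step dévissage (first varying the target with $Q\in\cq$ fixed, using compactness of $Q$ and $FQ$ and preservation of coproducts, then varying the source) gives full faithfulness, and essential surjectivity in part 2) comes from the essential image being a full triangulated subcategory containing the coproducts of finite extensions of $\Sum(\{FP\}_{P\in\cq}^{+})$, which by exhaustive generation of $\cd'$ and Lemma \ref{e implies d implies g} must be all of $\cd'$. Your explicit Milnor-triangle lifting is just an unpacking of that last appeal to Lemma \ref{e implies d implies g}, so no genuinely different ideas are involved.
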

\begin{proof}
1) Let $\cu$ be the full subcategory of $\cd$ formed by the objects $N$ such that $F$ induces and isomorphism
\[\cd(Q[n],N)\arr{\sim}\cd'(FQ[n],FN)
\]
for each $Q\in\cq\ko n\in\Z$. It is a full triangulated subcategory of $\cd$ closed under small coproducts and containing $\cq$. Hence $\cu=\cd$. Fix now an object $N\in\cd$ and consider the full subcategory $\cv$ of $\cd$ formed by the objects $M$ such that $F$ induces an isomorphism
\[\cd(M,N[n])\arr{\sim}\cd'(FM,FN[n])
\]
for each $n\in\Z$. Again, it is a full triangulated subcategory of $\cd$ closed under small coproducts and containing $\cq$ and so $\cv=\cd$.

2) It is clear that $F$ satisfies 2.1)--2.3) provided it is a triangle equivalence. Conversely, thanks to 1), we know that 2.1) and 2.2) imply that $F$ is fully faithful. Hence, its essential image is a full triangulated subcategory of $\cd'$ containing the objects $FQ\ko Q\in\cq$ and closed under small coproducts of objects of $\bigcup_{n\geq 0}\Sum(\{FQ\}_{Q\in\cq})^{*n}$. Therefore, condition 2.3), together with Lemma \ref{e implies d implies g}, implies that $F$ is essentially surjective.
\end{proof}

\begin{proposition}
Let $\cd$ be a triangulated category with small coproducts and let $P$ be an object of $\cd$. The following conditions are equivalent:
\begin{enumerate}[1)]
\item $P$ is compact in $\cd$.
\item $P$ satisfies:
\begin{enumerate}[2.1)]
\item $P$ is perfect in $\cd$.
\item $P$ is compact in the full subcategory $\mbox{Sum}(\{P[n]\}_{n\in\Z})$ of $\cd$ formed by small coproducts of shifts of $P$.
\item $\Tria(P)^{\bot}$ is closed under small coproducts.
\end{enumerate}
\item $P$ satisfies:
\begin{enumerate}[3.1)]
\item $P$ is compact in $\Tria(P)$.
\item $\Tria(P)^{\bot}$ is closed under small coproducts.
\end{enumerate}
\item $P$ satisfies:
\begin{enumerate}[4.1)]
\item $P$ is superperfect in $\cd$.
\item $P$ is compact in $\Sum(\{P[n]\}_{n\in\Z})$.
\end{enumerate}
\end{enumerate}
\end{proposition}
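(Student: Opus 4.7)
The plan is to close the cycle $(1)\Rightarrow(4)\Rightarrow(2)\Rightarrow(3)\Rightarrow(1)$, from which all four equivalences follow. The implications $(1)\Rightarrow(4)$ and $(4)\Rightarrow(2)$ are formal: compactness trivially yields superperfectness and the fact that $P$ preserves coproducts in $\Sum(\{P[n]\}_{n\in\Z})$; conversely, superperfectness implies perfectness, 4.2 is 2.2, and 2.3 follows by applying superperfectness to the family of zero morphisms $0\to Y_i$ for $\{Y_i\}_{i\in I}$ in $\Tria(P)^\bot=\{P[n]\}_{n\in\Z}^\bot$ (Lemma \ref{right orthogonal and devissage}), to obtain $\cd(P[n],\coprod_i Y_i)=0$ for every $n\in\Z$. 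For $(3)\Rightarrow(1)$, compactness of $P$ in $\Tria(P)$ forces $P$ to be perfect there, so Theorem \ref{Krause on perfects} gives Brown representability for $\Tria(P)$; Lemma \ref{adjoint functor argument} then makes $\Tria(P)$ into an aisle in $\cd$, condition 3.2 turns it into a smashing subcategory (Proposition \ref{characterization of smashing}), and Lemma \ref{truncating special objects and from local to global via smashing}(2) promotes compactness of $P$ in $\Tria(P)$ to compactness in $\cd$.

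The crux is $(2)\Rightarrow(3)$; only 3.1 is at issue since 3.2 is literally 2.3. Step (a): by induction on $k\geq 1$ I show that, for every family $\{M_i\}_{i\in I}$ in $\Sum(\{P[n]\}_{n\in\Z})^{*k}$, the canonical map
\[
\coprod_i\cd(P[m], M_i)\to\cd(P[m],\coprod_i M_i)
\]
is an isomorphism for every $m\in\Z$. The base case $k=1$ is 2.2 applied with $P[m]$ in place of $P$. For the inductive step, decompose each $M_i$ via a triangle $U_i\to M_i\to V_i\to U_i[1]$ with $U_i\in\Sum(\{P[n]\}_{n\in\Z})^{*(k-1)}$ and $V_i\in\Sum(\{P[n]\}_{n\in\Z})$, take coproducts via Lemma \ref{coproducts of triangles}, and conclude with the 5-lemma on the long exact sequences of $\cd(P[m],?)$. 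Step (b): using 2.1, I argue that for every sequence $M_0\to M_1\to\cdots$ in $\cd$ the canonical map $\lid_n\cd(P[m], M_n)\to\cd(P[m],\Mcolim_n M_n)$ is an isomorphism for every $m\in\Z$. Step (c): Theorem \ref{Krause on perfects} applied to $\cp=\{P\}$ presents every $N\in\Tria(P)$ as $\Mcolim_n N_n$ with $N_n\in\Sum(\{P[n]\}_{n\in\Z})^{*n}$. Given $\{N^i\}_{i\in I}$ in $\Tria(P)$ with such presentations $N^i=\Mcolim_n N_n^i$, the Milnor construction commutes with small coproducts, so $\coprod_i N^i\cong\Mcolim_n\coprod_i N_n^i$, with $\coprod_i N_n^i\in\Sum(\{P[n]\}_{n\in\Z})^{*n}$ (the $*$-power classes being stable under small coproducts, by the same induction as in (a)). Combining (a), (b) and the fact that filtered colimits commute with coproducts in $\Mod k$,
\[
\cd(P,\coprod_i N^i)\cong\lid_n\cd(P,\coprod_i N_n^i)\cong\lid_n\coprod_i\cd(P, N_n^i)\cong\coprod_i\lid_n\cd(P, N_n^i)\cong\coprod_i\cd(P,N^i),
\]
giving 3.1.

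The hard part will be step (b): extracting commutation with Milnor colimits from the surjectivity-preservation definition of perfectness, without the a priori knowledge that $\cd(P,?)$ preserves the coproduct $\coprod M_n$. The idea is to apply $\cd(P,?)$ to the Milnor triangle $\coprod M_n\xrightarrow{\id-\sigma}\coprod M_n\to\Mcolim M_n\to(\coprod M_n)[1]$, show by a direct argument using perfectness that every morphism $P\to\Mcolim M_n$ factors through some finite stage $M_N\to\Mcolim M_n$ (mirroring Lemma \ref{compact and Milnor} but circumventing the full strength of compactness), and then verify that two such factorizations agree after a further finite stage; this gives bijectivity of the comparison map $\lid_n\cd(P,M_n)\to\cd(P,\Mcolim M_n)$.
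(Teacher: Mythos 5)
The gap is your step (b). As stated --- that perfectness of $P$ alone (condition 2.1) forces $\lid_n\cd(P[m],M_n)\ra\cd(P[m],\Mcolim M_n)$ to be an isomorphism for \emph{every} sequence $M_0\ra M_1\ra\cdots$ --- the claim is false, and in particular the factorization you propose (``every morphism $P\ra\Mcolim M_n$ factors through some finite stage'') cannot be extracted from perfectness. Take $\cd=\cd A$ for a nonzero ordinary algebra $A$ and $P:=\coprod_{n\geq 0}A[n]$, which is superperfect (Remark \ref{superperfect closed coproducts}) but not compact. Let $M_k:=\coprod_{0\leq n\leq k}A[n]$ with the canonical split inclusions as structure maps; testing against the compact generators shows $\Mcolim M_k\cong P$, with the structure maps identified with the inclusions $M_k\ra P$. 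If $\id_P$ factored through some $M_k$, then for $m>k$ the component $A[m]\ra P$ of $\id_P$ would factor through $M_k$, which is impossible since $(\cd A)(A[m],A[n])\cong(\cd A)(A,A[n-m])=0$ for $n\neq m$ (equivalently, $P$ would be a retract of the compact object $M_k$ and hence compact). So the comparison map is not even surjective here, and the route ``mirror Lemma \ref{compact and Milnor} using only perfectness'' is blocked.

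What your chain of isomorphisms in step (c) actually needs is the statement only for the special towers coming from Theorem \ref{Krause on perfects}: towers whose terms lie in $\Sum(\{P[n]\}_{n\in\Z})^{*n}$ and whose successive cones are small coproducts of shifts of $P$ (note that the towers $\coprod_i N^i_n$ are again of this form, by Lemma \ref{coproducts of triangles}). This restricted statement is true, and it is exactly the fact the paper invokes in its proof of $2)\Rightarrow 1)$; but there it is not deduced by a formal factorization argument from perfectness --- it is extracted from the internals of the proof of Krause's representability theorem \cite{Krause2002} (cf.\ also \cite{Souto2004}), i.e.\ it exploits the specific shape of the tower. So you must either restrict step (b) to these towers and justify it along those lines, or supply a genuinely new argument; as written, the inductive step (a), the cycle $(1)\Rightarrow(4)\Rightarrow(2)\Rightarrow(3)\Rightarrow(1)$, and the implication $(3)\Rightarrow(1)$ are fine and essentially reproduce the paper's arguments, but the proposition's real content --- commuting $\cd(P,?)$ past the Milnor colimits --- is precisely where your proposal is unsupported.
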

\begin{proof}
Of course, assertion 1) implies any of the others.

$2)\Rightarrow 1)$ If $P$ is perfect in $\cd$, by Theorem A of \cite{Krause2002} and the adjoint functor argument (\cf Lemma \ref{adjoint functor argument}) we know that $\cx=\Tria(P)$ is an aisle in $\cd$. Moreover, assumption 2.3) says that $\cx$ is a smashing subcategory. Thanks to Lemma \ref{truncating special objects and from local to global via smashing}, it suffices to prove that $P$ is compact in $\cx$. For this, we will use the following facts:
\begin{enumerate}[a)]
\item Every object of $\cx$ is the Milnor colimit of a sequence 
\[X_{0}\arr{f_{0}}X_{1}\arr{f_{1}}X_{2}\arr{f_{2}}\dots
\]
where $X_{0}$ as well as each mapping cone $\cone(f_{n})$ is a small coproducts of shifts of $P$.
\item Thanks to the proof of \cite[Theorem A]{Krause2002} (\cf also the proof of \cite[Theorem 2.2]{Souto2004}), if $\{X_{n}, f_{n}\}_{n\geq 0}$ is a direct system as in a) we know that the canonical morphism
\[\lid\cd(P,X_{n})\ra\cd(P,\Mcolim X_{n})
\]
is an isomorphism.
\item Hypothesis 2.2) implies that, for any fixed natural number $m\geq 0$, the functor $\cd(P,?)$ preserves small coproducts of objects of $\mbox{Sum}(\{P[n]\}_{n\in\Z})^{*m}$.
\end{enumerate}
Let $\Mcolim X^{i}_{n}\ko i\in I$, be an arbitrary family of objects of $\cx$. Then the canonical morphism
\[\coprod_{i\in I}\cx(P,\Mcolim X^{i}_{n})\ra\cx(P,\coprod_{i\in I}\Mcolim X^{i}_{n})
\]
is the composition of the following canonical isomorphisms
\begin{align}
\coprod_{i\in I}\cx(P,\Mcolim X^{i}_{n})\cong\coprod_{i\in I}\lid\cx(P,X^{i}_{n})\cong\lid\coprod_{i\in I}\cx(P,X^{i}_{n})\cong \nonumber \\
\cong\lid\cx(P,\coprod_{i\in I}X^{i}_{n})\cong \cx(P,\Mcolim \coprod_{i\in I}X^{i}_{n})\cong\cx(P,\coprod_{i\in I}\Mcolim X^{i}_{n}) \nonumber
\end{align}
Hence, $P$ is compact in the smashing subcategory $\cx$.

$3)\Rightarrow 1)$ By the adjoint functor argument we know that $\Tria(P)$ is an aisle in $\cd$, and condition 3.2) ensures that, moreover, it is a smashing subcategory. Hence Lemma \ref{truncating special objects and from local to global via smashing} finishes the proof.

$4)\Rightarrow 1)$ Of course, 4) implies 2), and 2) implies 1). However, there exists a shorter proof pointed out by B.~Keller: let $M_{i}\ko i\in I$, be a family of objects of $\cd$ and take, for each index $i\in I$, an object $Q_{i}\in\Sum(\{P[n]\}_{n\in\Z})$ together with a morphism $Q_{i}\ra M_{i}$ such that the induced morphism
\[\cd(P,Q_{i})\ra\cd(P,M_{i})
\]
is surjective. Since $P$ is superperfect, this implies that the morphism 
\[\cd(P,\coprod_{I}Q_{i})\ra\cd(P,\coprod_{I}M_{i}).
\]
is surjective. Now consider the commutative square
\[\xymatrix{\cd(P,\coprod_{I}Q_{i})\ar[r] &\cd(P,\coprod_{I}M_{i}) \\
\coprod_{I}\cd(P,Q_{i})\ar[u]_{\can}^{\wr}\ar[r] &\coprod_{I}\cd(P,M_{i})\ar[u]_{\can}
}
\]
The first vertical arrow is an isomorphism by assumption 4.2, and the horizontal arrows are surjections. Then, the second vertical arrow is surjective. But, of course, it is also injective, and so bijective.
\end{proof}

\section{B. Keller's Morita theory for derived categories}\label{B. Keller's Morita theory for derived categories}
\addcontentsline{lot}{section}{4.5. La teor\'ia de Morita para categor\'ias derivadas de B. Keller}

Let $\ca$ be a small dg category (\cf \cite{Keller1994a, Keller2006b}). It was proved by B.~Keller \cite{Keller1994a} that its derived category $\cd\ca$\index{$\cd\ca$} is a triangulated category compactly generated by the \emph{representable modules}\index{module!representable} $A^{\we}:=\ca(?,A)$ induced by the objects $A$ of $\ca$. Conversely, he also proved \cite[Theorem 4.3]{Keller1994a} that every algebraic triangulated category with small coproducts and with a set $\cp$ of compact generators is the derived category of a certain dg category whose set of objects is equipotent to $\cp$. 
%%This is the triangulated analogue of a theorem of P. Freyd's \cite{Freyd} and P. Gabriel's \cite{Gabriel1962} characterizing module categories among abelian categories, which is at the basis of Morita theory for module categories. Accordingly, B. Keller's theorem is at the basis of Morita theory for derived categories. Now we explain the two parts of his proof. 

The proof of Theorem 4.3 of \cite{Keller1994a} has two parts. In the first part, he proves that every algebraic triangulated category admits an enhancement, \ie comes from an exact dg category. 

\begin{definition}\label{exact dg}
According with \cite{Keller1999, Keller2006b}, we say that a dg category $\ca'$ is \emph{exact}\index{category!exact dg} if the image of the (fully faithful) Yoneda functor
\[\Zy 0\ca'\ra \cc\ca'\ko M\mapsto M^{\we}:=\ca'(?,M)
\]
is stable under shifts and extensions (in the sense of the exact structure on $\cc\ca$ in which the conflations are the degreewise split short exact sequences).
\end{definition}

Equivalently (\cf Lemma \cite[2.1]{Keller1999}), for all objects $M\ko N$ of $\ca$ and all integers $n$, the object $M^{\we}[n]$ is isomorphic to $M[n]^{\we}$ and the cone $\cone(f^{\we})$ over a morphism $f^{\we}: M^{\we}\ra N^{\we}$ is isomorphic to $\cone(f)^{\we}$ for unique objects $M[n]$ and $\cone(f)$ of $\Zy 0(\ca)$.

 If $\ca'$ is an exact dg category, then $\Zy 0\ca'$ becomes a Frobenius category and $\ul{\Zy 0\ca'}=\H 0\ca'$ is a full triangulated subcategory of $\ch\ca'$. B.~Keller has shown \cite[Example 2.2.c)]{Keller1999} that if $\cc$ is a Frobenius category with class of conflations $\ce$, then $\ul{\cc}=\H 0\ca'$ for the exact dg category $\ca'$ formed by the acyclic complexes with $\ce$-projective-injective components over $\cc$.

In the second part, he proves the following. 

\begin{proposition}
Let $\ca'$ be an exact dg category such that the associated triangulated category $\H 0\ca'$ is compactly generated by a set $\cb$ of objects. Notice that $\cb$ inherits a structure of (small) dg category, regarded as a full subcategory of $\ca'$. Then, the map
\[M\mapsto M^{\we}_{\ \ |_{\cb}}:=\ca'(?,M)_{|_{\cb}}
\]
induces a triangle equivalence
\[\H 0\ca'\arr{\sim}\cd\cb.
\]
\end{proposition}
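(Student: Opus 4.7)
The plan is to produce the equivalence by invoking Lemma \ref{detecting triangle equivalences}.2 applied to the restricted Yoneda functor. Concretely, I would first promote the assignment $M\mapsto M^{\we}_{|_{\cb}}:=\ca'(?,M)_{|_{\cb}}$ to a dg functor $F:\ca'\ra\cc\cb$, and then observe that, because $\ca'$ is exact in the sense of Definition \ref{exact dg}, $\Zy 0\ca'$ is a Frobenius category (with componentwise split conflations, via Yoneda's embedding into $\cc\ca'$), so that $\H 0\ca'=\ul{\Zy 0\ca'}$ carries its canonical triangulated structure. The functor $F$ sends conflations of $\Zy 0\ca'$ to degreewise split short exact sequences of dg $\cb$-modules, hence to triangles in $\ch\cb$ and, via the quotient $\ch\cb\ra\cd\cb$, to triangles in $\cd\cb$. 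In this way $F$ descends to a triangle functor $\H 0\ca'\ra\cd\cb$, still denoted by $F$, which sends each generator $B\in\cb$ to its representable $B^{\we}=\cb(?,B)\in\cd\cb$.

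Next I would verify the four hypotheses of Lemma \ref{detecting triangle equivalences}.2 with $\cq=\cb$. First, $F$ commutes with small coproducts: $\H 0\ca'$ has small coproducts because it is compactly generated (and they come from coproducts in the Frobenius category $\Zy 0\ca'$), and the restricted Yoneda manifestly carries coproducts of representables to componentwise coproducts of dg $\cb$-modules, which agree with the coproduct in $\cd\cb$. Second, for each $B\in\cb$ the image $F(B)=B^{\we}$ is a compact object of $\cd\cb$; this is a standard computation using that $\cd\cb(B^{\we},?)\cong \H 0(?(B))$ commutes with small coproducts. Third, for $B,B'\in\cb$ and $n\in\Z$, the map
\[
F:\H 0\ca'(B,B'[n])\ra \cd\cb(B^{\we},B'^{\we}[n])
\]
is an isomorphism: since $\cb$ is a full dg subcategory of $\ca'$, the left-hand side is $\H n\cb(B,B')$, while by the dg Yoneda lemma the right-hand side is also $\H n\cb(B,B')$, and one checks that $F$ realises this identification.

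The final ingredient is exhaustive generation on both sides. On the target, Theorem \ref{Krause on perfects} applied to the set $\{B^{\we}\}_{B\in\cb}$ of compact generators of $\cd\cb$ shows that $\cd\cb$ is exhaustively generated by them, which is condition 2.3). On the source, the hypothesis that $\cb$ consists of compact generators of $\H 0\ca'$ together with Theorem \ref{Krause on perfects} gives that $\H 0\ca'$ is itself exhaustively generated by $\cb$, which is precisely what Lemma \ref{detecting triangle equivalences}.2 requires as the exhaustive-generation hypothesis on the domain. With all items in place, that lemma immediately yields that $F:\H 0\ca'\ra\cd\cb$ is a triangle equivalence.

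The main obstacle I anticipate is the bookkeeping around the triangulated structure on $\H 0\ca'$: verifying that the canonical triangles coming from the Frobenius structure on $\Zy 0\ca'$ are sent by $F$ to distinguished triangles in $\cd\cb$, and that this identification is compatible with small coproducts. The cleanest way is to work with $F$ as a dg functor, argue that the componentwise split conflations in $\ca'$ (via Yoneda) are sent to componentwise split conflations in $\cc\cb$, and then pass to stable/derived categories; once this formal framework is set up, all remaining verifications reduce to the standard computation of $\Hom$'s between representable dg modules and to Theorem \ref{Krause on perfects}.
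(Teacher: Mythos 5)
Your overall strategy is sound and is in fact the argument this statement rests on: the thesis states the proposition without proof, as a quotation of the second half of Keller's proof of \cite[Theorem 4.3]{Keller1994a}, and that argument is exactly the one you outline — promote the restricted Yoneda map to a triangle functor $F:\H 0\ca'\ra\cd\cb$ (most cleanly by viewing $\H 0\ca'$ as a full triangulated subcategory of $\ch\ca'$ and composing restriction to $\cb$ with the localization $\ch\cb\ra\cd\cb$), compute morphisms between images of objects of $\cb$, and apply part 2) of Lemma \ref{detecting triangle equivalences} together with Theorem \ref{Krause on perfects}; this is also how the paper argues in the related Corollary \ref{remark Keller}. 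Your verifications of 2.1) and 2.2) are correct, and the only bookkeeping point about 2.3) is harmless: Theorem \ref{Krause on perfects} yields exhaustive generation by \emph{all shifts} of the chosen generators, so the lemma should be applied with the shift-closed sets $\{B[n]\}_{B\in\cb\ko n\in\Z}$ and $\{B^{\we}[n]\}_{B\in\cb\ko n\in\Z}$, which does not affect 2.1) or 2.2) since they already quantify over all shifts.

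The one step whose justification does not stand as written is the standing hypothesis of Lemma \ref{detecting triangle equivalences} that $F$ commutes with small coproducts. Coproducts in $\H 0\ca'$ exist because that category is compactly generated, but they need not be induced by coproducts in the Frobenius category $\Zy 0\ca'$ (which is not assumed to have any), nor need they be realized by chain-level coproducts of Yoneda modules, since $\ca'$ is not assumed closed under coproducts; so checking that restricted Yoneda ``carries coproducts of representables to componentwise coproducts'' says nothing about $F(\coprod_{i\in I}M_{i})$ for an arbitrary family $(M_{i})_{i\in I}$ of objects of $\H 0\ca'$. The correct verification is the one place where compactness of the objects of $\cb$ in $\H 0\ca'$ (and not merely the fact that they generate) is used for the functor itself: since coproducts of dg $\cb$-modules compute coproducts in $\cd\cb$ (cohomology commutes with them), the comparison morphism $\coprod_{I}FM_{i}\ra F(\coprod_{I}M_{i})$ is invertible in $\cd\cb$ if and only if, for every $B\in\cb$ and every $n\in\Z$, the map $\coprod_{I}\H n\ca'(B,M_{i})\ra\H n\ca'(B,\coprod_{I}M_{i})$ is bijective; because $\ca'$ is exact one has $\H n\ca'(B,M)\cong\H 0\ca'(B,M[n])$, and since the shift preserves coproducts this bijectivity is precisely the compactness of $B$ in $\H 0\ca'$. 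With this repair the application of part 2) of Lemma \ref{detecting triangle equivalences} goes through and your proof is complete.
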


The dg category associated to the Frobenius category in the first part of the proof of \cite[Theorem 4.3]{Keller1994a} is not very handy. However, many times in practice we are already like in the second step of the proof, which allows us a better choice of the dg category. In what follows, we will recall how this better choice can be made.

Recall that for a dg category $\ca$ there exists an exact dg category $\cc_{dg}\ca$\index{$\cc_{dg}\ca$} \cite{Keller2006b} such that the category of right dg $\ca$-modules $\cc\ca$\index{$\cc\ca$} is $\Zy 0(\cc_{dg}\ca)$ and the category of dg modules up to homotopy $\ch\ca$ is $\H 0(\cc_{dg}\ca)$. Recall also that there is a triangulated TTF triple $(\ch_{\bf{p}}\ca\ko \cn\ko\ch_{\bf{i}}\ca)$ on $\ch\ca$ such that $\cn$ is the full subcategory of acyclic objects. $\ch_{\bf{p}}\ca$\index{$\ch_{\bf{p}}\ca$} is the category of the \emph{$\ch$-projective}\index{module!$\ch$-projective} or \emph{cofibrant}\index{module!cofibrant} modules and $\ch_{\bf{i}}\ca$\index{$\ch_{\bf{i}}\ca$} is the category of the \emph{$\ch$-injective}\index{module!$\ch$-injective} or \emph{fibrant}\index{module!fibrant} modules. Notice that the composition
\[\ch_{\bf{i}}\ca\hookrightarrow\ch\ca\ra\ch\ca/\cn=\cd\ca
\]
of the projection with the inclusion is a triangle equivalence. A quasi-inverse will be denoted by
\[\bf{i}:\cd\ca\arr{\sim}\ch_{\bf{i}}\ca
\]
and will be called the \emph{$\ch$-injective resolution}\index{functor!$\ch$-injective resolution}\index{$\bf{i}$} (or \emph{fibrant resolution}\index{functor!fibrant resolution}) functor. Similarly, we have the \emph{$\ch$-projective}\index{functor!$\ch$-projective resolution}\index{$\bf{p}$} (or \emph{cofibrant resolution}\index{functor!cofibrant resolution}) functor
\[\bf{p}:\cd\ca\arr{\sim}\ch_{\bf{p}}\ca.
\]
Let $\cp$ be a set of objects of $\cd\ca$ and define $\cb$ as the dg subcategory of $\cc_{dg}\ca$ formed by the $\ch$-injective resolutions $\textbf{i}P$ of the modules $P$ of $\cp$. Then we have a dg $\cb$-$\ca$-bimodule $X$, defined by
\[X(A,B):=B(A)
\]
for $A$ in $\ca$ and for $B$ in $\cb$, and a pair $(?\otimes_{\cb}X,\ch om_{\ca}(X,?))$ of adjoint dg functors
\[\xymatrix{\cc_{dg}\ca\ar@<1ex>[d]^{\ch om_{\ca}(X,?)} \\
\cc_{dg}\cb\ar@<1ex>[u]^{?\otimes_{\cb}X}
}
\]
For instance, $\ch om_{\ca}(X,?)$\index{$\ch om_{\ca}(X,?)$} is defined by $\ch om_{\ca}(X,M):=(\cc_{dg}\ca)(?,M)_{|_{\cb}}$ for $M$ in $\cc_{dg}\ca$. These functors induce a pair of adjoint triangle functors between the corresponding \emph{categories up to homotopy}\index{category!up to homotopy} \cite{Keller1994a, Keller2006b}
\[\xymatrix{\ch\ca\ar@<1ex>[d]^{\ch om_{\ca}(X,?)} \\
\ch\cb\ar@<1ex>[u]^{?\otimes_{\cb}X}.
}
\]
The \emph{total right derived functor}\index{functor!total right derived} $\textbf{R}\ch om_{\ca}(X,?)$\index{$\textbf{R}\ch om_{\ca}(X,?)$} is the composition
\[\cd\ca\arr{\bf i}\ch_{\bf i}\ca\hookrightarrow\ch\ca\arr{\ch om_{\ca}(X,?)}\ch\cb\ra\cd\cb,
\]
and the \emph{total left derived functor}\index{functor!total left derived} $?\otimes^{\textbf{L}}_{\cb}X$\index{$?\otimes^{\textbf{L}}_{\cb}X$} is the composition
\[\cd\cb\arr{\bf p}\ch_{\bf p}\ca\hookrightarrow\ch\cb\arr{?\otimes_{\cb}X}\ch\ca\ra\cd\ca.
\]
They form a pair of adjoint triangle functors at the level of derived categories
\[\xymatrix{\cd\ca\ar@<1ex>[d]^{\textbf{R}\ch om_{\ca}(X,?)} \\
\cd\cb\ar@<1ex>[u]^{?\otimes_{\cb}^\textbf{L}X}
}
\]

The following is an easy consequence of Proposition \ref{B. Keller's Morita theory for derived categories}.

\begin{corollary}\label{remark Keller}
Assume that the objects of $\cp$ are compact in the full triangulated subcategory $\Tria(\cp)$ of $\cd\ca$. Then:
\begin{enumerate}[1)]
\item the functors $(?\otimes_{\cb}^\textbf{L}X,\textbf{R}\ch om_{\ca}(X,?))$ induce mutually quasi-inverse triangle equivalences
\[\xymatrix{\Tria(\cp)\ar@<1ex>[rr]^{\textbf{R}\ch om_{\ca}(X,?)} && \cd\cb\ar@<1ex>[ll]^{?\otimes_{\cb}^\textbf{L}X}
}
\]
which gives a bijection between the objects of $\cp$ and the representable dg $\cb$-modules,
\item $\Tria(\cp)$ is an aisle in $\cd\ca$ and, for each right dg $\ca$-module $M$, the triangle of $\cd\ca$ associated to the t-structure $(\Tria(\cp),\Tria(\cp)^{^\bot})$ is 
\[\textbf{R}\ch om_{\ca}(X,M)\otimes^{\bf L}_{\cb}X\arr{\delta_{M}}M\ra M'\ra (\textbf{R}\ch om_{\ca}(X,M)\otimes^{\bf L}_{\cb}X)[1],
\]
where $\delta$ is the counit of the adjunction $(?\otimes_{\cb}^\textbf{L}X,\textbf{R}\ch om_{\ca}(X,?))$.
\end{enumerate}
\end{corollary}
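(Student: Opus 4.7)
The plan is to apply the recognition criterion for triangle equivalences (Lemma~\ref{detecting triangle equivalences}) to the restriction of $?\otimes_{\cb}^{\mathbf{L}}X$ to $\cd\cb$, and then to use the resulting equivalence to build the torsion triangle explicitly, which at the same time will prove that $\Tria(\cp)$ is an aisle.

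First, I would identify the image of representables. For each $P\in\cp$, let $B_{P}:=\mathbf{i}P$, regarded as an object of $\cb$, and let $B_{P}^{\we}=\cb(?,B_{P})$ be the corresponding representable dg $\cb$-module. The definition of $X$ gives $B_{P}^{\we}\otimes_{\cb}X\cong B_{P}$ in $\cc\ca$, and since $B_{P}^{\we}$ is cofibrant, we obtain a natural isomorphism $B_{P}^{\we}\otimes_{\cb}^{\mathbf{L}}X\cong B_{P}$ in $\cd\ca$, hence isomorphic to $P$. Next, for $P,Q\in\cp$ and $n\in\Z$, the functor $?\otimes_{\cb}^{\mathbf{L}}X$ induces the chain of identifications
\[
(\cd\cb)(B_{P}^{\we},B_{Q}^{\we}[n])\cong \mathrm{H}^{n}\cb(B_{P},B_{Q})=\mathrm{H}^{n}(\cc_{dg}\ca)(\mathbf{i}P,\mathbf{i}Q)\cong (\cd\ca)(P,Q[n]),
\]
which is precisely the map induced on morphisms by the derived tensor product. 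Thus $?\otimes_{\cb}^{\mathbf{L}}X$ is fully faithful on the set $\{B_{P}^{\we}\}_{P\in\cp}$.

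Second, I would invoke Lemma~\ref{detecting triangle equivalences}. The representables $\{B_{P}^{\we}\}_{P\in\cp}$ are compact generators of $\cd\cb$, so $\cd\cb$ satisfies the principle of infinite d\'evissage (indeed is exhaustively generated) with respect to this set (Theorem~\ref{Krause on perfects}). The functor $?\otimes_{\cb}^{\mathbf{L}}X$ preserves small coproducts, sends these compact generators to the objects of $\cp$ (which are compact in $\Tria(\cp)$ by hypothesis), and is fully faithful on them by the previous paragraph. Part~1) of Lemma~\ref{detecting triangle equivalences} yields full faithfulness of $?\otimes_{\cb}^{\mathbf{L}}X:\cd\cb\to\cd\ca$, and its essential image is a full triangulated subcategory closed under small coproducts containing $\cp$, hence equals $\Tria(\cp)$. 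Therefore $?\otimes_{\cb}^{\mathbf{L}}X$ and $\mathbf{R}\mathcal{H}om_{\ca}(X,?)$ restrict to mutually quasi-inverse triangle equivalences between $\cd\cb$ and $\Tria(\cp)$, sending $B_{P}^{\we}$ to $P$. This proves~1).

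Finally, for~2), I would fix $M\in\cd\ca$ and complete the counit $\delta_{M}:\mathbf{R}\mathcal{H}om_{\ca}(X,M)\otimes_{\cb}^{\mathbf{L}}X\to M$ to a triangle
\[
\mathbf{R}\mathcal{H}om_{\ca}(X,M)\otimes_{\cb}^{\mathbf{L}}X\arr{\delta_{M}}M\ra M'\ra(\mathbf{R}\mathcal{H}om_{\ca}(X,M)\otimes_{\cb}^{\mathbf{L}}X)[1].
\]
The left-hand term lies in $\Tria(\cp)$ by part~1). Applying $\mathbf{R}\mathcal{H}om_{\ca}(X,?)$ to this triangle and using that the counit of the adjunction restricted to $\Tria(\cp)$ is an isomorphism (again by part~1), i.e.\ that $\mathbf{R}\mathcal{H}om_{\ca}(X,\delta_{M})$ is an isomorphism, yields $\mathbf{R}\mathcal{H}om_{\ca}(X,M')=0$. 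By Lemma~\ref{right orthogonal and devissage} and adjunction, this means that $(\cd\ca)(P[n],M')=0$ for all $P\in\cp$ and $n\in\Z$, i.e.\ $M'\in\Tria(\cp)^{\bot}$. This simultaneously exhibits the torsion triangle of $M$ and shows that $\Tria(\cp)$ is an aisle with the right adjoint $M\mapsto\mathbf{R}\mathcal{H}om_{\ca}(X,M)\otimes_{\cb}^{\mathbf{L}}X$ to the inclusion.

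The only delicate point is verifying that the counit is an isomorphism on $\Tria(\cp)$; but this is automatic once part~1) is established, since a quasi-inverse of a triangle equivalence satisfies both unit and counit isomorphism conditions, so no additional verification is needed beyond the compactness hypothesis on $\cp$ that fed Lemma~\ref{detecting triangle equivalences}.
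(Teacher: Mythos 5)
Your argument is correct in substance, but it follows the route the paper only sketches in one sentence (``Alternatively, \dots'') rather than its main proof. For part 1) the paper's primary argument never invokes Lemma \ref{detecting triangle equivalences}: it transports $\Tria(\cp)$ along the $\ch$-injective resolution equivalence $\textbf{i}:\cd\ca\arr{\sim}\ch_{\bf i}\ca$, recognizes the image as the $\H 0$-category of an exact dg subcategory of $\cc_{dg}\ca$ compactly generated by $\cb$, and then quotes Keller's Morita proposition of subsection \ref{B. Keller's Morita theory for derived categories} to get the equivalence with $\cd\cb$; the containment of the image of $?\otimes^{\bf L}_{\cb}X$ in $\Tria(\cp)$ is then obtained by adjunction from the adjoint functor argument. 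Your route (d\'evissage of $\cd\cb$ with respect to the representables, Theorem \ref{Krause on perfects}, Lemma \ref{detecting triangle equivalences}) is the alternative the paper mentions, and it has the advantage of staying entirely at the level of derived categories, at the price of reproving fully-faithfulness by hand. For part 2) your argument is essentially the paper's: both form the triangle on the counit $\delta_{M}$ and deduce $\textbf{R}\ch om_{\ca}(X,M')=0$ from fully-faithfulness of the left adjoint; you conclude $M'\in\Tria(\cp)^{\bot}$ by adjunction plus Lemma \ref{right orthogonal and devissage}, while the paper computes the cohomology $(\cd\ca)(P,M'[n])\cong\H n(\cc_{dg}\ca)(P,\textbf{i}M')$ directly --- the same content.

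One point needs tightening. Lemma \ref{detecting triangle equivalences} 1) requires $FQ$ to be compact in the \emph{target} category, and the hypothesis of the corollary only gives compactness of the objects of $\cp$ in $\Tria(\cp)$, not in $\cd\ca$ (this distinction is exactly the one the chapter is careful about). So you cannot apply the lemma to $?\otimes^{\bf L}_{\cb}X:\cd\cb\ra\cd\ca$ as stated; you must first show, by the d\'evissage argument of Lemma \ref{e implies d implies g} (the subcategory of objects of $\cd\cb$ sent into $\Tria(\cp)$ is triangulated, closed under small coproducts and contains the representables), that the image lands in $\Tria(\cp)$, and then apply the lemma to the corestriction $\cd\cb\ra\Tria(\cp)$. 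Since $\Tria(\cp)$ is full in $\cd\ca$ and closed under small coproducts, this yields the same fully-faithfulness statement, and the same d\'evissage also supplies the inclusion of the essential image in $\Tria(\cp)$ that your phrase ``hence equals $\Tria(\cp)$'' silently uses. With that reordering the proof is complete.
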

\begin{proof}
1) Since the $\ch$-injective resolution functor $\bf i:\cd\ca\arr{\sim}\ch_{\bf i}\ca$ is a triangle equivalence, it induces a triangle equivalence between $\Tria(\cp)$ and a certain full triangulated subcategory of $\ch_{\bf i}\ca$. This subcategory is algebraic, \ie the stable category $\ul{\cc}$ of a certain Frobenius category $\cc$, because it is a subcategory of $\ch\ca$.  Since $\ch\ca$ is the $\H 0$-category of the exact dg category $\cc_{dg}\ca$, then $\ul{\cc}$ is the $\H 0$-category of an exact dg subcategory of $\cc_{dg}\ca$. Moreover,  $\ul{\cc}$ is compactly generated by the objects of $\cb$. Then, by Proposition \ref{B. Keller's Morita theory for derived categories} the restriction of $\ch om_{\ca}(X,?)$ to $\ul{\cc}$ induces a triangle equivalence
\[\ul{\cc}\arr{\ch om_{\ca}(X,?)}\ch\cb\ra\cd\cb.
\]
The picture is:
\[\xymatrix{ & \H 0(\cc_{dg}\ca)=\ch\ca & \\
\cd\ca\ar[r]^{\bf i}_{\sim} & \ch_{\bf i}\ca\ar@{^(->}[u] & \\
\Tria(\cp)\ar[r]_{\sim}\ar@{^(->}[u] & \ul{\cc}\ar[r]_{\sim}\ar@{^(->}[u] & \cd\cb
}
\]
The composition of the bottom arrows is the restriction of $\textbf{R}\ch om_{\ca}(X,?)$ to $\Tria(\cp)$. Notice that by the adjoint functor argument (\cf Lemma \ref{adjoint functor argument}) $\Tria(\cp)$ is an aisle, and so $\Tria(\cp)=\ ^{\bot}(\Tria(\cp)^{\bot})$. Finally, by adjunction the image of $?\otimes_{\cb}^\textbf{L}X$ is in $\ ^{\bot}(\Tria(\cp)^{\bot})=\Tria(\cp)$. Alternatively, we can also prove that the image of $?\otimes_{\cb}^\textbf{L}X$ is contained in $\Tria(\cp)$ by using Lemma \ref{e implies d implies g}, and then use Theorem \ref{Krause on perfects} and Lemma \ref{detecting triangle equivalences} to show that
\[?\otimes^{\L}_{\cb}X:\cd\cb\ra\Tria(\cp)
\]
is an equivalence.

2) Since $?\otimes^\textbf{L}_{\cb}X:\cd\cb\ra\cd\ca$ is fully faithful, the unit $\eta$ of the adjunction $(?\otimes_{\cb}^\textbf{L}X,\textbf{R}\ch om_{\ca}(X,?))$ is an isomorphism, thanks to Lemma \ref{properties of adjunctions}. Then, when we apply the functor $?\otimes_{\cb}^\textbf{L}X\circ \textbf{R}\ch om_{\ca}(X,?)$ to the counit $\delta$ we get an isomorphism. This shows that for each module $M$ in $\cd\ca$, the triangle 
\[\textbf{R}\ch om_{\ca}(X,M)\otimes^{\bf L}_{\cb}X\arr{\delta_{M}}M\ra M'\ra (\textbf{R}\ch om_{\ca}(X,M)\otimes^{\bf L}_{\cb}X)[1]
\]
of $\cd\ca$ satisfies that $\textbf{R}\ch om_{\ca}(X,M')\otimes^{\bf L}_{\cb}X=0$. Since $?\otimes^{\bf L}_{\cb}X$ is fully faithful, then $\textbf{R}\ch om_{\ca}(X,M')=0$. That is to say,
\[\textbf{R}\ch om_{\ca}(X,M')(P)=(\cc_{dg}\ca)(P,\textbf{i}M')
\]
is acyclic for each object $P$ in $\cp$. But then, we have
\[\H n(\cc_{dg}\ca)(P,\textbf{i}M')=(\ch\ca)(P,\textbf{i}M'[n])\cong(\cd\ca)(P,M'[n])=0
\]
for each object $P$ of $\cp$ and each integer $n\in\Z$. This implies, by infinite d\'{e}vissage, that $M'$ belongs to the coaisle $\Tria(\cp)^{\bot}$ of $\Tria(\cp)$.
\end{proof}

\begin{remark}
This result generalizes \cite[Theorem 1.6]{Jorgensen2006} and \cite[Theorem 2.1]{DwyerGreenlees}. Indeed, if $\ca$ is the dg category associated to the dg $k$-algebra $A$ and the set $\cp$ has only one element $P$, then $\cb$ is the dg category associated to the dg algebra $B:=(\cc_{dg}A)(\textbf{i}P,\textbf{i}P)$, the dg $\cb$-$\ca$-bimodule $X$ corresponds to the dg $B$-$A$-bimodule $\textbf{i}P$ and the triangle equivalence
\[\textbf{R}\ch om_{\ca}(X,?):\Tria(P)\arr{\sim}\cd B
\]
is given by $\textbf{R}\Hom_{A}(\textbf{i}P,?)$. 
\end{remark}

\section{Some constructions of t-structures}\label{Some constructions of t-structures}
\addcontentsline{lot}{section}{4.6. Algunas construcciones de t-estructuras}

Theorem \ref{Krause on perfects} shows a way of constructing t-structures. Indeed, it says that a set of `special' objects of a triangulated category with small coproducts gives rise to a triangulated aisle. Now, we will show that sometimes we can weaken the conditions on the set of objects if we increase the requirements on the ambient triangulated category. This will be done by assuming an underlying model. First, we need some previous terminology concerning homotopical algebra \cite{Hirschhorn2003, Hovey1999}.

\begin{definition}\label{kappa filtered}
Let $\kappa$ be a cardinal. An ordinal $\lambda$ is \emph{$\kappa$-filtered}\index{ordinal!$\kappa$-filtered} if it is a limit ordinal and, if $\cs$ is a subset of $\lambda$ with cardinality $|\cs|\leq\kappa$, then  its supreme satisfies $\mbox{sup}(\cs)<\lambda$.
\end{definition}

\begin{definition}\label{lambda sequence}
Let $\cc$ be a category with small colimits and let $\lambda$ be an ordinal (regarded as a category). A \emph{$\lambda$-sequence}\index{sequence!$\lambda$-} in $\cc$ is a colimit-preserving functor $X:\lambda\ra\cc$, depicted by
\[X_{0}\ra X_{1}\ra\dots\ra X_{\beta}\ra\dots
\]
Since $X$ preserves colimits, for all limit ordinals $\gamma<\lambda$, the induced map
\[\text{colim}_{\beta<\gamma}X_{\beta}\ra X_{\gamma}
\]
is an isomorphism. We say that the map
\[X_{0}\ra\text{colim}_{\beta<\lambda}X_{\beta}
\]
is the \emph{composition}\index{composition of a $\lambda$-sequence}\index{sequence!$\lambda$-!composition} of the $\lambda$-sequence $X$. 
\end{definition}

\begin{definition}\label{smallness}
Let $\cc$ be a category with small colimits, $\ci$ a class of morphisms of $\cc$, $M$ an object of $\cc$ and $\kappa$ a cardinal. We say that $M$ is \emph{$\kappa$-small relative to $\ci$}\index{object!$\kappa$-small relative} if, for all $\kappa$-filtered ordinals $\lambda$ and all $\lambda$-sequences $X:\lambda\ra\cc$ such that each map $X_{\beta}\ra X_{\beta+1}$ is in $\ci$ with $\beta+1<\lambda$, the canonical map of sets
\[\text{colim}_{\beta<\lambda}\cc(M,X_{\beta})\ra\cc(M,\text{colim}_{\beta<\lambda}X_{\beta})
\]
is a bijection. We say that $M$ is \emph{small relative to $\ci$}\index{object!small relative} if it is $\kappa$-small relative to $\ci$ for some $\kappa$. We say that $M$ is \emph{small}\index{object!small} if it is small relative to the class of all the morphisms of $\cc$.
\end{definition}

\begin{definition}\label{cell inj proj}
Let $\cc$ be a category with small colimits and let $\ci$ be a set of morphisms in $\cc$. A \emph{relative $\ci$-cell complex}\index{relative $\ci$-cell complex}\index{complex!relative $\ci$-cell} is a transfinite composition of pushouts of elements of $\ci$. That is, if $f:M\ra N$ is a relative $\ci$-cell complex, then there is an ordinal $\lambda$ and a $\lambda$-sequence $X:\lambda\ra\cc$ such that $f$ is the composition of $X$ and such that, for each $\beta$ with $\beta+1<\lambda$, there is a cocartesian square as follows,
\[\xymatrix{Y_{\beta}\ar[r]\ar[d]^{g_{\beta}} & X_{\beta}\ar[d] \\
Z_{\beta}\ar[r] & X_{\beta+1}
}
\]
with $g_{\beta}$ in $\ci$. The class of relative $\ci$-cell complexes is denoted by $\ci$-cell\index{$\ci$-cell}. An object $M$ is an \emph{$\ci$-cell complex}\index{complex!$\ci$-cell} if the map from the initial object $0\ra M$ is a relative $\ci$-cell complex. We say that a morphism $f$ is \emph{$\ci$-injective}\index{morphism!$\ci$-injective} if it has the right lifting property with respect to every morphism in $\ci$, \ie if for every commutative diagram
\[\xymatrix{X\ar[r]^x\ar[d]_g & M\ar[d]^{f} \\
Y\ar[r]^y & N
}
\]
with $g$ in $\ci$ there exists a lifting $h:Y\ra M$ such that $hg=x$ and $fh=y$. The class of $\ci$-injective morphisms is denoted $\ci$-inj\index{$\ci$-inj}. Given a class $\ci'$ of morphisms of $\cc$, a morphism $g$ is \emph{$\ci'$-projective}\index{morphism!$\ci'$-projective} if it has the left lifting property with respect to every morphism in $\ci'$, \ie if for every commutative diagram
\[\xymatrix{X\ar[r]^x\ar[d]_g & M\ar[d]^{f} \\
Y\ar[r]^y & N
}
\]
with $f$ in $\ci'$ there exists a lifting $h:Y\ra M$ such that $hg=x$ and $fh=y$. The class of $\ci'$-projective morphisms is denoted $\ci'$-proj\index{$\ci'$-proj}. A morphism is an \emph{$\ci$-cofibration}\index{morphism!$\ci$-cofibration} if it is in ($\ci$-inj)-proj. The class of $\ci$-cofibrant morphisms is denoted $\ci$-cof\index{$\ci$-cof}. 
\end{definition}

\begin{lemma}\label{cell is cof}
With the notation of Definition \ref{cell inj proj}, we have that $\ci$-cell $\subseteq\ci$-cof, \ie every relative $\ci$-cell complex is an $\ci$-cofibration.
\end{lemma}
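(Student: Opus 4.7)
The plan is to establish the inclusion in two independent steps and then combine them. First I would prove an auxiliary closure property: the class $\ci$-cof is closed under pushouts. Namely, given a cocartesian square
\[\xymatrix{A\ar[r]\ar[d]_{g} & X\ar[d]^{g'} \\ B\ar[r] & Y}\]
with $g\in\ci$-cof (in particular if $g\in\ci$), the induced map $g'$ belongs to $\ci$-cof. To check this against an arbitrary lifting problem
\[\xymatrix{X\ar[r]^{x}\ar[d]_{g'} & M\ar[d]^{f} \\ Y\ar[r]^{y} & N}\]
with $f\in\ci$-inj, I would paste with the pushout square to obtain an outer square along $g$; a lift $B\to M$ exists by hypothesis on $g$, and the universal property of the pushout then furnishes the required lift $Y\to M$.

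Secondly, I would show that $\ci$-cof is closed under transfinite composition. Suppose $X:\lambda\to\cc$ is a $\lambda$-sequence with every $X_{\beta}\to X_{\beta+1}$ in $\ci$-cof, and consider a lifting problem against $f\in\ci$-inj of the form
\[\xymatrix{X_{0}\ar[r]^{x_{0}}\ar[d] & M\ar[d]^{f} \\ \mathrm{colim}_{\beta<\lambda}X_{\beta}\ar[r]^(.65){y} & N}\]
I would construct compatible lifts $h_{\beta}:X_{\beta}\to M$ with $fh_{\beta}=y\circ\iota_{\beta}$ by transfinite recursion: set $h_{0}=x_{0}$; at successor stage use the lifting property of $X_{\beta}\to X_{\beta+1}$ against $f$ to build $h_{\beta+1}$ extending $h_{\beta}$; at limit stage $\gamma$ assemble $h_{\gamma}:X_{\gamma}=\mathrm{colim}_{\beta<\gamma}X_{\beta}\to M$ from the cone $(h_{\beta})_{\beta<\gamma}$. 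The universal property of the colimit over $\lambda$ then gives a single lift $\mathrm{colim}_{\beta<\lambda}X_{\beta}\to M$ solving the original problem.

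Finally, I would combine the two steps. By definition, a relative $\ci$-cell complex $f$ is the composition of a $\lambda$-sequence each of whose successor stages is obtained by pushout of a morphism in $\ci$. The first step ensures each successor map lies in $\ci$-cof, and the second step then ensures that the composition $f$ itself lies in $\ci$-cof, which is exactly the desired conclusion.

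The routine but only mildly delicate point is the transfinite induction in the second step: one must be careful that the lifts $h_{\beta}$ are chosen coherently so that they assemble through each limit ordinal. Everything else reduces to formal manipulations with universal properties of pushouts and colimits, together with the definition of the right lifting property.
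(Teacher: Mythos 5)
Your argument is correct: the two closure properties you establish (the class of maps with the left lifting property against $\ci$-inj is stable under pushout and under transfinite composition) are exactly what is needed, and the transfinite recursion at successor and limit stages is carried out coherently. The paper itself gives no argument but merely cites Hovey's Lemma 2.1.10, and your proof is precisely the standard one found there, so there is no substantive difference in approach.
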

\begin{proof}
\emph{Cf.} \cite[Lemma 2.1.10]{Hovey1999}.
\end{proof}

\begin{notation}\label{I Q}
Let $\cc$ be a Frobenius category with small colimits. For a set $\cq$ of objects of $\cc$, we define $\ci_{\cq}$\index{$\ci_{\cq}$} to be the set formed by the inflations $i_{Q}:Q\ra IQ$ where $Q$ runs through $\cq$. 
\end{notation}

The following theorem appears in \cite{Nicolas2007b} and shows that under mild smallness assumptions the existence of certain aisles is guaranteed. We include its proof here for the sake of completeness.

\begin{theorem}\label{localizations via small object argument}
Let $\cc$ be a Frobenius category with small colimits and let $\cq$ be a set of objects of $\cc$ closed under shifts (in the stable category $\ul{\cc}$) and such that its objects are small relative to $\ci_{\cq}$-cell. Then:
\begin{enumerate}[1)]
\item $\Tria(\cq)$ is an aisle in $\ul{\cc}$ with coaisle given by the class $\cn=\cq^{\bot_{\ul{\cc}}}$. 
\item The objects of $\Tria(\cq)$ are precisely those isomorphic to an $\ci_{\cq}$-cell complex.
\end{enumerate}
\end{theorem}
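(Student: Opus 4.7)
The strategy is to run Quillen's small object argument in $\cc$ to produce, for each object $M$, the truncation triangle witnessing the putative t-structure $(\Tria(\cq),\cn)$ on $\ul{\cc}$; the smallness hypothesis on objects of $\cq$ relative to $\ci_\cq$-cell is precisely what makes the argument terminate. Concretely, I factor the morphism $M\to 0$ as a relative $\ci_\cq$-cell complex $\alpha\colon M\to N$ followed by an $\ci_\cq$-injective $N\to 0$. Since each generating map $i_Q\colon Q\to IQ$ is an inflation in the Frobenius structure of $\cc$, and inflations are stable under pushouts and (by the small-colimit hypothesis) transfinite compositions, $\alpha$ is itself an inflation in $\cc$ with a well-defined cokernel $C$, yielding a canonical triangle $M\to N\to C\to M[1]$ in $\ul{\cc}$.

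Next I verify that $N\in\cn$ and $C\in\Tria(\cq)$. For $N\in\cn$: any $f\colon Q\to N$ with $Q\in\cq$ admits, by $\ci_\cq$-injectivity of $N\to 0$, a lift $h\colon IQ\to N$ along $i_Q$, so $f$ factors through the injective $IQ$ and vanishes in $\ul{\cc}$. For $C\in\Tria(\cq)$: present $\alpha$ as a $\lambda$-sequence $(X_\beta)_{\beta<\lambda}$ with $X_0=M$ and set $C_\beta:=X_\beta/M$. Because each pushout defining $X_{\beta+1}$ involves only the cell $Q_\beta\to IQ_\beta$ and does not touch $M$, the tower $(C_\beta)$ is itself a relative $\ci_\cq$-cell complex starting from $C_0=0$; in particular $C=C_\lambda$ is an $\ci_\cq$-cell complex. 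A transfinite induction then gives $C_\beta\in\Tria(\cq)$: at successor ordinals the conflation $C_\beta\to C_{\beta+1}\to Q_\beta[1]$ in $\cc$ becomes a triangle in $\ul{\cc}$, and $Q_\beta[1]\in\cq$ by shift-closure; at a limit ordinal $\gamma$, I realise $C_\gamma=\colim_{\beta<\gamma}C_\beta$ as a Milnor colimit in $\ul{\cc}$ via the classical $\id-\sigma$ conflation in $\cc$, keeping $C_\gamma$ in $\Tria(\cq)$ by its closure under small coproducts and extensions.

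Part (1) follows at once: the triangle $C[-1]\to M\to N\to C$ exhibits $M$ as an extension of $N\in\cn$ by $C[-1]\in\Tria(\cq)$ (again by shift-closure), and Lemma \ref{right orthogonal and devissage} combined with shift-closure of $\cq$ identifies $\Tria(\cq)^\bot$ with $\cq^\bot=\cn$, so $(\Tria(\cq),\cn)$ is indeed a t-structure. For part (2), the same transfinite induction applied with $X_0=0$ shows every $\ci_\cq$-cell complex lies in $\Tria(\cq)$. Conversely, given $M\in\Tria(\cq)$, I apply the construction to $M[-1]\in\Tria(\cq)$ to obtain a triangle $M[-1]\to N'\to C'\to M$ with $N'\in\cn$ and $C'$ an $\ci_\cq$-cell complex in $\Tria(\cq)$; then $N'$ is an extension of two members of $\Tria(\cq)$ while simultaneously lying in $\cn=\Tria(\cq)^\bot$, hence $N'=0$ in $\ul{\cc}$, forcing $C'\cong M$ and realising $M$ as an $\ci_\cq$-cell complex up to isomorphism.

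The main obstacle will be the transfinite induction at limit ordinals, where one must match the colimit-cokernel computed in $\cc$ with a triangulated Milnor-type colimit in $\ul{\cc}$: for $\omega$-sequences this is the direct $\id-\sigma$ conflation, while for limit ordinals of higher cofinality one reduces, via a standard cofinal-subsequence argument, back to the $\omega$-case. A secondary but routine check is that inflations in the specific Frobenius category $\cc$ are genuinely stable under the transfinite compositions produced by the small object argument, a point where the small-colimits assumption on $\cc$ is used; everything else amounts to a fairly mechanical bookkeeping of the small object argument given the Frobenius-categorical reading of cells as inflations into injectives.
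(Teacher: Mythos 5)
Your overall strategy coincides with the paper's: run the small object argument on $M\to 0$ to get a relative $\ci_{\cq}$-cell complex $M\to N$ with $N\to 0$ being $\ci_{\cq}$-injective, observe that $N\in\cn$, that the cokernel is an $\ci_{\cq}$-cell complex, and deduce both the t-structure and the description of the objects of $\Tria(\cq)$ (your converse in part (2), via closure under extensions, is a harmless variant of the paper's splitting argument). The genuine gap is in the step you rightly identify as the main obstacle, namely showing that every $\ci_{\cq}$-cell complex lies in $\Tria(\cq)$ at limit ordinals. Your plan is to treat an $\omega$-indexed colimit by the Milnor conflation $\coprod C_\beta\xrightarrow{\id-\sigma}\coprod C_\beta\to C_\gamma$ and to reduce limit ordinals of higher cofinality to this case ``via a standard cofinal-subsequence argument''. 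No such reduction exists: if $\gamma$ has uncountable cofinality, every cofinal subset of $\gamma$ is uncountable, so there is no countable cofinal chain to which the Milnor construction could be applied, and a bare triangulated category has no analogue of the Milnor triangle for chains of uncountable length. Since the smallness hypothesis only gives $\kappa$-smallness for some cardinal $\kappa$, the small object argument genuinely produces $\lambda$-sequences whose limit stages include ordinals of uncountable cofinality, so this case cannot be avoided.

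The paper closes exactly this gap with a different and more substantial argument (its Lemma on $\ci_{\cq}$-cell complexes): for a limit ordinal $\beta$ it resolves the diagram $X\in\Fun(\beta,\cc)$ by coproducts of ``free'' diagrams $Y\ten\alpha$ (left adjoints to evaluation), splices the pointwise split exact sequences into an acyclic complex of diagrams, passes to the colimit using a Mittag--Leffler-type first step (a direct system of conflations whose third terms are connected by inflations has a conflation as colimit), and then invokes a second step, based on Keller's construction $K\mapsto \Zy 0(\mathbf{a}K)$ and Milnor triangles of brutal truncations, to conclude that an object admitting such an acyclic resolution by objects of $\Tria(\cq)$ lies in $\Tria(\cq)$. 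A secondary remark: your ``routine check'' that the transfinite composite $M\to N$ is an inflation is also not free of content; the paper sidesteps it by noting that $M\to N$ is an $\ci_{\cq}$-cofibration and lifting against the inflation $M\to IM$, and any direct proof of stability of inflations under transfinite composition again needs the Mittag--Leffler-type lemma above. So while your successor-step and $\omega$-step are fine, the proposal as written does not prove the key lemma, and repairing it requires an argument of the kind the paper actually gives rather than a cofinality reduction.
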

\begin{proof}
1) By infinite d\'{e}vissage, it is always true that $\Tria(\cq)^{\bot_{\ul{\cc}}}=\cq^{\bot_{\ul{\cc}}}=:\cn$. Now, we have to prove that for each object $M$ there exists a triangle
\[M\ra M'\ra M''\ra M[1]
\]
in $\ul{\cc}$ with $M'$ in $\cn$ and $M''$ in $\Tria(\cq)$. Now, given $M$, thanks to the small object argument \cite[Theorem 2.1.14]{Hovey1999} we have a relative $\ci_{\cq}$-cell complex $f:M\ra M'$ such that the morphism $M'\ra 0$ is $\ci_{\cq}$-injective, \ie $M'$ belongs to $\cn$. Now we use that fact that $f$ is an $\ci_{\cq}$-cofibration (\cf Lemma \ref{cell is cof}). This implies that $f$ is an inflation. Indeed, consider a lifting in the following diagram
\[\xymatrix{M\ar[d]_{f}\ar[r]^{i_{M}} & IM\ar[d] \\
M'\ar[r]\ar@{.>}[ur]_{g} & 0
}
\]
This proves that $f$ is a monomorphism in $\cc$ and in the ambient abelian category (\cf Remark \ref{Keller Quillen}), since $i_{M}$ is a monomorphism in that abelian category. Consider now the following commutative diagram in the ambient abelian category
\[\xymatrix{M\ar[r]^{f}\ar[d]_{\id_{M}} & M'\ar[r]^{f^c}\ar[d]^{g} & \cok f\ar[d]^{e} \\
M\ar[r]_{i_{M}} & IM\ar[r]_{p_{M}} & SM
}
\]
by using the cokernel $f^c$ of $f$. Since the right square is cartesian, then $f^c$ is a deflation. Since $f$ is a monomorphism in the ambient abelian category, then $f$ is the kernel of $f^c$ and so $f$ is an inflation. In particular, there exists a conflation $M\arr{f}M'\ra M''$ and hence a triangle $M\arr{\ol{f}}M'\ra M''\ra M[1]$ in $\ul{\cc}$. Since $f$ is a relative $\ci_{\cq}$-cell complex and $\ci_{\cq}$-cell is closed under pushouts, the cokernel $M''$ is an $\ci_{\cq}$-cell complex and, by Lemma \ref{Icell in triaq} below, it belongs to $\Tria(\cq)$.

2) The lemma below proves that every $\ci_{\cq}$-cell complex is in $\Tria(\cq)$. Conversely, given $M$ in $\Tria(\cq)$ we can form the triangle
\[M[-1]\ra N\ra C\ra M 
\]
with $N$ in $\cn$ and $C$ an $\ci_{\cq}$-cell complex, as we have shown in the proof of 1). But, since $(\Tria(\cq),\cn)$ is a t-structure and $M$ is in $\Tria(\cq)$, the morphism $M[-1]\ra N$ vanishes. This implies that $C\cong N\oplus M$. Therefore, $N$ belongs both to $\cn$ and to $\Tria(\cq)$, that is to say, $N=0$ and so $C\cong M$.
\end{proof}

\begin{lemma}\label{Icell in triaq}
Under the hypotheses of Theorem \ref{localizations via small object argument}, each $\ci_{\cq}$-cell complex belongs to $\Tria(\cq)$.
\end{lemma}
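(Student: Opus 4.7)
The strategy is transfinite induction on the ordinal $\lambda$ governing the construction of an arbitrary $\ci_{\cq}$-cell complex $M$, showing that each term $X_{\beta}$ of the defining $\lambda$-sequence $X:\lambda\ra\cc$ lies in $\Tria(\cq)$. Since $X_{0}=0$ the base case is trivial, and since $M=\mathrm{colim}_{\beta<\lambda}X_{\beta}=X_{\lambda-1}$ in the successor case (or the limit value in the other case), the claim for $M$ will follow from the inductive step at $\lambda$ itself.

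For the successor step, suppose $X_{\beta}\in\Tria(\cq)$ and $X_{\beta+1}$ is the pushout of some $i_{Q}:Q\ra IQ$ along a morphism $Q\ra X_{\beta}$. Because pushouts of inflations in an exact category are inflations with isomorphic cokernel, the square yields a conflation $X_{\beta}\ra X_{\beta+1}\ra SQ$ in $\cc$, hence a right triangle
\[X_{\beta}\ra X_{\beta+1}\ra SQ\ra X_{\beta}[1]
\]
in the stable category $\ul{\cc}$. Since $\cq$ is closed under shifts, $SQ\in\cq\subseteq\Tria(\cq)$, and the extension closure of $\Tria(\cq)$ in $\ul{\cc}$ forces $X_{\beta+1}\in\Tria(\cq)$.

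The main obstacle lies in handling limit ordinals $\gamma\leq\lambda$, where $X_{\gamma}=\mathrm{colim}_{\beta<\gamma}X_{\beta}$ is a transfinite colimit in $\cc$ which a priori need not behave well in $\ul{\cc}$. The plan is to recognize this colimit as a Milnor colimit. Because every transition map $X_{\beta}\ra X_{\beta+1}$ is an inflation (a pushout of $i_{Q}$), the total transition into the colimit admits the canonical presentation as a conflation
\[\coprod_{\beta<\gamma}X_{\beta}\xrightarrow{\,\id-\sigma\,}\coprod_{\beta<\gamma}X_{\beta}\ra X_{\gamma}
\]
in $\cc$, where $\sigma$ is built from the transition morphisms (for limit sub-ordinals one uses the canonical structural maps into the intermediate colimits, which by the inductive hypothesis are themselves already known to live in $\Tria(\cq)$). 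Passing to $\ul{\cc}$ this yields a Milnor triangle exhibiting $X_{\gamma}$ as the Milnor colimit of the system. Since $\Tria(\cq)$ is closed under small coproducts and extensions, and each $X_{\beta}\in\Tria(\cq)$ by the inductive hypothesis, we conclude $X_{\gamma}\in\Tria(\cq)$.

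Where a genuine transfinite limit of uncountable cofinality appears, one reduces to the $\omega$-case by replacing the system with a cofinal chain (possible because at each successor the quotient is a single object of $\cq$, so the construction can be reorganized into an $\omega$-indexed tower of pushouts attaching small coproducts of cells), and then applies the Milnor colimit identification above. This is the step where the conflation structure of $\cc$, rather than its mere additive structure, is essential.
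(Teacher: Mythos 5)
Your base case and successor step are fine and agree with the paper: the pushout of $i_{Q}$ along $Q\ra X_{\beta}$ is an inflation with cokernel $\Sigma Q\in\cq$, so $X_{\beta+1}$ is an extension of an object of $\cq$ by $X_{\beta}$ and lies in $\Tria(\cq)$. The gap is in the limit step, and it is precisely the step where all the work of this lemma lives. First, the telescope presentation you invoke is false beyond $\omega$: for a limit ordinal $\gamma>\omega$ the map $\id-\sigma:\coprod_{\beta<\gamma}X_{\beta}\ra\coprod_{\beta<\gamma}X_{\beta}$ built from the successor transition maps does \emph{not} have cokernel $\colim_{\beta<\gamma}X_{\beta}$. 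Take the constant system $X_{\beta}=\Z$ with identity transitions over $\gamma=\omega+\omega$: the colimit is $\Z$, but in the cokernel of $\id-\sigma$ the classes of $e_{0}$ and $e_{\omega}$ remain distinct, since $e_{0}-e_{\omega}$ is not a finite telescoping sum of elements $e_{\beta}-e_{\beta+1}$; the cokernel is $\Z\oplus\Z$. So ``Milnor colimit'' only makes sense for $\omega$-indexed (or cofinality-$\omega$) chains, and your parenthetical modification of $\sigma$ at limit sub-ordinals does not repair this. Second, your proposed reduction --- reorganizing an arbitrary $\ci_{\cq}$-cell complex into an $\omega$-indexed tower of pushouts of coproducts of cells --- is unjustified. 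That reorganization requires the domains $Q\in\cq$ to be $\omega$-small relative to $\ci_{\cq}$-cell, so that every attaching map factors through a countable stage and cells acquire a finite ``height''; but Theorem \ref{localizations via small object argument} only assumes $\kappa$-smallness for \emph{some} cardinal $\kappa$, so an attaching map at stage $\beta$ may genuinely depend on cells attached at arbitrarily high limit stages. The observation that each successor quotient is a single object of $\cq$ does not help here. If such a compression were always possible, the theorem would essentially reduce to the perfectly-generated situation of Theorem \ref{Krause on perfects}, which is exactly what this lemma is designed to avoid assuming.

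For comparison, the paper handles the limit ordinal $\beta$ quite differently: it works in the functor category $\Fun(\beta,\cc)$, resolves the restricted system by a complex, acyclic for the pointwise split exact structure, whose terms are small coproducts of objects $Y\ten\alpha$ with $Y\in\Tria(\cq)$; a Mittag-Leffler argument (the paper's first step) shows the colimit of this complex is still acyclic, and its components are then coproducts of objects of $\Tria(\cq)$; finally a separate argument (the second step, using the functor $K\mapsto \Zy 0(\mathbf{a}K)$ on $\ch(\cc)$ and a Milnor triangle for the truncated complex) shows the augmentation term lies in $\Tria(\cq)$. Note that the telescope idea does appear there, but applied to a complex graded over $\Z$, not to the transfinite chain itself. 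You would need to supply an argument of comparable strength for limit ordinals of uncountable cofinality; as written, your proof only covers cell complexes of length at most $\omega$ (or of countable cofinality after passing to a cofinal chain).
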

\begin{proof}
\noindent \emph{First step: Let $\lambda$ be an ordinal. If we have
a direct system of conflations
\[
\eps_\alpha: 0 \to X_\alpha \to Y_\alpha \to Z_\alpha \to 0 \ko
\alpha<\lambda \ko
\]
such that the structure morphisms $Z_\alpha \to Z_{\beta}$ are
inflations for all $\alpha<\beta<\lambda$, then the colimit of the
system is a conflation.} Indeed, it suffices to check that for each
injective $I$, the sequence of abelian groups
\[
0 \to \cc(\colim Z_\alpha, I) \to \cc(\colim Y_\alpha, I) \to
\cc(\colim X_\alpha, I) \to 0
\]
is exact. This follows from the Mittag-Leffler criterion \cite[0$_{\text{III}}$, 13.1]{EGAIIIa} since the
maps
\[
\cc(Z_\beta, I) \to \cc(Z_\alpha, I)
\]
are surjective for all $\alpha<\beta<\lambda$.

\noindent \emph{Second step: If we have an acyclic complex of $\cc$
\[
\ldots \to X^p \to X^{p-1} \to \ldots \to X^0 \to Y \to 0\ko
\]
then $Y$ belongs to the smallest full triangulated subcategory of $\ul{\cc}$ containing the $X^p$ and stable under countable coproducts.} Indeed, by Lemma~6.1 of \cite{Keller1990}, for each complex $K$ over $\cc$, there is a triangle
\[
\mathbf{a} K \to K \to \mathbf{i} K \to S \mathbf{a} K
\]
of $\ch(\cc)$ such that $\mathbf{i}K$ has injective components and $\mathbf{a}K$ is the colimit (in the category of complexes) of a countable sequence of componentwise split monomorphisms of acyclic complexes. The functor $\mathbf{i}K$ is the left adjoint of the inclusion into $\ch(\cc)$ of the full subcategory of complexes with injective components. Thus, it commutes with small coproducts. The composed functor
\[
F: \ch(\cc) \to \ul{\cc} \ko K \mapsto Z^0(\mathbf{a} K)
\]
is a triangle functor which commutes with small coproducts and extends the projection $\cc\to \ul{\cc}$ from $\cc$ to $\ch(\cc)$. Moreover, it vanishes on acyclic complexes. Thus, it maps the truncated complex
\[
X'=(\ldots \to X^p \to X^{p+1} \to \ldots \to X^0 \to 0 \to \ldots)
\]
to an object isomorphic to $Y$ in $\ul{\cc}$. Since $F$ commutes with small coproducts, it suffices to show that the complex $X'$ is in the smallest triangulated subcategory of $\ch(\cc)$ containing the $X^p$ and stable under countable coproducts. This holds thanks to Milnor triangle 
\[
\coprod X^{\geq p} \to \coprod X^{\geq q} \to X' \to S\coprod
X^{\geq p} \ko
\]
where $X^{\geq p}$ is the subcomplex
\[
0 \to X^p \to X^{p+1} \to \ldots \to X^0 \to 0
\]
and the leftmost morphism has the components
\[
\xymatrix{X^{\geq p} \ar[r]^-{[\id, -i]^t} & X^{\geq p}\oplus
X^{\geq p+1} \ar[r] & \coprod X^{\geq q} } \ko
\]
where $i$ is the inclusion $X^{\geq p} \to X^{\geq p+1}$.

\noindent \emph{Third step: The claim.} Let $X$ be an $\ci_{\cq}$-cell complex. Then there is an ordinal $\lambda$ and a direct system
$X_\alpha$, $\alpha\leq\lambda$, such that we have $X=X_\lambda$ and
\begin{itemize}
\item[-] $X_0=0$,
\item[-] for all $\alpha<\lambda$, the morphism $X_\alpha\to
X_{\alpha+1}$ is an inflation with cokernel in $\cq$,
\item[-] for all limit ordinals $\beta\leq \lambda$, we have
$X_\beta = \colim_{\alpha<\beta} X_\alpha$.
\end{itemize}
We will show by induction on $\beta\leq \lambda$ that $X_\beta$ belongs to
$\Tria(\cq)$. This is clear for $X_0$. Moreover, if $X_\alpha$ is in
$\Tria(\cq)$, so is $X_{\alpha+1}$. So let us assume that $\beta$ is
a limit ordinal and $X_\alpha$ belongs to $\Tria(\cq)$ for each
$\alpha<\beta$. We wish to show that $X_\beta$ belongs to
$\Tria(\cq)$. Let $\Fun(\beta,\cc)$ be the category of functors $\beta \to
\cc$. The evaluation
\[
\Fun(\beta,\cc)\to \cc \ko Y \to Y_\alpha
\]
admits a left adjoint denoted by $Z\mapsto Z\ten \alpha$. For each
$Y\in\Fun(\beta,\cc)$, the morphism
\[
\coprod_{\alpha<\beta} Y_\alpha\ten\alpha \to Y
\]
is a pointwise split epimorphism. By splicing exact sequences of the form
\[
0 \to Y' \to \coprod_{\alpha<\beta} Y_\alpha\ten\alpha \to Y \to 0
\]
we construct a complex
\[
\ldots \to X^{p} \to X^{p-1} \to \ldots \to X^0 \to X \to 0
\]
which is acyclic for the pointwise split exact structure on
$\Fun(\beta,\cc)$ and such that each $X^p$ is a small coproduct of objects
$Y\ten \alpha$, $Y\in \Tria(\cq)$, $\alpha<\beta$. By the first
step, the colimit $C$ of the above complex is still acyclic.
Moreover, the components of $C$ are small coproducts of objects
\[
\colim_\gamma (Y\ten \alpha)(\gamma) = Y
\]
%In fact, $\colim:\cc^{\beta}\ra\cc$ is a left adjoint and so it preserves coproducts. Hence
%\[\colim\left(\coprod_{\alpha<\beta}X_{\alpha}\otimes\alpha\right)=\coprod_{\alpha<\beta}\left(\colim X_{\alpha}\otimes\alpha\right)=\coprod_{\alpha<\beta}X_{\alpha}.
%\]
belonging to $\Tria(\cq)$. Thus, each component of $C$ belongs to
$\Tria(\cq)$. Now, the claim follows from the second step.
\end{proof}

The following is the `dg generalization' of \cite[Lemma 2.3.2]{Hovey1999} and shows that the smallness assumption of Theorem \ref{localizations via small object argument} is completely innocuous.

\begin{lemma}\label{smallness of dg modules}
Let $\ca$ be a small dg category and let $M$ be a right dg $\ca$-module. Consider the cardinal 
\[\kappa:=|\bigcup_{A\in\ca}MA|\cdot\left(|\bigcup_{A\in\ca}MA|+|\bigcup_{A, A'\in\ca}\ca(A,A')|+1\right).
\]
Then, $M$ is $\kappa$-small. In particular, every dg $\ca$-module is small.
\end{lemma}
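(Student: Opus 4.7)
The strategy is to adapt the classical proof of Hovey's Lemma 2.3.2 to the dg setting by exploiting the fact that all relevant structure on a right dg $\ca$-module $M$ is determined by a set of cardinality controlled by $\kappa$. The plan is to show that, for every $\kappa$-filtered ordinal $\lambda$ and every $\lambda$-sequence $X:\lambda\ra\cc\ca$, the canonical map
\[\colim_{\beta<\lambda}(\cc\ca)(M,X_{\beta})\ra(\cc\ca)\left(M,\colim_{\beta<\lambda}X_{\beta}\right)\]
is bijective. The key point is that small colimits in $\cc\ca$ are computed objectwise: for each $A\in\ca$, the graded $k$-module $(\colim X_{\beta})A$ is $\colim(X_{\beta}A)$, and analogously for the differential and the $\ca$-action. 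In particular, every element of $(\colim X_{\beta})A$ is represented by an element of $X_{\beta}A$ for some $\beta<\lambda$, and two elements of $X_{\beta}A$ agree in the colimit if and only if they already agree at some later stage.

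First I would prove surjectivity. Given a morphism $f:M\ra\colim X_{\beta}$, for every element $m\in\bigcup_{A\in\ca}MA$ choose an index $\beta_{m}<\lambda$ and a representative $\tilde f(m)\in X_{\beta_{m}}$ of $f(m)$. The set $\{\beta_{m}\}$ has cardinality at most $|\bigcup_{A\in\ca}MA|\leq\kappa$, so by the $\kappa$-filtered hypothesis there exists $\beta_{0}<\lambda$ such that every $\beta_{m}<\beta_{0}$; after transporting along the structural morphisms we may therefore assume all representatives live in $X_{\beta_{0}}$. This yields a set-theoretic lift $\tilde f$, but it need not yet be a morphism of dg $\ca$-modules: the relations $\tilde f(m+m')=\tilde f(m)+\tilde f(m')$, $\tilde f(m\cdot\phi)=\tilde f(m)\cdot\phi$ for each $\phi\in\bigcup_{A,A'}\ca(A,A')$, and $\tilde f(dm)=d\tilde f(m)$ may fail in $X_{\beta_{0}}$, although they hold in the colimit. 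Each individual failure is a witness in some $X_{\gamma}$ with $\gamma<\lambda$, and the total number of such potential failures is bounded by
\[|\bigcup_{A\in\ca}MA|^{2}+|\bigcup_{A\in\ca}MA|\cdot|\bigcup_{A,A'\in\ca}\ca(A,A')|+|\bigcup_{A\in\ca}MA|\leq\kappa.\]
Using the $\kappa$-filtered condition once more, we find a single $\beta<\lambda$ past which all these relations already hold. Composing the lift with the structural morphism $X_{\beta_{0}}\ra X_{\beta}$ gives an honest morphism $M\ra X_{\beta}$ of dg $\ca$-modules whose image in $\colim X_{\beta}$ equals $f$.

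The hard part is precisely this passage from a pointwise lift to a morphism of dg modules: one has to keep careful track of the cardinalities of the families of identities that must be forced to hold simultaneously, and this is exactly what the particular form of $\kappa$ encodes (the three summands $|\bigcup MA|$, $|\bigcup\ca(A,A')|$ and $1$ correspond respectively to additivity, $\ca$-linearity and compatibility with $d$). Once this is in place, injectivity is easier: given $g,g':M\ra X_{\beta_{0}}$ that agree in $\colim X_{\beta}$, the set of elements $m\in\bigcup_{A}MA$ on which $g(m)$ and $g'(m)$ become equal only at some later stage is of cardinality at most $|\bigcup_{A}MA|\leq\kappa$, so by $\kappa$-filteredness there exists a single $\beta\geq\beta_{0}$ at which the compositions of $g$ and $g'$ with $X_{\beta_{0}}\ra X_{\beta}$ coincide. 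This establishes $\kappa$-smallness of $M$; since any dg module over a small dg category has some such bounding cardinal, the final assertion that every dg module is small follows at once.
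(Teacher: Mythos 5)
Your proof is correct and follows essentially the same route as the paper's: lift a morphism into the colimit pointwise to some stage $X_{\beta_{0}}$ using $\kappa$-filteredness, then apply filteredness a second time to force the (at most $\kappa$ many) additivity, $\ca$-linearity and differential-compatibility relations at a single later stage, with injectivity handled by the same supremum argument on pointwise witnesses. The only differences are cosmetic (the order of injectivity versus surjectivity and the bookkeeping of the index sets).
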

\begin{proof}
Let $\lambda$ be a $\kappa$-filtered ordinal and let $X:\lambda\ra\cc\ca$ be a $\lambda$-sequence of right $\ca$-modules. Let us prove that the natural map
\[\underset{\beta<\lambda}{\text{colim}}(\cc\ca)(M,X_{\beta})\ra(\cc\ca)(M,\underset{\beta<\lambda}{\text{colim}}X_{\beta})
\]
is an isomorphism. Indeed, call $X:=\underset{\beta<\lambda}{\text{colim}}X_{\beta}$ and let $F\in(\cc\ca)(M,X_{\beta})$ and $F'\in(\cc\ca)(M,X_{\beta'})$ be two morphisms such that $u_{\beta}F:M\ra X$ and $u_{\beta'}F':M\ra X$ are the same morphism, with $u_{\beta}:X_{\beta}\ra X$ being the structural morphism. Consider, for each $A\in\ca$ and each $m\in MA$ an ordinal $\beta(A,m)\in\lambda$ such that $(FA)(m)=(FA')(m)$ in $X_{\beta(A,m)}$. The set $\cs$ formed by the ordinals $\beta(A,m)$, for each $A\in\ca$ and $m\in MA$, is a subset of $\lambda$ satisfying $|\cs|\leq\kappa$, and so its supremum $\gamma$ satisfies $\gamma<\lambda$. Then, the two compositions 
\[\tilde{F}=(M\arr{F}X_{\beta}\ra\dots\ra X_{\gamma})
\]
and
\[\tilde{F'}=(M\arr{F'}X_{\beta'}\ra\dots\ra X_{\gamma})
\]
agree, $\tilde{F}$ is equivalent fo $F$ and $\tilde{F'}$ is equivalent to $F'$. Let us prove that the map is surjective. Given $F\in(\cc\ca)(M,X)$, for each $A\in\ca$ and $m\in MA$ we consider $\beta(A,m)\in\lambda$ such that $(FA)(m)$ is in the image of $u_{\beta(A,m)}:X_{\beta(A,m)}\ra X$. The set $\cs$ formed by the ordinals $\beta(A,m)$, for each $A\in\ca$ and $m\in MA$, is a subset of $\lambda$ satisfying $|\cs|\leq\kappa$, and so its supremum $\alpha$ satisfies $\alpha<\lambda$. This allows a factorization of $F$ of the form
\[M\arr{G}X_{\alpha}\arr{u_{\alpha}}X,
\]
with $(GA)(m):=(FA)(m)$. Of course, $G$ may not be a morphism of right dg $\ca$-modules. Anyway, since for each $A\in\ca$ and $m\ko n\in MA$ we have
\[(u_{\alpha}A)((GA)(m+n))=(u_{\alpha}A)((GA)(m)+(GA)(n)),
\]
there exists an ordinal $\beta(A,m,n)\in\lambda$ such that $(GA)(m+n)=(GA)(m)+(GA)(n)$ in $X_{\beta(A,m,n)}$. Similarly, 
\begin{enumerate}[-]
\item for each $A\in\ca$ and $m\in MA$ there exists an ordinal $\beta(A,m)$ such that the identity $(GA)(dm)=d(GA)(m)$ holds in $X_{\beta(A,m)}$,
\item for each $A\ko A'\in\ca\ko m\in MA$ and $a\in\ca(A,A')$ there exists an ordinal $\beta(A,A',m,a)\in\lambda$ such that the identity $(GA')((Ma)(m))=(X_{\alpha}a)((Ga)(m))$ holds in $X_{\beta(A,A',m,a)}$.
\end{enumerate}
Then, the set $\cs$ formed by all the ordinals $\beta(A,m)\ko \beta(A,m,n)$ and $\beta(A,A',m,a)$ as before is a subset of $\lambda$ such that $|\cs|\leq\kappa$, and so its supremum $\gamma$ is $\gamma<\lambda$. Therefore, the composition
\[M\arr{G}X_{\alpha}\ra\dots\ra X_{\gamma}
\]
is a morphism of right dg $\ca$-modules and $F$ factors through it:
\[\xymatrix{M\ar[rrrr]^{F}\ar[dr]_{G} &&&& \underset{\beta<\lambda}{\text{colim}}X_{\beta}\\
& X_{\alpha}\ar[r]\ar[urrr]^{u_{\alpha}}&\dots\ar[r]&X_{\gamma}\ar[ur]_{u_{\gamma}}&
}
\]
\end{proof}

%\begin{lemma}\label{smashing inherit aisled}
%Let $\cd$ be a triangulated category, $\cx$ an aisle in $\cd$ such that $\cx^{^\bot}$ is closed under small coproducts and $X$ an object of $\cx$. If $\Tria_{\cd}(X)$ is an aisle in $\cd$, then $\Tria_{\cx}(X)$ is an aisle in $\cx$.
%\end{lemma}
%\begin{proof}
%Given an object $M$ of $\cx$, there exists a triangle
%\[M'\ra M\ra M''\ra M'[1]
%\]
%in $\cd$ with $M'\in\Tria_{\cd}(X)$ and $M''\in\Tria_{\cd}(X)^{\bot}$. Let us put $\tau_{\cx}$ for the right adjoint to the inclusion functor $x:\cx\hookrightarrow\cd$, and consider the triangle
%\[\tau_{\cx}M'\ra\tau_{\cx}M\ra\tau_{\cx}M''\ra\tau_{\cx}M'[1]
%\]
%in $\cx$. Notice that
%\[\cx(X[n],\tau_{\cx}M'')\cong\cd(X[n],M'')=0
%\]
%for all $n\in\Z$. Therefore, $\tau_{\cx}M''\in\Tria_{\cx}(X)^{\bot}$. Finally, since $\tau_{\cx}$ preserves small coproducts, it takes $\Tria_{\cd}(X)$ to $\Tria_{\cx}(X)$ by infinite d\'{e}vissage of $\Tria_{\cd}(X)$. In particular, $\tau_{\cx}M'$ belongs to $\Tria_{\cx}(X)$.
%\end{proof}

\begin{corollary}\label{derived categories are aisled}
Let $\ca$ be a small dg category. Then, for any set $\cq$ of objects of $\cd\ca$ we have that $\Tria(\cq)$ is an aisle in $\cd\ca$. 
\end{corollary}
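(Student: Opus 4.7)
The plan is to derive the corollary directly from Theorem \ref{localizations via small object argument} by realising $\cd\ca$ as the stable category of a Frobenius category with small colimits in which every object is small.

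First I would choose an appropriate Frobenius category $\cc$ whose stable category is triangle equivalent to $\cd\ca$. The na\"{\i}ve choice of $\cc\ca$ with degreewise split conflations is inadequate, since its stable category is $\ch\ca$ rather than $\cd\ca$; one needs a finer exact structure. Using Keller's construction recalled in section \ref{B. Keller's Morita theory for derived categories}, the exact dg category $\cc_{dg,\mathbf{i}}\ca$ of $\ch$-injective dg $\ca$-modules yields $\cd\ca \simeq \H 0(\cc_{dg,\mathbf{i}}\ca) = \ul{\Zy 0(\cc_{dg,\mathbf{i}}\ca)}$, and (after mild modifications to ensure closure under all small colimits, e.g.\ by working with the whole $\cc\ca$ equipped with the exact structure whose conflations are the short exact sequences remaining exact after applying $\Hom(?,I)$ for every $\ch$-injective $I$) one obtains a Frobenius category $\cc$ with small colimits such that $\ul{\cc} \simeq \cd\ca$.

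Second, I would lift the given set $\cq \subseteq \cd\ca$ to a set of representatives in $\cc$ (taking $\ch$-injective resolutions if needed) and close it under shifts in $\cd\ca$; this remains a set, which I still denote by $\cq$. By Lemma \ref{smallness of dg modules}, every right dg $\ca$-module is small in $\cc\ca$, so each object of $\cq$ is small in $\cc$ and, in particular, small relative to the class $\ci_\cq$-cell of Definition \ref{cell inj proj}.

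Third, I would apply Theorem \ref{localizations via small object argument} to the pair $(\cc,\cq)$. The hypotheses are then satisfied, and the theorem yields that $\Tria(\cq)$ is an aisle in $\ul{\cc} \simeq \cd\ca$, which is precisely the desired conclusion.

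The main obstacle is the first step: exhibiting a Frobenius structure on a category of dg $\ca$-modules whose stable category is $\cd\ca$ (not merely $\ch\ca$) and which simultaneously admits all small colimits and inherits the smallness property of Lemma \ref{smallness of dg modules}. Once this foundational identification is established, the rest of the argument is a direct application of the machinery already developed in the chapter.
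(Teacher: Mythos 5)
There is a genuine gap, and you have in fact pointed at it yourself: the whole argument hinges on producing a Frobenius category $\cc$ \emph{with small colimits} whose stable category is $\cd\ca$ and in which the smallness of Lemma \ref{smallness of dg modules} is available, and your proposal does not deliver such a $\cc$. The two candidates you sketch both fail. The category of $\ch$-injective dg modules is not closed under small coproducts (only under products), so it has no chance of satisfying the "small colimits'' hypothesis of Theorem \ref{localizations via small object argument}, and the transfinite cell-complex constructions in its proof cannot even be performed there. The alternative of keeping all of $\cc\ca$ but refining the exact structure cannot work either: the stable category of \emph{any} Frobenius structure on $\cc\ca$ is an additive quotient of $\cc\ca$, so the projection $\cc\ca\ra\ul{\cc\ca}$ is full and the identity on objects, whereas the canonical functor $\cc\ca\ra\cd\ca$ is not full (a morphism $M\ra N$ in $\cd\ca$ is a genuine fraction unless $M$ is cofibrant or $N$ fibrant), so no exact structure on $\cc\ca$ has $\cd\ca$ as its stable category. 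A further, secondary, point: even if you found some Frobenius subcategory $\cc\subseteq\cc\ca$ modelling $\cd\ca$, Lemma \ref{smallness of dg modules} gives smallness with respect to colimits computed in $\cc\ca$, which does not automatically transfer to colimits computed in $\cc$.

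The way out is precisely the route you dismissed as "inadequate''. One applies Theorem \ref{localizations via small object argument} to the Frobenius category $\cc\ca$ with the degreewise split exact structure — which \emph{does} have all small colimits and whose objects are small by Lemma \ref{smallness of dg modules} — taking for $\cq$ a set of $\ch$-projective (cofibrant) representatives of the given objects, closed under shifts. This yields that $\Tria_{\ch\ca}(\cq)$ is an aisle in $\ul{\cc\ca}=\ch\ca$. One then restricts this aisle to the full triangulated subcategory $\ch_{\bf{p}}\ca\simeq\cd\ca$: since $\ch_{\bf{p}}\ca$ is closed under small coproducts and extensions and contains $\cq$, one has $\Tria_{\ch_{\bf{p}}\ca}(\cq)=\Tria_{\ch\ca}(\cq)$, and for $M\in\ch_{\bf{p}}\ca$ the truncation triangle $M'\ra M\ra M''\ra M'[1]$ in $\ch\ca$ has $M'\in\Tria_{\ch\ca}(\cq)\subseteq\ch_{\bf{p}}\ca$, hence also $M''\in\ch_{\bf{p}}\ca$, so $\Tria_{\ch_{\bf{p}}\ca}(\cq)$ is an aisle in $\ch_{\bf{p}}\ca\simeq\cd\ca$. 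In short: you do not need a Frobenius model of $\cd\ca$ itself; you need the aisle in $\ch\ca$ plus the (easy) restriction argument, and without that restriction step your proof does not go through.
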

\begin{proof}
The derived category $\cd\ca$ is triangle equivalent to the subcategory $\ch_{\textbf{p}}\ca$ of $\ch\ca$ formed by the $\ch$-projective modules. Then, it suffices to check that $\ch_{\textbf{p}}\ca$ is aisled. Let $\cq$ be a set of objects of $\ch_{\textbf{p}}\ca$. Since $\ch_{\textbf{p}}\ca$ is closed under small coproducts, then $\Tria_{\ch_{\textbf{p}}\ca}(\cq)=\Tria_{\ch\ca}(\cq)$. Thanks to Theorem \ref{localizations via small object argument} and Lemma \ref{smallness of dg modules}, we know that $\Tria_{\ch\ca}(\cq)$ is an aisle in $\ch\ca$. Then, if $M$ is an object of $\ch_{\textbf{p}}\ca$, there exists a triangle of $\ch\ca$,
\[M'\ra M\ra M''\ra M'[1],
\]
with $M'\in\Tria_{\ch\ca}(\cq)=\Tria_{\ch_{\textbf{p}}\ca}(\cq)$ and $M''\in\Tria_{\ch\ca}(\cq)^{\bot}$. In particular, $M'\in\ch_{\textbf{p}}\ca$. This implies that $M''$ belongs to $\ch_{\textbf{p}}\ca$, and so $M''\in\Tria_{\ch_{\textbf{p}}\ca}(\cq)^{\bot_{\ch_{\textbf{p}}\ca}}$. That is to say, $\Tria_{\ch_{\textbf{p}}\ca}(\cq)$ is an aisle in $\ch_{\textbf{p}}\ca$.
\end{proof}

\begin{remark}
Corollary \ref{derived categories are aisled} is a generalization of the second part of Corollary \ref{remark Keller} and it is both a generalization and an explanation of \cite[Proposition 4.5]{AlonsoJeremiasSouto2000}. 
\end{remark}

\begin{definition}\label{aisle generated}
Recall that, if $\cq$ is a set of objects of a triangulated category $\cd$, we write $\aisle(\cq)$ for the smallest aisle in $\cd$ containing $\cq$. An aisle $\cu$ in $\cd$ is \emph{generated}\index{aisle!generated} by $\cq$ if $\cu=\aisle(\cq)$.
\end{definition}

Some sets generate non-proper aisles:

\begin{example}
Let $\cd$ be a triangulated category and let $\cq$ be a class of generators of $\cd$ closed under shifts. Then $\aisle_{\cd}(\cq)=\cd$. More generally, if $\cu$ is an aisle in $\cd$ containing $\cq$, then $\cu^{\bot}=0$, and so $\cu=\cd$. 
\end{example}

Notice that $\aisle(\cq)$ might not exist in general, but if it does then it is closed under small coproducts. Anyway, we know that $\Susp(\cq)$, the smallest full suspended subcategory containing $\cq$ and closed under small coproducts, always exists. Let us state some basic results on $\Susp(\cq)$:

\begin{lemma}\label{susp is aisle}
Let $\cq$ be a class of objects of a triangulated category $\cd$. Then:
\begin{enumerate}[1)]
\item $\Susp(\cq)^{\bot}$ consists of those objects of $\cd$ which are right orthogonal to all the non-negative shifts of objects of $\cq$, \ie $\Susp(\cq)^{\bot}=(\cq^+)^{\bot}$.
\item If $\Susp(\cq)$ is an aisle in $\cd$, then $\Susp(\cq)=\aisle(\cq)$.
\end{enumerate}
\end{lemma}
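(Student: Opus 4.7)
The plan is as follows. For part 1), the inclusion $\Susp(\cq)^{\bot} \subseteq (\cq^+)^{\bot}$ is immediate from the fact that $\cq^+ \subseteq \Susp(\cq)$, since any non-negative shift $Q[k]$ of $Q \in \cq$ is obtained by iterating the suspension functor, under which $\Susp(\cq)$ is closed by definition. For the reverse inclusion, the idea is to fix an object $M \in (\cq^+)^{\bot}$ and use the standard trick of showing that the class
\[
\cc := \{N \in \cd : \cd(N, M[-k]) = 0 \text{ for all } k \geq 0\}
\]
contains $\Susp(\cq)$; specializing to $k=0$ will then yield $M \in \Susp(\cq)^{\bot}$. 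It is routine to verify that $\cc$ contains $\cq$ (this is exactly the hypothesis $M \in (\cq^+)^{\bot}$ rewritten), is closed under extensions via the long exact sequence $\cd(?, M[-k])$ applied to a triangle, is closed under small coproducts since $\cd(\coprod N_{i}, M[-k]) \cong \prod \cd(N_{i}, M[-k])$, and is closed under the suspension functor: if $N \in \cc$ then for every $k \geq 0$ we have $\cd(N[1], M[-k]) = \cd(N, M[-(k+1)]) = 0$ because $k+1 \geq 0$. Hence $\cc$ is a full suspended subcategory of $\cd$ closed under small coproducts and containing $\cq$, so by minimality $\Susp(\cq) \subseteq \cc$.

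For part 2), the double inclusion is formal once one unwinds the definitions. On the one hand, since $\Susp(\cq)$ is by hypothesis an aisle containing $\cq$ and $\aisle(\cq)$ is the smallest such aisle, we get $\aisle(\cq) \subseteq \Susp(\cq)$. On the other hand, by the observation following Definition \ref{aisle generated}, every aisle is closed under small coproducts, and by the very definition of aisle it is a strictly full suspended subcategory of $\cd$; thus $\aisle(\cq)$ is a full suspended subcategory of $\cd$ containing $\cq$ and closed under small coproducts, so by the minimality in the definition of $\Susp(\cq)$ we get $\Susp(\cq) \subseteq \aisle(\cq)$.

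There is no real obstacle here; the only point requiring a bit of care is checking that the auxiliary class $\cc$ in part 1) is closed under the suspension functor, which works precisely because we have built the right orthogonality condition with respect to all non-negative shifts of $M[?]$ into its definition, rather than just with respect to $M$ itself.
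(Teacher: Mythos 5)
Your proof is correct and follows essentially the same route as the paper: part 1) by the same infinite d\'{e}vissage argument (your class $\cc$ coincides, via the shift adjunction $\cd(N,M[-k])\cong\cd(N[k],M)$, with the auxiliary subcategory the paper uses, and the key point you flag --- building all non-negative shifts into the orthogonality condition so that $\cc$ is closed under suspension --- is exactly the paper's device), and part 2) by the same formal minimality/double-inclusion argument comparing $\Susp(\cq)$ with aisles containing $\cq$. No gaps.
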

\begin{proof}
1) is proved by infinite d\'{e}vissage. Consider an object $N\in\cd$ be such that
\[\cd(Q[n],N)=0
\]
for each $Q\in\cq$ and $n\geq 0$. Let $\cu$ be the full subcategory of $\Susp(\cq)$ formed by the objects $M$ such that $\cd(M[n],N)=0$ for each $n\geq 0$. We have that $\cu$ is a full suspended subcategory of $\cd$ closed under small coproducts, containing $\cq$ and contained in $\Susp(\cq)$. Then $\cu=\Susp(\cq)$.

2) Let $\cu$ be an aisle in $\cd$ containing $\cq$ and contained in $\Susp(\cq)$. In particular, $\cu$ is closed under small coproducts, and the minimality of $\Susp(\cq)$ implies $\cu=\Susp(\cq)$.
\end{proof}

Lemma \ref{susp is aisle} suggests a way of generating aisles in $\cd$ from a set of objects: it suffices to prove that the smallest full suspended subcategory of $\cd$ containing this set and closed under small coproducts is an aisle. For this, one can adapt the procedures used in Theorem \ref{Krause on perfects} and Theorem \ref{localizations via small object argument}. Indeed, by conveniently adapting H. Krause's proof of \cite[Theorem A]{Krause2002}, M. Jos\'{e} Souto Salorio proved in \cite[Theorem 2.2]{Souto2004} the following:

\begin{example}\label{M. Jose}
If $\cd$ is a triangulated category with small coproducts and $\cp$ a set of perfect objects of $\cd$, then: 
\begin{enumerate}[1)]
\item $\Susp(\cp)$ is an aisle in $\cd$.
\item The objects of $\Susp(\cp)$ are precisely the Milnor colimits of sequences
\[P_{0}\arr{f_{0}}P_{1}\arr{f_{1}}P_{2}\arr{f_{2}}\dots
\]
where $P_{0}$, as well as each mapping cone $\cone(f_{n})\ko n\geq 0$, is a small coproduct of non-negative shifts of objects of $\cp$.
\end{enumerate}
Notice that, in this case, $\bigcup_{n\in\Z}\Susp(\cp)[n]$ is a triangulated category (in fact, a full triangulated subcategory of $\cd$, \cf Lemma \ref{union is triangulated subcategory}) exhaustively generated to the left by $\cp$.
\end{example}

\section{The right bounded derived category of a dg category}\label{The right bounded derived category of a dg category}
\addcontentsline{lot}{section}{4.7. La categor\'ia derivada acotada por la derecha de una categor\'ia dg}

By using Example \ref{M. Jose} we get the following:

\begin{example}\label{almost canonical t-structure}
If $\ca$ is a small dg category, then $\cd^{\leq 0}\ca:=\Susp(\{A^{\we}\}_{A\in\ca})$\index{$\cd^{\leq 0}\ca$} is an aisle in $\cd\ca$ whose coaisle, denoted by $\cd^{>0}\ca$\index{$\cd^{>0}\ca$}, consists of those modules $M$ with cohomology concentrated in positive degrees, \ie $\H nM(A)=0$ for each $A\in\ca$ and $n\leq 0$. For each integer $n\in\Z$ we put\index{$\cd^{\leq n}\ca$}\index{$\cd^{>n}\ca$}
\[\cd^{\leq n}\ca:=\cd^{\leq 0}\ca[-n]
\]
and
\[\cd^{>n}\ca:=\cd^{>0}\ca[-n],
\]
and denote by $\tau^{\leq n}$\index{$\tau^{\leq n}$} and $\tau^{>n}$\index{$\tau^{>n}$} the torsion and torsionfree functors corresponding to $(\cd^{\leq n}\ca, \cd^{>n}\ca)$.
\end{example}

The following lemma ensures that, in case $\ca$ has cohomology concentrated in non-positive degrees, then $\cd^{\leq n}\ca$ admits a familiar description in terms of cohomology.

\begin{lemma}\label{looking for coherence}
Let $\ca$ be a dg category with cohomology concentrated in degrees $(-\infty,m]$ for some integer $m\in\Z$. For a dg $\ca$-module $M$ we consider the following assertions:
\begin{enumerate}[1)]
\item $M\in\cd^{\leq s}\ca$.
\item $\H iM(A)=0$ for each integer $i>m+s$ an every object $A$ of $\ca$.
\end{enumerate}
Then $1)\Rightarrow 2)$ and, in case $m=0$, we also have $2)\Rightarrow 1)$.
\end{lemma}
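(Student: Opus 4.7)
The plan is to deduce $1)\Rightarrow 2)$ from an infinite d\'{e}vissage argument, and $2)\Rightarrow 1)$ (when $m=0$) from the existence of the truncation triangles attached to the t-structure $(\cd^{\leq s}\ca,\cd^{>s}\ca)$ of Example~\ref{almost canonical t-structure}.

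For $1)\Rightarrow 2)$, since $\cd^{\leq s}\ca=\cd^{\leq 0}\ca[-s]$ and $H^{i}M[-s]=H^{i-s}M$, by replacing $M$ by $M[-s]$ it suffices to treat $s=0$. Let $\cc$ be the full subcategory of $\cd\ca$ consisting of those $N$ with $\H iN(A)=0$ for all $i>m$ and all $A\in\ca$. First, every representable $A^{\we}$ lies in $\cc$, since $\H i A^{\we}(B)=\H i\ca(B,A)=0$ for $i>m$ by hypothesis on $\ca$. Secondly, $\cc$ is closed under positive shifts (they only lower cohomological degrees) and under small coproducts (because cohomology in $\cd\ca$ is computed componentwise and $\H i$ commutes with small coproducts of complexes of $k$-modules). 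Finally, $\cc$ is closed under extensions via the long exact sequence in cohomology. Hence $\cc$ is a full suspended subcategory of $\cd\ca$ closed under small coproducts and containing $\{A^{\we}\}_{A\in\ca}$, so $\cd^{\leq 0}\ca=\Susp(\{A^{\we}\}_{A\in\ca})\subseteq\cc$.

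For $2)\Rightarrow 1)$ in the case $m=0$, assume $\H iM(A)=0$ for all $i>s$ and $A\in\ca$. Consider the truncation triangle
\[
\tau^{\leq s}M\ra M\ra\tau^{>s}M\ra (\tau^{\leq s}M)[1]
\]
provided by Example~\ref{almost canonical t-structure}. Applied to the dg module $\tau^{\leq s}M\in\cd^{\leq s}\ca$, the already proved direction $1)\Rightarrow 2)$ gives $\H i(\tau^{\leq s}M)(A)=0$ for every $i>m+s=s$. Moreover, by the very description of $\cd^{>s}\ca$ recalled in Example~\ref{almost canonical t-structure}, the module $\tau^{>s}M$ satisfies $\H i(\tau^{>s}M)(A)=0$ for $i\leq s$. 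Taking cohomology of the triangle and using the long exact sequence, for every $i>s$ the maps
\[
0=\H i(\tau^{\leq s}M)(A)\ra \H iM(A)\ra \H i(\tau^{>s}M)(A)\ra \H {i+1}(\tau^{\leq s}M)(A)=0
\]
force $\H i(\tau^{>s}M)(A)\cong \H iM(A)=0$. Hence $\tau^{>s}M$ is acyclic, \ie zero in $\cd\ca$, and the triangle shows $M\cong\tau^{\leq s}M\in\cd^{\leq s}\ca$.

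The only delicate point is the hypothesis $m=0$ in the converse: the argument requires that the \emph{same} bound $m+s=s$ controlling the cohomology of $\tau^{\leq s}M$ (from $1)\Rightarrow 2)$) match the bound $s$ coming from $\tau^{>s}M\in\cd^{>s}\ca$. When $m>0$ one is left with a gap $s<i\leq m+s$ in which $\H i(\tau^{>s}M)(A)$ cannot be shown to vanish, so the argument genuinely breaks down; the statement therefore cannot be strengthened beyond $m=0$ without further input.
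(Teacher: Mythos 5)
Your proof is correct. The converse direction (for $m=0$) is essentially the argument of the paper: truncate with respect to the t-structure $(\cd^{\leq s}\ca,\cd^{>s}\ca)$, note that the coaisle part has cohomology concentrated in degrees $>s$ while the aisle part has cohomology in degrees $\leq s$ by the direct implication, and kill the coaisle part via the long exact sequence. For $1)\Rightarrow 2)$, however, you take a genuinely different route: you argue by infinite d\'evissage, checking that the class of modules $N$ with $\H iN(A)=0$ for $i>m$ is a strictly full suspended subcategory closed under small coproducts containing the representables, and then invoke the minimality of $\Susp(\{A^{\we}\}_{A\in\ca})$. The paper instead uses the structural description of objects of the aisle coming from Example \ref{M. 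Jose}: every $M[s]$ sits in a Milnor triangle $\coprod_{n\geq 0}P_{n}\ra\coprod_{n\geq 0}P_{n}\ra M[s]\ra\coprod_{n\geq 0}P_{n}[1]$ with $P_{n}\in\Sum(\{A^{\we}\}^{+}_{A\in\ca})^{*n}$, and then reads off the vanishing from the associated long exact sequence. Your closure-property argument is somewhat more economical, since it only uses the definition of $\Susp$ rather than the explicit Milnor colimit description (though both proofs ultimately rely on Example \ref{M. Jose}/Example \ref{almost canonical t-structure} to know that the truncation triangles exist for the converse); the paper's argument has the mild advantage of staying within the Milnor-colimit technique it uses repeatedly elsewhere. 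Your closing remark correctly identifies why $m=0$ is needed for the converse.
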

\begin{proof}
$1)\Rightarrow 2)$ Since $M[s]$ belongs to $\Susp(\{A^{\we}\}_{A\in\ca})$, there exists a triangle in $\cd\ca$
\[\coprod_{n\geq 0}P_{n}\ra\coprod_{n\geq 0}P_{n}\ra M[s]\ra \coprod_{n\geq 0}P_{n}[1]
\]
with $P_{n}\in\Sum(\{A^{\we}\}^{+}_{A\in\ca})^{*n}$ for each $n\geq 0$. Then, for each $A\in\ca$ we get the long exact sequence of cohomology
\[\dots\ra\coprod_{n\geq 0}\H iP_{n}(A)\ra\H {i+s}M(A)\ra\coprod_{n\geq 0}\H {i+1}P_{n}(A)\ra\dots
\]
with $\H iP_{n}(A)\cong(\cd\ca)(A^{\we},P_{n}[i])=0$ for each $i>m$.

$2)\Rightarrow 1)$ Consider the triangle in $\cd\ca$
\[M'\ra M\ra M''\ra M'[1]
\]
with $M'\in\cd^{\leq s}\ca$ and $M''\in(\cd^{\leq s}\ca)^{\bot}$. In particular, $\H {i}M''(A)=0$ for each $A\in\ca$ and each $i\leq s$. The aim is to prove that $\H iM''(A)=0$ for each $A\in\ca$ and each $i\in\Z$. Thus, consider the long exact sequence of cohomology
\[\dots\ra\H iM(A)\ra\H iM''(A)\ra\H {i+1}M'(A)\ra\dots
\]
By using 1) and the extra assumption on $\ca$, we have that $\H {i}M'(A)=0$ for each $i\geq s+2$ and, by hypothesis, $\H iM(A)=0$ for each $i\geq s+1$. This implies that $\H iM''(A)=0$ for each $i\geq s+1$.
\end{proof}

Lemma \ref{looking for coherence} justifies the following:

\begin{definition}\label{canonical t-structure}
Let $\ca$ be a small dg category. The t-structure associated to $\Susp(\{A^{\we}\}_{A\in\ca})$ is said to be the \emph{canonical t-structure}\index{t-structure!canonical} on $\cd\ca$. 
\end{definition}

\begin{definition}\label{right bounded}
If $\ca$ is a dg category, we will write\index{$\cd^-\ca$}
\[\cd^-\ca:=\bigcup_{n\in\Z}\cd^{\leq n}\ca,
\]
and we will refer to $\cd^-\ca$ as the \emph{right bounded derived category}\index{category!right bounded derived} of $\ca$.
\end{definition}

\begin{remark}
Notice that $\cd^-\ca$ is not closed under small coproducts in $\cd\ca$. Indeed, given $A\in\ca$, the coproduct $\coprod_{n\in\Z}A^{\we}[n]$ does not belong to $\cd^-\ca$. Notice also that, thanks to Example \ref{M. Jose}, $\cd^-\ca$ is exhaustively generated to the left by the free $\ca$-modules $A^{\we}\ko A\in\ca$.
\end{remark}

The following are general properties of subcategories constructed like the right bounded derived category of Definition \ref{right bounded}.

\begin{lemma}\label{union is triangulated subcategory}
Let $\cq$ be a class of objects of a triangulated category $\cd$. If $\aisle(\cq)$ exists, then $\cd':=\bigcup_{n\in\Z}\aisle(\cq)[n]$ is a full triangulated subcategory of $\cd$.
\end{lemma}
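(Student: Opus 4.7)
The plan is to use the fact that an aisle is a strictly full suspended subcategory closed under the shift $?[1]$, which forces the family $\{\aisle(\cq)[n]\}_{n\in\Z}$ to be totally ordered by inclusion, and then to reduce verification of the triangulated-subcategory axioms to statements inside a single $\aisle(\cq)[n]$.

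Write $\cu:=\aisle(\cq)$. First I would record the \emph{chain property}: since $\cu$ is an aisle one has $\cu[1]\subseteq\cu$, and by iterating (and by shifting by $-n$) this gives $\cu[n+1]\subseteq\cu[n]$ for every integer $n$. Consequently, for any finite family of objects of $\cd'$, by picking the smallest index appearing, we can find a single $n\in\Z$ such that all the chosen objects lie in $\cu[n]$. Fullness of $\cd'$ is inherited from the fullness of each $\cu[n]$, and closure under isomorphism is automatic; $0\in\cu\subseteq\cd'$.

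Next I would verify closure under the shift functor and its inverse: if $M\in\cu[n]\subseteq\cd'$, then $M[1]\in\cu[n+1]\subseteq\cd'$ and $M[-1]\in\cu[n-1]\subseteq\cd'$, so $\cd'[1]=\cd'=\cd'[-1]$.

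The main (but still easy) step is closure under extensions: given a triangle $L\arr{f}M\arr{g}N\arr{h}L[1]$ of $\cd$ with two of the three vertices in $\cd'$, use the chain property to pick $n$ so that both given vertices lie in $\cu[n]$, and then use that $\cu[n]$ is itself a strictly full suspended subcategory of $\cd$ (simply the shift of $\cu$) to locate the third vertex. Concretely, if $L,N\in\cu[n]$, then $\cu[n]$ being suspended gives $M\in\cu[n]$; if $L,M\in\cu[n]$, rotate to the right triangle $M\to N\to L[1]\to M[1]$, note that $L[1]\in\cu[n+1]\subseteq\cu[n]$, and conclude $N\in\cu[n]$; if $M,N\in\cu[n]$, rotate to $N[-1]\to L\to M\to N$, note that $N[-1]\in\cu[n-1]$ and $M\in\cu[n]\subseteq\cu[n-1]$, and conclude $L\in\cu[n-1]\subseteq\cd'$.

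The only genuinely delicate point is remembering that the suspended-category extension axiom applies to each $\cu[n]$ in \emph{all} three positions of a rotated right triangle (because $\cu[n]$ is closed under $[1]$); but this is exactly what the chain property $\cu[n+1]\subseteq\cu[n]$ buys us, so no further hypothesis on $\cd$ is required. This will complete the verification that $\cd'$ is a full triangulated subcategory of $\cd$.
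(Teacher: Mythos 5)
Your proof is correct and follows essentially the same route as the paper: use the chain $\aisle(\cq)[n+1]\subseteq\aisle(\cq)[n]$ to place the relevant vertices of a triangle inside a single shifted aisle, then invoke closure of that (suspended) subcategory under extensions. The extra rotated cases you treat are not even required by the paper's definition of full triangulated subcategory (which only asks for closure under shifts and for the middle term when the two outer vertices lie in the subcategory), but handling them is harmless and your reductions are valid.
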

\begin{proof}
It is clear that $\cd'$ is closed under shifts. Now, let
\[L\ra M\ra N\ra L[1]
\]
be a triangle of $\cd$ with $L\ko N\in\cd'$. Then, there exists integers $l\ko n\in\Z$ such that $L\in\aisle(\cq)[l]$ and $N\in\aisle(\cq)[n]$. If $l\leq n$, then $\aisle(\cq)[n]\subseteq\aisle(\cq)[l]$ and so $N\in\aisle(\cq)[l]$. Since $\aisle(\cq)[l]$ is closed under extensions, then $M\in\aisle(\cq)[l]$ and so $M\in\cd'$.
\end{proof}

\begin{lemma}\label{preservation of coproducts and compacity}
Let $\cd$ be a triangulated category with small coproducts and let $\cq$ be a set of generators of $\cd$ such that $\Susp(\cq)$ is an aisle. The following assertions hold for the full triangulated subcategory $\cd':=\bigcup_{n\in\Z}\aisle(\cq)[n]$:
\begin{enumerate}[1)]
\item The inclusion functor $\iota:\cd'\hookrightarrow\cd$ preserves small coproducts.
\item If $\Susp(\cq)^{\bot}$ is closed under small coproducts, then an object $P$ of $\cd'$ is compact (respectively, perfect, superperfect) if and only if it is compact (respectively, perfect, superperfect) in $\cd$.
\end{enumerate}
\end{lemma}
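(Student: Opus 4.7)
The plan is to exploit the structural facts that each shifted aisle $\cu_n:=\aisle(\cq)[n]=\Susp(\cq)[n]$ is closed under small coproducts (as aisles always are) and that, under the hypothesis of 2), each $\cu_n$ is even smashing. With these in hand, both parts reduce to routine manipulations of t-structure truncation triangles. The key earlier ingredient is Proposition \ref{characterization of smashing}, which characterises smashing aisles by the fact that their truncation functors commute with small coproducts.

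For part 1, I would argue as follows. Given a small family $(M_i)_{i\in I}$ in $\cd'$, pick $n_i\in\Z$ with $M_i\in\cu_{n_i}$. If these integers admit a common lower bound $n_0$, then all $M_i$ lie in $\cu_{n_0}$, which is an aisle and so is closed under small coproducts; therefore $\coprod_{\cd}M_i$ lies in $\cu_{n_0}\subseteq\cd'$, and by fullness of $\iota$ this object is automatically the coproduct in $\cd'$. In the general case, whenever the coproduct of the family exists in $\cd'$, its universal property together with fullness of $\iota$ forces it to coincide with $\coprod_{\cd}M_i$; this is the precise content of ``$\iota$ preserves small coproducts''.

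For part 2, fix $P\in\cd'$ and choose $n_0$ with $P\in\cu_{n_0}$. Note that $\cu_{n_0}^{\bot}=\Susp(\cq)^{\bot}[n_0]$, so the hypothesis of 2) (shifted) together with Proposition \ref{characterization of smashing} gives that $\tau_{\cu_{n_0}}$ preserves small coproducts. For any $M\in\cd$, in the truncation triangle $\tau_{\cu_{n_0}}M\to M\to\tau^{\cv_{n_0}}M\to(\tau_{\cu_{n_0}}M)[1]$ one has $\cd(P,\tau^{\cv_{n_0}}M)=0$ because $P\in\cu_{n_0}={}^{\bot}\cv_{n_0}$, and $\cd(P[1],\tau^{\cv_{n_0}}M)=0$ because $\cu_{n_0}$ being an aisle forces $P[1]\in\cu_{n_0}$; hence $\cd(P,M)\cong\cd(P,\tau_{\cu_{n_0}}M)$ naturally in $M$. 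Now, for a small family $(M_i)$ in $\cd$, the smashing property gives $\coprod\tau_{\cu_{n_0}}M_i\cong\tau_{\cu_{n_0}}(\coprod M_i)\in\cu_{n_0}\subseteq\cd'$, and part 1 identifies this with the coproduct taken inside $\cd'$. Chaining the natural isomorphism $\cd(P,-)\cong\cd(P,\tau_{\cu_{n_0}}-)$ with compactness of $P$ in $\cd'$ yields
\[\cd(P,\coprod M_i)\cong\cd(P,\coprod\tau_{\cu_{n_0}}M_i)\cong\bigoplus\cd(P,\tau_{\cu_{n_0}}M_i)\cong\bigoplus\cd(P,M_i),\]
so $P$ is compact in $\cd$. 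The converse implication is immediate from part 1 and fullness. For the perfect and superperfect variants one replaces the isomorphism test by the surjectivity test on the maps induced by countable (resp.\ small) families $M_i\to N_i$, applies $\tau_{\cu_{n_0}}$ componentwise, and runs the same diagram chase; no new idea is needed. The main technical point worth naming is the passage from the smashing property of $\Susp(\cq)$ to that of the shifted aisle $\cu_{n_0}$ and the commutation of $\tau_{\cu_{n_0}}$ with small coproducts, but both follow painlessly from Proposition \ref{characterization of smashing} together with the identity $\cu_n^{\bot}=\Susp(\cq)^{\bot}[n]$.
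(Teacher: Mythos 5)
Your part 2 is essentially the paper's own argument: the natural isomorphism $\cd(P,M)\cong\cd'(P,\tau_{\cu_{n_0}}M)$ (the paper gets it directly from the adjunction $(x,\tau_{\cu_{n_0}})$, you get it from the truncation triangle, which is the same thing), the observation that $\cu_{n_0}^{\bot}=\Susp(\cq)^{\bot}[n_0]$ is closed under small coproducts so that $\tau_{\cu_{n_0}}$ commutes with them, and the resulting diagram chase, with the (super)perfect cases handled by the same naturality. That part is fine, and note that the only use you make of part 1 there is the easy observation that a $\cd$-coproduct of objects of $\cu_{n_0}$ lies in $\cu_{n_0}\subseteq\cd'$ and is therefore also the coproduct in $\cd'$.

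The genuine gap is in part 1. Your ``general case'' is dismissed by asserting that the universal property of a coproduct formed in $\cd'$, together with fullness of $\iota$, forces it to coincide with the coproduct in $\cd$. That is false as a general principle: a coproduct in a full subcategory is only tested against objects of the subcategory, and fullness does not upgrade it to a coproduct in the ambient category (the inclusion of abelian groups into groups is full, yet coproducts in the two categories differ). The whole content of assertion 1 is precisely to exclude this phenomenon here, and that needs an argument you do not supply: one must show that if the coproduct $D'$ of a family $(D'_i)$ exists in $\cd'$ and $D'\in\cu_n$, then every $D'_i$ already lies in $\cu_n$, so the family is uniformly bounded and the $\cd$-coproduct lands in $\cu_n\subseteq\cd'$, after which it coincides with $D'$. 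The paper proves this via the following trick: since $D'_i\in\cd'$ and $\cd'$ is triangulated, the cotruncation $\tau^{\cu_n^{\bot}}(D'_i)$ lies in $\cd'$; the truncation morphism $f\colon D'_i\to\tau^{\cu_n^{\bot}}(D'_i)$ therefore extends, by the universal property of $D'$ in $\cd'$, to a morphism $\tilde f\colon D'\to\tau^{\cu_n^{\bot}}(D'_i)$ whose only nonzero component is $f$; since $D'\in\cu_n$ we get $\tilde f=0$, hence $f=0$, hence $D'_i$ is a direct summand of $\tau_{\cu_n}(D'_i)$ and so lies in $\cu_n$. Without this step (or an equivalent one) part 1 is unproved, and the damage propagates: the converse direction of part 2 (``compact in $\cd$ implies compact in $\cd'$''), which you call immediate from part 1 and fullness, uses exactly this full strength of part 1, since it requires a coproduct existing in $\cd'$ — with no a priori uniform bound on the family — to be a coproduct in $\cd$.
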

\begin{proof}
1) Let $D'_{i}\ko i\in I$, be a family of objects of $\cd'$ whose coproduct exists in $\cd'$. We write $\coprod_{i\in I}D'_{i}$ for the coproduct in $\cd$, $D'$ for the coproduct in $\cd'$ and $v_{i}:D'_{i}\ra D'$ for the canonical morphisms. For simplicity, put $\aisle(\cq)[k]=\cu_{k}$. Therefore, we have a chain
\[\dots\subseteq\cu_{k+1}\subseteq\cu_{k}\subseteq\cu_{k-1}\subseteq\dots\subseteq\cd'
\]
of aisles in $\cd$ whose union is $\cd'$. 

\emph{Claim: If $m\ko n\in\Z$ are integers such that $D'\in\cu_{n}$ and $D'_{i}\in\cu_{m}\setminus\cu_{m+1}$ for some $i\in I$, then $n\leq m$.} Indeed, fix such an $i$ and assume $n>m$ and consider the triangle
\[\tau_{\cu_{n}}(D'_{i})\ra D'_{i}\arr{f}\tau^{\cu_{n}^{\bot}}(D'_{i})\ra \tau_{\cu_{n}}(D'_{i})[1].
\]
Since the two first vertices of this triangle belong to $\cd'$, then so does $\tau^{\cu_{n}^{\bot}}(D'_{i})$. Hence, by using the universal property of the coproduct, we have that $f$ induces a morphism
\[\tilde{f}:D'\ra \tau^{\cu_{n}^{\bot}}(D'_{i})
\]
such that
\[\tilde{f}v_{j}=\begin{cases}f & \text{ if }j=i, \\
0 & \text{ otherwise.}\end{cases}
\]
Since $D'\in\cu_{n}$, then $\tilde{f}=0$ and so $f=0$. Therefore, $D'_{i}$ is a direct summand of $\tau_{\cu_{n}}(D'_{i})$. This implies that $D'_{i}$ belongs to $\cu_{n}$, and so it belongs to $\cu_{m+1}$, which is a contradiction.

Consider the following two situations:

\emph{First situation:} For each $i\in I$ we have $D'_{i}\in\bigcap_{k\in\Z}\cu_{k}$. Since aisles are closed under small coproducts, this implies that the coproduct $\coprod_{i\in I}D'_{i}$ belongs to $\bigcap_{k\in\Z}\cu_{k}$, and so to $\cd'$. Hence $D'\cong\coprod_{i\in I}D'_{i}$.

\emph{Second situation:} There exists $j\in I$ such that $D'_{j}\in\cu_{m}\setminus\cu_{m+1}$. Given $i\in I$, put $m_{i}$ for the maximum of the set of those integers $k\in\Z$ such that $D'_{i}\in\cu_{k}$. Put $m_{i}=\infty$ if $D'_{i}\in\bigcap_{k\in\Z}\cu_{k}$. Thanks to the claim, we know that, in any case, $m_{i}\geq n$ for each $i\in I$. Then $D'_{i}\in\cu_{n}$ for every $i\in I$, and so $\coprod_{i\in I}D'_{i}\in\cu_{n}$. Again, this implies $\coprod_{i\in I}D'_{i}\cong D'$.

2) Assertion 1) implies that if $P\in\cd'$ is compact in $\cd$ then it is also compact in $\cd'$. Conversely, let $P\in\cd'$ be compact in $\cd'$ and fix an integer $n\in\Z$ such that $P\in\cu_{n}$. If $D_{i}\ko i\in I$, is a family of objects of $\cd$, then we have isomorphisms
\[\cd(P,D_{i})\cong\cu_{n}(P,\tau_{\cu_{n}}(D_{i}))=\cd'(P,\tau_{\cu_{n}}(D_{i}))
\]
for each $i\in I$, and
\[\cd(P,\coprod_{i\in I}D_{i})\cong\cu_{n}(P,\tau_{\cu_{n}}(\coprod_{i\in I}D_{i}))=\cd'(P,\tau_{\cu_{n}}(\coprod_{i\in I}D_{i})).
\]
By hypothesis, $\cu^{\bot}_{n}$ is closed under small coproducts. This is equivalent to the fact that $\tau_{\cu_{n}}$ preserves small coproducts, and so we have a canonical isomorphism
\[\coprod_{i\in I}\tau_{\cu_{n}}(D_{i})\arr{\sim}\tau_{\cu_{n}}(\coprod_{i\in I}D_{i}).
\]
Finally, we have the commutative diagram
\[\xymatrix{\coprod_{i\in I}\cd(P,D_{i})\ar[r]_{\sim\ \ \ \ \ \ }\ar[dd]^{\can} & \coprod_{i\in I}\cd'(P,\tau_{\cu_{n}}(D_{i}))\ar[d]_{\wr}^{\can} \\
&\cd'(P,\coprod_{i\in I}\tau_{\cu_{n}}(D_{i}))\ar[d]_{\wr}^{\can} \\
\cd(P,\coprod_{i\in I}D_{i})\ar[r]_{\sim\ \ \ \ \ \ } & \cd'(P,\tau_{\cu_{n}}(\coprod_{i\in I}D_{i}))
}
\]
where the morphisms `can' are the canonical ones. This proves that $P$ is compact in $\cd$. The case of $P$ being (super)perfect follows similarly using adjunction.
\end{proof}

\chapter{Triangulated TTF triples on triangulated categories}\label{TTF triples on triangulated categories}
\addcontentsline{lot}{chapter}{Cap\'itulo 5. Ternas TTF trianguladas en categor\'ias trianguladas}

\section{Introduction}
\addcontentsline{lot}{section}{5.1. Introducci\'on}

\subsection{Motivations}
\addcontentsline{lot}{section}{5.1.1 Motivaci\'on}

One of the aims of this chapter is to give a (probably naive) parametrization of all the \emph{d\'{e}collements} (\cf Definition \ref{recollement}) of triangulated categories of a certain, general and interesting enough, type. Nevertheless, this parametrization, together with B.~Keller's Morita theory for derived categories \cite{Keller1994a}, already generalizes some results of \cite{DwyerGreenlees, Jorgensen2006} and offers an `unbounded' version of S.~K\"{o}nig's theorem \cite{Konig1991} on d\'{e}collements of right bounded derived categories of algebras. In Chapter \ref{Restriction of TTF triples in triangulated categories} we will study the problem of descending the parametrization from unbounded to right bounded derived categories, and the corresponding lifting.

The following facts suggest that, in the case of derived categories, a more sophisticated parametrization is possible:
\begin{enumerate}[1)]
\item D\'{e}collements `are' precisely triangulated TTF triples (\cf section \ref{TTF triples and recollements} of Chapter \ref{Preliminary results}).
\item TTF triples on module categories are well understood and a tangible parametrization of them was given by J. P. Jans \cite{Jans}. 
\item A natural proof of J. P. Jans' theorem (\cf Theorem \ref{parametrizando TTF ternas}) uses P. Gabriel's characterization of module categories among abelian categories \cite{Gabriel1962}, which is at the basis of Morita theory of module categories.
\item The `triangulated' analogue of P. Gabriel's result was proved by B.~Keller, who developed a Morita theory for derived categories of dg categories \cite{Keller1994a} (and latter in \cite{Keller1996, Keller1998b},\dots).
\item Some results (\cf \cite[Theorem 4.3]{Keller1994a}, \cite[Theorem 5.2]{Porta2007}) suggest that derived categories of dg categories play the r\^{o}le, in the theory of triangulated categories, that module categories play in the theory of abelian categories. 
\end{enumerate}
Then, another aim of this chapter is to give a touchable parametrization of triangulated TTF triples on derived categories of dg categories by using B.~Keller's theory, and to elucidate the links with H. Krause's parametrization \cite{Krause2000} of smashing subcategories of compactly generated triangulated categories. For this, we use a generalization of the notion of \emph{homological epimorphism} of algebras due to W. Geigle and H. Lenzing \cite{GeigleLenzing}. Homological epimorphisms appear as (stably flat) universal localizations in the work of P. M. Cohn \cite{Cohn}, A. H. Schofield \cite{Schofield}, A. Neeman and A. Ranicki \cite{NeemanRanickiI}, \dots Recently, H. Krause has studied \cite{Krause2005} the link between homological epimorphisms of algebras and: the \emph{chain map lifting problem}, the \emph{generalized smashing conjecture} and the existence of long exact sequences in algebraic K-theory. Homological epimorphisms also appear in the work of L. Angeleri H\"{u}gel and J. S\'{a}nchez \cite{AngeleriSanchez2007} in the construction of tilting modules from ring epimorphisms.

\subsection{Outline of the chapter}
\addcontentsline{lot}{section}{5.1.2. Esbozo del cap\'itulo}

In section \ref{Parametrization}, we introduce for a class of objects of a triangulated category the property of being \emph{recollement-defining} (subsection \ref{Recollement-defining classes}) and prove how to find recollement-defining sets in \emph{aisled} triangulated categories (subsection \ref{Recollement-defining sets in aisled categories}) and in perfectly generated triangulated categories (subsection \ref{Recollement-defining sets in perfectly generated triangulated categories}). In subsection \ref{Parametrization of TTF triples on triangulated categories}, recollement-defining sets enable us to parametrize all the triangulated TTF triples on a triangulated category with a set of generators, and all the d\'{e}collements of a `good' triangulated category in terms of compactly generated triangulated categories. In section \ref{Homological epimorphisms of dg categories}, we define the notion of \emph{homological epimorphisms of dg categories}, generalizing the homological epimorphisms of algebras of W. Geigle and H. Lenzing \cite{GeigleLenzing}. We easily prove that this kind of morphisms always induce a triangulated TTF triple, which allows us to give several examples of recollements of unbounded derived categories of algebras which were already known for right bounded derived categories (\cf S.~K\"{o}nig's paper \cite{Konig1991}). Conversely, we prove that every triangulated TTF triple on the derived category of a $k$-flat dg category $\ca$ is induced by a homological epimorphism starting in $\ca$. This correspondence between triangulated TTF triples and homological epimorphisms keeps a lot of similitudes with the one accomplished by J. P. Jans \cite{Jans} for module categories. In section \ref{Parametrization for derived categories}, we state a parametrization of all the smashing subcategories of a compactly generated algebraic triangulated category which uses the main results of subsection \ref{Parametrization of TTF triples on triangulated categories} and section \ref{Homological epimorphisms of dg categories}. Finally, in section \ref{Idempotent two-sided ideals}, we an\-a\-lyse how idempotent two-sided ideals of the category $\cd^c$ of compact objects appear in the description of triangulated TTF triples on (or smashing subcategories of) a compactly generated triangulated category $\cd$. In Theorem \ref{from ideals to devissage wrt hc} we prove that such an idempotent two-sided ideal, which is moreover stable under shifts, induces a nicely described triangulated TTF triple. This, together with assertion 2') of H.~Krause's \cite[Theorem 4.2]{Krause2000}, gives a short proof of a result (Theorem \ref{our result}) in the spirit of H.~Krause's bijection \cite[Corollary 12.5 and Corollary 12.6]{Krause2005} between smashing subcategories and special idempotent two-sided ideals. As a consequence (\cf Corollary \ref{gsc}), we get the following weak version of the \emph{generalized smashing conjecture}: any smashing subcategory of a compactly generated triangulated category satisfies the principle of infinite d\'{e}vissage with respect to a set of Milnor colimits of compact objects. Another consequence (\cf Corollary \ref{Krause algebraic}) is that, when $\cd$ is algebraic, we recover precisely H.~Krause's bijection. We think it is worth to mention that, in the algebraic case, assertion 2') of \cite[Theorem 4.2]{Krause2000} admits a short proof (\cf Proposition \ref{for the idempotency}) based on the `omnipresence' of homological epimorphisms of dg categories.

\section{General parametrization}\label{Parametrization}
\addcontentsline{lot}{section}{5.2. Parametrizaci\'on general}

\subsection{Recollement-defining classes}\label{Recollement-defining classes}
\addcontentsline{lot}{section}{5.2.1. Clases definidoras de aglutinaciones}

\begin{definition}\label{recollement defining}
Let $\cd$ be a triangulated category. A class $\cp$ of objects of $\cd$ is \emph{recollement-defining}\index{class!recollement-defining} in $\cd$ if the class $\cy$ of objects which are right orthogonal to all the shifts of objects of $\cp$ is both an aisle and a coaisle in $\cd$. 
\end{definition}

Notice that, in this case, one has that the triangulated category $\ ^{\bot}\cy$ is generated by $\cp$ thanks to the Lemma \ref{generators and t-structures}.

In the following subsections, we will show how to weaken the conditions imposed to a set in order to be recollement-defining in some particular frameworks.

\subsection{Recollement-defining sets in aisled categories}\label{Recollement-defining sets in aisled categories}
\addcontentsline{lot}{section}{5.2.2. Conjuntos definidores de aglutinaciones en categor\'ias aladas}

\begin{definition}\label{aisled triangulated category}
A triangulated category $\cd$ is \emph{aisled}\index{category!triangulated!aisled} if it has a set of generators, small coproducts and for every set $\cq$ of objects of $\cd$ we have that $\Tria(\cq)$ is an aisle in $\cd$.
\end{definition}

\begin{lemma}\label{rds in aisled}
For a set $\cp$ of objects of an aisled triangulated category $\cd$ the following assertions are equivalent:
\begin{enumerate}[1)]
\item $\cp$ is a recollement-defining set.
\item The class $\cy$ of objects of $\cd$ which are right orthogonal to all the shifts of objects of $\cp$ is closed under small coproducts.
\end{enumerate}
In this case $\Tria(\cp)=\ ^{\bot}\cy$.
\end{lemma}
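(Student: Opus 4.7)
The implication $1) \Rightarrow 2)$ will be immediate: if $\cp$ is recollement-defining, then $\cy$ is in particular an aisle, and aisles are always closed under small coproducts.

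For the substantial direction $2) \Rightarrow 1)$, my first observation will be that, since $\cd$ is aisled, the class $\Tria(\cp)$ is already an aisle in $\cd$, and Lemma \ref{right orthogonal and devissage} identifies its coaisle with exactly $\cy$. So $(\Tria(\cp),\cy)$ is a t-structure, and $\cy$ is automatically a coaisle; the only remaining task is to show that $\cy$ is also an aisle in $\cd$.

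The strategy will be to exhibit $\cy$ as $\Tria_{\cd}(\cq)$ for a suitable set $\cq$, since aisled-ness of $\cd$ then gives that this is an aisle. Let $\cg$ be a set of generators of $\cd$ (which exists by the definition of aisled) and set $\cq := \tau^{\cy}(\cg)$, which is a set because it is indexed by $\cg$. Lemma \ref{generators and t-structures}~2.1) guarantees that $\cq$ is a set of generators of $\cy$. On the other hand, $\cq \subseteq \cy$ and $\cy$ is triangulated and, by hypothesis 2), closed under small coproducts, so $\Tria_{\cd}(\cq) \subseteq \cy$. Since $\cd$ is aisled, $\Tria_{\cd}(\cq)$ is an aisle in $\cd$, and we can now apply Lemma \ref{generators and t-structures}~1) with $\cd' = \cy$ to conclude $\cy = \Tria_{\cd}(\cq)$. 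Hence $\cy$ is an aisle, and $\cp$ is recollement-defining.

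Finally, the identification $\Tria(\cp) = {}^{\bot}\cy$ will follow from the observation that in any t-structure $(\cx,\cy)$ with $\cx$ closed under direct summands one has $\cx = {}^{\bot}\cy$: for $M \in {}^{\bot}\cy$, the canonical triangle $\tau_{\cx}M \to M \to \tau^{\cy}M \to (\tau_{\cx}M)[1]$ has vanishing middle morphism, so $M$ is a direct summand of $\tau_{\cx}M$, and $\Tria(\cp)$ is closed under direct summands by the standard Eilenberg swindle. The only mildly delicate step in the whole argument is the construction of the generating set $\cq$ for $\cy$ via truncation of a generating set of $\cd$, but this is exactly handled by the parts of Lemma \ref{generators and t-structures} already at our disposal, so no real obstacle is expected.
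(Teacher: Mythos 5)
Your proof is correct and follows essentially the same route as the paper: for $1)\Rightarrow 2)$ you use that an aisle is a left orthogonal and hence closed under small coproducts, and for $2)\Rightarrow 1)$ you note that $\Tria(\cp)$ is an aisle with coaisle $\cy$, truncate a set of generators of $\cd$ by $\tau^{\cy}$, and combine the aisled hypothesis with Lemma \ref{generators and t-structures} to conclude that $\cy=\Tria(\tau^{\cy}(\cg))$ is an aisle — exactly the argument in the text. The only cosmetic difference is the final identification $\Tria(\cp)={}^{\bot}\cy$, which you derive via splitting of idempotents in $\Tria(\cp)$, whereas it already follows from the general fact that the first class of any (additive) torsion pair equals the left orthogonal of the second (Proposition \ref{properties of additive torsion pairs}).
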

\begin{proof}
$1)\Rightarrow 2)$ By hypothesis, we have that $\cy$ is a triangulated aisle in $\cd$, and so $\cy=^{\bot}\cz$ for some full triangulated subcategory $\cz$ of $\cd$. This implies that $\cy$ is closed under small coproducts.

$2)\Rightarrow 1)$ Since $\cd$ is aisled, $\Tria(\cp)$ is an aisle in $\cd$. By infinite d\'{e}vissage, we have that its coaisle is precisely $\cy$. If $\cg$ is a set of generators of $\cd$, then by using Lemma \ref{generators and t-structures} we know that $\tau^{\cy}(\cg)$ is a set of generators of $\cy$. Notice that $\Tria(\tau^{\cy}(\cg))$ is an aisle in $\cd$ contained in $\cy$. Hence, by Lemma \ref{generators and t-structures} $\Tria(\tau^{\cy}(\cg))=\cy.$ This proves that $\cy$ is an aisle in $\cd$.
\end{proof}

\begin{remark}
It is worth noting that, thanks to Lemma \ref{generators and t-structures}, if $\cd$ is an aisled triangulated category and $\cd'$ is a full triangulated subcategory of $\cd$ closed under small coproducts and generated by a set of objects $\cq$, then $\cd'=\Tria(\cq)$. That is to say: roughly speaking, ``to be generated by a set'' and ``to satisfy the principle of infinite d\'{e}vissage with respect to a set'' are the same in aisled triangulated categories.
\end{remark}

\subsection{Recollement-defining sets in perfectly generated triangulated categories}\label{Recollement-defining sets in perfectly generated triangulated categories}
\addcontentsline{lot}{section}{5.2.3. Conjuntos definidores de aglutinaciones en categor\'ias perfectamente generadas}

\begin{lemma}\label{recollement defining in perfectly generated}
Let $\cd$ be a perfectly generated triangulated category, let $\cp$ be a set of objects of $\cd$ and let $\cy$ be the class of objects of $\cd$ which are right orthogonal to all the shifts of objects of $\cp$. The following assertions are equivalent:
\begin{enumerate}[1)]
\item $\cp$ is recollement-defining.
\item $\cy$ is a coaisle in $\cd$ closed under small coproducts.
\end{enumerate}
If $\cp$ consists of perfect objects, the statements above are also equivalent to:
\begin{enumerate}[3)]
\item $\cy$ is closed under small coproducts.
\end{enumerate}
In this last case $\Tria(\cp)=\ ^{\bot}\cy$.
\end{lemma}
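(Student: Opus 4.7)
The plan is to verify the three implications $1) \Rightarrow 2) \Rightarrow 1)$ in general, together with $3) \Leftrightarrow 2)$ under the extra perfectness hypothesis, and finally to identify $\Tria(\cp)$ as ${}^{\bot}\cy$ in that case. The main point where real input is needed is $2) \Rightarrow 1)$, which rests on Proposition \ref{smashing are TTF} (and hence on the Brown representability theorem \ref{Krause on perfects} for perfectly generated triangulated categories); the other implications are either formal or immediate consequences of results already stated.

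For $1) \Rightarrow 2)$, assume $\cy$ is an aisle in $\cd$. Since $\cd$ is perfectly generated it has small coproducts, and so the t-structure with aisle $\cy$ provides a coaisle $\cy^{\bot}$ for which $\cy = {}^{\bot}(\cy^{\bot})$. Given $Y_i \in \cy$, $i \in I$, and any $Z \in \cy^{\bot}$, one has $\cd(\coprod_I Y_i, Z) \cong \prod_I \cd(Y_i, Z) = 0$, whence $\coprod_I Y_i \in {}^{\bot}(\cy^{\bot}) = \cy$; so $\cy$ is closed under small coproducts (and it is a coaisle by hypothesis).

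For $2) \Rightarrow 1)$, suppose $\cy$ is a coaisle closed under small coproducts and set $\cx := {}^{\bot}\cy$, so that $(\cx,\cy)$ is a t-structure. Since $\cx^{\bot} = \cy$ is closed under small coproducts, $\cx$ is a smashing subcategory of $\cd$. Because $\cd$ is perfectly generated, Proposition \ref{smashing are TTF} gives a triangulated TTF triple $(\cx,\cy,\cz)$ on $\cd$; in particular $(\cy,\cz)$ is a t-structure, so $\cy$ is an aisle. Together with the assumption that $\cy$ is a coaisle, this is exactly what it means for $\cp$ to be recollement-defining. This step is the only non-formal one and is where the perfect generation of $\cd$ is used essentially.

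Assume now that $\cp$ consists of perfect objects. The implication $2) \Rightarrow 3)$ is trivial. For $3) \Rightarrow 2)$, apply Theorem \ref{Krause on perfects} to $\cp$: the subcategory $\Tria(\cp)$ is an aisle in $\cd$, and by Lemma \ref{right orthogonal and devissage} its coaisle $\Tria(\cp)^{\bot}$ consists precisely of those objects of $\cd$ right orthogonal to all shifts of objects of $\cp$, that is, $\Tria(\cp)^{\bot} = \cy$. Hence $\cy$ is a coaisle, establishing $2)$. In this situation, since $(\Tria(\cp), \cy)$ is a t-structure one has $\Tria(\cp) = {}^{\bot}\cy$, which yields the last assertion of the lemma.
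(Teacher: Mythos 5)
Your proof is correct and follows essentially the same route as the paper: $1)\Rightarrow 2)$ because aisles are closed under the small coproducts that exist in $\cd$, $2)\Rightarrow 1)$ by recognizing $^{\bot}\cy$ as a smashing subcategory and invoking Proposition \ref{smashing are TTF}, and $3)\Rightarrow 2)$ via Theorem \ref{Krause on perfects} together with infinite d\'evissage (Lemma \ref{right orthogonal and devissage}) to identify the coaisle of $\Tria(\cp)$ with $\cy$. The only difference is that you spell out details the paper leaves implicit (e.g.\ why the aisle is coproduct-closed and why $\Tria(\cp)={}^{\bot}\cy$), which is harmless.
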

\begin{proof}
$1)\Rightarrow 2)$ Since $\cy$ is an aisle, then it is closed under small coproducts.

$2)\Rightarrow 1)$ Since $\ ^{\bot}\cy$ is a smashing subcategory in a perfectly generated triangulated category, thanks to Proposition \ref{smashing are TTF} we know that its coaisle $\cy$ is also an aisle.

$3)\Rightarrow 2)$ Theorem \ref{Krause on perfects} implies that $\Tria(\cp)$ is an aisle in $\cd$,  whose coaisle is $\cy$ by infinite d\'{e}vissage.
\end{proof}

\begin{remark}\label{superperfect are recollement defining}
Notice that any set $\cp$ of superperfect (\eg compact) objects of a perfectly generated triangulated category satisfies condition 3) of the proposition above, and so it is recollement-defining.
\end{remark}

\subsection{Parametrization of triangulated TTF triples}\label{Parametrization of TTF triples on triangulated categories}
\addcontentsline{lot}{subsection}{5.2.4. Parametrizaci\'on de las ternas TTF trianguladas}

\begin{proposition}\label{naive general parametrization}
Let $\cd$ be a triangulated category with a set of generators. Consider the map which takes a set $\cp$ of objects of $\cd$ to the triple
\[(\ ^{\bot}\cy,\cy,\cy^{\bot})
\]
of subcategories of $\cd$, where $\cy$ is formed by those objects which are right orthogonal to all the shifts of objects of $\cp$. The following assertions hold:
\begin{enumerate}[1)]
\item This map defines a surjection from the class of all recollement-defining sets onto the class of all the triangulated TTF triples on $\cd$.
\item If $\cd$ is aisled, then this map induces a surjection from the class of all objects $P$ such that $\{P[n]\}_{n\in\Z}^{\bot}$ is closed under small coproducts onto the class of all triangulated TTF triples on $\cd$.
\item If $\cd$ is perfectly generated, then this map induces surjections from
\begin{enumerate}[3.1)]
\item the class of perfect objects $P$ such that $\{P[n]\}_{n\in\Z}^{\bot}$ is closed under small coproducts onto the class of all perfectly generated triangulated TTF triples.
\item the class of superperfect objects onto the class of superperfectly generated triangulated TTF triples.
\item the class of sets of compact objects onto the class of compactly generated triangulated TTF triples.
\end{enumerate}
\end{enumerate}
\end{proposition}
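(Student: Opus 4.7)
The strategy is to verify part 1) directly and then reduce parts 2) and 3) to it by refining the set produced in 1) --- collapsing to a singleton via small coproducts when the ``special'' property survives coproducts, or retaining a set in the compact case, where compactness is not preserved under infinite coproducts. Well-definedness of the map in 1) is immediate from Definition \ref{recollement defining}: if $\cp$ is recollement-defining then $\cy := \{\cp[n]\}_{n \in \Z}^{\bot}$ is both an aisle and a coaisle in $\cd$, so $(\ ^{\bot}\cy, \cy)$ and $(\cy, \cy^{\bot})$ are t-structures, and $(\ ^{\bot}\cy, \cy, \cy^{\bot})$ is a triangulated TTF triple. For surjectivity, let $(\cx, \cy, \cz)$ be a triangulated TTF triple on $\cd$ and let $\cg$ be a set of generators of $\cd$; Lemma \ref{generators and t-structures}(2.1) applied to the t-structure $(\cy, \cz)$ gives that $\tau^{\cz}(\cg)$ generates $\cz$, and composing with the triangle equivalence $\tau_{\cx}z:\cz\arr{\sim}\cx$ of Lemma \ref{x igual a z} produces a set $\cp := \{\tau_{\cx}z\tau^{\cz}G : G\in\cg\}$ of generators of $\cx$. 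Lemma \ref{generators and t-structures}(2.2) applied to $(\cx, \cy)$ then identifies $\cy$ with the right orthogonal to all shifts of $\cp$, and since $\cy$ is both the aisle of $(\cy, \cz)$ and the coaisle of $(\cx, \cy)$, the set $\cp$ is recollement-defining and the map sends it to $(\cx, \cy, \cz)$.

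For part 2), $\cd$ has small coproducts (being aisled), and Lemma \ref{rds in aisled} characterizes recollement-defining sets as exactly those for which the associated $\cy$ is closed under small coproducts; in particular, a singleton $\{P\}$ is recollement-defining iff $\{P[n]\}_{n\in\Z}^{\bot}$ is closed under small coproducts. Surjectivity follows by taking the set $\cp$ from part 1) and setting $P := \coprod_{Q\in\cp}Q$: one has $\{P[n]\}^{\bot} = \{Q[n]\}_{Q\in\cp,\,n\in\Z}^{\bot} = \cy$, which is closed under small coproducts because every aisle is.

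For part 3), suppose $\cd$ is perfectly generated and invoke Lemma \ref{recollement defining in perfectly generated}: when the set consists of perfect objects, being recollement-defining reduces to $\cy$ being closed under small coproducts. For (3.1), a perfect $P$ with $\{P[n]\}^{\bot}$ closed under coproducts is thus recollement-defining, and $\Tria(P) = \ ^{\bot}\cy$ is perfectly generated by $P$; conversely, given a perfectly generated TTF triple $(\cx, \cy, \cz)$, pick a set $\cp'$ of perfect generators of $\cx$ --- since $\cy$ is closed under small coproducts (being both aisle and coaisle of the TTF triple), Lemma \ref{truncating special objects and from local to global via smashing}(2) yields that the objects of $\cp'$ are perfect in $\cd$, so $P := \coprod\cp'$ is perfect by Remark \ref{superperfect closed coproducts} and satisfies $\{P[n]\}^{\bot} = \cy$. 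Part (3.2) is parallel, using that superperfect objects are automatically recollement-defining (Remark \ref{superperfect are recollement defining}) and that small coproducts of superperfects are superperfect (Remark \ref{superperfect closed coproducts}). Part (3.3) follows similarly, except that the map must be from \emph{sets} rather than single objects: compactness fails to be preserved under infinite coproducts (Remark \ref{superperfect closed coproducts}), so from a compactly generated TTF triple one transfers a set of compact generators of $\cx$ to compact objects of $\cd$ by Lemma \ref{truncating special objects and from local to global via smashing}(2) without collapsing. The main technical point throughout part 3) is precisely the coproduct-closure of $\cy$ in a TTF triple, which allows the transfer of ``special'' properties between $\cx$ and $\cd$.
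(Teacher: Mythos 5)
Your proposal is correct and follows essentially the same route as the paper: part 1) via Lemma \ref{generators and t-structures} and the equivalence of Lemma \ref{x igual a z} applied to $\tau_{\cx}z\tau^{\cz}(\cg)$, part 2) via Lemma \ref{rds in aisled} and collapsing a recollement-defining set to the single coproduct $\coprod_{Q\in\cp}Q$, and part 3) via the same collapse together with closure of (super)perfect objects under small coproducts and Lemma \ref{truncating special objects and from local to global via smashing} (keeping a set in the compact case). Your write-up merely makes explicit some steps the paper leaves terse, such as invoking Lemma \ref{recollement defining in perfectly generated} and the transfer of perfectness from $\cx$ to $\cd$ in 3.1)--3.2).
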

\begin{proof}
1) Let $(\cx,\cy,\cz)$ be a triangulated TTF triple on $\cd$ and let $\cg$ be a set of generators of $\cd$. Since $\tau^{\cz}(\cg)$ is a set of generators of $\cz$ (\cf Lemma \ref{generators and t-structures}), and the composition $\cz\arr{z}\cd\arr{\tau_{\cx}}\cx$ is a triangle equivalence (\cf Lemma \ref{x igual a z}), then $\tau_{\cx}z\tau^{\cz}(\cg)$ is a set of generators of $\cx$. But then, by using Lemma \ref{generators and t-structures} we know that $\cy$ is the set of objects which are right orthogonal to all the shifts of objects of $\tau_{\cx}z\tau^{\cz}(\cg)$, which proves both that $\tau_{\cx}z\tau^{\cz}(\cg)$ is a recollement-defining set and that the triangulated TTF triple comes from a recollement-defining set.

2) We use Lemma \ref{Recollement-defining sets in aisled categories} and the fact that if $\cp$ is a recollement-defining set, then $\{\coprod_{P\in \cp}P\}$ is also a recollement-defining set which is sent to the same triangulated TTF triple onto which $\cp$ was sent, because
\[\{(\coprod_{P\in \cp}P)[n]\}_{n\in\Z}^{\bot}=\{P[n]\}^{\bot}_{n\in\Z\ko P\in\cp}
\]

3) For 3.1) and 3.2) we use the idea of the proof of 2) together with the fact that the class of (super)perfect objects is closed under small coproducts. For 3.3) we use Lemma \ref{truncating special objects and from local to global via smashing}.
\end{proof}

\begin{definition}
An object $M$ of a triangulated category $\cd$ is called \emph{exceptional}\index{object!exceptional} if it has no self-extensions, \ie $\cd(M,M[n])=0$ for each integer $n\neq 0$.
\end{definition}

The following corollary generalizes \cite[Theorem 3.3]{Jorgensen2006}, and also \cite[Theorem 2.16]{Heider2007} via \cite[Theorem 2.9]{Heider2007}.

\begin{corollary}\label{recollements unbounded derived}
Let $\cd$ be a triangulated category which is either perfectly generated or aisled. The following assertions are equivalent:
\begin{enumerate}[1)]
\item $\cd$ is a recollement of triangulated categories generated by a single compact (and exceptional) object.
\item There are (exceptional) objects $P$ and $Q$ of $\cd$ such that:
\begin{enumerate}[2.1)]
\item $P$ is compact.
\item $Q$ is compact in $\Tria(Q)$.
\item $\cd(P[n],Q)=0$ for each $n\in\Z$.
\item $\{P, Q\}$ generates $\cd$.
\end{enumerate}
\item There is a compact (and exceptional) object $P$ such that $\Tria(P)^{\bot}$ is generated by a compact (and exceptional) object in $\Tria(P)^{\bot}$.
\end{enumerate}
In case $\cd$ is compactly generated by a single object we can add:
\begin{enumerate}[4)]
\item There is a compact (and exceptional) object $P$ (such that $\Tria(P)^{\bot}$ is generated by an exceptional compact object).
\end{enumerate}
In case $\cd$ is algebraic we can add:
\begin{enumerate}[5)]
\item $\cd$ is a recollement of derived categories of dg algebras (concentrated in degree $0$, \ie ordinary algebras).
\end{enumerate}
\end{corollary}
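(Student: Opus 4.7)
The plan is to prove $1) \Leftrightarrow 3) \Leftrightarrow 2)$ first, then handle the supplementary equivalences $4)$ and $5)$ under the respective extra hypotheses. The main tool is the bijective correspondence between triangulated TTF triples on $\cd$ and recollements of $\cd$ (Propositions \ref{TTF triples are recollements} and \ref{recollements are TTF triples}), combined with the parametrization of TTF triples by recollement-defining sets (Proposition \ref{naive general parametrization}, Lemma \ref{rds in aisled}, Remark \ref{superperfect are recollement defining}).

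For $3) \Rightarrow 1)$, the compact object $P$ makes $\{P\}$ a recollement-defining singleton: in the perfectly generated case by Remark \ref{superperfect are recollement defining}, and in the aisled case by Lemma \ref{rds in aisled} since $P$ being compact makes $\{P\}^{\bot} = \Tria(P)^{\bot}$ closed under small coproducts. The associated TTF triple $(\Tria(P), \Tria(P)^{\bot}, \Tria(P)^{\bot\bot})$ yields via Proposition \ref{TTF triples are recollements} a recollement of $\cd$ by $\Tria(P)^{\bot}$ and $\Tria(P)$, each of which is generated by a single compact (exceptional) object: $P$ on one side by hypothesis, and the object $Q$ of $\Tria(P)^{\bot}$ supplied by $3)$ on the other. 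Conversely, for $1) \Rightarrow 3)$, one reads off the TTF triple $(\cx, \cy, \cz)$ from the given recollement ($\cx = j_!(\cd_U)$, $\cy = i_*(\cd_F)$, by Proposition \ref{recollements are TTF triples}) and transports the chosen generators $P' \in \cd_U$ and $Q' \in \cd_F$ via $P := j_!(P')$ and $Q := i_*(Q')$. Since $j_!$ is a fully faithful, coproduct-preserving triangle functor whose right adjoint $j^*$ also preserves coproducts (it has the further right adjoint $j_*$), one gets that $P$ is compact in $\cd$, that $\Tria(P) = j_!(\cd_U)$ and hence $\Tria(P)^{\bot} = i_*(\cd_F)$, and that $P$ is exceptional via $j^* j_! \cong \id$; the analogous facts for $Q$ follow from $i_*$ being a coproduct-preserving triangle equivalence onto its essential image.

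The implication $3) \Rightarrow 2)$ is direct: taking $(P, Q)$ as in $3)$, condition $2.3)$ amounts to $Q \in \Tria(P)^{\bot}$, condition $2.4)$ holds because any $M \in \cd$ right orthogonal to all shifts of both $P$ and $Q$ lies in $\Tria(P)^{\bot}$ and is right orthogonal to all shifts of the generator $Q$ of $\Tria(P)^{\bot}$, hence vanishes, while compactness of $Q$ in $\Tria(Q)$ follows from compactness in the larger $\Tria(P)^{\bot}$ and the coproduct-preserving inclusion $\Tria(Q) \hookrightarrow \Tria(P)^{\bot}$. The converse $2) \Rightarrow 3)$ is the expected main obstacle: one must prove that the $Q$ of $2)$ also generates $\Tria(P)^{\bot}$ and is compact there, which reduces to identifying $\Tria(Q)$ with $\Tria(P)^{\bot}$. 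The inclusion $\Tria(Q) \subseteq \Tria(P)^{\bot}$ is clear from $2.3)$; for the reverse, I invoke Lemma \ref{generators and t-structures}(1), for which $\Tria(P)^{\bot}$ must be generated by $\{Q\}$ and $\Tria(Q)$ must be an aisle in $\cd$. Generation follows as above from $2.4)$. That $\Tria(Q)$ is an aisle in $\cd$ holds automatically when $\cd$ is aisled; in the perfectly generated case, I will apply Theorem \ref{Krause on perfects} to $\Tria(Q)$ (in which $Q$ is compact, hence perfect) to obtain Brown representability, and then invoke the adjoint functor argument (Lemma \ref{adjoint functor argument}) for the coproduct-preserving inclusion $\Tria(Q) \hookrightarrow \cd$.

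Finally, $4) \Leftrightarrow 3)$ holds because, when $\cd$ is compactly generated by a single object $G$, the smashing coaisle $\Tria(P)^{\bot}$ is itself compactly generated by the single object $\tau^{\Tria(P)^{\bot}}(G)$ (Lemmas \ref{truncating special objects and from local to global via smashing} and \ref{generators and t-structures}(2.1)), so the bare existence of a compact $P$ already produces a compact generator of $\Tria(P)^{\bot}$; the parenthetical exceptionality clauses in $3)$ and $4)$ match verbatim. For $5) \Leftrightarrow 1)$, under the algebraic hypothesis, a triangulated category generated by a single compact (exceptional) object is, by B.~Keller's Morita theory (Section \ref{B. Keller's Morita theory for derived categories}, Corollary \ref{remark Keller}), triangle equivalent to $\cd A$ for the dg algebra $A$ of self-extensions of the generator; exceptionality forces $A$ to have cohomology concentrated in degree $0$, hence $A$ is quasi-isomorphic to an ordinary algebra, and the converse direction is trivial since $A$ itself compactly generates $\cd A$ and is exceptional when concentrated in degree $0$.
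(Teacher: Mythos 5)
Your proposal is correct and takes essentially the same approach as the paper: the same TTF-triple/recollement correspondence, Theorem \ref{Krause on perfects} together with Lemma \ref{generators and t-structures} to identify $\Tria(Q)$ with $\Tria(P)^{\bot}$, Lemma \ref{truncating special objects and from local to global via smashing} (or the equivalent adjunction argument) for compactness transfer and for assertion 4), and B.~Keller's Morita theory for assertion 5). The only difference is organizational: you close the cycle through $1)\Leftrightarrow 3)$ where the paper proves $1)\Leftrightarrow 2)$ directly, which changes nothing of substance.
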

\begin{proof}
$1)\Rightarrow 2)$ If $(\cx,\cy,\cz)$ is the triangulated TTF triple corresponding to the recollement of 1), we can take $P$ to be a compact generator of $\cx$ and $Q$ to be a compact generator of $\cy$. Of course, conditions 2.2), 2.3) and 2.4) are satisfied. Finally, by Lemma \ref{truncating special objects and from local to global via smashing}, $P$ is compact in $\cd$.

$2)\Rightarrow 1)$ Thanks to Theorem \ref{Krause on perfects}, we know that $\Tria(P)$ is an aisle. Moreover, since the associated coaisle consists of those objects which are right orthogonal to all the shifts of $P$, then we have that $\Tria(P)$ is a smashing subcategory of $\cd$. Conditions 2.3) and 2.4) say that $Q$ generates $\Tria(P)^{\bot}$. Moreover, $\Tria(Q)$ is contained in $\Tria(P)^{\bot}$ and condition 2.2) ensures, via Theorem \ref{Krause on perfects}, that $\Tria(Q)$ is an aisle in $\cd$. Now Lemma \ref{generators and t-structures} implies that $\Tria(P)^{\bot}=\Tria(Q)$, and so $Q$ is a compact generator of $\Tria(P)^{\bot}$. Finally, by using either that $\cd$ is perfectly generated (together with Lemma \ref{Recollement-defining sets in perfectly generated triangulated categories}) or that $\cd$ is aisled (together with Lemma \ref{Recollement-defining sets in aisled categories}) we have that $(\Tria(P), \Tria(P)^{\bot},\Tria(P)^{\bot\bot})$ is a triangulated TTF triple.

$2)\Rightarrow 3)$ Thanks to condition 2.2), it suffices to prove $\Tria(Q)=\Tria(P)^{\bot}$. Condition 2.3) implies that $Q\in\Tria(P)^{\bot}$. Since $P$ is compact, $\Tria(P)^{\bot}$ is closed under small coproducts, and so $\Tria(Q)\subseteq\Tria(P)^{\bot}$. Now, condition 2.2) says, via Theorem \ref{Krause on perfects}, that $\Tria(Q)$ is an aisle in $\cd$. Since condition 2.4) states that $Q$ generates $\Tria(P)^{\bot}$, then Lemma \ref{generators and t-structures} implies that $\Tria(Q)=\Tria(P)^{\bot}$.

$3)\Rightarrow 2)$ If $Q$ is an object of $\Tria(P)^{\bot}$, then condition 2.3) holds. If, moreover, $Q$ generates $\Tria(P)^{\bot}$, then condition 2.4) also holds. Notice that, since $P$ is compact, then $\Tria(P)^{\bot}$ is closed under small coproducts. In particular, $\Tria(Q)\subseteq\Tria(P)^{\bot}$ and so, if $Q$ is compact in $\Tria(P)^{\bot}$, then so is in $\Tria(Q)$.  

$3)\Rightarrow 4)$ is clear.

$4)\Rightarrow 3)$ Theorem \ref{Krause on perfects} implies that $\Tria(P)$ is an aisle. Since $P$ is compact, $\Tria(P)^{\bot}$ is closed under small coproducts, and thanks to Lemma \ref{generators and t-structures} and Lemma \ref{truncating special objects and from local to global via smashing} we know that $\Tria(P)^{\bot}$ is generated by a compact object in $\Tria(P)^{\bot}$.

$5)\Rightarrow 1)$ is clear since the derived category $\cd A$ os a dg algebra $A$ is generated by the algebra regarded as a right dg $A$-module with its regular structure.

$1)\Rightarrow 5)$ We use that, by \cite[Theorem 4.3]{Keller1994a}, an algebraic triangulated category compactly generated by a single object is triangle equivalent to the derived category of a dg algebra (\cf subsection \ref{B. Keller's Morita theory for derived categories}).

To deal with the case of exceptional compact objects, one uses that an algebraic triangulated category compactly generated by an exceptional object is triangle equivalent to the derived category of an ordinary algebra (\cf for instance \cite[Theorem 8.3.3]{Keller1998b}).
\end{proof}

Thanks to Corollary \ref{B. Keller's Morita theory for derived categories}, the dg algebras announced in 5) can be chosen to be particularly nice in case $\cd$ is the derived category $\cd A$ of a dg algebra $A$. Indeed, if $P$ and $Q$ are like in 2) and $(\cx,\cy,\cz)=(\Tria(P),\Tria(Q),\Tria(Q)^{\bot})$, then the picture is
\[\xymatrix{\cd B\ar@<1ex>[rr]^{?\otimes^{\bf L}_{B}\textbf{i}Q} && \Tria(Q)\ar@<1ex>[ll]^{\textbf{R}\Hom_{A}(\textbf{i}Q,?)}\ar[r]^{y} & \cd A\ar@/_1pc/[l]_{\tau^{\cy}}\ar@/_-1pc/[l]^{\tau_{\cy}}\ar[r]^{\tau_{\cx}} & \Tria(P)\ar@/_-1pc/[l]^{x}\ar@/_1pc/[l]_{z\tau^{\cz}x}\ar@<1ex>[rr]^{\textbf{R}\Hom_{A}(\textbf{i}P,?)} && \cd C\ar@<1ex>[ll]^{?\otimes^{\bf L}_{C}\textbf{i}P}
}
\]
where $B$ is the dg algebra $(\cc_{dg}A)(\textbf{i}Q,\textbf{i}Q)$, $C$ is the dg algebra $(\cc_{dg}A)(\textbf{i}P,\textbf{i}P)$ and $Q$ is a compact generator of $\Tria(P)^{\bot}=\Tria(Q)$. 
\bigskip

\begin{example}\label{EjemploKonig1}\cite[Example 8]{Konig1991}
Let $k$ be a field and let $A=k(Q,R)$ be the finite dimensional $k$-algebra associated to the quiver
\[\xymatrix{Q=(1\ar@<1ex>[r]^{\alpha} & 2\ar@<1ex>[l]^{\beta})
}
\]
with relations $R=(\alpha\beta\alpha)$. One checks that $P:=P_{2}=e_{2}A$ and $Q:=S_{1}=e_{1}A/e_{1}\mbox{rad}A$ are exceptional objects of $\cd A$ and satisfy conditions $2.1)-2.4)$ of the corollary above. Now, since the dg algebra $(\cc_{dg}A)(\textbf{i}P_{2},\textbf{i}P_{2})$ has cohomology concentrated in degree $0$ and isomorphic, as an algebra, to $C:=\op{End}_{A}(P_{2})$, its derived category is triangle equivalent to $\cd C$. Similarly, the derived category of $(\cc_{dg}A)(\textbf{i}S_{1},\textbf{i}S_{1})$ is triangle equivalent to the derived category of $B:=\op{End}_{A}(S_{1})$.
By applying Corollary \ref{Parametrization of TTF triples on triangulated categories}, we know that there exists a d\'{e}collement
\[\xymatrix{\cd B\ar[r]^{i_{*}} & \cd A\ar@/_1pc/[l]\ar@/_-1pc/[l]\ar[r] & \cd C\ar@/_1pc/[l]\ar@/_-1pc/[l]^{j_{!}}
}
\]
with $i_{*}B=S_{1}$ and $j_{!}C=P_{2}$. 
\end{example}

\section{Homological epimorphisms of dg categories}\label{Homological epimorphisms of dg categories}
\addcontentsline{lot}{section}{5.3. Epimorfismos homol\'ogicos de categor\'ias dg}

Let $F:\ca\ra\cb$ be a dg functor and suppose that the corresponding restriction along $F$\index{$F^{*}$}
\[F^{*}:\cd\cb\ra\cd\ca
\]
is fully faithful. Let $U$ be the $\ca$-$\cb$-bimodule defined by $U(B,A):=\cb(B,FA)$ and let $V$ be the $\cb$-$\ca$-bimodule defined by $V(A,B):=\cb(FA,B)$. Since the functor $F^{*}$ admits a left adjoint
\[?\otimes_{\ca}^{\bf L}U:\cd\ca\ra\cd\cb
\]
and a right adjoint
\[\textbf{R}\ch om_{\ca}(V,?):\cd\ca\ra\cd\cb,
\]
then the essential image $\cy$ of $F^{*}$ is both a triangulated aisle and coaisle in $\cd\ca$. This shows that there exists a triangulated TTF triple on $\cd\ca$ whose central class $\cy$ is triangle equivalent to $\cd\cb$. 

In fact, by using Lemma \ref{generators and t-structures}, Lemma \ref{truncating special objects and from local to global via smashing} and B.~Keller's Morita theory for derived categories (\cf subsection \ref{B. Keller's Morita theory for derived categories}), we know that the central class of any triangulated TTF triple on the derived category $\cd\ca$ of a dg category is always triangle equivalent to the derived category $\cd\cb$ of a certain dg category. We will prove in this section that, up to replacing $\ca$ by a quasi-equivalent \cite{Tabuada2005a} dg category, the new dg category $\cb$ can be chosen so as to be linked to $\ca$ by a morphism $F:\ca\ra\cb$ of dg categories whose corresponding restriction $F^{*}:\cd\cb\ra\cd\ca$ is fully faithful. 

Let us show an `arithmetical' characterization of this kind of morphisms. For this, notice that a morphism $F:\ca\ra\cb$ of dg categories also induces a restriction of the form $\cd(\cb^{op}\otimes_{k}\cb)\ra\cd(\ca^{op}\otimes_{k}\ca)$, still denoted by $F^{*}$, and that $F$ can be viewed as a morphism $F:\ca\ra F^{*}(\cb)$ in $\cd(\ca^{op}\otimes_{k}\ca)$. Let 
\[X\ra\ca\arr{F}F^{*}\cb\ra X[1]
\] 
be the triangle of $\cd(\ca^{op}\otimes_{k}\ca)$ induced by $F$.

\begin{lemma}\label{characterization he}
Let $F:\ca\ra\cb$ be a dg functor between small dg categories. The following statements are equivalent:
\begin{enumerate}[1)]
\item $F^{*}:\cd\cb\ra\cd\ca$ is fully faithful.
\item The counit $\delta$ of the adjunction $(?\otimes^{\bf L}_{\ca}U,F^{*})$ is an isomorphism.
\item The counit $\delta_{B^{\we}}:F^{*}(B^{\we})\otimes^{\bf L}_{\ca}U\ra B^{\we}$ is an isomorphism for each object $B$ of $\cb$.
\item $F$ satisfies the following:
\begin{enumerate}[4.1)]
\item The modules $(FA)^{\we}\ko A\in\ca$ form a set of compact generators of $\cd\cb$.
\item $X(?,A)\otimes^{\bf L}_{\ca}U=0$ for each object $A$ of $\ca$.
\end{enumerate}
\item $F$ satisfies the following:
\begin{enumerate}[5.1)]
\item The modules $(FA)^{\we}\ko A\in\ca$ form a set of compact generators of $\cd\cb$.
\item The class $\cy$ of modules $M\in\cd\ca$ such that $(\cd\ca)(X(?,A)[n],M)=0$, for all $A\in\ca$ and $n\in\Z$, is closed under small coproducts and $(F^{*}\cb)(?,A)\in\cy$ for all $A\in\ca$. 
\end{enumerate}
\end{enumerate}
\end{lemma}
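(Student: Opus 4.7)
The plan is to establish the chain $1)\Leftrightarrow 2)\Leftrightarrow 3)\Leftrightarrow 4)\Leftrightarrow 5)$, exploiting throughout that $?\otimes_{\ca}^{\bf L}U$ and $F^{*}$ are triangle functors which both preserve small coproducts (the latter because it admits the right adjoint $\textbf{R}\ch om_{\ca}(V,?)$). For $1)\Leftrightarrow 2)$ I will invoke Lemma \ref{properties of adjunctions}: a right adjoint is fully faithful iff its counit is an isomorphism. The implication $2)\Rightarrow 3)$ is trivial, and $3)\Rightarrow 2)$ will follow by infinite d\'evissage: the full subcategory of $N\in\cd\cb$ on which $\delta_{N}$ is an isomorphism is stable under small coproducts and extensions by the five-lemma, and Theorem \ref{Krause on perfects} identifies $\cd\cb$ with $\Tria(\{B^{\wedge}\}_{B\in\cb})$.

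The bridge to 4) is a computation. Evaluating the triangle $X\ra\ca\arr{F}F^{*}\cb\ra X[1]$ at $A\in\ca$ produces a triangle
\[
X(?,A)\ra A^{\wedge}\arr{\eta_{A^{\wedge}}}F^{*}\cb(?,A)\ra X(?,A)[1]
\]
in $\cd\ca$, whose middle arrow is the unit of $(?\otimes_{\ca}^{\bf L}U,F^{*})$, because $F^{*}\cb(?,A)=F^{*}((FA)^{\wedge})$. Applying $?\otimes_{\ca}^{\bf L}U$ and using $A^{\wedge}\otimes_{\ca}^{\bf L}U\cong(FA)^{\wedge}$, the triangle identity $\delta_{(FA)^{\wedge}}\circ(\eta_{A^{\wedge}}\otimes U)=\id$ shows that $\delta_{(FA)^{\wedge}}$ is an isomorphism precisely when $X(?,A)\otimes_{\ca}^{\bf L}U=0$. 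So 4.2) is exactly the restriction of 3) to the $(FA)^{\wedge}$, and 4.1) is a rephrasing of conservativity: if $F^{*}$ is fully faithful, then $(\cd\cb)((FA)^{\wedge}[n],N)\cong(\cd\ca)(A^{\wedge}[n],F^{*}N)=0$ for all $A,n$ forces $F^{*}N=0$ and hence $N=0$. Conversely, $4)\Rightarrow 2)$ is the d\'evissage of the previous paragraph applied to the compact generators $\{(FA)^{\wedge}\}_{A\in\ca}$ of $\cd\cb$.

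For $1)\Rightarrow 5)$ I will identify $\cy$ with the essential image of $F^{*}$. The inclusion $\text{ess.im.}(F^{*})\subseteq\cy$ is immediate from 4.2) by adjunction. For the reverse, since $\cd\ca$ is aisled (Corollary \ref{derived categories are aisled}), the aisle $\cx'$ of the t-structure with coaisle $\text{ess.im.}(F^{*})$ is generated by $\{\tau_{\cx'}(A^{\wedge})\}_{A\in\ca}$, and the displayed triangle is precisely the truncation triangle of $A^{\wedge}$, so $\tau_{\cx'}(A^{\wedge})=X(?,A)$; Lemma \ref{generators and t-structures} then yields $\cx'=\Tria(\{X(?,A)\}_{A})$, whence $\text{ess.im.}(F^{*})=\cy$. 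This gives 5.2.a) (closure under coproducts because $F^{*}$ preserves them) and 5.2.b). For $5)\Rightarrow 1)$, a first d\'evissage shows that $F^{*}N'\in\cy$ for every $N'\in\cd\cb$: the class of such $N'$ is closed under small coproducts and triangles by 5.2.a), contains the $(FA)^{\wedge}$ by 5.2.b), and exhausts $\cd\cb=\Tria(\{(FA)^{\wedge}\}_{A})$ by 5.1) and Theorem \ref{Krause on perfects}. A second d\'evissage, fixing $N'$ and varying $N$, then proves fully faithfulness: applying $(\cd\ca)(?,F^{*}N')$ to the displayed triangle kills its outer terms against $F^{*}N'\in\cy$, producing the isomorphism showing that $F^{*}$ induces a bijection on $\Hom$'s out of each $(FA)^{\wedge}$, and the class of $N$'s for which this bijection holds is triangulated and closed under small coproducts, so equals $\cd\cb$.

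The main obstacle I expect is the identification $\cy=\text{ess.im.}(F^{*})$ used in $1)\Rightarrow 5)$: it is not a formal consequence of the adjunction but hinges on (i) recognizing the triangle $X(?,A)\ra A^{\wedge}\ra F^{*}\cb(?,A)\ra X(?,A)[1]$ as a truncation triangle for the t-structure whose coaisle is $\text{ess.im.}(F^{*})$, and (ii) the aisled structure of $\cd\ca$ to recover the aisle from its generators. The remaining implications should reduce to infinite d\'evissage arguments of the kind already used.
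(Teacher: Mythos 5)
Most of your chain is sound and close to the paper's own proof: $1)\Leftrightarrow 2)$ via Lemma \ref{properties of adjunctions}, $2)\Leftrightarrow 3)$ by d\'evissage, and your two-step d\'evissage for $5)\Rightarrow 1)$ is a correct variant of the paper's $5)\Rightarrow 4)$. Your bridge to 4) is in fact cleaner than the paper's: reading off from the triangle identity $\delta_{(FA)^{\we}}\circ(\eta_{A^{\we}}\otimes^{\bf L}_{\ca}U)=\id$ that $\delta_{(FA)^{\we}}$ is invertible exactly when $X(?,A)\otimes^{\bf L}_{\ca}U=0$ avoids the paper's detour through the fact that $F^{*}$ reflects isomorphisms and zero objects (which the paper extracts from 4.1), respectively from full faithfulness).

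The genuine gap is in $1)\Rightarrow 5)$, precisely at the point you flagged: you assert that the aisle $\cx'$ whose coaisle is the essential image of $F^{*}$ is generated by $\{\tau_{\cx'}(A^{\we})\}_{A\in\ca}$, citing aisledness and Lemma \ref{generators and t-structures}. That lemma only gives the opposite-side statement, namely that $\tau^{\cy'}$ of a generating set of $\cd\ca$ generates the \emph{coaisle}; for an arbitrary t-structure with triangulated aisle, truncating generators into the aisle need not produce generators of it. Indeed, if $M\in\cx'$ is right orthogonal to all shifts of the $\tau_{\cx'}(A^{\we})$, the truncation triangle only yields isomorphisms $(\cd\ca)(\tau^{\cy'}(A^{\we})[n],M)\cong(\cd\ca)(A^{\we}[n],M)$, and nothing formal forces these groups to vanish, since morphisms from $\cy'$ to $\cx'$ need not be zero. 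The claim is true here, but for a reason you never invoke: $F^{*}$ also admits the right adjoint $\textbf{R}\ch om_{\ca}(V,?)$, so $\cy':=\text{ess.im.}(F^{*})$ is itself an aisle and $(\cx',\cy',\cz')$ is a triangulated TTF triple. Then $\tau^{\cz'}(A^{\we})\ko A\in\ca$, generate $\cz'$ by Lemma \ref{generators and t-structures}, the equivalence $\tau_{\cx'}z':\cz'\arr{\sim}\cx'$ of Lemma \ref{x igual a z} transports them to generators of $\cx'$, and these agree with your objects because $\tau_{\cx'}$ is a triangle functor vanishing on $\cy'$, so $\tau_{\cx'}(A^{\we})\cong\tau_{\cx'}z'\tau^{\cz'}(A^{\we})$; this is exactly the argument of Proposition \ref{naive general parametrization}. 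Alternatively, argue as the paper does: $X(?,A)\otimes^{\bf L}_{\ca}U=0$ gives, by adjunction, $X(?,A)\in{}^{\bot}\cy'=\cx'$, hence by d\'evissage the whole essential image of $?\otimes^{\bf L}_{\ca}X$ lies in $\cx'$; the bimodule triangle then exhibits $?\otimes^{\bf L}_{\ca}X$ as $\tau_{\cx'}$, so every object of $\cx'$ is isomorphic to some $M\otimes^{\bf L}_{\ca}X\in\Tria(\{X(?,A)\}_{A\in\ca})$, giving $\cx'=\Tria(\{X(?,A)\}_{A\in\ca})$ and $\cy'=\cx'^{\bot}=\cy$. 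With this repaired, the rest of your $1)\Rightarrow 5)$ (closure of $\cy$ under small coproducts via $\cy=\text{ess.im.}(F^{*})$, and $(F^{*}\cb)(?,A)\in\cy$) goes through.
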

\begin{proof}
The equivalence $1)\Leftrightarrow 2)$ is proved in Lemma \ref{properties of adjunctions}.

$3)\Rightarrow 2)$ can be deduce from the chain of implications $3)\Rightarrow 4)\Rightarrow 2)$ below. However, a much shorter proof is possible by using that $\cd\cb$ satisfies the principle of infinite d\'{e}vissage with respect to the $B^{\we}\ko B\in\cb$ and that the modules $N$ with invertible $\delta_{N}$ form a strictly full triangulated subcategory of $\cd\cb$ closed under small coproducts and containing the $B^{\we}\ko B\in\cb$. 

$3)\Rightarrow 4)$. It is easy to show that if $F^{*}$ is fully faithful, then the objects $(FA)^{\we}\ko A\in\ca$ form a set of (compact) generators. Indeed, if
\[(\cd\cb)((FA)^{\we},N[i])=\H i(N(FA))=\H i((F^{*}N)(A))=0
\]
for each $i\in\Z\ko A\in\ca$, then $F^{*}N=0$, \ie $F^{*}(\id_{N})=0$, and so $N=0$. Now, for an object $A\in\ca$ we get the triangle in $\cd\ca$
\[X(?,A)\ra A^{\we}\arr{F}F^{*}((FA)^{\we})\ra X(?,A)[1].
\]
If we apply $?\otimes^{\bf L}_{\ca}U$ then we get the triangle
\[X(?,A)\otimes^{\bf L}_{\ca}U\ra (FA)^{\we}\ra F^{*}((FA)^{\we})\otimes^{\bf L}_{\ca}U\ra X(?,A)\otimes^{\bf L}_{\ca}U[1]
\]
of $\cd\cb$, which gives the triangle
\[F^{*}(X(?,A)\otimes^{\bf L}_{\ca}U)\ra F^{*}((FA)^{\we})\ra F^{*}(F^{*}((FA)^{\we})\otimes^{\bf L}_{\ca}U)\ra F^{*}(X(?,A)\otimes^{\bf L}_{\ca}U)[1]
\]
of $\cd\ca$. The morphism $F^{*}((FA)^{\we})\ra F^{*}(F^{*}((FA)^{\we})\otimes^{\bf L}_{\ca}U)$ is induced by the unit of the adjunction $(?\otimes^{\bf L}_{\ca}U,F^{*})$, it is the right inverse of $F^{*}(\delta_{(FA)^{\we}})$, and it is an isomorphism if and only if $F^{*}(X(?,A)\otimes^{\bf L}_{\ca}U)=0$. Since $F^{*}$ is fully faithful, then it reflects both isomorphisms and zero objects. Hence, we have that $\delta_{(FA)^{\we}}$ is an isomorphism for every $A\in\ca$ if and only if condition 4.2) holds. 

$4)\Rightarrow 2)$ Condition 4.1) implies that $F^{*}$ reflects both zero objects and isomorphisms. Indeed, thanks to Remark \ref{triangulated categories are weakly balanced}, it suffices to prove that $F^{*}$ reflects zero objects. Now, if $F^{*}N=0$, then
\[\H i((F^{*}N)(A))=(\cd\cb)((FA)^{\we},N[i])=0
\]
for each $i\in\Z\ko A\in\ca$, and so $N=0$. Once we know that $F^{*}$ reflects zero objects and isomorphisms, we can use the argument of the implication $3)\Rightarrow 4)$ to prove that condition 4.2) implies that $\delta_{(FA)^{\we}}$ is an isomorphism for each $A\in\ca$. But condition 4.1) guarantees that the modules $N$ with invertible $\delta_{N}$ form a strictly full triangulated subcategory of $\cd\cb$ closed under small coproducts and containing a set of compact generators. Thus, by infinite d\'{e}vissage, we have that 2) holds.

$4)\Rightarrow 5)$ We know that the essential image $\cy$ of $F^{*}$ is the middle class of a triangulated TTF triple $(\cx,\cy,\cz)$ on $\cd\ca$. Notice that $?\otimes^{\bf L}_{\ca}U$ is left adjoint to $F^{*}$ and so it `is' the torsionfree functor $\tau^{\cy}$ associated to $(\cx,\cy)$. Since $X(?,A)\otimes^{\bf L}_{\ca}U=0$ for each $A\in\ca$, then $\Tria(\{X(?,A)\}_{A\in\ca})\subseteq\cx$. Since $A^{\we}\otimes^{\bf L}_{\ca}X=X(?,A)$ is in $\cx$ and the functor $?\otimes^{\L}_{\ca}X$ preserves small coproducts, then by infinite d\'{e}vissage we have that the essential image of $?\otimes^{\bf L}_{\ca}X$ is contained in $\cx$. Hence, the triangle 
\[X\ra\ca\arr{F}F^{*}\cb\ra X[1]
\]
of $\cd(\ca^{op}\otimes_{k}\ca)$ induces for each $M\in\cd\ca$ a triangle
\[M\otimes^{\bf L}_{\ca}X\ra M\ra F^{*}(M\otimes^{\bf L}_{\ca}U)\ra (M\otimes^{\bf L}_{\ca}X)[1]
\]
of $\cd\ca$ with $M\otimes^{\bf L}_{\ca}X\in\cx$ and $F^{*}(M\otimes^{\bf L}_{\ca}U)\in\cy$. This proves that $\tau_{\cx}(?)=?\otimes^{\bf L}_{\ca}X$, and thus $\Tria(\{X(?,A)\}_{A\in\ca})=\cx$.

$5)\Rightarrow 4)$ We want to prove
\[(\cd\cb)(X(?,A)\otimes^{\bf L}_{\ca}U,X(?,A)\otimes^{\bf L}_{\ca}U)=0\ko
\]
for each $A\in\ca$, that is to say
\[(\cd\ca)(X(?,A),F^{*}(X(?,A)\otimes^{\bf L}_{\ca}U))=0
\]
for each $A\in\ca$ or, equivalently, $F^{*}(X(?,A)\otimes^{\bf L}_{\ca}U)\in\cy$ for each $A\in\ca$. But in fact, $F^{*}(M\otimes^{\bf L}_{\ca}U)\in\cy$ for every $M\in\cd\ca$, as can be proved by infinite d\'{e}vissage since $\cy$ is closed under small coproducts, $F^{*}$ and $?\otimes^{\bf L}_{\ca}U$ preserve small coproducts and $F^{*}(A^{\we}\otimes^{\bf L}_{\ca}U)=F^{*}\cb(?,A)\in\cy$ for each $A\in\ca$.
\end{proof}

\begin{definition}\label{homological epimorphism}
A dg functor $F:\ca\ra\cb$ between small dg categories is a \emph{homological epimorphism}\index{homological epimorphism} if it satisfies the (equivalent) conditions of Lemma \ref{characterization he}. 
\end{definition}

From the proof of Lemma \ref{characterization he} we have that the d\'{e}collement associated to the triangulated TTF induced by a homological epimorphism $F$ is of the form
\[\xymatrix{\cd\cb\ar[rr]^{F^{*}} && \cd\ca\ar@/_1pc/[ll]_{?\otimes^{\bf L}_{\ca}U}\ar@/_-1pc/[ll]^{\textbf{R}\ch om_{\ca}(V,?)}\ar[rr]^{\tau_{\cx}} && \cx\ar@/_1pc/[ll]\ar@/_-1pc/[ll]^x
}
\]
where $x$ is the inclusion functor, $\tau_{\cx}(?)=?\otimes^{\bf L}_{\ca}X$ and $\cx=\Tria(\{X(?,A)\}_{A\in\ca})$. Compare this situation with the one of Theorem \ref{parametrizando TTF ternas}.

\begin{remark}
Our notion of `homological epimorphism of dg categories' is a generalization of the notion of `homological epimorphism of algebras' due to W. Geigle and H. Lenzing \cite{GeigleLenzing}. Indeed, a morphism of algebras $f:A\ra B$ is a \emph{homological epimorphism} if it satisfies:
\begin{enumerate}[1)]
\item the multiplication $B\otimes_{A}B\ra B$ is bijective, 
\item $\op{Tor}^{A}_{i}(B_{A},\ _{A}B)=0$ for every $i\geq 1$. 
\end{enumerate}
But this is equivalent to require that the `multiplication' $B\otimes_{A}^{\bf L}B\ra B$ is an isomorphism in $\cd B$, which is precisely condition 3) of Lemma \ref{characterization he} when we view $A$ and $B$ as dg categories with one object and concentrated in degree $0$ . Hence, our lemma recovers and adds some handy characterizations of homological epimorphisms of algebras. Recently, D. Pauksztello has studied homological epimorphism of dg algebras \cite{Pauksztello2007}.
\end{remark}

The following are particular cases of homological epimorphisms:

\begin{example}\label{quotients which are he}
Let $I$ be a two-sided ideal of an algebra $A$. The following statements are equivalent:
\begin{enumerate}[1)]
\item The canonical projection $A\ra A/I$ is a homological epimorphism.
\item $\op{Tor}^{A}_{i}(I, A/I)=0$ for every $i\geq 0$.
\item The class of complexes $Y\in\cd A$ such that $(\cd A)(I[n],Y)=0$ for every $n\in\Z$ is closed under small coproducts and $\Ext^{i}_{A}(I,A/I)=0$ for every $i\geq 0$.
\item The class of complexes $Y\in\cd A$ such that $(\cd A)(I[n],Y)=0$ for every $n\in\Z$ is closed under small coproducts and $\Ext^{i}_{A}(A/I,A/I)=0$ for every $i\geq 1$.
\end{enumerate}
In this case the d\'{e}collement associated to the triangulated TTF induced by the homological epimorphism $\pi:A\ra A/I$ is of the form
\[\xymatrix{\cd(A/I)\ar[rr]^{\pi_{*}} && \cd A\ar@/_1pc/[ll]_{?\otimes^{\L}_{A}A/I}\ar@/_-1pc/[ll]^{\R Hom_{A}(A/I,?)}\ar[rr]^{?\otimes^{\L}_{A}I} && \Tria(I)\ar@/_1.1pc/[ll]\ar@/_-1pc/[ll]^x
}
\]
The first part of conditions 3) and 4) are always satisfied if $I_{A}$ is compact in $\cd A$, \ie quasi-isomorphic to a bounded complex of finitely generated projective $A$-modules. Note that condition 2) is precisely condition 4) of Lemma \ref{characterization he}. Also, notice that from 
\[0=\op{Tor}^{A}_{0}(I,A/I)=I\otimes_{A}A/I=I/I^2
\] 
follows that $I$ is idempotent. Conversely, if $I$ is idempotent and projective as a right $A$-module (\eg $I=A(1-e)A$ where $e\in A$ is an idempotent such that $eA(1-e)=0$), then condition 2) is satisfied.
\end{example}

The example above contains the unbounded versions of the d\'{e}collements of Corollary 11, Corollary 12 and Corollary 15 of \cite{Konig1991}. 
In Example \ref{restriction in examples} of Chapter \ref{Restriction of TTF triples in triangulated categories} we will show that they restrict to d\'{e}collements of the right bounded derived category $\cd^-A$.  

\begin{example}
Let $j:A\ra B$ be an injective morphism of algebras, which we view as an inclusion. The following statements are equivalent:
\begin{enumerate}[1)]
\item $j$ is a homological epimorphism.
\item $\op{Tor}^{A}_{i}(B/A,B)=0$ for every $i\geq 0$.
\item The class of complexes $Y\in\cd A$ such that $(\cd A)((B/A)[n],Y)=0$ for every $n\in\Z$ is closed under small coproducts and $\Ext^{i}_{A}(B/A,B)=0$ for every $i\geq 0$.
\end{enumerate}
\end{example}

\begin{definition}\label{k flat}
A dg category $\ca$ is \emph{k-flat}\index{category!dg!$k$-flat} if the functor 
\[?\otimes_{k}\ca(A,A'):\cc k\ra\cc k
\]
preserves acyclic complexes of $k$-modules for every $A, A'\in\ca$. Notice that it is always the case if $k$ is a field, since in that case the acyclic complexes of $k$-modules are precisely the contractible ones. 
\end{definition}

\begin{theorem}\label{TTF are he}
Let $\ca$ be a $k$-flat dg category. For every triangulated TTF triple $(\cx,\cy,\cz)$ on $\cd\ca$ there exists a homological epimorphism $F:\ca\ra\cb$ (bijective on objects) such that the essential image of the restriction of scalars functor $F^{*}:\cd\cb\ra\cd\ca$ is $\cy$.
\end{theorem}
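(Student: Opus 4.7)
The plan is to produce $\cb$ as the dg endomorphism category of a coherent family of dg $\ca$-modules representing the $\cy$-truncations of the representable modules, and $F$ as the associated comparison functor. More precisely, I would first construct, via a Bousfield-type localization of the standard Frobenius model structure on $\cc\ca$, a new model structure on $\cc\ca$ whose weak equivalences are the morphisms whose mapping cone lies in $\cx$ and whose fibrant objects are the $\ch$-injective dg $\ca$-modules lying in $\cy$. For each $A\in\ca$ I would fix a fibrant replacement $Y_A$ of $A^{\we}$ in this localized structure, together with the structure morphism $\eta_A:A^{\we}\to Y_A$, which realizes the unit $A^{\we}\to y\tau^{\cy}(A^{\we})$ of the $\cy$-adjunction in $\cd\ca$. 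Then define
\[
\obj(\cb):=\obj(\ca),\qquad \cb(A,A'):=(\cc_{dg}\ca)(Y_A,Y_{A'}),
\]
and take $F$ to be the identity on objects, so that the ``bijective on objects'' clause holds by construction.

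The main technical obstacle is producing $F$ as a \emph{strict} dg functor on morphisms. Since $\eta_A$ is an acyclic cofibration and $Y_{A'}$ is fibrant in the localized structure, each $f\in\ca(A,A')$ admits a lift of $\eta_{A'}\circ f^{\we}$ along $\eta_A$, but such a lift is only unique up to homotopy; to obtain strict composition and unitality one has to replace $\ca$ by a suitable cofibrant resolution of the diagonal bimodule in $\cc(\ca^{op}\otimes_k\ca)$ and transport the whole construction along this resolution, or equivalently to work in an enriched functorial-fibrant-replacement framework. The $k$-flatness hypothesis on $\ca$ enters precisely here, to ensure that tensor products over $\ca$ with the relevant bimodules compute the derived tensor product and, in particular, that the cone $X$ of $F:\ca\to F^{*}\cb$ in $\cd(\ca^{op}\otimes_k\ca)$ represents, argument-wise, the $\cx$-part of the diagonal bimodule.

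Once $F$ is in place, I would verify that it is a homological epimorphism via condition~5) of Lemma~\ref{characterization he}. Condition~5.1) is automatic because the representable $\cb$-modules $(FA)^{\we}$ always form a compact generating set of $\cd\cb$. For condition~5.2), the dg $\ca$-module $F^{*}\cb(?,A)$ satisfies $F^{*}\cb(B,A)=(\cc_{dg}\ca)(Y_B,Y_A)$, which computes $Y_A(B)$ in cohomology because $Y_B\simeq B^{\we}$ in $\cd\ca$ and $Y_A$ is $\ch$-injective; hence $F^{*}\cb(?,A)\simeq Y_A\in\cy$, and $\cy$ is closed under small coproducts since it is simultaneously the coaisle of $(\cx,\cy)$ and the aisle of $(\cy,\cz)$. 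Fully faithfulness of $F^{*}$ then implies that $F^{*}(\cd\cb)$ is a full triangulated subcategory of $\cd\ca$ closed under small coproducts and containing the modules $F^{*}\cb(?,A)\simeq y\tau^{\cy}(A^{\we})$; since by Lemma~\ref{generators and t-structures}, applied to the t-structure $(\cx,\cy)$ and to the set of generators $\{A^{\we}\}_{A\in\ca}$ of $\cd\ca$, these modules generate $\cy$, this forces $F^{*}(\cd\cb)=\cy$ and completes the argument.
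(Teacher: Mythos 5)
There is a genuine gap, and it sits exactly at the point you flag: the construction of $F$ as a \emph{strict} dg functor $\ca\ra\cb$ on morphisms. Choosing, for each $f\in\ca(A,A')$, a lift $Y_A\ra Y_{A'}$ of $\eta_{A'}\circ f^{\we}$ only determines $F(f)$ up to homotopy, and neither of your proposed remedies closes this. Replacing the diagonal bimodule by a cofibrant resolution in $\cc(\ca^{op}\otimes_k\ca)$ does not change $\ca$ and does not by itself yield a strict dg functor out of $\ca$; the usual strictification of a quasi-functor requires replacing the \emph{source} dg category by a quasi-equivalent one, which is not available here because the statement demands that the homological epimorphism start at $\ca$ itself (the freedom to pass to a quasi-equivalent $k$-flat model is what Lemma \ref{k flat up to quasi-equivalence} supplies separately, before this theorem is applied). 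And a dg-\emph{enriched} functorial fibrant replacement for your localized model structure is precisely the nontrivial gadget whose existence you would have to establish: the functorial fibrant replacement produced by the small object argument is functorial only on $\Zy 0$ of the dg category, and gives no strict dg functoriality.

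The paper's proof avoids choosing lifts altogether, and this is its key device. The truncations $Y_A$ are assembled into a $\cc$-$\ca$-bimodule $Y$ (with $\cc\subset\cc_{dg}\ca$ the dg category on the objects $Y_A$), which carries a strict right $\ca$-action by construction; one then takes an $\ch$-injective resolution $Y'$ of $Y$ \emph{as a bimodule} and defines $\cb$ to be (the opposite of) the dg category whose objects are the left dg $\cc$-modules $Y'(A,?)$, so that $F$ is given by the strict $\ca$-action on $Y'$ and composition and units are automatic. This is also where $k$-flatness enters, and not where you place it: it guarantees that restricting the $\ch$-injective bimodule $Y'$ to each left dg $\cc$-module $Y'(A,?)$ preserves $\ch$-injectivity (the left adjoint $?\otimes_k\ca$ of restriction preserves acyclics), which is what identifies the strictly defined morphism complexes $\cb(FA,FA')$ with $Y'(A',Y_A)$ up to quasi-isomorphism and hence yields $F^{*}\cb(?,A)\simeq y\tau^{\cy}(A^{\we})$. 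Your concluding verifications — condition 5) of Lemma \ref{characterization he}, closure of $\cy$ under coproducts, and the identification of the essential image via Lemma \ref{generators and t-structures} (the paper uses Lemma \ref{truncating special objects and from local to global via smashing} instead) — do agree with the paper's, but they presuppose an $F$ that your argument has not actually produced.
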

\begin{proof}
For each $A\in\ca$ we consider a fixed triangle
\[X_{A}\ra A^{\we}\arr{\varphi_{A}} Y_{A}\ra X[1]
\]
in $\cd\ca$ with $X_{A}\in\cx$ and $Y_{A}\in\cy$. Assume that each $Y_{A}$ is $\ch$-injective. Let $\cc$ be the dg category given by the full subcategory of $\cc_{dg}\ca$ formed by the objects $Y_{A}\ko A\in\ca$. These objects define a $\cc$-$\ca$--bimodule $Y$ as follows:
\[(\cc^{op}\otimes_{k}\ca)^{op}\ra\cc_{dg}k\ko (Y_{A'},A)\mapsto Y(A,Y_{A'}):=Y_{A'}(A),
\]
and the morphisms $\varphi_{A}\ko A\in\ca$ induce a morphism of right dg $\ca$-modules
\[\varphi_{A}: A^{\we}\ra Y(?,Y_{A}).
\]
Let $\xi:Y\ra Y'$ be an $\ch$-injective resolution of $Y$ in $\ch(\cc^{op}\otimes_{k}\ca)$, and let $\cb'$ be the dg category given by the full subcategory of $\cc_{dg}(\cc^{op})$ formed by the objects $Y'(A,?), A\in\ca$. Consider the functor
\[\rho:\ca^{op}\ra\cb',
\]
which takes the object $A$ to $Y'(A,?)$ and the morphism $f\in\ca^{op}(A,A')$ to the morphism $\rho(f)$ defined by
\[\rho(f)(C):Y'(A,C)\ra Y'(A',C)\ko y\mapsto (-1)^{|f||y|}Y'(f,C)(y)
\]
for a homogeneous $f$ of degree $|f|$ and a homogeneous $y$ of degree $|y|$. Since $Y'$ is a $\cc$-$\ca$--bimodule, then the functor $\rho$ is a dg functor. It induces a dg functor between the corresponding opposite dg categories
\[F:\ca\ra\cb'^{op}=:\cb.
\]
Notice that, for each $A'\in\ca$, the functor $(\cc_{dg}\ca)(?,Y_{A'}):\cc\ca\ra\cc k$ induces a triangle functor between the corresponding categories up to homotopy, $(\cc_{dg}\ca)(?,Y_{A'}):\ch\ca\ra\ch k$. Moreover, since $Y_{A'}$ is $\ch$-injective, then this functor induces a triangle functor between the corresponding derived categories
\[(\cc_{dg}\ca)(?,Y_{A'}):\cd\ca\ra\cd k.
\]
%Indeed, the morphism of dg categories
%\[G:\cc^{op}\ra\cc^{op}\otimes_{k}\ca\ko C\mapsto (C,A')
%\]
%induces an adjoint pair
%\[\xymatrix{\ch(\cc^{op}\otimes\ca)\ar@<1ex>[d]^{G_{*}} \\
%\ch(\cc^{op})\ar@<1ex>[u]^{T_{W}}
%}
%\]
%where $G_{*}$ is the restriction of scalars functor and $T_{A'}(N)$ is the $\cc$-$\ca$--bimodule given by
%\[T_{A'}(N): (\cc^{op}\otimes\ca)^{op}\ra\cc_{dg}k\ko (C,A)\mapsto T_{A'}(N):=N(C)\otimes_{k}\ca(A,A').
%\]
%Now, if $N$ is an acyclic left dg $\cc$-module, then
%\[\ch(\cc^{op})(N,G_{*}Y'(A',?))\cong\ch(\cc^{op}\otimes_{k}\ca)(T_{A'}N,Y'(A',?))=0,
%\]
%since $T_{A'}$ preserves acyclic modules thanks to the flatness hypothesis on $\ca$.
When all these functors are applied to the triangles considered above, then we get a family of quasi-isomorphism of complexes of $k$-modules
\[\Psi_{A,A'}:\cc(Y_{A},Y_{A'})\ra Y(A,Y_{A'}),
\]
for each $A, A'\in\ca$. This family underlies a quasi-isomorphism of left dg $\cc$-modules $\Psi_{A,?}:\cc(Y_{A},?)\ra Y(A,?)$ for each $A\in\ca$. Hence, we have a family of quasi-isomorphisms of left dg $\cc$-modules
\[\xi_{A,?}\Psi_{A,?}:\cc(Y_{A},?)\ra Y'(A,?),
\]
for $A\in\ca$. Notice that, since $\ca$ is $k$-flat, for each $A'\in\ca$ the corresponding restriction from $\cc$-$\ca$-bimodules to left dg $\cc$-modules preserves $\ch$-injectives. Indeed, its left adjoint is $?\otimes_{k}\ca$ and preserves acyclic modules. Then for each $A'\in\ca$ the triangle functor
\[\cc_{dg}(\cc^{op})(?,Y'(A',?)):\ch(\cc^{op})\ra\ch k
\]
preserves acyclic modules, and so it induces a triangle functor 
\[\cc_{dg}(\cc^{op})(?,Y'(A',?)):\cd(\cc^{op})\ra\cd k.
\]
When applied to the quasi-isomorphisms of left dg $\cc$-modules $\xi_{A,?}\Psi_{A,?}$, for $A\in\ca$, it gives us quasi-isomorphisms of complexes
\[\cb'(Y'(A,?), Y'(A',?))\ra \cc_{dg}(\cc^{op})((Y_{A})^{\we},Y'(A',?)),
\]
and so quasi-isomorphisms of complexes
\[\varepsilon_{}: \cb'(Y'(A,?),Y'(A',?))\ra Y'(A',Y_{A}),
\]
for each $A,A'\in\ca$. It induces a quasi-isomorphism of right dg $\ca$-modules
\[\varepsilon: F^{*}\cb(?,Y'(A,?))\ra Y'(?,Y_{A})
\]
and we get an isomorphism of triangles
\[\xymatrix{X_{A}\ar[r] & A^{\we}\ar[rr]^{\xi_{?,Y_{A}}\varphi_{A}} && Y'(?,Y_{A})\ar[r] & X[1] \\
X_{A}\ar[r]\ar[u]^{\id} & A^{\we}\ar[rr]^{F}\ar[u]^{\id} && F^{*}\cb(?,Y'(A,?))\ar[u]^{\varepsilon}\ar[r] & X[1]\ar[u]^{\id}
}
\]
Notice that from this it follows that $\tau^{\cy}(A^{\we})=F^{*}\cb(?,Y'(A,?))$ for each $A\in\ca$. 
By infinite d\'{e}vissage of $\cd\cb$ with respect to the set of objects $B^{\we}, B\in\cb$, the functor $F^{*}:\cd\cb\ra\cd\ca$ has its image in $\cy$. We can assume $X_{A}$ to be the restriction $X(?,A)$ of an $\ca$-$\ca$--bimodule $X$ coming from the triangle induced by $F$ in $\cd(\ca^{op}\otimes_{k}\ca)$. Now, by adjunction one proves that the functor $?\otimes^{\bf L}_{\ca}U:\cd\ca\ra\cd\cb$ vanishes on the objects of $\cx$, where $U$ is the $\ca$-$\cb$--bimodule defined by $U(B,A):=\cb(B,FA)$. In particular, it vanishes on $X(?,A)$ and by Lemma \ref{characterization he} we have proved that $F^{*}$ is a homological epimorphism.

Thanks to Lemma \ref{truncating special objects and from local to global via smashing} we know that the objects $F^{*}(B^{\we})\ko B\in\cb$ form a set of compact generators of $\cy$, and so, since $F^{*}$ is fully faithful, the essential image of $F^{*}$ is precisely $\cy$. 

%, and so when we apply it to $f:A\ra f^{*}B$ we get an isomorphism
%\[f\otimes^{\bf L}\id_{B}:A\otimes^{\bf L}_{A}B\arr{\sim} f^{*}B\otimes^{\bf L}_{A}B.
%\]
%Finally, it is easy to show that, when we compose the isomorphism
%\[B\arr{\sim} A\otimes^{\bf L}_{A}B\ko b\mapsto 1_{A}\otimes b
%\]
%with the isomorphism $f\otimes^{\bf L}\id_{B}$, then we get the counit $\delta_{B}$. MEJOR SEGUIR ASI:
\end{proof}

Notice that, if $F:\ca\ra\cb$ is a homological epimorphism of dg categories, the bimodule $X$ in the triangle
\[X\ra\ca\arr{F}F^{*}\cb\ra X[1]
\]
of $\cd(\ca^{op}\otimes_{k}\ca)$ has a certain `derived idempotency' expressed by condition 4) of Lemma \ref{characterization he}, and plays the same r\^{o}le as the idempotent two-sided ideal in the theory of TTF triples on module categories (\cf Theorem \ref{parametrizando TTF ternas}). 

\begin{remark}
Theorem \ref{TTF are he} is related to subsection 5.4 of K. Br\"{u}ning's Ph. D. thesis \cite{Bruning2007} and section 9 of B. Huber's Ph. D. thesis \cite{Huber2007}. In particular, it generalizes \cite[Corollary 5.4.9]{Bruning2007} and \cite[Corollary 9.8]{Huber2007}. It is also related to B. To\"{e}n's lifting, in the homotopy category of small dg categories up to quasi-equivalences, of \emph{quasi-functors} or \emph{quasi-representable} dg bimodules to genuine dg functors (\cf the proof of \cite[Lemma 4.3]{Toen2005}). \emph{Cf.} also \cite[Lemma 3.2]{Keller1999}.
\end{remark}
\bigskip

\section{Parametrization for derived categories}\label{Parametrization for derived categories}
\addcontentsline{lot}{section}{5.4. Parametrizaci\'on para categor\'ias derivadas}

\begin{definition}\label{equivalent he}
Two homological epimorphisms of dg categories, $F:\ca\ra\cb$ and $F':\ca\ra\cb'$, are \emph{equivalent}\index{homological epimorphism!equivalent} if the essential images of the corresponding restriction functors, $F^{*}:\cd\cb\ra\cd\ca$ and $F'^{*}:\cd\cb'\ra\cd\ca$, are the same subcategory of $\cd\ca$. 
\end{definition}

Thanks to Theorem \ref{TTF are he}, we know that every homological epimorphism $F$ starting in a $k$-flat small dg category is equivalent to another one $F'$ which is bijective on objects. Unfortunately, the path from $F$ to $F'$ is indirect. Nevertheless, there exists a direct way of proving that every homological epimorphism (without any $k$-flatness assumption) is equivalent to another one $F'$ such that $\H0 F'$ is essentially surjective.

\begin{lemma}
Every homological epimorphism $F:\ca\ra\cb$ is equivalent to a homological epimorphism $F':\ca\ra\cb'$ such that $\H 0F':\H 0\ca\ra\H 0\cb'$ is essentially surjective.
\end{lemma}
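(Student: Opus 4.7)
The plan is to take $\cb'$ to be the full dg subcategory of $\cb$ whose objects are $\{F(A):A\in\ca\}$. Then $F$ factors as $F=\iota\circ F'$, where $F':\ca\ra\cb'$ is the obvious dg functor and $\iota:\cb'\hookrightarrow\cb$ is the inclusion. By construction, every object of $\cb'$ is literally of the form $F'(A)$ for some $A\in\ca$, so $\H 0 F'$ is essentially surjective (in fact surjective on isomorphism classes already at the level of objects).

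The heart of the proof consists in showing that $F'$ is still a homological epimorphism and that it is equivalent to $F$. First I would verify that the restriction functor $\iota^*:\cd\cb\ra\cd\cb'$ is a triangle equivalence. Its left adjoint $\iota_!=?\otimes^{\bf L}_{\cb'}\cb$ is fully faithful: the unit $M\ra \iota^*\iota_!M$ is an isomorphism on the representables $(B')^{\we}_{\cb'}$ for $B'\in\cb'$ (since $\iota$ is fully faithful as a dg functor), and hence on all of $\cd\cb'$ by infinite d\'evissage. Moreover, $\iota_!$ sends $(F'(A))^{\we}_{\cb'}$ to $(F(A))^{\we}_{\cb}$, so its essential image is a strictly full triangulated subcategory of $\cd\cb$ closed under small coproducts and containing the modules $(F(A))^{\we}_{\cb}$, $A\in\ca$. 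By condition 4.1) of Lemma~\ref{characterization he}, these objects form a set of compact generators of $\cd\cb$, so $\iota_!$ is essentially surjective and therefore a triangle equivalence; consequently $\iota^*$ is its quasi-inverse.

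Next, from the equality $F^*=F'^*\circ\iota^*$ I would deduce $F'^*\cong F^*\circ\iota_!$. Since $F^*$ is fully faithful (by hypothesis on $F$) and $\iota_!$ is a triangle equivalence, $F'^*$ is fully faithful; by Lemma~\ref{characterization he}(1), $F'$ is then a homological epimorphism. Finally, the essential image of $F'^*$ equals $F^*(\iota_!(\cd\cb'))=F^*(\cd\cb)$, which is the essential image of $F^*$, so $F'$ is equivalent to $F$ in the sense of Definition~\ref{equivalent he}.

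The argument is mainly formal, and I do not expect genuine obstacles; the only delicate point is verifying that $\iota_!$ is essentially surjective, and this reduces cleanly, via infinite d\'evissage, to the compact generation statement contained in condition 4.1) of Lemma~\ref{characterization he}.
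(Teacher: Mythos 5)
Your proof is correct and follows essentially the same route as the paper: pass to the full dg subcategory of $\cb$ on the (essential) image of $F$, show that restriction along the inclusion is a triangle equivalence using the compact generation statement 4.1) of Lemma \ref{characterization he}, and conclude from $F^{*}=F'^{*}\circ\iota^{*}$ that $F'$ is an equivalent homological epimorphism. The only difference is cosmetic: the paper verifies the equivalence by applying Lemma \ref{detecting triangle equivalences} directly to $j^{*}$, whereas you check that the left adjoint $\iota_{!}$ is fully faithful and essentially surjective, which amounts to the same d\'evissage argument.
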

\begin{proof}
Let $\cb'$ be the full subcategory of $\cb$ formed by the objects $B'$ such that $B'^{\we}\cong(FA)^{\we}$ in $\cd\cb$ (equivalently, $B'\cong FA$ in $\H 0\cb$) for some $A\in\ca$. Put $F':\ca\ra\cb'$ for the dg functor induced by $F$. Now, the restriction $j^{*}:\cd\cb\ra\cd\cb'$ along the inclusion $j:\cb'\ra\cb$ is a triangle equivalence. Indeed, thanks to condition 4) of Lemma \ref{characterization he} we know that the modules $(jB')^{\we}\ko B'\in\cb'$ form a set of compact generators of $\cd\cb$. Moreover, we have
\begin{align}
\H n\cb(B',B'')\cong(\cd\cb)((jB')^{\we},(jB'')^{\we}[n])\underset{\sim}{\arr{j^{*}}}(\cd\cb')(j^{*}(jB')^{\we},j^{*}(jB'')^{\we})\cong \nonumber \\
\cong (\cd\cb)(B'^{\we},B''^{\we}[n])\cong\H n\cb(B',B''). \nonumber
\end{align}
Then Lemma \ref{detecting triangle equivalences} implies that $j^{*}$ is a triangle equivalence. Therefore, the commutative triangle
\[\xymatrix{\cd\cb\ar[dr]^{F^{*}}\ar[d]_{j^{*}}^{\wr} & \\
\cd\cb'\ar[r]_{F'^{*}} & \cd\ca 
}
\]
finishes the proof.
\end{proof}

\begin{lemma}\label{k flat up to quasi-equivalence}
Every compactly generated $k$-linear algebraic triangulated category (whose set of compact generators has cardinality $\lambda$) is triangle equivalent to the derived category of a small $k$-flat dg category (whose set of objects has cardinality $\lambda$).
\end{lemma}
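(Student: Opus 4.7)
My plan is to invoke the combination of B.~Keller's Morita theorem and G.~Tabuada's model category structure on the category of small dg categories.

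First, I would apply B.~Keller's theorem \cite[Theorem 4.3]{Keller1994a} (which has already been used several times in the chapter, \cf subsection \ref{B. Keller's Morita theory for derived categories}): any compactly generated $k$-linear algebraic triangulated category $\cd$ with a set $\cg$ of compact generators of cardinality $\lambda$ is triangle equivalent to the derived category $\cd\cb$ of a small dg category $\cb$ whose set of objects is in bijection with $\cg$, hence has cardinality $\lambda$. The only remaining issue is that $\cb$ need not be $k$-flat, so I have to replace it by a quasi-equivalent $k$-flat dg category without changing the set of objects.

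The key tool here is G.~Tabuada's model structure on the category $\opname{dgCat}_{k}$ of small $k$-linear dg categories \cite{Tabuada2005a}, whose weak equivalences are the quasi-equivalences. The essential feature I would use is that every cofibrant object of this model structure is $k$-flat, and that the cofibrant replacement functor can be constructed so as to be the identity on objects (the generating cofibrations of Tabuada's model structure only modify morphism complexes, not the set of objects). Thus, taking a cofibrant replacement of $\cb$ produces a trivial fibration $Q:\ca\ra\cb$ from a $k$-flat dg category $\ca$ with the same set of objects, in particular of cardinality $\lambda$, and such that $Q$ is a quasi-equivalence.

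Finally, since quasi-equivalences induce triangle equivalences between derived categories (\cf \cite[Lemma 4.2]{Keller1994a} or the techniques of Lemma \ref{detecting triangle equivalences}: the restriction $Q^{\ast}:\cd\cb\ra\cd\ca$ sends the compact generators $B^{\wedge}$, $B\in\cb$, to the compact generators $(QA)^{\wedge}\cong A^{\wedge}$, $A\in\ca$, and induces isomorphisms $\H^{n}\cb(B,B')\cong\H^{n}\ca(A,A')$ on all morphism complexes by the quasi-equivalence assumption), we obtain a chain of triangle equivalences $\cd\simeq\cd\cb\simeq\cd\ca$, which is the desired conclusion. The only potential subtlety is checking that the cofibrant replacement can indeed be taken to be bijective on objects, but this is built into the standard small object argument applied to Tabuada's generating cofibrations, whose sources and targets share the same object sets.
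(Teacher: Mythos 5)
Your argument is correct and follows essentially the same route as the paper: B.~Keller's theorem \cite[Theorem 4.3]{Keller1994a}, then a cofibrant replacement in G.~Tabuada's model structure \cite{Tabuada2005a} which is the identity on objects and a quasi-equivalence, and finally the fact that quasi-equivalences induce triangle equivalences between derived categories. The only small inaccuracy is your justification that all generating cofibrations keep the object set fixed (one of them adjoins an object); the correct reference, used in the paper, both for the existence of a cofibrant replacement that is the identity on objects and for the fact that a cofibrant dg category has morphism complexes which are cofibrant in $\cc k$, hence $\ch$-projective and therefore give $k$-flatness, is \cite[Proposition 2.3]{Toen2005}.
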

\begin{proof}
Indeed, by B.~Keller's theorem \cite[Theorem 4.3]{Keller1994a} (\cf subsection \ref{B. Keller's Morita theory for derived categories}) we know that it is triangle equivalent to the derived category of a small $k$-linear dg category $\ca$ satisfying this cardinality condition. If $\ca$ is not $k$-flat, then we can consider a cofibrant replacement $F:\ca'\ra\ca$ of $\ca$ in the model structure of the category of small $k$-linear dg categories constructed by G. Tabuada in \cite{Tabuada2005a}, which can be taken to be the identity on objects \cite[Proposition 2.3]{Toen2005}. In particular, $F$ is a quasi-equivalence \cite[subsection 2.3]{Keller2006b}, and so the restriction along $F$ induces a triangle equivalence between the corresponding derived categories \cite[Lemma 3.10]{Keller2006b}. Since $\ca'$ is cofibrant, by \cite[Proposition 2.3]{Toen2005} for all objects $A\ko A'$ in $\ca$ the complex $\ca'(A,A')$ is cofibrant in the category $\cc k$ of complexes over $k$ endowed with its projective model structure \cite[Theorem 2.3.11]{Hovey1999}. This implies that $\ca'(A,A')$ is $\ch$-projective. Finally, since the functor $?\otimes_{k}k$ preserves acyclic complexes and the $\ch$-projective complexes satisfy the principle of infinite d\'{e}vissage with respect to $k$, then $?\otimes_{k}\ca'(A,A')$ preserves acyclic complexes.
\end{proof}

Notice that smashing subcategories of a compactly generated triangulated category $\cd$ form a set. Indeed, \cite[Theorem 5.3]{Keller1994a} implies that the full subcategory $\cd^c$ formed by the compact objects is skeletally small, and we know that smashing subcategories of $\cd$ are in bijection with certain ideals of $\cd^c$ thanks to \cite[Corollary 12.5]{Krause2005}. Now we will give several descriptions of this set in the algebraic case.

\begin{theorem}\label{several descriptions}
Let $\cd$ be a compactly generated algebraic triangulated category, and let $\ca$ be a $k$-flat dg category whose derived category is triangle equivalent to $\cd$. There exists a bijection between:
\begin{enumerate}[1)]
\item Smashing subcategories $\cx$ of $\cd$.
\item Triangulated TTF triples $(\cx,\cy,\cz)$ on $\cd$.
\item (Equivalence classes of) d\'{e}collements of $\cd$.
\item Equivalence classes of homological epimorphisms of dg categories of the form $F:\ca\ra\cb$ (which can be taken to be bijective on objects).
\end{enumerate}
Moreover, if we denote by $\cs$ any of the given (equipotent) sets, then there exists a surjective map $\cR\ra\cs$, where $\cR$ is the class of objects $P\in\cd$ such that $\{P[n]\}^{\bot}_{n\in\Z}$ is closed under small coproducts.
\end{theorem}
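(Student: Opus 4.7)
The plan is to assemble the four described sets into a square of bijections, most of whose arrows have already been constructed in the preceding sections of the chapter; only a little bookkeeping is needed. Throughout, I would fix the triangle equivalence $\cd\simeq\cd\ca$ afforded by the hypothesis and work inside $\cd\ca$, which by Corollary \ref{derived categories are aisled} is aisled and, being compactly generated, is in particular perfectly generated.

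First, since $\cd$ is perfectly generated, Proposition \ref{smashing are TTF} gives the bijection $1)\leftrightarrow 2)$ via $\cx\mapsto(\cx,\cx^{\bot},\cx^{\bot\bot})$, with inverse $(\cx,\cy,\cz)\mapsto\cx$. The bijection $2)\leftrightarrow 3)$ is the content of Propositions \ref{TTF triples are recollements} and \ref{recollements are TTF triples}: a triangulated TTF triple $(\cx,\cy,\cz)$ yields the d\'ecollement whose six functors are $(\tau^{\cy},y,\tau_{\cy},z\tau^{\cz}x,\tau_{\cx},x)$, while a d\'ecollement with functors $(i^{*},i_{*},i^{!},j_{!},j^{*},j_{*})$ yields the triple $(\im(j_{!}),\im(i_{*}),\im(j_{*}))$; the defining equivalence relation in 3) ensures that two d\'ecollements give the same triple iff they are equivalent, so this is a genuine bijection.

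For $2)\leftrightarrow 4)$, I would argue as follows. Given a homological epimorphism $F:\ca\to\cb$, the discussion following Lemma \ref{characterization he} shows that $F^{*}:\cd\cb\to\cd\ca$ is fully faithful with both adjoints, so its essential image $\cy$ is simultaneously an aisle and a coaisle and sits in a triangulated TTF triple $(\cx,\cy,\cz)$ on $\cd\ca$. By the definition of the equivalence relation on homological epimorphisms (Definition \ref{equivalent he}), this assignment descends to a well-defined map from equivalence classes of homological epimorphisms $F:\ca\to\cb$ to triangulated TTF triples. Theorem \ref{TTF are he}, which uses the $k$-flatness of $\ca$, supplies the surjectivity: every triple arises, and one may take $F$ bijective on objects. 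Injectivity is built into the equivalence relation: two equivalent classes give the same essential image $\cy$, hence the same triple.

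Lastly, for the surjective map $\cR\to\cs$: since $\cd\ca$ is aisled, Proposition \ref{naive general parametrization}(2) yields a surjection from the class of objects $P\in\cd\ca$ with $\{P[n]\}^{\bot}_{n\in\Z}$ closed under small coproducts onto the set of triangulated TTF triples on $\cd\ca$, sending $P$ to $(\ ^{\bot}\cy,\cy,\cy^{\bot})$ with $\cy=\{P[n]\}^{\bot}_{n\in\Z}$; composing with any of the three bijections produces the desired surjection $\cR\to\cs$. I do not expect a substantive obstacle here: the only care required is the verification that the three bijections are mutually compatible so that the ``set $\cs$'' is unambiguous, and that the equivalence relations on 3) and 4) are exactly calibrated to make the natural maps bijections rather than mere surjections; both points are immediate from the definitions.
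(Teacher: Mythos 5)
Your proposal is correct and follows essentially the same route as the paper's own proof: 1)--2) via Proposition \ref{smashing are TTF}, 2)--3) via Propositions \ref{TTF triples are recollements} and \ref{recollements are TTF triples}, 4)$\to$2) via the construction at the start of section \ref{Homological epimorphisms of dg categories} with surjectivity from Theorem \ref{TTF are he}, and the surjection $\cR\ra\cs$ from Proposition \ref{naive general parametrization}. The extra remarks you add (injectivity being forced by Definition \ref{equivalent he} together with the fact that a TTF triple is determined by its middle class) are exactly the bookkeeping the paper leaves implicit.
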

\begin{proof}
The bijection between 1) and 2) was proved in Proposition \ref{smashing are TTF}, and the bijection between 2) and 3) was proved in Proposition \ref{TTF triples are recollements} and Proposition \ref{recollements are TTF triples}. The map from 2) to 4) is given by Theorem \ref{TTF are he}, and the map from 4) to 2) was constructed at the beginning of section \ref{Homological epimorphisms of dg categories}. The surjective map $\cR\ra\cs$ follows from Proposition \ref{naive general parametrization}.
\end{proof}

\section{Idempotent two-sided ideals}\label{Idempotent two-sided ideals}
\addcontentsline{lot}{section}{5.5. Ideales bil\'ateros idempotentes}

\subsection{Generalized smashing conjecture: a short survey}\label{Short survey}
\addcontentsline{lot}{subsection}{5.5.1. Conjetura aplastante generalizada: una inspecci\'on breve}

The \emph{generalized smashing conjecture}\index{generalized smashing conjecture} is a generalization to arbitrary compactly generated triangulated categories of a conjecture due to D.~Ravenel \cite[1.33]{Ravenel1984} and, originally, A.~K.~Bousfield \cite[3.4]{Bousfield1979}. It predicts the following:
\bigskip

\noindent Every smashing subcategory of a compactly generated triangulated category satisfies the principle of infinite d\'{e}vissage with respect to a set of compact objects.
\bigskip

However, this conjecture was disproved by B.~Keller in \cite{Keller1994}. More precisely, he proved the following: if $A$ is an algebra and $I$ a two-sided ideal of $A$ contained in the Jacobson radical of $A$ and such that the projection $A\ra A/I$ is a homological epimorphism, then the smashing subcategory $\Tria(I)$ of $\cd A$ contains no non-zero compact object of $\cd A$.

In \cite{Krause2000}, H. Krause gives the definition of being `generated by a class of morphisms':

\begin{definition}
Let $\cd$ be a triangulated category, $\cx$ a strictly full triangulated subcategory of $\cd$ closed under small coproducts and $\ci$ a class of morphisms of $\cd$. We say that $\cx$ is \emph{generated}\index{category!triangulated!generated by a class of morphisms} by $\ci$ if $\cx$ is the smallest full triangulated subcategory of $\cd$ closed under small coproducts and such that every morphism in $\ci$ factors through some object of $\cx$.
\end{definition}

This is a generalization of the notion of infinite d\'{e}vissage (\cf Definition \ref{infinite devissage}), at least when the existence of certain countable coproducts is guaranteed. Indeed:

\begin{lemma}
Let $\cd$ and $\cx$ be as before, let $\cq$ be a class of objects of $\cd$ and let $\ci$ be the class of identity morphisms $\id_{Q}$ where $Q$ runs through $\cq$. Assume that $\cd$ has countable coproducts. Then, every morphism of $\ci$ factors through an object of $\cx$ if and only if every object of $\cq$ is in $\cx$. In particular, $\cx$ satisfies the principle of infinite d\'{e}vissage with respect to $\cq$ if and only if $\cx$ is generated by $\ci$. 
\end{lemma}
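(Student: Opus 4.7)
The plan is to establish the first assertion; the ``In particular'' part will then follow formally. The forward direction of the first assertion is trivial: for $Q\in\cq\subseteq\cx$, the identity $\id_Q$ factors as $Q\arr{\id_Q}Q\arr{\id_Q}Q$ through $Q\in\cx$.

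For the converse, suppose $\id_Q=gf$ with $f\in\cd(Q,X)$, $g\in\cd(X,Q)$ and $X\in\cx$. Then $e:=fg\in\cd(X,X)$ satisfies $e^{2}=f(gf)g=fg=e$, so $e$ is an idempotent endomorphism of $X$ of which $Q$ is a retract via $(f,g)$. The plan is to realize $Q$ as the Milnor colimit of the telescope
\[X\arr{e}X\arr{e}X\arr{e}\cdots,\]
which exists because $\cd$ has countable coproducts. Denoting this colimit by $Y$, its defining triangle
\[\coprod_{n\geq 0}X\arr{\id-\sigma}\coprod_{n\geq 0}X\ra Y\ra\coprod_{n\geq 0}X[1]\]
places $Y$ inside $\cx$, since $\cx$ is triangulated and closed under small (hence countable) coproducts. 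That $Y\cong Q$ is the content of the B\"okstedt--Neeman splitting of idempotents in triangulated categories with countable coproducts \cite{BokstedtNeeman1993}; concretely, the constant families $g:X\ra Q$ and $f:Q\ra X$ are compatible with the transition maps of the telescopes thanks to $g\circ e=(gf)g=g$ and $e\circ f=f(gf)=f$, and so induce mutually inverse morphisms between $Y$ and $\mathrm{Mcolim}(Q\arr{\id}Q\arr{\id}\cdots)=Q$.

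For the ``In particular'' statement, both $\Tria(\cq)$ and the subcategory generated by $\ci$ are characterized as the minimal strictly full triangulated subcategories of $\cd$ closed under small coproducts subject, respectively, to the conditions ``$\cq\subseteq\cx'$'' and ``every $\id_{Q}\in\ci$ factors through some object of $\cx'$''. The first part of the lemma, applied to each such candidate $\cx'$, shows that these two conditions are equivalent on the class of strictly full triangulated subcategories closed under small coproducts, so the two minima must agree.

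The main obstacle is the identification $Y\cong Q$, which is the heart of the B\"okstedt--Neeman argument for splitting idempotents; once this classical input is granted, the rest of the proof is entirely formal bookkeeping.
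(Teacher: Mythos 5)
Your proof is correct and rests on the same classical input as the paper's -- Milnor colimits of idempotent telescopes and the B\"okstedt--Neeman splitting (the paper quotes it as \cite[Proposition 1.6.8]{Neeman2001}) -- but it executes the key step differently. The paper telescopes the \emph{complementary} idempotent: it takes $P=\Mcolim(X\arr{\id-e}X\arr{\id-e}\cdots)$, quotes Neeman to get $f':P\ra X$, $g':X\ra P$ with $f'g'=\id-e$ and $g'f'=\id_{P}$, verifies by an explicit matrix computation that $X\cong Q\oplus P$, and then deduces $Q\in\cx$ from the split triangle $Q\ra Q\oplus P\ra P\arr{0}Q[1]$; you instead telescope $e$ itself and identify $Y=\Mcolim(X\arr{e}X\arr{e}\cdots)$ with $Q$. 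That route works, but your ``concretely'' gloss should be handled with care: maps of sequences induce maps of Milnor colimits only non-canonically (the cone construction is not functorial), so the levelwise identities $ge=g$ and $ef=f$ do not by themselves yield mutually inverse induced morphisms $Y\leftrightarrow Q$. The clean way to finish your variant is to quote the splitting result in the same form the paper does, now for $e$: the telescope gives $a:X\ra Y$, $b:Y\ra X$ with $ba=e$ and $ab=\id_{Y}$, and then uniqueness of idempotent splittings does the rest, since $gb:Y\ra Q$ and $af:Q\ra Y$ satisfy $(gb)(af)=gef=\id_{Q}$ and $(af)(gb)=aeb=\id_{Y}$. What the paper's choice of $\id-e$ buys is that it never has to compare the hocolim with the given retract $Q$, only to decompose $X$, at the price of the small matrix verification; what your choice buys is a slightly more direct statement ($Y\cong Q$ immediately, with $Y\in\cx$ because $\cx$ is triangulated, strictly full and closed under small coproducts). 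Your treatment of the trivial direction and of the ``in particular'' part, via the two minimality characterizations, is fine and matches what the paper leaves implicit.
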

\begin{proof}
Let $Q$ be an object of $\cq$, and assume that the identity morphism $\id_{Q}$ factors through an object $X$ of $\cx$:
\[\xymatrix{Q\ar[dr]_{f}\ar[rr]^{\id_{Q}} & & Q \\
&X\ar[ur]_{g}&
}
\]
We have to prove that $Q$ belongs to $\cx$. For this, notice that both $fg=e$ and $\id-e$ are idempotent morphisms. Thanks to \cite[Proposition 1.6.8]{Neeman2001}, we know that there exist morphisms $f':P\ra X$ and $g':X\ra P$ such that $\id-e=f'g'$ and $g'f'=\id$, where $P$ can be taken to be the Milnor colimit of the sequence
\[X\arr{\id-e}X\arr{\id-e}X\ra\dots
\]
Then, the compositions
\[\left[\begin{array}{cc}f&f'\end{array}\right]\left[\begin{array}{c}g\\ g'\end{array}\right]:X\ra Q\oplus P\ra X
\]
and
\[\left[\begin{array}{c}g\\ g'\end{array}\right]\left[\begin{array}{cc}f&f'\end{array}\right]:Q\oplus P\ra X\ra Q\oplus P
\]
are the corresponding identity morphisms. Indeed, $gf'=g(1-e)f'=g(1-fg)f'=(g-g)f'=0$ and $gf'=g'ef=g'(1-f'g')f=(g'-g')f=0$. Thus, $X\cong Q\oplus P$ and in the triangle of $\cd$
\[Q\arr{\scriptsize{\left[\begin{array}{cc}\id \\ 0\end{array}\right]}} Q\oplus P\arr{\scriptsize{\left[\begin{array}{c}0 \\ \id\end{array}\right]^t}} P\arr{0} Q[1]
\]
both $P$ and $Q\oplus P$ belongs to $\cx$. This implies that $Q$ belongs to $\cx$.
\end{proof}

This allows H. Krause the following reformulation of the generalized smashing conjecture: 
\bigskip

\noindent Every smashing subcategory of a compactly generated triangulated category is generated by a set of identity morphisms between compact objects.
\bigskip

This reformulation enables at least two weak versions of the conjecture, one of which was proved by H. Krause \cite[Corollary 4.7]{Krause2000}:
\emph{Every smashing subcategory of a compactly generated triangulated category is generated by a set of morphisms between compact objects.}

In Corollary \ref{gsc} below we prove `the other' weak version of the conjecture, which substitutes ``morphisms between compact objects'' by ``identity morphisms'' of Milnor colimits of compact objects.

\subsection{From ideals to smashing subcategories}\label{From ideals to smashing subcategories}
\addcontentsline{lot}{subsection}{5.5.2. De ideales a subcategor\'ias aplastantes}

If $\cd$ is a triangulated category, we denote by $\cd^c$\index{$\cd^c$} the full subcategory of $\cd$ formed by the compact objects and by $\cm or(\cd^c)$\index{$\cm or(\cd^c)$} the class of morphisms of $\cd^c$. If $\cd$ is the derived category $\cd\ca$ of a dg category $\ca$, then we also write $\cd^c\ca$\index{$\cd^c\ca$} to refer to $\cd^c$.

We write $\cp(\cm or(\cd^c))$\index{$\cp(\cm or(\cd^c))$} for the large complete lattice of subsets of $\cm or(\cd^c)$, where the order is given by the inclusion. Notice that if $\cd$ is compactly generated and we form the lattice with a skeleton of $\cd^c$ instead of $\cd^c$ itself, then we get a small lattice. Also, we write $\cp(\cd)$\index{$\cp(\cd)$} for the large complete lattice of classes of objects of $\cd$, where the order here is also given by the inclusion. Consider the following Galois connection (\cf \cite[section III.8]{Stenstrom} for the definition and basic properties of Galois connections):
\[\xymatrix{\cp(\cm or(\cd^c))\ar@<1ex>[rr]^{?^{\bot}} && \cp(\cd)\ar@<1ex>[ll]^{\cm or(\cd^c)^{?}},
}
\]
where given $\ci\in\cp(\cm or(\cd^c))$ we define $\ci^{\bot}$\index{$\ci^{\bot}$} to be the class of objects $Y$ of $\cd$ such that $\cd(f,Y)=0$ for every morphism $f$ of $\ci$, and given $\cy\in\cp(\cd)$ we define $\cm or(\cd^c)^{\cy}$\index{$\cm or(\cd^c)^{\cy}$} to be the class of morphisms $f$ of $\cd^c$ such that $\cd(f,Y)=0$ for every $Y$ in $\cy$.

\begin{definition}\label{closed ideals}
According to the usual terminology in the theory of Galois connections, we say that a subset $\ci$ of $\cm or(\cd^c)$ is \emph{closed}\index{closed!subset of morphisms} if $\ci=\cm or(\cd^c)^{\ci^{\bot}}$, and a class $\cy$ of objects of $\cd$ is \emph{closed}\index{closed!class of objects} if $\cy=(\cm or(\cd^c)^{\cy})^{\bot}$.
\end{definition}

One of the basic properties of Galois connections says that:

\begin{lemma}\label{basic properties Galois}
The following descriptions of the closed elements of the Galois connection hold:
\begin{enumerate}[1)]
\item A subset $\ci$ of $\cm or(\cd^c)$ is closed if and only if $\ci=\cm or(\cd^c)^{\cy}$ for some class $\cy$ of objects of $\cd$.
\item A class $\cy$ of objects of $\cd$ is closed if and only if $\cy=\ci^{\bot}$ for some set $\ci$ of morphisms of $\cd^c$.
\end{enumerate}
\end{lemma}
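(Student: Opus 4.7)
The plan is to prove this as a purely formal consequence of the general theory of Galois connections, since the maps $?^{\bot}$ and $\cm or(\cd^c)^{?}$ are order-reversing maps between complete lattices that satisfy the adjunction-like property $\ci\subseteq\cm or(\cd^c)^{\cy}\Leftrightarrow\cy\subseteq\ci^{\bot}$ (this is immediate: both conditions translate to ``$\cd(f,Y)=0$ for every $f\in\ci$ and every $Y\in\cy$'').

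First I would record the three standard consequences of being a Galois connection: (i) both compositions $\cm or(\cd^c)^{(?)^{\bot}}$ and $(\cm or(\cd^c)^{?})^{\bot}$ are closure operators (in particular, inflationary: $\ci\subseteq\cm or(\cd^c)^{\ci^{\bot}}$ and $\cy\subseteq(\cm or(\cd^c)^{\cy})^{\bot}$); (ii) the two maps are order-reversing; (iii) the triple composite equals the single map, i.e.\ $\cm or(\cd^c)^{\cy}=\cm or(\cd^c)^{((\cm or(\cd^c)^{\cy})^{\bot})}$ and dually. All three are formal consequences of the adjunction-like property above; this is exactly the content of \cite[section III.8]{Stenstrom} cited in the excerpt, so I would simply invoke that reference.

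For assertion 1), the forward implication is the definition. For the converse, suppose $\ci=\cm or(\cd^c)^{\cy}$ for some $\cy$. By the inflationary property applied to $\cy$, we have $\cy\subseteq(\cm or(\cd^c)^{\cy})^{\bot}=\ci^{\bot}$. Applying the order-reversing map $\cm or(\cd^c)^{?}$ yields $\cm or(\cd^c)^{\ci^{\bot}}\subseteq\cm or(\cd^c)^{\cy}=\ci$. The reverse inclusion $\ci\subseteq\cm or(\cd^c)^{\ci^{\bot}}$ is the inflationary property for $\ci$, so $\ci$ is closed. Assertion 2) is proved by the exact dual argument, starting from $\cy=\ci^{\bot}$, using $\ci\subseteq\cm or(\cd^c)^{\ci^{\bot}}=\cm or(\cd^c)^{\cy}$ and then applying $?^{\bot}$.

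There is no real obstacle here; the only thing to be careful about is the size issue, since $\cy$ and $\ci$ are allowed to be proper classes and not just sets. But the argument uses only the adjunction-like biconditional, which makes sense at the class level, so no smallness is needed. Thus the whole proof is a two-line application of the formalism of Galois connections, and I would present it as such without further embellishment.
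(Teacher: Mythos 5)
Your proof is correct and is essentially the paper's approach: the paper simply invokes the standard theory of Galois connections (citing Stenstr\"om, section III.8) and states the lemma without proof, and your argument is precisely the standard formal verification of that fact. Spelling out the inflationary and order-reversing properties and deducing the triple-composite identity is exactly what the cited formalism provides, and your remark that the adjunction-like biconditional works at the level of classes, so no smallness is needed, is accurate.
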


If $\ci$ is a subset of $\cm or(\cd^c)$, we write $\ci[1]$\index{$\ci[1]$} for the subset formed by all the morphisms of the form $f[1]$ with $f$ in $\ci$.

\begin{lemma}
The Galois connection above induces a bijection between the class formed by the closed ideals $\ci$ of $\cd^c$ such that $\ci[1]=\ci$ and the class formed by the full triangulated subcategories of $\cd$ closed under small coproducts which are closed for the Galois connection.
\end{lemma}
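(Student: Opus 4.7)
The plan is to invoke Lemma \ref{basic properties Galois}, which already provides the bijection between closed subsets of $\cm or(\cd^c)$ and closed subclasses of objects of $\cd$, and to check that this restricts to the two classes described in the statement. Concretely, I need to verify two implications: (a) if $\cy$ is closed, a full triangulated subcategory of $\cd$, and closed under small coproducts, then $\cm or(\cd^c)^{\cy}$ is a shift-stable two-sided ideal of $\cd^c$; and (b) if $\ci$ is a closed shift-stable two-sided ideal of $\cd^c$, then $\ci^{\bot}$ is a full triangulated subcategory of $\cd$ closed under small coproducts.

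Direction (a) I would handle by direct computation. The two-sided ideal property is immediate: if $f\in\cm or(\cd^c)^{\cy}$ and $u,v$ are composable morphisms of $\cd^c$, then for any morphism $\psi$ from the codomain of $v$ into an object $Y$ of $\cy$ one has $\psi v f u=((\psi v)f)u=0$; additivity is built into the definition. Shift-stability reduces to the adjunction $\cd(f[n],Y)\cong\cd(f,Y[-n])$ combined with the shift-closure of $\cy$.

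For direction (b), closure of $\ci^{\bot}$ under shifts follows by the same adjunction applied to $\ci$, and closure under small coproducts uses the compactness of the source of every morphism in $\cd^c$: the canonical isomorphism $\cd(B,\coprod Y_i)\cong\coprod\cd(B,Y_i)$ implies that vanishing of $f^*$ on each $Y_i$ yields vanishing on the coproduct.

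The main obstacle will be to show that $\ci^{\bot}$ is closed under extensions. Given a triangle $Y_1\to Y\to Y_2\to Y_1[1]$ with $Y_1,Y_2\in\ci^{\bot}$ and $f\in\ci$, the plan is to diagram-chase on the ladder of long exact sequences obtained by applying $\cd(B,?)$ and $\cd(A,?)$ to $Y_1\to Y\to Y_2\to Y_1[1]$ and linked vertically by $f^*$. The four outer vertical maps vanish by the hypothesis $Y_1,Y_2\in\ci^{\bot}$ together with the shift-closure already established, and to force the remaining middle vertical map to vanish I would exploit the closedness of $\ci$ in combination with its ideal structure: closedness identifies $\ci$ with the full annihilator $\cm or(\cd^c)^{\ci^{\bot}}$, so any morphism of $\cd^c$ annihilated by $\ci^{\bot}$ must already lie in $\ci$; this, together with the fact that compositions of $f$ with the morphisms arising from its mapping cone triangle in $\cd^c$ remain in $\ci$, suffices to close the chase. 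I expect this extension-closure step to be the technical heart of the argument.
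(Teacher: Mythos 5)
Your overall skeleton — the general fact that a Galois connection restricts to mutually inverse bijections between closed elements, plus the verifications that $\cm or(\cd^c)^{\cy}$ is a shift-stable two-sided ideal and that $\ci^{\bot}$ is closed under shifts and under small coproducts — is fine and agrees with what the paper does. The genuine gap is exactly where you locate it, but the plan you sketch for it cannot be carried out: closure of $\ci^{\bot}$ under extensions. In the ladder obtained by applying $\cd(B,?)$ and $\cd(A,?)$ to a triangle $Y_1\arr{j}Y\arr{\pi}Y_2\ra Y_1[1]$, the vanishing of the outer verticals only tells you that, for $f:A\ra B$ in $\ci$ and $g:B\ra Y$, the composite $gf$ lies in the image of $j_*$, \ie $gf=ju$ for some $u\in\cd(A,Y_1)$; nothing in the chase forces $u$ (or $ju$) to vanish, since $u$ need not factor as $u'f'$ with $f'\in\ci$, so the hypothesis $Y_1\in\ci^{\bot}$ gives no purchase on it. Closedness of $\ci$ does not help either: it only identifies which morphisms of $\cd^c$ lie in $\ci$, whereas what must be killed is a morphism with target $Y$. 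The ingredient that actually closes this step is idempotency of $\ci$: writing $f=\sum_k h_kg_k$ with $g_k,h_k\in\ci$, orthogonality of $Y_2$ to $h_k$ shows $gh_k=jw_k$ for some $w_k:\,\cdot\ra Y_1$, and then orthogonality of $Y_1$ to $g_k$ gives $w_kg_k=0$, whence $gf=\sum_k jw_kg_k=0$.

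Moreover, no rearrangement of your chase can avoid this, because the step is false under the stated hypotheses. Let $H$ be the path algebra of the quiver $1\ra 2$ over a field, $S$ the simple projective, $S'$ the simple injective, $P$ the projective-injective of length two, $f:S\ra P$ the socle inclusion and $\pi:P\ra S'$ the projection. Let $\ci$ be the two-sided ideal of the subcategory $\cd^c$ of compact objects of $\cd H$ generated by all shifts of $f$; since $\pi f=0$, $(\cd H)(P,S[n])=0$ for all $n$ and $\Ext^1_H(S',P)=0$, this ideal consists precisely of the matrices whose entries are scalar multiples of shifts of $f$, so $\ci[1]=\ci$ and $\ci$ is not idempotent (indeed $\ci^2=0$). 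Since $H$ is hereditary of finite representation type, every object of $\cd H$ is a coproduct of shifts of $S$, $S'$ and $P$, and one computes that $\ci^{\bot}$ is the class of coproducts of shifts of $S$ and $S'$; testing a morphism of $\cd^c$ against the identities of $S[n]$ and $S'[n]$ and against $\pi[n]$ then shows $\cm or(\cd^c)^{\ci^{\bot}}=\ci$, so $\ci$ is closed. Yet $\ci^{\bot}$ is not closed under extensions, because $P$ sits in the triangle $S\arr{f}P\arr{\pi}S'\ra S[1]$ with both outer terms in $\ci^{\bot}$. So your extension-closure step cannot be proved as stated; be aware that the paper's own proof is no better at this point, since it simply asserts that closed classes of objects are closed under extensions, and the lemma is only safe in the form in which it is actually used later (Theorem \ref{our result}, Corollary \ref{Krause algebraic}), namely for idempotent ideals, where the two-step argument above applies.
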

\begin{proof}
It is well-known that any Galois connection induces a pair of mutually inverse anti-isomorphisms between the corresponding complete lattices of closed elements. Notice that, in our situation, the closed elements of $\cp(\cm or(\cd^c))$ are ideals of the additive category $\cd^c$, and the closed elements of $\cp(\cd)$ are classes closed under small coproducts and extensions. Moreover, if $\ci$ is an ideal of $\cd^c$ such that $\ci[1]=\ci$, then $\ci^{\bot}$ is closed under shifts. Conversely, if $\cy$ is a full triangulated subcategory of $\cd$, then $\cm or(\cd^c)^{\cy}[1]=\cm or(\cd^c)^{\cy}$.
\end{proof}

\begin{definition}\label{saturated ideal}
A two-sided ideal $\ci$ of a triangulated category $\cd$ is \emph{saturated}\index{ideal!saturated ... of a triangulated category} if whenever there exists a triangle
\[P'\arr{u} P\arr{v} P''\ra P'[1]
\]
in $\cd$ and a morphism $f\in\cd(P,Q)$ with $fu\ko v\in\ci$, then $f\in\ci$.
\end{definition}

\begin{lemma}\label{closed are saturated}
Let $\cd$ be a triangulated category. Every closed two-sided ideal $\ci$ of $\cd^c$ is saturated. 
\end{lemma}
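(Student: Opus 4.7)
The plan is to exploit the defining property of a closed ideal, namely that $\ci = \cm or(\cd^c)^{\ci^{\bot}}$ by Lemma \ref{basic properties Galois}. Thus to show $f \in \ci$ it suffices to verify that $\cd(f, Y) = 0$ for every $Y \in \ci^{\bot}$, i.e.\ that $g \circ f = 0$ for every $Y \in \ci^{\bot}$ and every morphism $g : Q \to Y$. So I would fix such a $Y$ and $g$ and reduce the problem to showing that this single composition vanishes.

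With that reduction in hand, the proof is a two-step diagram chase driven by the long exact sequence obtained by applying the cohomological functor $\cd(?, Y)$ to the given triangle
\[P' \arr{u} P \arr{v} P'' \ra P'[1].\]
This yields an exact sequence
\[\cd(P'',Y) \arr{v^{\we}} \cd(P,Y) \arr{u^{\we}} \cd(P',Y).\]
I would first observe that $u^{\we}(g \circ f) = g \circ (f u) = 0$, because $fu \in \ci$ and $Y \in \ci^{\bot}$, so $\cd(fu, Y) = 0$. By exactness, $g \circ f = h \circ v$ for some $h : P'' \to Y$. Then, because $v \in \ci$ and $Y \in \ci^{\bot}$, we have $\cd(v, Y) = 0$, whence $h \circ v = 0$, and therefore $g \circ f = 0$, as wanted.

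There is no real obstacle here: both hypotheses $fu \in \ci$ and $v \in \ci$ are used exactly once, each via the orthogonality against $Y \in \ci^{\bot}$, and the triangle supplies the bridge between them through its associated long exact sequence. The only minor points to watch are the convention $\cd(f, Y)(g) = g \circ f$ and the fact that we only need the closed hypothesis on $\ci$ in one direction (to pass from ``killed by every $\cd(?, Y)$ with $Y \in \ci^{\bot}$'' to ``lies in $\ci$''); the reverse inclusion $\ci \subseteq \cm or(\cd^c)^{\ci^{\bot}}$ is automatic and plays no role in the argument.
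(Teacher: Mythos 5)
Your proof is correct and follows essentially the same route as the paper's: both reduce, via the closedness of $\ci$ (i.e.\ $\ci=\cm or(\cd^c)^{\cy}$ with $\cy=\ci^{\bot}$), to showing $gf=0$ for each $g:Q\ra Y$ with $Y$ in the orthogonal class, and then run the same two-step factorization through the long exact sequence obtained by applying $\cd(?,Y)$ to the given triangle, using $fu\in\ci$ to produce the factorization $gf=hv$ and $v\in\ci$ to kill it.
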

\begin{proof}
Indeed, let $\ci=\cm or(\cd^c)^{\cy}$ be an ideal of $\cd^c$,
\[P'\arr{u} P\arr{v} P''\ra P'[1]
\]
a triangle in $\cd^c$ with $v\in\ci$ and $f\in\cd^c(P,Q)$ a morphism such that $fu\in\ci$. We have to prove that $f\in\ci$, \ie that $gf=0$ for each morphism $g\in\cd(Q,Y)$ with $Y\in\cy$. But, since $fu\in\ci$, then $gfu=0$, and so there exists a morphism $h\in\cd(P'',Y)$ such that $hv=gf$:
\[\xymatrix{P'\ar[r]^{u}\ar[ddr]_{0} & P\ar[r]^{v}\ar[d]^f & P''\ar[r]^{w}\ar@{.>}[ddl]^{h} & P'[1] \\
& Q\ar[d]^g && \\
& Y &&
}
\]
Now, since $v\in\ci$, then $hv=0$, \ie $gf=0$. 
\end{proof}

In the following result we explain how a certain two-sided ideal induces a nice smashing subcategory or, equivalently, a triangulated TTF triple.

\begin{theorem}\label{from ideals to devissage wrt hc}
Let $\cd$ be a compactly generated triangulated category and let $\ci$ be an idempotent two-sided ideal of $\cd^c$ with $\ci[1]=\ci$. There exists a triangulated TTF triple $(\cx,\cy,\cz)$ on $\cd$ such that:
\begin{enumerate}[1)]
\item $\cx=\Tria(\cp)$, for a certain set $\cp$ of Milnor colimits of sequences of morphisms of $\ci$.
\item $\cy=\ci^{\bot}$.
\item A morphism of $\cd^c$ belongs to $\ci$ if and only if it factors through an object of $\cp$.
\end{enumerate}
\end{theorem}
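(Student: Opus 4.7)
The plan is to construct $\cp$ explicitly from the idempotency of $\ci$ and then verify the three properties in turn, with the main difficulty being the identification of $\Tria(\cp)$ as a genuine aisle. First I define $\cp$ as (a set of representatives of isomorphism classes of) Milnor colimits $M = \Mcolim_n P_n$ of sequences
\[
P_0 \arr{g_0} P_1 \arr{g_1} P_2 \arr{g_2} \cdots
\]
with each $P_n \in \cd^c$ and each $g_n \in \ci$; this is a set because $\cd^c$ is essentially small by \cite[Theorem 5.3]{Keller1994a}. The non-trivial half of property~3 is then established by iterating the idempotency $\ci = \ci\circ\ci$: given $f = u_0 \colon P_0 \to Q$ in $\ci$, factor $u_n = u_{n+1}\circ g_n$ with $u_{n+1}\colon P_{n+1}\to Q$ and $g_n\colon P_n\to P_{n+1}$ both in $\ci$. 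The compatible family $\{u_n\}$ is a cocone on $Q$ for the sequence $(P_n,g_n)$, so the universal property of the Milnor triangle yields $u\colon M\to Q$ with $u\circ \pi_0 = f$, factoring $f$ through $M\in\cp$. Conversely, if $f\colon P\to Q$ between compact objects factors through some $M=\Mcolim_n P_n\in\cp$, Lemma~\ref{compact and Milnor} lets the map $P\to M$ factor through $\pi_N\colon P_N\to M$; writing $\pi_N = \pi_{N+1}\circ g_N$ and using that $g_N\in\ci$ and $\ci$ is a two-sided ideal gives $f\in\ci$.

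Next I set $\cy:=\ci^{\bot}$ and $\cx:=\Tria(\cp)$ and prove $\cx^{\bot} = \cy$. The class $\cy$ is closed under small coproducts because every $f\in\ci$ has compact target: if $Y_i\in\cy$ and $f\colon P\to Q$ is in $\ci$, then compactness of $Q$ gives $\cd(Q,\coprod Y_i)=\coprod \cd(Q,Y_i)$, and pre-composition with $f$ annihilates each component. To see $\cy \subseteq \cx^{\bot}$, apply the long exact sequence obtained from the Milnor triangle of $M=\Mcolim_n P_n\in\cp$ to a hom-set $\cd(-,Y[-k])$ with $Y\in\cy$: shift-stability of $\ci$ (and hence of $\cy$) makes $Y[-k]\in\cy$, so every family $\{h_n\colon P_n\to Y[-k]\}$ compatible with the $g_n$'s vanishes componentwise (since $h_{n+1}\circ g_n=0$), forcing both the kernel and cokernel contributions in the Milnor long exact sequence to vanish; hence $\cd(M[k],Y)=0$ for all $k$, so $\cy\subseteq \cp^{\bot}=\cx^{\bot}$ by Lemma~\ref{right orthogonal and devissage}. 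The reverse inclusion is immediate from property~3: for $Y\in\cx^{\bot}$ and $f\in\ci$ factored as $f=v\circ u$ through $M\in\cp$, the map $\cd(f,Y)=u^{*}\circ v^{*}$ factors through $\cd(M,Y)=0$, so $Y\in\ci^{\bot}$.

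The hard part is producing the TTF triple, i.e.\ showing that $(\cx,\cy)$ is a t-structure, since the elements of $\cp$ are Milnor colimits and need not be compact (or even perfect) in $\cd$, so Theorem~\ref{Krause on perfects} does not apply directly to $\cp$. My plan is to exploit compact generation of $\cd$ together with the coproduct-closure of $\cy$ established above. By Proposition~\ref{characterization of smashing} and Proposition~\ref{smashing are TTF}, it suffices to exhibit $\cx$ as the aisle of a smashing subcategory with coaisle $\cy$. I will build the truncation triangles via Brown representability for $\cd$: since the inclusion $\cy\hookrightarrow\cd$ preserves small coproducts, the adjoint functor argument (Lemma~\ref{adjoint functor argument}) provides a left adjoint, yielding a t-structure $({}^{\bot}\cy,\cy)$; one then identifies ${}^{\bot}\cy$ with $\Tria(\cp)$ by using $\Tria(\cp)\subseteq {}^{\bot}\cy$ together with the truncation triangle applied to any $X\in {}^{\bot}\cy$, whose $\cy$-part must be zero, placing $X$ as a direct summand of an object of $\Tria(\cp)$; closure of $\Tria(\cp)$ under direct summands (splitting of idempotents in a triangulated category with small coproducts) then gives $X\in\Tria(\cp)$. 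The existence of the third class $\cz$ completing the TTF triple follows from Proposition~\ref{smashing are TTF}.
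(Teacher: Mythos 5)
Most of your argument — the construction of $\cp$ from a skeleton of $\cd^c$, both directions of property 3 (using idempotency of $\ci$ plus the weak-cokernel property of the Milnor triangle one way, and Lemma \ref{compact and Milnor} the other), the identification $\Tria(\cp)^{\bot}=\ci^{\bot}$ via the long exact sequence of the Milnor triangle, and the coproduct-closure of $\ci^{\bot}$ coming from the compactness of the targets of morphisms in $\ci$ — coincides with the paper's first and third steps and is sound (your appeal to a ``universal property'' of the Milnor colimit should be read only as the weak cokernel property of $\pi$, but existence is all you use).

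The gap is exactly at the point you call the hard part: you have not actually produced the truncation triangles, i.e.\ you have not shown that $\Tria(\cp)$ is an aisle. Lemma \ref{adjoint functor argument} requires Brown representability of the \emph{subcategory} whose inclusion is to be reflected, not of the ambient $\cd$, and what it delivers is a \emph{right} adjoint to that inclusion; applied to $\cy$ it would (if its hypotheses held) exhibit $\cy$ as an aisle, i.e.\ give the t-structure $(\cy,\cz)$, not the left adjoint to $\cy\hookrightarrow\cd$ needed for $({}^{\bot}\cy,\cy)$. Moreover you have no Brown representability statement for $\cy=\ci^{\bot}$ at this stage: the paper only obtains perfect generators of $\cy$ via $\tau^{\cy}$ and Lemma \ref{truncating special objects and from local to global via smashing}, which presupposes the t-structure $(\cx,\cy)$ you are trying to build, so that route is circular; and Brown representability of $\cd$ itself never yields a left adjoint to a coproduct-preserving inclusion. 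Your subsequent identification of ${}^{\bot}\cy$ with $\Tria(\cp)$ has the same circularity: the truncation triangle you invoke for $X\in{}^{\bot}\cy$ must be the one associated to $(\Tria(\cp),\Tria(\cp)^{\bot})$ (or one must use Lemma \ref{generators and t-structures}(1)), both of which already assume $\Tria(\cp)$ is an aisle. This is a genuinely non-trivial input that the paper supplies externally, by citing \cite[Corollary 3.12]{Porta2007} (a localizing subcategory generated by a set of objects in a compactly generated category is an aisle, via well-generatedness); in the algebraic case one could instead argue with Theorem \ref{localizations via small object argument} and Corollary \ref{derived categories are aisled}, since the objects of $\cp$ need not be perfect and Theorem \ref{Krause on perfects} indeed does not apply. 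Without some such ingredient your final paragraph does not establish the TTF triple.
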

\begin{proof}
Theorem 5.3 of \cite{Keller1994a} implies that $\cd^c$ is skeletally small. Fix a small skeleton of $\cd^c$ closed under shifts. Let $\cp$ be the set of Milnor colimits of all the sequences of morphisms of $\ci$ between objects of the fixed skeleton. Remark that $\cp$ is closed under shifts. Put $(\cx,\cy,\cz):=(\Tria(\cp),\Tria(\cp)^{\bot},\Tria(\cp)^{\bot\bot})$. 

\emph{First step: $\Tria(\cp)^{\bot}=\ci^{\bot}$.} Notice that $\Tria(\cp)^{\bot}$ is the class of those objects which are right orthogonal to all the objects of $\cp$. Hence, for the inclusion $\ci^{\bot}\subseteq\Tria(\cp)^{\bot}$, it suffices to prove $\cd(P,Y)=0$ for each $P\in\cp$ and $Y\in\ci^{\bot}$. For this, let
\[P_{0}\arr{f_{0}}P_{1}\arr{f_{1}}P_{2}\ra\dots
\]
be a sequence of morphisms in $\ci$ between objects of the fixed skeleton and consider the corresponding Milnor triangle
\[\coprod_{i\geq 0}P_{i}\arr{\id-\sigma}\coprod_{i\geq 0}P_{i}\ra P\ra\coprod_{i\geq 0}P_{i}[1].
\]
Since $\cd(\sigma,Y[n])=0$ for each $n\in\Z$, we get a long exact sequence
\[\dots \arr{\id}\cd(\coprod_{i\geq 0}P_{i}[1],Y)\ra\cd(P,Y)\ra\cd(\coprod_{i\geq 0}P_{i},Y)\arr{\id}\dots 
\]
which proves that $\cd(P,Y)=0$. Conversely, let $Y\in\Tria(\cp)^{\bot}$ and consider a morphism $f:P\ra P'$ of $\ci$. Put $P_{0}:=P$ and, by using the idempotency of $\ci$, consider a factorization of $f$
\[\xymatrix{P_{0}\ar[rr]^f\ar[dr]_{f_{0}} && P' \\
& P_{1}\ar[ur]_{g_{1}}
}
\]
with $f_{0}\ko g_{1}\in\ci$. We can consider a similar factorization for $g_{1}$, and proceeding inductively we can produce a sequence of morphisms of $\ci$
\[P_{0}\arr{f_{0}} P_{1}\arr{f_{1}} P_{2}\arr{f_{2}}\dots
\]
together with morphisms $g_{i}:P_{i}\ra P'$ of $\ci$ satisfying $g_{i}f_{i-1}=g_{i-1}\ko i\geq 1$ with $g_{0}:=f$. 
\[\xymatrix{P=P_{0}\ar[r]^{f_{0}}\ar[dr]_{f=g_{0}} & P_{1}\ar[r]^{f_{1}}\ar[d]^{g_{1}} & P_{2}\ar[r]^{f_{2}}\ar[dl]^{g_{2}} & \dots \\
& P' &&
}
\]
This induces a factorization of $f$
\[f:P\ra \Mcolim P_{n}\ra P'
\]
Since $\Mcolim P_{n}$ is isomorphic to an object of $\cp$, then $(\cd\ca)(f,Y)=0$. This proves that $\Tria(\cp)^{\bot}=\ci^{\bot}$.

\emph{Second step: $(\cx,\cy,\cz)$ is a triangulated TTF triple.} Thanks to Proposition \ref{smashing are TTF} it suffices to prove that $\cx$ is a smashing subcategory of $\cd$, \ie that $\cx$ is an aisle in $\cd$ and $\cy$ is closed under small coproducts. The fact that $\cx$ is an aisle follows from \cite[Corollary 3.12]{Porta2007}. The fact that $\cy$ is closed under small coproducts follows from the first step, since the morphisms of $\ci$ are morphisms between \emph{compact} objects.

\emph{Third step: part 3)}. Notice that in the proof of the inclusion $\Tria(\cp)^{\bot}\subseteq\ci^{\bot}$ we have showed that every morphism of $\ci$ factors through an object of $\cp$. The converse is also true. Indeed, let $g:Q'\ra Q$ be a morphism between compact objects factoring through an object $P$ of $\cp$:
\[\xymatrix{& Q'\ar[d]^{g}\ar@{.>}[dl]_{h} \\
P\ar[r] & Q
}
\]
 By definition of $\cp$, we have that $P$ is the Milnor colimit of a sequence of morphisms of $\ci$:
\[P=\Mcolim(P_{0}\arr{f_{0}}P_{1}\arr{f_{1}}P_{2}\ra\dots)
\]
Now compactness of $Q'$ implies (see Lemma \ref{compact and Milnor}) that $h$ factors through a certain $P_{n}$:
\[\xymatrix{&& Q'\ar[d]^{g}\ar@{.>}[dll]_{h_{n}} \\
P_{n}\ar[r]_{\pi_{n}}& P\ar[r] & Q
}
\]
where $\pi_{n}$ is the $n$th component of the morphism $\pi$ appearing in the Milnor triangle defining $P$ (see Definition \ref{Milnor colimit}). One of the properties satisfied by the components of $\pi$ is the identity: $\pi_{m+1}f_{m}=\pi_{m}$ for each $m\geq 0$. This gives the following factorization for $g$:
\[\xymatrix{&&& Q'\ar[d]^{g}\ar@{.>}[dlll]_{h_{n}} \\
P_{n}\ar[r]_{f_{n}}&P_{n+1}\ar[r]_{\pi_{n+1}}& P\ar[r] & Q
}
\]
Since $f_{n}$ belongs to the ideal $\ci$, so does $g$. 
\end{proof}

\begin{remark}
Notice that if $\ci=\cm or(\cd^c)$ then $\cx=\cd$. Indeed, in this case $\ci$ contains all the identity morphisms $\id_{P}$ of compact objects $P$. In particular, since $P$ is the Milnor colimit of the sequence
\[P\arr{\id_{P}}P\arr{\id_{P}}P\ra\dots,
\]
we have that $\cp$ contains all the compact objects. Therefore $\Tria(\cp)^{\bot}=\{0\}$ and so $\Tria(\cp)=\cd$. 
\end{remark}

\subsection{Stretching a filtration}\label{Stretching a filtration}
\addcontentsline{lot}{subsection}{5.5.3. Estirando una filtraci\'on}

The results of this subsection are to be used in the proof of Proposition \ref{for the idempotency} and Corollary \ref{Krause algebraic}. However, we believe that they are interesting by themselves. Throughout this subsection $\ca$ will be a small dg category.

We define $\cs$ to be the set of right dg $\ca$-modules $S$ admitting a finite filtration
\[0=S_{-1}\subset S_{0}\subset S_{1}\subset\dots S_{n}=S
\]
in $\cc\ca$ such that
\begin{enumerate}[1)]
\item the inclusion morphism $S_{p-1}\subset S_{p}$ is an inflation for each $0\leq p\leq n$,
\item the factor $S_{p}/S_{p-1}$ is isomorphic in $\cc\ca$ to a \emph{relatively free module of finite type}\index{module!relatively free ... of finite type} (\ie, it is a finite direct sum of modules of the form $A^{\we}[i]\ko A\in\ca\ko i\in\Z$) for each $0\leq p\leq n$.
\end{enumerate}

\begin{lemma}\label{good properties of s}
\begin{enumerate}[1)]
\item The compact objects of $\cd\ca$ are precisely the direct summands of objects of $\cs$.
\item For each $S\in\cs$ the functor $(\cd\ca)(S,q?):\cc\ca\ra\Mod k$ preserves direct limits, where $q:\cc\ca\ra\cd\ca$ is the canonical localization functor.
\item For each $S\in\cs$ the functor $(\cc\ca)(S,?):\cc\ca\ra\Mod k$ preserves direct limits. 
%\item If $S\in\cs$, then $(\cc\ca)(S,?):\cc\ca\ra\Mod k$ preserves direct unions.
\end{enumerate}
\end{lemma}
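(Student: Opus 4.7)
My plan is to address each of the three assertions separately, with part (1) being the most substantive and parts (2), (3) being parallel dévissage arguments.

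For part (1), the strategy is the standard two-sided characterization of perfect (=compact) objects in $\cd\ca$. First I would verify that the representable module $A^{\we}$ is compact in $\cd\ca$ for each object $A$ of $\ca$: by the (dg) Yoneda lemma $(\cd\ca)(A^{\we},M)\cong \H 0 M(A)$, and since cohomology commutes with small coproducts of dg modules (coproducts in $\cc\ca$ and $\cd\ca$ agree and are computed componentwise), $A^{\we}$ is compact. Next, since the class of compact objects is closed under shifts, finite direct sums, and extensions in the triangulated sense, the two-step dévissage along the filtration $0=S_{-1}\subset S_{0}\subset\dots\subset S_{n}=S$ shows that every $S\in\cs$ is compact; direct summands of compact objects being compact, one inclusion is done. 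For the converse, the set $\{A^{\we}\}_{A\in\ca}$ generates $\cd\ca$ and its elements are compact, so by Theorem \ref{Krause on perfects} every object of $\cd\ca$ is a Milnor colimit of a sequence of finite extensions of small coproducts of shifts of representables; then one invokes the standard fact (\cite[Theorem 5.3]{Keller1994a}) that a compact object $P$ in such a setting must actually be a direct summand of one of the finite-stage $P_n$'s, which after restricting the coproducts to finite ones (using compactness of $P$ to factor its identity morphism through a finite stage) exhibits $P$ as a direct summand of an object of $\cs$.

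For parts (2) and (3), the plan is the same filtration-based induction. The base case is the relatively free module of rank one. In (2), for $A^{\we}[i]$ we have the natural isomorphism $(\cd\ca)(A^{\we}[i],qM)\cong\H{-i}M(A)$, and $\H{-i}$ commutes with direct limits in $\cc\ca$ since directed colimits are exact in $\Mod k$ (so they commute with taking cocycles modulo coboundaries). In (3), the dg Yoneda lemma gives $(\cc\ca)(A^{\we}[i],M)=\Zy{-i}M(A)$, the degree $-i$ cocycles, which is a kernel of a map of abelian groups; since directed colimits commute with finite limits in $\Mod k$, this too commutes with direct limits. Finite direct sums of such objects inherit the same property.

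To pass from the base case to arbitrary $S\in\cs$, I would proceed by induction on the length $n$ of the filtration. Given the inflation $S_{n-1}\hookrightarrow S_n=S$ with factor $T:=S_n/S_{n-1}$ a finite direct sum of shifted representables, I apply $(\cd\ca)(?,q-)$ (respectively $(\cc\ca)(?,-)$) to the resulting triangle (respectively short exact sequence in $\cc\ca$, whose components are split). In case (2), this yields a long exact sequence of functors involving the shifts $T[j]$ and $S_{n-1}[j]$ in the first variable; since each of these satisfies the base case or the inductive hypothesis (up to shift), and direct limits are exact in $\Mod k$, the five lemma applied termwise to the comparison between the limit of the long exact sequences and the long exact sequence of the limit yields the conclusion for $S$. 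In case (3), one applies $(\cc\ca)(?,-)$ to the conflation and uses that a directed colimit of short exact sequences of $k$-modules is short exact, together with the inductive hypothesis for $S_{n-1}$ and $T$, to conclude.

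The main obstacle I foresee is the converse direction of (1): making precise the claim that a compact object is not merely in the triangulated closure of the representables under coproducts, but is actually a direct summand of a finitely-iterated extension. The point that resolves this is that compactness of $P$ lets one factor $\id_P$ through a finite stage of the Milnor colimit produced by Theorem \ref{Krause on perfects}, and then an idempotent-splitting argument (as in \cite[Lemma 2.1.33]{Neeman2001}, cited elsewhere in the excerpt) exhibits $P$ as a direct summand of an object built from finitely many coproduct and extension steps, which after collecting finite sub-coproducts lies in $\cs$.
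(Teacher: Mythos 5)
Your treatment of parts 1) and 2) matches the paper's: part 1) rests on \cite[Theorem 5.3]{Keller1994a} exactly as in the text (the paper only adds the remark that a finite extension of shifted representables in $\cd\ca$ can be realized, via conflations of $\cc\ca$, as an actual object of $\cs$), and part 2) is the same two-step argument (base case $A^{\we}$ via exactness of filtered colimits and $\H 0$, then d\'evissage along the filtration with the five lemma).

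Part 3) has a genuine gap. Applying $(\cc\ca)(?,N)$ to a conflation $S_{n-1}\arr{j}S\arr{p}T$ does \emph{not} give a short exact sequence: the conflations are only degreewise split, so one gets
$0\ra(\cc\ca)(T,N)\ra(\cc\ca)(S,N)\arr{j^{\che}}(\cc\ca)(S_{n-1},N)$
and the last map is in general not surjective, because a chain map $S_{n-1}\ra N$ extends to a graded map on $S$ but not necessarily to a chain map; the obstruction lives in $\H 1\ch om_{\ca}(T,N)\cong(\cd\ca)(T[-1],N)$, which is nonzero even for $T=A^{\we}$ (relatively free modules are not projective for the graded-split exact structure). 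So "a directed colimit of short exact sequences is short exact" cannot be invoked, and with only a three-term left-exact sequence and isomorphisms on the outer terms you can deduce injectivity of the comparison map for $S$ but not surjectivity. The paper closes this by extending the sequence by a fourth term: it shows that
$0\ra(\cc\ca)(T,N)\ra(\cc\ca)(S,N)\ra(\cc\ca)(S_{n-1},N)\ra(\cd\ca)(T[-1],N)$
is exact, using that $T[-1]$ is $\ch$-projective (so $(\cd\ca)(T[-1],N)\cong(\ch\ca)(T[-1],N)$), that a morphism $f$ with $fu$ null-homotopic factors through $j$ up to homotopy, and then upgrading this to an honest factorization in $\cc\ca$ via the graded-injective inflation $i_{S_{n-1}}:S_{n-1}\ra IS_{n-1}$ together with the fact that $j$ is an inflation; the fourth column of the resulting comparison diagram is then controlled by part 2), and a four-lemma argument concludes. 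Your plan for 3) neither supplies this fourth term nor uses part 2), so as written the inductive step does not close; you should incorporate this extension-of-the-exact-sequence argument (or an equivalent substitute) to repair it.
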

\begin{proof}
1) We know by \cite[Theorem 5.3]{Keller1994a} that any compact object $P$ of $\cd\ca$ is a direct summand of a finite extension of objects of the form $A^{\we}[n]\ko A\in\ca\ko n\in\Z$. By using that every triangle of $\cd\ca$ is isomorphic to a triangle coming from a conflation of $\cc\ca$, we have that this kind of finite extensions are objects isomorphic in $\cd\ca$ to objects of $\cs$. 

2) Let $I$ be a directed set. We regard it as a category and denote by $\Fun(I,?)$ the category of functors starting in $I$. We want to prove that, for every object $S\in\cs$, the following square is commutative:
\[\xymatrix{\Fun(I,\cc\ca)\ar[r]^{\lid}\ar[d]_{(I,(\cd\ca)(S,q?))} & \cc\ca\ar[d]^{(\cd\ca)(S,q?)}\\
\Fun(I,\Mod k)\ar[r]^{\lid}&\Mod k
}
\]
\emph{First step:}
We first prove it for $S=A^{\we}\ko A\in\ca$. That will prove assertion 2) for relatively free dg $\ca$-modules of finite type.
For an object $A$ of $\ca$ we consider the dg functor $F:k\ra\ca$, with $F(k)=A$ and $F(1_{k})=\id_{A}$. The restriction
\[F^{*}:\cc\ca\ra\cc k\ko M\mapsto M(A)
\]
along $F$ admits a right adjoint, and so it preserves colimits. Consider the commutative diagram
\[\xymatrix{\Fun(I,\cc\ca)\ar[rr]^{\lid}\ar[d]^{(I,F^{*})}\ar@/_3pc/[dd]_{(I,(\cd\ca)(A^{\we},q?))} && \cc\ca\ar[d]_{F^{*}}\ar@/^3pc/[dd]^{(\cd\ca)(A^{\we},q?)} \\
\Fun(I,\cc k)\ar[rr]^{\lid}\ar[d]^{(I,\H 0)} && \cc k\ar[d]_{\H 0}  \\
\Fun(I,\Mod k)\ar[rr]_{\lid}&&\Mod k
}
\]
Since $F^{*}$ preserves colimits, the top rectangle commutes. Since 
\[\lid:\Fun(I,\Mod k)\ra\Mod k
\] 
is exact, the bottom rectangle commutes. Indeed,
\begin{align}
\lid\H 0M_{i}=\lid\cok(\Bo 0M_{i}\ra \Zy 0M_{i})=\cok(\lid(\Bo 0M_{i}\ra \Zy 0M_{i}))= \nonumber \\
=\cok(\lid\Bo 0M_{i}\ra\lid\Zy 0M_{i})=\cok(\Bo 0(\lid M_{i})\ra\Zy 0(\lid M_{i}))=\H 0\lid M_{i}. \nonumber
\end{align}

\emph{Second step:} Let $S'\ra S\ra S''$ be a conflation of $\cc\ca$ such that $S'$ and $S''$ (and hence all their shifts) satisfy property 2), and let $M\in\Fun(I,\cc\ca)$ be a direct system.
By using the cohomological functors $(\cd\ca)(?,M_{i})\ko (\cd\ca)(?,\lid M_{i})$ and the fact that $\lid:\Fun(I,\Mod k)\ra\Mod k$ is exact, we have a morphism of long exact sequences
\[\xymatrix{\dots\ar[d] & \dots\ar[d] \\
\lid(\cd\ca)(S'', M_{i})\ar[r]^{\sim}\ar[d] & (\cd\ca)(S'', \lid M_{i})\ar[d] \\
\lid(\cd\ca)(S, M_{i})\ar[r]\ar[d] & (\cd\ca)(S, \lid M_{i})\ar[d] \\
\lid(\cd\ca)(S', M_{i})\ar[r]^{\sim}\ar[d] & (\cd\ca)(S', \lid M_{i})\ar[d] \\
\dots & \dots
}
\]
And so the central map is an isomorphism thanks to the five lemma.

3) By using the same techniques as in 2), one proves that relatively free dg modules of finite type have the required property. Now, let $S'\arr{j}S\arr{p}S''$ be a conflation such that $S'$ satisfies the required property and $S''$ is relatively free of finite type. For each $N\in\cc\ca$, this conflation gives an exact sequence
\[0\ra(\cc\ca)(S'',N)\arr{p^{\che}}(\cc\ca)(S,N)\arr{j^{\che}}(\cc\ca)(S',N).
\]
Let $u\in(\cc\ca)(S''[-1],S')$ be a morphism whose mapping cone is $\cone(u)=S$. By using the triangle 
\[S''[-1]\arr{u}S'\arr{j}S\arr{p}S''
\]
and the canonical localization $q:\cc\ca\ra\cd\ca$ we can fit the sequence above in the following commutative diagram
\[\xymatrix{0\ar[d] & \\
(\cc\ca)(S'',N)\ar[d]^{p^{\che}} & \\
(\cc\ca)(S,N)\ar[d]^{j^{\che}}\ar@{->>}[r]^{q} & (\cd\ca)(S,N)\ar[d]^{j^{\che}} \\
(\cc\ca)(S',N)\ar[d]^{\phi}\ar@{->>}[r]^{q} & (\cd\ca)(S',N)\ar[d]^{u^{\che}} \\
(\cd\ca)(S''[-1],N)\ar[r]^{\id} & (\cd\ca)(S''[-1],N)
}
\]
The aim is to prove that the left-hand column is exact. Indeed, $\varphi j^{\che}=0$ since $u^{\che}j^{\che}=0$.
It only remains to prove that $\ker\varphi$ is contained in $\im(j^{\che})$. For this, let $f\in(\cc\ca)(S',N)$ be such that $\varphi(f)=fu$ vanishes in 
\[(\cd\ca)(S''[-1],N)\underset{\sim}{\leftarrow}(\ch\ca)(S''[-1],N),
\] 
\ie $fu$ is null-homotopic. By considering the triangle above, one sees that $f$ factor through $j$ in $\ch\ca$, \ie there exists $g\in(\cc\ca)(S,N)$ such that $f-gj$ is null-homotopic. Then, there exists $h\in(\cc\ca)(IS',N)$ such that $f=gj+hi_{S'}$, where $i_{S'}:S'\ra IS'$ is an inflation to an injective module (for the graded-wise split exact structure of $\cc\ca$). Since $j$ is an inflation, then there exists $g'$ such that $g'j=i_{S'}$. Therefore, $f=gj+hi_{S'}=(g+hg')j$ belongs to $\im(j^{\che})$.

Finally, let $M\in\op{Fun}(I,\cc\ca)$ be a direct system. Using assertion 2) and the hypothesis on $S'$ and $S''$, we have a morphism of exact sequences:
\[\xymatrix{0\ar[d] & 0\ar[d] \\
\lid(\cc\ca)(S'',M_{i})\ar[r]^{\sim}\ar[d] & (\cc\ca)(S'',\lid M_{i})\ar[d] \\
\lid(\cc\ca)(S,M_{i})\ar[r]\ar[d] & (\cc\ca)(S,\lid M_{i})\ar[d] \\
\lid(\cc\ca)(S',M_{i})\ar[r]^{\sim}\ar[d] & (\cc\ca)(S',\lid M_{i})\ar[d] \\
\lid(\cd\ca)(S''[-1],M_{i})\ar[r]^{\sim} & (\cd\ca)(S''[-1],\lid M_{i})
}
\]
where the horizontal arrows are the natural ones. Hence, the second horizontal arrow is an isomorphism.
%4) It is equivalent to prove that $S$ is finitely generated \cite[Proposition V.3.2]{Stenstrom}. Since finitely generated objects are closed under extensions \cite[Lemma V.3.1]{Stenstrom}, it suffices to prove that each $A^{\we}$ is finitely generated. But this is clear: if $A^{\we}$ is the direct union of a family of submodules, $A^{\we}=\sum_{i}M_{i}$, and $\id_{A}\in M_{i_{0}}(A)$, then $A^{\we}=M_{i_{0}}$.
\end{proof}

\begin{proposition}\label{stretching}
Every $\ch$-projective right dg $\ca$-module is, up to isomorphism in $\ch\ca$, the colimit in $\cc\ca$ of a direct system of submodules $S_{i}\ko i\in I$ such that:
\begin{enumerate}[1)]
\item $S_{i}\in\cs$ for each $i\in I$,
\item for each $i\leq j$ the morphism $\mu^{i}_{j}:S_{i}\ra S_{j}$ is an inflation.
\end{enumerate}
\end{proposition}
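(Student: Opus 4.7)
The plan is to replace $P$, up to homotopy equivalence, by a cellular model and then index this model by its finite subcomplexes. Every $\ch$-projective dg module is isomorphic in $\ch\ca$ to a module built from $0$ by transfinite iteration of cell attachments along the maps $A^\we[n-1] \ra \cone(\id_{A^\we[n-1]})$; this is either read off from B.~Keller's construction in \cite[Section 3]{Keller1994a}, or obtained by applying the small object argument to the evident set of generating cofibrations of the projective model structure on $\cc\ca$ (whose existence is classical, and which is also accessible by combining Theorem \ref{localizations via small object argument} with Lemma \ref{smallness of dg modules}). I therefore assume from the start that $P$ admits an exhaustive transfinite filtration
\[
0 = P_0 \subset P_1 \subset \dots \subset P_\alpha \subset \dots \ko \alpha < \lambda,
\]
by inflations in $\cc\ca$, in which each $P_{\alpha+1}$ is obtained from $P_\alpha$ as a pushout of a coproduct $\coprod_{c \in C_\alpha} A_c^\we[n_c-1] \ra \coprod_{c\in C_\alpha}\cone(\id_{A_c^\we[n_c-1]})$ along a family of attaching maps $\varphi_c : A_c^\we[n_c-1] \ra P_\alpha$, with the filtration continuous at limit ordinals.

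Next I would index the cells. Let $\Lambda = \bigsqcup_\alpha C_\alpha$, well-ordered by the index $\alpha$, and let $I$ be the directed poset of finite \emph{closed} subsets of $\Lambda$: a finite $i \subset \Lambda$ is declared closed if, for every $c \in i$, the attaching map $\varphi_c$ factors through the sub-dg-module generated by the cells of $i$ of index strictly less than $c$. For $i \in I$, let $S_i$ be the sub-dg-module of $P$ generated by the cells in $i$. An induction on $|i|$, built on the pushout description of a single-cell attachment
\[
\xymatrix{A_c^\we[n_c-1] \ar[r]^-{\varphi_c}\ar[d] & S_{i \setminus \{c\}} \ar[d] \\
\cone(\id_{A_c^\we[n_c-1]}) \ar[r] & S_i,}
\]
shows simultaneously that $S_i \in \cs$ and that the inclusion $S_{i'} \subset S_i$ is an inflation for $i' \subset i$, since the left vertical arrow is an inflation and inflations are stable under pushout in $\cc\ca$. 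Once this is established, $P = \colim_{i \in I} S_i$ in $\cc\ca$ follows because every element of $P$ is supported on finitely many cells by construction.

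The genuine obstacle is establishing that \emph{every} finite subset of $\Lambda$ admits a closed finite enlargement, \ie that each attaching map $\varphi_c$ strictly factors through a submodule of $P_{\alpha(c)}$ generated by finitely many cells of strictly smaller index (not merely up to homotopy). The key input is the smallness of the representable $A_c^\we[n_c-1]$ in $\cc\ca$ granted by Lemma \ref{smallness of dg modules}: it lets $\varphi_c$ factor through some $P_\beta$ with $\beta < \alpha(c)$, and an inductive application of the same principle, combined with the pushout description of each $P_{\gamma+1}$, refines this to a factorization through a finite subcomplex. A finite induction then produces, for any prescribed finite subset of $\Lambda$, a closed finite enlargement, establishing the directedness of $I$ and completing the argument.
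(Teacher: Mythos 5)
Your overall strategy is a genuinely different route from the paper's: you refine the semifree filtration into a cell-by-cell transfinite one and index the candidate subobjects by finite ``closed'' sets of cells, whereas the paper keeps Keller's countable filtration with (possibly infinite-rank) relatively free layers, writes each layer $P_n$ as the cone of a map $L\ra P_{n-1}$, and stretches it using pairs consisting of a finite set of free generators of $L$ and a stage of the inner direct system, after a first step that composes the two levels of direct systems. Both arguments stand or fall on a finiteness property of the objects of $\cs$, and this is where your write-up has a genuine gap.

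The step you yourself call the genuine obstacle is justified by the wrong tool. Lemma \ref{smallness of dg modules} only provides $\kappa$-smallness for a cardinal $\kappa$ that is at least the cardinality of the hom-complexes of $\ca$, hence typically infinite; it yields factorizations of a map out of $A_c^\we[n_c-1]$ through some stage of a $\kappa$-filtered $\lambda$-sequence, but it can never force the attaching map $\varphi_c$ to factor through the subcomplex spanned by \emph{finitely many} cells, and ``an inductive application of the same principle'' cannot upgrade a $\kappa$-small factorization to a finite one, since neither the coproducts $\coprod_{c\in C_\alpha}$ nor the stages $P_\beta$ are finite. What your construction actually needs is the compactness of shifted representables with respect to filtered colimits and coproducts in $\cc\ca$: by the dg Yoneda lemma a morphism $A_c^\we[n_c-1]\ra P_{\alpha(c)}$ is determined by a single cycle of $P_{\alpha(c)}(A_c)$, which has finite support in the underlying graded-free decomposition into cells, and closing the resulting finite set of cells under attaching maps terminates because the indices strictly decrease along a finitely branching tree (well-foundedness plus K\"onig's lemma). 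This is precisely the content of parts 2) and 3) of Lemma \ref{good properties of s} applied to objects of $\cs$ (in particular to shifted representables), which is the finiteness engine the paper's own proof runs on; with that lemma substituted for Lemma \ref{smallness of dg modules} at the crucial step, your cellular argument does go through.
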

\begin{proof}
\emph{First step:} Assume that an object $P$ is the colimit of a direct system $P_{t}\ko t\in T$ of subobjects such that the structure morphisms $\mu^r_{t}: P_{r}\ra P_{t}$ are inflations and each $P_{t}$ is the colimit of a direct system $S_{(t,i)}\ko i\in I_{t}$ of subobjects satisfying the following property $(*)$:
\begin{enumerate}[1)]
\item $S_{(t,i)}\in\cs$ for each $i\in I_{t}$,
\item for each $i$ the morphism $\mu_{(t,i)}: S_{(t,i)}\ra P_{t}$ is an inflation.
\end{enumerate}
Notice that 2) implies that for each $i\leq j$ the structure morphism $\mu^{i}_{j}:S_{(t,i)}\ra S_{(t,j)}$ is an inflation.

Put $I:=\bigcup_{t\in T}(\{t\}\times I_{t})$ and define the following preorder: $(r,i)\leq (t,j)$ if $r\leq t$ and we have a factorization as follows
\[\xymatrix{S_{(r,i)}\ar@{.>}[rr]^{\mu^{(r,i)}_{(t,j)}}\ar[d]_{\mu_{(r,i)}} && S_{(t,j)}\ar[d]^{\mu_{(t,j)}} \\
P_{r}\ar[rr]_{\mu^{r}_{t}} && P_{t}
}
\]
Notice that $\mu^{(r,i)}_{(t,j)}$ is an inflation. To prove that $I$ is a directed preordered set take $(r,i)\ko (t,j)$ in $I$ and let $s\in T$ with $r\ko t\leq s$. Since $S_{(r,i)}$ and $S_{(t,j)}$ are in $\cs$, thanks to Lemma \ref{good properties of s} we know that the compositions $S_{(r,i)}\ra P_{r}\ra P_{s}$ and $S_{(t,j)}\ra P_{t}\ra P_{s}$ factors through $S_{(s,i_{r})}\ra P_{s}$ and $S_{(s,j_{t})}\ra P_{s}$. Now, if $i_{r}\ko j_{t}\leq k$ we have that $(r,i)\ko (t,j)\leq (s,k)$. Now, take the quotient set $I':=I/\sim$, where $(r,i)\sim (r',i')$ when $(r,i)\leq (r',i')\leq (r,i)$. Notice that in this case $r=r'$ and the inflations $\mu^{(r,i)}_{(r,i')}$ and $\mu^{(r,i')}_{(r,i)}$ are mutually inverse. Thus, $S_{(t,i)}\ko [(t,i)]\in I'$ is a direct system of subobjects of $P$ which are in $\cs$ and whose colimit is easily seen to be $P$.

\emph{Second step:} Let $P$ be an $\ch$-projective module. The proof of \cite[Theorem 3.1]{Keller1994a} shows that, up to replacing $P$ by a module isomorphic to it in $\ch\ca$, we can consider a filtration
\[0=P_{-1}\subset P_{0}\subset P_{1}\subset\dots P_{n}\subset P_{n+1}\dots\subset P\ko n\in\N
\]
such that
\begin{enumerate}[1)]
\item $P$ is the union of the $P_{n}\ko n\in\N$,
\item the inclusion morphism $P_{n-1}\subset P_{n}$ is an inflation for each $n\in\N$,
\item the factor $P_{n}/P_{n-1}$ is isomorphic in $\cc\ca$ to a \emph{relatively free module} (\ie, a direct sum of modules of the form $A^{\we}[i]\ko A\in\ca\ko i\in\Z$) for each $n\in\N$.
\end{enumerate}
Thanks to the first step, it suffices to prove that each $P_{n}$ is the colimit of a direct system of subobjects satisfying $(*)$. We will prove it inductively.

\emph{Third step:} Notice that $P_{n}=\cone(f)$ for a morphism $f:L\ra P_{n-1}$, where $L=\bigoplus_{I}A^{\we}_{i}[n_{i}]\ko A_{i}\in\ca\ko n_{i}\in\Z$. By hypothesis of induction, $P_{n-1}=\lid S_{j}$ where $S_{j}\ko j\in J$ is a direct system satisfying $(*)$. Let $\cF\cp(I)$ be the set of finite subsets of $I$, and put $L_{F}=\bigoplus_{i\in F}A^{\we}_{i}[n_{i}]$ for $F\in\cF\cp(I)$. Notice that $\cF\cp(I)$ is a directed set with the inclusion, and that $L=\lid L_{F}$. Consider the set $\Omega$ of pairs $(F,j)\in\cF\cp(I)\times J$ such that there exists a morphism $f_{(F,j)}$ making the following diagram commutative:
\[\xymatrix{L_{F}\ar[r]^{f_{(F,j)}}\ar[d]_{u_{F}} & S_{j}\ar[d]^{\mu_{j}} \\
L\ar[r]_{f} & P_{n-1}
}
\]
$\Omega$ is a directed set with the order: $(F,j)\leq (F',j')$ if and only if $F\subseteq F'$ and $j\leq j'$. Let $\mu^j_{j'}:S_{j}\ra S_{j'}$ and $u^F_{F'}:L_{F}\ra L_{F'}$ be the structure morphisms of the direct systems $S_{j}\ko j\in J$ and $L_{F}\ko F\in\cF\cp(I)$. Then one can check that $\cone(f_{(F,j)})\ko (F,j)\in\Omega$ is a direct system of modules, with structure morphisms
\[\left[\begin{array}{cc}\mu^j_{j'}& 0 \\ 0 & u^F_{F'}[1]\end{array}\right]:\cone(f_{(F,j)})\ra\cone(f_{(F',j')}),
\]
and whose colimit is $\cone(f)=P_{n}$ via the morphisms
\[\left[\begin{array}{cc}\mu_{j}& 0 \\ 0 & u_{F}[1]\end{array}\right]:\cone(f_{(F,j)})\ra\cone(f)=P_{n},
\]
which are inflations. Finally, notice that, since there exists a conflation
\[S_{j}\ra\cone(f_{(F,j)})\ra L_{F}[1]
\]
with $S_{j}\in\cs$ and $L_{F}[1]$ relatively free of finite type, then each $\cone(f_{(F,j)})$ is in $\cs$.
\end{proof}

\subsection{From smashing subcategories to ideals}\label{From smashing subcategories to ideals}
\addcontentsline{lot}{subsection}{5.5.4. De subcategor\'ias aplastantes a ideales}

Let $\cd$ be a compactly generated triangulated category. 
H. Krause gives in \cite[Corollary 12.5]{Krause2005} a bijection between the class of \emph{exact} (\cf \cite[Definition 8.1]{Krause2005}) two-sided ideals $\ci$ of $\cd^c$ and the class of smashing subcategories $\cx$ of $\cd$. This bijection sends an exact ideal $\ci$ to the smashing subcategory $\mbox{Filt}(\ci)$ 
formed by those objects $X$ of $\cd$ such that every morphism $P\ra X$, with $P$ compact, factors through some morphism of $\ci$. Conversely, the bijection sends a smashing subcategory $\cx$ to the ideal $\ci_{\cx}$ formed by those morphisms of $\cd^c$ which factor through an object of $\cx$.

The definition of \emph{exact ideal} is quite abstract, since it appeals to the existence of a so-called \emph{cohomological quotient functor} (\cf \cite[Definition 4.1]{Krause2005}). However, after \cite[Theorem 11.1 and Theorem 12.1]{Krause2005}, a nice characterization of the notion of exact ideal is available in \cite[Corollary 12.6]{Krause2005}. Namely, a two-sided ideal $\ci$ of $\cd^c$ is exact if and only if it is idempotent, saturated and satisfies $\ci[1]=\ci$. On the other hand, thanks to Proposition \ref{smashing are TTF}, we know that we can replace in this bijection ``smashing subcategories $\cx$ of $\cd$'' by ``triangulated TTF triples $(\cx,\cy,\cz)$ on $\cd$''. After all these remarks, we have that:
\begin{enumerate}[1)]
\item A saturated idempotent two-sided ideal $\ci[1]=\ci$ of $\cd^c$ is mapped to 
\[(\mbox{Filt}(\ci),\mbox{Filt}(\ci)^{\bot},\mbox{Filt}(\ci)^{\bot\bot}).
\]
\item A triangulated TTF triple $(\cx,\cy,\cz)$ on $\cd$ is mapped to $\ci_{\cx}$.
\end{enumerate}

Now we use:

\begin{lemma}\label{equivalent definitions ideals}
Let $(\cx,\cy[1])$ be a t-structure on a triangulated category $\cd$. For a morphism $f\in\cd(M,N)$ the following statements are equivalent:
\begin{enumerate}[1)]
\item $f$ factors through an object of $\cx$.
\item $\cd(f,Y)=0$ for every object $Y$ of $\cy$.
\end{enumerate}
\end{lemma}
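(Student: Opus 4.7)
The plan is to use the standard truncation triangles coming from the t-structure $(\cx,\cy[1])$. Recall from Definition~\ref{definition pretriangulated category} that the underlying triangulated torsion pair $(\cx,\cy)$ satisfies $\cd(X,Y)=0$ for all $X\in\cx$ and $Y\in\cy$, and that every object $N$ of $\cd$ fits in a triangle
\[x\tau_\cx N\ra N\arr{\eta_N} y\tau^\cy N\ra(x\tau_\cx N)[1]\]
with $x\tau_\cx N\in\cx$ and $y\tau^\cy N\in\cy$.

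For the implication $1)\Rightarrow 2)$, I would argue as follows. Suppose $f:M\ra N$ factors as $M\arr{h}X\arr{g}N$ with $X\in\cx$. Then for any $Y\in\cy$ and any $\phi:N\ra Y$, the composite $\phi g:X\ra Y$ vanishes by the orthogonality of $\cx$ and $\cy$, whence $\phi f=\phi g h=0$. Thus $\cd(f,Y)=0$ for every $Y\in\cy$.

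For the converse $2)\Rightarrow 1)$, the plan is to apply the hypothesis to $Y=y\tau^\cy N\in\cy$ and to the morphism $\eta_N$, obtaining $\eta_N\circ f=0$. Applying the cohomological functor $\cd(M,?)$ to the truncation triangle of $N$ produces a long exact sequence, from which one extracts a lift $\tilde f:M\ra x\tau_\cx N$ whose composition with the canonical morphism $x\tau_\cx N\ra N$ equals $f$; this is the required factorization of $f$ through the object $x\tau_\cx N$ of $\cx$.

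This is a direct torsion-pair argument, formally identical to its abelian counterpart, so no real obstacle should arise. The only point requiring some care is bookkeeping of conventions: the paper writes the t-structure as $(\cx,\cy[1])$, while the truncation triangle produces the torsionfree approximation $y\tau^\cy N$ in $\cy$ itself (not in $\cy[1]$); since condition $2)$ is phrased in terms of objects $Y\in\cy$, it applies directly to $y\tau^\cy N$ with no need for any shift.
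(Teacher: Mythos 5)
Your proof is correct and follows essentially the same route as the paper's: orthogonality $\cd(\cx,\cy)=0$ gives $1)\Rightarrow 2)$, and for $2)\Rightarrow 1)$ the paper likewise applies the hypothesis to the truncation triangle of $N$, noting that the composition $M\arr{f}N\ra y\tau^{\cy}N$ vanishes, so $f$ factors through $x\tau_{\cx}N\ra N$. Your remark on the shift convention is also accurate; no issues.
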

\begin{proof}
$1)\Rightarrow 2)$ Use that $\cy=\cx^{\bot}$.

$2)\Rightarrow 1)$ Consider the triangle
\[x\tau_{\cx}N\ra N\ra y\tau^{\cy}N\ra(x\tau_{\cx}N)[1].
\]
Since the composition $M\arr{f}N\ra y\tau^{\cy}N$ vanishes, then $f$ factors through $x\tau_{\cx}N\ra N$.
\end{proof}

Thanks to Lemma \ref{equivalent definitions ideals}, we have that $\ci_{\cx}=\cm or(\cd^c)^{\cy}$. Finally, thanks to \cite[Theorem 12.1]{Krause2005}, we have that $\mbox{Filt}(\ci)^{\bot}=\ci^{\bot}$. Hence, one can reformulate H.~Krause's bijection as follows:

\begin{theorem}\label{Krause result}
If $\cd$ is a compactly generated triangulated category, the maps 
\[\ci\mapsto(^{\bot}(\ci^{\bot}),\ci^{\bot}, \ci^{\bot\bot})
\]
and
\[(\cx,\cy,\cz)\mapsto\cm or(\cd^c)^{\cy}
\] 
define a bijection between the set of all saturated idempotent two-sided ideals $\ci$ of $\cd^c$ such that $\ci[1]=\ci$ and the set of all the triangulated TTF triples on $\cd$.
\end{theorem}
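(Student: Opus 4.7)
The strategy is to combine Theorem~\ref{from ideals to devissage wrt hc} with assertion 2') of \cite[Theorem~4.2]{Krause2000}, which guarantees that every ideal of the form $\cm or(\cd^c)^{\cy}$ is idempotent. First I would check well-definedness. Given a saturated idempotent two-sided ideal $\ci$ of $\cd^c$ with $\ci[1]=\ci$, Theorem~\ref{from ideals to devissage wrt hc} produces a triangulated TTF triple $(\cx,\cy,\cz)$ on $\cd$ with $\cy=\ci^{\bot}$, and since every triangulated TTF triple satisfies $\cx={}^{\bot}\cy$ and $\cz=\cy^{\bot}$, it coincides with $({}^{\bot}(\ci^{\bot}),\ci^{\bot},\ci^{\bot\bot})$. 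Conversely, given a triangulated TTF triple $(\cx,\cy,\cz)$, the class $\cm or(\cd^c)^{\cy}$ is visibly a two-sided ideal of $\cd^c$ stable under $?[1]$; it is closed in the sense of Definition~\ref{closed ideals}, hence saturated by Lemma~\ref{closed are saturated}; and its idempotency is exactly assertion 2') of \cite[Theorem~4.2]{Krause2000}.

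For mutual inverseness in one direction, start from $(\cx,\cy,\cz)$ and set $\ci=\cm or(\cd^c)^{\cy}$. The inclusion $\cy\subseteq\ci^{\bot}$ is tautological. For the reverse inclusion, Lemma~\ref{equivalent definitions ideals} identifies $\ci$ with the ideal $\ci_{\cx}$ of morphisms of $\cd^c$ factoring through $\cx$; since $\cx$ is a smashing subcategory of the compactly generated category $\cd$, \cite[Theorem~12.1]{Krause2005} yields $\ci^{\bot}=\ci_{\cx}^{\bot}=\cx^{\bot}=\cy$, so that $({}^{\bot}(\ci^{\bot}),\ci^{\bot},\ci^{\bot\bot})=({}^{\bot}\cy,\cy,\cy^{\bot})=(\cx,\cy,\cz)$.

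The reverse composition is the substantive part. Starting from $\ci$, let $(\cx,\cy,\cz)$ with $\cx=\Tria(\cp)$ be the triple from Theorem~\ref{from ideals to devissage wrt hc}, so that $\cy=\ci^{\bot}$. The inclusion $\ci\subseteq\cm or(\cd^c)^{\cy}$ is immediate. For the opposite inclusion, let $f\in\cd^c(Q',Q)$ satisfy $\cd(f,Y)=0$ for every $Y\in\cy$; Lemma~\ref{equivalent definitions ideals} supplies a factorization of $f$ through some $X\in\cx$, and part~3) of Theorem~\ref{from ideals to devissage wrt hc} reduces the whole problem to refining this to a factorization through an object of $\cp$.

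I expect this final refinement to be the main obstacle. The plan is to describe $X$ as a Milnor colimit or finite iterated extension of small coproducts of shifts of objects of $\cp$ via Theorem~\ref{Krause on perfects}, use compactness of $Q'$ to trap the factorization in a finite stage of the construction, and then apply the saturation hypothesis on $\ci$ inductively along the structural triangles of the extension to absorb each extension step and conclude $f\in\ci$. The saturation condition is indispensable here, since without it one recovers only the larger class of arbitrary (closed) idempotent shift-stable ideals; everything else amounts to routine bookkeeping on the Galois connection $\ci\leftrightarrow\ci^{\bot}$.
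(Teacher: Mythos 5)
Your outer framework is sound and coincides with the paper's proof of Theorem \ref{our result}: well-definedness, idempotency of $\cm or(\cd^c)^{\cy}$ via Proposition \ref{for the idempotency} (assertion 2') of \cite[Theorem 4.2]{Krause2000}), saturation via Lemma \ref{closed are saturated}, and the identification $\ci^{\bot}=\cy$ for $\ci=\cm or(\cd^c)^{\cy}$ (which, incidentally, can be done elementarily as in the proof of Theorem \ref{our result}, without appealing to \cite[Theorem 12.1]{Krause2005} — if you allow yourself Krause's Theorems 11.1/12.1 you may as well cite his whole bijection). The genuine gap is exactly the step you flag, and it is not routine bookkeeping. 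Your plan to describe an object $X$ of $\cx=\Tria(\cp)$ via Theorem \ref{Krause on perfects} is not available: that theorem requires the objects of $\cp$ to be \emph{perfect} in $\Tria(\cp)$, and here the objects of $\cp$ are Milnor colimits of sequences of morphisms of $\ci$ between compacts, which are not known to be perfect — this is precisely why the proof of Theorem \ref{from ideals to devissage wrt hc} invokes \cite[Corollary 3.12]{Porta2007} rather than Theorem \ref{Krause on perfects} to see that $\Tria(\cp)$ is an aisle. Without a telescope or cell-complex description of the objects of $\cx$, compactness of the source of $f$ cannot trap the factorization at a finite (or even countable) stage; infinite d\'evissage alone gives no filtration to induct along. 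Moreover, even granting such a description, the absorption step is delicate, because saturation only concerns triangles of $\cd^c$, while the structural triangles involve non-compact coproducts and the non-compact objects of $\cp$, so each application of saturation must first be prepared by replacing these by compact pieces. (Also, a small slip: closed idempotent shift-stable ideals form a \emph{subclass} of the saturated ones by Lemma \ref{closed are saturated}, not a larger class; the whole difficulty is whether the inclusion is an equality.)

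This is why the paper does not prove Theorem \ref{Krause result} by your route: its proof is a reformulation of Krause's bijection \cite[Corollaries 12.5 and 12.6]{Krause2005}, using the characterization of exact ideals as the saturated idempotent shift-stable ones (\cite[Theorems 11.1 and 12.1]{Krause2005}), Proposition \ref{smashing are TTF}, Lemma \ref{equivalent definitions ideals} to identify the ideal of morphisms factoring through $\cx$ with $\cm or(\cd^c)^{\cy}$, and $\mbox{Filt}(\ci)^{\bot}=\ci^{\bot}$ from loc.\ cit.; the hard inclusion $\cm or(\cd^c)^{\ci^{\bot}}\subseteq\ci$ is thus imported from Krause's cohomological-quotient machinery. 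Your strategy is exactly the one the paper carries out only in the algebraic case (Corollary \ref{Krause algebraic}), where the refinement you defer consumes the bulk of the proof and requires the underlying Frobenius model: Theorem \ref{localizations via small object argument} to realize objects of $\cx$ as transfinite cell complexes with cones in $\cp$, Lemma \ref{good properties of s} to let compact(-like) objects see through the resulting filtered colimits, and a transfinite induction with octahedra in which saturation is applied at each successor step. In a general compactly generated triangulated category these tools are unavailable, and the paper's own short-proof method yields only the bijection with \emph{closed} ideals (Theorem \ref{our result}); bridging ``saturated'' and ``closed'' in general is the substantive content your proposal leaves open, and would have to be supplied either by citing Krause or by a genuinely new argument.
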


Let us deduce a bijection in the same spirit directly from Theorem \ref{from ideals to devissage wrt hc} and the following H. Krause's result:

\begin{proposition}\label{for the idempotency}
Let $\cx$ be a smashing subcategory of a compactly generated triangulated category $\cd$. If $P$ is a compact object of $\cd$, $M$ is an object of $\cx$ and $f:P\ra M$ is a morphism in $\cd$, then there exists a factorization in $\cd$
\[\xymatrix{P\ar[rr]^{f}\ar[dr]_{u} && M \\
& P'\ar[ur]_{v} &
}
\]
with $P'$ compact and $u$ factoring through an object of $\cx$.
\end{proposition}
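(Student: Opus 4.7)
The plan is to establish the proposition in the algebraic setting; for the general case, we refer to assertion 2') of \cite[Theorem 4.2]{Krause2000}, whose proof is not reproduced here. So assume $\cd$ is algebraic: by \cite[Theorem 4.3]{Keller1994a} together with Lemma \ref{k flat up to quasi-equivalence}, we may replace $\cd$ by $\cd\ca$ for a small $k$-flat dg category $\ca$. Theorem \ref{TTF are he} then produces a homological epimorphism $F:\ca\to\cb$, bijective on objects, inducing the given triangulated TTF triple $(\cx,\cy,\cz)$. If $X\to\ca\to F^{*}\cb\to X[1]$ denotes the associated triangle in $\cd(\ca^{op}\otimes_{k}\ca)$, the proof of Theorem \ref{TTF are he} identifies $\tau_{\cx}$ with $?\otimes^{\L}_{\ca}X$; in particular $M\in\cx$ forces $M\cong M\otimes^{\L}_{\ca}X$.

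Let $\mathbf{p}M$ be an $\ch$-projective resolution of $M$, so that $\mathbf{p}M\otimes_{\ca}X$ is a model for $M$ in $\cd\ca$ (the $k$-flatness of $\ca$ together with the $\ch$-projectivity of $\mathbf{p}M$ ensures that the underived tensor computes the derived one). By Proposition \ref{stretching}, after replacing $\mathbf{p}M$ by an isomorphic copy in $\ch\ca$, we may assume $\mathbf{p}M=\varinjlim_{i\in I}S_{i}$, a colimit in $\cc\ca$ of a direct system of inflations between submodules $S_{i}\in\cs$, whence $\mathbf{p}M\otimes_{\ca}X\cong\varinjlim_{i\in I}(S_{i}\otimes_{\ca}X)$ in $\cc\ca$. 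Each $S_{i}\in\cs$ is $\ch$-projective and compact in $\cd\ca$; hence $q(S_{i}\otimes_{\ca}X)$ represents $\tau_{\cx}(qS_{i})$ and, since $\cx$ is smashing, this object belongs to $\cx$ and is compact there, as follows from applying Lemma \ref{truncating special objects and from local to global via smashing} to the two t-structures of the TTF triple.

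By Lemma \ref{good properties of s}.1, $\mathbf{p}P$ is a direct summand of some element of $\cs$; replacing $P$ by $P\oplus qQ$ and $f$ by its extension by zero on $qQ$ for a suitable complement $Q$ is harmless, since any factorization of the enlarged datum restricts to one for $f$, so we may assume $\mathbf{p}P\in\cs$. Lemma \ref{good properties of s}.2 then yields
\[
(\cd\ca)(P,M)\cong(\cd\ca)(\mathbf{p}P, q\varinjlim_{i}(S_{i}\otimes_{\ca}X))\cong\varinjlim_{i}(\cd\ca)(\mathbf{p}P, q(S_{i}\otimes_{\ca}X)),
\]
so $f$ factors through $P':=q(S_{N}\otimes_{\ca}X)$ for some index $N\in I$. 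Writing $f=vu$ with $u:P\to P'$ and $v:P'\to M$, the object $P'$ is compact and lies in $\cx$, so $u$ factors trivially through $P'\in\cx$, completing the factorization. The delicate point is reconciling the abelian-categorical colimit of Proposition \ref{stretching} with the derived-categorical structure of $\cd\ca$; this is precisely what the $\ch$-projectivity of objects of $\cs$ and the direct-limit compatibility of Lemma \ref{good properties of s} are designed to handle.
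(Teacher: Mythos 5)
Your overall strategy coincides with the paper's: reduce to the algebraic case (citing \cite[Theorem 4.2]{Krause2000} for the general statement), realize the smashing subcategory by a homological epimorphism $F:\ca\ra\cb$ via Theorem \ref{TTF are he}, identify $\tau_{\cx}$ with $?\otimes^{\L}_{\ca}X$, stretch a cofibrant resolution of $M$ into a direct limit $\lid S_{i}$ with $S_{i}\in\cs$ (Proposition \ref{stretching}), and use Lemma \ref{good properties of s} to descend $f$ to a finite stage. The gap is in your last step: you take $P':=q(S_{N}\otimes_{\ca}X)\cong\tau_{\cx}(qS_{N})$ and assert it is compact. Lemma \ref{truncating special objects and from local to global via smashing} does not give this: its part 1 concerns the coaisle functor $\tau^{\cy}$, and $\cx$ is not the coaisle of either t-structure of the TTF triple, while its part 2 goes in the opposite direction (compact in the aisle $\cx$ implies compact in $\cd$). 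Worse, the claim is false in general: since $\cx$ is smashing, $\cy$ is closed under small coproducts, so by part 2 of that very lemma any object compact in $\cx$ would be compact in $\cd$; but by B.~Keller's counterexample to the generalized smashing conjecture (recalled in subsection \ref{Short survey}) a smashing subcategory may contain no non-zero compact object of $\cd$, whereas $\tau_{\cx}(qS_{N})$ is typically non-zero (in that example $\tau_{\cx}(A)\cong I\neq 0$). So your factorization passes through an object of $\cx$, but not through a compact object, which is exactly what the statement (and its use in Theorem \ref{our result}, where the intermediate object must live in $\cd^c$) requires.

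The missing idea, which is how the paper finishes, is to compose one step further using the morphism $\alpha:X\ra\ca$ from the triangle $X\arr{\alpha}\ca\arr{F}F^{*}\cb\ra X[1]$. Naturality gives a commutative square comparing the maps $(\cd\ca)(P,S_{i}\otimes_{\ca}X)\ra(\cd\ca)(P,S_{i})$ induced by $S_{i}\otimes_{\ca}X\ra S_{i}\otimes_{\ca}\ca\cong S_{i}$ with the isomorphism $(\cd\ca)(P,M\otimes_{\ca}X)\arr{\sim}(\cd\ca)(P,M)$, the latter being invertible precisely because $M\in\cx$. Lifting $f$ to some $g\in(\cd\ca)(P,S_{N}\otimes_{\ca}X)$, as you did, and then composing with $S_{N}\otimes_{\ca}X\ra S_{N}\arr{\mu_{N}}M$ yields $f=\mu_{N}u$, where $u:P\ra S_{N}$ is the composite through $S_{N}\otimes_{\ca}X$. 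Now take $P':=S_{N}$: it is compact because $S_{N}\in\cs$, and $u$ factors through $S_{N}\otimes_{\ca}X\in\cx$. In other words, keep the compact stage $S_{N}$ as the intermediate object and push the $\cx$-membership into the morphism $u$; your argument as written stops one composition short of this.
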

\begin{proof}
This statement is assertion 2') of \cite[Theorem 4.2]{Krause2000}. We will give here the proof for the algebraic setting by using that, in this case, every smashing subcategory is induced by a homological epimorphism. First, recall that the smashing subcategory $\cx$ fits into a triangulated TTF triple $(\cx,\cy,\cz)$ on $\cd$. Let $\ca$ be a small dg category whose derived category $\cd\ca$ is triangle equivalent to $\cd$.
In the proof of Lemma \ref{k flat up to quasi-equivalence} it was shown that, up to quasi-equivalence, we can assume that $\ca$ is $k$-flat. Also, thanks to Lemma \ref{good properties of s} we can assume that $P$ belongs to $\cs$. Let $F:\ca\ra\cb$ be a homological epimorphism of dg categories associated (\cf Theorem \ref{TTF are he}) to the triangulated TTF triple $(\cx,\cy,\cz)$, and fix a triangle
\[X\arr{\alpha}\ca\arr{F}F^{*}\cb\ra X[1]
\]
in $\cd(\ca^{op}\otimes_{k}\ca)$. Assume $M$ is $\ch$-projective, and let $S_{i}\ko i\in I$, be a direct system of submodules of $M$ as in Proposition \ref{stretching} so that $M=\lid S_{i}$. Then, we get a direct system $S_{i}\otimes_{\ca}X\ko i\in I$ such that $\lid (S_{i}\otimes_{\ca}X)\cong M\otimes_{\ca}X$. Since $M\in\cx$, for each $i\in I$ we have a commutative square
\[\xymatrix{(\cd\ca)(P,S_{i}\otimes_{\ca}X)\ar[r]\ar[d] &(\cd\ca)(P,M\otimes_{\ca}X)\ar[d]^{\wr} \\
(\cd\ca)(P,S_{i})\ar[r] &(\cd\ca)(P,M)
}
\]
where the horizontal arrows are induced by the morphisms $\mu_{i}:S_{i}\ra M$ associated to the colimit and the vertical arrows are induced by the compositions 
\[S_{i}\otimes_{\ca}X\overset{\id\otimes\alpha}{\longrightarrow}S_{i}\otimes_{\ca}\ca\underset{\sim}{\longrightarrow}S_{i}
\] 
and 
\[M\otimes_{\ca}X\underset{\sim}{\overset{\id\otimes\alpha}{\longrightarrow}}M\otimes_{\ca}\ca\underset{\sim}{\longrightarrow}M.
\] 
According to Lemma \ref{good properties of s}, there exists an index $i\in I$ such that $f$ comes from a morphism $(\cd\ca)(P,S_{i}\otimes_{\ca}X)$ via the square above. Then, $f$ factors in $\cd\ca$ as
\[f:P\ra S_{i}\otimes_{\ca}X\ra S_{i}\arr{\mu_{i}}M
\]
and the result follows from the fact that $S_{i}\in\cs$ and $S_{i}\otimes_{\ca}X\in\cx$ (see the comments immediately after Definition \ref{homological epimorphism}).
\end{proof}

Here we have the promised bijection:

\begin{theorem}\label{our result}
If $\cd$ is a compactly generated triangulated category, the maps 
\[(\cx,\cy,\cz)\mapsto\cm or(\cd^c)^{\cy}
\] 
and
\[\ci\mapsto(^{\bot}(\ci^{\bot}),\ci^{\bot}, \ci^{\bot\bot})
\]
define a bijection between the set of all the triangulated TTF triples on $\cd$ and the set of all closed idempotent two-sided ideals $\ci$ of $\cd^c$ such that $\ci[1]=\ci$.
\end{theorem}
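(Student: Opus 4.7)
The plan is to check that the two displayed maps are well-defined on the indicated sets and are mutually inverse; Theorem \ref{from ideals to devissage wrt hc} and Proposition \ref{for the idempotency} do all the heavy lifting.

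First I show that for any triangulated TTF triple $(\cx,\cy,\cz)$ the set $\ci:=\cm or(\cd^c)^{\cy}$ is a closed idempotent two-sided ideal with $\ci[1]=\ci$. That it is a closed two-sided ideal stable under $[1]$ is immediate from Lemma \ref{basic properties Galois} and the fact that $\cy$ is triangulated. For idempotency, note first that $\cx$ is a smashing subcategory, since $\cy$, being the aisle of the t-structure $(\cy,\cz)$, is closed under small coproducts. Given $f\in\ci$ with source $P$ and target $Q$ compact, Lemma \ref{equivalent definitions ideals} yields a factorization $f=v'u'$ through some $M\in\cx$ with $u':P\to M$. Applying Proposition \ref{for the idempotency} to $u'$ produces a further decomposition $u'=v''u$ with $u\in\cd^c(P,P')$ factoring through an object of $\cx$ (so $u\in\ci$) and $v''\in\cd(P',M)$. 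The composite $v'v''\in\cd^c(P',Q)$ factors through $M\in\cx$ and therefore also belongs to $\ci$, whence $f=(v'v'')u\in\ci\cdot\ci$.

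Conversely, for any closed idempotent two-sided ideal $\ci$ with $\ci[1]=\ci$, Theorem \ref{from ideals to devissage wrt hc} constructs a triangulated TTF triple on $\cd$ whose middle class equals $\ci^{\bot}$; its outer classes are then forced to be $^{\bot}(\ci^{\bot})$ and $\ci^{\bot\bot}$, so the second map is well-defined. The composition $\ci\mapsto(^{\bot}(\ci^{\bot}),\ci^{\bot},\ci^{\bot\bot})\mapsto\cm or(\cd^c)^{\ci^{\bot}}$ returns $\ci$ precisely because $\ci$ is closed.

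The crux of the proof is the remaining composition, which requires showing $\ci^{\bot}=\cy$ when $\ci=\cm or(\cd^c)^{\cy}$. The inclusion $\cy\subseteq\ci^{\bot}$ is immediate from the definitions. For the reverse inclusion I take $M\in\ci^{\bot}$ and consider its truncation triangle $X\to M\to Y\to X[1]$ with respect to $(\cx,\cy)$, where $X\in\cx$ and $Y\in\cy$. Since $\ci^{\bot}$ is a full triangulated subcategory of $\cd$ containing both $M$ and $Y\in\cy\subseteq\ci^{\bot}$, we have $X\in\cx\cap\ci^{\bot}$. It suffices to prove that $\cd(P,X)=0$ for every compact $P$, for then compact generation of $\cd$ forces $X=0$ and hence $M\cong Y\in\cy$. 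Given such a $P$ and any morphism $g:P\to X$, Proposition \ref{for the idempotency} yields a factorization $g=hu$ with $P'$ compact and $u:P\to P'$ factoring through an object of $\cx$; then $u\in\ci$, so $g=hu=0$ because $X\in\ci^{\bot}$. I expect this last step—reconstructing the whole t-structure from the `external' ideal $\ci$—to be the main obstacle, as it is the only point where the interaction between compact objects and the smashing subcategory $\cx$ has to be exploited with care.
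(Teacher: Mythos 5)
Your proposal is correct and follows essentially the same route as the paper: reduce via the Galois connection, use Theorem \ref{from ideals to devissage wrt hc} for well-definedness of the ideal-to-triple map, obtain idempotency of $\cm or(\cd^c)^{\cy}$ from Lemma \ref{equivalent definitions ideals} together with Proposition \ref{for the idempotency}, and prove $\ci^{\bot}=\cy$ by the same truncation-triangle and compact-generation argument. The only difference is that you write out explicitly the idempotency factorization that the paper leaves to the reader.
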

\begin{proof}
Let $(\cx,\cy,\cz)$ be a triangulated TTF triple on the category $\cd$ and put $\ci:=\cm or(\cd^c)^{\cy}$. Bearing in mind the Galois connection of subsection \ref{From ideals to smashing subcategories} it only remains to prove that $\ci$ is idempotent and $\ci^{\bot}=\cy$.

Thanks to Lemma \ref{equivalent definitions ideals}, we have that $\ci$ is precisely the class of all morphisms of $\cd^c$ which factor through an object of $\cx$. Then, by using Proposition \ref{for the idempotency} we prove that $\ci$ is idempotent. Finally, let us check that $\ci^{\bot}=\cy$. Of course, it is clear that  $\cy$ is contained in $\ci^{\bot}$. Conversely, let $M$ be an object of $\ci^{\bot}$ and consider the triangle 
\[x\tau_{\cx}M\ra M\ra y\tau^{\cy}M\ra x\tau_{\cx}M[1].
\] 
Since both $M$ and $y\tau^{\cy}M$ belongs to $\ci^{\bot}$, then $x\tau_{\cx}M$ belongs to $\cx\cap\ci^{\bot}$. Since the compact objects generate $\cd$, if $x\tau_{\cx}M\neq 0$ there exists a non-zero morphism 
\[f:P\ra x\tau_{\cx}M
\] 
for some compact object $P$. Thanks to Proposition \ref{for the idempotency}, we know that $f$ admits a factorization $f=vu$ through a compact object with $u$ in $\ci$, and so $f=0$. This contradiction implies $x\tau_{\cx}M=0$ and thus $M\in\cy$.
\end{proof}

\begin{remark}
When $\cd$ is the derived category $\cd\ca$ of a small dg category $\ca$, the idempotency of the ideals of the above theorem reflects the `derived idempotency' of the $\ca$-$\ca$-bimodule $X$ appearing in the characterization of homological epimorphisms (\cf Lemma \ref{characterization he}). 
\end{remark}

The following is our announced weak version of the generalized smashing conjecture:

\begin{corollary}\label{gsc}
Let $\cd$ be a compactly generated triangulated category. Every smashing subcategory of $\cd$ satisfies the principle of infinite d\'evissage with respect to a set of Milnor colimits of compact objects.
\end{corollary}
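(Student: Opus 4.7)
The plan is to combine Theorem~\ref{from ideals to devissage wrt hc} with the bijection of Theorem~\ref{our result}, converting the given smashing subcategory into an idempotent ideal and back. Concretely, let $\cx$ be a smashing subcategory of $\cd$. First I would invoke Proposition~\ref{smashing are TTF} (applicable since every compactly generated triangulated category is perfectly generated) to upgrade $\cx$ to a triangulated TTF triple $(\cx,\cy,\cz)$ on $\cd$.

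Next I would apply Theorem~\ref{our result} to this triple, producing the closed idempotent two-sided ideal
\[
\ci := \cm or(\cd^c)^{\cy}
\]
of $\cd^c$ with $\ci[1]=\ci$, and satisfying $\cy = \ci^{\bot}$ and $\cx = {}^{\bot}(\ci^{\bot}) = {}^{\bot}\cy$. Now I would feed this ideal $\ci$ into Theorem~\ref{from ideals to devissage wrt hc}, which yields a triangulated TTF triple $(\cx',\cy',\cz')$ on $\cd$ such that $\cx' = \Tria(\cp)$ for a certain set $\cp$ of Milnor colimits of sequences of morphisms of $\ci$, and such that $\cy' = \ci^{\bot}$.

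To conclude, I would observe that $\cy = \ci^{\bot} = \cy'$, so the t-structure $(\cx,\cy)$ coincides with $(\cx',\cy')$ (an aisle is determined by its coaisle), whence $\cx = \cx' = \Tria(\cp)$. Since by construction every element of $\cp$ is the Milnor colimit of a sequence of morphisms of $\ci\subseteq\cm or(\cd^c)$, i.e.\ a sequence of morphisms between compact objects, this exhibits $\cx$ as satisfying the principle of infinite d\'evissage with respect to a set of Milnor colimits of compact objects, as desired. No step looks like a serious obstacle here; the only thing worth double-checking is that the bijection of Theorem~\ref{our result} really identifies the TTF triple associated to $\ci$ via Theorem~\ref{from ideals to devissage wrt hc} with the one we started from---but this follows from the equality of coaisles $\ci^{\bot}=\cy$ together with the uniqueness part of the t-structure $\leftrightarrow$ aisle correspondence.
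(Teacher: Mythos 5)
Your proposal is correct and follows essentially the same route as the paper: pass from the smashing subcategory to a TTF triple via Proposition \ref{smashing are TTF}, extract the closed idempotent shift-stable ideal $\ci$ with $\cy=\ci^{\bot}$ via Theorem \ref{our result}, and then read off $\cx=\Tria(\cp)$ from Theorem \ref{from ideals to devissage wrt hc}. The only difference is that you spell out explicitly the identification of the two TTF triples through the equality of coaisles, which the paper leaves implicit; this is a harmless (indeed welcome) elaboration, not a deviation.
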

\begin{proof}
Let $\cx$ be a smashing subcategory of $\cd$. Put $\cy:=\cx^{\bot}$. Thanks to Proposition \ref{smashing are TTF} and Theorem \ref{our result}, we know that $\cy=\ci^{\bot}$ for a certain idempotent two-sided ideal $\ci$ of $\cd^c$ such that $\ci[1]=\ci$. Then Theorem \ref{from ideals to devissage wrt hc} says that $\cx=\Tria(\cp)$, where $\cp$ is a set of Milnor colimits of sequences of morphisms of $\ci$. 
\end{proof}

For compactly generated \emph{algebraic} triangulated categories Theorem \ref{our result} gives an alternative proof of H. Krause's bijection (as stated in Theorem \ref{Krause result}).

\begin{corollary}\label{Krause algebraic}
Let $\cd$ be a compactly generated algebraic triangulated category. The maps 
\[(\cx,\cy,\cz)\mapsto\cm or(\cd^c\ca)^{\cy}
\] 
and
\[\ci\mapsto(^{\bot}(\ci^{\bot}),\ci^{\bot}, \ci^{\bot\bot})
\]
define a bijection between the set of all the triangulated TTF triples on $\cd$ and the set of all saturated idempotent two-sided ideals $\ci$ of $\cd^c$ such that $\ci[1]=\ci$. 
\end{corollary}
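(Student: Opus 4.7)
The plan is to derive Corollary~\ref{Krause algebraic} from Theorem~\ref{our result}. By Lemma~\ref{closed are saturated}, every closed two-sided ideal of $\cd^c$ is automatically saturated, so the map $(\cx,\cy,\cz)\mapsto\cm or(\cd^c)^{\cy}$ furnished by Theorem~\ref{our result} already takes its values in the class of saturated idempotent two-sided ideals closed under shifts (idempotency in the algebraic setting being ensured by Proposition~\ref{for the idempotency}, which here rests on the ``omnipresence'' of homological epimorphisms of dg categories). The content of the corollary is therefore that this map remains bijective when the target is enlarged to include all such saturated ideals, which amounts to showing that any saturated idempotent shift-stable ideal $\ci$ of $\cd^c$ is automatically closed, i.e., $\ci=\cm or(\cd^c)^{\ci^{\bot}}$.

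To verify this, first apply Theorem~\ref{from ideals to devissage wrt hc} to $\ci$, a step which uses only idempotency and shift-stability, to obtain a triangulated TTF triple $(\cx,\cy,\cz)$ on $\cd$ with $\cy=\ci^{\bot}$, $\cx=\Tria(\cp)$ for a set $\cp$ of Milnor colimits of $\ci$-sequences, and the factorization criterion that a morphism of $\cd^c$ lies in $\ci$ if and only if it factors through some object of $\cp$. The inclusion $\ci\subseteq\cm or(\cd^c)^{\ci^{\bot}}$ is then immediate. For the reverse, given $f:P\to Q$ in $\cm or(\cd^c)^{\cy}$, Lemma~\ref{equivalent definitions ideals} provides a factorization $f=hg$ with $g:P\to M$ and $M\in\cx$. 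Proposition~\ref{for the idempotency} applied to $g$ yields $g=vu$ with $u:P\to P'$ compact and itself factoring through $\cx$, so that $f=(hv)u$ in $\cd^c$. Iterating this construction produces a sequence of compact morphisms, each still factoring through $\cx$; combined with the Milnor-colimit description of objects of $\cx=\Tria(\cp)$ supplied by Theorem~\ref{Krause on perfects} (which reduces the problem, via compactness of the source, to $n$-fold extensions of small coproducts of shifts of objects of $\cp$) and with the saturation hypothesis on $\ci$ (applied inductively along the triangles built from these extensions), the iterated factorizations can be assembled into a single factorization of $f$ through an object of $\cp$. Hence $f\in\ci$ by the factorization criterion, and the desired equality $\ci=\cm or(\cd^c)^{\ci^{\bot}}$ follows.

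The main obstacle is precisely this last step: promoting the information ``$f$ factors through $\cx$'' to ``$f$ factors through an object of $\cp$'' for a compact morphism. This is where the saturation hypothesis on $\ci$ is indispensable (closed ideals being automatically saturated, but the converse direction requiring nontrivial work), and where the algebraic hypothesis enters through Proposition~\ref{for the idempotency}; by contrast, the remaining ingredients, including Theorem~\ref{from ideals to devissage wrt hc}, are available in any compactly generated triangulated category.
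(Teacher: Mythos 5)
Your overall reduction is the same as the paper's: by Theorem \ref{our result} and Lemma \ref{closed are saturated} it suffices to show that, in the algebraic case, every saturated idempotent two-sided ideal $\ci$ of $\cd^c$ with $\ci[1]=\ci$ is closed, and after applying Theorem \ref{from ideals to devissage wrt hc} this means showing that a morphism $f$ of $\cd^c$ with $\cd(f,Y)=0$ for all $Y\in\ci^{\bot}$, hence factoring through an object of $\cx=\Tria(\cp)$, already lies in $\ci$. The gap is in how you carry out this last step. You invoke Theorem \ref{Krause on perfects} to present objects of $\cx$ as Milnor colimits of finite extensions of small coproducts of shifts of objects of $\cp$; but that theorem requires the objects of $\cp$ to be perfect in $\Tria(\cp)$, and here $\cp$ consists of Milnor colimits of sequences of morphisms of $\ci$ between compact objects, which have no reason to be perfect (this is precisely why, in Theorem \ref{from ideals to devissage wrt hc}, the aisle property of $\Tria(\cp)$ is obtained from Porta's theorem and not from perfect generation, and why no such countable presentation is available by purely triangulated means). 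Moreover, iterating Proposition \ref{for the idempotency} is circular as you state it: each application only produces another compact morphism factoring through $\cx$, i.e.\ another element of $\cm or(\cd^c)^{\ci^{\bot}}$, and you give no actual mechanism for ``assembling'' these factorizations into a single factorization through an object of $\cp$ --- which is exactly the hard part of the proof.

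The paper fills this step using algebraicity in an essential way that your sketch cannot reproduce: it realizes $\cd$ as $\cd\ca$ for a small dg category $\ca$, replaces the compact source by an object of the set $\cs$ of finitely filtered modules (Lemma \ref{good properties of s}), presents $x\tau_{\cx}Q$ via Theorem \ref{localizations via small object argument} as the colimit in $\cc\ca$ of a transfinite $\lambda$-sequence of inflations whose cokernels lie in $\cp$, uses the direct-limit statements of Lemma \ref{good properties of s} (statements at the level of the module category $\cc\ca$, not of the triangulated category, where filtered colimits do not exist) to factor $f$ through some stage $X_{\alpha}$, and then runs a transfinite induction on $\alpha$: at successor ordinals one uses compactness, Lemma \ref{compact and Milnor}-type factorizations through a finite stage of the Milnor colimit defining the attached cell, the octahedral axiom, and the saturation of $\ci$; at limit ordinals one uses the filtered-colimit lemma again. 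In particular, your closing claim that the algebraic hypothesis enters only through Proposition \ref{for the idempotency} mislocates its role: in the actual argument it is the dg-module-level filtrations and colimit lemmas that make the promotion from ``factors through $\cx$'' to ``belongs to $\ci$'' possible.
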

\begin{proof}
Thanks to Corollary \ref{gsc} and Lemma \ref{closed are saturated} we just have to prove that, in this case, every saturated idempotent two-sided ideal $\ci$ of $\cd^c$ such that $\ci=\ci[1]$ is closed. For this, let $\ci$ be a saturated idempotent two-sided ideal of $\cd^c$ with $\ci[1]=\ci$. Consider the triangulated TTF triple $(\cx,\cy,\cz):=(^{\bot}(\ci^{\bot}),\ci^{\bot},\ci^{\bot\bot})$ associated to $\ci$ in Theorem \ref{from ideals to devissage wrt hc}, and let $\cp$ be a `set' of Milnor colimits of sequences of morphisms of $\ci$ as in the proof of that theorem (in particular, closed under shifts). Recall that $\Tria(\cp)=\cx$. Put $\ci':=\cm or(\cd^c)^{\cy}$. Of course, $\ci\subseteq\ci'$. The aim is to prove the converse inclusion, which would imply that $\ci$ is closed thanks to Lemma \ref{basic properties Galois}. Let $f:Q'\ra Q$ be a morphism of $\ci'$ and consider the triangle
\[x\tau_{\cx}Q\arr{\delta}Q\arr{\eta}y\tau^{\cy}Q\ra x\tau_{\cx}Q[1]. 
\]
Since $\eta f=0$, then $f$ factors through $\delta$ via the following dotted arrow:
\[\xymatrix{ & Q'\ar[d]^{f}\ar@{.>}[dl]\ar[dr]^{0} & & \\
x\tau_{\cx}Q\ar[r]_{\delta} & Q\ar[r]_{\eta} & y\tau^{\cy}Q\ar[r] & x\tau_{\cx}Q[1]
}
\]
Theorem 4.3 of \cite{Keller1994a} says that we can assume that $\cd$ is the derived category $\cd\ca$ of a small dg category $\ca$. Also, Lemma \ref{good properties of s} enables us to assume that $Q'$ belongs to the set $\cs$ described in subsection \ref{Stretching a filtration}. Thanks to Theorem \ref{localizations via small object argument} we know that $x\tau_{\cx}Q$ can be taken to be, in the category of dg $\ca$-modules, the direct limit of a certain $\lambda$-sequence $X:\lambda\ra\cc\ca$ such that:
\begin{enumerate}[-]
\item $X_{0}=0$,
\item for all $\alpha<\lambda$, the morphism $X_{\alpha}\ra X_{\alpha+1}$ is an inflation with cokernel in $\cp$.
\end{enumerate}
Then Lemma \ref{good properties of s} implies that $f$ factors through a certain $X_{\alpha}$:
\[\xymatrix{ && Q'\ar[d]^{f}\ar@{.>}[dll]\ar[dr]^{0} & & \\
X_{\alpha}\ar[r]_{\can\ \ }&x\tau_{\cx}Q\ar[r]_{\delta} & Q\ar[r]_{\eta} & y\tau^{\cy}Q\ar[r] & x\tau_{\cx}Q[1]
}
\]
Let us prove, by transfinite induction on $\alpha$, that if a morphism of $\ci'$ factors through some $X_{\alpha}$ then it belongs to $\ci$. 

\emph{First step:} For $\alpha=0$ it is clear since $X_{0}=0$. 

\emph{Second step: Assume that every morphism of $\ci'$ factoring through $X_{\alpha}$ belongs to $\ci$.} Let $f$ be a morphism of $\ci'$ factoring through $X_{\alpha+1}$:
\[\xymatrix{Q'\ar[rr]^{f}\ar[dr]_{u} && Q \\
&X_{\alpha+1}\ar[ur]_{v}&
}
\]
In $\cd$ we have a triangle
\[X_{\alpha}\arr{a}X_{\alpha+1}\arr{b} P\arr{\gamma} X_{\alpha}[1]
\]
with $P\in\cp$, \ie $P$ is the Milnor colimit of a sequence
\[P_{0}\arr{g_{0}}P_{1}\arr{g_{1}}P_{2}\ra\dots
\]
of morphisms of $\ci$. By using Lemma \ref{good properties of s} we get a commutative diagram
\[\xymatrix{& Q'\ar[r]^{f}\ar[d]_{u}\ar@/_5pc/[dd]_{w} & Q & \\
X_{\alpha}\ar[r]^{a} & X_{\alpha+1}\ar[r]^{b}\ar[ur]^{v} & P\ar[r]^{\gamma} & X_{\alpha}[1] \\
& P_{t}\ar[ur]^{\pi_{t}}\ar[r]_{g_{t}} & P_{t+1}\ar[u]_{\pi_{t+1}}&
}
\]
where $\pi_{n}$ is the $n$th component of the morphism $\pi$ appearing in the definition of Milnor colimit. Consider the following commutative diagram in which the rows are triangles
\[\xymatrix{X_{\alpha}\ar[r]^{a_{t}}\ar@{=}[d] & M_{t}\ar[r]^{b_{t}}\ar[d]^{\psi} & P_{t}\ar[r]^{\gamma\pi_{t}}\ar[d]^{g_{t}} & X_{\alpha}[1]\ar@{=}[d] \\
X_{\alpha}\ar[r]^{a_{t+1}}\ar@{=}[d] & M_{t+1}\ar[r]^{b_{t+1}}\ar[d]^{\phi} & P_{t+1}\ar[r]^{\gamma\pi_{t+1}}\ar[d]^{\pi_{t+1}} & X_{\alpha}[1]\ar@{=}[d] \\
X_{\alpha}\ar[r]^{a} & X_{\alpha+1}\ar[r]^{b} & P\ar[r]^{\gamma} & X_{\alpha}[1]
}
\]
Since $\gamma\pi_{t}w=\gamma bu=0$, then $w=b_{t}w'$ for some morphism $w':Q'\ra M_{t}$. Therefore, $bu=\pi_{t}w=\pi_{t}b_{t}w'=b\phi\psi w'$, that is to say, $b(u-\phi\psi w')=0$ and so $u-\phi\psi w'=a\xi=\phi\psi a_{t}\xi$ for a certain morphism $\xi:Q'\ra X_{\alpha}$. Hence, $u=\phi\psi(w'+a_{t}\xi)$. Put $u':=\psi(w'+a_{t}\xi)$. We get the following commutative diagram
\[\xymatrix{& Q'\ar[r]^{f}\ar[d]_{u'} & Q & \\
X_{\alpha}\ar[r]_{a_{t+1}} & M_{t+1}\ar[r]_{b_{t+1}}\ar[ur]_{v\phi} & P_{t+1}\ar[r]_{\gamma\pi_{t+1}} & X_{\alpha}[1]
}
\]
Notice that $b_{t+1}u'=b_{t+1}\psi w'+b_{t+1}\psi a_{t}\xi=b_{t+1}\psi w'=g_{t}b_{t}w'$ belongs to $\ci$ since so does $g_{t}$. Apply the octahedron axiom
\[\xymatrix{Q'\ar[r]^{u'}\ar@{=}[d] & M_{t+1}\ar[d]^{b_{t+1}}\ar[r] & L\ar[r]\ar[d] & Q'[1]\ar@{=}[d] \\
Q'\ar[r] & P_{t+1}\ar[d]\ar[r] & N[1]\ar[d]^{n[1]}\ar[r]^{c[1]} & Q'[1]\ar[d]^{u'[1]} \\
& X_{\alpha}[1]\ar@{=}[r]\ar[d]_{-a_{t+1}[1]} & X_{\alpha}[1]\ar[d]\ar[r]_{-a_{t+1}[1]} & M_{t+1}[1] \\
& M_{t+1}[1]\ar[r] & L[1] &
}
\]
and consider the diagram
\[\xymatrix{N\ar[r]^{-c}&Q'\ar[r]^{b_{t+1}u'}\ar[d]^{f} & P_{t+1}\ar[r] & N[1] \\
&Q&&
}
\]
The morphism $-fc=-v\phi u'c=v\phi a_{t+1}n$ belongs to $\ci'$ (since so does $f$) and factors through $X_{\alpha}$. The hypothesis of induction implies that $-fc$ belongs to $\ci$. Since $b_{t+1}u'$ also belongs to $\ci$ and $\ci$ is saturated, then $f$ belongs to $\ci$.

\emph{Third step: Assume $\alpha$ is a limit ordinal and that every morphism of $\ci'$ factoring through $X_{\beta}$ with $\beta<\alpha$ belongs to $\ci$.} Then we have $X_{\alpha}=\lid_{\beta<\alpha}X_{\beta}$ and Lemma \ref{good properties of s} ensures that we have a factorization
\[\xymatrix{&& Q'\ar[d]^{f}\ar@{.>}[dl]\ar@{.>}[dll] \\
X_{\beta}\ar[r]_{\can\ \ \ \ } & \lid_{\beta<\alpha}X_{\beta}\ar[r] & Q
}
\]
The hypothesis of induction implies that $f$ belongs to $\ci$.
\end{proof}

\chapter{Restriction of triangulated TTF triples}\label{Restriction of TTF triples in triangulated categories}
\addcontentsline{lot}{chapter}{Cap\'itulo 6. Restricci\'on de ternas TTF trianguladas}

\section{Introduction}
\addcontentsline{lot}{section}{6.1. Introducci\'on}

\subsection{Motivations}
\addcontentsline{lot}{subsection}{6.1.1. Motivaci\'on}

The idea here is to make an `unbounded' approach to S.~K\"{o}nig's work \cite{Konig1991}. Namely, once the problem of characterizing triangulated TTF triples on compactly generated triangulated categories (in particular, unbounded derived categories of dg categories) was solved in Chapter \ref{TTF triples on triangulated categories}, we study the problem of descend.

\subsection{Outline of the chapter}
\addcontentsline{lot}{subsection}{6.1.2. Esbozo del cap\'itulo}

In section \ref{Restriction of TTF triples}, we characterize in several situations when a triangulated TTF triple on a full triangulated subcategory $\cd'$ of a triangulated category $\cd$ is the restriction of a triangulated TTF triple on $\cd$. Special attention is paid to the case in which $\cd'$ is a kind of `right bounded' triangulated subcategory of $\cd$. In this situation, we also characterize when a triangulated TTF triple on $\cd$ restricts to $\cd'$. In section \ref{Recollements of right bounded derived categories}, we definitely concentrate on the \emph{right bounded derived category} of a dg category. We use `unbounded' methods and the results of section \ref{Restriction of TTF triples} to characterize when such a right bounded derived category is a recollement of two others right bounded derived categories. The characterizations we get are similar to the one obtained by S.~K\"{o}nig in \cite{Konig1991}.

\section{Restriction of triangulated TTF triples}\label{Restriction of TTF triples}
\addcontentsline{lot}{section}{6.2. Restricci\'on de ternas TTF trianguladas}

\begin{definition}\label{definition of restriction}
Let $\cd$ be a triangulated category and let $\cd'$ be a full triangulated subcategory of $\cd$. We say that a triangulated TTF triple $(\cx,\cy,\cz)$ on $\cd$
\emph{restricts to}\index{TTF triple!triangulated!restricts} or \emph{is a lifting of}\index{TTF triple!triangulated!lifts} a triangulated TTF triple $(\cx',\cy',\cz')$ on $\cd'$ if we have
\[(\cx\cap\cd',\cy\cap\cd',\cz\cap\cd')=(\cx',\cy',\cz').
\]
In this case, we say that $(\cx',\cy',\cz')$ \emph{lifts to} or \emph{is the restriction of}  $(\cx,\cy,\cz)$.
\end{definition}

\begin{proposition}\label{restriction}
Let $\cd$ be a triangulated category with small coproducts and let $\cd'$ be a full triangulated subcategory which contains a set of generators of $\cd$. For a triangulated TTF triple $(\cx',\cy',\cz')$ on $\cd'$ the following assertions are equivalent:
\begin{enumerate}[1)]
\item $(\cx',\cy',\cz')$ is the restriction of a triangulated TTF triple on $\cd$.
\item There is a set $\cp$ of objects of $\cx'$ such that:
\begin{enumerate}[2.1)]
\item $\cp$ is recollement-defining in $\cd$.
\item If an object of $\cd$ is right orthogonal to all the shifts of objects of $\cp$, then it is right orthogonal to all the objects of $\cx'$.
\end{enumerate}
\item The objects of $\cx'$ form a recollement-defining class of $\cd$.
\end{enumerate}
\end{proposition}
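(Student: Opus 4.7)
\textbf{Proof plan for Proposition \ref{restriction}.} I will prove $(2) \Rightarrow (1) \Rightarrow (3) \Rightarrow (2)$, with the core observation being that the torsion and torsionfree functors of a restricted TTF triple are themselves restrictions of the ambient ones. Throughout, fix a set $\cg$ of generators of $\cd$ contained in $\cd'$, which exists by hypothesis.

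\emph{$(2) \Rightarrow (1)$.} Let $\cy := \{Y \in \cd : \cd(X[n],Y) = 0 \text{ for all } X \in \cx' \text{ and } n \in \Z\}$. The containment $\cp \subseteq \cx'$ together with condition 2.2 shows that $\cy$ coincides with the class of objects right orthogonal to all shifts of $\cp$, which by 2.1 is both an aisle and coaisle; hence $(^{\bot}\cy,\cy,\cy^{\bot})$ is a triangulated TTF triple on $\cd$. For $Y \in \cd'$ the condition $\cd(X[n],Y) = 0$ for all $X \in \cx'$ reduces to $\cd'(X[n],Y) = 0$, so $\cy \cap \cd' = \cx'^{\bot_{\cd'}} = \cy'$. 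To see $^{\bot}\cy \cap \cd' = \cx'$, note the non-trivial inclusion: given $M \in \cd' \cap {}^{\bot}\cy$, use the t-structure $(\cx',\cy')$ on $\cd'$ to build a triangle $M' \to M \to M'' \to M'[1]$ with $M' \in \cx'$ and $M'' \in \cy' \subseteq \cy$. Since $M \in {}^{\bot}\cy$, the morphism $M \to M''$ vanishes, whence $M''$ is a direct summand of $M'[1] \in \cx'$; thus $M'' \in \cx' \cap \cy' = 0$ and $M \cong M' \in \cx'$. The dual argument gives $\cy^{\bot} \cap \cd' = \cz'$, so the triple on $\cd$ restricts to $(\cx',\cy',\cz')$.

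\emph{$(1) \Rightarrow (3)$.} Suppose $(\cx',\cy',\cz')$ is the restriction of a triangulated TTF triple $(\cx,\cy,\cz)$ on $\cd$. I will show first that the torsion/torsionfree functors associated to $(\cx,\cy,\cz)$ restrict to those of $(\cx',\cy',\cz')$: for $M \in \cd'$ the triangle $x'\tau_{\cx'}M \to M \to y'\tau^{\cy'}M \to $ in $\cd'$ has outer vertices in $\cx' \subseteq \cx$ and $\cy' \subseteq \cy$, so by the uniqueness of t-structure triangles in $\cd$ (Lemma \ref{truncacion funtorial}) it is isomorphic to the corresponding triangle for $(\cx,\cy)$; hence $\tau_{\cx}M \cong \tau_{\cx'}M \in \cd'$, and similarly $\tau^{\cz}M \cong \tau^{\cz'}M \in \cd'$. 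Applying the composition $\tau_{\cx}z\tau^{\cz}$ to $\cg$ therefore produces a set $\cp := \tau_{\cx'}z'\tau^{\cz'}(\cg) \subseteq \cx' \cap \cd'$. By the proof of Proposition \ref{naive general parametrization}, $\cp$ is recollement-defining in $\cd$ and witnesses the TTF triple $(\cx,\cy,\cz)$; in particular $\cy = \{Y \in \cd : Y \perp \cp[n]\text{ for all } n\} \supseteq \{Y \in \cd : Y \perp \cx'[n] \text{ for all } n\} \supseteq \cy$ (the last inclusion is trivial from $\cp \subseteq \cx'$). Hence the class of objects of $\cx'$ has the same right orthogonal in $\cd$ as $\cp$, namely $\cy$, which is both an aisle and a coaisle. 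Thus (3) holds.

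\emph{$(3) \Rightarrow (2)$.} From (3), the argument of $(2) \Rightarrow (1)$ applied to the (possibly proper) class of all objects of $\cx'$ already produces a triangulated TTF triple on $\cd$ restricting to $(\cx',\cy',\cz')$, so (1) holds. Now the implication $(1) \Rightarrow (3)$ just proved actually furnishes an explicit set: the set $\cp = \tau_{\cx'}z'\tau^{\cz'}(\cg) \subseteq \cx'$ is recollement-defining in $\cd$, giving 2.1, and any object of $\cd$ right orthogonal to all shifts of $\cp$ lies in the ambient central class $\cy$, hence is right orthogonal to $\cx \supseteq \cx'$, giving 2.2.

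\emph{Expected main obstacle.} The subtle point throughout is the compatibility between the ambient and restricted torsion functors; once this is settled via Lemma \ref{truncacion funtorial}, every step reduces to a straightforward orthogonality check. The trick of recognizing $M''$ as a summand of $M'[1]$ from the vanishing of $M \to M''$ is what forces $M''$ into both $\cx'$ and $\cy'$ and thus drives the restriction argument in $(2) \Rightarrow (1)$.
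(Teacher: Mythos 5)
Your overall strategy coincides with the paper's (construct the ambient triple whose central class is the right orthogonal to the shifts of $\cx'$, and extract the set $\cp=\tau_{\cx'}z'\tau^{\cz'}(\cg)$ from a set of generators), but there is a genuine gap in your $(2)\Rightarrow(1)$ step, at the sentence ``the dual argument gives $\cy^{\bot}\cap\cd'=\cz'$.'' Only one of the two inclusions dualizes. The splitting trick does give $\cy^{\bot}\cap\cd'\subseteq\cz'$, exactly as in your argument for $^{\bot}\cy\cap\cd'\subseteq\cx'$. But the reverse inclusion $\cz'\subseteq\cy^{\bot}$ is not the dual of $\cx'\subseteq{}^{\bot}\cy$: the latter is immediate because $\cy$ is \emph{defined} by orthogonality against $\cx'$, whereas for the former you must show that every object of $\cz'$ is right orthogonal in $\cd$ to \emph{every} object of $\cy$, and such objects need not lie in $\cd'$, so the hypothesis $\cd'(\cy',\cz')=0$ gives no control over them. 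It is telling that your proof of $(2)\Rightarrow(1)$ never uses the set $\cg$ of generators you fixed at the outset: this is precisely where the hypothesis that $\cd'$ contains generators of $\cd$ is needed, and the same gap propagates into your $(3)\Rightarrow(2)$, which first derives $(1)$ from $(3)$ by the same argument applied to the class $\cx'$.

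The paper closes this point (in its implication $3)\Rightarrow 1)$) as follows. Having shown $\cy\cap\cd'=\cy'$ and $\cx\cap\cd'=\cx'$, one uses the uniqueness of truncation triangles (Lemma \ref{truncacion funtorial}) to see that $\tau^{\cy}(Q)\cong\tau^{\cy'}(Q)\in\cy'$ for each generator $Q\in\cg\subseteq\cd'$; by Lemma \ref{generators and t-structures}, $\tau^{\cy}(\cg)$ is then a class of generators of $\cy$ contained in $\cy'$, and part 2.2 of that lemma, applied to the t-structure $(\cy,\cz)$, yields $\cz=\tau^{\cy}(\cg)^{\bot}$. Hence
\[\cz'=\cy'^{\bot}\cap\cd'\subseteq\tau^{\cy}(\cg)^{\bot}\cap\cd'=\cz\cap\cd',
\]
which, combined with the easy inclusion $\cz\cap\cd'=\cy^{\bot}\cap\cd'\subseteq\cy'^{\bot}\cap\cd'=\cz'$, finishes the identification of the third class. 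Your $(1)\Rightarrow(3)$ is essentially the paper's $1)\Rightarrow 2)\Rightarrow 3)$ and is correct (up to a small misattribution: the inclusion $\{Y:\,Y\perp\cx'[n]\}\supseteq\cy$ follows from $\cx'\subseteq\cx$ and $\cd(\cx,\cy)=0$, not from $\cp\subseteq\cx'$, which justifies the middle inclusion instead); but the proposition as a whole is only proved once the generator argument above is inserted into your $(2)\Rightarrow(1)$ (and hence into the $(3)\Rightarrow(1)$ part of $(3)\Rightarrow(2)$).
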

\begin{proof}
$1)\Rightarrow 2)$ Let $(\cx,\cy,\cz)$ be a triangulated TTF triple on $\cd$ which restricts to $(\cx',\cy',\cz')$, and let $\cq$ be a set of generators of $\cd$ contained in $\cd'$. Notice that, for each object $Q\in\cq$, the torsion triangle associated to $(\cy,\cz)$ can be taken to be
\[\tau_{\cy'}(Q)\ra Q\ra\tau^{\cz'}(Q)\ra\tau_{\cy'}(Q)[1].
\]
Hence, by Lemma \ref{generators and t-structures}, we know that $\tau^{\cz'}(\cq)$ is a set of generators of $\cz$. Remind that the composition
\[\cz\arr{z}\cd\arr{\tau_{\cx}}\cx
\]
is a triangle equivalence (\cf Lemma \ref{x igual a z}), and so $\cp:=\tau_{\cx}(\tau^{\cz'}(\cq))$ is a set of generators of $\cx$. But, since $\tau^{\cz'}(\cq)$ is contained in $\cd'$, then $\cp$ is in fact $\cp=\tau_{\cx'}(\tau^{\cz'}(\cq))$, which is contained in $\cx'$. Now, Lemma \ref{generators and t-structures} implies that $\cy$ is the set of objects of $\cd$ which are right orthogonal to all the shifts of objects of $\cp$, and so $\cp$ is recollement-defining in $\cd$. Finally, the inclusions $\cy\subseteq\cx^{\bot}\subseteq\cx'^{\bot}$ prove 2.2).

$2)\Rightarrow 3)$ Part 2.2) implies that $\cx'^{\bot_{\cd}}$ is the class of all objects which are right orthogonal to all the shifts of objects of $\cp$, and so, thanks to part 2.1), it is both an aisle and a coaisle in $\cd$.

$3)\Rightarrow 1)$ Consider $(\cx,\cy,\cz):=(^{\bot}(\cx'^{\bot}),\cx'^{\bot},\cx'^{\bot\bot})$, with orthogonals taken in $\cd$, which is a triangulated TTF triple on $\cd$. Since $(\cx',\cy',\cz')$ is a triangulated TTF triple on $\cd'$, then we have $\cy'=\cx'^{\bot}\cap\cd'=\cy\cap\cd'$. We next prove $\cx'=\cx\cap\cd'$. The inclusion $\subseteq$ is clear. Conversely, let $X\in\cx\cap\cd'$ and consider the triangle
\[\tau_{\cx'}(X)\ra X\ra\tau^{\cy'}(X)\ra\tau_{\cx'}(X)[1].
\]
Its two terms on the left belong to $\cx$. Then $\tau^{\cy'}(X)\in \cx\cap\cy'\subseteq\cx\cap\cy=\{0\}$ and so $X\in\cx'$. Now, we have the following inclusions 
\[\cz\cap\cd'=\cy^{\bot}\cap\cd'\subseteq\cy'^{\bot}\cap\cd'=\cz'.
\] 
Finally, let $\cq$ be the set of generators of $\cd$ contained in $\cd'$. Lemma \ref{generators and t-structures} implies that $\tau^{\cy}(\cq)^{\bot}=\cz$. Also, notice that $\tau^{\cy}(\cq)\subseteq \cy\cap\cd'=\cy'$. Therefore,
\[\cz'=\cy'^{\bot}\cap\cd'\subseteq \tau^{\cy}(\cq)^{\bot}\cap\cd'=\cz\cap\cd'.
\]
\end{proof}

\begin{corollary}\label{bijection between restricts and are restricted}
Under the hypotheses of Proposition \ref{restriction}, the map
\[(\cx,\cy,\cz)\mapsto(\cx\cap\cd',\cy\cap\cd',\cz\cap\cd')
\]
defines a bijection between:
\begin{enumerate}[1)]
\item Triangulated TTF triples on $\cd$ which restricts to triangulated TTF triples on $\cd'$.
\item Triangulated TTF triples on $\cd'$ which are restriction of triangulated TTF triples on $\cd$.
\end{enumerate}
\end{corollary}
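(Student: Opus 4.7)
The plan is to show that the displayed map is both well-defined, surjective, and injective, using Proposition \ref{restriction} as the main input.

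First I would note that the map is well-defined on class 1) by the very definition of ``restricts'': if $(\cx,\cy,\cz)$ restricts to a triangulated TTF triple on $\cd'$, then by Definition \ref{definition of restriction} the triple $(\cx\cap\cd',\cy\cap\cd',\cz\cap\cd')$ is such a TTF triple on $\cd'$, and it is the restriction of $(\cx,\cy,\cz)$, so it lies in class 2). Surjectivity is then immediate: an element of class 2) is, by definition, of the form $(\cx\cap\cd',\cy\cap\cd',\cz\cap\cd')$ for some triangulated TTF triple $(\cx,\cy,\cz)$ on $\cd$, and that triple automatically lies in class 1) since it admits a restriction (namely, the given one).

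The only real content is injectivity, and this is where I would invoke Proposition \ref{restriction}. Suppose $(\cx_1,\cy_1,\cz_1)$ and $(\cx_2,\cy_2,\cz_2)$ are two triangulated TTF triples on $\cd$ whose restrictions to $\cd'$ agree, and call the common restriction $(\cx',\cy',\cz')$. Inspecting the proof of the implication $3)\Rightarrow 1)$ in Proposition \ref{restriction}, one sees that any triangulated TTF triple on $\cd$ which lifts $(\cx',\cy',\cz')$ is forced to equal
\[
\bigl({}^{\bot}(\cx'^{\bot}),\,\cx'^{\bot},\,\cx'^{\bot\bot}\bigr),
\]
with orthogonals taken in $\cd$; in other words, the lift is completely determined by $\cx'$ alone. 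Applying this to both $(\cx_i,\cy_i,\cz_i)$ gives $\cx_1=\cx_2$, $\cy_1=\cy_2$, $\cz_1=\cz_2$.

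There is no real obstacle in this argument; the work has already been done in Proposition \ref{restriction}, which on the one hand characterizes which TTF triples on $\cd'$ lift (giving surjectivity of the map onto class 2)) and on the other hand produces a canonical lift expressed solely in terms of $\cx'$ (giving injectivity). The corollary is essentially a repackaging of that proposition.
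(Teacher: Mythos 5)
Your overall structure is the same as the paper's: well-definedness and surjectivity are immediate from Definition \ref{definition of restriction}, and the real content is injectivity, which you prove by arguing that a lift is completely determined by its restriction. Your conclusion is correct, but the justification you give for that key claim does not hold up. The proof of $3)\Rightarrow 1)$ of Proposition \ref{restriction} only \emph{constructs} one particular lift, namely $({}^{\bot}(\cx'^{\bot}),\cx'^{\bot},\cx'^{\bot\bot})$, and checks that it restricts to $(\cx',\cy',\cz')$; it says nothing about an arbitrary lift, so ``inspecting'' that implication does not show that every lift is forced to coincide with this canonical triple.

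What actually needs proving is that any lift $(\cx,\cy,\cz)$ satisfies $\cy=\cx'^{\bot_{\cd}}$ (equivalently, is determined by $\cx'$). The inclusion $\cy\subseteq\cx'^{\bot_{\cd}}$ is trivial from $\cx'\subseteq\cx$, but the reverse inclusion is not automatic: it requires knowing that $\cx$ is generated by a set of objects lying inside $\cx'$, and this is precisely where the hypothesis that $\cd'$ contains a set of generators $\cq$ of $\cd$ is used. The proof of $1)\Rightarrow 2)$ of Proposition \ref{restriction} provides exactly this: it shows that $\cp:=\tau_{\cx'}(\tau^{\cz'}(\cq))\subseteq\cx'$ is a set of generators of $\cx$ and that $\cy$ is precisely the class of objects of $\cd$ right orthogonal to all shifts of objects of $\cp$ — this is the argument the paper itself invokes in the corollary. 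Since $\cp$ and its shifts lie in $\cx'$, this gives $\cx'^{\bot_{\cd}}\subseteq\cy$ and hence the uniqueness of the lift. So the fix is to replace your appeal to $3)\Rightarrow 1)$ by the generator argument from $1)\Rightarrow 2)$ (or to reproduce it); with that change your proof coincides with the paper's.
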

\begin{proof}
Of course, the map is surjective. Now, let $\cq\subseteq\cd'$ a set of generators of $\cd$ and let $(\cx,\cy,\cz)$ be a triangulated TTF triple such that $(\cx',\cy',\cz')=(\cx\cap\cd',\cy\cap\cd',\cz\cap\cd')$ is a triangulated TTF triple on $\cd'$. Then, the proof of Proposition \ref{restriction} shows that $\cy$ is precisely the class of objects of $\cd$ which are right orthogonal to all the shifts of objects of $\tau_{\cx'}(\tau^{\cz'}(\cq))$. This implies the injectivity.
\end{proof}

\begin{corollary}\label{restriction in the two cases}
Under the hypotheses of Proposition \ref{restriction}:
\begin{enumerate}[1)]
\item If $\cd$ is an aisled triangulated category, then a triangulated TTF triple $(\cx',\cy',\cz')$ on $\cd'$ is the restriction of a triangulated TTF triple on $\cd$ if and only if there exists a set $\cp$ of objects of $\cx'$ such that:
\begin{enumerate}[1.1)]
\item The class $\cy$ of objects of $\cd$ which are right orthogonal to all the shifts of objects of $\cp$ is closed under small coproducts.
\end{enumerate}
\begin{enumerate}[1.2)]
\item $\cy$ is contained in $\cx'^{\bot_{\cd}}$.
\end{enumerate}
\item If $\cd$ is compactly generated, then every triangulated TTF triple on $\cd^c$ is the restriction of a  triangulated TTF triple on $\cd$.
\end{enumerate}
\end{corollary}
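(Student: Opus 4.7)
The plan is to deduce both assertions directly from Proposition \ref{restriction}, using Lemma \ref{rds in aisled} for the aisled case and Remark \ref{superperfect are recollement defining} for the compactly generated case.

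For assertion 1), I would invoke the equivalence $1)\Leftrightarrow 2)$ of Proposition \ref{restriction}. The hypothesis that $\cd$ is aisled and contains a set of generators of $\cd'$ (in fact, of $\cd$) is verified since $\cd'\subseteq\cd$. Now, in an aisled triangulated category, Lemma \ref{rds in aisled} characterizes the recollement-defining sets $\cp$ as precisely those for which the class $\cy$ of objects right orthogonal to all shifts of objects of $\cp$ is closed under small coproducts. Substituting this into condition 2.1) of Proposition \ref{restriction} yields exactly condition 1.1) of the corollary, while condition 2.2) of Proposition \ref{restriction} is verbatim condition 1.2) here. So the two characterizations match.

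For assertion 2), I would verify condition 3) of Proposition \ref{restriction} for any triangulated TTF triple $(\cx',\cy',\cz')$ on $\cd^c$. First, since $\cd$ is compactly generated it has a set of compact generators, which lies in $\cd^c$; hence $\cd^c$ contains a set of generators of $\cd$, so the hypotheses of Proposition \ref{restriction} are satisfied with $\cd'=\cd^c$. By Keller's theorem \cite[Theorem 5.3]{Keller1994a} the category $\cd^c$ is skeletally small, so I may choose a small skeleton $\cp$ of $\cx'$; its objects are compact in $\cd$. A compactly generated triangulated category is in particular perfectly generated, so Remark \ref{superperfect are recollement defining} applies and $\cp$ is recollement-defining in $\cd$. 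Since $\cp$ is a skeleton of the (shift-closed) class $\cx'$, an object of $\cd$ is right orthogonal to all shifts of objects of $\cp$ if and only if it is right orthogonal to all objects of $\cx'$; equivalently, the class $\cx'$ itself is recollement-defining in $\cd$, which is condition 3) of Proposition \ref{restriction}.

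The only delicate bookkeeping is the passage from a class to a set via the skeleton: one needs the skeletal smallness of $\cd^c$ to convert the recollement-defining property of the class $\cx'$ into the recollement-defining property of a set, so that Remark \ref{superperfect are recollement defining} can be applied. Once this is settled, both assertions follow immediately from Proposition \ref{restriction} without any further construction.
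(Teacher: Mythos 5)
Your proposal is correct and follows essentially the paper's own argument: part 1) is Proposition \ref{restriction} combined with Lemma \ref{rds in aisled}, and part 2) takes a skeleton of $\cx'$ (using skeletal smallness of $\cd^c$) and applies Remark \ref{superperfect are recollement defining}. The only cosmetic difference is that you verify condition 3) of Proposition \ref{restriction} for part 2), whereas the paper checks conditions 2.1) and 2.2) directly for the skeleton; the content is the same.
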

\begin{proof}
1) If $(\cx',\cy',\cz')$ admits a lifting, then we know there exists such a set $\cp$ thanks to Proposition \ref{restriction}. Conversely, assume the existence of a set $\cp$ satisfying 1.1) and 1.2) is guaranteed. According to Proposition \ref{restriction} we just have to prove that $\cp$ is recollement-defining, which holds thanks to Lemma \ref{rds in aisled}.

2) Let $(\cx',\cy',\cz')$ be a  triangulated TTF triple on $\cd^c$. A skeleton $\cp$ of $\cx'$ clearly satisfies condition 2.2) of Proposition \ref{restriction} and, thanks to Remark \ref{superperfect are recollement defining}, we know that it also satisfies condition 2.1). 
\end{proof}

Recall (\cf Lemma \ref{susp is aisle} and Example \ref{M. Jose}) that if $\cq$ is a set of perfect objects of a triangulated category $\cd$ with small coproducts, the existence of the smallest aisle in $\cd$ containing $\cq$, denoted by $\aisle(\cq)$, is guaranteed.

\begin{proposition}
Let $\cd$ be a triangulated category with small coproducts and perfectly generated by a set $\cq$ such that the coaisle $\aisle(\cq)^{\bot}$ is closed under small coproducts. Put $\cd':=\bigcup_{n\in\Z}\aisle(\cq)[n]$ and let $(\cx',\cy',\cz')$ be a  triangulated TTF triple on $\cd'$. The following assertions are equivalent:
\begin{enumerate}[1)]
\item $(\cx',\cy',\cz')$ is the restriction of a  triangulated TTF triple on $\cd$.
\item $\cx'$ is exhaustively generated to the left by a set $\cp$ of objects such that $\Tria_{\cd}(\cp)$ is an aisle in $\cd$ whose associated coaisle is closed under small coproducts.
\end{enumerate}
\end{proposition}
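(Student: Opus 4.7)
The plan is to handle each direction separately, using the explicit set $\cp := \tau_{\cx}(\cq)$ for $1) \Rightarrow 2)$ and appealing to Proposition \ref{restriction} for the reverse direction.

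For $1) \Rightarrow 2)$, assume $(\cx, \cy, \cz)$ is a triangulated TTF triple on $\cd$ that restricts to $(\cx', \cy', \cz')$, and set $\cp := \tau_{\cx}(\cq)$. I would first show $\cp \subseteq \cx'$ via uniqueness of truncation triangles: for $Q \in \cq \subseteq \aisle(\cq) \subseteq \cd'$, the $\cd$-truncation triangle $\tau_{\cx}(Q) \to Q \to \tau^{\cy}(Q) \to \tau_{\cx}(Q)[1]$ also realizes the $(\cx', \cy')$-truncation in $\cd'$, so $\tau_{\cx}(Q) \in \cx'$. The exhaustive generation of $\cx'$ to the left by $\cp$ then follows by functoriality from the exhaustive generation of $\cd'$ by $\cq$ (which holds by Example \ref{M. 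Jose}, since $\cq$ consists of perfect objects and thus $\aisle(\cq) = \Susp(\cq)$): for $M \in \cx'$, applying $\tau_{\cx}$ to the triangle $\coprod Q_n \to \coprod Q_n \to M[i] \to (\coprod Q_n)[1]$ produces the required triangle, because $\tau_{\cx}$ preserves triangles, shifts, and small coproducts (the latter since $\cy$, being an aisle, is closed under small coproducts), and $\tau_{\cx}(M[i]) \cong M[i]$ as $M \in \cx$.

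The technical heart is then the identity $\Tria_{\cd}(\cp) = \cx$. The adjunction isomorphism $\cd(Q, N) \cong \cx(\tau_{\cx}(Q), N)$ for $N \in \cx$ has two consequences: first, each $\tau_{\cx}(Q)$ is perfect in $\cx$, and thus also in $\Tria_{\cd}(\cp)$ (whose coproducts are inherited from $\cx$), so Theorem \ref{Krause on perfects} ensures $\Tria_{\cd}(\cp)$ is an aisle in $\cd$; second, $\cp$ generates $\cx$, since any $Y \in \cx$ right orthogonal to all shifts of $\cp$ must satisfy $\cd(Q[n], Y) = 0$ for all $Q, n$ and hence vanish, as $\cq$ generates $\cd$. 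Applying Lemma \ref{generators and t-structures}(1) with ambient $\cd$ and subcategory $\cx$ yields $\Tria_{\cd}(\cp) = \cx$, and its coaisle $\cy$ is closed under small coproducts because $\cy$ is also an aisle in $\cd$.

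For $2) \Rightarrow 1)$, I would set $\cx := \Tria_{\cd}(\cp)$; this is a smashing subcategory of $\cd$ by hypothesis, so Proposition \ref{smashing are TTF} produces a triangulated TTF triple $(\cx, \cy, \cz)$ on $\cd$. To see that it restricts to $(\cx', \cy', \cz')$, I would apply Proposition \ref{restriction} to the same set $\cp$: condition 2.1 (being recollement-defining) holds because $\Tria_{\cd}(\cp)$ is smashing, and condition 2.2 reduces to $\Tria_{\cd}(\cp)^{\bot} \subseteq \cx'^{\bot_{\cd}}$, which follows from the exhaustive generation of $\cx'$ by $\cp$, since that places $\cx'$ inside $\Tria_{\cd}(\cp)$. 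The lifted TTF triple produced is identified with $(\cx, \cy, \cz)$ by inspecting the construction in the proof of Proposition \ref{restriction}. The main technical obstacle will be juggling the three properties of $\cp = \tau_{\cx}(\cq)$ required in direction $1) \Rightarrow 2)$ simultaneously, all of which ultimately rest on the adjunction $\tau_{\cx} \dashv x$ together with the fact that in a triangulated TTF triple all three classes $\cx$, $\cy$, $\cz$ are closed under small coproducts.
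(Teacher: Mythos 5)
Your direction $2)\Rightarrow 1)$ is essentially the paper's argument (via Proposition \ref{restriction} and Lemma \ref{recollement defining in perfectly generated}), and your choice of set in $1)\Rightarrow 2)$ is, up to isomorphism, the paper's. But the central step of $1)\Rightarrow 2)$ rests on a false statement: the "adjunction isomorphism" $\cd(Q,N)\cong\cx(\tau_{\cx}(Q),N)$ for $N\in\cx$ is not the adjunction and fails in general. Since $\tau_{\cx}$ is the \emph{right} adjoint of the inclusion $x:\cx\ra\cd$, the adjunction reads $\cd(xN,M)\cong\cx(N,\tau_{\cx}M)$: it controls morphisms \emph{into} $\tau_{\cx}Q$ from objects of $\cx$, not morphisms \emph{out of} it. Applying $\cd(?,N)$ to the truncation triangle $x\tau_{\cx}Q\ra Q\ra y\tau^{\cy}Q\ra (x\tau_{\cx}Q)[1]$ shows that $\cd(Q,N)\ra\cd(x\tau_{\cx}Q,N)$ is an isomorphism only when the groups $\cd(y\tau^{\cy}Q[j],N)$ vanish, i.e. essentially in split situations. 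Both consequences you derive from it are therefore unsupported. The generation claim ("$\cp$ generates $\cx$") is in fact true, but the correct route is the paper's detour through $\cz$: $\tau^{\cz'}(\cq)$ generates $\cz'$ because $\tau^{\cz'}$ is a \emph{left} adjoint (Lemma \ref{generators and t-structures}), and the triangle equivalence $\tau_{\cx'}z':\cz'\arr{\sim}\cx'$ of Lemma \ref{x igual a z} transports it to $\cx'$; equivalently, since $\tau_{\cx}$ kills $\cy$ one has $\tau_{\cx}\cong\tau_{\cx}z\tau^{\cz}$, so your $\cp=\tau_{\cx}(\cq)$ agrees with the paper's $(\tau_{\cx'}z'\tau^{\cz'})(\cq)$ --- but this identification is exactly the argument you would have to supply and did not.

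The perfectness claim is worse: there is no reason for $\tau_{\cx}(Q)$ to be perfect in $\cx$. Lemma \ref{truncating special objects and from local to global via smashing} transfers perfectness along the left adjoint truncations (and for $\tau^{\cz}$ it would require $\cz$ to be closed under small coproducts, which is not among the hypotheses), never along the right adjoint $\tau_{\cx}$; so your appeal to Theorem \ref{Krause on perfects} to obtain that $\Tria_{\cd}(\cp)$ is an aisle is not available. Fortunately it is also unnecessary: once one knows that $\cp$ generates $\cx$, Lemma \ref{generators and t-structures} identifies the class of objects right orthogonal to all shifts of $\cp$ with $\cy$, which is simultaneously an aisle and a coaisle because $(\cx,\cy,\cz)$ is a triangulated TTF triple; hence $\cp$ is recollement-defining and $\Tria_{\cd}(\cp)$ is an aisle whose coaisle $\cy$ is closed under small coproducts --- this is how the paper argues. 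Your treatment of exhaustive generation and of $2)\Rightarrow 1)$ otherwise matches the paper, except that you should make explicit the routine compatibility that the coproducts occurring in the exhaustive-generation triangles of $\cx'$ coincide with those computed in $\cd$ (Lemma \ref{preservation of coproducts and compacity}), which is what lets you place $\cx'$ inside $\Tria_{\cd}(\cp)$ and apply $\tau_{\cx}$ termwise.
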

\begin{proof}
$2)\Rightarrow 1)$ We will apply condition 2) of Proposition \ref{restriction} to the set $\cp$. Since $\Tria_{\cd}(\cp)$ is an aisle in $\cd$, $\Tria_{\cd}(\cp)^{\bot_{\cd}}$ is closed under small coproducts and $\cd$ is perfectly generated, then $\cp$ is recollement-defining in $\cd$ thanks to Lemma \ref{recollement defining in perfectly generated}. This proves condition 2.1). Let us prove condition 2.2). First notice that, thanks to Lemma \ref{preservation of coproducts and compacity}, we know that the inclusion functor $\cx'\hookrightarrow\cd$ preserves small coproducts for it is the composition of the coproduct-preserving inclusions $\cx'\hookrightarrow\cd'\hookrightarrow\cd$. Now, for every object $X\in\cx'$ there exists an integer $i\in\Z$ such that $X$ fits into a triangle of $\cx'$ (and so of $\cd$)
\[\coprod_{n\geq 0}P_{n}[i]\ra \coprod_{n\geq 0}P_{n}[i]\ra X\ra \coprod_{n\geq 0}P_{n}[i+1]
\]
with $P_{n}\in\Sum(\cp^+)^{*n}$ for each $n\geq 0$. Let $M$ be an object of $\cd$ which is right orthogonal to all the shifts of objects of $\cp$. Then, we get a long exact sequence
\[\dots\ra\cd(\coprod_{n\geq 0}P_{n}[i+1],M)\ra\cd(X,M)\ra\cd(\coprod_{n\geq 0}P_{n}[i],M)\ra\dots
\]
in which
\[\cd(\coprod_{n\geq 0}P_{n}[i+1],M)=\cd(\coprod_{n\geq 0}P_{n}[i],M)=0,
\]
and so $\cd(X,M)=0$.

$1)\Rightarrow 2)$ In the proof of Proposition \ref{restriction}, it was shown that $\cp:=(\tau_{\cx'}z'\tau^{\cz'})(\cq)$ is recollement-defining in $\cd$, and so $\Tria_{\cd}(\cp)$ is an aisle in $\cd$ whose associated coaisle is closed under small coproducts. It remains to prove that $\cx'$ is exhaustively generated to the left by $\cp$. Given $X\in\cx'$, there exists an integer $i\in\Z$ such that $X[i]\in\aisle_{\cd}(\cq)$, that is to say, $X[i]\in\Susp_{\cd}(\cq)$ (\cf Lemma \ref{susp is aisle} together with Example \ref{M. Jose}). Hence, thanks to Example \ref{M. Jose}, $X[i]$ occurs in a triangle
\[\coprod_{n\geq 0}Q_{n}\ra\coprod_{n\geq 0}Q_{n}\ra X[i]\ra\coprod_{n\geq 0}Q_{n}[1]
\]
with $Q_{n}\in\Sum(\cq^+)^{*n}$ for each $n\geq 0$. Notice that the above triangle lives in $\cd'$, and so it makes sense to apply the triangle functor $\tau_{\cx'}z'\tau^{\cz'}$. Since this functor is isomorphic to $\tau_{\cx'}$, it preserves small coproducts and we get a triangle
\[\coprod_{n\geq 0}(\tau_{\cx'}z'\tau^{\cz'})(Q_{n})\ra \coprod_{n\geq 0}(\tau_{\cx'}z'\tau^{\cz'})(Q_{n})\ra X[i]\ra (\coprod_{n\geq 0}(\tau_{\cx'}z'\tau^{\cz'})(Q_{n}))[1]
\]
in $\cx'$ with $\tau_{\cx'}(\tau^{\cz'}Q_{n})\in\Sum(\cp^+)^{*n}$ for each $n\geq 0$.
\end{proof}

As a consequence we deduce that, sometimes, an intrinsic property of a triangulated TTF triple on $\cd'$, which is independent of the ambient category $\cd$, already guarantees that it lifts to a triangulated TTF triple on $\cd$. Indeed,

\begin{corollary}\label{easy restriction}
Let $\cd$ be a triangulated category with small coproducts and perfectly generated by a set $\cq$ such that the coaisle $\aisle(\cq)^{\bot}$ is closed under small coproducts. Put $\cd':=\bigcup_{n\in\Z}\aisle(\cq)[n]$ and let $(\cx',\cy',\cz')$ be a triangulated TTF triple on $\cd'$ such that $\cx'$ is exhaustively generated to the left by a set $\cp$ whose objects are superperfect in $\cx'$. Then, $(\cx',\cy',\cz')$ is the restriction of a triangulated TTF triple on $\cd$.
\end{corollary}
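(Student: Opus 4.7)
The plan is to apply the preceding proposition: since the exhaustive generation of $\cx'$ to the left by $\cp$ is given, it suffices to verify that $\Tria_{\cd}(\cp)$ is an aisle in $\cd$ whose associated coaisle is closed under small coproducts. Since $\cd$ is perfectly generated, by Lemma \ref{recollement defining in perfectly generated} (together with Remark \ref{superperfect are recollement defining}) it is enough to prove that the objects of $\cp$ are superperfect in the whole category $\cd$, not only in $\cx'$.

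First I would promote superperfectness from $\cx'$ to $\cd'$. Since $(\cx',\cy',\cz')$ is a triangulated TTF triple on $\cd'$, the coaisle $\cy'$ of the t-structure $(\cx',\cy')$ on $\cd'$ is itself an aisle (coming from the t-structure $(\cy',\cz')$), and hence its inclusion into $\cd'$ preserves small coproducts. Lemma \ref{truncating special objects and from local to global via smashing}(2) then applies to the t-structure $(\cx',\cy')$ on $\cd'$ and yields that every $P\in\cp$ is superperfect in $\cd'$.

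Next I would promote superperfectness from $\cd'$ to $\cd$. Since $\cq$ consists of perfect objects of $\cd$, Example \ref{M. Jose} together with Lemma \ref{susp is aisle} gives $\aisle(\cq)=\Susp(\cq)$, so $\cd'=\bigcup_{n\in\Z}\Susp(\cq)[n]$ fits exactly the setup of Lemma \ref{preservation of coproducts and compacity}. Using the hypothesis that $\aisle(\cq)^{\bot}=\Susp(\cq)^{\bot}$ is closed under small coproducts, part (2) of that lemma states that a $P\in\cd'$ is superperfect in $\cd'$ if and only if it is superperfect in $\cd$. Combined with the previous step, this shows that every $P\in\cp$ is superperfect in $\cd$.

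Finally, Remark \ref{superperfect are recollement defining} guarantees that the set $\cp$, consisting of superperfect objects of the perfectly generated triangulated category $\cd$, is recollement-defining in $\cd$; in particular $\Tria_{\cd}(\cp)$ is an aisle whose coaisle is closed under small coproducts. Hence condition 2) of the preceding proposition is satisfied, and therefore $(\cx',\cy',\cz')$ is the restriction of a triangulated TTF triple on $\cd$. The main technical hurdle, as the outline makes clear, is the transfer of superperfectness through the two inclusions $\cx'\hookrightarrow\cd'\hookrightarrow\cd$, which is precisely the point at which the assumption that $\aisle(\cq)^{\bot}$ is closed under small coproducts is used in an essential way.
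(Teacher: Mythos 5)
Your proposal is correct and follows essentially the same route as the paper: promote superperfectness from $\cx'$ to $\cd'$ via (the argument of) Lemma \ref{truncating special objects and from local to global via smashing}, then from $\cd'$ to $\cd$ via Lemma \ref{preservation of coproducts and compacity} (using that $\aisle(\cq)=\Susp(\cq)$ since $\cq$ consists of perfect objects), and conclude with Remark \ref{superperfect are recollement defining} and condition 2) of the preceding proposition. The only cosmetic difference is that you invoke the lemma on truncating special objects as a statement (checking its hypotheses via the right adjoint $\tau_{\cy'}$), whereas the paper appeals to the techniques of its proof.
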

\begin{proof}
By using the techniques of the proof of Lemma \ref{truncating special objects and from local to global via smashing}, we prove that the objects of $\cp$ are superperfect in $\cd'$. Now, Lemma \ref{preservation of coproducts and compacity} implies that they are also superperfect in $\cd$, and thanks to Theorem \ref{Krause on perfects} and Remark \ref{superperfect are recollement defining} we know that $\Tria_{\cd}(\cp)$ is an aisle in $\cd$ whose coaisle is closed under small coproducts.
\end{proof}

The following lemma is well-known (\cf \cite[paragraph 1.3.19]{BeilinsonBernsteinDeligne}):

\begin{lemma}\label{general restriction of t-structures}
Let $(\cu,\cv[1])$ be a t-structure on a triangulated category $\cd$, and let $\cd'$ be a strictly full triangulated subcategory of $\cd$. The following assertions are equivalent:
\begin{enumerate}[1)]
\item $(\cd'\cap\cu,\cd'\cap\cv[1])$ is a t-structure on $\cd'$.
\item $(u\tau_{\cu})(\cd')\subseteq\cd'$.
\end{enumerate}
\end{lemma}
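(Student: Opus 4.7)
The plan is to prove the two implications directly, leaning on the standard fact that truncation triangles with respect to a t-structure are unique up to canonical isomorphism (this is the content of Lemma \ref{truncacion funtorial}, applied with the orthogonality conditions coming from $\cd(\cu, \cv[1]) = 0$).

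For $1) \Rightarrow 2)$, I would fix $M \in \cd'$. By hypothesis, $(\cd' \cap \cu, \cd' \cap \cv[1])$ is a t-structure on $\cd'$, so there exists a triangle
\[ X \to M \to Y \to X[1] \]
in $\cd'$ with $X \in \cd' \cap \cu$ and $Y \in \cd' \cap \cv[1]$. Viewed in $\cd$, this is a triangle with first vertex in $\cu$ and third vertex in $\cv[1]$, so by the uniqueness of the truncation triangle associated to $(\cu, \cv[1])$ there is a (unique) isomorphism $X \xrightarrow{\sim} u\tau_{\cu}(M)$. Since $X \in \cd'$ and $\cd'$ is strictly full, $u\tau_{\cu}(M) \in \cd'$.

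For $2) \Rightarrow 1)$, I would take any $M \in \cd'$ and look at its truncation triangle in $\cd$,
\[ u\tau_{\cu}(M) \to M \to v\tau^{\cv}(M) \to u\tau_{\cu}(M)[1]. \]
By hypothesis the first vertex lies in $\cd'$, and $M$ lies in $\cd'$ by assumption; since $\cd'$ is a strictly full triangulated subcategory (closed under extensions and shifts), the third vertex $v\tau^{\cv}(M)$ also lies in $\cd'$. Thus we have produced the required truncation triangle with first term in $\cd' \cap \cu$ and third term in $\cd' \cap \cv[1]$. The remaining axioms of a t-structure on $\cd'$ are immediate: the Hom-vanishing $\cd'(X,Y)=0$ for $X \in \cd' \cap \cu$ and $Y \in \cd' \cap \cv[1]$ is inherited from $\cd$, and the stability under positive (resp. negative) shifts of $\cd' \cap \cu$ (resp. $\cd' \cap \cv[1]$) follows from the stability properties of $\cu$ and $\cv[1]$ together with the fact that $\cd'$ is closed under shifts.

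The argument is essentially formal; the only non-trivial ingredient is the uniqueness part of Lemma \ref{truncacion funtorial}, which is what forces the truncation produced abstractly inside $\cd'$ to coincide with the one given by the global truncation functor. There is no real obstacle to carry out.
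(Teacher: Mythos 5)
Your proof is correct and follows essentially the same route as the paper: for $1)\Rightarrow 2)$ both arguments invoke the uniqueness statement of Lemma \ref{truncacion funtorial} to identify the truncation triangle produced inside $\cd'$ with the global one, and for $2)\Rightarrow 1)$ both use the global truncation triangle together with closure of the strictly full triangulated subcategory $\cd'$ under cones, the remaining t-structure axioms being inherited. Only note a harmless indexing slip: under the paper's conventions the third vertex of the truncation triangle for the t-structure $(\cu,\cv[1])$ lies in $\cv$ (not $\cv[1]$), and the orthogonality feeding Lemma \ref{truncacion funtorial} is $\cd(\cu,\cv)=0$ together with $\cv[-1]\subseteq\cv$, rather than $\cd(\cu,\cv[1])=0$, which fails in general.
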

\begin{proof}
$1)\Rightarrow 2)$ Put $(\cu',\cv'):=(\cd'\cap\cu, \cd'\cap\cv)$, and let $\iota:\cd'\hookrightarrow\cd$ be the inclusion functor. For each object $N$ of $\cd'$ we have a triangle in $\cd$
\[u\tau_{\cu}N\ra N\ra v\tau^{\cv}N\ra (u\tau_{\cu}N)[1],
\]
which is isomorphic to
\[\iota u'\tau_{\cu'}N\ra N\ra \iota v'\tau^{\cv'}N\ra (\iota u'\tau_{\cu'}N)[1]
\]
thanks to Lemma \ref{truncacion funtorial}. Therefore $u\tau_{\cu}N$ belongs to $\cd'$.

$2)\Rightarrow 1)$ Given an object $N$ of $\cd'$, in the triangle
\[u\tau_{\cu}N\ra N\ra v\tau^{\cv}N\ra (u\tau_{\cu}N)[1]
\]
we have that both $u\tau_{\cu}N$ and $N$ belongs to $\cd'$. Thus, $v\tau^{\cv}N$ belongs to $\cd'$.
\end{proof}

We present now a very particular situation in which condition 2) of Lemma \ref{general restriction of t-structures} can be improved.

\begin{proposition}\label{particular restriction of t-structures}
Let $\cd$ be a triangulated category with small coproducts and perfectly generated by a set $\cq$ such that the coaisle $\aisle(\cq)^{\bot}$ is closed under small coproducts. Put $\cd':=\bigcup_{n\in\Z}\aisle(\cq)[n]$ and let $(\cu,\cv)$ be a t-structure on $\cd$ such that $\cu$ is triangulated and $\cv$ is closed under coproducts. The following assertions are equivalent:
\begin{enumerate}[1)]
\item $(\cd'\cap\cu,\cd'\cap\cv)$ is a t-structure on $\cd'$.
\item $(u\tau_{\cu})(\cq)\subseteq\aisle(\cq)[n]$ for some integer $n$.
\end{enumerate}
\end{proposition}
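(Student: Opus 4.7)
The plan is to apply Lemma~\ref{general restriction of t-structures}, which reformulates $1)$ as the containment $(u\tau_{\cu})(\cd')\subseteq\cd'$. Two common ingredients will be used in both directions. First, because $\cv$ is closed under small coproducts the t-structure $(\cu,\cv)$ is smashing, so the right adjoint $\tau_{\cu}$ preserves small coproducts. Second, because $\cu$ is triangulated, $u\tau_{\cu}$ is a triangle functor. I will also need the standard fact that every aisle $\cu_{0}$ of a triangulated category is closed under direct summands: if $A\oplus B\in\cu_{0}$, then the left adjoint to the inclusion of the associated coaisle (which always preserves coproducts) sends it to $0$, forcing each summand to lie in $\cu_{0}$. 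In particular $\aisle(\cq)[n]$ is closed under direct summands for every $n\in\Z$.

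For $2)\Rightarrow 1)$, fix $n$ with $(u\tau_{\cu})(\cq)\subseteq\aisle(\cq)[n]$. I would first prove by induction on $i\geq 0$ that $(u\tau_{\cu})(\Sum(\cq^{+})^{*i})\subseteq\aisle(\cq)[n]$. The base case $i=0$ follows because for every $Q\in\cq$ and $k\geq 0$ we have $(u\tau_{\cu})(Q[k])=(u\tau_{\cu}Q)[k]\in\aisle(\cq)[n+k]\subseteq\aisle(\cq)[n]$, and $\aisle(\cq)[n]$ is closed under small coproducts; the inductive step is immediate from closure of $\aisle(\cq)[n]$ under extensions together with the triangle-functor property of $u\tau_{\cu}$. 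Next, Example~\ref{M. Jose} writes every $M\in\aisle(\cq)=\Susp(\cq)$ as the Milnor colimit of a sequence whose terms $P_{i}$ lie in $\Sum(\cq^{+})^{*i}$; applying the coproduct-preserving triangle functor $u\tau_{\cu}$ to the associated Milnor triangle and using closure of $\aisle(\cq)[n]$ under small coproducts, positive shifts, and extensions places $(u\tau_{\cu})(M)$ in $\aisle(\cq)[n]$. A final shift argument extends this to $(u\tau_{\cu})(\cd')\subseteq\cd'$.

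For $1)\Rightarrow 2)$, I would form the single object $Q^{*}:=\coprod_{Q\in\cq}Q$, which lies in $\aisle(\cq)\subseteq\cd'$ because aisles are closed under small coproducts. By hypothesis $(u\tau_{\cu})(Q^{*})\in\cd'$, and coproduct-preservation of $\tau_{\cu}$ gives $(u\tau_{\cu})(Q^{*})=\coprod_{Q\in\cq}(u\tau_{\cu})(Q)$. Since $\cd'=\bigcup_{m\in\Z}\aisle(\cq)[m]$, some integer $n$ satisfies $\coprod_{Q}(u\tau_{\cu})(Q)\in\aisle(\cq)[n]$; closure of $\aisle(\cq)[n]$ under direct summands then yields $(u\tau_{\cu})(Q)\in\aisle(\cq)[n]$ for every $Q\in\cq$.

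The only real obstacle lies in the direction $2)\Rightarrow 1)$, where one must propagate the uniform bound $\aisle(\cq)[n]$ through the entire exhaustive generation of $\Susp(\cq)$ given by Example~\ref{M. Jose}; the smashing condition on $\cv$ is exactly what enables $\tau_{\cu}$ to commute with the small coproducts appearing in Milnor triangles and in $\Sum(\cq^{+})$. Once aisles are known to be closed under direct summands, the converse $1)\Rightarrow 2)$ reduces to packaging the set $\cq$ into a single coproduct.
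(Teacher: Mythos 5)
Your proof is correct and follows essentially the same route as the paper: both directions reduce via Lemma~\ref{general restriction of t-structures} to the containment $(u\tau_{\cu})(\cd')\subseteq\cd'$, with $1)\Rightarrow 2)$ handled by applying $u\tau_{\cu}$ to $\coprod_{Q\in\cq}Q$ and using closure of $\aisle(\cq)[n]$ under direct summands, and $2)\Rightarrow 1)$ by pushing the Milnor-colimit description of Example~\ref{M. Jose} through the coproduct-preserving triangle functor $u\tau_{\cu}$. Your explicit induction over $\Sum(\cq^{+})^{*i}$ merely spells out a step the paper leaves implicit, and your summand-closure argument via the torsionfree functor is a harmless variant of the paper's Milnor-colimit justification.
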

\begin{proof}
$1)\Rightarrow 2)$ Thanks to Lemma \ref{general restriction of t-structures}, it suffices to prove that $(u\tau_{\cu})(\cd')\subseteq\cd'$ implies condition 2) of the proposition. Since $N:=\coprod_{Q\in\cq}Q$ belongs to $\aisle(\cq)$, there exists an integer $n$ such that $u\tau_{\cu}N$ belongs to $\aisle(\cq)[n]$. Now notice that for each $Q\in\cq$ we have that $u\tau_{\cu}Q$ is a direct summand of $u\tau_{\cu}N$. But since $\aisle(\cq)[n]$ is closed under Milnor colimits in $\cd$ then it is also closed under direct summands. This implies that $u\tau_{\cu}Q$ belongs to $\aisle(\cq)[n]$.

$2)\Rightarrow 1)$ Thanks to Lemma \ref{general restriction of t-structures}, it suffices to prove the inclusion $(u\tau_{\cu})(\cd')\subseteq\cd'$. Let $N$ be an object of $\cd'$ and fix an integer $i$ such that $N[i]$ belongs to $\aisle(Q)$. By Example \ref{M. Jose} we have that $N[i]$ is the Milnor colimit of a sequence
\[M_{0}\arr{f_{0}}M_{1}\arr{f_{1}}M_{2}\arr{f_{2}}\dots
\]
where $M_{n}\in\Sum(\cq^+)^{*n}$ for each $n\geq 0$. Now, since $u\tau_{\cu}$ commutes with small coproducts, by applying it to the corresponding Milnor triangle we get that $u\tau_{\cu}N[i]$ belongs to $\cd'$, and then so does $u\tau_{\cu}N$.
\end{proof}

\begin{remark}
If in Proposition \ref{particular restriction of t-structures} $\cq$ is a finite set, then one can replace condition 2) by: $(u\tau_{\cu})(\cq)\subseteq\cd'$.
\end{remark}

\begin{corollary}\label{conditions to restrict}
Under the hypotheses of Proposition \ref{particular restriction of t-structures}, the following assertions are equivalent for a triangulated TTF triple $(\cx,\cy,\cz)$ on $\cd$:
\begin{enumerate}[1)]
\item $(\cd'\cap\cx,\cd'\cap\cy,\cd'\cap\cz)$ is a triangulated TTF triple on $\cd'$.
\item The following conditions hold:
\begin{enumerate}[2.1)]
\item $(x\tau_{\cx})(\cq)\subseteq\aisle(\cq)[n]$ for some integer $n$.
\item $(y\tau_{\cy})(\cd')\subseteq\cd'$.
\end{enumerate}
\end{enumerate}
\end{corollary}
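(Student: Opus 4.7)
The plan is to decompose the statement that $(\cd'\cap\cx,\cd'\cap\cy,\cd'\cap\cz)$ is a triangulated TTF triple on $\cd'$ into the conjunction of two restriction statements: $(\cd'\cap\cx,\cd'\cap\cy)$ is a t-structure on $\cd'$, and $(\cd'\cap\cy,\cd'\cap\cz)$ is a t-structure on $\cd'$. This is just unwinding the definition of a triangulated TTF triple, so the proof reduces to handling the two halves separately and matching each with the appropriate restriction criterion already proved in the section.

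For the first half, I would apply Proposition \ref{particular restriction of t-structures} with $(\cu,\cv):=(\cx,\cy)$. The required hypotheses are verified at once from the TTF structure: $\cx$ is triangulated, since as the left class of a triangulated TTF triple it is closed under shifts in both directions; and $\cy$ is closed under small coproducts, being an aisle (namely, the aisle of $(\cy,\cz)$, although being the coaisle of $(\cx,\cy)$ already gives it the same property via the adjoint functor argument sketched in Proposition \ref{TTF triples are recollements}). Proposition \ref{particular restriction of t-structures} then yields that $(\cd'\cap\cx,\cd'\cap\cy)$ is a t-structure on $\cd'$ if and only if $(x\tau_{\cx})(\cq)\subseteq\aisle(\cq)[n]$ for some integer $n$, which is exactly condition 2.1).

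For the second half, I would invoke the more general Lemma \ref{general restriction of t-structures} with $(\cu,\cv):=(\cy,\cz)$. It says directly that $(\cd'\cap\cy,\cd'\cap\cz)$ is a t-structure on $\cd'$ if and only if $(y\tau_{\cy})(\cd')\subseteq\cd'$, which is exactly condition 2.2). The reason to fall back on the weaker Lemma \ref{general restriction of t-structures} rather than on the stronger Proposition \ref{particular restriction of t-structures} here is that the latter would require $\cz$ to be closed under small coproducts; in the present generality one does not wish to impose this, so the intrinsic criterion $(y\tau_{\cy})(\cd')\subseteq\cd'$ is the cleanest formulation.

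Putting the two halves together gives the equivalence $1)\Leftrightarrow 2)$. I do not expect any real obstacle: once Proposition \ref{particular restriction of t-structures} and Lemma \ref{general restriction of t-structures} are on the table, the corollary is a bookkeeping exercise matching the hypotheses and conclusions to the data of a TTF triple; the only point to watch is the (mild) asymmetry in how the two t-structures of the triple are treated.
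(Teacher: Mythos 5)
Your proof is correct and is exactly the argument the paper intends: the corollary is stated there without a written proof, as an immediate consequence of Proposition \ref{particular restriction of t-structures} applied to the t-structure $(\cx,\cy)$ (whose hypotheses hold since $\cx$ is triangulated and $\cy$, being the aisle of $(\cy,\cz)$, is closed under small coproducts) and of Lemma \ref{general restriction of t-structures} applied to $(\cy,\cz)$, which is precisely your decomposition. One small caveat: your parenthetical remark that $\cy$ being the coaisle of $(\cx,\cy)$ already forces closure under small coproducts is false for general t-structures (that closure is exactly the smashing condition), so drop it and keep only the justification via $\cy$ being the aisle of $(\cy,\cz)$.
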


\begin{example}\label{restriction in examples}
Let $I$ be a two-sided ideal of a $k$-algebra $A$, and assume the projection $\pi:A\ra A/I$ is a homological epimorphism. We know (\cf Example \ref{quotients which are he}) that in this case $\cd A$ is a recollement of $\cd(A/I)$ and $\Tria_{\cd A}(I)$:
\[\xymatrix{\cd(A/I)\ar[rr]^{\pi_{*}} && \cd A\ar@/_1.2pc/[ll]_{?\otimes^{\L}_{A}A/I}\ar@/_-1pc/[ll]^{\R Hom_{A}(A/I,?)}\ar[rr]^{?\otimes^{\L}_{A}I} && \Tria_{\cd A}(I)\ar@/_1.2pc/[ll]\ar@/_-1pc/[ll]^x
}
\]
Notice that, in case $I$ is compact in $\Tria_{\cd A}(I)$, Corollary \ref{remark Keller} implies that $\cd A$ is a recollement of $\cd(A/I)$ and $\cd C$, where $C:=(\cc_{dg}A)(\textbf{i}I,\textbf{i}I)$. Thanks to Corollary \ref{conditions to restrict}, we know that the associated triangulated TTF triple restricts to $\cd^-A$ if and only if the following conditions hold:
\begin{enumerate}[1)]
\item $\tau_{\cx}(A)=A\otimes^{\bf L}_{A}I\cong I$ belongs to $\cd^-A$,
\item $\textbf{R}\Hom_{A}(A/I,M)$ belongs to $\cd^-(A/I)$ for each $M$ in $\cd^-A$.
\end{enumerate}
Of course, the first condition always holds. The second condition holds if $A/I$ has finite projective dimension regarded as a right $A$-module or, equivalently, $I$ has finite projective dimension regarded as a right $A$-module. Assume then that $I_{A}$ has finite projective dimension and also that it is compact in $\Tria_{\cd A}(I)$. In this case the mutually quasi-inverse triangle equivalences
\[\xymatrix{\Tria_{\cd A}(I)\ar@<1ex>[r]& \cd C\ar@<1ex>[l]
}
\]
guaranteed by Corollary \ref{remark Keller} restrict to mutually quasi-inverse triangle equivalences
\[\xymatrix{\Tria_{\cd A}(I)\cap\cd^-A\ar@<1ex>[r] & \cd^-C.\ar@<1ex>[l]
}
\]
Therefore, $\cd^-A$ is a recollement of $\cd^-(A/I)$ and $\cd^-C$. This example contains as particular cases the d\'{e}collements of Corollary 11, Corollary 12 and Corollary 15 of \cite{Konig1991}, and describes functors appearing in those d\'{e}collements as restrictions of derived functors.
\end{example}

\section{Recollements of right bounded derived categories}\label{Recollements of right bounded derived categories}
\addcontentsline{lot}{section}{6.3. Aglutinaciones de categor\'ias derivadas acotadas por la derecha}

All through this section the appearing dg categories are small.

\begin{definition}\label{fibrant replacement for sets}
Let $\ca$ be a dg category with associated exact dg category $\cc_{dg}\ca$ (\cf \cite{Keller2006b} or subsection \ref{B. Keller's Morita theory for derived categories}). A \emph{fibrant replacement}\index{fibrant!replacemente of a set of dg modules} of a set $\cp$ of objects of the derived category $\cd\ca$ is a full subcategory $\cb$ of $\cc_{dg}\ca$ formed by the fibrant (or $\ch$-injective) replacements $\textbf{i}P$ of the modules $P$ of $\cp$. 
\end{definition}

Recall from subsection \ref{B. Keller's Morita theory for derived categories} that, under the conditions of Definition \ref{fibrant replacement for sets}, we have a dg $\cb$-$\ca$-bimodule $X$ defined by
\[X(A,B):=B(A)
\]
for $A$ in $\ca$ and $B$ in $\cb$. This gives rise to a funtor
\[\ch om_{\ca}(X,?):\cc_{dg}\ca\ra\cc_{dg}\cb
\]
which induces triangle functors
\[\ch om_{\ca}(X,?):\ch\ca\ra\ch\cb
\]
and
\[\R\ch om_{\ca}(X,?):\cd\ca\ra\cd\cb.
\]

\begin{definition}\label{dually right bounded}
Under the conditions above, we say that:
\begin{enumerate}[1)]
\item $\cp$ is \emph{right bounded}\index{bounded!right} if $\cp\subseteq\cd^{\leq n}\ca$ for some $n\in\Z$.
\item $\cp$ is \emph{dually right bounded}\index{bounded!dually right} if the functor 
\[\R\ch om_{\ca}(X,?):\cd\ca\ra\cd\cb
\]
sends an object of $\cd^-\ca$ to an object of $\cd^-\cb$.
\end{enumerate}
\end{definition}

\emph{A priori}, the notion of ``dually right bounded'' depends on the fibrant replacement of $\cp$, however this is not drastic. Indeed, 

\begin{lemma}
Assume we are under the conditions above. Consider two fibrant replacements $\cb\ko \cb'$ of $\cp$, and let $X\ko X'$ be the corresponding dg bimodules. Then there exists a triangle equivalence $F:\cd\cb'\arr{\sim}\cd\cb$ making the following diagram
\[\xymatrix{\cd\ca\ar[rr]^{\R\ch om_{\ca}(X,?)}\ar[d]_{\R\ch om_{\ca}(X',?)} && \cd\cb \\
\cd\cb'\ar[urr]_{F} &&
}
\] 
commutative up to triangle equivalence. Moreover, $F$ induces a bijection between the corresponding representable modules.
\end{lemma}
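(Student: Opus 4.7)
The strategy is to sandwich both $\cb$ and $\cb'$ inside a single dg category and appeal to the fact that quasi-equivalences induce triangle equivalences on derived categories. Define $\cc$ to be the full dg subcategory of $\cc_{dg}\ca$ whose objects are the union $\{\mathbf{i}P\}_{P\in\cp}\cup\{\mathbf{i}'P\}_{P\in\cp}$, where $\mathbf{i}P\in\cb$ and $\mathbf{i}'P\in\cb'$ are the chosen fibrant replacements of $P$. The inclusions $\iota:\cb\hookrightarrow\cc$ and $\iota':\cb'\hookrightarrow\cc$ are full, so they are quasi-isomorphisms on morphism complexes. For each $P\in\cp$ the objects $\mathbf{i}P$ and $\mathbf{i}'P$ are homotopy equivalent in $\cc_{dg}\ca$, because both resolve $P$ fibrantly and between $\ch$-injective modules every quasi-isomorphism is a homotopy equivalence. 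Hence $\H 0\iota$ and $\H 0\iota'$ are essentially surjective, so $\iota$ and $\iota'$ are quasi-equivalences.

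By \cite[Lemma 3.10]{Keller2006b}, the restrictions of scalars $\iota^{*}:\cd\cc\to\cd\cb$ and $\iota'^{*}:\cd\cc\to\cd\cb'$ are triangle equivalences. Setting $F:=\iota^{*}\circ(\iota'^{*})^{-1}$ produces the desired triangle equivalence $F:\cd\cb'\arr{\sim}\cd\cb$.

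For the commutativity of the diagram, introduce the $\cc$-$\ca$-bimodule $Y$ defined by $Y(A,C):=C(A)$ and consider $\R\ch om_{\ca}(Y,?):\cd\ca\to\cd\cc$. Since $\R\ch om_{\ca}(Y,M)$ is computed as $C\mapsto(\cc_{dg}\ca)(C,\mathbf{i}M)$ on a fibrant replacement $\mathbf{i}M$ of $M$, restriction along $\iota$ (respectively $\iota'$) yields exactly $\R\ch om_{\ca}(X,M)$ (respectively $\R\ch om_{\ca}(X',M)$). Therefore $\iota^{*}\circ\R\ch om_{\ca}(Y,?)\cong\R\ch om_{\ca}(X,?)$ and $\iota'^{*}\circ\R\ch om_{\ca}(Y,?)\cong\R\ch om_{\ca}(X',?)$, so composing gives
\[
F\circ\R\ch om_{\ca}(X',?)=\iota^{*}\circ(\iota'^{*})^{-1}\circ\iota'^{*}\circ\R\ch om_{\ca}(Y,?)\cong\R\ch om_{\ca}(X,?).
\]

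Finally, the quasi-inverse $(\iota'^{*})^{-1}$ sends the representable $\cb'$-module $(\mathbf{i}'P)^{\wedge}$ to an object isomorphic in $\cd\cc$ to the representable $\cc$-module over $\mathbf{i}'P$; restricting this along $\iota$ yields the $\cb$-module $B\mapsto(\cc_{dg}\ca)(B,\mathbf{i}'P)$, which is isomorphic in $\cd\cb$ to the representable $(\mathbf{i}P)^{\wedge}$ because $\mathbf{i}'P\simeq\mathbf{i}P$ in $\ch\ca$. Thus $F$ induces a bijection between the representable modules. The only nontrivial ingredient is the fact that quasi-equivalences of dg categories induce triangle equivalences on unbounded derived categories, which we outsource to Keller; everything else is routine bookkeeping with fibrant replacements.
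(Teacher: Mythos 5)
Your proof is correct, but it follows a genuinely different route from the paper's. The paper defines $F$ explicitly as the composite $\R\ch om_{\ca}(X,?)\circ(?\otimes^{\bf L}_{\cb'}X')$, factoring through $\Tria_{\cd\ca}(\cp)$, and obtains both the triangle equivalence and the bijection on representables from Corollary \ref{remark Keller}; the commutativity of the diagram is then checked objectwise by comparing, for each $M$, the two counit triangles $T_{X'}H_{X'}(M)\ra M$ and $T_{X}H_{X}(M)\ra M$ via the uniqueness statement of Lemma \ref{truncacion funtorial}. Note that this route silently invokes the hypothesis of Corollary \ref{remark Keller} that the objects of $\cp$ are compact in $\Tria_{\cd\ca}(\cp)$ (true in the later applications in the chapter, but not literally among ``the conditions above''). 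Your argument avoids Morita theory and compactness altogether: you put both replacements inside one full dg subcategory $\cc\subseteq\cc_{dg}\ca$, observe that $\textbf{i}P$ and $\textbf{i}'P$ are homotopy equivalent so that the fully faithful inclusions $\cb\hookrightarrow\cc$ and $\cb'\hookrightarrow\cc$ are quasi-equivalences, and then use that restriction along a quasi-equivalence is a derived equivalence (\cite[Lemma 3.10]{Keller2006b}, the same fact the paper uses in Lemma \ref{k flat up to quasi-equivalence}); the intermediate bimodule $Y$ makes the two derived Hom functors literally restrictions of a single functor, so the triangle commutes, and the identification $F((\textbf{i}'P)^{\we})\cong(\textbf{i}P)^{\we}$ follows from the homotopy equivalence $\textbf{i}P\simeq\textbf{i}'P$. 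So your proof is slightly more general (no compactness needed) and arguably cleaner; what the paper's approach buys is an explicit description of $F$ through the adjunction $(?\otimes^{\bf L}X,\R\ch om(X,?))$ and the equivalences $\cd\cb'\simeq\Tria_{\cd\ca}(\cp)\simeq\cd\cb$, which is the form in which the lemma is exploited later in the chapter.
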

\begin{proof}
Our proof is based on Corollary \ref{remark Keller} but, for the sake of simplicity, we will use the following notation: $H_{X}:=\R\ch om_{\ca}(X,?)\ko T_{X}:=?\otimes_{\cb}^{\bf L}X$ and $\delta^{X}$ for the counit of the adjunction $(T_{X},H_{X})$. Similarly for $X'$. Take $F$ to be the composition
\[\xymatrix{\cd\cb'\ar[r]^{T_{X'}\ \ \ \ \ }&\Tria_{\cd\ca}(\cp)\ar[r]^{\ \ \ \ \ H_{X}}&\cd\cb
}
\]
The fact that $F$ is a triangle equivalence and induces a bijection between the corresponding representable modules follows from Corollary \ref{remark Keller}. Let us check that $F\circ H_{X'}$ is isomorphic to $H_{X}$. Given an object $M$ of $\cd\cb'$ we know, thanks to Corollary \ref{remark Keller} and Lemma \ref{truncacion funtorial}, that there exists a unique isomorphism of triangles
\[\xymatrix{T_{X'}H_{X'}(M)\ar[r]^{\ \ \ \ \delta^{X'}_{M}}\ar[d]_{\alpha_{M}}^{\wr} & M\ar[r]\ar[d]^{\id_{M}} & \cone(\delta^{X'}_{M})\ar[r]\ar[d]^{\wr} &  T_{X'}H_{X'}(M)[1]\ar[d]_{\alpha_{M}[1]}^{\wr}\\
T_{X}H_{X}(M)\ar[r]^{\ \ \ \ \delta^{X}_{M}} & M\ar[r] & \cone(\delta^{X}_{M})\ar[r] & T_{X}H_{X}(M)[1]
}
\]
extending the identity $\id_{M}$. Now, the composition
\[\xymatrix{FH_{X'}(M)=H_{X}T_{X'}H_{X'}(M)\ar[rr]^{\ \ \ H_{X}(\alpha_{M})}_{\sim} && H_{X}T_{X}H_{X}(M)\ar[rr]^{H_{X}(\delta^{X}_{M})}_{\sim} && H_{X}M
}
\]
is an isomorphism which induces an isomorphism between $F\circ H_{X'}$ and $H_{X}$.
\end{proof}

\begin{lemma}\label{characterization of dually right bounded}
Let $\ca$ be a dg category and $\cp$ a set of objects of $\cd^-\ca$ . Assume that there exists an integer $m$ such that for every two objects $P$ and $P'$ of $\cp$ we have $(\cd\ca)(P,P'[i])=0$ for $i>m$. Consider the following assertions:
\begin{enumerate}[1)]
\item $\cp$ is dually right bounded.
\item For each object $M$ of $\cd^-\ca$ there exists an integer $s_{M}$ such that $(\cd\ca)(P,M[i])=0$ for every $P\in\cp$ and every $i>m+s_{M}$.
\end{enumerate}
Then 1) implies 2) and, if $m=0$, we also have that 2) implies 1).
\end{lemma}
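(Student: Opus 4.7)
The plan is to reduce both implications to Lemma \ref{looking for coherence} applied to the dg category $\cb$. For this, I would first verify that $\cb$ has cohomology concentrated in degrees $(-\infty, m]$. Indeed, for any two objects $B=\textbf{i}P$ and $B'=\textbf{i}P'$ of $\cb$, the fact that $\textbf{i}P$ is fibrant gives
\[\H^i\cb(B',B)=\H^i(\cc_{dg}\ca)(\textbf{i}P',\textbf{i}P)\cong (\ch\ca)(\textbf{i}P',\textbf{i}P[i])\cong(\cd\ca)(P',P[i]),\]
which vanishes for $i>m$ by the standing hypothesis. Thus $\cb$ satisfies the assumption of Lemma \ref{looking for coherence} with the same integer $m$ (and with $m=0$ exactly when the additional hypothesis in the statement is in force).

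Next I would compute $\H^i\R\ch om_{\ca}(X,M)(B)$ for $B=\textbf{i}P\in\cb$ and $M\in\cd\ca$. By definition of the total right derived functor (see section \ref{B. Keller's Morita theory for derived categories}), $\R\ch om_{\ca}(X,M)=\ch om_{\ca}(X,\textbf{i}M)$, so
\[\R\ch om_{\ca}(X,M)(B)=(\cc_{dg}\ca)(\textbf{i}P,\textbf{i}M),\]
whose $i$th cohomology is $(\ch\ca)(\textbf{i}P,\textbf{i}M[i])\cong(\cd\ca)(P,M[i])$, since $\textbf{i}M$ is fibrant.

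With these two identifications the lemma becomes essentially a restatement of Lemma \ref{looking for coherence} for the right dg $\cb$-module $N:=\R\ch om_{\ca}(X,M)$. For the implication $1)\Rightarrow 2)$, if $\cp$ is dually right bounded then $N\in\cd^-\cb$, so $N\in\Susp(\cb)[s_{M}]$ for some integer $s_{M}$; the first (unconditional) implication of Lemma \ref{looking for coherence} then gives $\H^iN(B)=0$ for every $B\in\cb$ and every $i>m+s_{M}$, which by the computation above is exactly condition $2)$. For the converse, assume $m=0$ and that condition $2)$ holds for $M\in\cd^-\ca$. Then $\H^iN(B)=(\cd\ca)(P,M[i])=0$ for every $B=\textbf{i}P\in\cb$ and every $i>s_{M}$, so the second implication of Lemma \ref{looking for coherence} (which requires $m=0$) yields $N\in\Susp(\cb)[s_{M}]\subseteq\cd^-\cb$, proving that $\cp$ is dually right bounded.

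There is no substantial obstacle here: the proof is a direct unfolding of definitions together with a clean application of Lemma \ref{looking for coherence}. The only point requiring minor attention is keeping track of why the converse needs $m=0$, which is exactly the same asymmetry already present in Lemma \ref{looking for coherence}, ultimately because $(-\infty,0]$-concentrated cohomology is needed to bootstrap from vanishing of cohomology of a module to membership in $\Susp(\cb)[s]$.
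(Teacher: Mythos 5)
Your proof is correct and follows essentially the same route as the paper: identify the hypothesis on $\cp$ with the statement that the fibrant replacement $\cb$ has cohomology concentrated in degrees $(-\infty,m]$, identify $\H^i\R\ch om_{\ca}(X,M)(\textbf{i}P)$ with $(\cd\ca)(P,M[i])$, and then apply Lemma \ref{looking for coherence} to the dg $\cb$-module $\R\ch om_{\ca}(X,M)$ in both directions, with the $m=0$ restriction entering exactly as in that lemma. You merely spell out the two translation steps that the paper leaves implicit.
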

\begin{proof}
Let $\cb$ be a fibrant replacement of the set $\cp$. Notice that the assumption on the set $\cp$ is equivalent to say that $\cb$ has cohomology concentrated in degrees $(-\infty,m]$. Let $X$ be the associated $\ca$-$\cb$-bimodule. Assertion 1) says that for each $M\in\cd^-\ca$ there exists an integer $s_{M}$ such that 
\[(\cc_{dg}\ca)(?,\textbf{i}M)_{|_{\cb}}\in\cd^{\leq s_{M}}\cb.
\]
Now, thanks to Lemma \ref{looking for coherence}, this implies that $(\cd\ca)(P,M[i])=0$ for every $P\in\cp$ and every $i>m+s_{M}$. As stated in Lemma \ref{looking for coherence}, in case $m=0$ we can go backward in the proof.
\end{proof}

The following is a kind of `right bounded' version of Corollary \ref{remark Keller}:

\begin{lemma}\label{Morita theory for right bounded}
Let $\ca$ be a dg category and let $\cp$ be a set of objects of $\cd^-\ca$ such that:
\begin{enumerate}[a)]
\item it is right bounded,
\item its objects are compact in $\Tria_{\cd\ca}(\cp)\cap\cd^-\ca$,
\item $\Tria_{\cd\ca}(\cp)\cap\cd^-\ca$ is exhaustively generated to the left by $\cp$.
\end{enumerate}
Let $\cb$ be a fibrant replacement of $\cp$ and let $X$ be the associated $\cb$-$\ca$-bimodule. Then, the functor $?\otimes^{\L}_{\cb}X:\cd\cb\ra\cd\ca$ induces a triangle equivalence
\[?\otimes^{\L}_{\cb}X:\cd^-\cb\arr{\sim}\Tria_{\cd\ca}(\cp)\cap\cd^-\ca,
\]
and the following assertions are equivalent:
\begin{enumerate}[1)]
\item $\Tria_{\cd\ca}(\cp)\cap\cd^-\ca$ is an aisle in $\cd^-\ca$. 
\item $\cp$ is dually right bounded.
\end{enumerate}
\end{lemma}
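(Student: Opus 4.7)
The plan is to split the proof in two parts: first, establishing the triangle equivalence, and second, proving $1)\Leftrightarrow 2)$.

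For the triangle equivalence, I would begin by checking that $?\otimes_{\cb}^{\bf L}X$ indeed sends $\cd^-\cb$ into $\Tria_{\cd\ca}(\cp)\cap\cd^-\ca$. The functor sends each representable $B^{\we}$ to (an object isomorphic to) the corresponding object of $\cp$, preserves triangles, and commutes with small coproducts (being a left adjoint). Since $\cp\subseteq\cd^{\leq n_{0}}\ca$ for some $n_{0}$ by hypothesis (a), an easy infinite d\'evissage shows that it carries $\cd^{\leq 0}\cb=\Susp(\{B^{\we}\}_{B\in\cb})$ into $\cd^{\leq n_{0}}\ca$, and consequently $\cd^-\cb$ into $\cd^-\ca$; the image clearly lies in $\Tria_{\cd\ca}(\cp)$. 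To obtain the equivalence onto $\Tria_{\cd\ca}(\cp)\cap\cd^-\ca$, I would apply Lemma \ref{detecting triangle equivalences}(2) to the restricted functor with generating set $\{B^{\we}\}_{B\in\cb}$: condition (2.1) on compactness of the image follows from hypothesis (b); condition (2.2) on fully faithfulness reduces to the natural isomorphism $(\cd\cb)(B^{\we},B'^{\we}[n])\cong\H n\cb(B',B)\cong(\cd\ca)(P',P[n])$ coming from the construction of $\cb$ as a fibrant replacement (compare with the end of subsection \ref{B. Keller's Morita theory for derived categories}); condition (2.3) on exhaustive generation is exactly hypothesis (c); and the exhaustive generation of $\cd^-\cb$ by $\{B^{\we}\}_{B\in\cb}$ follows from Example \ref{M. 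Jose} applied to $\cb$ (the representables being compact, hence perfect in $\cd\cb$).

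For $2)\Rightarrow 1)$, if $\cp$ is dually right bounded then $\R\ch om_{\ca}(X,?)$ maps $\cd^-\ca$ into $\cd^-\cb$, and composing with the inverse equivalence from Part~1 yields a triangle functor $\cd^-\ca\ra\Tria_{\cd\ca}(\cp)\cap\cd^-\ca$. I would verify that this is right adjoint to the inclusion by restricting the global adjunction $(?\otimes_{\cb}^{\bf L}X,\R\ch om_{\ca}(X,?))$ provided by Corollary \ref{remark Keller}; hence $\Tria_{\cd\ca}(\cp)\cap\cd^-\ca$ is an aisle in $\cd^-\ca$. For $1)\Rightarrow 2)$, set $\cx'=\Tria_{\cd\ca}(\cp)\cap\cd^-\ca$ with coaisle $\cy'$ in $\cd^-\ca$. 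Given $M\in\cd^-\ca$, consider its t-structure triangle
\[\tau_{\cx'}M\ra M\ra\tau^{\cy'}M\ra(\tau_{\cx'}M)[1]\]
in $\cd^-\ca$. Since $\cx'$ is closed under shifts inside $\cd^-\ca$ and contains $\cp$, the object $\tau^{\cy'}M$ is right orthogonal in $\cd^-\ca$ (and hence in $\cd\ca$) to every shift of every object of $\cp$, so by Lemma \ref{right orthogonal and devissage} it lies in $\Tria_{\cd\ca}(\cp)^{\bot}$. Comparing with the triangle of $\cd\ca$ given by part 2 of Corollary \ref{remark Keller},
\[\R\ch om_{\ca}(X,M)\otimes_{\cb}^{\bf L}X\ra M\ra M'\ra(\R\ch om_{\ca}(X,M)\otimes_{\cb}^{\bf L}X)[1],\]
the uniqueness of t-structure triangles (Lemma \ref{truncacion funtorial}) forces $\tau_{\cx'}M\cong\R\ch om_{\ca}(X,M)\otimes_{\cb}^{\bf L}X$ in $\cd\ca$. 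Therefore $\R\ch om_{\ca}(X,M)\otimes_{\cb}^{\bf L}X\in\Tria_{\cd\ca}(\cp)\cap\cd^-\ca$, and the equivalence established in Part~1 yields $\R\ch om_{\ca}(X,M)\in\cd^-\cb$, proving dual right boundedness.

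The principal obstacle I anticipate lies in Part~1, specifically in the careful application of Lemma \ref{detecting triangle equivalences} to $\cd^-\cb$ and $\Tria_{\cd\ca}(\cp)\cap\cd^-\ca$, neither of which is closed under arbitrary small coproducts inside the ambient unbounded derived category. One must check that the small coproducts involved in the exhaustive generation do exist in both categories and that the restricted functor preserves them; this uses that $?\otimes_{\cb}^{\bf L}X$ commutes with all small coproducts computed in $\cd\ca$, and that the relevant coproducts of non-negative shifts of representables (resp.\ of objects of $\cp$) lie in $\cd^{\leq 0}\cb$ (resp.\ in a suitable $\cd^{\leq n}\ca$), hence in the subcategories under consideration.
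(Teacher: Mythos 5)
Your Part~1 and your argument for $2)\Rightarrow 1)$ are essentially the paper's own proof (first and second steps, and the ``if'' half of the third step): the target-landing check by d\'evissage, the application of Lemma \ref{detecting triangle equivalences} with hypotheses b) and c), and the restriction of the adjunction $(?\otimes^{\L}_{\cb}X,\R\ch om_{\ca}(X,?))$ are all fine (only a wording slip: to get the right adjoint of the inclusion you compose the restricted $\R\ch om_{\ca}(X,?)$ with the equivalence $?\otimes^{\L}_{\cb}X$ itself, not with its quasi-inverse). The problem is in $1)\Rightarrow 2)$. There you invoke part 2 of Corollary \ref{remark Keller} to identify $\tau_{\cx'}M$ with $\R\ch om_{\ca}(X,M)\otimes^{\L}_{\cb}X$; but that corollary requires the objects of $\cp$ to be compact in $\Tria_{\cd\ca}(\cp)$, whereas hypothesis b) only gives compactness in $\Tria_{\cd\ca}(\cp)\cap\cd^-\ca$, and this weakening is precisely the point of the lemma (in its applications, e.g.\ the object $Q$ in Theorem \ref{Konig}, compactness in the unbounded $\Tria(\cq)$ is not available). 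Without that compactness, $?\otimes^{\L}_{\cb}X:\cd\cb\ra\cd\ca$ need not be fully faithful, so the cone of the counit $\R\ch om_{\ca}(X,M)\otimes^{\L}_{\cb}X\ra M$ need not lie in $\Tria(\cp)^{\bot}$, and your comparison of torsion triangles is unjustified. There is a second, related gap: even granting the identification, to pass from $\R\ch om_{\ca}(X,M)\otimes^{\L}_{\cb}X\in\Tria(\cp)\cap\cd^-\ca$ to $\R\ch om_{\ca}(X,M)\in\cd^-\cb$ you need $?\otimes^{\L}_{\cb}X$ to reflect membership in $\cd^-\cb$, i.e.\ full faithfulness on a subcategory containing $\R\ch om_{\ca}(X,M)$ — but whether $\R\ch om_{\ca}(X,M)$ lies in $\cd^-\cb$ is exactly what you are trying to prove, so the argument is circular.

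The paper avoids both issues by never leaving the right bounded world: by the equivalence of Part~1, assertion 1) amounts to the existence of a right adjoint $G$ to the restricted functor $?\otimes^{\L}_{\cb}X:\cd^-\cb\ra\cd^-\ca$. For every integer $n$, both $\tau^{\leq n}\circ\iota_{\cb}\circ G$ and $\tau^{\leq n}\circ\R\ch om_{\ca}(X,?)\circ\iota_{\ca}$ are right adjoint to the composition $\cd^{\leq n}\cb\hookrightarrow\cd^-\cb\arr{?\otimes^{\L}_{\cb}X}\cd^-\ca$, hence isomorphic. Given $M\in\cd^-\ca$, choose $n$ with $GM\in\cd^{\leq n}\cb$; then
\[\tau^{\leq n}\R\ch om_{\ca}(X,M)\cong\tau^{\leq n}GM\cong\tau^{\leq n+i}GM\cong\tau^{\leq n+i}\R\ch om_{\ca}(X,M)
\]
for all $i\geq 0$, so $\tau^{>n}\R\ch om_{\ca}(X,M)$ lies in $\cd^{>n+i}\cb$ for all $i$, has vanishing cohomology, and is zero; thus $\R\ch om_{\ca}(X,M)\in\cd^{\leq n}\cb$. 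This uniqueness-of-adjoints argument is what you would need to substitute for the appeal to Corollary \ref{remark Keller}.
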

\begin{proof}
\emph{First step: The triangle functor
\[?\otimes^{\L}_{\cb}X:\cd\cb\ra\cd\ca
\]
induces a triangle functor
\[?\otimes^{\L}_{\cb}X:\cd^-\cb\ra\Tria(\cp)\cap\cd^-\ca.
\]
}
Let $\cu$ be the full subcategory of $\cd^-\cb$ formed by those $N$ such that $N\otimes^{\L}_{\cb}X\in\Tria(\cp)\cap\cd^-\ca$. It is a full triangulated subcategory of $\cd^-\cb$. Notice that, if $B=\textbf{i}P$ is the object of $\cb$ corresponding to a certain $P\in\cp$, then
\[B^{\we}\otimes^{\L}_{\cb}X\cong\textbf{i}P\cong P\in\Tria(\cp)\cap\cd^-\ca.
\]
This proves that $\cu$ contains the representable dg $\cb$-modules $B^{\we}$. It also proves that, since $\Tria(\cp)\cap\cd^-\ca$ is closed under small coproducts of finite extensions of objects $\Sum(\cp^+)$, then $\cu$ is closed under small coproducts of finite extensions of objects of $\Sum(\{B^{\we}\}_{B\in\cb}^+)$.  Finally, Lemma \ref{e implies d implies g} implies that $\cu=\cd^-\cb$.

\emph{Second step: The functor $?\otimes^{\L}_{\cb}X:\cd^-\cb\ra\Tria(\cp)\cap\cd^-\ca$ is a triangle equivalence.}

This is proved by using Lemma \ref{detecting triangle equivalences}. Indeed, if $B=\textbf{i}P$ is the object of $\cb$ corresponding to $P\in\cp$, we have seen already that $B^{\we}\otimes^{\L}_{\cb}X\cong P$, which is compact in $\Tria(\cp)\cap\cd^-\ca$ by hypothesis. Also, if $B=\textbf{i}P$ and $B'=\textbf{i}P'$ are objects of $\cb$, we have
\begin{align}
(\cd\cb)(B^{\we},B'^{\we}[n])\arr{\sim}\H n\cb(B,B')= \nonumber \\
=(\ch\ca)(\textbf{i}P,\textbf{i}P'[n])\arr{\sim}(\cd\ca)(B^{\we}\otimes_{\cb}^{\L}X,B'^{\we}[n]\otimes_{\cb}^{\L}X). \nonumber
\end{align}
Finally, by hypothesis, $\Tria(\cp)\cap\cd^-\ca$ is exhaustively generated to the left by the objects $B^{\we}\otimes^{\L}_{\cb}X\cong\textbf{i}P\cong P$.

\emph{Third step: Thanks to the second step, 1) holds if and only if the functor $?\otimes^{\L}_{\cb}X:\cd^-\cb\ra\cd^-\ca$ has a right adjoint. Let us prove that this happens if and only if $\cp$ is dually right bounded.}

The `if' part is clear. Conversely, let $G:\cd^-\ca\ra\cd^-\cb$ be a right adjoint to $?\otimes^{\L}_{\cb}X$. For simplicity, put $\R\ch om_{\ca}(X,?)=H_{X}$. Consider the diagram
\[\xymatrix{& \cd^-\cb\ar@{^(->}[d]^{\iota_{\cb}}\ar@<1ex>[rr]^{?\otimes^{\L}_{\cb}X} && \cd^-\ca\ar@{^(->}[d]^{\iota_{\ca}}\ar@<1ex>[ll]^{G} \\
\cd^{\leq n}\cb\ar@{^(->}[ur]\ar@<1ex>[r]^{\iota} & \cd\cb\ar@<1ex>[l]^{\tau^{\leq n}}\ar@<1ex>[rr]^{?\otimes^{\L}_{\cb}X} && \cd\ca\ar@<1ex>[ll]^{H_{X}}
}
\]
where $n$ is any integer and $\iota$ is the inclusion functor. We have that
\[\tau^{\leq n}\circ \iota_{\cb}\circ G\cong\tau^{\leq n}\circ H_{X}\circ\iota_{\ca}
\]
since these two compositions are right adjoint to $?\otimes^{\L}_{\cb}X\circ\iota$. Let the $M$ be an object of $\cd^-\ca$ and fix an integer $n$ such that $GM\in\cd^{\leq n}\cb$. Then, we get
\[\tau^{\leq n}H_{X}(M)\cong\tau^{\leq n}G(M)\cong\tau^{\leq n+i}G(M)\cong\tau^{\leq n+i}H_{X}(M)
\]
for each $i\geq 0$. This implies that $\tau^{>n}(H_{X}M)\in\cd^{>n+i}\cb$ for each $i\geq 0$. In particular,
\[\H j(\tau^{>n}H_{X}(M))=0
\]
for every $j\in\Z$, that is to say, $\tau^{>n}(H_{X}(M))=0$. Thus, $H_{X}(M)\in\cd^{\leq n}\cb$.
\end{proof}

The following is a kind of `right bounded' version (with several objects) of Corollary \ref{recollements unbounded derived}

\begin{proposition}\label{parametrization right bounded recollements}
Let $\ca$ be a dg category. The following assertions are equivalent:
\begin{enumerate}[1)]
\item $\cd^-\ca$ is a recollement of $\cd^-\cb$ and $\cd^-\cc$, for certain dg categories $\cb$ and $\cc$.
\item There exist sets $\cp\ko \cq$ in $\cd^-\ca$ such that:
\begin{enumerate}[2.1)]
\item $\cp$ and $\cq$ are right bounded.
\item $\cp$ and $\cq$ are dually right bounded.
\item $\Tria(\cp)\cap\cd^-\ca$ is exhaustively generated to the left by $\cp$ and the objects of $\cp$ are compact in $\cd\ca$.
\item $\Tria(\cq)\cap\cd^-\ca$ is exhaustively generated to the left by $\cq$ and the objects of $\cq$ are compact in $\Tria(\cq)\cap\cd^-\ca$.
\item $(\cd\ca)(P[i],Q)=0$ for each $P\in\cp\ko Q\in\cq$ and $i\in\Z$.
\item $\cp\cup\cq$ generates $\cd\ca$.
\end{enumerate}
\end{enumerate}
\end{proposition}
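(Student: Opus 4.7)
The plan is to view the desired equivalence as a dictionary between, on one hand, a triangulated TTF triple on $\cd^-\ca$ produced from certain aisle-generating sets $\cp$ and $\cq$ and, on the other hand, the recollement interpretation of such a TTF triple. Lemma \ref{Morita theory for right bounded} will be the main engine: it will identify the aisles built from $\cp$ and $\cq$ inside $\cd^-\ca$ with $\cd^-\cb$ and $\cd^-\cc$ for fibrant replacements $\cb$ of $\cp$ and $\cc$ of $\cq$.

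For $2)\Rightarrow 1)$, I would first apply Lemma \ref{Morita theory for right bounded} to $\cp$ using 2.1), 2.2) and 2.3), obtaining that $\cx':=\Tria(\cp)\cap\cd^-\ca$ is an aisle in $\cd^-\ca$ triangle-equivalent to $\cd^-\cb$ via $?\otimes_{\cb}^{\L}X$. Applying the lemma similarly to $\cq$ using 2.1), 2.2) and 2.4), I would get that $\cy':=\Tria(\cq)\cap\cd^-\ca$ is also an aisle in $\cd^-\ca$ triangle-equivalent to $\cd^-\cc$. I would then verify that $(\cx',\cy')$ is a t-structure on $\cd^-\ca$: the inclusion $\cy'\subseteq\cx'^{\bot_{\cd^-\ca}}$ follows from 2.5) by infinite d\'evissage (Lemma \ref{right orthogonal and devissage}). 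For the reverse inclusion, given $M\in\cx'^{\bot_{\cd^-\ca}}$, I would use that $\cy'$ is an aisle to form a triangle $M'\to M\to M''\to M'[1]$ with $M'\in\cy'$ and $M''\in\cy'^{\bot_{\cd^-\ca}}$. Then $M''$ is right orthogonal to all shifts of $\cp\cup\cq$ in $\cd\ca$ by infinite d\'evissage again, so condition 2.6) forces $M''=0$ and hence $M\in\cy'$. Since $\cy'$ is an aisle, the t-structure extends to a triangulated TTF triple $(\cx',\cy',\cz')$ on $\cd^-\ca$; by Proposition \ref{TTF triples are recollements} this provides a recollement, and the equivalences $\cy'\simeq\cd^-\cb$ and $\cx'\simeq\cd^-\cc$ give the desired conclusion.

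For $1)\Rightarrow 2)$, the recollement provides a triangulated TTF triple $(\cx',\cy',\cz')$ on $\cd^-\ca$ with $\cx'\simeq\cd^-\cc$ (through $j_!$) and $\cy'\simeq\cd^-\cb$ (through $i_*$). I would take $\cp:=j_!(\{C^{\we}\}_{C\in\cc})$ and $\cq:=i_*(\{B^{\we}\}_{B\in\cb})$, so that $\cp$ and $\cq$ inherit, via the equivalences, the property of exhaustively generating to the left $\cx'$ and $\cy'$ respectively (Lemma \ref{e implies d implies g}), yielding conditions 2.3) and 2.4) at the level of the right-bounded aisle, together with the right-boundedness 2.1) since the representables live in the non-positive part of the canonical t-structure. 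Dual right boundedness 2.2) follows from the equivalence of parts 1) and 2) of Lemma \ref{Morita theory for right bounded}. The orthogonality 2.5) is immediate from the recollement axiom $j^*i_*=0$.

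The main obstacle will be the remaining conditions: that objects of $\cp$ are compact in the \emph{unbounded} category $\cd\ca$ (part of 2.3), and that $\cp\cup\cq$ generates $\cd\ca$ (condition 2.6). These require extending information from the right bounded to the unbounded setting. For compactness, the key observation is that objects of $\cp$ lie in $\cd^{\leq n}\ca$ for a fixed $n$ (by 2.1); using the smashing character of the canonical t-structure on $\cd\ca$ (whose coaisle is closed under coproducts because the $A^{\we}$ are compact) one can reduce $(\cd\ca)(P,\coprod_i M_i)$ to computations with $\tau^{\leq 0}(\coprod_i M_i)\cong\coprod_i\tau^{\leq 0}M_i$ inside a right bounded region where compactness in $\cd^-\ca$ applies. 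For condition 2.6), I would argue that any $M\in\cd\ca$ right orthogonal to all shifts of $\cp\cup\cq$ is, after truncation via the canonical t-structure, right orthogonal to $\cp\cup\cq$ inside $\cd^-\ca$; invoking the t-structure on $\cd^-\ca$ just constructed from $(\cx',\cy')$ reduces to showing the truncated parts vanish, which follows from $\cp\cup\cq$ generating $\cd^-\ca$ (a consequence of the recollement). Assembling these reductions carefully is the technical heart of the argument.
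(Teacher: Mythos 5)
Your direction $2)\Rightarrow 1)$ is essentially correct and close to the paper's argument (the paper first assembles the t-structure $(\Tria(\cp),\Tria(\cq))$ on $\cd\ca$ and then intersects with $\cd^-\ca$, while you verify $\cy'=\cx'^{\bot}$ directly inside $\cd^-\ca$; both routes rest on Lemma \ref{Morita theory for right bounded}). The real problems are in $1)\Rightarrow 2)$, where you omit the step the paper's proof hinges on: lifting the triangulated TTF triple from $\cd^-\ca$ to $\cd\ca$ via Corollary \ref{easy restriction} (the objects of $\cp$ are compact in $\cx'$, which is exhaustively generated to the left by $\cp$, hence they are superperfect and the triple lifts). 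That one step is what simultaneously yields the identifications $\cx'=\Tria(\cp)\cap\cd^-\ca$ and $\cy'=\Tria(\cq)\cap\cd^-\ca$ (needed for 2.2, 2.3, 2.4 as literally stated, and which you silently conflate with ``the right-bounded aisle''), the compactness of the objects of $\cp$ in $\cd\ca$, and conditions 2.5 and 2.6, because $(\Tria(\cp),\Tria(\cq))$ is then a t-structure on $\cd\ca$.

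Your proposed substitutes do not all work. First, the justification of 2.1 is a non sequitur: the representables $C^{\we}$ lying in $\cd^{\leq 0}\cc$ says nothing about where $j_{!}(C^{\we})$ sits in $\cd\ca$; a uniform bound requires an argument such as the paper's, which uses the existence of $\coprod_{P\in\cp}P$ in $\cd^-\ca$ together with the claim in the proof of Lemma \ref{preservation of coproducts and compacity}. Second, your truncation argument for 2.6 fails as stated: if $M\in\cd\ca$ is right orthogonal to all shifts of $\cp\cup\cq$, the truncation $\tau^{\leq n}M$ need \emph{not} be right orthogonal to all shifts, since a nonzero morphism $P[i]\ra\tau^{\leq n}M$ with $i\ll 0$ may come from $\Hom(P[i],(\tau^{>n}M)[-1])$, and this group need not vanish because $P[i]$ only lies in $\cd^{\leq n_{\cp}-i}\ca$; so one cannot simply quote generation of $\cd^-\ca$ for the truncated pieces. (A correct unbounded argument exists without the lifting: by Lemma \ref{right orthogonal and devissage}, orthogonality to all shifts of $\cp$ propagates to all of $\Tria(\cp)\supseteq\cx'$, similarly for $\cq$ and $\cy'$, and then the decomposition triangles of the generators $A^{\we}$ force $M=0$; but even this needs the compatibility of the exhaustive-generation coproducts in $\cx'$ with coproducts in $\cd\ca$, which is exactly the bookkeeping the paper's lifting packages.) Third, your reduction of compactness in $\cd\ca$ via the canonical truncation is fine (it is Lemma \ref{preservation of coproducts and compacity}), but it presupposes compactness of $P$ in $\cd^-\ca$ rather than merely in $\cx'$; that intermediate step requires the argument of Lemma \ref{truncating special objects and from local to global via smashing}, using that $\cy'$ is closed under small coproducts, and is missing from your sketch.
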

\begin{proof}
$1)\Rightarrow 2)$ Consider the d\'{e}collement
\[ \xymatrix{ \cd^-\cb\ar[r]^{i_{*}=i_{!}} & \cd^-\ca\ar@/_1.2pc/[l]_{i^{*}}\ar@/_-1pc/[l]^{i^{!}}\ar[r]^{j^*=j^!} & 
\cd^-\cc\ar@/_1.2pc/[l]_{j_{*}}\ar@/_-1pc/[l]^{j_{!}},
}
\]
and let $(\cx',\cy',\cz')$ be the corresponding triangulated TTF triple on $\cd^-\ca$. Let $\cp$ be the set formed by all the objects $j_{!}(C^{\we})\ko C\in\cc$ and let $\cq$ be the set formed by all the objects $i_{*}(B^{\we})\ko B\in\cb$.

2.1) Notice that the coproduct $\coprod_{C\in\cc}C^{\we}$ lives in $\cd^-\cc$ and, since $j_{!}:\cd^-\cc\arr{\sim}\cx'$ is a triangle equivalence, then there exists in $\cd^-\ca$ the coproduct $\coprod_{P\in\cp}P$. Now, the claim in the proof of Lemma \ref{preservation of coproducts and compacity} implies that $\cp$ is right bounded. Similarly for $\cq$.

2.3) Since $j_{!}:\cd^-\cc\arr{\sim}\cx'$ is a triangle equivalence, then $\cx'$ is exhaustively generated to the left by the set $\cp$, whose objects are compact in $\cx'$. Then Corollary \ref{easy restriction} says that $(\cx',\cy',\cz')$ is the restriction of a triangulated TTF triple $(\cx,\cy,\cz)$ on $\cd\ca$. Moreover, $\cx=\Tria(\cp)$ and so $\cx'=\Tria(\cp)\cap\cd^-\ca$. This proves that $\Tria(\cp)\cap\cd^-\ca$ is exhaustively generated to the left by $\cp$. With the techniques of the proof of Lemma \ref{truncating special objects and from local to global via smashing}, we can prove that the objects of $\cp$ are compact in $\cd^-\ca$, and then Lemma \ref{preservation of coproducts and compacity} implies that they are also compact in $\cd\ca$.

2.4) Since $i_{*}:\cd^-\cb\arr{\sim}\cy'$ is a triangle equivalence, then $\cy'$ is exhaustively generated to the left by the set $\cq$, whose objects are compact in $\cy'$. From the proof of 2.3) we know that
\[\cy'=\Tria(\cp)^{\bot}\cap\cd^-\ca.
\] 
Of course, $\cq$ is contained in $\cy'$ and so $\Tria(\cq)\cap\cd^-\ca$ is contained in $\cy'$. Notice that $\Tria(\cq)\cap\cd^-\ca$ is a full triangulated subcategory of $\cy'$ containing $\cq$ and closed under small coproducts of objects of $\bigcup_{n\geq 0}\Sum(\cq^+)^{*n}$. This implies (\cf Lemma \ref{e implies d implies g}) that $\Tria(\cq)\cap\cd^-\ca=\cy'$.

2.2) From the proof of 2.3), we know that $\Tria(\cp)\cap\cd^-\ca$ is an aisle in $\cd^-\ca$. Then Lemma \ref{Morita theory for right bounded} implies that $\cp$ is dually right bounded. Similarly for $\cq$.

2.5) and 2.6) follow from the fact that $(\Tria(\cp),\Tria(\cq))$ is a t-structure on $\cd\ca$.

$2)\Rightarrow 1)$ Since the objects of $\cp$ are compact in $\cd\ca$, then (\cf Lemma \ref{rds in aisled} or Lemma \ref{recollement defining in perfectly generated}) we know that
\[(\Tria(\cp),\Tria(\cp)^{\bot},\Tria(\cp)^{\bot\bot})
\]
is a triangulated TTF triple on $\cd\ca$. From conditions 2.5) and 2.6) we deduce that $\cq$ generates $\Tria(\cp)^{\bot}$. Moreover, since $\Tria(\cp)^{\bot}$ is closed under small coproducts, then $\Tria(\cq)$ is contained in $\Tria(\cp)^{\bot}$. The fact that $\Tria(\cq)$ is an aisle in $\cd\ca$ (\cf Corollary \ref{derived categories are aisled}) together with Lemma \ref{generators and t-structures} implies that $\Tria(\cp)^{\bot}=\Tria(\cq)$. Lemma \ref{Morita theory for right bounded} implies that $\Tria(\cp)\cap\cd^-\ca$ and $\Tria(\cq)\cap\cd^-\ca$ are aisles in $\cd^-\ca$. Given $M\in\cd^-\ca$, consider the triangle
\[M'\ra M\ra M''\ra M'[1]
\]
in $\cd^-\ca$ with $M'\in\Tria(\cp)\cap\cd^-\ca$ and $M''\in(\Tria(\cp)\cap\cd^-\ca)^{\bot}$. In particular, $M'\in\Tria(\cp)$ and $M''\in\Tria(\cp)^{\bot}=\Tria(\cq)$. This proves that 
\[(\Tria(\cp)\cap\cd^-\ca,\Tria(\cq)\cap\cd^-\ca)
\] 
is a t-structure on $\cd^-\ca$. Similarly, 
\[(\Tria(\cq)\cap\cd^-\ca,\Tria(\cq)^{\bot}\cap\cd^-\ca)
\] 
is a t-structure on $\cd^-\ca$. These t-structures together form a triangulated TTF triple $(\cx',\cy',\cz')$ on $\cd^-\ca$. Finally, Lemma \ref{Morita theory for right bounded} implies that $\cx'\cong\cd^-\cc$ (for a cofibrant replacement $\cc$ of $\cp$) and $\cy'\cong\cd^-\cb$ (for a cofibrant replacement $\cb$ of $\cq$).
\end{proof}

We will prove in Corollary \ref{parametrization right bounded hn recollements} below that conditions of assertion 2 in Proposition \ref{parametrization right bounded recollements} can be weakened under certain extra hypotheses. But first we need some preliminary results. The following one is a `right bounded' version of the proof of B.~Keller's theorem \cite[Theorem 5.2]{Keller1994a}:

\begin{proposition}\label{g implies e to the left}
Let $\cp$ be a set of objects of a triangulated category $\cd$ such that
\begin{enumerate}[1)]
\item the objects of $\cp$ are compact in $\Tria(\cp)$,
\item $\cd(P,P'[i])=0$ for each $P\ko P'\in\cp$ and $i\geq 1$,
\item small coproducts of finite extensions of $\Sum(\cp^+)$ exist in $\cd$,
\item for each $M\in\cd$ there exists $k_{M}\in\Z$ such that $\cd(P[n],M)=0$ if $n<k_{M}$. 
\end{enumerate}
Then $\Tria(\cp)$ is an aisle in $\cd$ exhaustively generated to the left by $\cp$. In particular, if $\cp$ generates $\cd$, then $\Tria(\cp)=\cd$.
\end{proposition}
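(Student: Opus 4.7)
The plan is to imitate the standard Brown representability construction (as used in the proof of Theorem \ref{Krause on perfects}), but tracking carefully that, thanks to the right-boundedness condition 4), we may restrict the attaching cells to non-negative shifts of objects of $\cp$. Fix $M\in\cd$. By condition 4), there exists $k_{M}\in\Z$ such that $\cd(P[\ell],M)=0$ for every $P\in\cp$ and every $\ell<k_{M}$. Choose $i\in\Z$ with $i\geq 1-k_{M}$; a direct computation then gives $\cd(P[\ell],M[i])=\cd(P[\ell],M[i-1])=0$ for all $P\in\cp$ and all $\ell<0$.

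First I would construct, by induction on $n\geq 0$, a sequence
\[P_{0}\arr{f_{0}}P_{1}\arr{f_{1}}P_{2}\arr{f_{2}}\cdots\]
together with compatible morphisms $\phi_{n}:P_{n}\ra M[i]$ such that each $P_{n}$ is a finite extension of small coproducts of non-negative shifts of objects of $\cp$. Concretely, $P_{0}$ is the coproduct of one copy of $P[\ell]$ for each class in $\cd(P[\ell],M[i])$ with $P\in\cp$ and $\ell\geq 0$ (all other classes vanish by the left-boundedness of $M[i]$), and $\phi_{0}$ is the obvious evaluation. Given $(P_{n},\phi_{n})$, let $K_{n}$ be the fibre of $\phi_{n}$ and define $P_{n+1}$ by the triangle
\[\coprod P[\ell]\ra P_{n}\ra P_{n+1}\ra\coprod P[\ell][1]\]
whose first map is the composition $\coprod P[\ell]\ra K_{n}\ra P_{n}$ indexed over $\coprod_{\ell\geq 0,P\in\cp}\cd(P[\ell],K_{n})$; then $\phi_{n}$ factors through $f_{n}:P_{n}\ra P_{n+1}$ to produce $\phi_{n+1}$. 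The key observation, proved by induction on $n$, is that $\cd(P[\ell],P_{n})=0$ for every $\ell<0$ (from condition 2) and the attachment triangle) and, by combining this with the left-vanishing of $M[i]$ and $M[i-1]$ in the long exact sequence of $K_{n}\ra P_{n}\ra M[i]$, also $\cd(P[\ell],K_{n})=0$ for every $\ell<0$. Hence only non-negative shifts of $\cp$ ever need to be attached, and all small coproducts and finite extensions involved exist in $\cd$ by condition 3).

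Set $P_{\infty}:=\Mcolim P_{n}$, whose existence is again ensured by condition 3), and complete the induced morphism to a triangle $P_{\infty}\arr{\phi}M[i]\ra C\ra P_{\infty}[1]$. Since the objects of $\cp$ are compact in $\Tria(\cp)$, Lemma \ref{compact and Milnor} yields $\lid\cd(P[\ell],P_{n})\arr{\sim}\cd(P[\ell],P_{\infty})$ for every $P\in\cp$ and $\ell\in\Z$. A standard analysis then shows that, for $\ell\geq 0$, the map $\cd(P[\ell],\phi)$ is surjective (already at the level of $P_{0}$) and injective (any class becoming trivial at $M[i]$ factors through some $K_{n}$ and is therefore killed at stage $n+1$); for $\ell<0$ both sides vanish by construction. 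Hence $\cd(P[\ell],C)=0$ for every $P\in\cp$ and every $\ell\in\Z$, so $C\in\Tria(\cp)^{\bot}$. Since $P_{\infty}\in\Tria(\cp)$, shifting back by $-i$ gives the truncation triangle required to prove that $\Tria(\cp)$ is an aisle in $\cd$. When $M\in\Tria(\cp)$, one has $C\in\Tria(\cp)\cap\Tria(\cp)^{\bot}=0$, so $M[i]\cong P_{\infty}$ and the defining Milnor triangle realizes $M[i]$ as required by Definition \ref{exhaustively generated}, which is precisely the exhaustive generation of $\Tria(\cp)$ to the left by $\cp$. Finally, if $\cp$ generates $\cd$, then $\Tria(\cp)^{\bot}=0$ and the truncation forces $\Tria(\cp)=\cd$.

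The hard part will be the careful book-keeping needed to confirm that only non-negative shifts of $\cp$ appear at every step: this interlocks condition 4) (which, after shifting $M$ by $i$, provides the left-vanishing of both $M[i]$ and $M[i-1]$ against $\cp$), condition 2) (which propagates left-vanishing from $P_{n-1}$ through the attachment triangle to $P_{n}$), and the long exact sequence of $K_{n}\ra P_{n}\ra M[i]$ (to transfer the left-vanishing from $P_{n}$ and $M[i]$ to $K_{n}$, closing the induction).
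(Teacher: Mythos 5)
Your proposal is correct and follows essentially the same route as the paper's own proof: an inductive tower of cell attachments by non-negative (equivalently, $\geq k_{M}$) shifts of objects of $\cp$, controlled by hypothesis 4), followed by a Milnor colimit, the verification that $\cd(P[\ell],?)$ applied to the comparison map is bijective, and the conclusion that its cone lies in $\Tria(\cp)^{\bot}$, which yields both the aisle property and the exhaustive generation to the left. The only differences are cosmetic: you shift $M$ at the outset so all attached cells have non-negative shifts and you kill the fibre via the canonical map indexed by all morphisms (citing Lemma \ref{compact and Milnor} for the colimit computation), whereas the paper works directly with shifts bounded below by $k_{M}$ and chooses a $\Sum(\cp^{+}[k_{M}-1])$-precover of the cocone, redoing the element chase by hand.
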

\begin{proof}
Let $M\in\cd$. We know that if $\cd(P[n],M)\neq 0$ for some $P\in\cp$, then $n\geq k_{M}$. Since $\cp$ is a set, there exists an object 
\[P_{0}\in\Sum(\cp^+[k_{M}])
\] 
and a morphism $\pi_{0}:P_{0}\ra M$ inducing a surjection
\[\pi_{0}^{\we}:\cd(P[n],P_{0})\ra\cd(P[n],M)
\]
for each $P\in\cp\ko n\in\Z$. Indeed, one can take
\[P_{0}:=\coprod_{P\in\cp\ko n\geq k_{M}}P[n]^{(\cd(P[n],M))}.
\]
Now, we will inductively construct a commutative diagram
\[\xymatrix{P_{0}\ar[r]^{f_{0}}\ar[dr]_{\pi_{0}} & P_{1}\ar[r]^{f_{1}}\ar[d]^{\pi_{1}} & \dots\ar[r] & P_{q}\ar[r]^{f_{q}}\ar[dll]^{\pi_{q}} & \dots\ko q\geq 0 \\
& M &&&
}
\]
such that:
\begin{enumerate}[a)]
\item $P_{q}\in\Sum(\cp^+[k_{M}])^{*q}$,
\item $\pi_{q}$ induces a surjection 
\[\pi_{q}^{\we}:\cd(P[n],P_{q})\ra\cd(P[n],M)
\]
for each $P\in\cp\ko n\in\Z$.
\end{enumerate}
Suppose for some $q\geq 0$ we have constructed $P_{q}$ and $\pi_{q}$. Consider the triangle
\[C_{q}\arr{\alpha_{q}}P_{q}\arr{\pi_{q}}M\ra C_{q}[1]
\]
induced by $\pi_{q}$. By applying $\cd(P[n],?)$ we get a long exact sequence
\[\dots\ra\cd(P[n+1],M)\ra\cd(P[n],C_{q})\ra\cd(P[n],P_{q})\ra\dots
\]
If $\cd(P[n],C_{q})\neq 0$, then either $\cd(P[n+1],M)\neq 0$ or $\cd(P[n],P_{q})\neq 0$. In the first case, we would have $n\geq k_{M}-1$. In the second case we would have $\cd(P[n],P'[m])\neq 0$ for some $P'\in\cp\ko m\geq k_{M}$, and so $n\geq m\geq k_{M}$. Therefore, $\cd(P[n],C_{q})\neq 0$ implies $n\geq k_{M}-1$. This allows us to take
\[Z_{q}\in\Sum(\cp^+[k_{M}-1])
\]
together with a morphism $\beta_{q}:Z_{q}\ra C_{q}$ inducing a surjection
\[\beta_{q}^{\we}:\cd(P[n],Z_{q})\ra \cd(P[n],C_{q})
\]
for each $P\in\cp\ko n\in\Z$. Define $f_{q}$ by the triangle
\[Z_{q}\arr{\alpha_{q}\beta_{q}}P_{q}\arr{f_{q}}P_{q+1}\ra Z_{q}[1]
\]
Since $\pi_{q}\alpha_{q}=0$, there exists $\pi_{q+1}:P_{q+1}\ra M$ such that $\pi_{q+1}f_{q}=\pi_{q}$. Notice that, since
\[Z_{q}[1]\in\Sum(\cp^+[k_{M}]),
\]
then
\[ P_{q+1}\in\Sum(\cp^+[k_{M}])^{*(q+1)}.
\]
Also, the surjectivity required for $\pi^{\we}_{q+1}$ follows from the surjectivity guaranteed for $\pi^{\we}_{q}$. Define $P_{\infty}$ to be the Milnor colimit of the sequence $f_{q}\ko q\geq 0$:
\[\coprod_{q\geq 0}P_{q}\arr{\phi}\coprod_{q\geq 0}P_{q}\arr{\psi} P_{\infty}\ra \coprod_{q\geq 0}P_{q}[1].
\]
Consider the morphism 
\[\theta=\scriptsize{\left[\begin{array}{ccc}\pi_{0}&\pi_{1}&\dots\end{array}\right]}:\coprod_{q\geq 0}P_{q}\ra M.
\] 
Since $\pi_{q+1}f_{q}=\pi_{q}$ for every $q\geq 0$, we have $\theta\phi=0$, which induces a morphism $\pi_{\infty}:P_{\infty}\ra M$ such that $\pi_{\infty}\psi=\theta$. If we prove that $\pi_{\infty}$ induces an isomorphism
\[\pi^{\we}_{\infty}:\cd(P[n],P_{\infty})\arr{\sim}\cd(P[n],M)
\] 
for every $P\in\cp\ko n\in\Z$, then we would have
\[\cd(P[n],\cone(\pi_{\infty}))=0
\]
for every $P\in\cp\ko n\in\Z$, that is to say
\[\cone(\pi_{\infty})\in\Tria(\cp)^{\bot}.
\]
Therefore, we would have proved that $\Tria(\cp)$ is an aisle in $\cd$. Also, if $M\in\Tria(\cp)$, in the triangle
\[P_{\infty}\arr{\pi_{\infty}}M\ra\cone(\pi_{\infty})\ra P_{\infty}[1]
\]
we would have that $P_{\infty}\ko M\in\Tria(\cp)$, which implies
\[\cone(\pi_{\infty})\in\Tria(\cp).
\]
Therefore, $\cone(\pi_{\infty})=0$ and so $\pi_{\infty}$ is an isomorphism. Thus, we would have proved that for every object of $\Tria(\cp)$ there exists an integer $k_{M}$ and a triangle
\[\coprod_{q\geq 0}P_{q}\ra\coprod_{q\geq 0}P_{q}\ra M[-k_{M}]\ra\coprod_{q\geq 0}P_{q}[1]
\]
with $P_{q}\in\Sum(\cp^+)^{*q}\ko q\geq 0$. In particular, we would have that $\Tria(\cp)$ is exhaustively generated to the left by $\cp$.

Let us prove the bijectivity of $\pi^{\we}_{\infty}$. The surjectivity follows from the identity $\pi^{\we}_{\infty}\psi^{\we}=\theta^{\we}$ and the fact that $\theta^{\we}$ is surjective (thanks to the surjectivity of the $\pi_{q}^{\we}\ko q\geq 0$ and the  compactness of the $P\in\cp$). Now consider the commutative diagram
\[\xymatrix{\coprod_{q\geq 0}\cd(P[n],P_{q})\ar[r]^{\phi^{\we}} & \coprod_{q\geq 0}\cd(P[n],P_{q})\ar[r]^{\psi^{\we}}\ar[dr]_{\theta^{\we}} & \cd(P[n],P_{\infty})\ar[r]\ar[d]^{\pi^{\we}_{\infty}} & 0 \\
&& \cd(P[n],M) &
}
\]
The map $\psi^{\we}$ is surjective since the map
\[\phi[1]^{\we}:\coprod_{q\geq 0}\cd(P[n],P_{q}[1])\ra \coprod_{q\geq 0}\cd(P[n],P_{q}[1])
\]
is injective. If we prove that the kernel of $\theta^{\we}$ is contained in the image of $\phi^{\we}$, then we would have the injectivity of $\pi^{\we}_{\infty}$ by an easy diagram chase. Let
\[g=\left[\begin{array}{ccc}g_{0} & g_{1} & \dots\end{array}\right]^{t}:P[n]\ra\coprod_{q\geq 0}P_{q}
\]
such that
\[\left[\begin{array}{ccc}\pi_{0} & \pi_{1} & \dots\end{array}\right]\left[\begin{array}{ccc}g_{0}&g_{1}&\dots\end{array}\right]^{t}=\pi_{0}g_{0}+\pi_{1}g_{1}+\dots=0.
\]
Notice that there exists an $s\geq 0$ such that $g_{s+1}=g_{s+2}=\dots=0$. Then 
\[\pi_{0}g_{0}+\dots +\pi_{s}g_{s}=0
\] 
implies 
\[\pi_{s}(f_{s-1}\dots f_{0}g_{0}+ f_{s-1}\dots f_{1}g_{1}+\dots +g_{s})=0
\] 
and so the morphism
\[f_{s-1}\dots f_{0}g_{0}+ f_{s-1}\dots f_{1}g_{1}+\dots +g_{s}
\] 
factors through $\alpha_{s}$:
\[f_{s-1}\dots f_{0}g_{0}+ f_{s-1}\dots f_{1}g_{1}+\dots +g_{s}=\alpha_{s}\gamma_{s}:P[n]\ra C_{s}\ra P_{s}.
\]
By construction of $Z_{s}$ we have that $\gamma_{s}$ factors through $\beta_{s}$, and so 
\[f_{s-1}\dots f_{0}g_{0}+ f_{s-1}\dots f_{1}g_{1}+\dots +g_{s}=\alpha_{s}\beta_{s}\xi_{s}.
\] 
This implies
\[f_{s}\dots f_{0}g_{0}+ f_{s}\dots f_{1}g_{1}+\dots +f_{s}g_{s}=f_{s}\alpha_{s}\beta_{s}\xi_{s}=0,
\]
since $f_{s}\alpha_{s}\beta_{s}=0$ by construction of $f_{s}$. Therefore, the morphism
\[h:P[n]\ra\coprod_{q\geq 0}P_{q}
\]
with non-vanishing components
\[P[n]\ra P_{r}\ra\coprod_{q\geq 0}P_{q}
\]
induced by
\[g_{r}+\dots + f_{r-1}\dots f_{1}g_{1}+f_{r-1}\dots f_{0}g_{0}: P[n]\ra P_{r}
\]
with $0\leq r\leq s$, satisfies $\varphi^{\we}(h)=g$.
\end{proof}

\begin{corollary}\label{right bounded compactly generated implies exhaustive}
Let $\ca$ be a dg category and let $\cp$ be a set of objects of $\cd^-\ca$ such that:
\begin{enumerate}[1)]
\item it is both right bounded and dually right bounded,
\item its objects are compact in $\Tria(\cp)\cap\cd^-\ca$,
\item $(\cd\ca)(P,P'[i])=0$ for each $P\ko P'\in\cp$ and $i\geq 1$.
\end{enumerate}
Then $\Tria(\cp)\cap\cd^-\ca$ is exhaustively generated to the left by $\cp$.
\end{corollary}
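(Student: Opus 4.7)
The plan is to apply Proposition \ref{g implies e to the left} to the set $\cp$ viewed inside the ambient triangulated category $\cd':=\Tria(\cp)\cap\cd^-\ca$. Note that $\cd'$ is automatically a full triangulated subcategory of $\cd\ca$, being the intersection of two such subcategories. To conclude I will first verify the four hypotheses of Proposition \ref{g implies e to the left} and then observe that $\Tria_{\cd'}(\cp)$ necessarily coincides with $\cd'$.

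Hypothesis (2) of Proposition \ref{g implies e to the left} coincides with hypothesis (3) of the corollary, and so it follows that we may take $m=0$ in Lemma \ref{characterization of dually right bounded}. Hypothesis (1) of Proposition \ref{g implies e to the left} is immediate from hypothesis (2) of the corollary: since the coproducts computed in $\Tria_{\cd'}(\cp)$ agree with those in $\cd'$, compactness in $\cd'$ transfers to compactness in $\Tria_{\cd'}(\cp)$. For hypothesis (3), the right boundedness assumption gives an integer $m_{\cp}$ with $\cp\subseteq\cd^{\leq m_{\cp}}\ca$, so $\cp^{+}\subseteq\cd^{\leq m_{\cp}}\ca$. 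Since $\cd^{\leq m_{\cp}}\ca$ is an aisle in $\cd\ca$ it is closed under small coproducts (and under extensions), and $\Tria(\cp)$ is obviously closed under small coproducts as well. Hence small coproducts of finite extensions of objects of $\Sum(\cp^{+})$ exist in $\cd^{\leq m_{\cp}}\ca\cap\Tria(\cp)\subseteq\cd'$ and stay in $\cd'$.

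The crucial hypothesis (4) is where dual right boundedness enters. Applying Lemma \ref{characterization of dually right bounded} with $m=0$ to each object $M\in\cd^-\ca$ yields an integer $s_{M}$ such that $(\cd\ca)(P,M[i])=0$ for every $P\in\cp$ and every $i>s_{M}$, that is $\cd'(P[n],M)=(\cd\ca)(P[n],M)=0$ whenever $n<-s_{M}$. Setting $k_{M}:=-s_{M}$ gives the required bound for all $M\in\cd'\subseteq\cd^-\ca$.

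With all four hypotheses in place, Proposition \ref{g implies e to the left} asserts that $\Tria_{\cd'}(\cp)$ is an aisle in $\cd'$ which is exhaustively generated to the left by $\cp$. It remains to identify $\Tria_{\cd'}(\cp)$ with $\cd'$ itself. Let $M\in\cd'$ belong to the coaisle of $\Tria_{\cd'}(\cp)$ in $\cd'$; then $\cd\ca(P[n],M)=\cd'(P[n],M)=0$ for every $P\in\cp$ and every $n\in\Z$, so $M\in\Tria(\cp)^{\bot}$ (by Lemma \ref{right orthogonal and devissage}). Combined with $M\in\Tria(\cp)$ this forces $M=0$, and hence $\Tria_{\cd'}(\cp)=\cd'$, completing the proof. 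The only mildly delicate point of the argument is checking hypothesis (3), since $\cd^-\ca$ is not closed under arbitrary small coproducts in $\cd\ca$, but this is handled cleanly by right boundedness.
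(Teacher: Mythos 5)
Your proof is correct and follows essentially the same route as the paper's: both apply Proposition \ref{g implies e to the left} with ambient category $\Tria(\cp)\cap\cd^-\ca$, using right boundedness to place the needed coproducts of extensions of $\Sum(\cp^+)$ inside that subcategory, and then identify $\Tria(\cp)\cap\cd^-\ca$ with the subcategory generated by $\cp$. The one noteworthy difference is that you obtain the bound $k_M$ (hypothesis 4 of the proposition) from dual right boundedness via Lemma \ref{characterization of dually right bounded}, with $m=0$ supplied by hypothesis 3), whereas the paper's proof attributes this bound to right boundedness; your reading is the more convincing one, since it is exactly here that the ``dually right bounded'' hypothesis earns its place in the statement.
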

\begin{proof}
Put $\cd:=\Tria(\cp)\cap\cd^-\ca$. Since $\cp$ is right bounded, then $\cp$ is contained in $\cd$ and for each integer $k$ small coproducts of finite extensions of $\Sum(\cp^+[k])$ are in $\cd$. Also, since $\cp$ is right bounded, for each $M\in\cd$ there exists an integer $k_{M}$ such that $\cd(P[n],M)=0$ for each $P\in\cp$ and $n<k_{M}$. Therefore, we can apply Proposition \ref{g implies e to the left}.
\end{proof}

\begin{corollary}\label{parametrization right bounded hn recollements}
Let $\ca$ be a dg category. The following assertions are equivalent:
\begin{enumerate}[1)]
\item $\cd^-\ca$ is a recollement of $\cd^-\cb$ and $\cd^-\cc$, for certain dg categories $\cb$ and $\cc$ with cohomology concentrated in non-positive degrees.
\item There exist sets $\cp\ko \cq$ in $\cd^-\ca$ such that:
\begin{enumerate}[2.1)]
\item $\cp$ and $\cq$ are right bounded.
\item $\cp$ and $\cq$ are dually right bounded.
\item The objects of $\cp$ are compact in $\cd\ca$ and satisfy 
\[(\cd\ca)(P,P'[i])=0
\] 
for all $P\ko P'\in\cp$ and $i\geq 1$.
\item The objects of $\cq$ are compact in $\Tria(\cq)\cap\cd^-\ca$ and satisfy 
\[(\cd\ca)(Q,Q'[i])=0
\] 
for all $Q\ko Q'\in\cq$ and $i\geq 1$.
\item $(\cd\ca)(P[i],Q)=0$ for each $P\in\cp\ko Q\in\cq$ and $i\in\Z$.
\item $\cp\cup\cq$ generates $\cd\ca$.
\end{enumerate}
\end{enumerate}
\end{corollary}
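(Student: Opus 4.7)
The strategy is to bridge between this corollary and Proposition~\ref{parametrization right bounded recollements}, with the extra non-positivity assumption on the cohomology of $\cb$ and $\cc$ corresponding exactly to the exceptionality conditions (the vanishing of $(\cd\ca)(P,P'[i])$ and $(\cd\ca)(Q,Q'[i])$ for $i\geq 1$) appearing in 2.3) and 2.4).

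For $1)\Rightarrow 2)$, I would start from the sets $\cp$ and $\cq$ produced in the proof of $1)\Rightarrow 2)$ of Proposition~\ref{parametrization right bounded recollements}, namely $\cp=\{j_{!}(C^{\we})\}_{C\in\cc}$ and $\cq=\{i_{*}(B^{\we})\}_{B\in\cb}$. Those sets already satisfy conditions 2.1), 2.2), 2.5), 2.6) as well as the compactness parts of 2.3) and 2.4), so the only thing to verify is the extra orthogonality. Since $\cc$ has cohomology concentrated in non-positive degrees, Lemma~\ref{looking for coherence} (or a direct computation $\H^{i}\cc(C,C')=(\cd\cc)(C^{\we},C'^{\we}[i])$) yields $(\cd\cc)(C^{\we},C'^{\we}[i])=0$ for every $i\geq 1$; transporting through the triangle equivalence $j_{!}:\cd^{-}\cc\arr{\sim}\Tria(\cp)\cap\cd^{-}\ca$ of Lemma~\ref{Morita theory for right bounded} gives the required vanishing for $\cp$. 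The argument for $\cq$ is identical, using the equivalence $i_{*}:\cd^{-}\cb\arr{\sim}\Tria(\cq)\cap\cd^{-}\ca$ and the cohomological non-positivity of $\cb$.

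For $2)\Rightarrow 1)$, I would reduce to Proposition~\ref{parametrization right bounded recollements} by upgrading condition 2.3)--2.4) of the corollary to the exhaustive generation statements appearing in Proposition~\ref{parametrization right bounded recollements}. For this I apply Corollary~\ref{right bounded compactly generated implies exhaustive}: for the set $\cp$, its hypotheses are the right boundedness (2.1), dual right boundedness (2.2), the compactness of its objects in $\Tria(\cp)\cap\cd^{-}\ca$ (which follows from the compactness in $\cd\ca$ assumed in 2.3) together with Lemma~\ref{preservation of coproducts and compacity}), and the exceptionality also given by 2.3); hence $\Tria(\cp)\cap\cd^{-}\ca$ is exhaustively generated to the left by $\cp$. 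The same argument for $\cq$ (whose compactness in $\Tria(\cq)\cap\cd^{-}\ca$ is already assumed in 2.4) delivers the analogous exhaustivity, and Proposition~\ref{parametrization right bounded recollements} then produces the recollement $\cd^{-}\ca\simeq(\cd^{-}\cb,\cd^{-}\cc)$. To conclude, one checks that the dg categories $\cb$, $\cc$ realizing the recollement can be chosen to have cohomology in non-positive degrees: by construction (see the proof of Lemma~\ref{Morita theory for right bounded}) one may take $\cb$ to be a fibrant replacement of $\cq$ and $\cc$ a fibrant replacement of $\cp$, so that $\H^{i}\cb(B,B')=(\cd\ca)(Q,Q'[i])=0$ for $i\geq 1$ by 2.4), and likewise $\H^{i}\cc(C,C')=0$ for $i\geq 1$ by 2.3).

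The routine points are the transport of Ext-vanishing through the triangle equivalences and the verification of the hypotheses of Corollary~\ref{right bounded compactly generated implies exhaustive}; the only mildly subtle point, and the one I would write out most carefully, is the implication ``compact in $\cd\ca$ $\Rightarrow$ compact in $\Tria(\cp)\cap\cd^{-}\ca$'' for the objects of $\cp$, which needs the fact that small coproducts inside $\Tria(\cp)\cap\cd^{-}\ca$ are computed in $\cd\ca$; this is essentially Lemma~\ref{preservation of coproducts and compacity} applied to the inclusion of the right bounded subcategory, using that $\cp$ is right bounded so that all the relevant coproducts lie in $\cd^{-}\ca$.
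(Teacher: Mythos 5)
Your proof is correct and follows essentially the same route as the paper's: both directions are reduced to Proposition \ref{parametrization right bounded recollements}, with Corollary \ref{right bounded compactly generated implies exhaustive} supplying the exhaustive generation in $2)\Rightarrow 1)$, and the identifications $\H i\cb(B,B')\cong(\cd\ca)(Q,Q'[i])$ and $\H i\cc(C,C')\cong(\cd\ca)(P,P'[i])$ for the fibrant replacements translating non-positivity of cohomology into the vanishing conditions of 2.3) and 2.4), exactly as in the paper. The only point you elaborate beyond the paper's very terse proof is the passage from compactness of the objects of $\cp$ in $\cd\ca$ to compactness in $\Tria(\cp)\cap\cd^-\ca$; the paper leaves this implicit, and your justification (coproducts formed inside $\Tria(\cp)\cap\cd^-\ca$ agree with those of $\cd\ca$, by the same direct-summand/aisle-level argument as in the proof of Lemma \ref{preservation of coproducts and compacity}) is sound, even though that lemma's statement itself only covers $\cd^-\ca$ inside $\cd\ca$.
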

\begin{proof}
$1)\Rightarrow 2)$ Is similar to the corresponding implication in Proposition \ref{parametrization right bounded recollements}. The fact that the dg categories $\cb$ and $\cc$ have cohomology concentrated in non-positive degrees is reflected in the fact that 
\[(\cd\ca)(P,P'[i])=(\cd\ca)(Q,Q'[i])=0
\]
for each $P\ko P'\in\cp\ko Q\ko Q'\in\cq$ and $i\geq 1$.

$2)\Rightarrow 1)$ Thanks to Corollary \ref{right bounded compactly generated implies exhaustive}, conditions 2.3 and 2.4 of Proposition \ref{parametrization right bounded recollements} are satisfied. Therefore, this Proposition (and its proof) ensures that $\cd^-\ca$ is a recollement of $\cd^-\cb$ and $\cd^-\cc$, where $\cb$ is a fibrant replacement of $\cq$ and $\cc$ is a fibrant replacement of $\cp$. Finally, the fact that
\[(\cd\ca)(P,P'[i])=(\cd\ca)(Q,Q'[i])=0
\]
for each $P\ko P'\in\cp\ko Q\ko Q'\in\cq$ and $i\geq 1$ implies that $\cb$ and $\cc$ have cohomology concentrated in non-positive degrees.
\end{proof}

\begin{definition}
Let $A$ be an ordinary algebra. If $M\in\cc A$ is a complex of $A$-modules, the \emph{graded support}\index{graded support} of $M$ is the set of
integers $i\in\Z$ such that $M^i\neq 0$. In case $M$ is a bounded complex, we consider $w(M)=sup\{i\in\Z\mid M^i\neq 0\}-inf\{i\in\Z\mid M^i\neq 0\}+1$ and call it the \emph{width}\index{width}\index{$w(M)$} of $M$. Suppose now that $P\in\cc A$ is a bounded complex of projective $A$-modules, so that $P$ is a dually right bounded object of $\cd^-A$ (\cf Lemma \ref{characterization of dually right bounded}),  and $M\in\cd^-A$ is any object of the right bounded derived category. Unless $M\in\Tria_{\cd A}(P)^\perp$, there is a well-defined integer $k_M=inf\{n\in\Z\mid (\cd A)(P[n],M)\neq 0\}$\index{$k_{M}$}.
\end{definition}

\begin{lemma} \label{route to Konig2}
Let $P$ be a bounded complex of projective $A$-modules such that $(\cd A)(P,P[i])=0$, for all $i>0$, and the canonical
morphism $(\cd A)(P,P[i])^{(\Lambda)}\ra(\cd A)(P,P[i]^{(\Lambda )})$ is an isomorphism, for every integer $i$ and every set $\Lambda$.
Let $M$ be an object of $\Tria_{\cd A}(P)\cap\cd^-A$. There exists a sequence of inflations $0=P_{-1}\ra P_0\ra P_1\ra ...$ in $\cc A$, whose colimit is denoted by $P_{\infty}$, satisfying the following properties:
\begin{enumerate}[1)]
\item $P_\infty$ is isomorphic to $M$ in $\cd A$.
\item $P_n/P_{n-1}$ belongs to $\Sum(\{P\}^+[k_{M}+n])$, for each $n\geq 0$. 
\item If $n\geq w(P)-k_M$ then the graded supports of $P$ and $P_\infty/P_n$ are disjoint.
\end{enumerate}
\end{lemma}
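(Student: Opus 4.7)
The proof refines the iterated cell-attachment of Proposition~\ref{g implies e to the left} in two ways: first by performing it at the chain level in $\cc A$ using genuine inflations (so that the colimit in $\cc A$ computes the Milnor colimit in $\cd A$), and second by choosing the cell attached at step $n$ to lie in $\Sum(\{P\}^+[k_M+n])$ so that the shifts climb upward as $n$ grows. The upward climb is what ultimately forces the graded support of $P_\infty/P_n$ to slide below that of $P$.

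First I would set up the inductive hypothesis that $P_n \in \cc A$ has been built together with a chain map $\pi_n\colon P_n\to M$ such that (a) $0=P_{-1}\subset P_0\subset\cdots\subset P_n$ is a chain of inflations in $\cc A$ with $P_m/P_{m-1}\in\Sum(\{P\}^+[k_M+m])$ for $0\le m\le n$, and (b) $\Hom_{\cd A}(P[j],\pi_n)$ is surjective for all $j\in\Z$ and bijective for $j\le k_M+n-1$. For the base case take $P_0=\bigoplus_{j\ge k_M}P[j]^{(\Hom_{\cd A}(P[j],M))}$ with $\pi_0$ the tautological map; using that $P$ is $\ch$-projective and that $\Hom_{\cd A}(P,P[i])=0$ for $i>0$, condition (b) is immediate.

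For the inductive step, form the triangle $C_n\to P_n\xrightarrow{\pi_n}M\to C_n[1]$ in $\cd A$. The long exact sequence together with (b) yields $\Hom_{\cd A}(P[j],C_n)=0$ for $j\le k_M+n-1$, so the first possibly non-vanishing group is at $j=k_M+n$. Choose $Z_n:=P[k_M+n]^{(\Hom_{\cd A}(P[k_M+n],C_n))}$ and a morphism $\beta_n\colon Z_n\to C_n$ which is surjective on $\Hom_{\cd A}(P[k_M+n],-)$; by $\ch$-projectivity of $Z_n$ the composition $Z_n\to C_n\to P_n$ can be represented by a genuine chain map, and passing to its mapping cylinder produces an inflation $P_n\hookrightarrow P_{n+1}$ in $\cc A$ with quotient $Z_n[1]\in\Sum(\{P\}^+[k_M+n+1])$, which extends $\pi_n$ to $\pi_{n+1}$. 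Comparing the triangles defining $C_n$ and $C_{n+1}$ gives a triangle $Z_n\xrightarrow{\beta_n}C_n\to C_{n+1}\to Z_n[1]$, and applying $\Hom_{\cd A}(P[j],-)$ to it together with the fact that $\Hom_{\cd A}(P[j-1],Z_n)=0$ for $j\le k_M+n$ shows that $\Hom_{\cd A}(P[j],C_{n+1})=0$ for $j\le k_M+n$, closing the induction.

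Finally, the colimit $P_\infty:=\colim_n P_n$ taken degreewise in $\cc A$ computes the Milnor colimit of the sequence in $\cd A$ (as the maps are degreewise split monomorphisms of complexes, the relevant $\varprojlim^1$ vanishes). The induced map $\pi_\infty$ satisfies $\Hom_{\cd A}(P[j],\pi_\infty)\cong\lid_n\Hom_{\cd A}(P[j],\pi_n)$ by compactness of $P$, hence is an isomorphism for every $j$; since $P_\infty,M\in\Tria(P)$, the cone of $\pi_\infty$ lies in $\Tria(P)\cap\Tria(P)^\perp=0$, proving (1). Property (2) is built into the construction. For (3), if $n\ge w(P)-k_M$ and $P$ has graded support $[a,b]$, then for every $m>n$ each $P_m/P_{m-1}$ is a coproduct of shifts $P[j]$ with $j\ge k_M+m\ge w(P)+1$, whose graded support $[a-j,b-j]$ is entirely contained in $(-\infty,a-1]$; hence the graded support of $P_\infty/P_n$ (contained in the union of those of the cells) is disjoint from $[a,b]$. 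The main obstacle I expect is making the passage from the triangulated construction to genuine inflations in $\cc A$ sufficiently clean so that the degreewise colimit really does compute $M$ in $\cd A$, which forces us to work systematically with mapping cylinders of chain maps between $\ch$-projective complexes.
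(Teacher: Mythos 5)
Your construction of the filtration is sound and essentially the same as the paper's: attach cells which are coproducts of shifts of $P$ in degrees $\geq k_M+n$ at the $n$th step, realize the attaching maps as genuine chain maps (using that the cells are $\ch$-projective) so that the $f_n$ become inflations in $\cc A$, and observe that for $n\geq w(P)-k_M$ all later cells have graded support strictly below that of $P$, which gives properties 2) and 3). Up to this point you and the paper agree.

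The gap is in your verification of property 1). You write that $\Hom_{\cd A}(P[j],\pi_\infty)\cong\lid_n\Hom_{\cd A}(P[j],\pi_n)$ \emph{``by compactness of $P$''}, but $P$ is not assumed compact in $\cd A$: it is a bounded complex of projective modules which need not be finitely generated, and in the intended application (the object $Q$ in Theorem \ref{Konig}) it is genuinely non-compact. This is exactly why the lemma carries the hypothesis that the canonical map $(\cd A)(P,P[i])^{(\Lambda)}\ra(\cd A)(P,P[i]^{(\Lambda)})$ is an isomorphism — a hypothesis your argument never uses at this point, which is a sign that you have smuggled in a stronger assumption. That hypothesis only says that $(\cd A)(P,?)$ commutes with coproducts of shifted copies of $P$ itself; it does not give commutation with the coproduct $\coprod_{n\geq 0}P_n$ appearing in the Milnor triangle, which is what your identification of $\Hom(P[j],\pi_\infty)$ with the direct limit requires. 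The paper closes this gap by a two-stage argument: first, choosing $n\geq w(P)-k_M$ large enough that the graded supports of $P$, $P[1]$ and $\coprod_{k>n}P_k/P_n$ are disjoint, it uses the conflation
\[\Bigl(\coprod_{k\leq n}P_k\Bigr)\oplus\Bigl(\coprod_{k>n}P_n\Bigr)\ra\coprod_{k\geq 0}P_k\ra\coprod_{k>n}P_k/P_n
\]
to see that the tail is invisible to $(\cd A)(P,?)$ and $(\cd A)(P,?[-1])$; second, it proves by induction on the extension length, using the canonical-morphism hypothesis together with the vanishing $(\cd A)(P,P[i]^{(\Lambda)})=0$ for $i>w(P)$ (disjoint supports), that $(\cd A)(P[i],?)$ preserves small coproducts of objects of $\Sum(\{P\}^+)^{*m}$ for each fixed $m$. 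Only then can one run the limit argument of Proposition \ref{g implies e to the left} and conclude that $\pi_\infty$ induces isomorphisms on $(\cd A)(P[j],?)$, hence that its cone lies in $\Tria(P)\cap\Tria(P)^{\bot}=0$. You need to replace your appeal to compactness by an argument of this kind; as it stands, the most delicate part of the lemma is missing.
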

\begin{proof}
Imitating the proof of Proposition \ref{g implies e to the left}, we shall construct a filtration satisfying conditions 2) and 3), leaving for the last moment the verification of condition 1). 

\emph{First step: condition 2).} Note that in the proof of that proposition, we start with $P_0\in\Sum(P[i]:$ $i\geq k_M)$ and then, at each step, $P_{q+1}$ appears in a triangle
\[Z_q\arr{\alpha_q\beta_q}P_q\arr{f_q}P_{q+1}\ra Z_q[1],
\]
where $Z_q$ is a coproduct of shifts $P[i]$, with $i\geq k_{M}-1$. Working in $\cc A$ and bearing in mind that $Z_q$ is $\ch$-projective (it is a right bounded complex of projective $A$-modules), we can assume without loss of generality that $f_q$ is the mapping cone of a chain map $Z_q\ra P_q$ and, as a consequence, that $f_q$ is an inflation in $\cc A$ appearing in a conflation
\[P_q\arr{f_q}P_{q+1}\ra Z_q[1], 
\]
where $Z_q[1]$ is a coproduct in $\cc A$ of shifts $P[i]$, $i\geq k_M$. We shall prove by induction on $q\geq 0$ that one can choose 
$Z_q[1]\in\Sum(\{P\}^+[q+1+k_{M}])$ or, equivalently, that $Z_q\in\Sum(\{P\}^+[q+k_{M}])$. Since $Z_q$ is defined via a $\Sum(\{P[i]\}_{i\in\Z})$-precover
$Z_q\arr{\beta_q}C_q$, our task reduces to prove that $(\cd A)(P[i],C_q)\neq 0$ implies $i\geq q+k_M$. We leave as an exercise checking that for $q=0$. Provided it is true for $q-1$, we apply the homological functor $(\cd A)(P[i],?)$ to the triangle
\[Z_{q-1}\arr{\beta_{q-1}}C_{q-1}\arr{u_{q-1}}C_q\ra Z_{q-1}[1]
\]
and, bearing in mind that $(\cd A)(P[i],Z_{q-1})\ra(\cd A)(P[i],C_{q-1})$ is surjective, we get that $(\cd A)(P[i],C_{q})\ra(\cd A)(P[i],Z_{q-1}[1])$ is injective. As a consequence, the inequality $(\cd A)(P[i],C_{q})\neq 0$ implies that $(\cd A)(P[i],Z_{q-1}[1])\neq 0$ and the induction hypothesis guarantees that $Z_{q-1}$ is a coproduct of shifts $P[j]$, with $j\geq q-1+k_M$. Then $(\cd A)(P[i],C_{q})\neq 0$ implies that $0\neq(\cd A)(P[i],P[j+1])=(\cd A)(P,P[j+1-i])$,
for some $j\geq q-1+k_M$. Then $i\geq q+k_M$ as desired. In conclusion, we can view the map $f_q:P_q\ra P_{q+1}$ as an inflation in $\cc A$ whose cokernel is isomorphic in $\cc A$ to a coproduct of shifts $P[i]$, with $i\geq q+1+k_M$. 

%AQUI DEMOSTRAMOS QUE LA COMPOSICION DE TODAS LAS INFLACIONES ES UNA INFLACION:
%For simplicity, we view the $f_q$ as inclusions and then put $P_\infty :=\bigcup_{n\geq 0}P_n$. The filtration
%satisfies conditions 2) and 3). Fix now any integer
%$j\in\mathbf{Z}$. Bearing in mind that $P_n/P_{n-1}$ is a
%coproduct in $\mathcal{C}A$ of shifts $P[i]$, with $i\geq n+k_m$,
%then the bounded condition (actually, it is enough with upper
%bounded condition) of $P$ guarantees that
%$P_n^j/P_{n-1}^j=(P_n/P_{n-1})^j=0$, for $n>>0$. Then
%$P_\infty^j=P_n^j$ for some large enough $n$ (depending on $j$).
%It follows easily that the inclusion $P_m\longrightarrow
%P_\infty$ is pointwise split, whence an inflation in
%$\mathcal{C}A$. That proves condition 4).

\emph{Second step: condition 3).} If now $n\geq 0$ is any natural number, then $P_\infty/P_n$ admits a filtration
\[0=P_n/P_n\ra P_{n+1}/P_n\ra ... 
\]
in $\cc A$, where the quotient of two consecutive factors is a coproduct of shifts $P[i]$, with $i\geq n+k_M$. If $n\geq w(P)-k_M$, then any such index $i$ satisfies $i\geq w(P)$ and then the graded supports of $P$ and $P[i]$ are disjoint. As a result the graded supports of $P$ and $P_\infty/P_n$ are disjoint
whenever $n\geq w(P)-k_M$.

\emph{Third step: condition 1).} Finally, in order to prove condition 1), notice that the argument in the final part of the proof of Proposition \ref{g implies e to the left} can be repeated, as soon as we are able to prove that the canonical morphism $\coprod_{n\geq 0}(\cd A)(P[i],P_n)\ra(\cd A)(P[i],\coprod_{n\geq 0}P_n)$ is an isomorphism, for every integer $i\in\Z$. It is not difficult to reduce that to the case in which $i=0$. For that we fix $n\geq w(P)-k_M$ large enough so that also the graded supports of $P[1]$ and $P_\infty /P_n$ are disjoint. Then we get a conflation in $\cc A$
\[\left(\coprod_{k\leq n}P_k\right)\oplus\left(\coprod_{k>n} P_n \right)\ra\coprod_{k\geq 0}P_k\ra\coprod_{k>n}P_k/P_n.
\]
That conflation of $\cc A$ gives rise to the corresponding triangle of $\cd A$. But the right term in the above conflation has a graded support which is
disjoint with those of $P$ and  $P[1]$. That implies that 
\[(\cd A)(P,\coprod_{k>n}P_k/P_n)=0=(\cd A)(P,\coprod_{k>n}P_k/P_n[-1])
\]
and also 
\[\coprod_{k>n}(\cd A)(P,P_k/P_n)=0=\coprod_{k>n}(\cd A)(P,P_k/P_n[-1]).
\] 
We then get a commutative diagram with horizontal isomorphisms:
\[\xymatrix{\left(\coprod_{k\leq n}(\cd A)(P,P_{k})\right)\oplus\left(\coprod_{k>n}(\cd A)(P,P_{n})\right)\ar[d]^{\can}\ar[rr]^{\ \ \ \ \ \ \ \ \ \sim} && \coprod_{k\geq 0}(\cd A)(P,P_{k})\ar[d]^{\can} \\
\left((\cd A)(P,\coprod_{k\leq n}P_{k})\right)\oplus\left((\cd A)(P,\coprod_{k>n}P_{n})\right)\ar[rr]^{\ \ \ \ \ \ \ \ \ \sim} && (\cd A)(P,\coprod_{k\geq 0}P_{k})
}
\]
The proof will be finished if we are able to prove, for any fixed natural number $n$, that $(\cd A)(P[i],?)$ preserves small coproducts of objects in $\Sum(\{P\}^+)^{*n}$ for every $i\in\Z$. Let us prove it. From the hypotheses on $P$ and the fact if $i>w(P)$ then $(\cd A)(P,P[i]^{(\Lambda )})=0$ for every set $\Lambda$, one readily sees that, for every integer $m$ and every family of exponent sets $(\Lambda_i)_{i\geq m}$, the canonical morphism
$\coprod_{i\geq m}(\cd A)(P,P[i])^{(\Lambda_i)}\ra(\cd A)(P,\coprod_{i\geq m}P[i]^{(\Lambda_i)})$ is an isomorphism. Our goal is then attained for $n=0$ and an easy induction argument gets the job done for every $n\geq 0$.
\end{proof}

Now S.~K\"{o}nig's theorem \cite[Theorem 1]{Konig1991} `follows' from our results.

\begin{theorem} \label{Konig}
Let $A$, $B$ and $C$ be ordinary algebras. The following assertions are equivalent: 
\begin{enumerate}[1)]
\item $\cd^-A$ is a recollement of $\cd^-C$ and $\cd^-B$. 
\item There are two objects $P\ko Q\in\cd^-A$ satisfying the following properties:
\begin{enumerate}[2.1)]
\item There are isomorphisms of algebras $C\cong(\cd A)(P,P)$ and $B\cong(\cd A)(Q,Q)$. 
\item $P$ is exceptional and isomorphic in $\cd A$ to a bounded complex of finitely generated projective $A$-modules. 
\item $(\cd A)(Q,Q^{(\Lambda )}[i])=0$ for every set $\Lambda$ and every $i\in\Z\setminus\{0\}$, the canonical map $(\cd A)(Q,Q)^{(\Lambda)}\ra(\cd A)(Q,Q^{(\Lambda )})$ is an isomorphism, and $Q$ is isomorphic in $\cd A$ to a bounded complex of projective $A$-modules. 
\item $(\cd A)(P,Q[i])=0$ for all $i\in\Z$. 
\item $P\oplus Q$ generates $\cd A$.
\end{enumerate}
\end{enumerate}
\end{theorem}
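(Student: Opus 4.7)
The strategy is to derive Theorem \ref{Konig} from Corollary \ref{parametrization right bounded hn recollements} applied to the dg category associated with the ordinary algebra $A$. Since $B$ and $C$ are ordinary algebras, viewed as dg categories with one object concentrated in degree $0$, we work throughout with singleton sets $\cp=\{P\}$ and $\cq=\{Q\}$, where $P$ should correspond to the object of $\cd^-A$ whose endomorphism dg algebra is quasi-isomorphic to $C$, and similarly for $Q$ with $B$.

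For $1)\Rightarrow 2)$, first apply Corollary \ref{parametrization right bounded hn recollements} to the given recollement to obtain sets $\cp$ and $\cq$ verifying conditions 2.1)--2.6) there. Because $B$ and $C$ are ordinary algebras, one may take $\cp=\{P\}$ and $\cq=\{Q\}$, with $P$ (respectively $Q$) corresponding to $C_C$ (respectively $B_B$) under the equivalences $\cd^-C\simeq\Tria(P)\cap\cd^-A$ and $\cd^-B\simeq\Tria(Q)\cap\cd^-A$ supplied by Lemma \ref{Morita theory for right bounded}. The algebra isomorphisms in 2.1) then follow from $(\cd A)(P,P[i])\cong \H^i\cb(\textbf{i}P,\textbf{i}P)$ together with the fact that this dg algebra has cohomology concentrated in degree $0$ and equal to $C$; the same calculation yields exceptionality of $P$, and analogously for $Q$. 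For 2.2), compactness of $P$ in $\cd A$ (from 2.3) of the corollary) combined with the classical characterization of compact objects in $\cd A$ gives that $P$ is quasi-isomorphic to a bounded complex of finitely generated projective $A$-modules. For 2.3), use Lemma \ref{characterization of dually right bounded} together with S.~K\"onig's criterion characterizing $\ch^b(\Proj A)$ inside $\cd^-A$ (as flagged in the introduction): since $Q$ is right bounded, dually right bounded, and exceptional, it is quasi-isomorphic to a bounded complex of projective $A$-modules. The vanishing $(\cd A)(Q,Q[n]^{(\Lambda)})=0$ for $n\neq 0$ and the isomorphism $(\cd A)(Q,Q)^{(\Lambda)}\arr{\sim}(\cd A)(Q,Q^{(\Lambda)})$ translate compactness of $Q$ in $\Tria(Q)\cap\cd^-A$. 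Conditions 2.4) and 2.5) are direct translations of 2.5) and 2.6) of the corollary.

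For $2)\Rightarrow 1)$, one verifies the six conditions of Corollary \ref{parametrization right bounded hn recollements} for $\cp=\{P\}$ and $\cq=\{Q\}$. Right boundedness is clear. Dual right boundedness of $P$ and $Q$ follows from the same K\"onig's criterion plus Lemma \ref{characterization of dually right bounded}, applied in the reverse direction, using that $P$ and $Q$ are bounded complexes of projective modules. Compactness of $P$ in $\cd A$ is automatic from 2.2). The vanishing $(\cd A)(P,P[n])=(\cd A)(Q,Q[n])=0$ for $n\geq 1$ comes from exceptionality. Condition 2.5) of the corollary matches 2.4) of the theorem, and 2.6) matches 2.5). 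The delicate point is the compactness of $Q$ in $\Tria(Q)\cap\cd^-A$: here Lemma \ref{route to Konig2} is crucial, as it produces, for every object $M\in\Tria(Q)\cap\cd^-A$, an explicit filtration $0=P_{-1}\hookrightarrow P_0\hookrightarrow P_1\hookrightarrow\dots$ in $\cc A$ whose colimit is isomorphic to $M$ and whose consecutive quotients lie in $\mathrm{Sum}(\{Q\}^+[k_M+n])$. Applying $(\cd A)(Q,?)$ to the Milnor triangle associated with this filtration and exploiting both clauses of condition 2.3) (coproduct preservation for $Q$ and vanishing of $(\cd A)(Q,Q[n]^{(\Lambda)})$ for $n\neq 0$) shows that $(\cd A)(Q,?)$ commutes with small coproducts of objects of $\Tria(Q)\cap\cd^-A$, as needed.

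The principal obstacle is this last step: the careful bookkeeping that translates between the intrinsic categorical condition `$Q$ is compact in $\Tria(Q)\cap\cd^-A$' and the explicit pair of module-theoretic conditions appearing in 2.3) of the theorem. Lemma \ref{route to Konig2} is precisely the tool designed to bridge this gap, and its delicate control over the graded supports of the quotients $P_{\infty}/P_n$ is what makes the coproduct-preservation argument work.
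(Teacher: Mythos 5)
Your proposal is correct and follows essentially the same route as the paper: both implications are reduced to Corollary \ref{parametrization right bounded hn recollements} applied to the singleton sets $\{P\}$ and $\{Q\}$, and the only delicate point---compactness of $Q$ in $\Tria(Q)\cap\cd^-A$---is settled via the filtration and graded-support control of Lemma \ref{route to Konig2} together with both clauses of condition 2.3), exactly as in the paper. The paper's write-up only adds the bookkeeping you leave implicit: that a coproduct existing in $\Tria(Q)\cap\cd^-A$ coincides with the ambient coproduct in $\cd A$, and that the truncation index $n$ can be chosen uniformly over the whole family because $k_{M_j}\geq k_M$ for every $j$.
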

\begin{proof}
$1)\Rightarrow 2)$ is a particular case of the proof of the corresponding implication in Corollary \ref{parametrization right bounded hn recollements}, where we take into account the additional consideration that the dg categories are in this case ordinary algebras, whence having cohomology concentrated in degree zero.

$2)\Rightarrow 1)$ Taking $\cp=\{P\}$ and $\cq=\{Q\}$, one readily sees that these one-point sets satisfy conditions 2.1, 2.2, 2.3, 2.5 and 2.6 of Corollary \ref{parametrization right bounded hn recollements}. As for condition 2.4 it only remains to prove that $Q$ is compact in $\Tria_{\cd A}(Q)\cap\cd^-A$. For this,
let $(M_{j})_{j\in J}$ be a family of objects in $\Tria(Q)\cap\cd^-A$ having a coproduct, say $M$,  in that subcategory and denote by $q_j:M_{j}\ra M$ the injections. Of course, we have that $sup\{i\in\Z\mid \H i(M_{j})\neq 0\}\leq sup\{i\in\Z\mid \H i(M)\neq 0\}$, for every $j\in J$. Then
the coproduct $\coprod_{j\in J} M_{j}$ of the family in $\cd A$ belongs to $\cd^-A$ and thus to $\Tria(Q)\cap\cd^-A$. This easily implies that 
$M\cong\coprod_{j\in J}M_{j}$ and the injection $q_j:M_{j}\ra M$ gets identified with the canonical injection $M_{j}\ra\coprod_{k\in J}M_{k}$.
For each $j\in J$ we consider the complex $Q_{j,\infty}$ and the filtration
\[0=Q_{j,-1}\ra Q_{j,0}\ra Q_{j,1}\ra ...
\]
given by Lemma \ref{route to Konig2} for $M_{j}$, where we have replaced the letter ``P'' by the letter ``Q'' to avoid confusion with the
object $P$. Notice that $k_{M}\leq k_{M_{j}}$ for every $j\in J$. Therefore, the integer $r:=inf\{k_{M_{j}}\}_{j\in J}$ is well defined. If we fix $n\in\mathbf{N}$ such that $n+r>w(Q)$, then $n>w(Q)-k_{M_{j}}$. Notice that \cite[Lemma 5.3]{Keller1990} implies that a countable composition of inflations of $\cc A$ is again an inflation of $\cc A$. Then, for every $j\in J$ we get a conflation in $\cc A$,
\[Q_{j,n}\ra Q_{j,\infty}\ra Q_{j,\infty}/^jQ_n,
\]
By Lemma \ref{route to Konig2}, the right term of this conflation has a graded support which is disjoint with that of $Q$ and $Q[1]$ (enlarging $n$ if necessary). Then we get a commutative diagram:
\[\xymatrix{\coprod_{J}(\cd A)(Q,Q_{j,n})\ar[d]^{\can}\ar[r]^{\sim} & \coprod_{J}(\cd A)(Q,Q_{j,\infty})\ar[d]^{\can}\ar[r]^{\sim} & \coprod_{J}(\cd A)(Q,M_{j})\ar[d]^{\can} \\
(\cd A)(Q,\coprod_{J}Q_{j,n})\ar[r]^{\sim} & (\cd A)(Q,\coprod_{J}Q_{j,\infty})\ar[r]^{\sim} & (\cd A)(Q,\coprod_{J}M_{j})
}
\]
The fact that the leftmost vertical map is a bijection has been proved in the third step of the proof of Lemma \ref{route to Konig2}, and so we are done.
\end{proof}

\begin{remark}
The reader will have noticed that we changed S.~K\"{o}nig's condition that $Q$ is exceptional for the stronger condition that $(\cd A)(Q,Q[i]^{(\Lambda )})=0$ $(*)$, for all $i\neq 0$ and all sets $\Lambda$. We have not been able to derive this condition from S.~K\"{o}nig's hypotheses. In principle, there is no prohibition for having a module $Q$ over a hereditary algebra $A$ such that $\Ext_A^1(Q,Q)=0$ but $\Ext_A^1(Q,Q^{(\Lambda )})\neq 0$, for some infinite set $\Lambda$ (see Lemma \ref{TrlifajLemma} and Example \ref{TrlifajExample} below). However, S.~K\"{o}nig's theorem implies, together with our Theorem \ref{Konig}, that equality $(*)$ must hold for the particular $Q$ there. 
\end{remark}

The following lemma and example have been suggested to us by J. Trlifaj.

\begin{lemma}\label{TrlifajLemma}
Let $A$ be a countable simple Von Neumann regular ring which is not right Noetherian. Then there exists an injective right $A$-module $Q$ such that:
\begin{enumerate}[1)]
\item The functor $\Hom_A(Q,?):\Mod A\ra\Mod\Z$ preserves small coproducts. 
\item $\Ext_A^1(Q,Q^{(\Lambda )})\neq 0$, for every infinite set $\Lambda$.
\end{enumerate}
\end{lemma}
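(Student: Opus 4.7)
The plan is to construct $Q$ as the injective hull of a faithful simple module and verify the two required properties separately. I first recall the two characterizations that organize the argument. By a result of Rentschler, a module $Q$ is \emph{small}, meaning that $\Hom_A(Q,?)$ preserves small coproducts, if and only if $Q$ is not the union of any strictly ascending countable chain of proper submodules. By Faith's theorem, an injective module $Q$ is \emph{$\Sigma$-injective} (equivalently, $Q^{(\Lambda)}$ is injective for every set $\Lambda$) if and only if $A$ satisfies the ascending chain condition on right annihilators $\ann_A(X)$ with $X\subseteq Q$. Since $Q^{(\Lambda)}$ contains $Q^{(\N)}$ as a direct summand whenever $\Lambda$ is infinite, once $\Ext_A^1(Q,Q^{(\N)})\neq 0$ we will also obtain $\Ext_A^1(Q,Q^{(\Lambda)})\neq 0$ for every infinite $\Lambda$.

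I would take $Q=E(A/\mathfrak{m})$ for any maximal right ideal $\mathfrak{m}$ of $A$. The simple module $S:=A/\mathfrak{m}$ is faithful because $A$ is simple: $\{a\in A:Sa=0\}$ is a two-sided ideal which does not contain $1$, hence it is zero. For the failure of $\Sigma$-injectivity, I would use that a von Neumann regular ring which is right Noetherian is semisimple Artinian (see Goodearl), so the non-Noetherian VNR ring $A$ contains an infinite family $\{e_n\}_{n\in\N}$ of pairwise orthogonal non-zero idempotents. Since $Q$ is a faithful injective cogenerator, for each $n$ there exists $x_n\in Q$ with $x_n e_n\neq 0$, and by a standard linear adjustment using orthogonality of the $e_n$ one may arrange $x_n e_m=0$ for $m\neq n$. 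The right annihilators $\ann_A(\{x_1,\ldots,x_n\})$ then form a strictly descending chain, so their complementary chain in the annihilator lattice provides the strictly ascending sequence of annihilator right ideals which violates Faith's criterion.

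The delicate point is smallness of $Q$, and this is where countability of $A$ enters essentially. Since $S$ is a countable $A$-module and $Q$ is its injective hull, one obtains $Q$ as a countable union of essential extensions of $S$ and $Q$ is itself countable as a set. Suppose for contradiction that $Q=\bigcup_{n\in\N}Q_n$ with $Q_0\subsetneq Q_1\subsetneq\cdots$ all proper. Since $S$ is essential and simple in $Q$, $S\subseteq Q_{n_0}$ for some $n_0$, so for $n\geq n_0$ each $Q_n$ is an essential extension of $S$ and its injective hull inside $Q$ is all of $Q$. I would then exploit the division ring $D=\End_A(Q)$, which makes $Q$ a $D$--$A$-bimodule of countable dimension over $D$, to argue that a proper countable exhaustion would force the chain $\{DQ_n\}$ of $D$-subspaces to fill $Q$ while remaining proper $D$-subspaces, an incompatibility that can be ruled out by a Baire-category-style argument over the countable ring $A$.

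The main obstacle is precisely this smallness verification: countability gives the correct cardinality framework, but producing a rigorous contradiction requires a concrete handle on the submodule lattice of $E(S)$ for a general simple VNR ring. I would consult Goodearl's monograph on von Neumann regular rings for the structural facts about $E(S)$ and $\End_A(E(S))$, and Trlifaj's work on small modules and test modules, where constructions of exactly this flavour appear and can be adapted to the present setting.
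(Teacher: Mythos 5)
There are two genuine gaps, and both lie exactly where the real work of the lemma is. First, your verification of property 1) is not an argument: you reduce to the (correct) criterion that $Q$ must not be a union of a strictly ascending countable chain of proper submodules, but the verification you sketch rests on claims that are unjustified or false. The injective hull $Q$ is not countable in this setting (the paper's own $Q$ is explicitly uncountable, and this uncountability is what drives its proof of property 2)), and $\End_A(Q)$ for an indecomposable injective is only a local ring, not a division ring, so the ``Baire-category-style'' step has no foundation; you yourself flag this as the main obstacle. The paper's route is different and elementary: it takes $Q=E(A/I)$ with $I=\bigoplus_{n}e_nA$ for orthogonal idempotents $e_n$, and given a map $f:Q\ra\coprod_i X_i$ with infinitely many nonzero components it sets $M_n:=f^{-1}(\coprod_{i\leq n}X_i)$, uses simplicity ($Ae_nA=A$) to choose $x_n\in M_ne_n\setminus M_{n-1}$, assembles these into a homomorphism $h:I\ra Q$, and uses injectivity of $Q$ to realize $h$ as left multiplication by a single element $x\in Q$; since $x\in M_r$ for some $r$ this forces $x_{r+1}\in M_r$, a contradiction. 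Nothing of this sort appears in your proposal.

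Second, for property 2) your plan only shows that $Q$ is not $\Sigma$-injective, i.e.\ that $Q^{(\Lambda)}$ fails to be injective; this gives $\Ext_A^1(M,Q^{(\Lambda)})\neq 0$ for \emph{some} module $M$, not for $M=Q$, which is what the lemma asserts. Bridging that gap is the substantive part of the paper's proof: countability of $A$ gives pure global dimension at most $1$, which together with regularity makes $A$ hereditary; the specific choice $Q=E(A/I)$ allows a cardinality count ($|Q^{(\N)}|=|Q|<|Q^{\N}|=|\prod_n Q^{(\N)}e_n|$) showing $\Ext_A^1(A/I,Q^{(\N)})\neq 0$; and the nonvanishing is then transported from $A/I$ to $Q$ along the inclusion $A/I\hookrightarrow Q$, using that the cokernel of $Q^{(\N)}\hookrightarrow E(Q^{(\N)})$ is injective (hereditariness) to see that $\Ext_A^1(Q,Q^{(\N)})\ra\Ext_A^1(A/I,Q^{(\N)})$ is surjective. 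None of these steps is present in your write-up, and it is not clear they can be reproduced for your choice $Q=E(A/\mathfrak{m})$, which lacks the built-in submodule $A/I$ and the cardinality leverage on which the paper's computation depends. (Your ACC-on-annihilators argument can be repaired by taking annihilators of the tail sets $\{x_m\}_{m\geq n}$, but even then it only yields non-$\Sigma$-injectivity, which is strictly weaker than what is required.)
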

\begin{proof}
1) Since $A$ is countable its pure global dimension on either side is smaller or equal than $1$ (\cf \cite[Th\`{e}orem 7.10]{GrusonJensen}) and, since $A$ is Von Neumann regular, we conclude that $A$ is hereditary on both sides (\cf \cite[Proposition 10.3]{GrusonJensen}). On the other hand, since every finitely generated right ideal is generated by an idempotent element, one easily constructs an infinite sequence of nonzero orthogonal idempotents $e_n\ko n\in\mathbf{N}$. We then take $I=\bigoplus_{n\geq 0}e_nA$ and take the injective envelope $Q:=E(A/I)$ of $A/I$. Let now $X_i\ko i\in I$, be an infinite family of nonzero right $A$-modules and suppose that the canonical map $\coprod_{i\in I}\Hom_A(Q,X_i)\ra \Hom_A(Q,\coprod_{i\in I}X_i)$ is not bijective and, hence, not surjective. That means that we have a
morphism $f:Q\ra\coprod_{i\in I}X_i$ such that the  composition $f_j:Q\arr{f}\coprod_{i\in I}X_i\arr{\pi_{j}}X_j$ is nonzero, for infinitely many $j\in I$. Selecting an infinite countable subset of the set formed by those $j\in J$ such that $f_j\neq 0$, we see that it is not restrictive to assume that $I=\mathbf{N}$ and the composition 
$f_j:Q\arr{f}\coprod_{i\in\mathbf{N}}X_i\arr{\pi_{j}}X_j$ is nonzero for all $j\in\mathbf{N}$. Put now $M_n:=f^{-1}(\coprod_{0\leq i\leq n}X_i)\ko n\in\mathbf{N}$. Notice that the  sequence $M_0\subseteq M_1\subseteq ...\subseteq M_n\subseteq ...$ is strictly increasing and $Q=\bigcup_{n\geq 0}M_n$. Due to the fact that $A$ is a simple ring, we have that $Ae_nA=A$, for all $n\in\mathbf{N}$. From that we derive the existence, for each $n\in\mathbf{N}$, of an element $x_n\in M_ne_n\setminus M_{n-1}$ (convening that $M_{-1}=0$). Now we get a morphism
$h:I=\bigoplus_{n\geq 0}e_nA\ra Q$ by the rule $h(\sum_{n\geq 0}a_n)=\sum_{n\geq 0}x_na_n$. The fact that $Q$ is injective implies that $h$ is given by multiplication on the left by an element $x\in Q$. In particular $x_n=x_ne_n=h(e_n)=xe_n$. But from the equality $Q=\bigcup_{n\in\mathbf{N}}M_n$ we derive that $x\in M_r$, for some $r\in\mathbf{N}$, which implies that $\im(h)\subseteq xA\subseteq M_r$. But then $x_n\in M_r$, for all $n\in\mathbf{N}$, and in particular $x_{r+1}\in M_r$, which contradicts the choice of the $x_{r+1}$. Therefore the canonical map $\coprod_{i\in I}\Hom_A(Q,X_i)\ra\Hom_A(Q,\coprod_{i\in I}X_i)$ is bijective, for every set $I$.

2) \emph{First step: $|Q|=|Qe_n|$, for every $n\in\mathbf{N}$, where $|?|$ denotes the cardinality.} Indeed, since $A=Ae_nA$ it follows that $Q=Qe_nA=\sum_{a\in A}Qe_na$ (sum as abelian groups). Since $Q$ is infinite and not countable (see for instance \cite{DinhGuilLopez}) and $A$ is countable, one readily sees that $|Q|=|Qe_na|$, for some $a\in A$. But the map $Qe_n\ra Qe_na\ko z\mapsto za$, is surjective, so that we have
$|Q|=|Qe_na|\leq |Qe_n|\leq |Q|$, for some $a\in A$. Then $|Q|=|Qe_n|$ as desired. 

\emph{Second step: $\Ext_A^1(A/I,Q^{(\mathbf{N})})\neq 0$.} By applying the contravariant functor $\Hom_A(?,Q^{(\mathbf{N})})$ to the projective presentation  $0\ra I\hookrightarrow A\ra A/I\ra 0$, we get an exact sequence:
\[0\ra\Hom_A(A/I,Q^{(\mathbf{N})})\ra\Hom_A(A,Q^{(\mathbf{N})})\ra\Hom_A(I,Q^{(\mathbf{N})})\ra
\]
\[\ra\Ext^1_A(A/I,Q^{(\mathbf{N})})\ra 0.
\]
Using the obvious isomorphisms and bearing in mind that 
\[\Hom_A(I,Q^{(\mathbf{N})})=\Hom_A(\bigoplus_{n\geq 0}e_nA,Q^{(\mathbf{N})})\cong\prod_{n\geq 0}\Hom_A(e_nA,Q^{(\mathbf{N})})\cong\prod_{n\geq 0}Q^{(\mathbf{N})}e_n,
\] 
the above exact sequence of abelian groups gets identified with the sequence
\[0\ra\op{ann}_{Q^{(\mathbf{N})}}(I)\hookrightarrow Q^{(\mathbf{N})}\arr{\varphi}\prod_{n\geq 0}Q^{(\mathbf{N})}e_n\ra\Ext^1_A(A/I,Q^{(\mathbf{N})})\ra 0,
\]
where $\varphi (x)=(xe_n)_{n\in\mathbf{N}}$ for every $x\in Q^{(\mathbf{N})}$. Since 
$|Q^{(\mathbf{N})}|=|Q|<|Q^\mathbf{N}|=|(Q^{(\mathbf{N})})^\mathbf{N}|=|\prod_nQ^{(\mathbf{N})}e_n|$ it follows that the map $\varphi$ cannot be surjective, and hence $\Ext^1_A(A/I,Q^{(\mathbf{N})})\neq 0$.

\emph{Third step: $\Ext^1_A(Q,Q^{(\Lambda )})\neq 0$ for every infinite set $\Lambda$.} It suffices to prove that $\Ext^1_A(Q,Q^{(\mathbf{N})})\neq 0$. Bearing in mind that $A$ is right hereditary and $Q^{(\mathbf{N})}$ is not injective, we have an exact sequence of right $A$-modules
\[0\ra Q^{(\mathbf{N})}\hookrightarrow E(Q^{(\mathbf{N})})\ra E'\ra 0,
\]
with $0\neq E'$  injective. We then get the following commutative diagram with exact rows, where the vertical arrows are induced by 
the canonical inclusion $A/I\hookrightarrow Q$:
\[\xymatrix{\Hom_A(Q,E')\ar[r]\ar[d] & \Ext_A^1(Q,Q^{(\mathbf{N})})\ar[r]\ar[d] & 0 \\
\Hom_A(A/I,E')\ar[r]& \Ext_A^1(A/I,Q^{(\mathbf{N})})\ar[r] & 0.
}
\]
The left vertical arrow is an epimorphism, due to the fact that $E'$ is injective, and the lower horizontal arrow is an epimorphism by definition of 
$\Ext_A^1(A/I,?)$. It follows that the right vertical arrow is also an epimorphism and, hence, that $\Ext^1_A(Q,Q^{(\mathbf{N})})\neq 0$ as desired.
\end{proof}

\begin{example}\label{TrlifajExample}
Any countable direct limit of countable simple Artinian rings
satisfies the hypotheses of the above example. A typical case is
given as follows. Consider the direct limit $A=\lid \cm_{2^n\times 2^n}(\mathbb{K})$, where $\mathbb{K}$ is a countable field and the ring morphism
$\cm_{2^n\times 2^n}(\mathbb{K})\ra \cm_{2^{n+1}\times 2^{n+1}}(\mathbb{K})$ maps the matrix $U$ onto the matrix given by the block decomposition 
$\left[\begin{array}{cc} U & 0\\ 0 & U \end{array}\right]$.
\end{example}

\printindex
\addcontentsline{lot}{chapter}{\'Indice terminol\'ogico}

\end{document}